\numberwithin{equation}{section}
\newtheorem{theorem}{Theorem}[section]
\newtheorem{corollary}[theorem]{Corollary}
\newtheorem{lemma}[theorem]{Lemma}
\newtheorem{proposition}[theorem]{Proposition}
\newtheorem{definition}[theorem]{Definition}
\newtheorem{example}[theorem]{Example}
\newtheorem{remark}[theorem]{Remark}
\newcommand{\C}{\mathbb{C}}
\newcommand{\D}{\mathbb{D}}
\renewcommand{\H}{\mathbb{H}}
\renewcommand{\L}{\mathbb{L}}
\newcommand{\M}{\mathbb{M}}
\newcommand{\N}{\mathbb{N}}
\newcommand{\Q}{\mathbb{Q}}
\newcommand{\R}{\mathbb{R}}
\newcommand{\V}{\mathbb{V}}
\newcommand{\LL}{\mathscr{L}}
\newcommand{\PP}{\mathscr{P}}
\newcommand{\cA}{{\ensuremath{\mathcal A}}}
\newcommand{\cB}{{\ensuremath{\mathcal B}}}
\newcommand{\calC}{{\ensuremath{\mathcal C}}}
\newcommand{\cE}{{\ensuremath{\mathcal E}}}
\newcommand{\cH}{{\ensuremath{\mathcal H}}}
\newcommand{\cL}{{\ensuremath{\mathcal L}}}
\newcommand{\cQ}{{\ensuremath{\mathcal Q}}}
\newcommand{\cS}{{\ensuremath{\mathcal S}}}
\newcommand{\cU}{{\ensuremath{\mathcal U}}}
\newcommand{\ii}{{\mbox{\boldmath$i$}}}
\newcommand{\mm}{{\mbox{\boldmath$m$}}}
\newcommand{\nn}{{\mbox{\boldmath$n$}}}
\newcommand{\rr}{{\mbox{\boldmath$r$}}}
\newcommand{\mmu}{{\mbox{\boldmath$\mu$}}}
\newcommand{\nnu}{{\mbox{\boldmath$\nu$}}}
\newcommand{\ppi}{{\mbox{\boldmath$\pi$}}}
\newcommand{\ssigma}{{\mbox{\boldmath$\sigma$}}}
\newcommand{\sfa}{{\sf a}}
\newcommand{\sfd}{{\sf d}}
\newcommand{\sfg}{{\sf g}}
\newcommand{\sfv}{{\sf v}}
\newcommand{\sfF}{{\sf F}}
\newcommand{\sfG}{{\sf G}}
\newcommand{\sfH}{{\sf H}}
\newcommand{\sfJ}{{\sf J}}
\newcommand{\sfP}{{\sf P}}
\newcommand{\sfQ}{{\sf Q}}
\newcommand{\sfS}{{\sf S}}
\newcommand{\sfV}{{\sf V}}
\newcommand{\frg}{{\frak g}}
\newcommand{\rmc}{{\mathrm c}}
\newcommand{\rme}{{\mathrm e}}
\newcommand{\rmv}{{\mathrm v}}
\newcommand{\rmC}{{\mathrm C}}
\newcommand{\rmD}{{\mathrm D}}
\newcommand{\rmI}{{\mathrm I}}
\newcommand{\Kliminf}{K\kern-3pt-\kern-2pt\mathop{\rm lim\,inf}\limits}  
\newcommand{\supp}{\mathop{\rm supp}\nolimits}   
\newcommand{\diam}{\mathop{\rm diam}\nolimits}   
\newcommand{\Lip}{\mathop{\rm Lip}\nolimits}          
\renewcommand{\d}{{\mathrm d}}
\newcommand{\dt}{{\d t}}
\newcommand{\restr}[1]{\lower3pt\hbox{$|_{#1}$}}
\newcommand{\topref}[2]{\stackrel{\eqref{#1}}#2}
\newcommand{\Leb}[1]{{\mathscr L}^{#1}}      
\newcommand{\la}{{\langle}}                  
\newcommand{\ra}{{\rangle}}
\newcommand{\down}{\downarrow}              
\newcommand{\up}{\uparrow}
\newcommand{\weakto}{\rightharpoonup}
\newcommand{\eps}{\varepsilon}  
\newcommand{\nchi}{{\raise.3ex\hbox{$\chi$}}}
\newcommand{\media}{\mkern12mu\hbox{\vrule height4pt           %
          depth-3.2pt                                 
          width5pt}\mkern-16.5mu\int\nolimits}        
\newcommand{\forevery}{\text{for every }}
\def\qed{\ifmmode 
  \else \leavevmode\unskip\penalty9999 \hbox{}\nobreak\hfill
  \fi               
    \qquad           \hbox{\hskip.5em $\square$
                \hskip.1em}}
\def\endproofsym{\qed}
\def\endnobox{\def\endproofsym{}}
\newenvironment{proof}[1][Proof]{\def\endproofsym{\qed}\trivlist\item[\hskip\labelsep{%
\noindent{\normalfont\emph{#1}.}\hskip .321429\parindent}]\ignorespaces}
{\endproofsym\endtrivlist}
\newcommand{\nc}{\normalcolor}
\newcommand{\AAA}{\color{black}}
\newcommand{\GGG}{\color{black}}
\newcommand{\GGGG}{\color{black}}
\newcommand{\EEE}{\normalcolor\normalsize} 
\renewcommand{\mm}{\mathfrak m} 
\newcommand{\Gbil}[2]{\Gamma\big(#1,#2\big)}   
\newcommand{\Gq}[1]{\Gamma\big(#1\big)}          
\newcommand{\Probabilities}[1]{\mathscr P(#1)}          
\newcommand{\Probabilitiesac}[2]{\mathscr P^{ac}(#1,#2)}          
\newcommand{\ProbabilitiesTwo}[1]{\mathscr P_2(#1)}     
\newcommand{\Probabilitiesp}[1]{\mathscr P_p(#1)}     
\newcommand{\entv}{\mathrm{Ent}_{\mm}}                    
\newcommand{\RCD}[2]{\ensuremath{\mathrm{RCD}(#1,#2)}}
\newcommand{\CD}[2]{\ensuremath{\mathrm{CD}(#1,#2)}}
\newcommand{\CDS}[2]{\ensuremath{\mathrm{CD}^*(#1,#2)}}
\newcommand{\RCDS}[2]{\ensuremath{\mathrm{RCD}^*(#1,#2)}}
\newcommand{\BE}[2]{\mathrm{BE}(#1,#2)}   
\newcommand{\MC}[1]{\mathrm{DC}(#1)} 
\newcommand{\MCreg}[1]{\mathrm{DC}_{reg}(#1)} 
\newcommand{\frQ}{\mathfrak Q}
\newcommand{\weight}[2]{\frQ(#1,#2)}
\newcommand{\wname}{\frQ}
\newcommand{\fnd}[3]{\sfg_{\ifthenelse{\equal{#2}{}}{}{(#2)}}(#3,#1)}
\newcommand{\Reny}[2]{\mathcal U_{#1}(#2)} 
 \newcommand{\Mod}[4]{\cA_{#2}^{(#1)}(#3;#4)} 
\newcommand{\EMod}[3]{\cA_{\frak #1}(#2;#3)} 
\newcommand{\Action}[3]{\mathcal A_{#1}(#2;#3)} 
\newcommand{\Vhom}[1]{\V_{\kern-2pt
\ifthenelse{\equal{#1}{1}}{\cE}{#1}}}
\newcommand{\Vdual}[1]{{\V}_{\kern-2pt
\ifthenelse{\equal{#1}{1}}{\cE}{\rm #1}}'}
\newcommand{\Ddual}{\D_{\kern-1pt \cE}'}
\newcommand{\tGq}[2]{\Gamma_{\kern-2pt#1}(#2)}  
\newcommand{\tGqs}[2]{\Gamma_{\kern-2pt#1}^*(#2)}  
\newcommand{\tGbil}[3]{\Gamma_{\kern-2pt#1}(#2,#3)}
\newcommand{\class}[2]{#1_{#2}}
\newcommand{\classd}[2]{(\tilde #1)_{#2}}
\newcommand{\Sobolev}[4]{W^{1,2}(0,#1;#2,#3)
\ifthenelse{\equal{#4}{}}{}{\cap
  L^{#4}_{\ifthenelse{\equal{#4}{\infty}}w{}}(0,T;L^{#4}(X,\mm))}}
\newcommand{\GGamma}{\mathbf\Gamma}
\newcommand{\Prim}{V}
\newcommand{\Phim}{\mathcal V}
\newcommand{\cEs}[1]{\cE^*_{#1}}
\renewcommand{\nn}{\mathfrak n}
\newcommand{\ND}[2]{\mathcal{ND}(#1,#2)}
\newcommand{\AC}[3]{\mathrm{AC}^{#1}(#2;#3)}
\renewcommand{\C}{\mathsf{Ch}}
\newcommand{\DeltaE}{\mathrm L}
\newcommand{\duality}[4]{{\vphantom\la}_{#1}\la#3,#4\ra_{#2}}
\newcommand{\fn}{\normalcolor}
\newcommand{\rh}{\rho}
\newcommand{\vare}{\varepsilon}
\newcommand{\vph}{\varphi}
\newcommand{\factor}[3]{\sigma_{#1}^{(#2)}(#3)}
\def\XXint#1#2#3{{\setbox0=\hbox{$#1{#2#3}{\int}$}
\vcenter{\hbox{$#2#3$}}\kern-.5\wd0}}
\title{Nonlinear diffusion equations and curvature\\ conditions in 
  metric measure spaces
}
\begin{document}

\author{Luigi Ambrosio\
   \thanks{Scuola Normale Superiore, Pisa. email: \textsf{luigi.ambrosio@sns.it}.}
   \and
   Andrea Mondino
   \thanks{Universit\"at Z\"urich. email: \textsf{andrea.mondino@math.uzh.ch}.}
 \and
   Giuseppe Savar\'e\
   \thanks{Universit\`a di Pavia. email:
   \textsf{giuseppe.savare@unipv.it}. Partially supported by
   PRIN10/11 grant from MIUR for the project \emph{Calculus of Variations}.}
   }

\maketitle

\begin{abstract} 
Aim of this paper is to provide new characterizations of the curvature dimension condition in the context of metric measure
spaces $(X,\sfd,\mm)$. On the geometric side, our new approach takes into
account suitable weighted action functionals which provide the natural modulus of $K$-convexity when
one investigates the convexity properties of $N$-dimensional entropies. On the side of diffusion semigroups and evolution
variational inequalities, our new approach uses the nonlinear diffusion semigroup induced by the $N$-dimensional entropy, in place of the heat flow. Under suitable assumptions (most notably the quadraticity of Cheeger's energy relative to the metric measure structure) 
both approaches are shown to be equivalent to the strong $\CDS K N$ condition of Bacher-Sturm. 
\end{abstract}

\tableofcontents


\section{Introduction}  

Spaces with Ricci curvature bounded from
below play an important role in many probabilistic and analytic
investigations, that reveal various deep connections between different
fields.

Starting from the celebrated paper by \textsc{Bakry-\'Emery} \cite{Bakry-Emery84},
the curvature-dimension condition based on $\Gamma$-calculus and
the $\Gamma_2$-criterium in Dirichlet spaces 
provides crucial tools for proving 
refined estimates on Markov semigroups and many
functional inequalities, of Poincar\'e, Log-Sobolev, Talagrand,
and concentration type
(see, e.g.~\cite{Ledoux01,Ledoux04,Ledoux11,Bakry06,Ane-et-al00,Bakry-Gentil-Ledoux14}).

In the framework of optimal transport, the importance of curvature bounds  
has been deeply analyzed in \cite{Otto-Villani00,Cordero-McCann-Schmuckenschlager01,Sturm-VonRenesse05}.
These and other important results led \textsc{Sturm}
\cite{Sturm06I,Sturm06II} and \textsc{Lott-Villani} \cite{Lott-Villani09} 
to introduce a new synthetic notion of the curvature-dimension
condition, in the general framework of a metric-measure space 
$(X,\sfd,\mm)$. 

In recent years more than one paper has been devoted to the investigation of the relation between the differential and metric structures, 
particularly in connection with Dirichlet forms, see for instance \cite{Koskela_Zhou}, \cite{Koskela-Shanmugalingam-Zhou12}, 
\cite{Sturm_intrinsic}, \cite{AGS11b} and \cite{AGS12}. In particular, under a suitable
infinitesimally Hilbertian assumption on the metric measure structure (and very mild regularity assumptions), 
thanks to the results of the last 
two papers we know that the optimal transportation point of view provided by the \textsc{Lott}-\textsc{Sturm}-\textsc{Villani} theory coincides 
with the point of view provided by \textsc{Bakry}-\textsc{\'Emery} 
when the inequalities do not involve any upper bound on the
dimension: both the approaches can 
thus be equivalently used to characterize the class of $\RCD K\infty$ spaces
with Riemannian Ricci curvature bounded from below by $K\in \R$.
More precisely, the logarithmic entropy functional
\begin{equation}
  \label{eq:6}
  \mathcal U_\infty(\mu):=\int_X \varrho\log\varrho\,\d\mm
  \quad\text{if }\mu=\varrho\mm\ll \mm,
\end{equation}
satisfies 
the
$K$-convexity inequality along geodesics  $(\mu_s)_{s\in [0,1]}$ 
induced by the transport distance $W_2$ (i.e. with
cost equal to the square of the distance) 
\begin{equation}
  \label{eq:24}
\mathcal U_\infty(\mu_s)\le (1-s)\mathcal U_\infty(\mu_0)+s\mathcal
  U_\infty(\mu_1)
  -\frac K2 s(1-s)W_2^2(\mu_0,\mu_1)
\end{equation}
if and only if $\Gamma_2(f)\geq K\,\Gamma(f)$. 

A natural and relevant  question is then to establish a similar equivalence when
upper bounds on the dimension are imposed; more precisely one is interested in the equivalence between the condition
\begin{equation}\label{eq:intro0}
\Gamma_2(f)\geq K\,\Gamma(f)+\frac 1 N(\DeltaE f)^2
\end{equation}
(where $\DeltaE$ is the infinitesimal generator of the semigroup associated to the Dirichlet form)
and the curvature-dimension conditions based on optimal transport.  In the dimensional case, the logarithmic entropy functional \eqref{eq:6} is replaced by  the ``$N$-dimensional'' R\'eny entropy  
\begin{equation}
  \label{eq:59}
  \mathcal U_N(\mu):=\int_X U_N(\varrho)\,\d\mm=N-N \int_X \varrho^{1-\frac{1}{N}} \, \d\mm  \qquad\text{if }\mu=\varrho\mm+\mu^\perp, \quad
  \mu^\perp\perp\mm.
\end{equation}
Except for the case $K=0$, which can be formulated by means of a geodesic convexity condition
analogous to \eqref{eq:24}, the case $K\neq0$ involves a much more
complicated property \cite{Sturm06II,Bacher-Sturm10}, that gives raise
to difficult technical questions. 

Aim of this paper is precisely to provide new characterizations of the curvature dimension condition in the context of metric measure
spaces $(X,\sfd,\mm)$. On the geometric side, our new approach takes into
account suitable weighted action functionals of the form
\begin{equation}
\Mod tN\mu\mm=\int_0^1
\int_X\sfg(s,t)\varrho^{1-1/N}(x,s)\bar v^2(x,s)\,\d\mm\,\d
s,\label{eq:60}
\end{equation}
where $\mu_s=\varrho_s\mm$, $s\in [0,1]$, is a Wasserstein geodesic,
$\sfg$ is a weight function and $\bar v$ is the minimal velocity
density of $\mu$, a new concept that extends to general metric spaces
the notion of Wasserstein velocity vector field 
developed for Euclidean spaces 
\cite[Chap.~8]{AGS08}. Functionals like \eqref{eq:60}  
provide the natural modulus of $K$-convexity when
one investigates the convexity properties of 
the $N$-dimensional R\'eny entropy \eqref{eq:59}. 
 On the side of diffusion semigroups and evolution
variational inequalities, our new approach uses the nonlinear diffusion semigroup induced by the $N$-dimensional entropy, in place of the heat  flow.
Under suitable assumptions (most notably the quadraticity of Cheeger's energy relative to the metric measure structure) 
both approaches are shown to be equivalent to the strong $\CDS K N$
condition of \textsc{Bacher-Sturm}
\cite{Bacher-Sturm10}.

Apart from the stated equivalence between the \textsc{Lott-Sturm-Villani} and the
\textsc{Bakry-\'Emery} approaches, our results and techniques 
can hardly be compared with the recent work \cite{Erbar-Kuwada-Sturm}
of \textsc{Erbar}-\textsc{Kuwada}-\textsc{Sturm}, motivated by the
same questions. Instead of the R\'eny entropies \eqref{eq:59}, in their approach 
an $N$-dependent modification of the logarithmic entropy \eqref{eq:6} is considered, namely
the logarithmic entropy power 
\begin{equation}\label{eq:defSnSturm}
\cS_N(\mu):=\exp\left(-\frac 1 N \mathcal U_\infty 
(\mu)\right),
\end{equation}
and convexity inequalities as well as evolution variational inequalities are stated in terms of $\cS_N$, proving equivalence
with the strong $\CDS K N$ condition.
A conceptual and technical advantage of their approach is the use of essentially the same objects (logarithmic entropy, heat flow)
of the adimensional theory. On the other hand, since power-like nonlinearities appear in a natural way ``inside the integral''
in the optimal transport approach to the curvature dimension theory, we believe it is interesting to pursue a different line of thought,
using the Wasserstein gradient flow 
induced by the R\'eny entropies (in the same spirit of the seminal \textsc{Otto}'s paper \cite{Otto01}
on convergence to equilibrium for porous medium equations). 
The only point in common of the two papers is that both provide the equivalence between the differential
curvature-dimension condition \eqref{eq:intro0} and 
the so-called strong $\CDS K N$ condition; however, this equivalence is estabilished passing through convexity and
differential properties which are quite different in the two approaches (for instance some of them do not
involve at all the distorsion coefficients) and have, we believe, an independent interest.

Our paper starts with Section~\ref{subsec:heu}, where we illustrate in the simple framework of a $d$-dimensional
Euclidean space the basic heuristic arguments providing the links between contractivity and convexity. It builds upon
the fundamental papers \cite{Otto-Westdickenberg05} and \cite{Daneri-Savare08}. The main new ingredient here is that the links are provided in terms of monotonicity
of the Hamiltonian, instead of monotonicity of the Lagrangian (see \cite{Liero-Mielke13} for a related discussion of the role of dual
Hamiltonian estimates in terms of the so-called Onsager operator). 
More precisely, if $\sfS_t:\R^d\to\R^d$ is the flow generated by a smooth vector field
$\frak f:\R^d\to \R^d$, and if $\calC(x,y)$ is the cost functional relative to a Lagrangian $\cL$, then we know that the contractivity property
$$\calC(\sfS_tx,\sfS_t y)\leq\calC(x,y) \qquad \text{for all } t\geq 0,$$
is equivalent to the action monotonicity 
\begin{equation}\label{eq:IntroActionMon}
\frac{\d}{\d t} \cL(x(t),w(t)) \leq 0
\end{equation}
 whenever $x(t)$ solves the ODE
$$ \frac{\d}{\d t} x(t) = \frak f (x(t))$$
and $w$ solves the linearized ODE 
$$ \frac{\d}{\d t} w(t) =\rmD\frak f(x(t)) w(t)$$
 (in the applications $w$ arises as the derivative w.r.t. $s$ of a smooth
curve of initial data for the ODE). In Section~\ref{subsec:heu} we use duality arguments to prove the same equivalence when the action monotonicity \eqref{eq:IntroActionMon}
is replaced by the Hamiltonian monotonicity 
\begin{equation}\label{eq:IntroHamMon}
 \frac{\d}{\d t} \cH(x(t),\varphi(t)) \geq 0,
\end{equation} where now $\varphi$ solves the \textit{backward} transposed 
equation 
\begin{equation}\label{eq:IntroBackODE}
 \frac{\d}{\d t} \varphi(t)=-\mathrm D\frak f(x(t))^\intercal\varphi(t),
\end{equation} see Proposition~\ref{prop:easy-back}. Lemma~\ref{le:2ndeasyarg} provides, in the case 
when $\frak f=-\nabla U$ and $\cL$, $\cH$ are quadratic forms, the link between the Hamiltonian monotonicity and another contractivity
property involving both $\calC$ and $U$, see \eqref{eq:199}; this is known to be equivalent to the convexity of $U$ along the geodesics
induced by $\calC$.

In the context of optimal transportation (say on a smooth, compact
Riemannian manifold $(M,\frg)$), the role of the Hamiltonian is played by
$\cH(\varrho,\varphi):=\frac12\int_X |\rmD\varphi|_\frg^2\varrho\,\d\mm$, thanks to \textsc{Benamou-Brenier} formula and 
the \textsc{Otto} formalism:
\begin{equation}
\label{eq:intro1}
    \cL(\varrho,w):=\frac 12 \int_X
    |\rmD \varphi|^2_\frg\varrho\,\d\mm,\qquad
    -\mathrm{div}_\frg (\varrho \nabla\kern-2pt_\frg\varphi)=w.
\end{equation}
  In other words, the cotangent bundle is associated to the velocity gradient $\nabla\kern-2pt_\frg\varphi$ and the duality between
  tangent and cotangent bundle is provided by the possibly degenerate elliptic PDE $-\mathrm{div}_\frg (\varrho \nabla\kern-2pt_\frg \varphi)=w$.
 With a very short computation we show in Example~\ref{ex:1} how the \textsc{Bakry-\'Emery} $\BE 0\infty$ condition 
  corresponds precisely to the Hamiltonian monotonicity, when the vector field is (up to the sign) the gradient vector of the logarithmic 
  entropy functional. If the entropy $\cU(\varrho\,\mm)=\int U(\varrho)\,\d x$ satisfies the (stronger) \textsc{McCann}'s $\MC N$ condition, then the 
  same correspondence holds with $\BE 0 N$, see Example~\ref{ex:2}. In both cases the flow corresponds to the diffusion equation
  \begin{equation}\label{eq:intro2}
    \frac \d\dt \varrho=\Delta_\frg P(\varrho)
  \end{equation}
  with $P(\varrho):=\varrho U'(\varrho)-U(\varrho)$, which is linear
  only in the case of the logarithmic entropy \eqref{eq:6}.
  
  The computations made in Examples~\ref{ex:1} and Example~\ref{ex:2} involve regularity in time and space of the potentials $\varphi$
  in \eqref{eq:intro1}, whose proof is not straightforward already in the smooth Riemannian context. Another difficulty arises from the 
  degeneracy of the PDE $-\mathrm{div}_\frg (\varrho \nabla\kern-2pt_\frg \varphi)=w$, which forces us to consider weak solutions $\varphi$
  in ``weighted Sobolev spaces''.  
  Keeping in mind these technical difficulties, our goal is then to provide tools to extend the calculations of these examples to a nonsmooth
  context, following on the one hand the $\Gamma$-calculus formalism, on the other hand the calculus in metric measure spaces $(X,\sfd,\mm)$ developed in
  \cite{AGS11a}, \cite{AGS11b}, \cite{Gigli12} and in the subsequent papers. 

Now we pass to a more detailed description of the three main parts of the paper.

\bigskip
\centerline{\bf Part I}
\smallskip

This first part, which consists of Section~3 and Section~4, 
is written in the context of a Dirichlet form $\cE$ on $L^2(X,\mm)$, for some measurable space $(X,\cB)$ endowed 
with a $\sigma$-finite measure $\mm$. We adopt the notation $\H$ for $\L^2(X,\mm)$, $\V$ for the domain of the Dirichlet form, $-\DeltaE$ for the linear
monotone map from $\V$ to $\V'$ induced by $\cE$, $\sfP_t$ for the semigroup whose infinitesimal generator is $\DeltaE$. 

We already mentioned
the difficulties related to the degeneracy of our PDE; in addition, since we don't want to assume a spectral gap, we need also to take
into account the possibility that the kernel $\{f:\ \cE(f,f)=0\}$ of the Dirichlet form is not trivial. We then consider the abstract completion $\V_\cE$ of the
quotient space of $\V$ and the realization $\V_\cE'$ of the dual of $\V_\cE$ as the finiteness domain of the quadratic form $\cE^*:\V'\to [0,\infty]$
defined by
$$
\frac 12 \cE^*(\ell,\ell):=\sup_{f\in\V}\langle\ell,f\rangle-\frac 12\cE(f,f).
$$
Section~\ref{subsec:completion} is indeed devoted to basic functional analytic properties relative to the completion of quotient spaces w.r.t. a seminorm 
(duality, realization of the dual, extensions of the action of $\DeltaE$). The spaces $\V$, $\V_\cE$ and their duals are the basic
ingredients for the analysis, in Section~\ref{subsec:A-1}, of the
nonlinear diffusion equation 
\begin{equation}\label{eq:64}
\frac{\d}{\d t}\varrho-\DeltaE P(\varrho)=0
\end{equation}
(which
corresponds to \eqref{eq:intro2}) in the abstract context, for regular monotone nonlinearities $P$; the basic existence and uniqueness
result is given in  Theorem~\ref{thm:nonlin-diff}, which provides also the natural apriori estimates and contractivity properties.

Chapter 4 is devoted to the linearizations of the diffusion
equation \eqref{eq:64}. We first consider in
Theorem~\ref{prop:backward-linearization} the (backward) PDE 
$$
\frac {\d} {\d t}\varphi+P'(\varrho)\DeltaE\varphi =\psi
$$
which is the adjoint to the linearized equation and corresponds, when $\psi=0$, to the backward transposed ODE \eqref{eq:IntroBackODE} of the heuristic Section~\ref{subsec:heu}.
Existence, uniqueness and stability for this equation is provided in the class $W^{1,2}(0,T;\D,\H)$ of $L^2(0,T;\D)$ maps with derivative in $L^2(0,T;\H)$, where
$\D$ is the space of all $f\in\V$ such that $\DeltaE f\in\H$, endowed with the natural norm.\\
In Theorem~\ref{thm:reg2} we consider the linearized PDE 
\begin{equation}
\frac{\d}{\d t} w=\DeltaE (P'(\varrho)w);\label{eq:72}
\end{equation}
since \eqref{eq:72} is in ``divergence form'' we can use the regularity of $P'(\varrho)$ to provide existence and uniqueness (as well as stability) in the 
large class $W^{1,2}(0,T;\H,\D_\cE')$ of $L^2(0,T;\H)$ maps with derivative in $L^2(0,T;\D_\cE')$. Here
$\D_\cE'$ is the space of all $\ell\in\D'$ such that, for some constant $C$, 
$|\langle\ell,f\rangle|\leq C\|\DeltaE f\|_\H$ for all $f\in\D$ (endowed with the natural norm
provided by the minimal constant $C$). In Theorem~\ref{thm:precise-limit} we prove that the PDE is indeed the linearization of 
\eqref{eq:64}
by considering suitable families of initial conditions and their derivative.

\bigskip
\centerline{\bf Part II}
\smallskip

This part is devoted to the metric side of the theory and builds upon the papers \cite{AGS11a}, \cite{Lisini07}, \cite{AGS11b}, \cite{Daneri-Savare08}
with some new developments that we now illustrate.

Chapter~5 is mostly devoted to the introduction of preliminary and by now well estabilished concepts in metric spaces $(X,\sfd)$, as absolutely continuous 
curves $\gamma_t$, metric derivative $|\dot\gamma_t|$, $p$-action $\cA_p(\gamma)=\int|\dot\gamma|^p\,\d t$, slope $|\rmD f|$ and its one-sided 
counterparts $|\rmD^\pm f|$. In Section~\ref{sec:5.2} we
recall the metric/differential properties of the map
$$
Q_tf(x):=\inf_{y\in X}f(y)+\frac 1{2t}\sfd^2(x,y)\qquad x\in X,
$$
given by the Hopf-Lax formula (which provides a semigroup if $(X,\sfd)$ is a length space). Section~\ref{sec:5.3} and Section~\ref{sec:5.4} cover basic material
on couplings, $p$-th Wasserstein distance $W_p$, absolutely continuous curves w.r.t. $W_p$ and dynamic plans. Particularly important for us  is the 
1-1 correspondence between absolutely continuous curves $\mu_t$ in $(\Probabilities{X},W_p)$ and time marginals probability measures $\ppi$ in
$\rmC([0,1];X)$ with finite $p$-action $\mathscr A_p(\ppi):=\int \cA_p(\gamma)\,\d\ppi(\gamma)$, provided in \cite{Lisini07}. In general only
the inequality $|\dot\mu_t|^p\leq  \int |\dot\gamma_t|^p\,\d\ppi(\gamma)$ holds, and \cite{Lisini07} provides existence of a distinguished plan
$\ppi$ for which equality holds, that we call $p$-tightened to $\mu_t$. \\
Section~\ref{subsec:Cheeger} introduces a key ingredient of the metric theory, the Cheeger energy that we shall denote by $\C$ and the relaxed slope
$|\rmD f|_w$, so that $\C(f)=\frac 12\int_X|\rmD f|_w^2\,\d\mm$.
The energy $\C$ is by construction lower semicontinuous in $L^2(X,\mm)$; furthermore, under an additional quadraticity assumption it has been shown in 
\cite{AGS11b,Gigli12} 
that $\C$ provides a strongly  local Dirichlet form, whose Carr\'e du Champ is given by
$$
\Gamma(f,g)=\lim_{\eps\downarrow 0}\frac{|\rmD (f+\epsilon g)|_w^2-|\rmD f|_w^2}{2\epsilon}.
$$  
Motivated by the necessity to solve the PDE $-\mathrm{div}_\frg (\varrho \nabla\kern-2pt_\frg \varphi)=\ell$, whose abstract counterpart is
\begin{equation}\label{eq:intro4}
\int_X\varrho \Gamma(\varphi,f)\,\d\mm=\langle \ell,f\rangle\qquad\forall f\in\V,
\end{equation}
in Section~\ref{subsec:weighted} we consider natural weighted spaces $\V_\rho$ arising from the completion of the seminorm
$\sqrt{\int_X\varrho\Gamma(f)\,\d\mm}$, and the extensions of $\Gamma$ to these spaces, denoted by $\Gamma_\varrho$. In connection
with these spaces we investigate several stability properties which play a technical role in our proofs.\\

Section~\ref{sec:abscurWas} provides a characterization of $p$-absolutely continuous curves $\mu_s:[0,1]\to\Probabilities{X}$
in terms of the following control on the increments (where $|\rmD^*\varphi|$ is the usc relaxation of the slope $|\rmD\varphi|$):
$$
\biggl|\int_X\varphi\,\d\mu_s-\int_X\varphi\,\d\mu_t\biggr|\leq
\int_s^t\int_X|\rmD^*\varphi| v \, \d\mu_r\,\d r\qquad \varphi\in\Lip_b(X),\,\,0\leq s\leq t\leq 1.
$$
Any function $v$ in $L^p(X\times (0,1),\mm\otimes\Leb{1})$ will be called $p$-velocity density. In Theorem~\ref{thm:MVD} we show
that for all $p\in (1,\infty)$ a $p$-velocity density exists if and only if $\mu_t\in\AC{p}{[0,1]}{\Probabilities{X}}$ (see also \cite{Gigli-BangXian} for closely
related results). In addition we identify a crucial relation between
the unique $p$-velocity density $\bar v$ with minimal $L^p$ norm 
and any plan $\ppi$ $p$-tightened to $\mu$, namely
\begin{equation}\label{eq:intro3}
\bar v(\gamma_t,t)=|\dot\gamma_t|\quad\text{for $\ppi$-a.e. $\gamma$, for $\Leb{1}$-a.e. $t\in (0,1)$.}
\end{equation}
Heuristically, this means that even though branching cannot be ruled out, the metric velocity of the curve $\gamma$ in the support of $\ppi$
depends only on time and position of the curve, and it is independent of $\ppi$. 

In Section~\ref{sec:weights} we use the minimal velocity density $\bar v$ to define, under the additional assumption $\mu_s=\varrho_s\mm$,
the weighted energy functionals
\begin{equation}\label{eq:intro6}
  \begin{aligned}
    \Action{\wname}{\mu}\mm:= &
    \int_0^1\int_X
    \weight s{\varrho_s}
    \,\bar v^p\varrho_s\,\d\mm\,ds,
  \end{aligned}
\end{equation}
where $\weight{s}{r}: [0,1]\times [0,\infty)\to [0,\infty]$ is a suitable weight function (the typical choice will be
$\weight{s}{r}=\omega(s)Q(r)$ with $Q(r)=rP'(r)-P(r)$). 
Notice that when $\frQ\equiv 1$ we have the usual action $\int_0^1\int_X\bar v^p\,\d\mu_s\,\d s=\cA_p(\mu)$, 
which makes sense even for curves not made of absolutely continuous measures.
If $\ppi$ is a dynamic plan $p$-tightened to $\mu$ (recall that this means $\mathscr A_p(\ppi)=\cA_p(\mu)$),
we can use \eqref{eq:intro3} to obtain an equivalent expression in terms of $\ppi$:
$$
    \begin{aligned}
      \Action
      {\wname}\mu\mm &= 
      \int_0^1\int_X \weight {s}{\varrho_s(\gamma_s)}
      |\dot\gamma_s|^p\,\d\ppi(\gamma)\,\d s.
    \end{aligned}
$$
In Theorem~\ref{le:stabilitySigma} we provide, by Young measures techniques, continuity and lower semicontinuity properties of 
$\mu\mapsto\Action {\wname}\mu\mm$ under the assumption that the $p$-actions are convergent. 

In Section~\ref{sec:dynamic} we restrict ourselves to the case when $p=2$ and $\C$ is quadratic. For curves $\mu_s=\varrho_s \mm$ having
uniformly bounded densities w.r.t. $\mm$ we show in
Theorem~\ref{thm:dynamicKant} that $(\mu_s)_{s \in [0,1]}$ belongs to
$\AC2{[0,1]}{(\Probabilities X,W_2)}$ if and only if there exists
$\ell\in L^2(0,1;\V')$ satisfying, for all $f\in\V$,
$$
\frac{\d}{\d s}\int_X f\varrho_s\,\d\mm=\ell_s(f)\qquad\text{in
  $\mathscr D'(0,1)$. }  
$$
In addition $\ell_s\in\V_{\varrho_s}'$ for $\Leb{1}$-a.e. $s\in (0,1)$ and they are linked to the minimal velocity $\bar v$ by
$\cE^*(\ell_s,\ell_s)=\int_X|\bar v_s|^2\varrho_s\,\d s$.
Thanks to this result, we can obtain by duality the potentials $\phi_s$ associated to the curve, linked to $\ell_s$ by \eqref{eq:intro4}.

In Section~\ref{sec:9} we enter into the core of the matter, by providing on the one hand a characterization of strong $\CDS K N$ spaces whose
Cheeger energy is quadratic in terms
of convexity inequalities involving weighted action functionals and on the other hand a characterization involving
evolution variational inequalities. These characterizations extend \eqref{eq:24}, known \cite{AGS11b, AGMR12} in
the case $N=\infty$: the logarithmic entropy and the 
Wasserstein distance are now replaced by a nonlinear entropy and weighted action functionals. Section~\ref{subsec:9.1} provides basic 
results on weighted convexity inequalities and the distorsion multiplicative coefficients $ \factor \kappa t\delta$ (see \eqref{eq:170}), 
which appear in the formulation of the $\CDS K N$ condition. Section~\ref{subsec:regularent} introduces the basic entropies and 
their regularizations. In Section~\ref{subsec:9.3} we recall the basic definitions of $\CD K\infty$ space, of strong $\CD K\infty$ space
(involving the $K$-convexity \eqref{eq:24}
of the logarithmic entropy along \textit{all} geodesics) and  Proposition~\ref{prop:scdk} states their main properties, following
\cite{Rajala-Sturm12}. We then pass to the part of the theory involving dimensional bounds, by recalling the Baker-Sturm $\CDS K N$ condition
which involves a convexity inequality along $W_2$-geodesics for the
R\'eny entropies $\mathcal U_M$ defined in \eqref{eq:59}
and the distortion coefficients $ \factor {K/M} t\delta$, see \eqref{eq:177}, for all $M\geq N$.\\
Theorem~\ref{thm:Andrea_complete} is our first main result, providing a characterization of strong $\CDS K N$ spaces in terms of the convexity inequality
\begin{equation}\label{eq:ConvUNK}
      \cU_N(\mu_t)\le 
      (1-t) \cU_N (\mu_0)+ t \,
      \cU_N(\mu_1)-K\Mod t{N}{\mu}\mm\quad
    \forevery t\in [0,1].
\end{equation}
Here $\Mod t{N}{\mu}\mm$ is the $(t,N)$-dependent weighted action functional as in \eqref{eq:intro6} given by the choice
$\frQ^{(t)}(s,r):=\sfg(s,t)r^{-1/N}$, where $\sfg$ is the Green function defined in \eqref{eq:3}, so that
$$
  \Mod tN\mu\mm=\int_0^1
  \int_X\sfg(s,t)\varrho^{-1/N}(x,s)\bar v^2(x,s)\varrho_s\,\d\mm\,\d s.
$$
Comparing with the $\CDS K N$ definition \eqref{eq:177}, we can say
that the  distortion due to the lower bound $K$ on the Ricci tensor appears
just as a multiplicative factor, and that the distortion coefficients $\factor {K/M} t\delta$ 
are now replaced by the $(t,N)$-dependent weighted action functional. Hence,
$K$ and $N$ have more distinct roles, compared to the original definition.  \AAA Let us mention that  a convexity inequality in the same spirit of \eqref{eq:ConvUNK} was also obtained in \cite[Remark 4.18]{Erbar-Kuwada-Sturm}, the main difference being that in the present paper  $\cU_N$ is the Reny entropy functional \eqref{eq:59} while in \cite{Erbar-Kuwada-Sturm} a similar notation is used to denote the logarithmic entropy power  \eqref{eq:defSnSturm}. \fn \\
Our second main result is given in Theorem~\ref{thm:CD-EVI} and Theorem~\ref{thm:EVI-CD}. More precisely, in Theorem~\ref{thm:CD-EVI} we prove 
that in strong $\CDS K N$ spaces whose Cheeger energy is a quadratic form, for any regular entropy $U$ in McCann's class $\MC N$ the
induced functional $\cU$ as in \eqref{eq:59} satisfies the evolution variational inequality
\begin{equation}\label{eq:IntroEVIKN}
\frac 12\frac{\d^+}{\d t}W_2^2({\sf S}_t\varrho\,\mm,\nu)+\cU({\sf S}_t\varrho\,\mm)\leq
\cU(\nu)-K\Action{\omega Q}{\mu_{\cdot,t}}{\mm},
\end{equation}
where ${\sf S}$ is the nonlinear diffusion semigroup studied in Part I,
$\omega(s)=(1-s)$, $Q(r)=P(r)/r=U'(r)-U(r)/r$ and $\{\mu_{s,t}\}_{s\in [0,1]}$ is the unique geodesic connecting
$\mu_t={\sf S}_t\varrho\,\mm$ to $\nu$. The proof of this result follows the lines of \cite{AGS11b} ($N=\infty$, $\mm(X)<\infty$)
and \cite{AGMR12} (where the assumption on the finiteness of $\mm$ was removed) and uses the calculus tools developed
in \cite{AGS11a}, in particular in the proof of \eqref{eq:aug3_4}. In Theorem~\ref{thm:EVI-CD}, independently of the quadraticity
assumption, we adapt the ideas of \cite{Daneri-Savare08} to prove that the evolution variational inequality above (for all
regular entropies $U\in\MC N$) implies
the strong $\CDS K N$ condition. Moreover we can use Lemma~\ref{le:CDSKN-CDKI} to get the $\CD K \infty$ condition and then 
apply the characterization of $\RCD K\infty$ spaces provided in \cite{AGS11b} to obtain that $\C$ is quadratic.
Hence, under the quadraticity assumption on $\C$, the strong $\CDS K N$
condition and the evolution variational inequality are equivalent; without this assumption, as in the case $N=\infty$, the evolution
variational inequality is stronger.

\bigskip
\centerline{\bf Part III}
\smallskip
This last part is really the core  of the work, where all the tools developed in Parts I and II are combined to prove the main results. The natural setting is provided  by a Polish topological space $(X,\tau)$ endowed with
a $\sigma$-finite reference Borel measure $\mm$ and  a strongly local symmetric Dirichlet form $\cE$ in $L^2(X,\mm)$
enjoying a \emph{Carr\'e du Champ} $\Gamma:D(\cE)\times D(\cE)\to L^1(X,\mm)$ and a $\Gamma$-calculus. All the estimates about the  \textsc{Bakry-\'Emery} condition  discussed in Section~\ref{sec:BE}  and  the action estimates for nonlinear diffusion equations provided in Section \ref{sec:NDaction} do not really need an underlying compatible metric structure. In any case, in Section \ref{sec:EquivBERCDS}, they will be applied to the case of the Cheeger energy (thus assumed to be quadratic) of the metric measure space $(X,\sfd,\mm)$ in order to prove the main results of the paper.
Let us now discuss in more detail the content of Part III. 

In Section~\ref{sec:BE}  we recall the basic assumptions related to the \textsc{Bakry-\'Emery} condition and we prove some important properties related to them; in particular, in the case of a locally compact space, we establish  useful local and nonlinear criteria to check this condition.  
More precisely, we introduce the multilinear form 
 $\GGamma_2$ given by
$$  \GGamma_2(f,g;\varphi):= 
  \frac 12\int_X \Big(\Gbil fg\, {\DeltaE \varphi}-
  \Gbil f{\DeltaE g}\varphi-\Gbil g{\DeltaE f}\varphi\Big)\,\d\mm,
$$
with   $(f,g,\varphi)\in D_\V(\DeltaE)\times D_\V(\DeltaE) \times
  D_{L^\infty}(\DeltaE)$, where we set    $\V_\infty:=\V\cap L^\infty(X,\mm)$,
$\D_\infty:=\D\cap L^\infty(X,\mm)$,
\begin{equation}
  \nonumber
  \begin{cases}
  D_{L^p}(\DeltaE):=\big\{f\in \D{}\cap L^p(X,\mm):\ \DeltaE f\in
  L^p(X,\mm)\big\}\qquad p\in [1,\infty],\\
  D_\V(\DeltaE)=\big\{f\in \D:\ \DeltaE f\in \V\big\}.
  \end{cases}
\end{equation}
When $f=g$ we also set
$$ \GGamma_2(f;\varphi):=\GGamma_2(f,f;\varphi)=
  \int_X \Big(\frac 12\Gq f\, {\DeltaE \varphi}-
  \Gbil f{\DeltaE f}\varphi\Big)\,\d\mm. $$
The $\mathbf\Gamma_2$ form provides a weak version 
(see Definition \ref{def:BE} inspired by \cite{Bakry06,Bakry-Ledoux06}) of 
the Bakry-\'Emery $\BE K N$ condition \cite{Bakry-Emery84,Bakry92} 
  \begin{equation}
  \label{eq:Intro9}
  \GGamma_2(f;\varphi)\ge K\int_X \Gq f\,\varphi\,\d\mm+\frac 1N\int_X
  (\DeltaE f)^2\varphi\,\d\mm, \quad \forall (f,\varphi)\in D_\V(\DeltaE)\times D_{L^\infty}(\DeltaE), \varphi\ge0.
\end{equation} 
We say that a metric measure space 
$(X,\sfd,\mm)$ (see \S~\ref{subsec:Cheeger}) satisfies the 
\emph{metric} $\BE KN$ condition if 
the Cheeger energy is quadratic, the associated Dirichlet form
$\cE$ satisfies $\BE KN$, and any $f\in \V_\infty$ with $\Gq f\in L^\infty(X,\mm)$ has a $1$-Lipschitz representative.

In Section~\ref{subsec:BE-Dirichlet}, by an approximation lemma, on the one hand we show that  in order to get the full $\BE  K N$ it is enough to check the validity of \eqref{eq:Intro9} just for  every $f \in D_{\V}(\DeltaE)\cap D_{L^\infty}(\DeltaE)$ and every nonnegative $\varphi\in D_{L^\infty} (\DeltaE)$. On the other hand, thanks to the improved integrability of $\Gamma$ given by Theorem~\ref{thm:interpolation}, in Corollary~\ref{cor:weaker-assumption-G2} we extend the domain of $\GGamma_2$ to the whole $(\D_\infty)^3$ and we give an equivalent reformulation of  the $\BE K N$ condition for functions in this larger space.  Local and nonlinear characterizations of the $\BE K N$ condition for locally compact spaces are investigated  in  Section~\ref{subsec:localBE}: in Theorem~\ref{thm:localization}  we show that in  order to get the full $\BE  K N$  it is enough to check the validity of  \eqref{eq:Intro9} just for every  $f \in D_{\V}(\DeltaE)\cap D_{L^\infty}(\DeltaE)$ and  $\varphi\in D_{L^\infty} (\DeltaE)$ \emph{with compact support}, and in Theorem~\ref{thm:nonlinearBE} we give a new nonlinear characterization of the $\BE K N $ condition in terms of regular entropies,
 namely
\begin{equation}
    \label{eq:164_intro}
    \GGamma_2(f;P(\varphi))
    +\int_X R(\varphi) \,(\DeltaE f)^2\,\d\mm
    \ge K\int_X \Gq f\,P(\varphi)\,\d\mm.
  \end{equation}
This last formulation will be very convenient later in the work in order to make a bridge between the curvature of the space and  the  contraction properties of non linear diffusion semigroups.    

Chapter~\ref{sec:NDaction} is devoted to action estimates along a
nonlinear diffusion semigroup. The aim is to give a rigorous proof of
the crucial estimate briefly discussed in the formal calculations of
Example~\ref{ex:2}.  To this purpose, in Theorem~\ref{thm:crucial-estimate} we prove that if $\varrho_t$
(resp. $\varphi_t$)  is a  sufficiently regular solution to  the
nonlinear diffusion equation $\partial_t \varrho_t- \DeltaE P
(\rho_t)=0$ (resp.~to the backward 
linearized equation $\partial_t \varphi_t+P'(\varrho_t) \DeltaE \varphi_t=0$) then the map  
  $t\mapsto \cE_{\varrho_t}(\vph_t)=\int_X \rh_t\Gq{\vph_t}\,\d\mm$ 
  is absolutely continuous  and we have
  \begin{equation}
    \label{eq:Intro58}
    \frac\d{\d t}\frac 12\int_X \rh_t\Gq{\vph_t}\,\d\mm
    =\mathbf\Gamma_2(\vph_t;P(\rh_t))+
    \int_X R(\rh_t)(\DeltaE\vph_t)^2\,\d\mm\quad
    \text{$\Leb{1}$-a.e.  in } (0,T).
  \end{equation}
  Notice that this formula is exactly  the derivative of the hamiltonian  $\frac 12\int_X \rh_t\Gq{\vph_t}\,\d\mm$ along the nonlinear diffusion semigroup. It is  clear from \eqref{eq:164_intro} and \eqref{eq:Intro58} that the metric $\BE K N $ condition implies a lower bound on the derivative of the hamiltonian, more precisely in 
  Theorem~\ref{thm:BEKNaction} we show that  the metric $\BE K N $ condition implies
   \begin{equation}
      \label{eq:Intro91}
      \frac\d{\d t}\frac 12\int_X \rh_t\Gq{\vph_t}\,\d\mm\ge
      K \int_X P(\varrho_t)\Gq{\vph_t}\,\d\mm
      \qquad
      \text{$\Leb 1$-a.e.\ in }(0,T),
    \end{equation}
and its natural counterparts in terms of the potentials $\phi_t$ (introduced in Part II) associated to the curve $\varrho_t \mm$. 
The inequality \eqref{eq:Intro91} should be considered as the appropriate nonlinear
version of the \AAA Bakry-\'Emery inequality \cite{Bakry-Emery84} (see also  \cite{AGS12} for the non-smooth formulation, and 
\cite{Bakry-Ledoux06} for dimensional improvements) \fn for solutions $\varrho_t,\varphi_{T-t}$
to the \emph{linear}
Heat flow
   \begin{equation}
      \label{eq:kk}
      \frac\d{\d t}\frac 12\int_X \rh_t\Gq{\vph_{t}}\,\d\mm\ge
      K \int_X \varrho_t\Gq{\vph_{t}}\,\d\mm
      \qquad
      \text{$\Leb 1$-a.e.\ in }(0,T),
    \end{equation}
which characterizes the $\BE K\infty$ condition. In this case, due to the linearity of the
Heat flow and to the self-adjointness of the Laplace operator, 
the backward evolution $\varphi_t$ can be easily constructed
by using the time reversed Heat flow and it is independent of
$\varrho$. 

In the last Chapter~\ref{sec:EquivBERCDS} we combine all the estimates and tools in order  to prove the equivalence between metric $\BE K N$ and $\RCDS K N$. 
To this aim, in  Section~\ref{subsec:RegularCurves}  we show some technical lemmas about approximation of $W_2$-geodesics via regular curves and about regularization of entropies.  
Section~\ref{subsec:BEEVI} is devoted to the proof of Theorem \ref{cor:MAIN} stating that $\BE K N$ implies $\CDS K N$.  This is achieved by showing that the nonlinear diffusion semigroup associated to a regular entropy provides the unique solution of the Evolution Variational Inequality \eqref{eq:IntroEVIKN} which characterizes $\RCDS K N$.
In the same section we prove the facts of independent interest that   $\BE K N$ implies contractivity in $W_2$ of the nonlinear diffusion semigroup induced by a regular entropy (see Theorem~\ref{thm:BEKNContr}), and that $\BE K N$ implies monotonicity of the action $\cA_2$ computed on a curve which is moved by a nonlinear diffusion semigroup (see Theorem~\ref{thm:BEEVI1}). 
 \\The last Section~\ref{sec:RCDtoBE} is devoted to the proof of the
 converse implication, namely that  if $(X,\sfd, \mm)$ is an $\RCDS K
 N$ space then the Cheeger energy  satisfies $\BE K N$. The rough idea
 here is to differentiate the 2-action of an arbitrary $W_2$-curve
 along the nonlinear diffusion semigroup and use the arbitrariness of
 the curve to show that  this yields the nonlinear characterization
 \eqref{eq:164_intro} of $\BE K N$ obtained in
 Theorem~\ref{thm:nonlinearBE}. 
 {\AAA The perturbation technique used to generate a sufficiently large
 class of curves is similar to the one independently proposed by \cite{Bolley-Gentil-Guillin-Kuwada14}.

\smallskip
{\bf Acknowledgement.} The quality of this manuscript greatly improved after the revision made by the reviewer, whose
careful reading has been extremely helpful for the authors. 
\newpage
}

\centerline{\large {\bf Main notation}}
\medskip

\halign{#\hfil\qquad&#\hfil\cr
\text{$\cE(f,f)$, $\Gbil f g$}&\text{Symmetric Dirichlet form $\cE$
  and its Carr\'e du Champ, Sect.~\ref{subsec:Dir-Form}}\cr
\text{$\H$, $\V$}&\text{$L^2(X,\mm)$ and the domain of $\cE$ }\cr
\text{$\V_\infty$}&\text{$\V\cap L^\infty(X,\mm)$}\cr
\text{$\sfP$}&\text{Markov semigroup induced by $\cE$}\cr
\text{$\DeltaE$}&\text{Infinitesimal generator of $\sfP$}\cr
\text{$\D$}&\text{Domain of $\DeltaE$, \eqref{eq:defDD}}\cr
\text{$\D_\infty$}& $\D\cap L^\infty(X,\mm)$\cr
\text{$\Vhom 1$}&\text{Homogeneous space associated to $\cE$,
  Sect.~\ref{subsec:completion}}\cr
\text{$\cQ^*(\ell,\ell)$}&\text{Dual of a quadratic form $\cQ$, \eqref{eq:107}}\cr
\text{$\cE_\varrho(f,f)$, $\Gamma_\varrho(f,g)$}&\text{Weighted quadratic form and Carr\'e du Champ}\cr
\text{$\Vhom\varrho$}&\text{Abstract completion of the domain of $\cE_\varrho$}\cr
\text{$-A_\varrho^*$}&\text{Riesz isomorphism between $\V_\varrho'$ and $\V_\varrho$, \eqref{eq:112}}\cr
\text{$|\dot\gamma|$}&\text{metric velocity, or speed, Sect.~\ref{subsec:AC}}\cr
\text{$\AC p{[a,b]}{(X,\sfd)}$}&\text{$p$-absolutely continuous
  paths}\cr
\text{$\cA_p(\gamma)$}&\text{$p$-action of a path $\gamma$, \eqref{eq:21}}\cr
\text{$\Lip(X),\,\,\Lip_b(X)$}&\text{Lipschitz and bounded Lipschitz functions $f:X\to\R$}\cr
\text{$\Lip(f)$}&\text{Lipschitz constant of $f\in\Lip(X)$}\cr
\text{$|\rmD f|$, $|\rmD^\pm f|$, $|\rmD^* f|$}&\text{Slopes of $f$, \eqref{eq:5o}, \eqref{eq:18}}\cr
\text{$\sfQ_t$}&\text{Hopf-Lax semigroup, \eqref{eq:11o}}\cr
\text{$\mathscr B(X),\,\,\Probabilities{X}$}&\text{Borel sets and Borel probability measures in $X$}\cr
\text{$\Probabilitiesp X$}&\text{Probability measures with finite $p$-moment}\cr
\text{$\Probabilitiesac X\mm$}&\text{Absolutely continuous probability measures}\cr
\text{$W_p(\mu,\nu)$}&\text{$p$-Wasserstein extended distance in $\Probabilities{X}$}\cr
\text{$\rme_s$}&\text{Evaluation maps $\gamma\mapsto\gamma_s$ at time $s$}\cr
\text{$\cA_p(\ppi)$}&\text{$p$-action of $\ppi\in\Probabilities{\rmC([0,1];X)}$, \eqref{eq:18bis}}\cr
\text{$\mathrm{GeoOpt}(X)$}&\text{Optimal geodesic plans, \eqref{eq:178}}\cr
\text{$\C(f)$}&\text{Cheeger relaxed energy, \eqref{eq:14}}\cr
\text{$|\rmD f|_w$}&\text{Minimal weak gradient, \eqref{eq:53}}\cr
\text{$\Action{\wname}{\mu}\mm$}&\text{Weighted energy functional induced by $\wname$, \eqref{eq:17}}\cr
\text{$\Action{\wname}{\mu_0;\mu_1}\mm$}&\text{Weighted energy functional along a geodesic from $\mu_0$ to $\mu_1$}\cr
\text{$\sfg$}&\text{Green function on $[0,1]$, \eqref{eq:3}}\cr
\text{$\sigma_\kappa^{(t)}(\delta)$}&\text{Distorted convexity
  coefficients, \eqref{eq:170}}\cr
\text{$U$, $\mathcal U$}&\text{Entropy function and the induced
  entropy functional, \eqref{eq:40}, \eqref{eq:41}}\cr
\text{$P$}&\text{Pressure function induced by $U$, \eqref{eq:40}}\cr
\text{$\sfS$}&\text{Nonlinear diffusion semigroup associated to an
  entropy $U$}\cr
\text{$\MC N$}&\text{Entropies satisfying the
  $N$-dimensional McCann condition, Def.~\ref{def:McCann}}\cr
\text{$\CD K\infty, \CDS KN$}&\text{Curvature dimension conditions, Sect.~\ref{subsec:9.3}}\cr
\text{$\RCD K\infty$}&\text{Riemannian curvature dimension
  condition, Def.~\ref{def:RCDI}}\cr
\text{$\Gamma_2,\BE KN$}&\text{$\Gamma_2$ tensor and Bakry-\'Emery
  curvature dimension
  condition, Sect.~\ref{subsec:BE-Dirichlet}
}\cr
}

\newpage

\section{Contraction and convexity via Hamiltonian estimates: an heuristic argument}
\label{subsec:heu}

Let us consider a smooth Lagrangian $\cL:\R^d\times\R^d\to [0,\infty)$,
convex and $2$-homogeneous w.r.t. the second variable,
 which is the Legendre transform of
a smooth and convex Hamiltonian $\cH:\R^d\times(\R^d)^*\to [0,\infty)$, i.e.
\begin{equation}
  \label{eq:191}
  \cL(x,w)=\sup_{\varphi\in (\R^d)^*}\langle
  w,\varphi\rangle-\cH(x,\varphi),\qquad
  \cH(x,\varphi)=\sup_{w\in \R^d}\langle
  \varphi,w\rangle-\cL(x,w);
\end{equation}
We consider the cost functional
\begin{equation}
  \label{eq:2}
  \calC(x_0,x_1):=\inf\Big\{\int_0^1 \cL(x(s),\dot x(s))\,\d s:x\in
  \rmC^1([0,1];\R^d),\ x(i)=x_i,\ i=0,1\Big\}
\end{equation}
and the flow $\sfS_t:\R^d\to\R^d$ given by a smooth vector field
$\frak f:\R^d\to \R^d$, i.e.~$x(t)=\sfS_t (\bar x)$ is the solution of
\begin{equation}
  \label{eq:19}
  \frac\d\dt x(t)=\frak f(x(t)),\qquad x(0)=\bar x.
\end{equation}
We are interested in necessary and sufficient conditions for the contractivity of 
the cost $\calC$ under the action of the flow $\sfS_t$.

As a direct approach, for every solution $x$ of the ODE \eqref{eq:19} 
one can consider the
linearized equation
\begin{equation}
  \label{eq:20}
  \frac\d{\dt}w(t)= \mathrm D\frak f(x(t))w(t).
\end{equation}
It is well known that 
if $s\mapsto \bar x(s)$ is a smooth curve of initial data for \eqref{eq:19} and
$x(t,s):=\sfS_t \bar x(s)$ are the corresponding solutions, then
$\partial_s x(t,s)$ solves \eqref{eq:20} for all $s$, i.e.
\begin{equation}
  \label{eq:192}
  w(t,s):=\frac\partial{\partial s}x(t,s)\quad\text{satisfies}\quad
  \frac\partial{\partial t}w(t,s)=\mathrm D\frak f(x(t,s)) w(t,s),\quad
  w(0,s)=\dot{\bar x}(s).
\end{equation}

It is one of the basic tools of \cite{Otto-Westdickenberg05} 
to notice that $\sfS$ satisfies the contraction property
\begin{equation}
    \label{eq:115}
    \calC(\sfS_T \bar x_0,\sfS_T\bar x_1)\le 
    \calC( \bar x_0,\bar x_1)\quad\forevery \bar x_0,\bar x_1\in \R^d,\,\,T\geq 0
  \end{equation} 
if and only if for every solution $x$ of \eqref{eq:19} and every
solution $w$ of \eqref{eq:20} one has
\begin{equation}
  \label{eq:193}
  \frac \d{\dt} \cL(x(t),w(t))\le 0.
\end{equation}

As we will see in the next sections, in some situations it is easier to
deal with the Hamiltonian $\cH$ instead of the Lagrangian $\cL$.
In order to get a useful condition, we thus introduce
the backward transposed equation
\begin{equation}
  \label{eq:73}
  \frac\d{\dt}\varphi(t)=-\mathrm D\frak f(x(t))^\intercal\varphi(t).
\end{equation}
It is easy to check that $w'(t)=A(t)w(t)$ and $\varphi'(t)=-A(t)^\intercal \varphi(t)$ imply that the duality pairing $\langle w(t),\varphi(t)\rangle$
is constant. Hence, choosing $A(t)=\mathrm D\frak f(x(t))$ gives
\begin{equation}\label{eq:basic}
\text{$t\mapsto \langle w(t),\varphi(t)\rangle$ is constant, whenever $w$ solves \eqref{eq:20}, $\varphi$ solves \eqref{eq:73}.}
\end{equation}
In the next proposition we assume a mild coercitivity property on $\cL$, namely
$$
\cL(x,w)\geq \gamma(|x|)|w|^2\qquad\text{with}\qquad\lim_{R\to\infty}\int_0^R \sqrt{\gamma}(r)\,dr=\infty
$$
for some continuous function $\gamma:[0,\infty)\to (0,\infty)$.
Under this assumption, by differentiating the function $t\mapsto\int_{|x(0)|}^{|x(t)|}\sqrt{\gamma}(r)\,\d r$, it is easily seen that
\begin{equation}\label{eq:coerci_star}
\sup_n|x_n(0)|+\int_0^1 \cL(x_n(s),\dot{x}_n(s))\,\d s<\infty\quad\Longrightarrow\quad
\sup_n \max_{[0,1]}|x_n|<\infty.
\end{equation}

\begin{proposition}[Contractivity is equivalent to Hamiltonian monotonicity]
  \label{prop:easy-back}
  The flow $(\sfS_t)_{t\ge0}$ satisfies the contraction property
  \eqref{eq:115}   
  if and only if
  \begin{equation}
    \label{eq:187}
    \frac \d{\d t} \cH(x(t),\varphi(t))\ge 0\quad
    \text{whenever $x$ solves \eqref{eq:19} and $\varphi$ solves \eqref{eq:73}.}
  \end{equation}
\end{proposition}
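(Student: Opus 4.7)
The plan is to deduce the statement from the already recalled equivalence between the contraction property \eqref{eq:115} and the Lagrangian monotonicity \eqref{eq:193} (attributed to \textsc{Otto--Westdickenberg} \cite{Otto-Westdickenberg05}) by showing that \eqref{eq:193} and \eqref{eq:187} are in fact pointwise equivalent. The bridge between the two is Fenchel duality \eqref{eq:191} together with the conservation identity \eqref{eq:basic}, which says that for every $w$ solving \eqref{eq:20} and every $\varphi$ solving \eqref{eq:73} along the same reference curve $x(\cdot)$, the duality pairing $t\mapsto \langle w(t),\varphi(t)\rangle$ is constant.

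The core step is a duality argument performed at a single time. Fix a solution $x$ of \eqref{eq:19}. Given a solution $\varphi$ of \eqref{eq:73} and a time $t_0$, set $w_0:=\partial_\varphi \cH(x(t_0),\varphi(t_0))$ and let $w(\cdot)$ solve \eqref{eq:20} with $w(t_0)=w_0$. Consider the excess
\begin{equation*}
\Phi(t):=\cL(x(t),w(t))+\cH(x(t),\varphi(t))-\langle w(t),\varphi(t)\rangle.
\end{equation*}
Fenchel's inequality in \eqref{eq:191} gives $\Phi\ge 0$ on the interval of existence, and the choice of $w_0$ as the Legendre conjugate of $\varphi(t_0)$ at $x(t_0)$ yields $\Phi(t_0)=0$, so $t_0$ is a global minimum of $\Phi$. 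Since the last term in the definition of $\Phi$ is constant by \eqref{eq:basic}, the vanishing of $\Phi'(t_0)$ forces the key identity
\begin{equation*}
\frac\d\dt\bigg|_{t=t_0}\cL(x(t),w(t))=-\frac\d\dt\bigg|_{t=t_0}\cH(x(t),\varphi(t)).
\end{equation*}
Symmetrically, starting from an arbitrary $w$ solving \eqref{eq:20} and choosing $\varphi_0:=\partial_w\cL(x(t_0),w(t_0))$, the same construction produces a solution $\varphi$ of \eqref{eq:73} linked to $w$ by the identical identity at $t_0$.

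With this pointwise identity in hand, the equivalence \eqref{eq:193}$\Leftrightarrow$\eqref{eq:187} is immediate. Assuming \eqref{eq:193}, given any pair $(x,\varphi)$ and any $t_0$, the associated $w$ provides $\frac\d\dt\cH(x,\varphi)\big|_{t_0}=-\frac\d\dt\cL(x,w)\big|_{t_0}\ge 0$; since $t_0$ is arbitrary, \eqref{eq:187} holds. The converse starts from \eqref{eq:187}, selects for each $t_0$ the Legendre conjugate $\varphi_0\in\partial_w\cL(x(t_0),w(t_0))$ and runs the backward/forward integration of \eqref{eq:73} through $t_0$ to read off $\frac\d\dt\cL(x,w)\big|_{t_0}\le 0$. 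Combined with the \textsc{Otto--Westdickenberg} characterization this yields the proposition.

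The main obstacle is conceptual rather than computational: one must realize that pairing the given $\varphi$ with the \emph{specific} $w$ determined by the Legendre conjugate at a \emph{single} time $t_0$ is enough, precisely because the conservation law \eqref{eq:basic} turns Fenchel's inequality $\Phi\ge 0$ into a tight minimization identity at $t_0$. Smoothness of $\cL,\cH$ together with their $2$-homogeneity ensures that the Legendre maps $\partial_w\cL(x,\cdot)$ and $\partial_\varphi\cH(x,\cdot)$ are defined and mutually inverse, so no regularity issue obstructs the construction; the mild coercivity assumption \eqref{eq:coerci_star} preceding the proposition covers the global existence of the flow $\sfS_t$ along which the whole argument is carried out.
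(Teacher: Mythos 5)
Your argument is correct, but it follows a genuinely different route from the paper's. You reduce the Hamiltonian statement to the Lagrangian one: the excess function $\Phi(t)=\cL(x(t),w(t))+\cH(x(t),\varphi(t))-\langle w(t),\varphi(t)\rangle$ is nonnegative by \eqref{eq:191}, its last term is constant by \eqref{eq:basic}, and choosing the dual datum at a single time $t_0$ (namely $w(t_0)=\partial_\varphi\cH$, resp.\ $\varphi(t_0)=\partial_w\cL$, which exist by smoothness, so no attainment issue arises) makes $t_0$ a minimum of $\Phi$, forcing $\frac{\d}{\d t}\cL(x,w)\big|_{t_0}=-\frac{\d}{\d t}\cH(x,\varphi)\big|_{t_0}$; hence \eqref{eq:193} and \eqref{eq:187} are pointwise equivalent, and \eqref{eq:115} follows from the Otto--Westdickenberg equivalence quoted before the proposition. (Two cosmetic points: "mutually inverse Legendre maps" is more than you need or than strict convexity would even grant — only the Fenchel equality at the chosen point is used; and $\Phi'(t_0)=0$ needs $t_0$ interior, with endpoints recovered by continuity of the derivatives.) The paper instead proves both implications from scratch: \eqref{eq:187}$\Rightarrow$\eqref{eq:115} by taking a maximizing sequence $\bar\varphi_n$ in the dual representation of $\cL$ at the final time and transporting it backward via \eqref{eq:73} and \eqref{eq:basic} (the same duality mechanism as yours, in integrated, one-sided form), and \eqref{eq:115}$\Rightarrow$\eqref{eq:187} by first establishing the asymptotic expansion \eqref{eq:asymcost}, $\calC(x(0),x(\delta))/\delta^2\to\cL(x(0),\dot x(0))$, whose proof is exactly where the coercivity \eqref{eq:coerci_star} and the compactness of almost-minimizing curves enter, and then comparing $\cH$ at two times via \eqref{eq:237}. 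So your route buys brevity and a sharper pointwise identity linking the two monotonicities, but it outsources the analytic heart of the "contraction implies infinitesimal monotonicity" direction to the Lagrangian equivalence, which the paper only states without proof; if one demands a self-contained argument, the content of \eqref{eq:asymcost} (or an equivalent infinitesimal expansion of the cost) would still have to be supplied, and this is precisely what the paper's proof provides.
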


Notice that the monotonicity condition \eqref{eq:187} can be equivalently stated in differential form as
\begin{equation}
  \label{eq:194}
  \langle \cH_x(x,\varphi),\frak f(x)\rangle-\langle 
  \cH_\varphi(x,\varphi),
  \mathrm D\frak f(x(t))^\intercal\varphi
  \rangle\ge0\quad\forevery x\in \R^d,\
  \varphi\in (\R^d)^*.
\end{equation}
\begin{proof}
  Let us first prove that \eqref{eq:187} yields \eqref{eq:115}.
  Let $\bar x\in \rmC^1([0,1];\R^d)$ be a curve connecting $\bar x_0$
  to $\bar x_1$, let $x(t,s):=\sfS_t \bar x(s)$ and 
  $w(t,s):=\partial_s x(t,s)$. The thesis follows if we show that
  \begin{equation}
  \cL(x(T,s),w(T,s))\le \cL(\bar x(s),\dot{\bar x}(s))\quad
  \forevery s\in [0,1],\ T\ge0,\label{eq:195}
\end{equation}
since then
\begin{displaymath}
  \calC(x(T,0),x(T,1))\le 
  \int_0^1 \cL(x(T,s),w(T,s))\,\d s\le 
  \int_0^1 \cL(\bar x(s),\dot{\bar x}(s))\,\d s
\end{displaymath}
and it is sufficient to take the infimum of the right hand side
w.r.t.~all the curves connecting $\bar x_0$ to $\bar x_1$.

  For a fixed $s\in [0,1]$ and $T>0$ we consider a sequence 
  $\bar \varphi_n(s)$ such that 
  \begin{equation}
    \label{eq:196}
    \cL(x(T,s),w(T,s))=\lim_{n\to\infty} \langle
    w(T,s),\bar\varphi_n(s)\rangle -\cH(x(T,s),\bar\varphi_n(s)),
  \end{equation}
  and we consider the solution $\varphi_n(t,s)$ of the backward differential equation
  with terminal condition
  \begin{displaymath}
    \frac\partial{\partial
      t}\varphi_n(t,s)=-\mathrm D\frak f(x(t,s))^\intercal\varphi_n(t,s),\quad 0\le t\le T,\quad
    \varphi_n(T,s)=\bar\varphi_n(s).
  \end{displaymath}
  By \eqref{eq:192} and \eqref{eq:basic} we get $
    \langle w(T,s),\bar\varphi_n(s)\rangle=
    \langle w(0,s),\varphi(0,s)\rangle$.
   In addition we can use the monotonicity assumption \eqref{eq:187} to get
  \begin{displaymath}
    \cH(x(T,s),\bar\varphi_n(s))\ge
    \cH(\bar x(s),\varphi_n(0,s)).
  \end{displaymath}
  It follows that 
  \begin{eqnarray*}
    \langle
    w(T,s),\bar\varphi_n(s)\rangle -\cH(x(T,s),\bar\varphi_n(s))&\le& 
    \langle
    w(0,s),\varphi_n(0,s)\rangle -\cH(\bar x(s),\varphi_n(0,s))\\&\le& 
    \cL(\bar x(s),w(0,s))
  \end{eqnarray*}
  and passing to the limit as $n\to\infty$, by \eqref{eq:196} we get
  \eqref{eq:195} since $w(0,s)=\dot{\bar x}(s)$.

  In order to prove the converse implication,
  let us first prove the asymptotic formula
  \begin{equation}\label{eq:asymcost}
  \lim_{\delta\downarrow 0}\frac{\calC (x(0),x(\delta))}{\delta^2}=\cL(x(0),\dot x(0))
  \end{equation}
  for any curve $s\mapsto x(s)$ right differentiable at $0$. Indeed, 
  notice first that the inequality
  $$
  \limsup_{\delta\downarrow 0}\frac{\calC (x(0),x(\delta))}{\delta^2}\leq\cL(x(0),\dot x(0))
  $$
  immediately follows considering an affine function connecting $x(0)$ and $x(\delta)$.
  In order to get the $\liminf$ inequality, notice
  that for any curve $s\mapsto y(s)$ and any vector $\varphi$ one has
  \begin{eqnarray*}
    \int_0^1 \cL(y(s),\dot y(s))\,\d s&\ge&
    \int_0^1
    \Big(\langle \dot y(s),\varphi\rangle-
    \cH(y(s),\varphi)\Big)\,\d s
    \\&=&
       \langle y(1)-y(0),\varphi\rangle-\int_0^1 \cH(y(s),\varphi)\,\d s,
  \end{eqnarray*}
  so that choosing an almost (up to the additive constant $\delta^3$) 
  minimizing curve $y=x_\delta:[0,1]\to\R^d$ connecting $x(0)$ to $x(\delta)$ and replacing
  $\varphi$ by $\delta \varphi$ with $\delta\in (0,1)$,
  the $2$-homogeneity of $\cH$ yields
  \begin{displaymath}
   \delta+ \frac{\calC (x(0),x(\delta))}{\delta^2}\ge 
    \left\langle\frac {x(\delta)-x(0)}\delta,\varphi\right\rangle-
    \int_0^1 \cH(x_\delta(s),\varphi)\,\d s.
  \end{displaymath}
  Since \eqref{eq:coerci_star} provides the relative compactness of $x_\delta$ in $\rmC ([0,1];\R^d)$ and since
  $\gamma>0$, it is easily seen that $x_\delta$ uniformly converge to the constant $x(0)$ as $\delta\downarrow 0$.
  Therefore, passing to the limit as $\delta\downarrow 0$ we get
  \begin{equation*}
    \liminf_{\delta\down0}\frac{\calC (x(0),x(\delta))}{\delta^2}\ge 
    \left\langle \dot x(0),\varphi\right\rangle-
    \cH(x(0),\varphi)
  \end{equation*}
  and eventually we can take the supremum w.r.t. $\varphi$ to obtain \eqref{eq:asymcost}.
  
  If \eqref{eq:115} holds and $x(t)$ and $\varphi(t)$ are solutions to \eqref{eq:19} and \eqref{eq:73} respectively,
we fix $t_0\ge0$ and $w\in \R^d$ such that 
  \begin{equation}
    \label{eq:237}
    \cH(x(t_0),\varphi(t_0))=\langle w,\varphi(t_0)\rangle-
    \cL(x(t_0),w).
  \end{equation}
  We then consider 
  the curve $s\mapsto x(t_0)+sw$ and we set $x(t,s)=\sfS_{t-t_0}(x(t_0)+sw)$,
  so that $w(t)=\partial_s x(t,s)\restr{s=0}$ is a solution of 
  \eqref{eq:20} with Cauchy condition $w(t_0)=w$.
  For $t>t_0$ we can use twice \eqref{eq:asymcost} 
  and \eqref{eq:basic} once more to obtain
  \begin{eqnarray*}
    \cH(x(t),\varphi(t))&\ge& \langle\varphi(t),w(t)\rangle-
    \lim_{\delta\down0} \frac{\calC(x(t,0),x(t,\delta))}{\delta^2}
    \\&\topref{eq:115}\ge&
    \langle\varphi(t_0),w\rangle-
    \lim_{\delta\down0} \frac{\calC(x(t_0,0),x(t_0,\delta))}{\delta^2} 
    \\&=& \langle\varphi(t_0),w\rangle- \cL(x(t_0),w)
    \topref{eq:237}=\cH(x(t_0),\varphi(t_0)).
  \end{eqnarray*}
\end{proof}

We can refine the previous argument to gain further insights when
$\cH, \,\cL$ are quadratic forms and $\frak f$ is the gradient of a
potential $U$. More precisely, we will suppose that 
\begin{equation}
  \label{eq:197}
  \cL(x,w)=\frac 12 \langle\sfG(x)w,w\rangle,\quad
  \cH(x,\varphi)=\frac 12 \langle \varphi,\sfH(x)\varphi\rangle,\quad
  \sfH(x)=\sfG(x)^{-1},
\end{equation}
and $\sfG(x)$ are symmetric and positive definite linear maps from
$\R^d$ to $(\R^d)^*$, smoothly depending on $x\in \R^d$.
The vector field $\mathfrak f$ is the (opposite) gradient of $U:\R^d\to \R$ with respect to 
the metric induced by
$G$ if
\begin{equation}
  \label{eq:198}
  \langle \sfG(x)\frak f(x) ,w\rangle=\langle -\rmD U(x),w\rangle\quad
  \forevery w\in \R^d,\quad\text{i.e.\ }\frak f(x)=-\sfH(x)\rmD U(x).
\end{equation}
In \cite{Daneri-Savare08} it is shown that 
$U$ is geodesically convex along the distance induced by the cost
$\calC$ if and only if 
\begin{equation}
  \label{eq:199}
\calC(\bar x_0,\sfS_t \bar x_1)+t \Big(U(\sfS_t(\bar x_1))- U(\bar x_0)\Big)\le 
\calC(\bar x_0,\bar x_1)
\quad\forevery \bar x_0,\,\bar x_1\in \R^d,\ t\ge0.
\end{equation}
Here is a simple argument to deduce \eqref{eq:199} from
\eqref{eq:187}.

\begin{lemma}
  \label{le:2ndeasyarg}
  Let $\cL$, $\cH$, $\frak f$ be given by \eqref{eq:197} and \eqref{eq:198}. 
  Then \eqref{eq:187} yields \eqref{eq:199}.
\end{lemma}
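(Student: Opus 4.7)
The plan is to build an explicit competitor curve from $\bar x_0$ to $\sfS_t\bar x_1$ by ``shearing'' an almost optimal curve for $\calC(\bar x_0,\bar x_1)$ along the flow $\sfS$. Fix $\eps>0$ and choose $x_0\in\rmC^1([0,1];\R^d)$ with $x_0(i)=\bar x_i$, $i=0,1$, such that $\int_0^1\cL(x_0(s),\dot x_0(s))\,\d s\le \calC(\bar x_0,\bar x_1)+\eps$. Set $y(s):=\sfS_{ts}(x_0(s))$, so that $y(0)=\bar x_0$, $y(1)=\sfS_t\bar x_1$ and $y$ is admissible in \eqref{eq:2}, giving $\calC(\bar x_0,\sfS_t\bar x_1)\le \int_0^1 \cL(y(s),\dot y(s))\,\d s$. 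By the chain rule $\dot y(s)=t\,\frak f(y(s))+w(s)$, where $w(s):=(\rmD_x\sfS_{ts})(x_0(s))\dot x_0(s)$ is the linearized image of $\dot x_0(s)$ under the flow.

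The heart of the computation is the algebraic expansion of $\cL(y,\dot y)$ using the quadratic forms \eqref{eq:197} and the gradient relation \eqref{eq:198}. Since $\sfG\frak f=-\rmD U$, one has $\cL(y,\frak f)=\tfrac12\la \rmD U,\sfH\rmD U\ra=\cH(y,\rmD U)$ and $\la\sfG\frak f,w\ra=-\la \rmD U,w\ra$, so that
\begin{equation*}
\cL(y,\dot y)=t^2\,\cH(y,\rmD U(y))-t\la\rmD U(y),w\ra+\cL(y,w).
\end{equation*}
The mixed term is eliminated by integration in $s$: since $\partial_s U(y(s))=\la\rmD U(y),t\,\frak f+w\ra=-2t\,\cH(y,\rmD U(y))+\la\rmD U(y),w\ra$, the fundamental theorem of calculus gives
\begin{equation*}
\int_0^1\la\rmD U(y),w\ra\,\d s=U(\sfS_t\bar x_1)-U(\bar x_0)+2t\int_0^1\cH(y,\rmD U(y))\,\d s.
\end{equation*}
Substituting back and collecting terms yields the clean identity
\begin{equation*}
\int_0^1\cL(y,\dot y)\,\d s+t\bigl(U(\sfS_t\bar x_1)-U(\bar x_0)\bigr)=\int_0^1\cL(y,w)\,\d s-t^2\int_0^1\cH(y,\rmD U(y))\,\d s.
\end{equation*}

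To finish I will discard the nonpositive term $-t^2\int_0^1\cH\,\d s$ and control $\int_0^1\cL(y,w)\,\d s$ via the Lagrangian monotonicity \eqref{eq:193}. This is where the hypothesis enters: by Proposition~\ref{prop:easy-back} the assumption \eqref{eq:187} gives the contraction property \eqref{eq:115}, which by the Otto--Westdickenberg equivalence recalled just before the proposition is in turn equivalent to \eqref{eq:193}. Applied pointwise in $s$ to $\tau\mapsto \sfS_\tau(x_0(s))$, a solution of \eqref{eq:19}, paired with $\tau\mapsto(\rmD_x\sfS_\tau)(x_0(s))\dot x_0(s)$, the corresponding solution of \eqref{eq:20}, monotonicity from $\tau=0$ to $\tau=ts\ge0$ gives $\cL(y(s),w(s))\le\cL(x_0(s),\dot x_0(s))$. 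Integrating in $s$ and sending $\eps\downarrow 0$ produces \eqref{eq:199}.

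The computation is essentially routine once the shear $y(s)=\sfS_{ts}(x_0(s))$ has been identified as the correct competitor; I do not expect a real obstacle. The conceptual point to double-check is the cancellation that produces $-t\bigl(U(\sfS_t\bar x_1)-U(\bar x_0)\bigr)$: it is precisely the gradient structure $\frak f=-\sfH\rmD U$ combined with the $2$-homogeneity of $\cL$ in the velocity that turns the cross term $t\la\sfG\frak f,w\ra$ into (minus) a total $s$-derivative of $U\circ y$, up to the manifestly nonpositive remainder $-t^2\int_0^1\cH(y,\rmD U(y))\,\d s$.
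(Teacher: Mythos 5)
Your proof is correct, but the mechanism in the final step differs from the paper's. Both arguments build the same sheared competitor $s\mapsto \sfS_{ts}(\bar x(s))$ and use the decomposition $\dot y=t\,\frak f(y)+w$ with $w$ the linearized image of $\dot{\bar x}(s)$ (the paper's \eqref{eq:formulaxw}); the difference is how the gradient structure and the monotonicity hypothesis are exploited. You expand $\cL(y,\dot y)$ directly as a quadratic form, turn the cross term into a total $s$-derivative of $U\circ y$ plus the manifestly nonpositive remainder $-t^2\int\cH(y,\rmD U(y))\,\d s$, and then bound $\int\cL(y(s),w(s))\,\d s$ by the initial action using the Lagrangian monotonicity \eqref{eq:193}, which you import via Proposition~\ref{prop:easy-back} (\eqref{eq:187}$\Rightarrow$\eqref{eq:115}) and the Otto--Westdickenberg equivalence \eqref{eq:115}$\Leftrightarrow$\eqref{eq:193} that the paper only recalls without proof. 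The paper instead stays entirely on the dual side: it rewrites the pointwise integrand by a shifted Legendre transform (see \eqref{eq:2014_1}), cancels the cross term inside the supremum, and then applies the Hamiltonian monotonicity \eqref{eq:187} directly along backward adjoint solutions together with the pairing invariance \eqref{eq:basic}, so the argument is self-contained and never uses the Lagrangian estimate --- which matters for the paper's program, since in the later nonsmooth setting only the Hamiltonian/dual objects are available. Your route buys a more elementary algebraic computation at the price of the cited equivalence; note that you could avoid that detour entirely, since the pointwise bound $\cL(y(s),w(s))\le\cL(x_0(s),\dot x_0(s))$ you need is exactly \eqref{eq:195}, established inside the proof of Proposition~\ref{prop:easy-back} directly from \eqref{eq:187}.
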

\begin{proof}
  Let us consider a curve $\bar x(s)$ connecting $\bar x_0$ to $\bar
  x_1$ and let us set 
   \begin{displaymath}
    y(t,s):=\sfS_{t} (\bar x(s)),\quad
    x(t,s):=y(st,s),\quad
    z(t,s):=\frac\partial{\partial s}y(t,s),\quad
    w(t,s):=\frac\partial{\partial s}x(t,s),
  \end{displaymath}
  so that \eqref{eq:198} gives
   \begin{equation}\label{eq:formulaxw}
    w(t,s)=z(st,s)+t\mathfrak f(x(t,s))=
    z(st,s)-t\sfH(x(t,s))\rmD U(x(t,s)).
  \end{equation}
  Clearly for every $t\ge0$ the curve $s\mapsto x(t,s)$ connects
  $\bar x_0$ to $\sfS_t\bar x_1$ and therefore
  \begin{displaymath}
    \calC(\bar x_0,\sfS_t \bar x_1)\le 
    \int_0^1 \cL(x(t,s),w(t,s))\,\d s,\quad
    U(\sfS_t\bar x_1)-U(\bar x_0)=
    \int_0^1 \langle \rmD U(x(t,s)),w(t,s)\rangle\,\d s.
  \end{displaymath}
  It follows that 
  \begin{displaymath}
    \calC(\bar x_0,\sfS_t \bar x_1)+t \Big(U(\sfS_t(\bar x_1))- U(\bar
    x_0)\Big)
    \le \int_0^1 \Big(\cL(x(t,s),w(t,s))+t \langle \rmD
    U(x(t,s)),w(t,s)\rangle\Big)\,\d s.
  \end{displaymath}
  For a fixed $s\in [0,1]$ the integrand satisfies
  \begin{eqnarray}\label{eq:2014_1}
    &&\cL(x(t,s),w(t,s))+t \langle \rmD
    U(x(t,s)),w(t,s)\rangle\\&=&\nonumber
    \sup_{\psi\in (\R^d)^*} \langle \psi+t\rmD
    U(x(t,s)),w(t,s)\rangle-\cH(x(t,s),\psi)
    \\&=&\nonumber
    \sup_{\varphi\in (\R^d)^*}
    \langle \varphi,w(t,s)\rangle-\cH(x(t,s),\varphi-t\rmD U (x(t,s))).
  \end{eqnarray}
  Substituting the expression \eqref{eq:formulaxw} and recalling \eqref{eq:197} we get
  \begin{align*}
    \langle &\varphi,w(t,s)\rangle-\cH(x(t,s),\varphi-t\rmD U
    (x(t,s)))
    \\&=
    \langle \varphi,z(st,s)\rangle-
    t\langle \varphi,\sfH(x(t,s))\rmD U
    (x(t,s))\rangle-\cH(x(t,s),\varphi-t\rmD U (x(t,s)))
    \\&= 
    \langle \varphi,z(st,s)\rangle-
    \cH(x(t,s),\varphi)
    -
    t^2\cH(x(t,s),\rmD U (x(t,s)))
    \le \langle \varphi,z(st,s)\rangle-
    \cH(x(t,s),\varphi).
  \end{align*}
  Choosing now an arbitrary curve $\varphi(s)$ and solutions
  $\varphi(\tau,s)$ of 
   \begin{displaymath}
     \frac\partial{\partial
      \tau}\varphi(\tau,s)=-\rmD \frak f(y(\tau,s))^\intercal\varphi(\tau,s),\quad 0\le \tau\le st,\quad
    \varphi(st,s)=\varphi(s)
  \end{displaymath}
  we can use the monotonicity assumption and \eqref{eq:basic} to obtain
  \begin{align*}
    \langle \varphi(s),z(st,s)\rangle-
    \cH(x(t,s),\varphi(s))&\le 
    \langle \varphi(0,s),z(0,s)\rangle-
    \cH(x(0,s),\varphi(0,s))\\&=
    \langle \varphi(0,s),w(0,s)\rangle-
        \cH(x(0,s),\varphi(0,s))\\&\le 
    \cL(\bar x(s),w(0,s)).
  \end{align*}
  Since $\varphi(s)$ is arbitrary and $w(0,s)=\dot{\bar x}(s)$, considering a maximizing sequence
  $(\varphi_n(s))$ in \eqref{eq:2014_1} we eventually get
  \begin{displaymath}
    \calC(\bar x_0,\sfS_t \bar x_1)+t \Big(U(\sfS_t(\bar x_1))- U(\bar
    x_0)\Big)
    \le \int_0^1  \cL(\bar x(s),\bar x'(s))\,\d s
  \end{displaymath}
  and taking the infimum w.r.t.~the initial curve $\bar x$ we conclude.
\end{proof}

In order to understand how to apply the previous
arguments for studying contraction and convexity in Wasserstein space,
let us consider two basic examples. 
For simplicity we will consider the case
of a compact Riemannian manifold $(\M^d,\sfd,\mm)$ endowed
with the
distance and measure associated to the Riemannian metric tensor $\frg$.

\begin{example}[The Bakry-\'Emery condition for the linear heat
  equation]
  \label{ex:1}
  \upshape
In the subspace of smooth probability densities
  (identified with the corresponding measures) 
  the Wasserstein distance cost $\calC(\varrho_0,\varrho_1)=
  \frac 12W_2^2(\varrho_0\mm,\varrho_1\mm)$ is
  naturally associated to the Hamiltonian
  \begin{equation}
    \label{eq:200}
    \cH(\varrho,\varphi):=\frac12\int_X |\rmD\varphi|_\frg^2\varrho\,\d\mm:
  \end{equation}
  in fact the Otto-Benamou-Brenier interpretation yields
  \begin{equation}
    \label{eq:114}
    \calC(\varrho_0,\varrho_1)=\inf\Big\{\int_0^1
    \cL(\varrho_s,\dot\varrho_s)\,\d s:\ 
    s\mapsto \varrho_s\text{ connects $\varrho_0$ to $\varrho_1$}\Big\},
  \end{equation}
  where
  \begin{equation}
    \label{eq:201}
    \cL(\varrho,w):=\frac 12 \int_X
    |\rmD \varphi|^2_\frg\varrho\,\d\mm,\qquad
    -\mathrm{div}_\frg (\varrho \nabla\kern-2pt_\frg\varphi)=w,
  \end{equation}
  i.e.
  \begin{equation}
    \label{eq:202}
    \cL(\varrho,w)=\sup_{\varphi} \, \int_X \varphi\,w\,\d\mm-\frac 12 \int_X
    |\rmD\varphi|^2_\frg\varrho\,\d\mm.
  \end{equation}
  In other words, the cotangent bundle is associated to the velocity gradient $\nabla\kern-2pt_\frg\varphi$ and the duality between
  tangent and cotangent bundle is provided by the possibly degenerate elliptic PDE 
  \begin{equation}\label{eq:deg_PDE}
  -\mathrm{div}_\frg (\varrho \nabla\kern-2pt_\frg \varphi)=w.
  \end{equation}
  If we consider the logarithmic entropy 
  functional $\cU_\infty(\varrho):=\int_X \varrho\log\varrho\,\d\mm$,
  then its Wasserstein gradient flow corresponds to the linear differential
  equation
  \begin{equation}
    \label{eq:203}
    \frac \d{\dt}\varrho=\Delta_\frg\varrho
  \end{equation}
  and thus  the backward equation \eqref{eq:73} for $\varphi$ corresponds to
  \begin{equation}
    \label{eq:204}
    \frac \d{\dt}\varphi=-\Delta_\frg\varphi.
  \end{equation}
  Evaluating the derivative of the Hamiltonian one gets
  \begin{align*}
    \frac \d{\dt}\cH(\varrho_t,\varphi_t)&=
    \frac 12\frac \d{\dt}\int_X \varrho_t|\rmD\varphi_t|^2_\frg\,\d\mm=
    \frac 12\int_X \Delta_\frg\varrho_t|\rmD\varphi_t|^2_\frg\,\d\mm+
    \int_X \varrho_t \langle \rmD \varphi_t,\rmD
    (-\Delta_\frg\varphi_t)\rangle_\frg\,\d\mm
    \\&=\int_X \varrho_t\Big(\frac 12 \Delta_\frg
    |\rmD\varphi_t|^2_\frg-
    \langle \rmD \varphi_t,\rmD
    \Delta_\frg\varphi_t\rangle_\frg\Big)\,\d\mm.
  \end{align*}
  Since $\varrho\ge0$ and $\varphi$ are arbitrary, \eqref{eq:187} 
  corresponds to 
  the Bakry-\'Emery  $\BE 0\infty$ condition 
  $$
  \Gamma_2(\varphi):=\frac 12 \Delta_\frg 
    |\rmD\varphi_t|^2_\frg-
    \langle \rmD \varphi_t,\rmD
    \Delta_\frg\varphi_t\rangle_\frg
  \ge0\qquad\text{ for every
  $\varphi$}.
  $$
  
  It is remarkable that the above calculations correspond to the 
  Bakry-Ledoux \cite{Bakry-Ledoux06} derivation of the $\Gamma_2$ tensor: if $\sfP_t$
  denotes the heat flow associated to \eqref{eq:203}, it is well known
  that 
  \begin{displaymath}
    \Gamma_2\ge0\quad\Longleftrightarrow\quad
    \frac\d{\d s }\Big(\sfP_s |\rmD \sfP_{t-s}\varphi|^2_\frg\Big)\ge0
  \end{displaymath}
  or, in the integrated form,
  \begin{displaymath}
    \frac12\frac\d{\d s }\int_X \sfP_s \varrho\, |\rmD
    \sfP_{t-s}\varphi|_\frg^2\,\d\mm=
    \frac\d{\d s }\cH(\sfP_s \varrho,
    \sfP_{t-s}\varphi)
    \ge0
    \quad\forevery \varphi,\varrho\ge0.
  \end{displaymath}
  Thus the combination of the forward flow $\sfP_s\varrho_s$ and the backward
  flow $\sfP_{t-s}\varphi$ in the derivation of $\Gamma_2$ tensor
  corresponds to the Hamiltonian monotonicity \eqref{eq:187}.  
\end{example}

\begin{example}[The Bakry-\'Emery condition for nonlinear diffusion]
  \label{ex:2}
  \upshape
  If we apply the previous argument to the entropy functional
  $\cU(\varrho)=\int_X U(\varrho)\,\d\mm$, we are led to study the 
  nonlinear diffusion equation
  \begin{equation}
    \label{eq:205}
    \frac \d\dt \varrho=\Delta_\frg P(\varrho)\qquad\text{with}\qquad
    P(\varrho):=\varrho U'(\varrho)-U(\varrho).
  \end{equation}
  The corresponding linearized backward transposed flow is
  \begin{equation}
    \label{eq:206}
    \frac\d\dt \varphi=-P'(\varrho)\Delta_\frg \varphi
  \end{equation}
  and, setting $R(\varrho):=\varrho P'(\varrho)-P(\varrho)$, we get
   \begin{align*}
    & \frac \d{\dt}\cH(\varrho_t,\varphi_t)\\&=
    \frac 12\frac \d{\dt}\int_X \varrho_t|\rmD\varphi_t|^2_\frg\,\d\mm\\&=
    \frac 12\int_X \Delta_\frg P(\varrho_t)|\rmD\varphi_t|^2_\frg\,\d\mm-
    \int_X \varrho_t \langle \rmD \varphi_t,\rmD
    (P'(\varrho_t)\Delta_\frg\varphi_t)\rangle_\frg\,\d\mm
    \\&=\int_X P(\varrho_t)\Gamma_2(\varphi_t)\,\d\mm+
    \int_X P(\varrho_t)\langle\rm D\varphi_t,\rm D\Delta_\frg\varphi_t\rangle_\frg
    -\int_X \varrho_t \langle \rmD \varphi_t,\rmD
    (P'(\varrho_t)\Delta_\frg\varphi_t)\rangle_\frg\,\d\mm
    \\&=\int_X P(\varrho_t)\Gamma_2(\varphi_t)\,\d\mm+\int_X \bigl(-P(\varrho_t)+\varrho_tP'(\varrho_t)\bigr)\big(\Delta_\frg\varphi_t\big)^2\,\d\mm
    -\int_X\Delta_\frg\varphi_t\langle\rm D \varphi_t,\rm D P(\varrho_t)\rangle_\frg\,\d\mm\\
    &+\int_XP'(\varrho_t)\Delta_\frg\varphi_t\langle\rm D \varphi_t,\rm D\varrho_t\rangle_\frg\,\d\mm
        \\&=\int_X P(\varrho_t)\Gamma_2(\varphi_t)\,\d\mm
    +\int_X R(\varrho_t)\big(\Delta_\frg\varphi_t\big)^2\,\d\mm.
  \end{align*}
  If $U$ satisfies McCann's condition $\MC N$, so that
  $R(\varrho)\ge -\frac 1N P(\varrho)$, and 
  the Bakry-\'Emery condition $\BE 0N$ holds, so that 
  $\Gamma_2(\varphi)\ge \frac 1N (\Delta_\frg\varphi)^2$, we still get 
  $\frac\d\dt \cH(\varrho,\varphi)\ge0$.
\end{example}

\begin{example}[Nonlinear mobilities]
  \upshape
  As a last example, consider \cite{Dolbeault-Nazaret-Savare08b}  the case of an Hamiltonian 
  associated to a nonlinear positive mobility $h$
  \begin{equation}
    \label{eq:208}
    \cH(\varrho,\varphi):=\frac 12\int_X h(\varrho)|\rmD\varphi|_\frg^2\,\d\mm
  \end{equation}
  under the action of the linear heat flows 
  \eqref{eq:203} and \eqref{eq:204}: with computations similar to those of the previous examples we get
   \begin{align*}
     \frac \d{\dt}&\cH(\varrho_t,\varphi_t)=
    \frac 12\frac \d{\dt}\int_X h(\varrho_t)|\rmD\varphi_t|^2_\frg\,\d\mm\\&=
    \frac 12\int_X h'(\varrho_t)\Delta_\frg \varrho_t|\rmD\varphi_t|^2_\frg\,\d\mm-
    \int_X h(\varrho_t) \langle \rmD \varphi_t,\rmD
    \Delta_\frg\varphi_t)\rangle_\frg\,\d\mm
    \\&=
    \frac 12\int_X \Delta_\frg( h(\varrho_t))|\rmD\varphi_t|^2_\frg\,\d\mm-
    \int_X h(\varrho_t) \langle \rmD \varphi_t,\rmD
    \Delta_\frg\varphi_t)\rangle_\frg\,\d\mm
    -
    \frac 12 \int_X h''(\varrho_t)|\rmD \varrho_t|_\frg^2\,|\rmD\varphi_t|_\frg^2\,\d\mm
    \\&=
    \int_X h(\varrho_t)\Gamma_2(\varphi_t)\,\d\mm
    -
    \frac 12 \int_X h''(\varrho_t)|\rmD \varrho_t|_\frg^2\,|\rmD\varphi_t|_\frg^2\,\d\mm.
  \end{align*}
  If $h$ is concave and the Bakry-\'Emery condition $\BE 0\infty$
  holds, 
  \GGG we still have
  $\frac\d\dt \cH(\varrho,\varphi)\ge0$.
  According to Proposition \ref{prop:easy-back}, 
  this property formally corresponds to the contractivity
  of the $h$-weighted Wasserstein distance $\mathcal W_h$ 
  associated to the Hamiltonian \eqref{eq:208}
  along the Heat flow, a property that has been
  proved in \cite[Theorem 4.11]{Carrillo-Lisini-Savare-Slepcev09}
  by a different method.
  \EEE
\end{example}

\part{Nonlinear diffusion equations 
and their linearization in Dirichlet spaces}

\section{Dirichlet forms, homogeneous spaces
  and nonlinear diffusion}

\subsection{Dirichlet forms}
\label{subsec:Dir-Form}

In all this first part we will deal with a measurable space $(X,\cB)$, which is complete with respect to 
a $\sigma$-finite measure $\mm:\cB\to[0,\infty]$.  {\color{black} We denote by $\H$ the Hilbert space
$L^2(X,\mm)$ and we are given}  a symmetric Dirichlet form 
$\cE:\H=L^2(X,\mm)\to [0,\infty]$ 
(see e.g.~\cite{Bouleau-Hirsch91} as a general reference)
with proper domain 
\begin{equation}
  \label{eq:253}
  \V=D(\cE):=\big\{f\in L^2(X,\mm):\cE(f)<\infty\big\},\quad
  \text{with}\quad
  \V_\infty:=\V\cap L^\infty(X,\mm).
\end{equation}
$\V$ is a Hilbert space endowed with the norm
\begin{equation}
  \label{eq:261}
  \|f\|^2_\V:=\|f\|^2_{L^2(X,\mm)}+\cE(f);
\end{equation}
the inclusion of $\V$ in $\H$ is always continuous and
we will assume that it is also dense (we will write
$\V\stackrel{ds}\hookrightarrow\H$);
we will still denote by $\cE(\cdot,\cdot):\V\to\R$ the symmetric
bilinear form associated to $\cE$. 
Identifying $\H$ with its dual $\H'$, $\H$ is also continuously and densely
imbedded in the dual space $\V'$, so that
\begin{equation}
  \label{eq:266}
  \V\stackrel{ds}\hookrightarrow\H\equiv \H'\stackrel{ds}\hookrightarrow\V'\quad\text{is a standard Hilbert triple}
\end{equation}
and we have
\begin{equation}
  \label{eq:262}
  \duality{}{}fg=\int_X fg\,\d\mm\quad\text{whenever }f\in \H,\,
  g\in \V
\end{equation}
where $\duality{}{}\cdot\cdot=\duality{\V'}\V\cdot\cdot$
denotes the duality pairing between $\V'$ and $\V$,
when there will be no risk of confusion.

The locality of $\cE$ and the $\Gamma$-calculus are not needed
at this level: they will play a crucial role
in the next parts. On the other hand, 
we will repeatedly use the following properties of
Dirichlet form: 
\begin{subequations}
  \label{subeq:dir}
  \begin{equation}
    \label{eq:Dir-Alg}
    \V_\infty\text{ is an algebra,}\quad
    \cE^{1/2}(fg)\le \|f\|_\infty\cE^{1/2}(g)+
    \|g\|_\infty\cE^{1/2}(f)
    \quad f,\,g\in \V_\infty,
    \tag{DF1}
  \end{equation}
  \begin{equation}
    \label{eq:Dir-Lip}
    \begin{gathered}
      \text{if $P:\R\to\R$ is $L$-Lipschitz with $P(0)=0$ then the map
        $f\mapsto P\circ f$}\\
      \text{is well defined and continuous from $\V$ to $\V$
        with}\quad \cE(P\circ f)\le L^2\,\cE(f),
    \end{gathered}
    \tag{DF2}
  \end{equation}
  \begin{equation}
    \label{eq:Dir-Mon}
    \begin{gathered}
      \text{if $P_i:\R\to\R$ are Lipschitz and nondecreasing with
        $P_i(0)=0$, $i=1,2$, then}\\
      \cE(P_1\circ f,P_2\circ f)\ge 0\quad \forevery f\in \V.
    \end{gathered}
    \tag{DF3}
  \end{equation}
\end{subequations}
We will denote by $-\DeltaE$ the
linear monotone operator induced by $\cE$,
\begin{equation}
  \label{eq:265}
  \DeltaE:\V\to\V',\quad
  \duality{}{}{-\DeltaE f}g:=\cE(f,g)\quad\forevery f,g\in \V,
\end{equation}
satisfying
\begin{equation}\label{eq:599}
  \big|\duality{}{}{\DeltaE f}g\big|^2\le \cE(f,f)\,\cE(g,g)
  \quad\forevery f,\,g\in \V,
\end{equation}
and by $\D$ the Hilbert space
\begin{equation}
  \label{eq:263}
  \D:=\big\{f\in \V:\DeltaE f\in \H\big\}\quad\text{endowed with the Hilbert norm}\quad
  \|f\|_\D^2:=\|f\|_\V^2+\|\DeltaE f\|_\H^2.
\end{equation}
Thanks to the interpolation estimate
\begin{equation}\label{eq:interpola_2015}
  \|\varrho\|_\V\le C\|\varrho\|_\H^{1/2}\,\|\varrho\|_\D^{1/2}\quad
  \forevery \varrho\in\D,
\end{equation}
which easily follows by the identity
$\cE(\varrho,\varrho)=-\int_X\varrho\DeltaE\varrho\,\d\mm$, the norm of $\D$ is equivalent
to the norm $\|f\|_\H^2+\|\DeltaE f\|_\H^2$.

We also introduce the dual quadratic form $\cE^*$ on $\V'$, defined by
\begin{equation}
  \label{eq:107}
  \frac 12\cE^*(\ell,\ell):=\sup_{f\in \V}\,
  \langle \ell,f\rangle-\frac 12\cE(f,f).
\end{equation}
It is elementary to check that the right hand side in \eqref{eq:107} satisfies the parallelogram
rule, so our notation $\cE^*(\ell,\ell)$ is justified 
(and actually we will prove that $\cE^*$, when restricted
to its finiteness domain, is canonically associated to the dual Hilbert norm \AAA{of a suitable quotient space; see the 
following Section~\ref{subsec:completion} for the details\fn).

The operator $\DeltaE$ generates a 
Markov semigroup $(\sfP_t)_{t\ge0}$ 
in each $L^p(X,\mm)$, $1\leq p\leq\infty$:
for every $f\in \H$ the curve $f_t:=\sfP_t f$ 
belongs to $\rmC^1((0,\infty);\H)\cap\rmC^0((0,\infty);\D)$ 
and it is the unique solution in this class of the Cauchy problem
\begin{equation}
  \label{eq:267}
  \frac\d{\d t}f_t=\DeltaE f_t\quad t>0,\qquad
  \lim_{t\downarrow0}f_t=f\quad\text{strongly in }\H.
\end{equation} 
The curve $(f_t)_{t\ge0}$ belongs to $\rmC^1([0,\infty);\H)$ 
if and only if $f\in \D$ and in this case
\begin{equation}
  \label{eq:269}
  \lim_{t\down0}\frac{f_t-f}t=\DeltaE f\quad\text{strongly in }\H.
\end{equation}
$(\sfP_t)_{t\ge0}$ is in fact an analytic semigroup of linear
contractions in $\H$ and in 
each $L^p(X,\mm)$ space, $p\in (1,\infty)$, satisfying
the regularization estimate
\begin{equation}
  \label{eq:268}
  \frac 12 \|\sfP_t f\|_\H^2+
  t\cE(\sfP_t f)+t^2 \|\DeltaE\sfP_t f\|_\H^2\le 
  \frac 12 \|f\|_\H^2\quad \forevery t>0.
\end{equation}
The semigroup $(\sfP_t)_{t\ge0}$ is said to be \emph{mass preserving} if 
\begin{equation}
  \label{eq:271}
  \int_X \sfP_t f\,\d\mm=
  \int_X f\,\d\mm\quad\forall t\geq 0\qquad
  \forevery f\in L^1\cap L^2(X,\mm).
\end{equation}
Since $(\sfP_t)_{t\ge0}$ is a strongly continuous semigroup
of contractions in $L^1(X,\mm)$, the mass preserving
property is equivalent to
  \begin{equation}
    \label{eq:228}
    \int_X \DeltaE f\,\d\mm=0\quad 
    \forevery f\in \D\cap L^1(X,\mm)\text{ with }\DeltaE f\in L^1(X,\mm).
  \end{equation}
When $\mm(X)<\infty$ then \eqref{eq:271} is equivalent to 
the property $1\in D(\cE)$ 
with $\cE(1)=0$.

\subsection{Completion of quotient spaces w.r.t.\ a seminorm}
\label{subsec:completion}

Here we recall a simple construction that we will often use in the
following. 

Let $N$ be the kernel of $\cE$ and $\DeltaE $, namely
\begin{equation}
  \label{eq:106}
  N:=\Big\{f\in\V:\ \cE(f,f)=0\Big\}=\Big\{f\in\V:\ \DeltaE f=0\Big\}.
\end{equation}
It is obvious that
$N$ is a closed subspace of $\V$ and that it induces the
equivalence relation
\begin{equation}
  \label{eq:105}
  f\sim g\quad\Longleftrightarrow\quad f-g\in N.
\end{equation}
We will denote by $\widetilde \V:=\V/\!\!\sim$ the quotient space
and by $\tilde f$ the equivalence class of $f$ (still denoted by $f$ when there is no risk of confusion); 
it is well known that we can identify 
the dual of $\widetilde\V$ with the closed subspace $N^\perp$ of $\V'$, i.e.
\begin{equation}
  \label{eq:235}
  \widetilde\V'=N^\perp=\big\{\ell\in\V':\ \langle
  \ell,f\rangle=0\quad\forevery f\in N\big\}.
\end{equation}
Since $\cE$ is nonnegative, we have 
$\cE(f_1,g_1)=\cE(f_0,g_0)$ whenever $f_0\sim f_1$ and $g_0\sim g_1$, 
so that $\cE$ can also be considered a symmetric bilinear form 
on $\widetilde\V$, for which we retain the same notation.
The bilinear form $\cE$ is in fact a scalar product on $\widetilde\V$, so that 
it can be extended to a scalar product
on the abstract completion $\Vhom{\cE}$ of $\widetilde\V$, with respect to
the norm induced by $\cE$. The dual of $\Vhom{\cE}$ will be denoted by $(\Vhom\cE)'$. 

In the next proposition we relate $(\Vhom{\cE})'$ to $\V'$ and to the dual quadratic form
$\cE^*$ in \eqref{eq:107}. 

\begin{proposition} [Basic duality properties]\label{prop:allduals}
  Let 
  $\V,\,\cE,\,\widetilde\V$ be as above and let $\Vhom\cE$ 
  be the abstract completion of
  $\widetilde\V$ w.r.t.~the scalar product $\cE$.
  Then the following properties hold:
\begin{itemize}
\item[(a)] $(\Vhom{\cE})'$ can be canonically and isometrically
  realized as the finiteness domain of $\cE^*$ in $\V'$, 
  endowed with the norm induced by $\cE^*$, that we will denote as $\Vdual\cE$. 
\item[(b)] If $\ell\in \Vdual\cE$ and $(f_n)$ is a maximizing sequence in \eqref{eq:107}, then the corresponding elements
  in $\Vhom\cE$ strongly converge in $\Vhom\cE$ 
  to $f\in \Vhom\cE$ satisfying
  \begin{equation}
    \frac 12\cE^*(\ell,\ell)=\langle\ell,f\rangle-\frac 12
    \cE(f,f),\qquad
    \cE^*(\ell,\ell)=\cE(f,f).
    \label{eq:264}
\end{equation}
\item[(c)] The operator $\DeltaE$ 
  in \eqref{eq:265} maps $\V$ into $\Vdual\cE$; it 
  can be extended to a continuous and linear operator $\DeltaE_\cE$ from
  $\Vhom\cE$ to $\Vdual{\cE}$ 
  and $-\DeltaE_\cE:\Vhom\cE\to \Vdual\cE$ 
  is the Riesz isomorphism associated to the scalar product 
  $\cE$ on $\Vhom\cE$.
\item[(d)] $\cE^*(\ell,-\DeltaE  f)=\langle \ell,f\rangle$ for all 
  $\ell\in \Vdual\cE$, $f\in \V$.
\end{itemize}
\end{proposition}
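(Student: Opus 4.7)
The plan is to proceed in order, reducing everything to elementary Hilbert-space manipulations on the quotient $\widetilde\V$ and its abstract completion $\Vhom\cE$.

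For (a), the key observation is that the supremum in \eqref{eq:107} is finite if and only if $\ell$ factors through $\widetilde\V$ and is continuous with respect to the seminorm $\sqrt{\cE}$. Indeed, if $\ell \notin N^\perp$, pick $f_0 \in N$ with $\langle \ell,f_0\rangle \neq 0$; since $\cE(tf_0,tf_0)=0$ for all $t\in\R$, plugging $f=tf_0$ into \eqref{eq:107} gives $\cE^*(\ell,\ell)=+\infty$. Conversely, if $\ell \in N^\perp$ and $|\langle\ell,f\rangle| \le M\sqrt{\cE(f,f)}$ for every $f\in\V$, the scalar estimate $tM - t^2/2 \le M^2/2$ yields $\cE^*(\ell,\ell)\le M^2$. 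In this case $\ell$ descends to a continuous linear functional on $(\widetilde\V,\sqrt{\cE})$ and extends uniquely by density to an element of $(\Vhom\cE)'$, and standard Hilbert-space duality identifies $\cE^*$ with the squared dual norm.

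For (b), I would use the parallelogram identity. For any $f,g \in \V$,
\begin{equation*}
\Big(\langle\ell,f\rangle-\tfrac12\cE(f,f)\Big)+\Big(\langle\ell,g\rangle-\tfrac12\cE(g,g)\Big) = 2\Big(\big\langle\ell,\tfrac{f+g}{2}\big\rangle-\tfrac12\cE\big(\tfrac{f+g}{2},\tfrac{f+g}{2}\big)\Big)-\tfrac14\cE(f-g,f-g),
\end{equation*}
which bounds $\tfrac14\cE(f_n-f_m,f_n-f_m)$ by the defect of $f_n,f_m$ with respect to the supremum $\tfrac12\cE^*(\ell,\ell)$ and hence forces $(\tilde f_n)$ to be Cauchy in $(\Vhom\cE,\sqrt\cE)$. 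Its limit $f$ achieves the supremum by continuity, yielding the first identity in \eqref{eq:264}; the second follows from the first-order optimality condition $\langle\ell,g\rangle = \cE(f,g)$ for all $g\in\V$, which tested with $g=f$ gives $\langle\ell,f\rangle=\cE(f,f)$ and hence $\cE^*(\ell,\ell)=\cE(f,f)$.

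For (c), I compute directly: by \eqref{eq:599},
\begin{equation*}
\langle -\DeltaE f,g\rangle = \cE(f,g) \le \cE(f,f)^{1/2}\cE(g,g)^{1/2}\quad\forevery g\in\V,
\end{equation*}
so $-\DeltaE f \in \Vdual\cE$ with $\cE^*(-\DeltaE f,-\DeltaE f) \le \cE(f,f)$; testing with $g=f$ in \eqref{eq:107} provides the reverse inequality, giving equality. Thus $-\DeltaE$ is a $\sqrt\cE$-isometry from $\V$ to $\Vdual\cE$ that vanishes on $N$, so it descends to $\widetilde\V$ and extends uniquely by density to an isometry $-\DeltaE_\cE : \Vhom\cE \to \Vdual\cE$. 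Surjectivity, and hence the Riesz identification, is precisely part (b): the maximizer $f\in\Vhom\cE$ satisfies $\langle\ell,g\rangle = \cE(f,g)$ for every $g\in\V$, i.e.\ $\ell = -\DeltaE_\cE f$. Finally, (d) is an immediate consequence: writing $\ell = -\DeltaE_\cE \tilde g$ with $\tilde g\in\Vhom\cE$, the fact that $-\DeltaE_\cE$ is the Riesz isomorphism gives $\cE^*(\ell,-\DeltaE f) = \cE(\tilde g,\tilde f) = \langle -\DeltaE_\cE \tilde g, f\rangle = \langle\ell,f\rangle$.

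The main bookkeeping obstacle is in (a): one must verify not merely that $\ell$ restricts well to $\widetilde\V$ (automatic from $\ell\in N^\perp$) but that its extension to the abstract completion $\Vhom\cE$, whose elements are equivalence classes of Cauchy sequences rather than honest functions on $X$, remains consistent with the pairing with $\V$ and realises $\cE^*$ as the genuine dual-norm squared. Once this identification is in place, parts (b)--(d) are essentially the Riesz representation theorem on the Hilbert space $(\Vhom\cE,\cE)$.
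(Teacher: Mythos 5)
Your proof is correct and follows essentially the same route as the paper: finiteness of $\cE^*$ is identified with $\sqrt\cE$-continuity (hence with elements of $(\Vhom\cE)'$) via the Young-type estimate, maximizing sequences are shown to be Cauchy by uniform concavity/parallelogram, $-\DeltaE$ is seen to be a $\sqrt\cE$-isometry extended by density with surjectivity coming from the maximizer of (b), and (d) follows at once. Your small deviations — deriving $\cE^*(\ell,\ell)=\cE(f,f)$ from the first-order optimality condition rather than from equality in the Cauchy--Schwarz bound, and proving (d) directly through the Riesz isomorphism instead of polarization plus the density of $\{-\DeltaE f:\ f\in\widetilde\V\}$ — are harmless cosmetic variants of the paper's argument.
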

\begin{proof} (a) The inequality $2|\langle\ell,f\rangle|\leq \cE(f,f)+\cE^*(\ell,\ell)$,
by homogeneity, gives $|\langle\ell,f\rangle|\leq (\cE(f,f))^{1/2}(\cE^*(\ell,\ell))^{1/2}$. Hence, any
element $\ell$ in the finiteness domain of $\cE^*$ 
induces a continuous linear functional
on $\widetilde{\V}$ and therefore an element in  $(\Vhom{\cE})'$, 
with (dual) norm less than $(\cE^*(\ell,\ell))^{1/2}$.
Conversely, any $\ell\in (\Vhom{\cE})'$ induces a continuous linear functional in $\widetilde{\V}$, and then
a continuous linear functional $\ell$ in $\V$, satisfying
$|\ell(f)|\leq  \| \ell \|_{(\Vhom{\cE})'}(\cE(f,f))^{1/2}$. By the
continuity of 
$\cE$, $\ell\in \V'$; in addition, the Young inequality
gives $\cE^*(\ell,\ell)\leq \| \ell \|_{(\Vhom{\cE})'}^2$.

(b) The uniform concavity of $g\mapsto \langle\ell,g\rangle-\frac 12\cE(g,g)$ shows that $\cE(f_n-f_m,f_n-f_m)\to 0$
as $n,\,m\to\infty$. By definition of $\Vhom\cE$ this means that $(f_n)$ is convergent in $\Vhom\cE$. Eventually we use
the continuity of $\langle\ell,\cdot\rangle$ in $\Vhom\cE$ to conclude that the first identity in \eqref{eq:264} holds. {\color{black}
The second identity follows immediately from $2(\cE(f,f))^{1/2}(\cE^*(\ell,\ell))^{1/2}\geq \cE(f,f)+\cE^*(\ell,\ell)$.}

(c) By \eqref{eq:599}, $\DeltaE $ can also be seen as an operator from $\widetilde\V$ to $\Vdual\cE$, with
$\|\DeltaE f\|_{\Vdual\cE }\leq (\cE(f,f))^{1/2}$ for all $f\in \widetilde\V$. It extends therefore to the completion $\Vhom{\cE}$ of $\widetilde\V$.
Denoting by $\DeltaE _\cE$ the extension, let us prove that $-\DeltaE _{\cE}$ is the Riesz isomorphism.

We first prove that $-\DeltaE _{\cE}$ is onto; this follows easily proving that, for given $\ell\in \Vdual\cE $, the maximizer
$f\in\Vhom\cE$ given by (b) satisfies $\ell=-\DeltaE_\cE f$.
Since $\widetilde\V$ is dense in $\Vhom\cE$, we obtain that
\begin{equation}
\label{eq:63}
\Big\{\ell\in \Vdual{\cE}:\ \ell=-\DeltaE  f\ \text{for some }f\in \widetilde\V\Big\}\quad
\text{is dense in }\Vdual\cE .
\end{equation}
Computing $\cE^*(-\DeltaE f,-\DeltaE f)$ for $f\in\widetilde\V$ and using the definition of $\cE$ immediately gives 
$\cE^*(-\DeltaE f,-\DeltaE f)=\cE(f,f)$. By density, this proves that $-\DeltaE_\cE$ is the Riesz isomorphism.

(d) When $\ell=-\DeltaE g$ for some $g\in\widetilde\V$ it follows by polarization of the identity $\cE^*(-\DeltaE h,-\DeltaE h)=\cE(h,h)$, already
mentioned in the proof of (c). The general case follows by \eqref{eq:63}.
\end{proof}
We can summarize the realization in (a) by writing
\begin{equation}
  \label{eq:238}
  \Vdual\cE =D(\cE^*)=\big\{\ell\in \V':\ |\langle \ell,f\rangle|\le 
  C\sqrt {\cE(f,f)}\quad \forevery f\in \V\big\}.
\end{equation}
According to this representation and
the identification $\H=\H'$, 
$f\in\H$ 
belongs to $\Vdual1$ if and only if there exists a constant $C$ such that 
$$
\biggl|\int_X fg\,\d\mm\biggr|\leq C\Bigl(\cE(g,g)\Big)^{1/2}\qquad\forall g\in\V.
$$
If this is the case, we shall write $f\in\H\cap\Vdual1$.
\begin{remark}[Identification of Hilbert spaces]
  \label{rem:careful}
  \upshape
  In the usual framework
  of the variational formulation of parabolic
  problems, one usually considers
  a Hilbert triple as in \eqref{eq:266}
  $V\subset H\equiv H'\subset V'$ so that 
  the duality pairing $\langle \ell,f\rangle$ between $V'$ and $V$
  coincides
  with the scalar product in $H$ whenever $\ell\in H$. 
  In this way
  the definition of the domain $\D$ of $\DeltaE $ as 
  in \eqref{eq:264} 
  makes sense.
  In the case of $\Vhom\cE$, $\Vdual\cE$ 
  one has to be careful 
  that $\Vhom\cE$ is not generally imbedded in
  $\H$ and therefore $\H$ is not imbedded in $\Vdual\cE$,
  unless $\cE$ is coercive with respect to the $\H$-norm;
  it is then possibile to consider the intersection $\H\cap\Vdual\cE $
  (which can be better understood as $\H'\cap \Vdual\cE )$.
  Similarly, 
  $\V$ is imbedded in $\Vhom\cE$ 
  if and only if $\Vdual\cE $ is dense in $\V'$, 
  and this happens if and only if 
  $N=\{0\}$, i.e.\ $\cE$ is a norm on $\V$. 
\end{remark}
The following lemma will be useful.
\begin{lemma}\label{lem:Alica}
The following properties of the spaces $\H$, $\V$ and $\Vdual1$ hold.
\begin{itemize}
\item[(a)] A function $f\in\H$ belongs to $\V$ if and only if 
\begin{equation}\label{eq:Alica1}
\biggl|\int_X f\DeltaE g\,\d\mm\biggr|\leq C\Big(\cE(g,g)\Big)^{1/2}\qquad\forall g\in\D.
\end{equation}
\item[(b)] $\{\DeltaE f:\ f\in \D\}$ is dense in $\Vdual 1$ and, in particular,
$\H\cap\Vdual1$ is dense in $\Vdual1$.
\end{itemize}
\end{lemma}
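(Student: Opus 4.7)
The plan is to treat the two parts separately, with (a) relying on semigroup regularization plus lower semicontinuity of $\cE$, and (b) being a direct consequence of the Riesz-isomorphism structure in Proposition~\ref{prop:allduals}(c).

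For (a), the implication $f\in\V\Rightarrow$ bound is immediate: for $g\in\D$ the element $\DeltaE g$ lies in $\H$, so the duality pairing $\langle -\DeltaE g,f\rangle=\cE(f,g)$ collapses to $-\int_X f\DeltaE g\,\d\mm$, and Cauchy--Schwarz on $\cE$ gives the estimate with $C=\sqrt{\cE(f,f)}$. For the converse I would test the hypothesis against the regularizations $g_t:=\sfP_t f\in\D$. Differentiating $\|\sfP_{t/2}f\|_\H^2=\int_X f\,\sfP_t f\,\d\mm$ (using self-adjointness of the semigroup) and then using $\frac{\d}{\d t}\sfP_t f=\DeltaE\sfP_t f$ yields the identity
\[
\int_X f\,\DeltaE\sfP_t f\,\d\mm=\frac{\d}{\d t}\|\sfP_{t/2}f\|_\H^2=-\cE(\sfP_{t/2}f,\sfP_{t/2}f).
\]
Combining with the assumed bound applied to $g_t$, and with the energy-decreasing property $\cE(\sfP_t f)\le\cE(\sfP_{t/2}f)$ (a direct consequence of $\frac{\d}{\d s}\cE(\sfP_s f)=-2\|\DeltaE\sfP_s f\|_\H^2\le0$), one obtains the self-bounding inequality $\cE(\sfP_{t/2}f,\sfP_{t/2}f)\le C\,\cE(\sfP_{t/2}f,\sfP_{t/2}f)^{1/2}$, i.e.\ $\cE(\sfP_s f,\sfP_s f)\le C^2$ uniformly in $s>0$. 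Since $\sfP_s f\to f$ in $\H$ and $\cE$ is lower semicontinuous on $\H$ (closedness of the Dirichlet form), one concludes $\cE(f,f)\le C^2$, hence $f\in\V$.

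For (b), I would invoke Proposition~\ref{prop:allduals}(c): the extended operator $-\DeltaE_\cE\colon\Vhom\cE\to\Vdual1$ is an isometric isomorphism, so density of $\{\DeltaE f:f\in\D\}$ in $\Vdual1$ is equivalent to density of the image of $\D$ in $\Vhom\cE$ with respect to the $\sqrt{\cE(\cdot,\cdot)}$-norm. This in turn reduces, via the density of $\widetilde\V$ in $\Vhom\cE$ built into the completion, to density of $\D$ in $\V$ in the $\cE$-seminorm (equivalently, in the $\V$-norm up to the kernel $N$). But this is a standard consequence of the analyticity of $(\sfP_t)$: for every $f\in\V$ one has $\sfP_t f\in\D$ and $\sfP_t f\to f$ in $\V$ as $t\downarrow0$, so a fortiori $\cE(\sfP_t f-f)\to0$. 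The ``in particular'' clause is then automatic, since by the definition of $\D$ and Proposition~\ref{prop:allduals}(c) one has $\{\DeltaE f:f\in\D\}\subset \H\cap\Vdual1$, so the subset being dense forces the larger intersection to be dense as well.

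The main obstacle is the converse direction of (a): one must choose the right test function $g$ and exploit simultaneously self-adjointness of the semigroup (to convert $\int f\DeltaE\sfP_t f$ into $-\cE(\sfP_{t/2}f)$), monotonicity of $t\mapsto\cE(\sfP_t f)$ (to compare $\cE(\sfP_t f)$ and $\cE(\sfP_{t/2}f)$ on the right side of the hypothesis), and closedness of $\cE$ (to pass to the limit $t\downarrow0$). Everything else is a reformulation of facts already recorded in Proposition~\ref{prop:allduals} and standard semigroup theory.
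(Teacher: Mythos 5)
Your proof is correct. For part (a) you follow essentially the same route as the paper: the paper observes that \eqref{eq:Alica1} is stable under the semigroup and then tests with $g=\sfP_tf\in\D$, integrating by parts to get $\cE(\sfP_tf)\le C^2$ and concluding by lower semicontinuity; your half-time identity $\int_X f\,\DeltaE\sfP_tf\,\d\mm=-\cE(\sfP_{t/2}f,\sfP_{t/2}f)$ together with the energy decay $\cE(\sfP_tf)\le\cE(\sfP_{t/2}f)$ is just an equivalent reorganization of that computation (self-adjointness plus contraction of the energy), and the conclusion via closedness of the form is identical. For part (b) you take a genuinely different, though equally valid, route: the paper argues by orthogonality, taking $\ell\in\Vdual1$ with $\cE^*(\ell,\DeltaE f)=0$ for all $f\in\D$, using Proposition~\ref{prop:allduals}(d) to convert this into $\langle\ell,f\rangle=0$ on $\D$, and concluding $\ell=0$ from the density of $\D$ in $\V$; you instead transfer density through the isometric Riesz isomorphism of Proposition~\ref{prop:allduals}(c), reducing the claim to density of $\D$ in $\V$ for the $\cE$-seminorm, which you obtain from $\sfP_tf\to f$ in $\V$. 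Both arguments ultimately rest on the density of $\D$ in $\V$; the paper's annihilator argument is slightly leaner in that it only needs that abstract density statement, while yours is more constructive, makes the role of the Riesz isomorphism explicit, and renders the ``in particular'' clause (that $\{\DeltaE f:f\in\D\}\subset\H\cap\Vdual1$) immediate.
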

\begin{proof} 
(a) \AAA If $f\in\V$ we can integrate by parts and conclude via Cauchy-Schwartz inequality by choosing $C= \cE(f,f)^{1/2}$. To show the converse implication first of all note that the property \eqref{eq:Alica1} is stable under the action of the semigroup $(\sfP_t)_{t\ge0}$. Thus we can argue by approximation by observing that if $f \in\D$ one can choose  $g=f$; then integrate by parts 
on the left hand side 
to obtain $\cE(f,f)\leq C^2$. \fn
 
(b)  Let us consider an element $\ell\in \Vdual\cE$ such that 
\begin{displaymath}
  \cE^*(\ell,\DeltaE f)=0\quad\forevery\,f\in \D.
\end{displaymath}
Applying Proposition~\ref{prop:allduals}(d) we get
\begin{displaymath}
  \duality{}{}\ell f=0\quad\forevery\,f\in \D.
\end{displaymath}
Since $\ell\in \V'$ and $\D$ is dense in $\V$ we conclude that 
$\ell=0$.
\end{proof}

\subsection{Nonlinear diffusion}
\label{subsec:A-1}

The aim of this section is to study evolution equations of the form
\begin{equation}
  \label{eq:270}
  \frac\d{\d t}\varrho-\DeltaE P(\varrho)=0,
\end{equation}
where $P:\R\to\R$ is a \emph{regular monotone} nonlinearity 
satisfying 
  \begin{equation}
  \label{eq:A1}
  P\in \rmC^1(\R),\quad
  P(0)=0,\quad
  0<\sfa\le \sfP'(r)\le \sfa^{-1}  \quad
  \forevery r\ge0.
\end{equation}
The results are more or less standard application of
the abstract theory of monotone operators and variational evolution
equations in Hilbert spaces \cite{Brezis70,Brezis71,Brezis71b,Brezis73},
with the only caution described in Remark~\ref{rem:careful}
and the use of a general Markov operator
instead of a particular realization given by a
second order elliptic differential operator.

If $H_0,\,H_1$ are Hilbert spaces continuously imbedded in a common Banach space $B$ and $T>0$ 
is a given final time, we introduce the spaces of time-dependent functions
\begin{equation}
  \label{eq:A16}
  W^{1,2}(0,T;H_1,H_0):=\Big\{u\in W^{1,2}(0,T;B):\
  u\in L^2(0,T;H_1),\,\,\dot u\in L^2(0,T;H_0)\Big\},
\end{equation}
endowed with the norm
\begin{equation}
  \label{eq:A17}
  \|u\|^2_{W^{1,2}(0,T;H_1,H_0)}:=
  \|u\|^2_{L^2(0,T;H_1)}+\|\dot u\|^2_{L^2(0,T;H_0)}.
\end{equation}
Denoting by $(H_0,H_1)_{\vartheta,2}$,
$\vartheta\in (0,1)$,
the family of (complex or real, \cite[2.1 and Thm.~15.1]{Lions-Magenes72}, \cite[1.3.2]{Triebel78})
Hilbert interpolation spaces, the equivalence with the so-called \emph{trace Interpolation method}
\cite[Thm. 3.1]{Lions-Magenes72}, \cite[1.8.2]{Triebel78},
shows that 
\begin{equation}
  \label{eq:11}
  \text{if}\quad H_1\hookrightarrow H_0\quad
  \text{then}\quad
  W^{1,2}(0,T;H_1,H_0)\hookrightarrow \rmC([0,T]; (H_0,H_1)_{1/2,2} ),
\end{equation}
with continuous inclusion.

As a possible example, we will consider
$W^{1,2}(0,T;\V,\Vdual\cE)$ (in this case
$\V$ and $\Vdual\cE$ are continuously imbedded in $\V'$)
and $W^{1,2}(0,T;\D,\H)$. Since
\cite[Prop.~2.1]{Lions-Magenes72} 
$$
\Vdual\cE\subset \Vdual{},\quad 
(\V,\Vdual{})_{1/2,2}=\H,\qquad
\text{and}\qquad
(\D,\H)_{1/2,2}=\V,$$
we easily get
\begin{equation}
  \label{eq:12}
  W^{1,2}(0,T;\V,\Vdual\cE)\hookrightarrow
  \rmC([0,T];\H),\qquad
  W^{1,2}(0,T;\D,\H) \hookrightarrow
  \rmC([0,T];\V),
\end{equation}
Let us fix a regular function $P$ according to \eqref{eq:A1}:
we introduce the set
\begin{equation}
  \label{eq:137}
  \ND 0T:=\Big\{\varrho\in W^{1,2}(0,T;\H)
  \cap \rmC^1([0,T];\Vdual\cE):\ P(\varrho)\in L^2(0,T;\D)\Big\}.
\end{equation}
Notice that
\begin{equation}
  \label{eq:138}
  \ND 0T\subset \rmC([0,T];\V).
\end{equation}
Indeed, if $\varrho\in \ND 0T$ then by the chain rule $P(\varrho)\in W^{1,2}(0,T;\D,\H)$, 
so that $P(\varrho)\in \rmC([0,T];\V)$
thanks to \eqref{eq:12}. Composing
with the Lipschitz map $P^{-1}$ 
provides the continuity of $\varrho$ in $\V$
thanks to \eqref{eq:Dir-Lip}. 
\begin{theorem}[Nonlinear diffusion]
  \label{thm:nonlin-diff}
  Let $P$ be a regular function according to \eqref{eq:A1}.
  For every $T>0$ and every 
  $\bar\varrho\in \H$ there exists a unique curve 
  $\varrho=\sfS\bar\varrho\in W^{1,2}(0,T;\V,\Vdual\cE)$ 
  satisfying
  \begin{equation}
    \label{eq:75}
    \frac\d{\d t}\varrho-\DeltaE P(\varrho)=0\quad\text{$\Leb 1$-a.e.\
      in $(0,T)$, with $\varrho_0=\bar \varrho$.}
  \end{equation}
  Moreover:
  \begin{enumerate}[\rm (ND1)]
  \item For every $t>0$ the map 
    $\bar\varrho\mapsto \sfS_t\bar\varrho$ is a
    contraction with respect to the norm 
    $\Vdual1$, with
    \begin{equation}
      \label{eq:51}
      \|\sfS_t\bar \varrho^1-\sfS_t\bar\varrho^2\|^2_{\Vdual1}+
      2\sfa\, \int_0^t \int_X |\sfS_r\bar\varrho^1-\sfS_r\bar
      \varrho^2|^2\,\d\mm\,\d r\le 
      \|\bar\varrho^1-\bar\varrho^2\|_{\Vdual1}^2.
    \end{equation}
  \item If $W\in \rmC^{1,1}(\R)$ is a nonnegative convex 
    function with $W(0)=0$, then
    \begin{equation}
      \label{eq:123}
      \int_X W(\varrho_t)\,\d\mm
      +\int_0^t \cE(P(\varrho_r),W'(\varrho_r))
      \,\d r=
      \int_X W(\bar\varrho)\,\d\mm
      \quad\forall t\geq 0.
    \end{equation}
    Moreover, for every convex and lower semicontinuous function
    $W:\R\to [0,\infty]$ 
    \begin{equation}
      \label{eq:122}
      \int_X W(\varrho_t)\,\d\mm\le \int_X W(\bar\varrho)\,\d\mm.
    \end{equation}
    In particular, $\sfS_t$ is positivity preserving and
    if $0\le \bar\varrho\le R$ $\mm$-a.e.\ in $X$, then $0\le\varrho_t\le R$
    $\mm$-a.e.\ in $X$ for every $t\ge0$.
    \item If $\bar\varrho\in \V$ then 
      $\varrho \in \ND 0T\subset \rmC([0,T];\V)\cap
      \rmC^1([0,T];\Vdual\cE)$
  and 
    \begin{equation}
      \label{eq:76}
      \lim_{h\rightarrow 0}\frac 1h\big(\varrho_{t+h}-\varrho_t\big)=
      \DeltaE P(\varrho_t)\quad\text{strongly in $\Vdual1$,%
        \quad for all $t\geq 0$.}
    \end{equation}
  \item The maps $\sfS_t$, $t\ge0$, are contractions in $L^1\cap
      L^2(X,\mm)$ w.r.t. the $L^1(X,\mm)$ norm 
      \GGG and they can be uniquely extended to 
      a $\rmC^0$-semigroup of contractions 
      in $L^1(X,\mm)$ (still denoted by $(\sfS_t)_{t\ge0}$).
      \EEE
      For every $\bar\varrho_i\in L^1 
      (X,\mm)$, $i=1,2$,
      \begin{equation}
        \label{eq:274}
        \int_X (\sfS_t \bar\varrho_2-\sfS_t\bar\varrho_1)_+\,\d\mm
        \le 
        \int_X (\varrho_2-\varrho_1)_+\,\d\mm\quad
        \forevery t\ge0.
      \end{equation}
      In particular $\sfS$ is order preserving, 
      i.e.
      \begin{equation}
        \label{eq:276}
        \bar\varrho_1\le \bar\varrho_2\quad \Rightarrow\quad
        \sfS_t\bar\varrho_1\le \sfS_t\bar\varrho_2\quad\forevery t\ge0.
      \end{equation}
      \AAA Moreover, if $\bar{\varrho}\in L^{\infty}(X,\mm)$ with bounded support, then $\sfS_t \bar{\varrho} \to $
      Finally, if $\sfP_t$ is mass preserving then
    \begin{equation}
      \label{eq:222}
      \int_X \sfS_t \bar\varrho\,\d\mm=
      \int_X \bar\varrho\,\d\mm\quad
      \forevery \bar\varrho\in L^1
      (X,\mm),\quad
      t\ge0.
    \end{equation}
    \end{enumerate}
\end{theorem}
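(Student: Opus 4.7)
The overall plan is to view $\mathcal{A}\varrho := -\DeltaE P(\varrho)$ as the $\cE^*$-subdifferential of the convex functional $\Phi(\varrho) := \int_X \hat P(\varrho)\,\d\mm$ (with $\hat P(r):=\int_0^r P(s)\,\d s$) on the Hilbert space $\Vdual\cE$, and then to extract the remaining properties by well-chosen test functions, using the duality of Proposition~\ref{prop:allduals}(d). Assumption \eqref{eq:A1} makes $\hat P$ $\sfa$-strongly convex with quadratic growth $\sfa r^2/2\le \hat P(r)\le r^2/(2\sfa)$, so $\Phi$ is proper, convex, lsc and $D(\Phi)\supseteq\H\cap\Vdual\cE$, a dense subset of $\Vdual\cE$ by Lemma~\ref{lem:Alica}(b). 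For $\varrho$ with $P(\varrho)\in\V$ and $w\in\H\cap\Vdual\cE$, Proposition~\ref{prop:allduals}(d) gives
\[
\cE^*\bigl(-\DeltaE P(\varrho),\,w\bigr)=\langle w,P(\varrho)\rangle=\int_X w\,P(\varrho)\,\d\mm=\tfrac{d}{d\epsilon}\Phi(\varrho+\epsilon w)\big|_{\epsilon=0},
\]
so $\partial\Phi(\varrho)=\{-\DeltaE P(\varrho)\}$; Brezis--Komura theory then produces a $\Vdual\cE$-contraction semigroup whose trajectories solve \eqref{eq:75}.

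The fundamental a priori estimate comes from testing \eqref{eq:75} with $P(\varrho_t)\in\V$:
\[
\tfrac{d}{dt}\int_X\hat P(\varrho_t)\,\d\mm+\cE\bigl(P(\varrho_t),P(\varrho_t)\bigr)=0.
\]
Integration and the quadratic bounds on $\hat P$ give $\varrho\in L^\infty(0,T;\H)$ and $P(\varrho)\in L^2(0,T;\Vhom\cE)$; applying \eqref{eq:Dir-Lip} to the $\sfa^{-1}$-Lipschitz inverse $Q:=P^{-1}$ (with $Q(0)=0$) yields $\cE(\varrho)\le\sfa^{-2}\cE(P(\varrho))$, hence $\varrho\in L^2(0,T;\V)$ and $\dot\varrho=\DeltaE P(\varrho)\in L^2(0,T;\Vdual\cE)$. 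This places $\varrho$ in $W^{1,2}(0,T;\V,\Vdual\cE)$ and justifies the initial trace $\varrho_0=\bar\varrho$ via \eqref{eq:12}; initial data $\bar\varrho\in\H$ not already sitting in $\Vdual\cE$ are reached by a density/approximation argument combined with the $\H$-a priori bound just obtained.

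For (ND1) and uniqueness, I subtract the equations for two solutions and, using Proposition~\ref{prop:allduals}(d) together with the strong monotonicity $(r-s)(P(r)-P(s))\ge\sfa(r-s)^2$, compute
\[
\tfrac12\tfrac{d}{dt}\|\varrho^1-\varrho^2\|^2_{\Vdual\cE}=-\int_X(\varrho^1-\varrho^2)(P(\varrho^1)-P(\varrho^2))\,\d\mm\le-\sfa\|\varrho^1-\varrho^2\|^2_\H,
\]
then integrate. For \eqref{eq:123}, test \eqref{eq:75} with $W'(\varrho_t)-W'(0)\in\V$ (admissible by \eqref{eq:Dir-Lip} since $\varrho_t\in\V$ a.e.); the resulting term $\cE(P(\varrho_t),\,W'(\varrho_t)-W'(0))$ is nonnegative by \eqref{eq:Dir-Mon} because both $P$ and $W'-W'(0)$ are Lipschitz, nondecreasing, and vanish at $0$. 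Inequality \eqref{eq:122} then follows by monotone $C^{1,1}$-approximation of a general convex lsc $W\ge0$, and the choices $W(r)=r_-^2$ and $W(r)=((r-R)_+)^2$ produce positivity preservation and the upper bound, respectively.

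For (ND3), assuming $\bar\varrho\in\V$ so that $P(\bar\varrho)\in\V$ by \eqref{eq:Dir-Lip}, testing \eqref{eq:75} with $\partial_tP(\varrho)=P'(\varrho)\dot\varrho$ (rigorously justified through a time-discretization) yields $\int_XP'(\varrho)(\dot\varrho)^2\,\d\mm=-\tfrac12\tfrac{d}{dt}\cE(P(\varrho))$; integrating gives $\sfa\int_0^T\|\dot\varrho\|_\H^2\,\d t\le\tfrac12\cE(P(\bar\varrho))$, hence $\dot\varrho\in L^2(0,T;\H)$, $P(\varrho)\in L^2(0,T;\D)$, $\varrho\in\ND 0T$, with \eqref{eq:76} following from continuity of $\DeltaE P(\varrho_t)$ in $\Vdual 1$. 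For (ND4) I pair the resolvent difference $\varrho_i-\lambda\DeltaE P(\varrho_i)=g_i$ with a smooth nondecreasing approximation $\sigma_\epsilon$ of the positive-part sign (with $\sigma_\epsilon(0)=0$) evaluated at $P(\varrho^1)-P(\varrho^2)\in\V$; \eqref{eq:Dir-Mon} renders the $\cE$-contribution nonnegative, and $\epsilon\downarrow 0$ (using strict monotonicity of $P$ to transfer the sign from $P(\varrho^1)-P(\varrho^2)$ to $\varrho^1-\varrho^2$) yields $\|(g_1-g_2)_+\|_{L^1}\ge\|(\varrho_1-\varrho_2)_+\|_{L^1}$; Crandall--Liggett then delivers the $L^1$-T-contractive extension \eqref{eq:274} and hence the order preservation \eqref{eq:276}. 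Mass preservation is immediate by applying \eqref{eq:228} to $P(\varrho_t)\in\D\cap L^1(X,\mm)$. The most delicate step is this $L^1$-contraction argument, which requires a careful marriage of \eqref{eq:Dir-Mon} with the monotonicity of $P$ so that the $\V$-admissible test $\sigma_\epsilon(P(\varrho^1)-P(\varrho^2))$ yields a useful sign; a secondary technical point is the extension of the Brezis subdifferential flow (naturally living on $\overline{D(\Phi)}^{\Vdual\cE}$) to arbitrary $\H$-valued initial data.
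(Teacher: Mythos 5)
Your overall strategy is the same as the paper's: a gradient flow of the integral functional of the primitive of $P$ in the dual space $\Vdual\cE$, the same monotonicity computation for \eqref{eq:51}, the same Heaviside-type approximation applied to $P(\varrho_1')-P(\varrho_2')$ for the $L^1$ T-contraction, and Crandall--Liggett for the $L^1$ extension. However, there is one genuine gap, and it is precisely the point you dismiss as a ``secondary technical point'': the treatment of an arbitrary initial datum $\bar\varrho\in\H$. In this setting $\H$ is \emph{not} continuously embedded in $\Vdual\cE$ (see Remark~\ref{rem:careful}): if the kernel $N$ of $\cE$ is nontrivial, or more generally without coercivity of $\cE$ on $\H$, a function $\bar\varrho\in\H$ need not belong to $\Vdual\cE$ at all (e.g.\ when $\mm(X)<\infty$ and $1\in N$, any $\ell\in\H$ with $\int_X\ell\,\d\mm\neq0$ has $\cE^*(\ell,\ell)=+\infty$). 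So the Brezis flow of your $\Phi$ on $\Vdual\cE$ simply has no admissible initial point corresponding to such $\bar\varrho$, and your proposed fix --- approximate $\bar\varrho$ in $\H$ by elements of $\H\cap\Vdual\cE$ and use the $\H$ a priori bound --- does not close: convergence of the data in $\H$ gives no smallness of their differences in $\Vdual\cE$, so neither the $\Vdual\cE$-contraction nor the $L^2$-in-time estimate in \eqref{eq:51} yields a Cauchy property for the approximating solutions; the $\H$ bound only gives weak compactness, and then you would still have to identify the weak limit of the nonlinear terms $P(\varrho^n)$ (a Minty-type argument you do not provide) and make sense of the initial condition. The paper resolves this not by approximation but structurally, via the shifted functionals $\Phim_\eta(\sigma):=\Phim(\eta+\sigma)$ of \eqref{eq:54bis}: one evolves the \emph{difference} $\sigma_t=\varrho_t-\bar\varrho$, whose initial value is $0\in\Vdual1$, and Lemma~\ref{le:A1} characterizes $\partial\Phim_\eta$ so that the flow equation is exactly \eqref{eq:75}. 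This device (or an equally effective substitute) is missing from your argument and is needed for the theorem as stated.

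Two smaller points. First, for the flow to literally solve \eqref{eq:75} you need the \emph{converse} part of the subdifferential characterization (every element of the subdifferential forces $P(\varrho)\in\V$ and equals $-\DeltaE P(\varrho)$); your Gateaux-derivative computation only covers directions in $\H\cap\Vdual\cE$, where $\Phi$ is finite, and the converse uses Lemma~\ref{lem:Alica}(a) as in Lemma~\ref{le:A1}. Second, your mass-preservation argument ``apply \eqref{eq:228} to $P(\varrho_t)$'' presumes $P(\varrho_t)\in\D\cap L^1(X,\mm)$ with $\DeltaE P(\varrho_t)\in L^1(X,\mm)$, which is not available for general $L^1$ data; the clean route is to verify mass conservation at the level of the resolvent $\sfJ_\tau$ and transfer it through the exponential formula, as the paper does. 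Your (ND2) and (ND3) arguments are fine in spirit (the paper makes the chain rule rigorous via the perturbation $W+\eps V$, and proves $\dot\varrho\in L^2(0,T;\H)$ by applying \eqref{eq:51} to time-shifted solutions, which is the rigorous version of your formal test with $\partial_t P(\varrho)$).
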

We split the proof of the above theorem in various steps.
First of all, we introduce the primitive function of $P$,
\begin{equation}
  \label{eq:74}
  \Prim(r):=\int_0^r P(z)\,\d z,
\end{equation}
which, thanks to \eqref{eq:A1}, satisfies
\begin{equation}\label{eq:74_bis}
  \frac {\sfa}2r^2\le \Prim(r)\le \frac 1{2\sfa} r^2\qquad\forall r\geq 0.
\end{equation}
We adapt to our setting the approach 
of \cite{Brezis71}, showing that the nonlinear equation
\eqref{eq:75} can be viewed as a gradient flow
in the dual space $\Vdual1$ driven by 
the integral functional $\Phim:\V'\to[0,\infty]$ defined by
\begin{equation}\label{eq:54}
\Phim(\sigma):=
  \begin{cases}
    \displaystyle
    \int_X\Prim(\sigma)\,\d\mm&\text{if }\sigma\in 
    \H,\\
    +\infty&\text{if }\sigma\in \V'\setminus\H,
  \end{cases}
\end{equation}
associated to $\Prim$. 

Since $\H$ is not included in $\Vdual1$ in general,
if $\varrho$ is a solution of \eqref{eq:75} with an arbitrary 
$\bar\varrho\in \H$ only the difference $\sigma_t:=\varrho_t-\bar\varrho$,
will belong to $\Vdual1$; therefore it is useful to introduce the family of shifted functionals
$\Phim_\eta:\Vdual{}\to[0,\infty]$, $\eta\in\H$, defined by
\begin{equation}
  \label{eq:54bis}
  \Phim_\eta(\sigma):=\Phim(\eta+\sigma),\quad
  \qquad
  \forevery \sigma\in\Vdual{}.
\end{equation}
Notice that, thanks to \eqref{eq:74_bis},
\GGGG $\Phim_\eta$ is finite
on $\H$. Dealing with subdifferentials and evolutions in
$\Vdual1$, we consider the restriction of $\Phim_\eta$ to $\Vdual1$, with
$D(\Phim_\eta):= \Vdual1\cap \H$ 
\EEE and 
we shall denote by $\partial\Phim_\eta(\cdot)$ the 
$\cE^*$-subdifferential of $\Phim_\eta$, defined at any $\sigma\in D(\Phim_\eta)$
as the collection of all $\ell\in\Vdual1$ satisfying
$$
\cE^*(\ell,\zeta-\sigma)\leq
\Phim_\eta(\zeta)-\Phim_\eta(\sigma)\qquad\forall\zeta\in D(\Phim_\eta).
$$
In the next lemma we characterize the subdifferentiability and the subdifferential of $\Phim_\eta$.

\begin{lemma}[Subdifferential of $\Phim_\eta$]
  \label{le:A1}
  For every $\eta\in \H$ 
  the functional 
  $\Phim_\eta:\Vdual1\to[0,\infty]$ \EEE 
  defined by
  \eqref{eq:54bis} 
  is convex and lower semicontinuous. Moreover, for every $\sigma\in D(\Phim_\eta)$ we have
  \begin{equation}
    \label{eq:118}
    \ell\in \partial\Phim_\eta(\sigma)
    \quad\Longleftrightarrow\quad
    P(\sigma+\eta)\in \V,\quad
    \ell=-\DeltaE P(\sigma+\eta).
  \end{equation}
  In particular $\partial\Phim_\eta$ is single-valued in its domain and $D(\partial\Phim_\eta)=\{\sigma\in
  \H:\ 
  P(\sigma+\eta)\in \V\}.$
\end{lemma}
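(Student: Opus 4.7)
The plan is to handle the three assertions separately: convexity/lower semicontinuity, then the two implications of the characterization \eqref{eq:118}, from which the domain description follows immediately. Throughout, I would exploit the quadratic two-sided bound \eqref{eq:74_bis} on $\Prim$, together with Proposition \ref{prop:allduals} (duality $\cE^*(\ell,-\DeltaE g)=\langle\ell,g\rangle$) and Lemma \ref{lem:Alica}(a) (characterization of $\V$ via estimates against $\DeltaE g$).

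First, convexity of $\Phim_\eta$ on $\Vdual 1$ is immediate from the convexity of $\Prim$ (since $P=\Prim'$ is nondecreasing by \eqref{eq:A1}). For lower semicontinuity in the $\Vdual 1$ topology, I would take $\sigma_n\to\sigma$ in $\Vdual 1$ with $\liminf_n\Phim_\eta(\sigma_n)<\infty$; the upper bound in \eqref{eq:74_bis} forces $(\sigma_n+\eta)$ to be bounded in $\H$ along a subsequence, so up to a further subsequence $\sigma_{n_k}\rightharpoonup\tilde\sigma$ weakly in $\H$. To identify $\tilde\sigma=\sigma$, I would test against $-\DeltaE g$ with $g\in\D$: by Proposition \ref{prop:allduals}(d), $\cE^*(\sigma_{n_k},-\DeltaE g)=\int_X \sigma_{n_k}g\,\d\mm$, and passing to the limit in both formulations (strong in $\Vdual 1$ on one side, weak in $\H$ on the other) yields $\int_X(\sigma-\tilde\sigma)g\,\d\mm=0$ for all $g\in\D$. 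Density of $\D$ in $\H$ gives $\sigma=\tilde\sigma\in\H$, and then weak $\H$-lower semicontinuity of the convex continuous functional $\Phim$ on $\H$ concludes.

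For the easy implication in \eqref{eq:118}, assuming $P(\sigma+\eta)\in\V$, the pointwise convexity inequality $\Prim(b)-\Prim(a)\ge P(a)(b-a)$ applied with $a=\sigma+\eta$, $b=\zeta+\eta$ and integrated gives
\begin{equation*}
\Phim_\eta(\zeta)-\Phim_\eta(\sigma)\ge \int_X P(\sigma+\eta)(\zeta-\sigma)\,\d\mm\qquad\forall\zeta\in D(\Phim_\eta).
\end{equation*}
Since $\zeta-\sigma\in\H\cap\Vdual 1$ and $P(\sigma+\eta)\in\V$, Proposition \ref{prop:allduals}(d) rewrites the right-hand side as $\cE^*(-\DeltaE P(\sigma+\eta),\zeta-\sigma)$, so $-\DeltaE P(\sigma+\eta)\in\partial\Phim_\eta(\sigma)$.

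The main obstacle is the converse. Given $\ell\in\partial\Phim_\eta(\sigma)$, I would perturb $\sigma$ by $\epsilon\xi$ for arbitrary $\xi\in\H\cap\Vdual 1$: the bound $|\Prim(a+\epsilon\xi)-\Prim(a)-\epsilon P(a)\xi|\le C\epsilon^2\xi^2$ and dominated convergence (using \eqref{eq:A1} and $\sigma+\eta,\xi\in\H$) yield, after letting $\epsilon\downarrow 0$ and then replacing $\xi$ by $-\xi$,
\begin{equation*}
\cE^*(\ell,\xi)=\int_X P(\sigma+\eta)\,\xi\,\d\mm\qquad\forall\xi\in\H\cap\Vdual 1.
\end{equation*}
Specializing to $\xi=-\DeltaE f$ for $f\in\D$ (which lies in $\H\cap\Vdual 1$ by Proposition \ref{prop:allduals}) and using the identity $\cE^*(\ell,-\DeltaE f)=\langle\ell,f\rangle$ of Proposition \ref{prop:allduals}(d), I obtain
\begin{equation*}
\Bigl|\int_X P(\sigma+\eta)\,\DeltaE f\,\d\mm\Bigr|=|\langle\ell,f\rangle|\le\sqrt{\cE^*(\ell,\ell)}\,\sqrt{\cE(f,f)}\qquad\forall f\in\D.
\end{equation*}
Lemma \ref{lem:Alica}(a) then gives $P(\sigma+\eta)\in\V$, and integration by parts rewrites the previous display as $\langle-\DeltaE P(\sigma+\eta)-\ell,f\rangle=0$ for all $f\in\D$, hence for all $f\in\V$ by density; therefore $\ell=-\DeltaE P(\sigma+\eta)$ in $\Vdual 1$. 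Single-valuedness of $\partial\Phim_\eta$ and the claimed description of its domain follow at once from \eqref{eq:118}.
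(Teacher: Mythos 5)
Your proof is correct and follows essentially the same route as the paper's: the pointwise convexity inequality together with Proposition~\ref{prop:allduals}(d) for the easy implication, and the perturbation $\zeta=\sigma+\eps\xi$, the specialization $\xi=-\DeltaE f$ with Lemma~\ref{lem:Alica}(a), and a density argument to identify $\ell=-\DeltaE P(\sigma+\eta)$ for the converse, with only cosmetic differences in the lower-semicontinuity step (the paper works directly with weak $\Vdual1$-convergence and weak $\H$-compactness). Two slips of wording only: it is the lower bound $\Prim(r)\ge\tfrac{\sfa}{2}r^2$ in \eqref{eq:74_bis} (not the upper one) that bounds $\sigma_n+\eta$ in $\H$, and the identification of the limit, being an equality in $\V'$, uses the density of $\D$ in $\V$ rather than in $\H$.
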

\begin{proof}
  The convexity of $\Phim_\eta$ is clear. The lower semicontinuity is also easy to prove, since $\Phim_\eta(\sigma_n)\leq C<\infty$
  and $\sigma_n\to \sigma$ weakly in $\Vdual1$ imply that $\sigma\in\H$ and $\sigma_n$ weakly converge
  to $\sigma$ in $\H$, by the weak compactness of $(\sigma_n)$ in the weak topology of $\H$. 
  
  The left implication $\Leftarrow$ in \eqref{eq:118} is immediate, since
  by Proposition~\ref{prop:allduals}(d) and the fact that $\zeta-\sigma\in \H\cap \Vdual1$
  \begin{align*}
    \cE^*(-\DeltaE P(\sigma+\eta),\zeta-\sigma)&=
    \int_X P(\sigma+\eta)(\zeta-\sigma)\,\d\mm=
    \int_X
    P(\sigma+\eta)\,((\zeta+\eta)-(\sigma+\eta))\,\d\mm
    \\&\le 
    \int_X \Big(\Prim(\zeta+\eta)-\Prim(\sigma+\eta)\Big)\,\d\mm
    = \Phim_\eta(\zeta)-\Phim_\eta(\sigma),
  \end{align*}
  where we used the pointwise property $P(x)(y-x)\le
  \Prim(y)-\Prim(x)$
  for every $x,\,y\in \R$.
  
  In order to prove the converse implication $\Rightarrow$, let us suppose
  that $\ell\in\partial\Phim_\eta(\sigma)$; choosing $\zeta=\sigma+\eps \varphi$, with $\varphi\in \H\cap \Vdual1$, 
  we get
  \begin{displaymath}
    \cE^*(\ell,\varphi)\le
    \eps^{-1}\Big(\Phim_\eta(\sigma+\eps\varphi)-\Phim_\eta(\sigma)\Big)\le 
    \int_X P(\sigma+\eta+\eps\varphi)\varphi\,\d\mm. 
  \end{displaymath}
  Passing to the limit as $\eps\down0$ and changing $\varphi$ into $-\varphi$ we get
  \begin{displaymath}
    \cE^*(\ell,\varphi)=\int_X P(\sigma+\eta)\,\varphi\,\d\mm  
    \qquad\forevery \varphi\in \H\cap \Vdual1.
  \end{displaymath}
  Choosing now $\varphi=-\DeltaE f$ with $f\in\D$ we get
  \begin{displaymath}
    -\int_X P(\sigma+\eta)\DeltaE f\,\d\mm\le
    \|\ell\|_{\Vdual1}\Bigl(\cE(f,f)\Bigr)^{1/2},
  \end{displaymath}
  so that Lemma~\ref{lem:Alica}(a) yields $P(\sigma+\eta)\in \V$. Therefore (using 
   Proposition~\ref{prop:allduals}(d) once more in the last equality), we get 
  \begin{eqnarray*}
    \cE^*(\ell,-\DeltaE f)&=&
    -\int_X P(\sigma+\eta)\DeltaE f\,\d\mm=
    \cE(P(\sigma+\eta),f)\\&=&
    -\langle\DeltaE P(\sigma+\eta),f\rangle=
    \cE^*(-\DeltaE P(\sigma+\eta),-\DeltaE f)
  \end{eqnarray*}
  for all $f\in\D$, and this proves that $\ell$ coincides with $-\DeltaE P(\sigma+\eta)$.
\end{proof}

\begin{proof}[Proof of Theorem \ref{thm:nonlin-diff}]
  Let $\bar\varrho\in\H$ and let $\eta\in\H$ be any element such that 
  $\bar\sigma:=\bar\varrho-\eta\in \Vdual1$ (in particular we can choose
  $\eta=\bar\varrho$, so that $\bar\sigma=0$; as a matter of fact, $\eta$ plays only an
  auxiliary role in the proof and the solution $\varrho$ will be independent of $\eta$).
  Setting $\sigma_t:=\varrho_t-\eta$, the equation \eqref{eq:75} is
  equivalent to 
  \begin{equation}
    \label{eq:119}
    \frac\d{\d t}\sigma-\DeltaE P(\sigma+\eta)=0,\quad\text{i.e.}\quad \frac
    \d{\d t}\sigma+\partial\Phim_\eta(\sigma)\ni0,\quad
    \text{with}\quad
    \sigma_0=\bar\sigma,
  \end{equation}
  where $\partial\Phim_\eta$ is the subdifferential of $\Phim_\eta$, characterized in
  \eqref{eq:118}.
  
  \noindent
  {\it Proof of existence of solutions and (ND1).} Since Lemma~\ref{lem:Alica}(b) provides the density of the domain of $\Phim_\eta$ in $\Vdual1$,
  existence of a solution $\sigma\in \rmC([0,T];\Vdual1)$ satisfying 
  $\DeltaE P(\sigma+\eta),\frac \d{\dt}\sigma\in L^2(0,T;\Vdual1)$ (and thus $P(\sigma+\eta)\in L^2(0,T;\V)$) 
  follows by the general theory of equations in Hilbert spaces
  governed by the subdifferential of convex and lower semicontinuous functions 
  \cite{Brezis71}, so that $\varrho_t:=\sigma_t+\eta$ satisfies \eqref{eq:75}. 
  Since $P(\varrho)\in L^2(0,T;\V)$ and $P$ satisfies the regularity property \eqref{eq:A1},
  we also get $\varrho\in L^2(0,T;\V)$; since $\frac \d\dt \varrho\in {L^2(0,T;\Vdual1)}\subset L^2(0,T;\V')$
  we deduce $\varrho\in \rmC([0,T];\H)$ by \eqref{eq:12}.

  The abstract theory also provides the regularization estimates
  \begin{equation}
    \label{eq:121}
    t \,\| \DeltaE P(\sigma+\eta)  \|_{\Vdual 1}^2=
    t \, \cE (P(\varrho_t),P(\varrho_t))\le \int_X \Prim(\bar
    \varrho)\,\d\mm\qquad\forevery t>0,
    \end{equation}
    \begin{equation}
    \label{eq:121bis}
    \lim_{h\down0}\frac{\varrho_{t+h}-\varrho_t}h=\DeltaE P(\varrho_t)
    \text{ in }\Vdual1\quad\forevery t>0,
  \end{equation}
  and the fact that the semigroup $\sfS_t:\bar \rho\mapsto \varrho_t$ is nonexpansive in $\Vdual1$.
  If $\bar\varrho\in \V$ (so that $\bar\sigma\in D(\partial\Phim_\eta)$)
  the limit in \eqref{eq:121bis} holds also at $t=0$.
  Since $\partial\Phim_\eta$ is single-valued, this proves \eqref{eq:76}. 
  
  In order to prove \eqref{eq:51} we simply consider two solutions
  $\varrho^j_t=\sigma^j_t+\eta$, $j=1,\,2$ (we can choose the same $\eta$ since $\bar \rho^1-\bar
  \rho^2\in\Vdual1$), and we evaluate the time
  derivative of $\frac 12\cE^*(\varrho^1_t-\varrho^2_t)$, obtaining
  \begin{align}
    \notag\frac \d{\dt} \frac 12\cE^*(\varrho^1_t-\varrho^2_t)&=
    \frac \d{\dt} \frac 12\cE^*(\sigma_t^1-\sigma_t^2)=
    \cE^*(\sigma_t^1-\sigma_t^2, \DeltaE P(\varrho_t^1)-\DeltaE P(\varrho_t^2))
    \\&=
    \label{eq:132}
    -\int_X (\varrho_t^1-\varrho_t^2)(P(\varrho_t^1)-P(\varrho_t^2))\,\d\mm
    \\&\notag
    \le -\sfa \|\varrho_t^1-\varrho_t^2\|_{L^2(X,\mm)}^2,
  \end{align}
  where $\sfa$ is the constant in \eqref{eq:A1}.
  
  \noindent {\it Proof of (ND2).}
  \GGG We consider the perturbed function $W^\eps(r):=W(r)+\eps V(r)
  $, $r\in \R$, $\eps>0$, \EEE and 
  we can apply Lemma~\ref{le:A1} to the integral functional
  $\mathcal W_\eta^\eps$ defined similarly to $\Phim$, with $W^\eps$ instead
  of $V$; by denoting by $G$ the derivative of $W$ 
  \GGG and by $G^\eps(r):=G(r)+\eps P(r)$ the derivative of $W^\eps$, 
  \EEE
  the $\Vdual1$-subdifferential $\partial\mathcal W_\eta^\eps$ 
  can then be represented as $-\DeltaE
  G^\eps(\sigma+\eta)$ as in \eqref{eq:118} and its domain is contained in
  $D(\partial\Phim_\eta)$. If $\sigma $ is a solution of \eqref{eq:119}, the chain rule
  for convex and lower semicontinuous functionals 
  in Hilbert spaces yields
  \begin{align*}
    \frac\d\dt \int_X W^\eps(\varrho_t)\,\d\mm
    &=
    \frac \d{\dt }\mathcal W_\eta(\sigma_t)=
    -\cE^*(\frac \d{\dt}\sigma_t,\DeltaE G^\eps(\sigma_t+\eta))=
    -\cE^*(\DeltaE P(\sigma_t+\eta),\DeltaE G^\eps(\sigma_t+\eta))
    \\&=
        -\cE(P(\varrho_t),G^\eps(\varrho_t)).
  \end{align*}
  \GGG We can eventually integrate with respect to time and pass to
  the limit as $\eps\down0$ to obtain \eqref{eq:123}. \EEE
  
  The inequality \eqref{eq:122} follows now by \eqref{eq:Dir-Mon} and 
  by a standard approximation procedure,
  e.g. by considering the Moreau-Yosida regularization of $W$.
 Choosing now 
  $W(r):=(r-R)_+^2$ with $R\ge0$ 
  or $W(r):=(R-r)_+^2$ with $R\le 0$,
  we prove the comparison estimates w.r.t.~constants.
  
  \noindent
  {\it Proof of (ND3).} We already proved \eqref{eq:76};
  let us now show that
  $\frac\d\dt \varrho\in L^2(0,T;\H)$ if $\bar\varrho\in \V$.
  This property follows easily by \eqref{eq:51} applied to the couple of
  solutions
  $\varrho^1_t:=\varrho_t$ and $\varrho^2_t:=\varrho_{t+h}$, since it yields
  \begin{align*}
    \frac {2\sfa}{h^2}&\int_0^{T-h} \|\varrho_t-\varrho_{t+h}\|_{L^2(X,\mm)}^2\,\d t\le 
    \frac1{h^2}\|\varrho_h-\varrho_0\|_{\Vdual1}^2\le
                        \GGG \Big(\frac 1h\int_0^h\big\|\tfrac \d\dt
                        \varrho\big\|_{\Vdual1}\,\d t\Big)^2\\
                      &\GGG =
                        \Big(\frac 1h\int_0^h\big\|\DeltaE
                        P(\varrho_t)
                        \big\|_{\Vdual1}\,\d t\Big)^2\le
        \big\|\DeltaE P(\bar\varrho) \EEE
        \big\|^2_{\Vdual1}=
\cE(P(\bar\varrho),P(\bar\varrho))\quad 
    \forevery h\in (0,T),
  \end{align*}
  \GGG where we used the fact that the map $t\mapsto 
  \big\|\DeltaE
  P(\varrho_t)
  \big\|_{\Vdual1}$ is nonincreasing. \EEE
  The regularity 
  $\frac\d{\d t}\varrho\in L^2(0,T;\H)$ 
  yields $P(\varrho)\in L^2(0,T;\D)$
  \GGG and therefore 
  $P(\varrho)\in W^{1,2}(0,T;\D,\H)$, so that 
  the map $t\mapsto P(\varrho_t)$ belongs 
  to $\rmC([0,T];\V)$ by \eqref{eq:12}.
  The differential equation \eqref{eq:35}
  then yields $\varrho\in \rmC^1([0,T];\Vdual1).$
  \EEE

  \noindent
  {\it Proof of (ND4).} For every $\tau>0$ and $\varrho\in \H$ let us consider the resolvent
  equation 
  \begin{equation}
    \label{eq:292}
    \text{find }\varrho'\in H\text{ with }P(\varrho')\in \D\text{
      such that }
    \varrho'-\tau \DeltaE P(\varrho')=\varrho.
  \end{equation}
  By introducing the resolvent operators 
  $\sfJ_{\tau,\eta}:\Vdual1\to D(\partial\Phim_\eta)$, $\tau>0$ and
  $\eta\in \H$, defined by 
  $\sfJ_{\tau,\eta}:=(I+\tau\partial\Phim_\eta)^{-1}$, Lemma
  \ref{le:A1} shows that whenever $\varrho-\eta\in \Vdual1$ 
  a solution $\varrho'\in \H$ with $\varrho'-\eta\in \Vdual1$ can be
  obtained by setting
  \begin{equation}
    \label{eq:226}
    \varrho':=\sfJ_{\tau,\eta}(\varrho-\eta)+\eta.
  \end{equation}
  In particular, the choice $\eta:=\varrho$ ensures the existence of a
  solution to \eqref{eq:292}. We will show that the solution
  $\varrho'$ of
  \eqref{eq:292} is in
  fact unique and independent of the choice of $\eta$ in \eqref{eq:226}.
  More precisely, we will show that if a couple $\varrho_i'\in \H$,
  $i=1,2$, solves \eqref{eq:226} with data $\varrho_i\in \H$ one has
  \begin{equation}
    \label{eq:275}
    \int_X \big(\varrho_1'-\varrho_2'\big)_+\,\d\mm
    \le 
    \int_X \big(\varrho_1-\varrho_2\big)_+\,\d\mm
    \quad \forevery \varrho_1,\,\varrho_2\in \H.
  \end{equation}
  \EEE
  The monotonicity inequality
  \eqref{eq:275}
  can be proved by 
  introducing an increasing sequence  
  of smooth maps approximating
  the Heaviside 
  function:
  \begin{displaymath} 
    f_n\in \rmC^1(\R;[0,1]),\quad
    f_n\equiv 0\quad\text{in $(-\infty,0)$},\quad
    0<f_n'(r)\le n,\quad
    f_n(r)\uparrow 1\quad
    \forevery r>0.
  \end{displaymath}
  \GGGG Since 
  $P(\varrho_i')\in \D$ and 
  $f_n$ is Lipschitz with $f_n(0)=0$,
  $f_n(P(\varrho_1')-P(\varrho_2'))\in L^2
  \cap L^\infty(X,\mm)\cap\V$.
  We thus get by \eqref{eq:226} and the positivity of $f_n$
  \begin{align*}
    \int_X &
    (\varrho_1'-\varrho_2')f_n\big(P(\varrho_1')-P(\varrho_2')\big)\,\d\mm
    +\tau \cE\Big(
    f_n\big(P(\varrho_1')-P(\varrho_2')\big),
    P(\varrho_1')-P(\varrho_2')\Big)
       \\&= 
    \int_X
    (\varrho_1-\varrho_2)f_n\big(P(\varrho_1')-P(\varrho_2')\big)\,\d\mm
    \le \int_X \big(\varrho_1-\varrho_2\big)_+\,\d\mm.
  \end{align*}
  By neglecting the positive contribution
  of the Dirichlet form $\cE$ thanks to \eqref{eq:Dir-Mon},
  we can pass to the limit as $n\to\infty$ 
  by the monotone 
  convergence theorem 
  observing that 
  $(\varrho_1'-\varrho_2')
  f_n\big(P(\varrho_1')-P(\varrho_2')\big)
  \up (  \varrho_1'-\varrho_2')_+$ as $n\to\infty$; when
  $(\varrho_1-\varrho_2)_+\in L^1(X,\mm)$ we thus obtain
  \eqref{eq:275}.
  
  Recalling \eqref{eq:226}
  %
  %
  %
  and the exponential formula $\sfS_t(\bar\varrho)=
  \eta+\lim_{n\to\infty}(\sfJ_{t/n,\eta})^n(\bar\varrho-\eta)$
  strongly in $\Vdual1$ and weakly in $\H$ for some $\eta\in \H$ with $\bar\varrho-\eta\in \Vdual1$,
  we obtain 
  \eqref{eq:274}, the $L^1$-contraction of 
  and the order preserving property
  \eqref{eq:276}.

  \GGGG
  Let us now consider the operator
  \begin{equation}
    A:\varrho\mapsto -\DeltaE P(\varrho) \quad\text{defined in 
      $D(A):=\big\{\varrho\in L^1\cap L^2(X,\mm): \DeltaE P(\varrho)\in
  L^1\cap L^2(X,\mm)\big\}$}
\label{eq:109}
\end{equation}
\EEE
and its multivalued 
  extension obtained by taking the closure of its graph in
  $L^1(X,\mm)$:
  \begin{equation}
    \label{eq:236}
    \bar A\varrho:=\Big\{\xi\in L^1(X,\mm): 
    \exists\,\varrho_n\in D(A):\ \varrho_n\to \varrho,\ 
    A\varrho_n\to\xi\quad\text{in }L^1(X,\mm)\Big\}.
  \end{equation}
  If $\varrho\in D(A)$ it is easy to check  by \eqref{eq:275} that $\bar
  A\varrho=\{A\varrho\}$ 
  and the resolvent $\bar \sfJ_\tau:=(I+\tau\bar A)^{-1}$ 
  of $\bar A$ coincides 
  with the map $\sfJ_\tau:\varrho\to\varrho'$ induced by
  \eqref{eq:292} on $L^1\cap L^2(X,\mm)$. 
  Since $L^1\cap L^2(X,\mm)$
  is dense in $L^1(X,\mm)$, it follows by
  \eqref{eq:275} that $\bar A$ is an $m$-accretive operator in
  $L^1(X,\mm)$.
  By Crandall-Liggett Theorem the limit 
  $\bar \sfS_t(\varrho):=\lim_{n\to\infty} (\bar \sfJ_{t/n})^{n}\varrho$ 
  exists in the strong topology of $L^1(X,\mm)$ uniformly on $[0,T]$
  and provides the unique extension of $(\sfS_t)_{t\ge0}$ to
  continuous 
  semigroup of contractions in $L^1(X,\mm)$. In particular, for every $\varrho\in
  L^1\cap L^2(X,\mm)$ the sequence
  $(\sfJ_{t/n})^{n}\varrho$ converges strongly to $\sfS_t(\varrho)$ 
  in $L^1(X,\mm)$. 
  
  In order to check the mass preserving property \eqref{eq:222}
  in the case when $\sfP$ is mass preserving, 
  it is therefore sufficient to prove that $\sfJ_\tau$ is mass
  preserving on $L^1\cap L^2(X,\mm)$, i.e. \EEE
  \begin{equation}\label{eq:227}
    \int_X \varrho'\,\d\mm=
    \int_X \varrho\,\d\mm\quad\text{whenever 
      \eqref{eq:226} holds.}    
  \end{equation}
  Eventually, \eqref{eq:227} follows by integrating 
 \eqref{eq:226} 
  and recalling \eqref{eq:228}. 
 \end{proof}
\GGGG For later use, we fix some of the results obtained in the last
part of the above
proof in the next Theorem.
\begin{theorem}
  \label{thm:m-accretive}
  Let $P$ be a regular nonlinearity according to \eqref{eq:A1}; the
  operator $\bar A$ 
  defined by \eqref{eq:236} and \eqref{eq:109} is $m$-accretive in
  $L^1(X,\mm)$ with dense domain, its resolvent $\sfJ_\tau:=(I+\tau
  \bar A)^{-1}$ is a contraction
  satisfying \eqref{eq:275} for every $\varrho_i'=\sfJ_\tau\varrho_i$.
  For every $\varrho \in L^1\cap L^2(X,\mm)$ $\sfJ_\tau\varrho$ provides the
  unique solution $\varrho'$ of \eqref{eq:292} and
  the solution $\varrho_t=\sfS_t\varrho$ of \eqref{eq:75} can be obtained by the
  exponential formula $\varrho_t=\lim_{n\to\infty}\sfJ_{t/n}^n
  \varrho$ as strong limit in $L^1(X,\mm)$.
\end{theorem}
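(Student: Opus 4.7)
The plan is to organize the facts already established in part~(ND4) of the proof of Theorem~\ref{thm:nonlin-diff} into the $m$-accretive structure claimed here. First I would verify well-posedness of the resolvent map on $L^1\cap L^2(X,\mm)$: for such $\varrho$, choosing $\eta:=\varrho$ in \eqref{eq:226} makes $\bar\sigma=0\in\Vdual 1$, and the Hilbertian subdifferential theory in $\Vdual 1$ combined with Lemma~\ref{le:A1} produces a unique $\sigma\in D(\partial\Phim_\varrho)$ with $0\in\sigma+\tau\partial\Phim_\varrho(\sigma)$; then $\varrho':=\sigma+\varrho$ solves \eqref{eq:292}. The $L^1$-contraction \eqref{eq:275} proved via the Heaviside-approximation argument (together with \eqref{eq:Dir-Mon} to discard the nonnegative $\cE$-contribution, and monotone convergence) gives simultaneously uniqueness of $\varrho'$ in $L^1\cap L^2$, independence of the auxiliary $\eta$, and---applied against $\varrho_2\equiv 0$ so that $\varrho_2'\equiv 0$---the fact that $\varrho'\in L^1$, hence $\DeltaE P(\varrho')=\tau^{-1}(\varrho'-\varrho)\in L^1\cap L^2$ and $\varrho'\in D(A)$.

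Next I would extend $\sfJ_\tau$ from $L^1\cap L^2$ to $L^1(X,\mm)$ by density using the $L^1$-contraction, and define $\bar A$ through the graph closure~\eqref{eq:236}. By construction the extended $\sfJ_\tau$ coincides with $(I+\tau\bar A)^{-1}$, so surjectivity of $I+\tau\bar A$ on $L^1$ is automatic and $\bar A$ is $m$-accretive, with accretivity itself another immediate consequence of~\eqref{eq:275}. Density of $D(\bar A)$ in $L^1(X,\mm)$ reduces, by the density of $L^1\cap L^2$, to showing $\sfJ_\tau\varrho\to\varrho$ in $L^1$ as $\tau\downarrow 0$ for $\varrho\in L^1\cap L^2$; the Hilbertian theory gives this convergence in $\Vdual 1$ with a uniform $\H$-bound on $\sfJ_\tau\varrho$, and the $L^1$-contraction together with truncation to bounded, essentially compactly supported, approximants upgrades it to $L^1$-convergence.

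Finally, with $\bar A$ an $m$-accretive densely defined operator on $L^1(X,\mm)$, the Crandall--Liggett theorem yields a unique $C^0$-semigroup of contractions $\bar\sfS_t\varrho=\lim_{n\to\infty}\sfJ_{t/n}^n\varrho$, the limit being strong in $L^1$ and uniform for $t$ in bounded intervals. To identify $\bar\sfS_t$ with $\sfS_t$ on $L^1\cap L^2(X,\mm)$ I would observe that both semigroups are produced by iterating the same resolvent $\sfJ_\tau$; the Hilbertian construction of Theorem~\ref{thm:nonlin-diff} already identifies $\sfJ_{t/n}^n\varrho\to\sfS_t\varrho$ in $\Vdual 1$, while Crandall--Liggett gives convergence to $\bar\sfS_t\varrho$ in $L^1$, and uniqueness of the limit in $\V'$ forces $\sfS_t=\bar\sfS_t$ on the common domain. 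The main delicacy is precisely this last identification: one must reconcile two exponential formulae converging in different topologies, and the uniform $L^2$-bound on the iterates needed to pass from $L^1$-convergence to $\V'$-convergence is supplied by (ND2) with $W(r)=r^2$.
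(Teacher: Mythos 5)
Your route is the same as the paper's: Theorem~\ref{thm:m-accretive} is stated by the authors precisely as a record of what is proved in step (ND4) of Theorem~\ref{thm:nonlin-diff}, namely the shifted resolvents $\sfJ_{\tau,\eta}$ with the choice $\eta=\varrho$, the Heaviside approximation yielding \eqref{eq:275} (hence uniqueness of the solution of \eqref{eq:292}, independence of $\eta$, and the $L^1$-contraction), the graph closure \eqref{eq:236} defining $\bar A$, $m$-accretivity by density of $L^1\cap L^2$, Crandall--Liggett, and the identification of the $L^1$-limit with $\sfS_t$ through the exponential formula, which the paper asserts converges strongly in $\Vdual1$ and weakly in $\H$. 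Your observation that $\varrho'\in D(A)$ because $\DeltaE P(\varrho')=\tau^{-1}(\varrho'-\varrho)\in L^1\cap L^2$ is exactly the point the paper uses implicitly, and your reconciliation of the two exponential formulae is correct; only note that (ND2) is a statement about the semigroup, not about resolvent iterates, so the uniform $\H$-bound should rather be taken from the Hilbertian theory itself (e.g.\ $\Phim_\eta$ decreases under $\sfJ_{\tau,\eta}$, together with \eqref{eq:74_bis}), or one can simply use the weak-$\H$ convergence already contained in the paper's exponential formula; this is a minor misattribution, not a gap.

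The one step whose stated justification does not close is the density of $D(\bar A)$ (which the paper itself leaves implicit). Knowing that $\sfJ_\tau\varrho\to\varrho$ in $\Vdual1$ with a uniform $\H$-bound gives only weak $L^2$-convergence, and weak convergence together with $\|\sfJ_\tau\varrho\|_{L^1}\le\|\varrho\|_{L^1}$ and truncation of the data does \emph{not} imply strong $L^1$-convergence: oscillating sequences such as $1+\sin(nx)$ on $(0,1)$ converge weakly with convergent $L^1$-norms but not strongly in $L^1$, so the contraction-plus-truncation mechanism cannot by itself do the upgrade. What does close it is an extra compactness ingredient: the minimizing characterization of the resolvent gives $\int_X \Prim(\sfJ_\tau\varrho)\,\d\mm\le \int_X \Prim(\varrho)\,\d\mm$, and since $\Prim''=P'\ge\sfa$ this, combined with the weak $\H$-convergence and the weak lower semicontinuity of $u\mapsto\int_X\bigl(\Prim(u)-\tfrac\sfa2 u^2\bigr)\,\d\mm$, yields $\limsup_\tau\|\sfJ_\tau\varrho\|_\H\le\|\varrho\|_\H$ and hence strong $\H$-convergence; then $\mm$-a.e.\ convergence along subsequences together with $\limsup_\tau\|\sfJ_\tau\varrho\|_{L^1}\le\|\varrho\|_{L^1}$ and a Fatou (generalized Scheff\'e) argument gives $\sfJ_\tau\varrho\to\varrho$ in $L^1(X,\mm)$, so that $D(A)\ni\sfJ_\tau\varrho$ is $L^1$-dense. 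Also, in the purely measurable setting of Part~I there is no topology, so ``compactly supported'' should be replaced by ``supported on a set of finite measure'', which is all the truncation argument requires.
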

\EEE
We only considered nonlinear diffusion
problems associated to regular monotone functions $P$ 
as in \eqref{eq:A1}, since they provide a sufficiently general
class of equations for our aims. Nevertheless, 
starting from Theorem~\ref{thm:nonlin-diff} 
and adapting its arguments, it would not be difficult 
to prove existence and uniqueness results 
under more general assumptions.
The next result is a possible example in this direction:
a proof can be obtained by the same strategy (we omit the details, since we need only
Theorem~\ref{thm:nonlin-diff} in the sequel); notice that the fact that $\sfS_t$ preserves
$L^\infty$ bounds allows to modify 
the behaviour of $P$ for large densities, so that 
its primitive function $V$ has a quadratic growth
and its domain coincides with $L^2(X,\mm)$
when $\mm(X)<\infty$.

\begin{theorem}[Nonlinear diffusion for
  general nonlinearities]
  \label{thm:more-general-P}
  Let $P\in \rmC^0(\R)$ be 
  a nondecreasing function and let us suppose that 
  $\mm(X)<\infty.$ 
  For every $\bar\varrho\in L^\infty(X,\mm)$ 
  there exists a unique curve $\varrho=\sfS\bar\varrho
  \in W^{1,2}(0,T;\Vdual\cE)
  \cap L^\infty(X\times (0,T))$ with
  $P(\varrho)\in L^2(0,T;\V)$ satisfying 
  \eqref{eq:75}.
  $(\sfS_t)_{t\ge0}$ is a semigroup of contractions
  in $\Vdual\cE$ and in $L^1(X,\mm)$ and 
  properties {\upshape (ND2), (ND4)} still hold.
\end{theorem}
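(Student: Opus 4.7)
The plan is to obtain Theorem~\ref{thm:more-general-P} from Theorem~\ref{thm:nonlin-diff} by an approximation procedure, exploiting the $L^\infty$-bound given by (ND2) — which is available because $\bar\varrho$ is essentially bounded — to localize the nonlinearity on an interval where it can be replaced by a regular one, and then using monotonicity to pass to the limit.

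\textbf{Approximation and uniform estimates.} Without loss of generality assume $P(0)=0$ (a constant shift does not affect $\DeltaE P(\varrho)$). Set $R:=\|\bar\varrho\|_{L^\infty(X,\mm)}$ and choose regular monotone nonlinearities $P_n\in C^1(\R)$ satisfying \eqref{eq:A1} such that $P_n\to P$ locally uniformly on $\R$ and $P_n(0)=0$; concretely, one can mollify $P$, replace it by a linear function outside $[-R-1,R+1]$, and add the perturbation $r\mapsto r/n$ to enforce strict monotonicity. Apply Theorem~\ref{thm:nonlin-diff} to each $P_n$ to get $\varrho^n:=\sfS^n\bar\varrho\in W^{1,2}(0,T;\V,\Vdual\cE)$. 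Using \eqref{eq:122} with the convex functions $W_\pm(r):=(\pm r-R)_+^2$ and the hypothesis $|\bar\varrho|\le R$, one obtains the uniform bound $\|\varrho^n\|_{L^\infty(X\times(0,T))}\le R$. Writing $V_n(r):=\int_0^r P_n$ and applying \eqref{eq:123} with $W=V_n$ then yields
\[
\int_0^T\cE(P_n(\varrho^n_t),P_n(\varrho^n_t))\,\d t\le \int_X V_n(\bar\varrho)\,\d\mm\le C_R\,\mm(X),
\]
since $V_n$ is uniformly bounded on $[-R,R]$ and $\mm(X)<\infty$. Hence $P_n(\varrho^n)$ is uniformly bounded in $L^2(0,T;\V)$ and $\partial_t\varrho^n=\DeltaE P_n(\varrho^n)$ is uniformly bounded in $L^2(0,T;\Vdual\cE)$.

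\textbf{Passage to the limit.} Extract a subsequence with $\varrho^n\rightharpoonup^*\varrho$ in $L^\infty(X\times(0,T))$, $P_n(\varrho^n)\rightharpoonup\Xi$ weakly in $L^2(0,T;\V)$ and $\partial_t\varrho^n\rightharpoonup\partial_t\varrho$ weakly in $L^2(0,T;\Vdual\cE)$, so that $\partial_t\varrho=\DeltaE\Xi$ in the weak sense and $\varrho(0)=\bar\varrho$. The only nontrivial point is the identification $\Xi=P(\varrho)$: this is done by Minty's monotonicity trick. For any bounded $\zeta$ one has
\[
\int_0^T\!\!\int_X (P_n(\varrho^n)-P_n(\zeta))(\varrho^n-\zeta)\,\d\mm\,\d t\ge 0,
\]
and the critical term $\int\!\!\int P_n(\varrho^n)\varrho^n$ is controlled by combining the energy identity $\tfrac{\d}{\d t}\int V_n(\varrho^n)\,\d\mm=-\cE(P_n(\varrho^n),P_n(\varrho^n))$ with the weak limits, obtaining $\limsup_n\int\!\!\int P_n(\varrho^n)\varrho^n\,\d\mm\,\d t\le \int\!\!\int \Xi\varrho\,\d\mm\,\d t$. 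The remaining terms pass to the limit using local uniform convergence of $P_n$ to $P$ together with the $L^\infty$-bound on $\varrho^n$, yielding $\int\!\!\int (\Xi-P(\zeta))(\varrho-\zeta)\,\d\mm\,\d t\ge 0$; the choice $\zeta=\varrho\pm\epsilon\eta$ and arbitrariness of $\eta\in L^\infty$ give $\Xi=P(\varrho)$.

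\textbf{Uniqueness, contractions, and (ND2)--(ND4).} Uniqueness in the stated class is obtained by repeating the computation \eqref{eq:132}: for two admissible solutions $\varrho^1,\varrho^2$ with the same initial datum, the map $t\mapsto\tfrac12\cE^*(\varrho^1_t-\varrho^2_t)$ is absolutely continuous with derivative $-\int_X(\varrho^1-\varrho^2)(P(\varrho^1)-P(\varrho^2))\,\d\mm\le 0$ by the monotonicity of $P$, forcing $\varrho^1\equiv\varrho^2$. The $\Vdual\cE$-contraction and the semigroup property are then transferred from the $\sfS^n$ via uniqueness and weak lower semicontinuity, and (ND2) passes to the weak$^*$ limit by lower semicontinuity of convex integral functionals. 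For (ND4) one can either repeat the $m$-accretivity argument of Theorem~\ref{thm:m-accretive} for $P$ directly (the resolvent equation $\varrho'-\tau\DeltaE P(\varrho')=\varrho$ is solved by the same approximation scheme) or pass to the limit in the $L^1$-contraction and comparison estimates for $\sfS^n$.

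The main obstacle is the identification $\Xi=P(\varrho)$ in Step 2, since there is no compact embedding $\V\hookrightarrow\H$ at our disposal and hence no direct strong-compactness argument for $\varrho^n$; the monotonicity of $P$, exploited via Minty's trick and fuelled by the energy identity \eqref{eq:123}, is what makes this step go through.
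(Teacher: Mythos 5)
Your overall strategy (regularize $P$ by strictly monotone $P_n$, use the $n$-independent estimates (ND2) and \eqref{eq:123} to get uniform bounds, then identify the weak limit by Minty) is viable and is in fact a different route from the one the paper sketches (the paper only indicates truncating $P$ for densities above the $L^\infty$ bound so that $V$ has quadratic growth and re-running the subdifferential argument of Theorem~\ref{thm:nonlin-diff} directly). However, your justification of the one step you yourself single out as critical does not work as stated. You claim that the inequality $\limsup_n\int_0^T\!\!\int_X P_n(\varrho^n)\varrho^n\,\d\mm\,\d t\le\int_0^T\!\!\int_X\Xi\varrho\,\d\mm\,\d t$ follows ``by combining the energy identity $\frac{\d}{\d t}\int_X V_n(\varrho^n)\,\d\mm=-\cE(P_n(\varrho^n),P_n(\varrho^n))$ with the weak limits.'' That identity only controls $\int_0^T\cE(P_n(\varrho^n),P_n(\varrho^n))\,\d t$ and $\int_X V_n(\varrho^n_T)\,\d\mm$; passing it to the limit yields an energy inequality for the pair $(\varrho,\Xi)$, but no combination of these quantities with the available weak convergences bounds the space--time product $\int\!\!\int P_n(\varrho^n)\varrho^n$, which is a pairing of two merely weakly convergent sequences. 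As written, the crux of the Minty argument is therefore unsupported.

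The inequality is nevertheless true, and the correct tool is the dissipation identity for the squared dual norm rather than for the entropy. Since $\sigma^n_t:=\varrho^n_t-\bar\varrho=\int_0^t\DeltaE P_n(\varrho^n_r)\,\d r$ belongs to $H^1(0,T;\Vdual\cE)$, the Hilbert-space chain rule and Proposition~\ref{prop:allduals}(d) give $\frac{\d}{\d t}\frac12\cE^*(\sigma^n_t,\sigma^n_t)=\cE^*\big(\DeltaE P_n(\varrho^n_t),\sigma^n_t\big)=-\int_X P_n(\varrho^n_t)\,(\varrho^n_t-\bar\varrho)\,\d\mm$, hence $\int_0^T\!\!\int_X P_n(\varrho^n)(\varrho^n-\bar\varrho)\,\d\mm\,\d t=-\frac12\cE^*(\sigma^n_T,\sigma^n_T)$. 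Since $\sigma^n_T\rightharpoonup\sigma_T$ weakly in $\Vdual\cE$ (integrate the weakly convergent derivatives) and $\cE^*$ is weakly lower semicontinuous, while the same computation applied to the limit equation $\partial_t\varrho=\DeltaE\Xi$ gives $\int_0^T\!\!\int_X\Xi(\varrho-\bar\varrho)\,\d\mm\,\d t=-\frac12\cE^*(\sigma_T,\sigma_T)$, you obtain $\limsup_n\int_0^T\!\!\int_X P_n(\varrho^n)(\varrho^n-\bar\varrho)\le\int_0^T\!\!\int_X\Xi(\varrho-\bar\varrho)$; adding the term with $\bar\varrho$, which passes to the limit by weak $L^2$ convergence of $P_n(\varrho^n)$, yields exactly the limsup inequality you need, and the rest of your Minty argument, as well as the uniqueness, contraction and (ND2)/(ND4) transfers, then goes through as you describe (note that uniqueness is what upgrades the subsequential limits to convergence of the full sequence, which you implicitly use when passing the $L^1$ estimates to the limit).
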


\section{Backward and forward linearizations of nonlinear diffusion} 
\label{subsec:ND}

In this section we collect a few results
concerning linearization of the nonlinear diffusion equations 
of the form studied by Theorem~\ref{thm:nonlin-diff}.

The linearized PDE discussed in the next proposition corresponds to \eqref{eq:73} of the heuristic Section~\ref{subsec:heu},
while the evolution semigroup is provided by the nonlinear diffusion
equation of Theorem~\ref{thm:nonlin-diff}. 
Recall the notation 
$W^{1,2}(0,T;\D,\H)=L^2(0,T;\D)\cap W^{1,2}(0,T;\H)$, 
\eqref{eq:137} for $\ND 0T$,
and that, according to \eqref{eq:12} and \eqref{eq:138},
\begin{equation}
  \label{eq:279}
  \ND 0T\subset \rmC([0,T];\V) ,\qquad
  W^{1,2}(0,T;\D,\H)\hookrightarrow \rmC([0,T];\V).
\end{equation}
\begin{theorem}[Backward adjoint linearized equation]
  \label{prop:backward-linearization}
  Let $P$ be a regular monotone nonlinearity as 
  in \eqref{eq:A1} and let $\varrho\in L^2(0,T;\H)$.

  For every $\bar\varphi\in \V$,  $T>0$ and $\psi\in L^2(0,T;\H)$  there exists
  a unique strong solution
  $\varphi\in \Sobolev T\D\H{}$ of 
  \begin{equation}
    \label{eq:82}
    \frac\d\dt \varphi+P'(\varrho)\DeltaE\varphi=\psi,\qquad
    \varphi_T=\bar\varphi.
  \end{equation}
  \begin{enumerate}[\rm ({BA}1)]
  \item For all $r\in [0,T]$, the solution $\varphi$ satisfies
    \begin{equation}
      \label{eq:44}
      \int_r^T \int_X \frac 1{P'(\varrho)}|\dot\varphi|^2\,\d\mm\,\dt
      +\frac12 \cE(\varphi_r,\varphi_r)=
      \int_r^T \int_X \frac 1{P'(\varrho)}\psi\dot\varphi\,\d\mm\,\dt
      +\frac 12 \cE(\bar\varphi,\bar\varphi).
    \end{equation}
    \item If $\bar\varphi\in L^\infty(X,\mm)$ and $\psi\equiv0$, then 
    $\varphi_t\in L^\infty(X,\mm)$ with
    $|\varphi_t|\le \|\bar\varphi\|_{L^\infty(X,\mm)}$ $\mm$-a.e. in $X$ for every $t\in [0,T]$.
  \item If $\varrho^n\to \varrho^\infty,\ \psi^n\to\psi^\infty$ in $L^2(0,T;\H)$,
      $\bar\varphi^n\to \bar\varphi^\infty$ in $\V$ and $\varphi^n$,
      $n\in \N\cup\{\infty\}$, are the
    corresponding solutions of \eqref{eq:82}, 
    then $\varphi^n\to
    \varphi^\infty$ strongly in $W^{1,2}(0,T;\D,\H)$.
  \end{enumerate}
\end{theorem}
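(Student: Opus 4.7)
The plan is to reduce \eqref{eq:82} to a forward Cauchy problem by time reversal, solve it via a variational/Galerkin approach exploiting the uniform ellipticity $\sfa\le P'(\varrho)\le \sfa^{-1}$ from \eqref{eq:A1}, and then read off the announced properties from a priori estimates. Setting $u_t:=\varphi_{T-t}$, the backward equation becomes $\dot u_t = P'(\varrho_{T-t})\DeltaE u_t -\psi_{T-t}$ with initial datum $u_0=\bar\varphi$. Dividing by $P'(\varrho_{T-t})$ and testing against $v\in\V$ yields the weak form
$$\int \frac{\dot u\,v}{P'(\varrho_{T-t})}\,\d\mm + \cE(u,v) = -\int \frac{\psi_{T-t}\,v}{P'(\varrho_{T-t})}\,\d\mm,$$
which features a time-dependent inner product on $\H$ uniformly equivalent to the standard one, coupled with the symmetric, closed, time-independent bilinear form $\cE$ on $\V$. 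Standard variational techniques---Galerkin approximation in finite-dimensional subspaces of $\D$ (for instance generated by a spectral basis of $\DeltaE$), a priori estimates, and passage to the limit---yield existence and uniqueness of a weak solution with the expected summability.

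For (BA1) and the strong regularity $\varphi\in W^{1,2}(0,T;\D,\H)$, I multiply the forward equation by $\dot u/P'(\varrho_{T-t})$ and integrate in space. Using symmetry and time-independence of $\cE$, this produces the pointwise-in-time identity
$$\int \frac{|\dot u|^2}{P'(\varrho_{T-t})}\,\d\mm + \frac12\frac{\d}{\d t}\cE(u,u) = -\int \frac{\psi_{T-t}\dot u}{P'(\varrho_{T-t})}\,\d\mm,$$
whose integration on $[0,T-r]$ and undoing of the time reversal give exactly \eqref{eq:44}. This manipulation is rigorous at the Galerkin level, where the finite-dimensional solution is smooth in time and $\dot u$ is an admissible test function; the identity transfers to the limit via standard lower-semicontinuity arguments. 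Once $\dot u\in L^2(0,T;\H)$ is in hand, rewriting the equation as $\DeltaE u=(\dot u+\psi_{T-t})/P'(\varrho_{T-t})$ and invoking $P'(\varrho)\ge\sfa$ gives $\DeltaE u\in L^2(0,T;\H)$, hence $u\in L^2(0,T;\D)$.

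For (BA2), with $\psi\equiv0$, a direct Stampacchia truncation against $(u_t-M)_+$ with $M:=\|\bar\varphi\|_{L^\infty(X,\mm)}$ is blocked by the fact that the integration by parts would involve $(u-M)_+\,P'(\varrho)$, which need not lie in $\V$ since $P'(\varrho)$ is only in $L^\infty$. I circumvent this through an implicit Euler discretization $v_{n+1}-h\,P'(\varrho_n)\DeltaE v_{n+1}=v_n$: after dividing by $h\,P'(\varrho_n)$, each elliptic step is coercive on $\V$, and its weak formulation admits $w:=(v_{n+1}-\|v_n\|_{L^\infty})_+\in\V$ (which belongs to $\V$ thanks to \eqref{eq:Dir-Lip}) as a test function. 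Discarding the nonnegative Dirichlet contribution via \eqref{eq:Dir-Mon} yields $v_{n+1}\le \|v_n\|_{L^\infty}$, and iteration together with the symmetric lower bound propagate to $\|\varphi_t\|_{L^\infty}\le \|\bar\varphi\|_{L^\infty}$ in the limit. Finally, (BA3) follows by applying (BA1) to the difference $\delta^n:=\varphi^n-\varphi^\infty$, which satisfies \eqref{eq:82} with coefficient $P'(\varrho^n)$, terminal datum $\bar\varphi^n-\bar\varphi^\infty$, and forcing $(\psi^n-\psi^\infty)+(P'(\varrho^\infty)-P'(\varrho^n))\DeltaE\varphi^\infty$; the Lipschitz bound on $P'$, the a priori control of $\|\DeltaE\varphi^\infty\|_{L^2(0,T;\H)}$ delivered by (BA1), and the assumed convergences combine to force $\delta^n\to0$ strongly in $W^{1,2}(0,T;\D,\H)$. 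I expect the main technical obstacle to be the rigorous Galerkin passage in the second step together with the careful verification that the measurable, time-dependent coefficient $P'(\varrho)$---bounded but only in $L^\infty$, not in $\V$---plays a benign role in all the integrations by parts above.
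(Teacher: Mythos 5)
Your construction is correct in outline but follows a genuinely different route from the paper's. After the same time reversal, the paper does not use a Galerkin scheme: it replaces the coefficient $\alpha_t=P'(\varrho_{T-t})$ by its averages on a uniform partition of $(0,T]$, solves the resulting autonomous problems exactly on each subinterval (as gradient flows of $\cE$ for the weighted scalar product of $L^2(X,\alpha^{-1}\mm)$, plus Duhamel for the forcing), and passes to the limit using the same energy dissipation identity you write; the $L^\infty$ bound (BA2) is then a truncation argument performed on these piecewise-autonomous approximations, and (BA3) is proved by weak compactness, an Egorov-type lower semicontinuity lemma for the weighted terms and the two-sided trick of Remark~\ref{rem:trick}, rather than by an estimate on the difference. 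Your difference-equation argument for (BA3) is cleaner and quantitative, and your implicit-Euler/Stampacchia argument for (BA2) is a legitimate substitute for the paper's truncation, at the price of running two separate approximation schemes (Galerkin for existence and regularity, Euler for the maximum principle), each needing its own convergence proof, where the paper makes one approximation serve all purposes.

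There is one concrete flaw: in (BA3) you invoke ``the Lipschitz bound on $P'$'' to get $(P'(\varrho^\infty)-P'(\varrho^n))\DeltaE\varphi^\infty\to0$ in $L^2(0,T;\H)$. Assumption \eqref{eq:A1} only says $P\in\rmC^1$ with $\sfa\le P'\le\sfa^{-1}$: $P$ is Lipschitz, but $P'$ need not be, so $|P'(\varrho^\infty)-P'(\varrho^n)|\le L\,|\varrho^\infty-\varrho^n|$ is not available. The conclusion survives with the argument the paper uses: from $\varrho^n\to\varrho^\infty$ in $L^2(0,T;\H)$ extract a.e.\ convergent subsequences, use the continuity of $P'$ together with the domination $|P'(\varrho^\infty)-P'(\varrho^n)|\,|\DeltaE\varphi^\infty|\le 2\sfa^{-1}|\DeltaE\varphi^\infty|\in L^2$, apply dominated convergence, and recover the full sequence by the subsequence principle. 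With this repair your (BA3) closes, since the energy identity \eqref{eq:44} applied to $\delta^n$ plus Young's inequality then gives convergence of $\dot\delta^n$, of $\cE(\delta^n_r)$ uniformly in $r$, and of $\DeltaE\delta^n$ via the equation.

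Three smaller points should also be tightened. A spectral basis of $\DeltaE$ need not exist in this $\sigma$-finite Dirichlet setting (the spectrum can be continuous); this is harmless, since any countable family spanning a dense subspace of $\D$ works, provided the Galerkin initial data are chosen converging to $\bar\varphi$ in $\V$ and one notes that the time-dependent Gram matrix of the weighted inner product is measurable and uniformly elliptic, so the coefficient ODE system is solvable in the Carath\'eodory sense. The identity \eqref{eq:44} does not ``transfer by lower semicontinuity'' (that yields only an inequality): derive it directly by multiplying the limit equation by $\dot\varphi/P'(\varrho)$, which is legitimate once you know $\dot\varphi,\DeltaE\varphi\in L^2(0,T;\H)$, exactly as in the paper. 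Finally, in (BA2) the phrase that the discrete bound ``propagates to the limit'' presupposes that your implicit Euler scheme converges to the solution of \eqref{eq:82}; this is standard (an energy estimate for the scheme, identification of the limit through the weak formulation, and the uniqueness you have already established), but it is the only step transferring the $L^\infty$ bound from the scheme to $\varphi$, so it must be carried out.
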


\begin{remark}[Forward adjoint linearized equation]
  \label{rem:forward-linear}
  \upshape
  By time reversal, the previous
  Theorem is equivalent to the 
  analogous result 
  for the forward linearized equation
  \begin{align}
    \label{eq:82for}
    \frac\d\dt \zeta-P'(\varrho)\DeltaE\zeta=\psi\in L^2(0,T;\H),
    \qquad
    \zeta_0=\bar\zeta\in \V,
  \end{align}
  that admits a unique solution 
  $\zeta\in \Sobolev T\D\H{}$.  
\end{remark}

The following lower semicontinuity result will often be useful.

\begin{lemma}
  \label{le:sitrovaaltrove?}
  Let $Y$ be a Polish space endowed with a 
  nonnegative $\sigma$-finite Borel measure $\nn$, 
  let $w_n\in L^2(Y,\nn)$ and $Z_n\in L^\infty(Y,\nn)$, $Z_n\ge0$.
  If $w_n\weakto w$ in $L^2(X,\nn)$ and
  $Z_n\to Z$ pointwise $\nn$-a.e.\ in $Y$, then
  \begin{equation}
    \label{eq:134}
    \liminf_{n\to\infty}\int_Y Z_n|w_n|^2\,\d\nn\ge
    \int_Y Z|w|^2\,\d\nn.
  \end{equation}
\end{lemma}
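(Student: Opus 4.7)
The plan is to exploit the convexity of $w\mapsto |w|^2$ combined with the elementary pointwise inequality $|w|^2\ge 2w\varphi-\varphi^2$, which reduces a quadratic lower bound to a linear one against arbitrary test functions $\varphi$. The main obstacle is that the weights $Z_n$ are not uniformly bounded, so one cannot directly turn the a.e.\ convergence $Z_n\to Z$ into strong $L^2$ convergence of $Z_n\varphi$. This is handled by truncation.

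First, since $Z_n\ge 0$, for every $k\in\N$ we have $Z_n\ge Z_n\wedge k$, so
\[
\liminf_{n\to\infty}\int_Y Z_n|w_n|^2\,\d\nn\ge
\liminf_{n\to\infty}\int_Y (Z_n\wedge k)|w_n|^2\,\d\nn,
\]
and it suffices to bound the right-hand side from below by $\int_Y(Z\wedge k)|w|^2\,\d\nn$; once this is done the conclusion follows from monotone convergence letting $k\to\infty$.

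Next, for any $\varphi\in L^\infty(Y,\nn)$ with $\nn(\{\varphi\neq 0\})<\infty$ the inequality $|w_n|^2\ge 2w_n\varphi-\varphi^2$ gives
\[
\int_Y (Z_n\wedge k)|w_n|^2\,\d\nn\ge 2\int_Y (Z_n\wedge k)w_n\varphi\,\d\nn
-\int_Y (Z_n\wedge k)\varphi^2\,\d\nn.
\]
Because $0\le Z_n\wedge k\le k$ and $(Z_n\wedge k)\varphi\to (Z\wedge k)\varphi$ pointwise $\nn$-a.e., with uniform domination by $k\|\varphi\|_\infty\chi_{\{\varphi\neq 0\}}\in L^2(\nn)$, dominated convergence yields $(Z_n\wedge k)\varphi\to (Z\wedge k)\varphi$ strongly in $L^2(\nn)$; coupled with $w_n\weakto w$ this gives convergence of the first term to $2\int (Z\wedge k)w\varphi\,\d\nn$. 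A second application of dominated convergence handles the term $\int(Z_n\wedge k)\varphi^2\,\d\nn$, so
\[
\liminf_{n\to\infty}\int_Y(Z_n\wedge k)|w_n|^2\,\d\nn\ge
2\int_Y(Z\wedge k)w\varphi\,\d\nn-\int_Y(Z\wedge k)\varphi^2\,\d\nn.
\]

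Finally, by $\sigma$-finiteness of $\nn$ and $w\in L^2(Y,\nn)$, the set of test functions $\varphi\in L^\infty$ with $\nn$-finite support is dense in $L^2(Y,(Z\wedge k)\nn)$; choosing a sequence $\varphi_j$ approximating $w$ in this space and using $\sup_{\varphi}\bigl(2\int a w\varphi-\int a\varphi^2\bigr)=\int a|w|^2\,\d\nn$ with $a=Z\wedge k$, one obtains
\[
\liminf_{n\to\infty}\int_Y(Z_n\wedge k)|w_n|^2\,\d\nn\ge \int_Y(Z\wedge k)|w|^2\,\d\nn,
\]
and letting $k\uparrow\infty$ by monotone convergence concludes the proof.
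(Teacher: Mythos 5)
Your proof is correct, but it follows a genuinely different route from the paper's. The paper first assumes $\nn(Y)<\infty$ and applies Egorov's theorem: on a set $B_\delta$ with $\nn(Y\setminus B_\delta)\le\delta$ the convergence $Z_n\to Z$ is uniform, so the error term $\limsup_n\|Z_n-Z\|_{L^\infty(B_\delta,\nn)}\cdot\sup_n\|w_n\|_{L^2}^2$ vanishes and one is left with the weak lower semicontinuity of the fixed-weight quadratic form $w\mapsto\int_{B_\delta}Z|w|^2\,\d\nn$; the $\sigma$-finite case is then handled by exhausting $Y$ with sets $Y_k$ of finite measure. You instead truncate the \emph{weights} at level $k$, linearize the quadratic term via the subgradient inequality $|w_n|^2\ge 2w_n\varphi-\varphi^2$ against bounded test functions with finite-measure support, pass to the limit by pairing strong $L^2$ convergence of $(Z_n\wedge k)\varphi$ (dominated convergence) with the weak convergence of $w_n$, recover $\int(Z\wedge k)|w|^2$ by optimizing in $\varphi$, and conclude with monotone convergence in $k$. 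Each step of yours checks out (in particular, the explicit truncation of $w$ on an exhausting sequence of finite-measure sets produces the maximizing $\varphi_j$, and $Z_n\wedge k\to Z\wedge k$ a.e.\ since $x\mapsto x\wedge k$ is continuous). What your approach buys: it avoids Egorov, it never invokes weak lower semicontinuity of weighted quadratic forms as a black box (that fact is exactly what the linearization trick proves), it does not use the boundedness of the individual $Z_n$, and it treats the $\sigma$-finite case directly. The paper's argument is shorter if one takes Egorov and the weak l.s.c.\ of convex continuous functionals for granted.
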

\begin{proof}
  Let us first assume that $\nn(Y)<\infty;$ 
  by Egorov's Theorem, for every $\delta>0$
  we can find a $\nn$-measurable  set $B_\delta\subset Y$ such that 
  $\nn(Y\setminus B_\delta)\le \delta$ and 
  $Z_n\to Z$ uniformly on $B_\delta$. Since $\|w_n\|_{L^2(Y,\nn)}\le C$
  independent of $n$ we obtain
  \begin{align*}
    \liminf_{n\to\infty}&
    \int_Y Z_n|w_n|^2\,\d\nn\ge
    \liminf_{n\to\infty}
    \int_{B_\delta} Z_n|w_n|^2\,\d\nn
    \\&\ge
    -C^2 \limsup_{n\to\infty}\|Z_n-Z\|_{L^\infty(B_\delta,\nn)}+
    \liminf_{n\to\infty}
    \int_{B_\delta} Z|w_n|^2\,\d\nn\ge
    \int_{B_\delta} Z|w|^2\,\d\nn.
  \end{align*}
  By letting $\delta\down0$ we obtain \eqref{eq:134}.
  When $\nn(Y)=\infty$, since $\nn$ is $\sigma$-finite, 
  we can find an increasing sequence
  $Y_k\uparrow Y$ of Borel sets with $\nn(Y_k)<\infty$.
  By the previous claim, we get
  \begin{align*}
    \liminf_{n\to\infty}&
    \int_Y Z_n|w_n|^2\,\d\nn\ge
    \liminf_{n\to\infty}
    \int_{Y_k} Z_n|w_n|^2\,\d\nn
    \ge     \int_{Y_k} Z|w|^2\,\d\nn
  \end{align*}
  for every $k\in \N$. As $k\to\infty$ we recover \eqref{eq:134}.
\end{proof}

\begin{proof}[Proof of Theorem~\ref{prop:backward-linearization}] 
  Let us fix the final time $T$ and set
  $\alpha_t:=P'(\varrho_{T-t})$, $g_t:=\psi_{T-t}$.
  We can thus consider the forward equation
  \begin{equation}
    \label{eq:124}
    \frac \d{\dt} f_t-\alpha_t\DeltaE f_t=g_t\quad\text{in }(0,T),\qquad
    f_0=\bar f=\bar\varphi,
  \end{equation}
  where $\alpha$ is a Borel map satisfying (with $\sfa$ the positive constant in \eqref{eq:A1})
  \begin{equation}
    \label{eq:125}
    0<\sfa\le \alpha\le \frac 1\sfa\qquad \mm\otimes \Leb 1\text{-a.e.\
      in }X\times (0,T).
  \end{equation}
  In order to solve \eqref{eq:124} we use a 
  piecewise constant (in time) discretization 
  of the coefficients $\alpha_t$: we introduce a uniform partition
  of the time interval $(0,T]$ of step 
  $\tau:=T/N$ given by the intervals 
  $I^N_k:=((k-1)\tau,k\tau]$, $k=1,\ldots,N$ and
  we set
  $$\alpha^N_k:= \frac 1\tau \int_{I_k^N} \alpha_r\,\d r, \qquad
  \bar
  \alpha^N_t:=
  \alpha^N_k\quad\text{if $t\in I^N_k$},
  $$
  so that $\sfa\le \bar\alpha^N\le \sfa^{-1}$.
  Applying standard result for evolution
  equation in Hilbert spaces (in particular
  we write the PDE as the gradient flow of $\cE$ w.r.t. the $L^2(X,1/\alpha^N_k\mm)$ norm when $g\equiv 0$
  and in the inhomogeneous case we use Duhamel's principle) we can find
  recursively strong
  solutions $f^N_k\in W^{1,2}(I^N_k;\D,\H)$ of 
  \begin{equation}
    \label{eq:127}
    \frac 1{\alpha^N_k}\frac \d{\d t}f^N_k-\DeltaE f^N_k=\frac
    1{\alpha^N_k}g\quad
    \text{in }I^N_k,
    \qquad f^N_k((k-1)\tau))=f^N_{k-1}((k-1)\tau),
  \end{equation}
  with the convention $f^N_0(0)=\bar f$. Defining the function $f^N(t):=f^N_k(t)$ if $t\in I^N_k$, we easily
  check that 
  $f^N\in W^{1,2}(0,T;\D,\H)$, that $f^N$ is a strong solution of the differential equation
  \begin{equation}
    \label{eq:126}
    \frac \d{\dt} f^N-\bar\alpha^N\DeltaE f^N=g\quad\text{in }(0,T),
  \end{equation}
  and that it satisfies the apriori energy dissipation identity
  \begin{equation}
    \label{eq:45}
    \int_0^s \int_X \frac
    1{\bar\alpha^N}\Big|\frac\d\dt f^N\Big|^2\,\d\mm\,\d t+
    \frac 12 \cE(f^N_s,f^N_s)=
    \int_0^s \int_X \frac
    1{\bar\alpha^N} g\,\frac\d\dt f^N\,\d\mm\,\d t+\frac 12\cE(\bar f,\bar f).
  \end{equation}
  Since $1/\bar\alpha^N\geq \sfa$ and $\bar f\in\V$, this shows in particular that $f^N$ is
  uniformly bounded in $W^{1,2}(0,T;\D,\H)$. 
  Since $\bar\alpha^N\to\alpha$ in $L^2(0,T;\H)$ 
  we can then easily pass to the limit as $N\to\infty$
  (see also the more detailed argument below), 
  obtaining
  \eqref{eq:124}.
  Since \eqref{eq:124} holds in the strong form, we can also write it as
  $$
  \frac{1}{\alpha_t}\frac \d{\dt} f_t-\DeltaE f_t=\frac{g_t}{\alpha_t}\quad\text{in }(0,T),\qquad
  $$
  and then the energy identity corresponding to \eqref{eq:44} follows by multiplying both sides by $\d f_t/\dt$.
  This proves (BA1).

  When $g\equiv0$ and $\bar f\in L^\infty(X,\mm)$ satisfies $|\bar f|\le F$
  $\mm$-a.e.\ in $X$, 
  a standard truncation argument based on \eqref{eq:127} 
  yields the recursive estimate
  \begin{displaymath}
    \|f^N_{k}(k\tau)\|_{L^\infty(X,\mm)}\le 
    \|f^N_k(t)\|_{L^\infty(X,\mm)}\le
    \|f^N_{k-1}((k-1) \tau)\|_{L^\infty(X,\mm)}
    \quad
    \text{for $t$ in }I^N_k,
  \end{displaymath}
   and therefore $|f^N(t)|\le F$ $\mm$-a.e.\ in $X$ for every $t\in
  [0,T]$; this estimate passes to the limit as $N\to\infty$
  providing the statement (BA2).

  Let us now prove the last statement (BA3); we thus consider a 
  sequence $\alpha^n$ satisfying the uniform bounds
  $\sfa\le \alpha^n\le \sfa^{-1}$ 
  and the limit $\alpha^n\to \alpha^\infty$ $\mm\otimes \Leb 1$-a.e.\
  in $X\times (0,T)$, and corresponding solutions $f^n$
  of
  \begin{equation}
    \label{eq:128}
    \frac \d\dt f^n-\alpha^n\DeltaE f^n=g^n,\qquad f^n(0)=\bar f^n,
  \end{equation}
  with $\bar f^n\to\bar f^\infty$ strongly in $\V$ and 
  $g^n\to g^\infty$ strongly in $L^2(0,T;\H)$.
  Using the energy identity \eqref{eq:44} it is easily seen that
  $(f^{n})$ is bounded in $W^{1,2}(0,T;\H)$ and in $\rmC([0,T];\V)$;
  we can also use the PDE \eqref{eq:128}
  to show that $(f_n)$ is bounded  in $L^2(0,T;\D)$. 
  Hence, possibly extracting a suitable subsequence (still denoted by $f^n$),
  we can assume that  
  $f^n\weakto f^\infty$ in $W^{1,2}(0,T;\D,\H)$,
  so that 
  $\frac \d\dt f^n\weakto \frac \d\dt f^\infty$ and
  $\DeltaE f^n\weakto \DeltaE f^\infty$
  in $L^2(0,T;\H)$. 
  Since for every $s\in [0,T]$ 
  the linear operator $f\mapsto f(s)$ 
  is continuous from $W^{1,2}(0,T;\D,\H)$
  to $\V$ thanks to \eqref{eq:279}, 
  we also obtain 
  the weak continuity property 
  $f^n(s)\weakto f^\infty(s)$ in $\V$ for every 
  $s\in [0,T]$.
  In particular $f^\infty$ satisfies \eqref{eq:128} 
  with $n=\infty$.
   
  {\color{black} Taking also Lemma~\ref{le:sitrovaaltrove?} into account,} it follows that 
  \begin{displaymath}
    \liminf_{n\to\infty} \cE(f^n(s),f^n(s))\ge \cE(f^\infty(s),f^\infty(s)),
  \end{displaymath}
  \begin{displaymath}
    \liminf_{n\to\infty} 
    \int_0^s \int_X \frac 1{\alpha^n}
    \Big|\frac\d\dt f^n\Big|^2\,\d\mm\,\d r\ge
    \int_0^s \int_X \frac 1{\alpha^\infty}
    \Big|\frac\d\dt f^\infty\Big|^2\,\d\mm\,\d r
  \end{displaymath}
  and
  \begin{displaymath}
    \lim_{n\to\infty} \cE(\bar f^n,\bar f^n)= \cE(\bar f^\infty, \bar f^\infty),\quad
    \lim_{n\to\infty} 
    \int_0^s \int_X \frac {g^n}{\alpha^n} \frac\d\dt f^n\,\d\mm\,\d r=
    \int_0^s \int_X \frac {g^\infty}{\alpha^\infty} \frac\d\dt f^\infty\,\d\mm\,\d r,
  \end{displaymath}
  so that by \eqref{eq:44} we obtain
  \begin{align*}
    \lim_{n\to\infty} 
    \Big( \int_0^s \int_X \frac 1{\alpha^n}\Big|\frac\d\dt f^n\Big|^2\,\d\mm
    &+
    \frac 12\cE(f^n(s),f^n(s))\Big)
    =
    \int_0^s \int_X \frac {g^\infty}{\alpha^\infty}  \frac\d\dt
    f^\infty\,\d\mm\,\d r
    +\frac 12 \cE(\bar f^\infty,\bar f^\infty)
    \\&=
    \int_0^s \int_X \frac 1{\alpha^\infty}
    \Big|\frac\d\dt f^\infty\Big|^2\,\d\mm\,\d r+
    \frac 12\cE(f^\infty(s),f^\infty(s)).
  \end{align*}
  We conclude (see Remark~\ref{rem:trick} below)
  that $\sqrt{\frac 1{\alpha^n}}\frac \d\dt f^n\to \sqrt{\frac 1{\alpha^\infty}}\frac \d\dt f^\infty$ 
  strongly in $L^2(0,T;\H)$, {\color{black} so that we can use the strong convergence and the uniform 
  boundedness from below of $\alpha^n$ to conclude that}
  $f^n\to f^\infty$ strongly in $W^{1,2}(0,T;\D,\H)$.
\end{proof}
\begin{remark} \label{rem:trick} 
  \upshape 
  We will repeatedly use the following 
  simple property, valid for 
  sequences $(a_n)$, $(b_n)$ of nonnegative real numbers: 
  if 
  $$ 
  \liminf_{n\to\infty}a_n\ge a,\quad\liminf_{n\to\infty} b_n\ge b,\quad 
  \limsup_{n\to\infty} \,(a_n+b_n)\le (a+b), 
  $$ 
  then 
  $$ 
\lim_{n\to\infty}a_n=a\qquad\text{and}\qquad\lim_{n\to\infty}b_n=b. 
  $$ 
\end{remark} 
 The next proposition provides existence and regularity for the linearization of the nonlinear diffusion
equation of Theorem~\ref{thm:nonlin-diff}.

In the statement we will make use of the space $\D'$, the dual of $\D$, and
\begin{equation}
  \label{eq:70}
  \Ddual:=\Big\{\ell\in \D':\ |\langle \ell,f\rangle|\le C\|\DeltaE
  f\|_{\H}\quad\forevery f\in \D\Big\}.
\end{equation}
Since 
$\D\hookrightarrow^{ds}\V$ we have
$\H\hookrightarrow^{ds}\V'\hookrightarrow^{ds}\D'$
with continuous and dense inclusions; 
the duality pairing between $\D'$ and $\D$ is an extension
of the one between $\V'$ and $\V$ and of the scalar product in $\H$,
and we will still denote it as $\duality{}{}\cdot\cdot$
whenever no misunderstanding are possible. 
Denoting by $\|\ell\|_{\Ddual}$ the least constant $C$ in
\eqref{eq:70}, $\Ddual$ is also a Hilbert space, precisely it can be identified with 
the dual of the pre-Hilbert space one obtains endowing $\D$ with the norm $\|\DeltaE f\|_{\H}$, smaller
than the canonical norm of $\D$. Arguing as in Section~\ref{subsec:completion},
we can and will identify $\Ddual$ with the finiteness domain in $\D'$
of the lower semicontinuous functional 
\begin{equation}
  \label{eq:277}
  \frac 12 \|\ell\|_{\Ddual}^2:=
  \sup_{f\in\D}\duality{}{}\ell f -\frac 12 \int_X |\DeltaE f|^2\,\d\mm.
\end{equation}
By duality, any element $h\in\H$ induces an element $\DeltaE h\in\Ddual$,
via the relation 
$$\duality{\D'}\D{\DeltaE h} f=\int_X h \;\DeltaE f\,\d\mm.$$
We shall also make use of the space
$W^{1,2}(0,T;\H,\Ddual)$, fitting in our framework because both $\H$
and $\Ddual$ embed into the space $\D'$. 
Since $\Ddual\hookrightarrow \D'$ 
and the duality formula for complex interpolation yields
 $(\H,\D')_{1/2}=\V'$, \eqref{eq:11} yields 
\begin{equation}
  \label{eq:278}
  W^{1,2}(0,T;\H,\Ddual)\hookrightarrow 
  W^{1,2}(0,T;\H,\D')\hookrightarrow
  \rmC([0,T];\V').
\end{equation}
\begin{theorem}[Forward linearized equation]
  \label{thm:reg2}
  Let $P$ be a regular monotone nonlinearity as 
  in \eqref{eq:A1} and let $\rho\in L^2(0,T;\H)$. 
  \begin{enumerate}[\rm (L1)]
  \item    \label{item:reg2.2}
    For every $\bar w\in \Vdual1$, $T>0$ there exists a unique solution
    $w\in W^{1,2}(0,T;\H,\Ddual)$ of 
    \begin{equation}
      \label{eq:81}
      \frac\d\dt w=\DeltaE(P '(\varrho)w),\qquad
      w_0=\bar w
    \end{equation}
    in the weak formulation (recall \eqref{eq:279} and \eqref{eq:278}) 
    \begin{equation}
      \label{eq:52}
      \duality{\V'}{\V}{w_s}{\vartheta_s}
      -\int_{0}^{s} \int_X 
    \Big(\partial_t \vartheta_t + P'(\varrho_{t}) \DeltaE\vartheta_t\Big)
    w_{t}\,\d\mm\,\d t=
    \duality{\V'}{\V}{\bar w}{\vartheta_0} 
    \quad
    \forall s\in [0,T],
    \end{equation}
    for every $\vartheta\in W^{1,2}(0,T;\D,\H)$. In addition, 
    the function $w$ satisfies
    \begin{equation}
      \label{eq:131}
      \int_0^t \int_X P'(\varrho_r)|w_r|^2\,\d\mm\,\d r+
      \frac 12 \|w_t\|_{\Vdual1}^2=\frac 12\|\bar w\|_{\Vdual1}^2\qquad \forall t\in [0,T]
    \end{equation}
    and, for every solution $\varphi$ of \eqref{eq:82} with $\psi\equiv0$ one has
    \begin{equation}
      \label{eq:130}
      \duality{\V'}\V{w_t}{\varphi_t}=
      \duality{\V'}\V{\bar w}{\varphi_0}.
    \end{equation}
    \item 
      \label{item:reg2.3}
      If $\bar w=\DeltaE \bar\zeta$ 
      for some $\bar \zeta\in \V$, then 
      $w_t=\DeltaE \zeta_t$ for every $t\in [0,T]$, where
      $\zeta\in W^{1,2}(0,T;\D,\H)$ is the solution of
      \eqref{eq:82for} with $\psi\equiv0$.
    \item If $\varrho^n\to \varrho^\infty$ in $L^2(0,T;\H)$,
      $\bar w^n\to \bar w^\infty$ in $\Vdual1$ and $w^n$,
      $n\in \N\cup\{\infty\}$, are the
    corresponding solutions of \eqref{eq:81}, 
    then $w^n\to
    w^\infty$ strongly in $W^{1,2}(0,T;\H,\Ddual)$.
  \end{enumerate}
\end{theorem}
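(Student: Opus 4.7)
The plan is to exploit the duality between the forward equation \eqref{eq:81} and the backward adjoint equation \eqref{eq:82} already solved in Theorem~\ref{prop:backward-linearization}, leveraging the fact that $-\DeltaE:\Vhom\cE\to\Vdual\cE$ is a Riesz isomorphism (Proposition~\ref{prop:allduals}(c)).

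For uniqueness, given a solution $w$ of \eqref{eq:52} with $\bar w=0$, I fix $s\in(0,T]$ and $\bar\varphi\in\D$ and use as test $\vartheta_t:=\varphi_t$ on $[0,s]$, extended by $\vartheta_t:=\bar\varphi$ on $[s,T]$, where $\varphi$ is the solution of \eqref{eq:82} on $[0,s]$ with $\psi\equiv 0$ and terminal value $\bar\varphi$; this is a legitimate element of $W^{1,2}(0,T;\D,\H)$ because $\bar\varphi\in\D$. Since the integrand of \eqref{eq:52} vanishes on $[0,s]$ and $\bar w=0$, we obtain $\int_X w_s\bar\varphi\,\d\mm=0$ for every $\bar\varphi\in\D$; density of $\D$ in $\H$ forces $w_s=0$.

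For existence I first treat the dense class $\bar w=\DeltaE\bar\zeta$ with $\bar\zeta\in\V$, which simultaneously delivers part (L2). Remark~\ref{rem:forward-linear} produces $\zeta\in W^{1,2}(0,T;\D,\H)$ solving $\partial_t\zeta=P'(\varrho)\DeltaE\zeta$ with $\zeta_0=\bar\zeta$, and I set $w_t:=\DeltaE\zeta_t$. The isometry $\|\DeltaE\zeta_t\|_{\Vdual1}^2=\cE(\zeta_t,\zeta_t)$ from Proposition~\ref{prop:allduals}(c) gives $w\in\rmC([0,T];\Vdual1)$, while $P'(\varrho)w\in L^2(0,T;\H)$ yields $\partial_t w=\DeltaE(P'(\varrho)w)\in L^2(0,T;\Ddual)$; the weak formulation \eqref{eq:52} follows by integration by parts in time against any $\vartheta\in W^{1,2}(0,T;\D,\H)$. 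The energy identity \eqref{eq:131} reduces to $\frac{\d}{\dt}\tfrac12\cE(\zeta_t,\zeta_t)=\cE(\zeta_t,\partial_t\zeta_t)=-\int_X P'(\varrho_t)|\DeltaE\zeta_t|^2\,\d\mm$, again by Proposition~\ref{prop:allduals}(c)--(d). The density of $\{\DeltaE\bar\zeta:\bar\zeta\in\D\}$ in $\Vdual1$ (Lemma~\ref{lem:Alica}(b)) together with the energy identity applied to differences $w^n-w^m$ (which solve \eqref{eq:81} by linearity in $\bar w$ with $\varrho$ fixed) provides a Cauchy sequence in $W^{1,2}(0,T;\H,\Ddual)$ converging to the solution for arbitrary $\bar w\in\Vdual1$. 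Identity \eqref{eq:130} is then a direct consequence of \eqref{eq:52} applied to $\vartheta=\varphi$ obtained from Theorem~\ref{prop:backward-linearization}.

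For the stability statement (L3), the uniform energy bound \eqref{eq:131} (with the uniform lower bound $P'(\varrho^n)\ge\sfa$) provides weak compactness of $w^n$ in $W^{1,2}(0,T;\H,\Ddual)$; after extracting a subsequence along which $P'(\varrho^n)\to P'(\varrho^\infty)$ pointwise $\mm\otimes\Leb1$-a.e., I pass to the limit in the linear weak formulation \eqref{eq:52} to identify the weak limit as $w^\infty$. Upgrading weak to strong convergence follows the scheme at the end of the proof of Theorem~\ref{prop:backward-linearization}: Lemma~\ref{le:sitrovaaltrove?} provides lower semicontinuity of $\int_X P'(\varrho^n)|w^n|^2\,\d\mm$, convergence of the right-hand side of \eqref{eq:131} holds by continuity of $\bar w^n\mapsto\|\bar w^n\|_{\Vdual1}$, and the trick of Remark~\ref{rem:trick} forces strong convergence of $\sqrt{P'(\varrho^n)}\,w^n$ in $L^2(0,T;\H)$ and of $w^n$ in $\rmC([0,T];\Vdual1)$; the equation \eqref{eq:81} then upgrades this to strong convergence in $W^{1,2}(0,T;\H,\Ddual)$. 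The main obstacle is justifying the energy identity \eqref{eq:131} rigorously, since $t\mapsto\|w_t\|_{\Vdual1}^2$ must be differentiated while $w_t$ is a priori only in $\H\cap\Vdual1$; this is the reason for first handling the dense class $\bar w=\DeltaE\bar\zeta$, where the identity reduces to a clean computation for $\zeta$ in $W^{1,2}(0,T;\D,\H)$, and then transferring the identity to the general case by the linear estimate.
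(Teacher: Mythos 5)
Your proposal is correct and follows essentially the same route as the paper: construct the solution as $w_t=\DeltaE\zeta_t$ from the forward adjoint flow (giving (L2) and the energy identity via $\cE(\zeta_t,\zeta_t)=\|w_t\|_{\Vdual1}^2$), extend to general $\bar w\in\Vdual1$ by linearity and the density of $\{\DeltaE\bar\zeta\}$ from Lemma~\ref{lem:Alica}(b), obtain uniqueness by duality against backward solutions (which is exactly what \eqref{eq:130} encodes), and prove (L3) by the weak-compactness plus lower-semicontinuity scheme of (BA3) with Lemma~\ref{le:sitrovaaltrove?} and Remark~\ref{rem:trick}. The only nitpick is that in the uniqueness step the relevant density is that of $\D$ in $\V$ (or in $\H$ for a.e. $s$ plus continuity in $\V'$), not of $\D$ in $\H$ as stated, since a priori $w_s\in\V'$ for the fixed time $s$.
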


\begin{proof}[Proof of Theorem~\ref{thm:reg2}] 
  Let us first show the second claim: 
  if $w_t=\DeltaE \zeta_t\in W^{1,2}(0,T;\H;\Ddual)$
  for the solution $\zeta\in W^{1,2}(0,T;\D;\H)$ of \eqref{eq:82for} 
  and if $\vartheta$ is any function in $W^{1,2}(0,T;\D,\H)$ we have
  \begin{align*}
    \duality{\V'}{\V}{w_t}{\vartheta_t}=
    \duality{\V'}\V{ \DeltaE\zeta_t}{\vartheta_t}=
    -\cE(\zeta_t,\vartheta_t),
  \end{align*}
  so that $t\mapsto \duality{\V'}\V{ w_t}{\vartheta_t}$ 
  is absolutely continuous in $[0,T]$ and 
  for $\Leb 1$-a.e.\ $t\in (0,T)$ 
  its derivative is given by
  \begin{align*}
    -\frac \d{\dt}\cE(\zeta_t,\vartheta_t)&=
    \int_X \Big(\DeltaE \zeta_t \,\dot\vartheta_t+
    \DeltaE \vartheta_t \,\dot\zeta_t\Big)\,\d\mm=
    \int_X w_t\Big(\dot \vartheta_t+
    P'(\varrho_t)\,\DeltaE\vartheta_t\Big)
    \,\d\mm.
  \end{align*}
  A further integration in time yields \eqref{eq:52}.
  In this case \eqref{eq:131} is a consequence of 
  \eqref{eq:44} with $\psi\equiv0$, by noticing that 
  \begin{displaymath}
    \cE(\zeta_t,\zeta_t)=\cE^*(w_t,w_t)=\|w_t\|^2_{\Vdual1},\qquad
    \dot\zeta_t=P'(\varrho_t)w_t.
  \end{displaymath}
  The uniqueness of the solution to \eqref{eq:52} is clear
  thanks to \eqref{eq:130}.
  
  The general result stated in the first claim for arbitrary 
  $\bar w\in \Vdual1$ follows by the linearity of the problem,
  the estimate \eqref{eq:131}, and the density 
  of the set $\{\DeltaE \bar\zeta:\ \bar \zeta\in \V\}$ 
  in $\Vdual1$, see Lemma~\ref{lem:Alica}(b).

  The proof of (L3) is completely analogous to the proof
  of (BA3) in Theorem~\ref{prop:backward-linearization}:
  the weak convergence of $w^n$ to $w^\infty$ in
  $W^{1,2}(0,T;\H,\Ddual)$ follows by the a priori estimate 
  \eqref{eq:131}, the linearity of the problem w.r.t. $w$ for given $\varrho$
  and the uniqueness of its solution. 
  Strong convergence can then be obtained
  by standard lower semicontinuity arguments and 
  Remark~\ref{rem:trick}, by passing to the limit in \eqref{eq:131}.
\end{proof}

\begin{theorem}[Perturbation properties]
 \label{thm:precise-limit}
 Let us suppose that 
 $P$ 
 is a regular monotone nonlinearity as in \eqref{eq:A1}.
 Let $\bar\varrho_\eps:=
 \bar\varrho+\eps\bar w_\eps$ with $\bar\varrho,\,\bar\varrho_\eps\in
 L^2\cap  L^\infty (X,\mm)$, $\bar\varrho_\eps$ uniformly bounded in
 $L^2\cap L^\infty(X,\mm)$, and $\bar w_\eps\to \bar w$ strongly
 in $\Vdual1$ as $\eps\down0$. Let
 $\varrho_{\eps,t}$ (resp. $\varrho_t$) be the solutions provided by
 Theorem~\ref{thm:nonlin-diff} with initial datum $\bar\varrho_\epsilon$ (resp. $\bar\varrho$) and set
 $$w_{\eps,t}:=\frac{\varrho_{\eps,t}-\varrho_t}{\eps}.$$
 Then for every $t\ge0$
 there exists the limit $\lim_{\eps\down0}w_{\eps,t}=w_t$
 strongly in $\Vdual1$, the limit function $w$ belongs to  $W^{1,2}(0,T;\H,\Ddual)$  and satisfies \eqref{eq:81}.
\end{theorem}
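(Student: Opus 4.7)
The idea is to view $w_\eps$ as the solution of a linearized equation with a perturbed coefficient, pass to the limit by weak compactness, identify the limit via the uniqueness in Theorem~\ref{thm:reg2}, and finally upgrade to strong convergence through the energy identity \eqref{eq:131}.

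First, subtracting the two nonlinear diffusion equations and dividing by $\eps$ one finds that $w_{\eps,t}$ solves
\begin{equation*}
  \frac\d\dt w_{\eps,t}=\DeltaE\big(a_{\eps,t}\,w_{\eps,t}\big),\qquad
  a_{\eps,t}(x):=\int_0^1 P'\big(\varrho_t+\sigma(\varrho_{\eps,t}-\varrho_t)\big)\,\d\sigma,\qquad
  w_{\eps,0}=\bar w_\eps,
\end{equation*}
in the sense of \eqref{eq:52} with $P'(\varrho)$ replaced by $a_\eps$. Thanks to \eqref{eq:A1} one has $\sfa\le a_\eps\le \sfa^{-1}$, so the same energy identity proved in Theorem~\ref{thm:reg2} (based on $\cE^*(w,\DeltaE f)=-\int wf\,\d\mm$ from Proposition~\ref{prop:allduals}(d)) applies and gives
\begin{equation*}
  \tfrac12\|w_{\eps,t}\|^2_{\Vdual1}+\int_0^t\!\!\int_X a_{\eps,r}\,|w_{\eps,r}|^2\,\d\mm\,\d r=\tfrac12\|\bar w_\eps\|^2_{\Vdual1}.
\end{equation*}
Together with the $L^\infty$-bound on $\bar\varrho_\eps$ inherited by $\varrho_{\eps,t}$ via (ND2), this yields the uniform estimates $w_\eps$ bounded in $L^\infty(0,T;\Vdual1)\cap L^2(0,T;\H)$ and, via the PDE, $\dot w_\eps$ bounded in $L^2(0,T;\Ddual)$; hence $w_\eps$ is bounded in $W^{1,2}(0,T;\H,\Ddual)\hookrightarrow \rmC([0,T];\V')$.

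Next, the contraction property \eqref{eq:51} gives $\|\varrho_{\eps,t}-\varrho_t\|^2_{\Vdual1}+2\sfa\int_0^T\|\varrho_{\eps,r}-\varrho_r\|^2_\H\,\d r\le \eps^2\|\bar w_\eps\|^2_{\Vdual1}$, so along a subsequence $\varrho_\eps\to\varrho$ $\mm\otimes\Leb1$-a.e.\ in $X\times(0,T)$; by the continuity and boundedness of $P'$ we then obtain $a_\eps\to P'(\varrho)$ a.e.\ and in every $L^p_{\mathrm{loc}}$ with $p<\infty$. Extracting a further subsequence, $w_\eps\weakto w$ in $W^{1,2}(0,T;\H,\Ddual)$, $w_{\eps,t}\weakto w_t$ in $\Vdual1$ for every $t$, and $w_\eps\weakto w$ in $L^2(0,T;\H)$. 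For any test $\vartheta\in W^{1,2}(0,T;\D,\H)$, the term $a_\eps\DeltaE\vartheta$ converges strongly in $L^2(0,T;\H)$ by dominated convergence, so its product with $w_\eps\weakto w$ passes to the limit and $w$ solves the weak formulation \eqref{eq:52} with initial datum $\bar w$. The uniqueness asserted by Theorem~\ref{thm:reg2}(L1) identifies the full limit and shows that $w\in W^{1,2}(0,T;\H,\Ddual)$ satisfies \eqref{eq:81}.

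The remaining and most delicate point is the strong convergence in $\Vdual1$ at each $t$. From the energy identities for $w_\eps$ and $w$,
\begin{equation*}
  \tfrac12\|w_{\eps,t}\|^2_{\Vdual1}+\int_0^t\!\!\int_X a_{\eps,r}|w_{\eps,r}|^2\,\d\mm\,\d r=\tfrac12\|\bar w_\eps\|^2_{\Vdual1},\qquad
  \tfrac12\|w_t\|^2_{\Vdual1}+\int_0^t\!\!\int_X P'(\varrho_r)|w_r|^2\,\d\mm\,\d r=\tfrac12\|\bar w\|^2_{\Vdual1}.
\end{equation*}
The assumed strong convergence $\bar w_\eps\to\bar w$ in $\Vdual1$ makes the right-hand sides converge, while Lemma~\ref{le:sitrovaaltrove?} applied to $Z_\eps=a_\eps$, $Z=P'(\varrho)$, yields
\begin{equation*}
  \liminf_{\eps\downarrow0}\int_0^t\!\!\int_X a_{\eps,r}|w_{\eps,r}|^2\,\d\mm\,\d r\ge \int_0^t\!\!\int_X P'(\varrho_r)|w_r|^2\,\d\mm\,\d r.
\end{equation*}
Combining these facts yields $\limsup_\eps\|w_{\eps,t}\|_{\Vdual1}\le\|w_t\|_{\Vdual1}$, which together with the weak convergence $w_{\eps,t}\weakto w_t$ in the Hilbert space $\Vdual1$ gives the desired strong convergence. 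Invoking Remark~\ref{rem:trick}, a further consequence is the strong convergence of $a_\eps^{1/2}w_\eps$ in $L^2(0,T;\H)$ and of $w_\eps$ in $W^{1,2}(0,T;\H,\Ddual)$. The principal obstacle is precisely this passage from weak to strong convergence: it crucially depends on having at disposal the \emph{equality} in the energy balance (not merely the contraction \eqref{eq:51}), which in turn requires the $L^\infty$-regularity propagated by (ND2) in order to justify the chain rule and to keep $a_\eps$ equibounded.
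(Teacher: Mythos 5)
Your proof is correct, and where it matters most it coincides with the paper's argument: the uniform bounds come from \eqref{eq:51}, and the upgrade from weak to strong convergence is exactly the paper's combination of the energy balance for $w_\eps$ (your identity with the coefficient $a_{\eps}$ is precisely the integrated form of \eqref{eq:132} divided by $\eps^2$, since your averaged coefficient $a_\eps$ coincides with the difference quotient $Z_\eps$ used there), the identity \eqref{eq:131} for the limit, Lemma~\ref{le:sitrovaaltrove?}, and Remark~\ref{rem:trick}. The genuine difference lies in the identification of the limit: you regard $w_\eps$ as an exact solution of the divergence-form equation $\frac{\d}{\d t}w_\eps=\DeltaE(a_\eps w_\eps)$, pass to the limit directly in the weak formulation \eqref{eq:52} (strong $L^2$ convergence of $a_\eps\DeltaE\vartheta$ against weak convergence of $w_\eps$) and invoke the uniqueness of Theorem~\ref{thm:reg2}(L1); the paper instead tests $w_\eps$ against solutions of the backward adjoint equation \eqref{eq:82} with arbitrary $\psi$ and identifies $\eps^{-1}(P(\varrho_\eps)-P(\varrho))\weakto P'(\varrho)w$ through the Taylor estimate \eqref{eq:35}--\eqref{eq:36}, arriving at \eqref{eq:43}. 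Your route is arguably a bit more economical: it never needs the quadratic remainder estimate, hence the uniform $L^\infty$ bound on the densities plays no role in your coefficient argument (equiboundedness $\sfa\le a_\eps\le\sfa^{-1}$ follows from \eqref{eq:A1} alone), so your closing remark that (ND2) is needed ``to keep $a_\eps$ equibounded'' is misplaced — it is the paper's identification step, not yours, that exploits the $L^\infty$ bounds; the paper's duality route, on the other hand, yields the explicit representation \eqref{eq:43} which is reused later. Two small points of hygiene: the energy identity for $w_\eps$ is not literally ``proved in Theorem~\ref{thm:reg2}'' (there the coefficient is $P'(\varrho)$ and the proof proceeds via data $\DeltaE\bar\zeta$ and density), but for your $w_\eps$, which lies in $W^{1,2}(0,T;\V,\Vdual\cE)$ with $a_\eps w_\eps\in L^2(0,T;\V)$, it follows directly from the chain rule for $\cE^*$ and Proposition~\ref{prop:allduals}(d), exactly as in \eqref{eq:132}; and the a.e.\ convergence $a_\eps\to P'(\varrho)$ holds only along subsequences, so both in the identification and in the $\liminf$ step one should argue on an arbitrary subsequence (realizing the $\liminf$) and use uniqueness of the limit, as the paper does explicitly.
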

\begin{proof} 
  By the Lipschitz estimate \eqref{eq:51} of 
  $\sfS:\Vdual\cE\to L^2(0,T;\H)\cap L^\infty(0,T;\Vdual\cE)$ 
  we know that $(w_\eps)$ is bounded in $L^2(0,T;\H)$ and in
  $L^\infty(0,T;\Vdual1)$, in particular this gives $\varrho_\varepsilon\to\varrho$ in $L^2(0,T;\H)$. 
  We can then find a subsequence
  $\eps_n\down0$ such that $w_{\eps_n}\to w$ weakly in $L^2(0,T;\H)$ 
  and weakly$^*$ in
  $L^\infty(0,T;\Vdual1)$.
   
  Since 
  $P\in \rmC^1(\R)$ and there exists a constant $R>0$ 
  such that $|\varrho_\eps|\le R,\ |\varrho|\le R$,
  we can use the inequalities (depending on the parameter $\delta>0$
  and on the fixed constant $R$)
 \begin{equation}
   \label{eq:35}
   \big|P(\varrho_\eps)-P(\varrho)-P'(\varrho)(\varrho_\eps-\varrho)\big|\le
   \delta|\varrho_\eps-\varrho|+C_\delta|\varrho_\eps-\varrho|^2,
 \end{equation}
 and the uniform bound of  $\eps^{-1}(\varrho_\eps-\varrho)$ 
 in $L^2(0,T;\H)$ to obtain 
 \begin{equation}
   \label{eq:36}
   \eps_n^{-1}\big(P(\varrho_{\eps_n})-P(\varrho)\big)\weakto P'(\varrho)w\quad
   \text{weakly in }L^2(0,T;\H).
 \end{equation}
In fact, since $P$ is Lipschitz,
$\eps^{-1}(P(\varrho_\eps)-P(\varrho))$ 
is also uniformly bounded in $L^2(0,T;\H)$ 
thus we can use a bounded test function $\zeta\in L^2(0,T;\H)$
to characterize the weak limit in \eqref{eq:36}.
For such a test function, denoting by $E$ an upper bound 
of $\eps^{-1}\|\varrho_\eps-\varrho\|_{L^2(0,T;\H)} $, 
we have
\begin{align*}
  \Big|\int_0^T\int_X &
  \Big(\frac{P(\varrho_\eps)-P(\varrho)}\eps
 -P'(\varrho)\frac{\varrho_\eps-\varrho}\eps\Big)\zeta\,\d\mm\,\d
 t\Big|
 \le \delta\,
 E\,\|\zeta\|_{L^2(0,T;\H)}+
  \eps\, C_\delta\,E^2\,\sup|\zeta|
\end{align*}
 thus showing \eqref{eq:36} as $\delta>0$ is arbitrary.
 
 Let us now consider for every $t>0$ and $\bar\varphi\in \V$ the
 solution $\varphi$
 of \eqref{eq:82} with final condition $\varphi_t=\bar\varphi$ and arbitrary $\psi\in L^2(0,T;\H)$,
 thus satisfying (by the Leibniz rule)
 \begin{displaymath}
   \int_X w_{\eps,t}\bar\varphi\,\d\mm=
   \eps^{-1}\int_0^t
   \int_X \Big(\big(P(\varrho_{\eps,r})-P(\varrho_r)\big)\DeltaE\varphi_r+
   (\varrho_{\eps,r}-\varrho_r)\dot \varphi_r\Big)\,\d\mm\,\d r+
   \int_X \bar w_\eps \varphi_0\,\d\mm.
 \end{displaymath}
 Since $\dot \varphi,\,\DeltaE\varphi\in L^2(0,T;\H)$ we obtain that
 for every $t>0$ the sequence $(w_{\eps_n,t})$ converges weakly in $\V'$
 (and thus in $\Vdual1$, since it is uniformly bounded in $\Vdual1$) 
 and the limit $\hat w_t$ will satisfy
 \begin{equation}
   \label{eq:43}
   \langle \hat w_t,\bar\varphi\rangle=
   \int_0^t \int_X w_r\psi_r \,\d\mm\,\d r+
   \int_X \bar w \varphi_0\,\d\mm.
 \end{equation}
 Choosing in particular $\psi\equiv0$,
 the previous formula identifies the limit, so that
 $\hat w_t=w_t$ for $\Leb 1$-a.e.\ $t\in (0,T)$
 and moreover the limit does not depend on the particular subsequence
 $(\eps_n)$. Since $\psi$ is arbitrary, we also get
 that $w$ satisfies \eqref{eq:81}
 in the weak sense of \eqref{eq:52}.

 In order to prove strong convergence of $w_\eps$ to $w$
 in $\Vdual1$ for every $t\in [0,T]$ and
 in $L^2(0,T;\H)$, we start from \eqref{eq:132}
 written for $\varrho^1:=\varrho$ and $\varrho^2:=\varrho_\eps$.
 Since
 \begin{displaymath}
   \liminf_{\eps\down0}\frac 1{\eps^2}\cE^*(\varrho(t)-\varrho_\eps(t),\varrho(t)-\varrho_\eps(t))=
   \liminf_{\eps\down0}\cE^*(w_\eps(t),w_\eps(t))\ge \cE^*(w(t),w(t)),
 \end{displaymath}
 and the limit $w$ satisfies \eqref{eq:131},
 by the argument of Remark~\ref{rem:trick} it is sufficient to prove
 that
 \begin{equation}
   \label{eq:133}
   \liminf_{\eps\down0} \frac 1{\eps^2}\int_0^t \int_X
   (\varrho-\varrho_\eps)(P(\varrho)-P(\varrho_\eps))\,\d\mm\,\d s\ge
   \int_0^t \int_X P'(\varrho)|w|^2\,\d\mm\,\d s.
 \end{equation}
 Setting
 \begin{displaymath}
   Z_\eps:=
   \begin{cases}
\displaystyle{\frac{P(\varrho)-P(\varrho_\eps)}{\varrho-\varrho_\eps}}&
     \text{if }\varrho\neq\varrho_\eps\\
     P'(\varrho)&\text{if }\varrho=\varrho_\eps,
   \end{cases}
 \end{displaymath}
 we obtain a family of nonnegative and uniformly bounded
 functions that satisfies $Z_{\eps_n}\to P'(\varrho)$ $\mm\otimes \Leb 1$-a.e.\ in $X\times (0,T)$
 whenever $\varrho_{\eps_n}\to\varrho$ $\mm\otimes \Leb 1$-a.e.\ in $X\times (0,T)$.
 On the other hand
 \begin{displaymath}
\frac1{\eps^2}(\varrho-\varrho_\eps)(P(\varrho)-P(\varrho_\eps))=
   Z_\eps |w_\eps|^2.
 \end{displaymath}
 We conclude by applying Lemma~\ref{le:sitrovaaltrove?} to a subsequence $(\eps_n)$ on which the $\liminf$ in
 \eqref{eq:133} is attained and convergence $\mm\otimes \Leb 1$-a.e.\ in $X\times (0,T)$ holds.

\end{proof}

Let $\varrho\in \ND0T$ be the solution provided by Theorem~\ref{thm:nonlin-diff} with initial datum $\bar\varrho\in \V$.
By applying Theorem~\ref{thm:precise-limit} to the difference quotients
$$
\frac{1}{\eps}(\varrho_{t+\eps}-\varrho_t)
$$
and using the strong differentiability of $t\mapsto\varrho_t$  with respect to $\Vdual1$ (see \eqref{eq:76}) we obtain
the following corollary.
\begin{corollary}
  \label{cor:rho-derivative}
  Let $\varrho\in \ND0T$ be the solution provided by
  Theorem~\ref{thm:nonlin-diff} with initial datum $\bar\varrho\in
   \V\cap L^\infty(X,\mm)$. 
  Then $w:=\frac \d\dt \varrho$ is 
  a solution to \eqref{eq:52}, 
  with initial datum $\bar w=\DeltaE P(\bar\varrho)$.
\end{corollary}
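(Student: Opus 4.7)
The plan is to apply Theorem~\ref{thm:precise-limit} to the family of perturbations of $\bar\varrho$ given by the semigroup itself, i.e.\ $\bar\varrho_\eps := \sfS_\eps \bar\varrho = \varrho_\eps$, so that the forward difference quotients in time get identified, via the semigroup property, with the difference quotients of the initial data in Theorem~\ref{thm:precise-limit}.

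First I would verify the hypotheses of Theorem~\ref{thm:precise-limit}. Since $\bar\varrho\in\V\cap L^\infty(X,\mm)$, the comparison estimate in (ND2) yields that $\bar\varrho_\eps=\varrho_\eps$ is uniformly bounded in $L^2\cap L^\infty(X,\mm)$. Setting
\[
\bar w_\eps:=\frac{\bar\varrho_\eps-\bar\varrho}\eps=\frac{\varrho_\eps-\bar\varrho}\eps,
\]
property (ND3), and in particular \eqref{eq:76} evaluated at $t=0$ (which holds since $\bar\varrho\in\V$), gives
\[
\bar w_\eps\longrightarrow \DeltaE P(\bar\varrho)\quad\text{strongly in }\Vdual 1,
\]
so the limit $\bar w:=\DeltaE P(\bar\varrho)$ belongs to $\Vdual 1$ as required.

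By the semigroup property, $\sfS_t\bar\varrho_\eps=\sfS_{t+\eps}\bar\varrho=\varrho_{t+\eps}$, so the function $w_{\eps,t}$ appearing in Theorem~\ref{thm:precise-limit} is exactly the forward difference quotient
\[
w_{\eps,t}=\frac{\varrho_{t+\eps}-\varrho_t}\eps.
\]
Theorem~\ref{thm:precise-limit} then produces, for every $t\ge 0$, a strong $\Vdual 1$-limit $w_t$, with $w\in W^{1,2}(0,T;\H,\Ddual)$ solving \eqref{eq:81} in the weak form \eqref{eq:52} with initial datum $\bar w=\DeltaE P(\bar\varrho)$.

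It remains to identify $w$ with $\frac{\d}{\dt}\varrho$. But $\varrho\in\ND0T\subset\rmC^1([0,T];\Vdual 1)$ by (ND3), and the strong $\Vdual 1$-derivative of $\varrho_t$ coincides with $\DeltaE P(\varrho_t)$; in particular
\[
\frac{\varrho_{t+\eps}-\varrho_t}\eps\longrightarrow \frac{\d}{\dt}\varrho_t\quad\text{strongly in }\Vdual 1
\]
for every $t\in[0,T)$ (and a symmetric argument with backward quotients at $t=T$). Uniqueness of the $\Vdual 1$-limit forces $w_t=\frac{\d}{\dt}\varrho_t$, concluding the proof. I do not expect a serious obstacle: the only small point to double-check is that the forward and backward difference quotients yield the same limit at the endpoints, which is immediate from the $\rmC^1$-regularity in $\Vdual 1$ provided by (ND3).
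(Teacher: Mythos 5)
Your proposal is correct and is essentially the paper's own argument: the paper likewise obtains the corollary by applying Theorem~\ref{thm:precise-limit} to the difference quotients $\tfrac1\eps(\varrho_{t+\eps}-\varrho_t)$ (i.e.\ to the perturbed initial data $\sfS_\eps\bar\varrho$, using the semigroup property) and then identifying the limit with $\tfrac{\d}{\dt}\varrho_t$ via the strong $\Vdual1$-differentiability \eqref{eq:76} guaranteed by (ND3). Your verification of the hypotheses (uniform $L^2\cap L^\infty$ bounds from (ND2), convergence of $\bar w_\eps$ to $\DeltaE P(\bar\varrho)$ from \eqref{eq:76} at $t=0$) matches what the paper implicitly relies on.
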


\part{Continuity equation and curvature conditions in metric measure spaces}

\section{Preliminaries}

\subsection{Absolutely continuous curves, Lipschitz functions and
  slopes}
\label{subsec:AC}
Let $(X,\sfd)$ be a complete metric space,
possibly extended (i.e.~the distance $\sfd$ can take the value $+\infty$).
A curve $\gamma:[a,b]\to X$ 
belongs to $\AC p{[a,b]}{(X,\sfd)}$, $1\leq p\leq\infty$, if
there exists $v\in L^p(a,b)$ such that 
\begin{equation}
  \label{eq:47o}
  \sfd(\gamma(s),\gamma(t))\le \int_s^t v(r)\,\d r\quad\forevery a\le
  s\le t\le b.
\end{equation}
We will often use the shorter notation 
$\AC p{[a,b]}{X}$ whenever the choice of the distance $\sfd$ will be
clear from the context.
The metric velocity of $\gamma$, defined by
\begin{equation}
  \label{eq:46o}
  |\dot \gamma|(r):=\lim_{h\to0}\frac{\sfd(\gamma(r+h),\gamma(r))}{|h|},
\end{equation}
exists for $\Leb 1$-a.e.\ $r\in (a,b)$, belongs to $L^p(a,b)$,
and provides the minimal function $v$, up to $\Leb{1}$-negligible sets, such that \eqref{eq:47o} holds.
We set 
\begin{equation}
  \label{eq:157}
  \cA_p(\gamma):=
  \begin{cases}
    \displaystyle \int_a^b|\dot\gamma|^p(r)\, \d r&\text{if }\gamma\in \AC
    p{[a,b]}X,\\
    +\infty&\text{otherwise.}
  \end{cases}
\end{equation}
Notice that $\sfd^p(\gamma(a),\gamma(b))\le (b-a)^{p-1}\cA_p(\gamma)$.

A continuous function $\gamma:[0,1]\to X$ is a length minimizing constant speed curve if 
$\cA_1(\gamma)=\sfd(\gamma(0),\gamma(1))=
|\dot \gamma|(t)$ for $\Leb 1$-a.e.~$t\in (0,1)$, or, equivalently, if
$\cA_p(\gamma)=\sfd^p(\gamma(0),\gamma(1))$ for 
some (and thus every) $p>1$. In the sequel, by geodesic we always mean
a length minimizing constant speed curve.

The extended metric space $(X,\sfd)$ is a \emph{length space} if
\begin{equation}
  \label{eq:21}
  \sfd(x_0,x_1)=\inf\Big\{\cA_1(\gamma):
  \gamma\in \AC{}{[0,1]}X,\ \gamma(i)=x_i
  \Big\}\quad
  \forevery x_0,x_1\in X.
\end{equation}
The collection of all Lipschitz  
real functions defined in $X$ will be denoted by
$\Lip(X)$, while $\Lip_b$ will denote the subspace of bounded Lipschitz functions.

 The slopes $|\rmD^\pm\varphi|$, the local Lipschitz constant $|\rmD\varphi|$ and
 the asymptotic Lipschitz constant $|\rmD^*\varphi|$ of $\varphi\in \Lip_b(X)$
are respectively defined by
\begin{gather}
  \label{eq:5o}
  |\rmD^\pm\varphi|(x):=\limsup_{y\to x}\frac
  {\big(\varphi(y)-\varphi(x)\big)_\pm}{\sfd(y,x)},\quad
  |\rmD\varphi|(x):=\limsup_{y\to x}\frac
  {|\varphi(y)-\varphi(x)|}{\sfd(y,x)},
  \\
  |\rmD^*\varphi|(x):=\limsup_{{y,z\to x}\atop{y\neq z}}\frac
  {|\varphi(y)-\varphi(z)|}{\sfd(y,z)}=
  \lim_{r\down0} \Lip(f,B_r(x)),
  \intertext{%
    with the convention that all the above quantities are $0$ if $x$ is an
    isolated point.
    Notice that $|\rmD^*\varphi|$ is an u.s.c.\ function
    and that, whenever $(X,\sfd)$ is a length space,}
  \label{eq:18}
  |\rmD^*\varphi|(x) =\limsup_{y\to x}|\rmD \varphi|(y),  
  \qquad{ \Lip(\varphi)=\sup_{x\in X}|\rmD\varphi(x)|=
    \sup_{x\in X}|\rmD^*\varphi(x)|.}
\end{gather}
{\color{black} For $\varphi\in\Lip_b(X)$ we shall also use the upper gradient property
\begin{equation}\label{def:upper_gradient}
|\varphi(\gamma(1))-\varphi(\gamma(0))|
\leq\int_0^1|\rmD^*\varphi|(\gamma(t))|\dot\gamma(t)|\,\d t
\end{equation}
whose proof easily follows by approximating $|\rmD^*\varphi|$ from above with the Lipschitz constant in 
balls and then estimating the derivative of the absolutely continuous map $\varphi\circ\gamma$.}

\subsection{The Hopf-Lax evolution formula}\label{sec:5.2}

Let us suppose that $(X,\sfd)$ is a metric space; 
the Hopf-Lax evolution map
$\sfQ_t:\rmC_b(X)\to \rmC_b(X)$, $t\geq 0$, is defined by $\sfQ_0f=f$ and
\begin{equation}\label{eq:11o}
  \sfQ_t f(x):=\inf_{y\in X} f(y)+\frac {\sfd^2(y,x)}{2t}\qquad t>0.
\end{equation}
We shall need the pointwise {properties}  
\begin{equation}
  \label{eq:211}
  \inf_X f\le \sfQ_t f(x)\le \sup_X f\quad\forevery x\in X,\ t\ge0,
\end{equation}
\begin{equation}\label{eq:identities0}
  -\frac{\d^+}{\d t}\sfQ_tf(x)\ge 
  \frac 12 |\rmD^* \sfQ_tf|^2(x)
  \quad\forevery x\in X, \ t\ge0
\end{equation}
(these are proved in Proposition~3.3 and Proposition~3.4
of \cite{AGS11c}, $\d^+/\d t$ denotes the right derivative). 

When $(X,\sfd)$ is a length space $(\sfQ_t)_{t\ge0}$ is a semigroup and we have the refined identity
\cite[Thm.~3.6]{AGS11a}
\begin{equation}\label{eq:identities}
-\frac{\d^+}{\d t}\sfQ_tf(x)=\frac{1}{2}|\rmD \sfQ_t f|^2(x)\quad
\forevery x\in X,\ t>0.
\end{equation} 
Inequality \eqref{eq:identities0} and the length property of $X$ yield the a priori bounds
\begin{equation}\label{eq:aprioriQt}
  {\rm Lip}(\sfQ_t f)\leq 2\,{\rm Lip}(f)\quad\forall t\geq 0,\qquad{\rm Lip}\bigl(\sfQ_\cdot f(x))\leq 2\,\bigl[{\rm Lip}(f)\bigr]^2\quad\forall x\in X.
  \end{equation}

\subsection{Measures, couplings, Wasserstein distance}\label{sec:5.3}

Let $(X,\sfd)$ be a complete and separable metric space. 
We denote by $\mathscr B(X)$ the collection of its Borel sets and by
$\Probabilities X$ the set of all Borel probability measures on $X$
endowed with the weak topology induced by the
duality with the class $\rmC_b(X)$ of bounded and continuous functions in $X$.
If $\mm$ is a nonnegative $\sigma$-finite Borel measure of $X$, $\Probabilitiesac X\mm$ denotes
the convex subset of the probabiliy measures absolutely continuous w.r.t.~$\mm$.
$\Probabilitiesp X$ denotes the set of probability measures $\mu\in
\Probabilities X$ with finite $p$-moment, i.e.
\begin{displaymath}
  \int_X \sfd^p(x,x_0)\,\d\mu(x)<\infty\quad
  \text{for some (and thus any) $x_0\in X$.}
\end{displaymath}
If $(Y,\sfd_Y)$ is another sparable metric space, $\rr:X\to Y$ is
a Borel map and $\mu\in \Probabilities X$, $\rr_\sharp \mu$ denotes
the push-forward measure in $\Probabilities Y$ defined by 
$\rr_\sharp\mu(B):=\mu(\rr^{-1}(B))$ for every $B\in \mathscr B(Y)$.

For every $p\in [1,\infty)$, the $L^p$-Wasserstein (extended) 
distance $W_p$ between two measures $\mu_0,\,\mu_1\in\Probabilities X$
is defined as
\begin{equation}
  \label{eq:153}
  W_p^p(\mu_1,\mu_2):=\inf\Big\{\int_{X\times
    X}\sfd^p(x_1,x_2)\,\d\mmu(x_1,x_2):
  \mmu\in \Probabilities {X\times X},\ 
  \pi^i_\sharp \mmu=\mu_i\Big\},
\end{equation}
where $\pi^i:X\times X\to X$, $i=1,\,2$, denote the projections
$\pi^i(x_1,x_2)=x_i$. A measure $\mmu$ with $\pi^i_\sharp\mmu=\mu_i$
as in \eqref{eq:153} is called a coupling between $\mu_1$ and
$\mu_2$. If $\mu_1,\,\mu_2\in \Probabilitiesp X$ then
a coupling $\mmu$ minimizing \eqref{eq:153} exists,
$W_p(\mu_0,\mu_1)<\infty,$ and $(\Probabilitiesp X,W_p)$ 
is a complete and separable metric space;
it is also a length space if $X$ is a length space.
Notice that if $X$ is unbounded
$(\Probabilities X,W_p)$ is an extended metric space, even if
$\sfd$ is a finite distance on $X$.

The dual Kantorovich characterization of $W_p$ provides the useful
representation formula (here stated only in the case $p=2$)
\begin{equation}
  \label{eq:154}
  \frac 12 W_2^2(\mu_0,\mu_1)=\sup\Big\{
  \int_X \sfQ_1\varphi\,\d\mu_1-\int_X \varphi\,\d\mu_0:
  \varphi\in \Lip_b(X)\Big\},
\end{equation}
where $(\sfQ_t)_{t>0}$ is defined in \eqref{eq:11o}.

\subsection{$W_p$-absolutely continuous curves and dynamic plans}\label{sec:5.4}

A dynamic plan $\ppi$ is a 
Borel probability measure on
$\rmC([0,1]; X)$. 
For each dynamic plan $\ppi$ one can consider the (weakly) continuous curve 
$\mu=(\mu_s)_{s\in [0,1]}\subset \Probabilities X$ defined by
$\mu(s):=(\rme_s)_\sharp \ppi$, $s\in [0,1]$
 (we will often write $\mu_s$ instead of $\mu(s)$ and
 we will also use an analogous 
 notation for ``time dependent'' densities or functions);
 here
\begin{equation}\label{def:evaluation}
\rme_s:\rmC([0,1]; X)\to X,\qquad 
\rme_s(\gamma):=\gamma(s)
\end{equation}
is the evaluation map at time $s\in [0,1]$.

We say that $\ppi$ has finite $p$-energy, $p\in [1,\infty)$, if 
\begin{equation}
    \label{eq:18bis}
    \mathscr A_p(\ppi):=\int \cA_p(\gamma)
  \,\d\ppi(\gamma)<\infty,
\end{equation}
a condition that in particular yields
$\gamma\in 
  \mathrm{AC}^p([0,1];{X})$ for $\ppi\text{-almost every }\gamma$.
If for some $p>1$ the dynamic plan $\ppi$ has finite $p$-energy,
it is not hard to show that the 
induced curve $\mu$ belongs to $\AC p{[0,1]}{(\Probabilities X,W_p)}$
and that
\begin{equation}
  \label{eq:156}
  |\dot\mu_s|^p\le \int |\dot\gamma_s|^p\,\d\ppi(\gamma)\
  \text{for $\Leb 1$-a.e.~$s\in (0,1)$},\quad
  \text{so that}\ \ 
  \int_0^1 |\dot \mu_s|^p\,\d s\le  \mathscr A_p(\ppi),
\end{equation}
where $|\dot\mu_s|$ denotes the metric derivative of the curve $\mu$
in $(\Probabilities X,W_p)$. Notice that the second inequality in 
\eqref{eq:156} can also be written as $\mathcal A_p(\mu)\leq\mathscr A_p(\ppi)$.
The converse inequalities, which involve a special choice of $\ppi$, provide a metric version of the
so-called superposition principle, and their proof is less elementary.

\begin{theorem}[\cite{Lisini07}] \label{thm:lisini}
For any $\mu\in \AC p{[0,1]}{(\Probabilities X,W_p)} $
there exists a dynamic plan $\ppi$ with finite $p$-energy
such that
\begin{equation}
  \label{eq:158}
    \mu_t=(\rme_t)_\sharp\ppi\quad\forevery t\in [0,1],\qquad
   \int_0^1 |\dot \mu_t|^p\,\d t= \mathscr A_p(\ppi).
\end{equation}
\end{theorem}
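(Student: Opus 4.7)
My plan is to realize $\ppi$ as a weak limit of dynamic plans supported on piecewise interpolations of $\mu$ along a refining sequence of dyadic partitions of $[0,1]$. The two unavoidable devices are the gluing lemma (to build a coherent joint law from the optimal couplings between consecutive times) and an isometric embedding of $(X,\sfd)$ into a Banach space (to have a natural ambient space for interpolation, since $X$ is not assumed to be a length or geodesic space). The tightness of the approximating sequence and the lower semicontinuity of $\mathscr A_p$ will then deliver both the existence of $\ppi$ and the energy identity in \eqref{eq:158}.

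More concretely, I would proceed as follows. Fix $N\in\N$ and set $t^N_k:=k/N$. Choose $W_p$-optimal couplings $\sigma^N_k\in\Probabilities{X\times X}$ between $\mu_{t^N_k}$ and $\mu_{t^N_{k+1}}$ and iteratively apply the gluing lemma to produce $\boldsymbol\sigma_N\in\Probabilities{X^{N+1}}$ whose two-consecutive marginals are the $\sigma^N_k$. Via the Kuratowski embedding, regard $X$ as a subset of a Banach space $B$, and to each $(x_0,\dots,x_N)\in X^{N+1}$ associate the piecewise affine (in $B$) curve $\gamma$ with $\gamma(t^N_k)=x_k$; push $\boldsymbol\sigma_N$ forward under this map to get $\ppi_N\in\Probabilities{C([0,1];B)}$. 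By Jensen's inequality and $W_p(\mu_s,\mu_t)\le\int_s^t|\dot\mu|\,\d r$,
\[
\mathscr A_p(\ppi_N)=N^{p-1}\!\!\sum_{k=0}^{N-1}\!W_p^p(\mu_{t^N_k},\mu_{t^N_{k+1}})\le\int_0^1|\dot\mu_s|^p\,\d s,
\]
so the $p$-actions are uniformly bounded.

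From this uniform bound and the tightness of $\{\mu_t\}_{t\in[0,1]}$ (which is tight in $\Probabilities X$ because $t\mapsto\mu_t$ is continuous and $[0,1]$ is compact), I would deduce tightness of $(\ppi_N)$ on $C([0,1];B)$ via a path-space Prokhorov criterion: Markov's inequality applied to $\cA_p$ converts the mean $p$-action bound into a uniform-in-$N$ control of the modulus of continuity of $\gamma$ in $\ppi_N$-probability, which together with the tightness of the time marginals yields tightness on the path space. Extract a subsequential weak limit $\ppi$. Its time marginals coincide with $\mu_{t^N_k}$ at dyadic times by construction, and extend to $\mu_t$ for every $t$ by joint continuity of $t\mapsto\mu_t$ and of $\ppi\mapsto(\rme_t)_\sharp\ppi$. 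In particular $\ppi$ is concentrated on curves taking values in the closure of $X$ in $B$, which we identify with $X$ by completeness. Finally, $\gamma\mapsto\cA_p(\gamma)$ is lower semicontinuous on $C([0,1];B)$ (as the supremum over partitions of sums of $p$-th power Banach differences, divided by appropriate lengths), so Fatou gives $\mathscr A_p(\ppi)\le\liminf_N\mathscr A_p(\ppi_N)\le\int_0^1|\dot\mu|^p\,\d t$; combined with the reverse inequality \eqref{eq:156} this yields the required equality.

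The main obstacle I foresee is controlling the modulus of continuity of the $\ppi_N$-generic path uniformly in $N$ so as to obtain tightness in $C([0,1];B)$, and ensuring that the limit remains concentrated on curves valued in $X$ rather than in the strictly larger ambient $B$. Both points are handled by exploiting that the Banach interpolants have $B$-metric derivative bounded by the $\sfd$-metric derivative of the underlying discrete data (so the $p$-energy estimate above is sharp), together with the fact that the values $\gamma(t^N_k)$ lie in $X$ and uniform convergence preserves this in the limit up to taking the closure, which equals $X$ by completeness. A minor ancillary issue is that $C([0,1];B)$ is not separable in general, but one works inside the separable subset of curves with a uniform modulus of continuity, which is standard.
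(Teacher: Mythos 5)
Your argument is correct and is essentially the same proof as in the cited reference \cite{Lisini07}: the paper does not reprove Theorem~\ref{thm:lisini} but defers to Lisini, whose argument proceeds exactly as you describe — glue $W_p$-optimal couplings along a partition, interpolate affinely after an isometric embedding of $X$ into a Banach space, get a uniform bound $\mathscr A_p(\ppi_N)\le \int_0^1|\dot\mu|^p\,\d t$ by Jensen, obtain tightness on path space from this bound plus tightness of the time marginals, and conclude by lower semicontinuity of $\mathscr A_p$ together with the converse inequality \eqref{eq:156}. The only points to keep explicit are that your equicontinuity/tightness step uses $p>1$ (consistent with the setting of Section~\ref{sec:5.4}) and that separability is recovered by working in the closed linear span of the embedded image of $X$ rather than in all of $\ell^\infty$.
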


We say that the dynamic plan $\ppi$ is 
$p$-\emph{tightened to $\mu$} if \eqref{eq:158} holds. For this class of
plans equality holds in \eqref{eq:156}, namely
\begin{equation}
  \label{eq:156bis}
  |\dot\mu_s|^p= \int |\dot\gamma_s|^p\,\d\ppi(\gamma)\
  \text{for $\Leb 1$-a.e.~$s\in (0,1)$.}
\end{equation}
Focusing now on the case $p=2$, 
the distinguished class of optimal geodesic plans 
$\mathrm{GeoOpt}(X)$ consists of those dynamic plans
whose $2$-action coincides with the squared $L^2$-Wasserstein distance
between the marginals at the end points:
\begin{equation}
  \label{eq:178}
  \ppi\in \mathrm{GeoOpt}(X) \quad \text{if}\quad
  \mathscr A_2(\ppi)=W_2^2(\mu_0,\mu_1),\quad
  \mu_i=(\rme_i)_\sharp \ppi.
\end{equation}
It is not difficult to check that \eqref{eq:178} is equivalent to
\begin{equation}
  \label{eq:179}
  \ppi\text{-a.e.\ $\gamma$ is a geodesic and }
  (\rme_0,\rme_1)_\sharp \ppi\text{ is an optimal coupling between }
  \mu_0,\,\mu_1.
\end{equation}
{\color{black} 
It follows that $\ppi\in \mathrm{GeoOpt}(X)$ is always 2-tightened to the curve of
its marginals, and that 
a curve $\mu\in \Lip([0,1];(\ProbabilitiesTwo X,W_2))$
is a geodesic if and only if there exists $\ppi\in \mathrm{GeoOpt}(X)$
having $\mu$ has curve of marginals (Theorem~\ref{thm:lisini} is needed to prove the
``only if" implication).}

Finally, when a reference $\sigma$-finite and nonnegative Borel
measure $\mm$
is fixed, we say that $\ppi\in
\Probabilities{\rmC([0,1];\Probabilities X)}$ 
is a \emph{test plan} if it has finite $2$-energy and
there exists a constant $R>0$ such that
\begin{equation}
  \label{eq:39}
  \mu_t:=(\rme_t )_\sharp \ppi=\varrho_t\mm\ll\mm,\quad
  \varrho_t \le R\quad\text{$\mm$-a.e. in $X$ for every }t\in [0,1].
\end{equation}

\subsection{Metric measure spaces and the Cheeger energy}
\label{subsec:Cheeger}

In this paper a \emph{metric measure space} 
$(X,\sfd,\mm)$ will always consist of:
\begin{itemize}
\item a complete and separable metric space $(X,\sfd)$;
\item a
  nonnegative Borel measure $\mm$ having full support and satisfying
  the growth condition
  \begin{equation}
    \label{eq:173}
    \mm(B_r(x_0))\le A\rme^{B r^2}\quad\text{for some constants }
    A,\,B\ge0, \text{ and some }x_0\in X.
  \end{equation}
\end{itemize}
The Cheeger energy of a function $f\in L^2(X,\mm)$ is
defined as 
\begin{equation}
  \label{eq:14}
  \C(f):=\inf\Big\{\liminf_{n\to\infty}\frac 12\int_X |\rmD
  f_n|^2\,\d\mm:
  f_n\in \Lip_b(X),\quad
  f_n\to f\text{ in }L^2(X,\mm)\Big\}.
\end{equation}
If $f\in L^2(X,\mm)$ with $\C(f)<\infty$, then there exists a unique
function $|\rmD f|_w\in L^2(X,\mm)$, called \emph{minimal weak gradient of $f$},
satisfying the two conditions
\begin{equation}
  \label{eq:53}
  \begin{gathered}
    \Lip_b(X)\cap L^2(X,\mm)\ni f_n\weakto f,\ |\rmD f_n|\weakto
    G\quad \text{in }L^2(X,\mm)\quad\Rightarrow\quad |\rmD f|_w\le G\,\,\text{$\mm$-a.e.}\\
    \C(f)=\frac 12 \int_X |\rmD f|_w^2\,\d\mm.
  \end{gathered}
\end{equation}
In \eqref{eq:14} we can also replace $|\rmD f|$ with
$|\rmD^* f|$ since
a further approximation result of \cite[\S8.3]{AGS11c}
(see \cite{Ambrosio-Colombo-Dimarino12} for a detailed proof)
yields
for every $f\in L^2(X,\mm)$ with $\C(f)<\infty$
\begin{equation}
  \label{eq:105bis}
  \exists\,f_n\in \Lip_b(X)\cap L^2(X,\mm):\quad
  f_n\to f,\quad |\rmD^* f_n|\to |\rmD f|_w\quad\text{strongly in }L^2(X,\mm).
\end{equation}
We will denote by $W^{1,2}(X,\sfd,\mm)$ 
the vector space
of the $L^2(X,\mm)$ functions with finite Cheeger energy
endowed with the canonical norm
\begin{equation}
  \label{eq:239}
  \|f\|_{W^{1,2}(X,\sfd,\mm)}^2:=\|f\|_{L^2(X,\mm)}^2+2\C(f)
\end{equation}
that induces on $W^{1,2}(X,\sfd,\mm)$ a Banach space structure.
We say that $\C$ is a quadratic form if it satisfies the parallelogram
identity
\begin{equation}
  \label{eq:1}
  \C(f+g)+\C(f-g)=2\C(f)+2\C(g)\quad\forevery f,g\in W^{1,2}(X,\sfd,\mm).
\end{equation}
In this case we will denote by $\cE$ the associated bilinear Dirichlet
form, so that $\C(f)=\frac 12\cE(f,f)$; 
if $\cB$ is the $\mm$-completion 
of the collection of Borel sets in $X$,
we are in the setting of Section~\ref{subsec:Dir-Form};
keeping that notation, $\H=L^2(X,\mm)$ and $\V$ is the separable
Hilbert space $W^{1,2}(X,\sfd,\mm)$ 
endowed with the norm \eqref{eq:239}.
Under the quadraticity assumption on $\C$ it is possible to prove \cite[Thm.~4.18]{AGS11b} that
\eqref{eq:1} can be localized, namely
\begin{equation}
  \label{eq:188}
  |\rmD (f+g)|_w^2+
  |\rmD (f-g)|_w^2=2|\rmD f|_w^2+2|\rmD g|_w^2\quad
  \mm\text{-a.e.~in $X$.}
\end{equation}
It follows that
\begin{equation}
  \label{eq:189}
  (f,g)\mapsto \Gbil fg:=\frac 14 |\rmD (f+g)|_w^2-\frac 14
  |\rmD (f-g)|_w^2=\lim_{\eps\down0}\frac{|\rmD (f+\eps g)|_w^2-|\rmD f|_w^2}{2\eps}
\end{equation}
is a strongly continuous bilinear map from $\V$ to $L^1(X,\mm)$, with
$\Gq f=|\rmD f|_w^2$. The operator $\Gamma$ is the \emph{Carr\'e du Champ}
associated to $\cE$ and $\cE$ 
is a strongly local Dirichlet form enjoying
 useful $\Gamma$-calculus
properties, see e.g. \cite{Bouleau-Hirsch91,AGS12,Savare12},
and the mass preserving property
\eqref{eq:271} (thanks to \eqref{eq:173}). {\color{black} In the measure-metric setting
we will still use the symbol $\DeltaE$ to denote the linear operator  
$-\Delta:\V\to \V'$ associated to $\cE$, corresponding in the classical cases to Laplace's operator with homogenous Neumann boundary conditions.} We also set 
\begin{equation}\label{eq:defDD}
\D:=\big\{f\in \V:\ \DeltaE f\in \H\big\},
\end{equation}
the domain of $\DeltaE$ as unbounded selfadjoint operator
in $\H$, endowed with the Hilbertian norm 
 $\|f\|_\D^2:=\|f\|^2_\V+\|\DeltaE f\|^2_\H$.
The operator $-\DeltaE$ generates a measure preserving 
Markov semigroup $(\sfP_t)_{t\ge0}$ 
in each $L^p(X,\mm)$, $1\leq p\leq\infty$.

\GGGG
Recall that the Fisher information of a nonnegative function $f\in
L^1(X,\mm)$ is defined as 
\begin{equation}
  \label{eq:209}
  \mathsf F(f):=4\cE(\sqrt f,\sqrt f)=8\C(\sqrt f)=
  \int_{\{f>0\}} \frac{\Gamma(f)}f\,\d\mm
\end{equation}
with the usual convention $\mathsf F(f)=+\infty$ whenever $\sqrt
f\not\in W^{1,2}(X,\sfd,\mm)$.
\subsection{Entropy estimates of the quadratic moment and of the Fisher
  information along nonlinear diffusion equations}

In this section we will derive a basic estimate involving quadratic
moments, logarithmic
entropy, and Fisher information along the solutions of the nonlinear
diffusion equation \eqref{eq:270} in the metric-measure setting of the
previous section \ref{subsec:Cheeger}. In order to deal with arbitrary
measures satisfying the growth condition \eqref{eq:173}, we follow the
approach of \cite{AGS11a}: we  will derive the estimates for a reference
measure
with finite mass and then we will extend them to the general case by
an approximation argument. A basic difference here is related to the 
structure of the equations, which are not $L^2$ gradient flows; we
will thus use the $L^1$-setting by taking advantage of the
$m$-accretiveness
of the operator $\bar A$ of Theorem \ref{thm:m-accretive}.

Let us first focus on the approximation argument. 
Taking \eqref{eq:173} into account, we fix a point $x_0\in X$ and
we set 
\begin{equation}
  \label{eq:85}
  \sfV(x):=\Big(a+b\,\sfd^2(x,x_0)\Big)^{1/2},\quad \sfV_k(x):=\sfV(x)\land k,
\end{equation}
for suitable constants $b:=B+1$, $a\ge (\log A)_+$ so that 
\begin{equation}
  \label{eq:86}
  \int_X \mathrm e^{-\sfV^2(x)}\,\d\mm
  =2b\mathrm e^{-a} \int_0^\infty r \rme^{-br^2} \mm(B_r(\bar x))\,\d
  r
  \le \mathrm e^{-a}A \int_0^\infty 2r\mathrm e^{-r^2}\,\d r\le 1.
\end{equation}
As in \cite[Theorem 4.20]{AGS11a} we consider the increasing sequence of
finite measures
\begin{equation}
  \label{eq:273}
  \mm_0:= \mathrm e^{-\sfV^2}\mm=\beta_0\mm,\quad
  \mm_k:= \beta_k \mm=\mathrm e^{\sfV_k^2}\mm_0,\quad
  \beta_k:=\mathrm e^{\sfV_k^2-\sfV^2},
  \quad k\in \N_0.
\end{equation}
Notice that $\beta_k$ is a bounded Lipschitz function and
$\beta_k^{-1}$ is locally Lipschitz. The map 
\begin{equation}
  \label{eq:280}
  Y_k:\varrho\mapsto \varrho/\beta_k\quad\text{is an isometry of $
  L^1(X,\mm)$ onto $L^1(X,\mm_k)$}.
\end{equation}
We will denote by $\C_k$ the Cheeger energy associated to the metric
measure space $(X,\sfd,\mm_k)$; by the invariance property
\cite[Lemma 4.11]{AGS11a} and \eqref{eq:188} $\C_k$ is also associated
to a symmetric Dirichlet form $\cE_k$ in $L^2(X,\mm_k)$, inducing a
selfadjoint operator $L_k$ with domain $\D_k\subset L^2(X,\mm_k)$. 
We fix a map $P:\R\to \R$ as in
\eqref{eq:A1} and we define the $m$-accretive operator $\bar A_k$ in
$L^1(X,\mm_k)$ as in \eqref{eq:236}, by taking the closure of the
graph of $A_k:=-L_k\circ P$ defined by \eqref{eq:109}. 
We eventually consider the realization of $\bar A_k$ in $L^1(X,\mm)$ 
\begin{equation}
  \label{eq:281}
  \widetilde A_k:= Y_k^{-1} \bar A_k Y_k;
\end{equation}
since $Y_k$ are isometries, $\widetilde A_k$ is $m$-accretive in
$L^1(X,\mm)$ and it generates a contraction semigroup $(\sfS^k_t)_{t\ge0}$  by
Crandall-Liggett Theorem as in Theorem \ref{thm:nonlin-diff}(ND4).
Notice that for every $\bar\varrho\in L^1(X,\mm)$ with
\begin{equation}
  Y_k\bar\varrho\in L^2(X,\mm_k)\quad
  \text{i.e.}\quad
  \int_X \rme^{\sfV^2}\bar\varrho^2\,\d\mm<\infty
  \label{eq:284}
\end{equation}
(in particular
when
$\bar\varrho$ belongs to $L^2(X,\mm)$ and has bounded support),
setting $\varrho^k_t:=\sfS^k_t \bar\varrho$ the curve $Y_k
\varrho^k_t$ is a strong solution of 
the equation $u'-L_kP(u)=0$ in $W^{1,2}(0,T;\V_k,\Vdual{\cE_k})$ and
for  every entropy function $W$ as in Theorem
\ref{thm:nonlin-diff} (ND2) we have
\begin{equation}
  \label{eq:283}
  \int_X W(\varrho^k_t/\beta_k)\beta_k\,\d\mm+
  \int_0^t \cE_k(P(\varrho^k_r/\beta_r),W'(\varrho^k_r/\beta_r))\,\d r=
  \int_X W(\bar\varrho/\beta_k)\beta_k\,\d\mm.
\end{equation}
\begin{theorem}
  \label{thm:semigroup-convergence}
  For every $\bar \varrho\in L^1(X,\mm)$ we have
  \begin{equation}
    \label{eq:282}
    \lim_{k\up\infty}\sfS^k_t\bar\varrho=\sfS_t\bar\varrho\quad\text{strongly
      in }L^1(X,\mm)
  \end{equation}
  and the limit is uniform in every compact interval $[0,T]$. 
\end{theorem}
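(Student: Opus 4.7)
The plan is to reduce \eqref{eq:282} to a pointwise convergence statement for the resolvents of the $m$-accretive operators $\widetilde A_k$ in $L^1(X,\mm)$, and then to establish that resolvent convergence by a variational/Mosco-type argument exploiting the monotonicity $\beta_k\uparrow 1$. Since both $(\sfS^k_t)_{k\in\N}$ and $(\sfS_t)$ are contractions in $L^1(X,\mm)$ uniformly in $k$ and $t\in[0,T]$, a standard $3\varepsilon$-argument reduces the proof to $\bar\varrho$ in a dense subset of $L^1(X,\mm)$; I would choose the class $\cK$ of functions in $L^1\cap L^\infty(X,\mm)$ with bounded support, for which \eqref{eq:284} holds as soon as $k$ is large enough that $\sfV_k\equiv\sfV$ on $\supp\bar\varrho$. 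By the Crandall--Liggett exponential formula, combined with the Brezis--Pazy convergence theorem for $m$-accretive operators in $L^1$, the uniform-in-$t$ conclusion \eqref{eq:282} will follow once one proves
\[
    \sfJ^k_\tau\varrho:=(I+\tau\widetilde A_k)^{-1}\varrho\ \longrightarrow\ \sfJ_\tau\varrho:=(I+\tau\bar A)^{-1}\varrho\quad\text{in }L^1(X,\mm)
\]
for every $\tau>0$ and every $\varrho\in\cK$.

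For the second step, fix $\tau>0$ and $\varrho\in\cK$, and set $\varrho^k:=\sfJ^k_\tau\varrho$, $u^k:=Y_k\varrho^k=\varrho^k/\beta_k$. Applying Theorem~\ref{thm:m-accretive} on the space $(X,\sfd,\mm_k)$, one has $u^k\in L^1\cap L^2(X,\mm_k)$ with $P(u^k)\in\D_k$, and the resolvent equation in weak form reads
\[
    \int_X(\beta_k u^k-\varrho)\,g\,\d\mm+\tau\int_X\Gbil{P(u^k)}{g}\beta_k\,\d\mm=0\qquad\forevery g\in\V_\infty.
\]
The $L^1$- and $L^\infty$-contraction properties of $\sfJ^k_\tau$ inherited from (ND2) and (ND4), combined with testing against $g=P(u^k)$ and the coercivity $\sfa\le P'\le\sfa^{-1}$ in \eqref{eq:A1}, yield uniform bounds on $(\varrho^k)$ in $L^1\cap L^\infty(X,\mm)$ and on $(P(u^k))$ in the weighted energy $\int\Gq{P(u^k)}\beta_k\,\d\mm$. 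Extracting a subsequence, $\varrho^k$ converges weakly$^*$ in $L^\infty(X,\mm)$ to some $\hat\varrho$; since $\beta_k\uparrow 1$ monotonically and pointwise, $u^k$ converges to the same limit.

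The third and decisive step is to identify $\hat\varrho=\sfJ_\tau\varrho$ by passing to the limit in the bilinear term against arbitrary test functions $g\in\V_\infty$ with bounded support. The main obstacle is that $g$ has a nontrivial gradient while $\beta_k\to 1$ only pointwise, so a naive weak limit in $\V$ does not suffice to handle $\int\Gbil{P(u^k)}{g}\beta_k\,\d\mm$. The expected remedy is to extract, from the energy identity obtained by testing with $g=P(u^k)$, the convergence
\[
    \int_X\Gq{P(u^k)}\beta_k\,\d\mm\ \longrightarrow\ \int_X\Gq{P(\hat\varrho)}\,\d\mm,
\]
using on the $\liminf$ side the lower semicontinuity of $\C$ together with the monotonicity $\beta_k\uparrow 1$, and on the $\limsup$ side the explicit energy identity combined with the pointwise limit of the linear terms; by the argument of Remark~\ref{rem:trick} this upgrades weak convergence of $P(u^k)$ to strong convergence in $\V$. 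Once this is in hand, the limit equation
\[
    \int_X(\hat\varrho-\varrho)g\,\d\mm+\tau\int_X\Gbil{P(\hat\varrho)}{g}\,\d\mm=0\qquad\forevery g\in\V_\infty
\]
identifies $\hat\varrho=\sfJ_\tau\varrho$ through the variational characterisation of $\bar A$ provided by Lemma~\ref{le:A1} and Theorem~\ref{thm:m-accretive}. Uniqueness of the limit forces the full sequence to converge; finally the weak-$*$ $L^\infty$-convergence, combined with the uniform bounded support/tightness inherited from $\sfS^k_t$ applied to $\cK$, yields strong $L^1(X,\mm)$ convergence via dominated convergence, completing the proof.
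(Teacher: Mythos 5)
Your reduction is the same as the paper's: by Br\'ezis--Pazy and the exponential formula it suffices to prove pointwise convergence of the resolvents $\sfJ^k_\tau\to\sfJ_\tau$ in $L^1(X,\mm)$, and by equi-contractivity one may restrict to a dense class of data with bounded support; your variational identification of the limit of $\sfJ^k_\tau\varrho$ is also in the spirit of the paper, which characterizes $z^k_\tau=P(h^k_\tau)$ as the minimizer of the uniformly convex functional $z\mapsto\int_XV^*(z)\,\d\mm_k+\C_k(z)-\int_Xfz\,\d\mm$ and invokes the stability argument of \cite[Thm.~4.18]{AGS11a} to get strong $L^2(X,\mm_0)$ convergence (hence $\mm$-a.e.\ convergence along subsequences) of $P(h^k_\tau)$, and then of $h^k_\tau$ and $f^k_\tau$ by the Lipschitz bounds on $P^{-1}$ and $\beta_k\to1$ locally uniformly.

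The genuine gap is in your last step, the upgrade to strong $L^1(X,\mm)$ convergence. First, weak-$*$ convergence in $L^\infty(X,\mm)$ together with ``dominated convergence'' does not yield strong $L^1$ convergence: dominated convergence needs convergence $\mm$-a.e.\ (or in measure), which weak-$*$ convergence does not provide; you would have to extract it from the (sketched) strong $\V$/$L^2$ convergence of $P(u^k)$, which is exactly what the paper's minimization argument is designed to deliver. Second, and more seriously, the claimed ``uniform bounded support/tightness inherited from $\sfS^k_t$'' is not available: the resolvent equation $\varrho'-\tau\DeltaE P(\varrho')=\varrho$ is elliptic and $\sfJ^k_\tau\varrho$ need not have bounded support even when $\varrho$ does, so there is no a priori confinement preventing mass from escaping to infinity on the $\sigma$-finite space $(X,\mm)$. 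This loss-of-mass issue is precisely the delicate point, and the paper closes it by a mechanism you never invoke: for $f\ge0$, order preservation gives $f^k_\tau\ge0$ and the mass-preserving property \eqref{eq:227}--\eqref{eq:228} gives $\int_Xf^k_\tau\,\d\mm=\int_Xf\,\d\mm=\int_Xf_\tau\,\d\mm$, so that $\mm$-a.e.\ convergence plus convergence of the $L^1$ norms yields strong $L^1$ convergence (Scheff\'e-type argument); for data of arbitrary sign one uses the domination $|f^k_\tau|\le\sfJ^k_\tau|f|$ and a generalized dominated convergence theorem. Without the mass identity (or some equivalent tightness estimate, e.g.\ uniform control of second moments of $|f^k_\tau|\mm$), your final implication does not go through.
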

\begin{proof}
  By Br\'ezis-Pazy Theorem \cite[Thm.~3.1]{Brezis-Pazy72}, in order to prove
  \eqref{eq:282} it is sufficient to check the pointwise convergence of the
  resolvent operators $\sfJ^k_\tau:=(I+\tau \widetilde A_k)^{-1}$ to
  $\sfJ_\tau=(I+\tau\bar A)^{-1}$ in
  $L^1(X,\mm)$, i.e.
  \begin{equation}
    \label{eq:285}
    \lim_{k\up\infty} \sfJ_\tau^k f=\sfJ_\tau f\quad
    \text{strongly in }L^1(X,\mm)\quad
    \text{for every }\tau>0, \ f\in L^1(X,\mm).
  \end{equation}
  Since $\sfJ_\tau^k,\sfJ_\tau$ are contractions, for every $g\in
  L^1(X,\mm)$ we have
  \begin{displaymath}
    \|\sfJ^k_\tau f-\sfJ_\tau f\|_{L^1}\le 
    \|\sfJ^k_\tau f-\sfJ_\tau^k g+\sfJ_\tau^k g-\sfJ_\tau g+\sfJ_\tau g-\sfJ_\tau f\|_{L^1}
    \le 2\| f- g\|_{L^1}+\|\sfJ^k_\tau g-\sfJ_\tau g\|_{L^1}
  \end{displaymath}
  so that by an approximation argument it is not restrictive to check \eqref{eq:285} for
  $ f\in L^1\cap L^2(X,\mm)$ with bounded support. 

  Let
  $f^k_\tau=\sfJ_\tau^k f\in L^1(X,\mm)$; recalling Theorem
  \ref{thm:m-accretive}, it is easy to check that
  $h^k_\tau=Y_k f^k_\tau=f^k_\tau/\beta_k\in L^2(X,\mm_k)$ is the solution of 
  \begin{displaymath}
    h^k_\tau-L_k P(h_\tau^k)=f/\beta_k
    \quad\text{in }L^2(X,\mm_k).
  \end{displaymath}
  If we denote by $P^{-1}:\R\to \R$ the inverse function of $P$, then
  $z^k_\tau=P(h^k_\tau)$ belongs to $\D_k\subset L^2(X,\mm_k)$ and 
  solves
  \begin{equation}
    \label{eq:287}
    P^{-1}(z^k_\tau)-L_k z^k_\tau=f/\beta_k.
  \end{equation}
  Introducing the uniformly convex function $V^*(r):=\int_0^r P^{-1}(x)\,\d x$
  which still satisfies the uniform quadratic bounds \eqref{eq:74_bis},
  the solution to problem \eqref{eq:287} can be characterized as the unique
  minimizer in $L^2(X,\mm_k)$ of the uniformly convex functional
  \begin{equation}
    \label{eq:288}
    z\mapsto \Phi^k(z):=\int_X V^*(z)\,\d\mm_k+\C_k(z)-\int_X f\,z\,\d\mm.
  \end{equation}
  Arguing as in the proof of \cite[Theorem 4.18]{AGS11a} it is not
  difficult to show that $z^k_\tau$ converges strongly to $z_\tau$ in
  $L^2(X,\mm_0)\subset L^2(X,\mm)$, where $z_\tau$ is
  the unique minimizer of 
  \begin{equation}
    \label{eq:288}
    z\mapsto \Phi(z):= \int_X V^*(z)\,\d\mm+\C(z)-\int_X f\,z\,\d\mm,
  \end{equation}
  with
  \begin{equation}
    \label{eq:289}
    \int_X V^*(z_\tau^k)\,\d\mm_k=
    \int_X V^*(z_\tau^k)\beta_k
    \,\d\mm
    \to 
    \int_X V^*(z_\tau)\,\d\mm  
    \quad
    \text{as }k\up\infty.
  \end{equation}
  Since every subsequence $n\mapsto k(n)$ admits a further
  subsequence $n\mapsto k'(n)$ along which $z^{k'(n)}_\tau\to z_\tau$ converges $\mm_0$ (and thus $\mm$)-a.e., 
  the Lipschitz character of $P$ yields
  $h^{k'(n)}_\tau \to h_\tau$ $\mm$-a.e.;  since $\beta_k\to 1$ uniformly on bounded sets 
  we also get $f^{k'(n)}_\tau\to f_\tau$ $\mm$-a.e.
  
  When $f\ge0$ the order preserving property
  \eqref{eq:226} shows that $f_\tau^k\ge0$ and the mass preserving
  property yields
  \begin{equation}
    \label{eq:291}
    \int_X f_\tau^k\,\d\mm=\int_X h_\tau^k\,\d\mm_k=\int_X f\,\d\mm=\int_X f_\tau\,\d\mm,
  \end{equation}
  so that $f^{k'(n)}_\tau\to f_\tau$ strongly in $L^1(X,\mm)$. Since
  the sequence $n\mapsto k(n)$ is arbitrary, we conclude that
  $f^k_\tau\to f_\tau$ strongly in $L^1(X,\mm)$ as $k\to\infty$. 
  When $f$ has arbitrary sign, we still use the monotonicity property 
  to obtain the pointwise bound $|f^k_\tau|\le \sfJ_\tau^k |f|$ and we
  conclude by applying
  a variant of the Lebesgue dominated convergence theorem.
\end{proof}
We consider now the logarithmic entropy density $U_\infty(r):=r\log r$,
$r\ge0$ and for given nonnegative measures $\mu\in \PP(X)$ and $k\in \N_0$,
we set
\begin{equation}
  \label{eq:148}
  \mathcal U_\infty^k(\mu):=\int_X
  U_\infty(\varrho/\beta_k)\beta_k\,\d\mm=
  \int_X \varrho \log(\varrho/\beta_k)\,\d\mm,\quad
  \mu=\varrho\mm\ll\mm;
\end{equation}
we will simply write $\mathcal U_\infty(\mu)$ when $k=\infty$ and
$\beta_k\equiv 1$; we will set $\mathcal U^k_\infty(\mu)=+\infty$ if
$\mu$ is not absolutely continuous w.r.t.~$\mm$.
The inequality
\begin{equation}
  \label{eq:185}
  U_\infty(r)\ge r-\rme^{-\sfv^2}-r\sfv^2\quad\text{for every }\sfv\in \R,\ r>0,
\end{equation}
and \eqref{eq:86} show that the negative part of the integrand in
\eqref{eq:148} is always integrable whenever $\mu\in \PP(X)$  and
$k<\infty$ with
\begin{equation}
  \label{eq:190}
  \mathcal U^k_\infty(\mu)+\int_X \sfV_k^2\,\d\mu\ge0.
\end{equation}
When $k=\infty$ and $\mu\in \PP_2(X)$ we also have
\begin{equation}
  \label{eq:190bis}
  \mathcal U_\infty(\mu)+\int_X \sfV^2\,\d\mu=\mathcal U^k_\infty(\mu)+\int_X \sfV_k^2\,\d\mu
  \ge0.
\end{equation}
Moreover, if $\mu_k=\varrho_k\mm$ is a sequence of probability
measures with $\varrho_k\to\varrho$ strongly in $L^1(X,\mm)$ with
$\mu=\varrho\mm\in \PP_2(X)$, by
\cite[Lemma 9.4.3]{AGS08} and writing $\mathcal U^k_\infty(\mu_k)+\int_X
\sfV_k^2\,\d\mu_k $ as the relative entropy of $\mu_k$ with respect to
the finite measure $\mm_0$
we have
\begin{equation}
  \label{eq:293}
  \liminf_{k\to\infty}\mathcal U^k_\infty(\mu_k)+\int_X
  \sfV_k^2\,\d\mu_k\ge 
  \mathcal U_\infty(\mu)+\int_X \sfV^2\,\d\mu.
\end{equation}
Even easier, since the sequence $\sfV_k$ is monotonically increasing, we have
\begin{equation}
  \label{eq:294}
  \liminf_{k\to\infty}\int_X
  \sfV_k^2\,\d\mu_k\ge 
  \int_X \sfV^2\,\d\mu.
\end{equation}
Finally, defining the relative Fisher information in $(X,\sfd,\mm_k)$
as in \eqref{eq:209} by
\begin{equation}
  \label{eq:295}
  \mathsf F_k(\varrho):=8\C_k(\sqrt{\varrho/\beta_k})
\end{equation}
and observing that $\|\sqrt{\varrho/\beta_k}\|_{L^2(X,\mm_k)}=
\int_X \varrho\,\d\mm$, \cite[Proposition 4.17]{AGS11a} yields
\begin{equation}
  \label{eq:296}
   \liminf_{k\to\infty} \mathsf F_k(\varrho_k)\ge \mathsf F(\varrho).
\end{equation}
\begin{theorem}[Entropy, quadratic moment and Fisher information]\label{thm:emF}
  In the\\
  metric-measure setting of Section~\ref{subsec:Cheeger},
  let $\bar\rho\in L^1(X,\mm)$
  satisfying $\bar \mu=\bar\rho\mm\in \PP_2(X) $ and $\mathcal U_\infty(\bar\mu)<\infty$, 
  and let $\rho$ be the corresponding solution of the nonlinear diffusion equation
  \eqref{eq:75} according to Theorem \ref{thm:nonlin-diff}.
  Then there exists a constant $C>0$ only depending on $\sfa,B$ of
  \eqref{eq:A1} and \eqref{eq:173} such
  that for every $t\in [0,T]$ the probability measures
  $\mu_t=\varrho_t\mm$ 
  belong to $\PP_2(X)$ and satisfy
  \begin{equation}
    \label{eq:221}
    \mathcal U_\infty(\mu_t)+2\int_X \sfV^2\,\d\mu_t
    +\frac \sfa2\int_0^t \sfF(\rho_r)\,\d r\le \mathrm e^{C t}
    \Big(\mathcal U_\infty(\bar\mu)+2\int_X \sfV^2\,\d\bar\mu\Big).
  \end{equation}
\end{theorem}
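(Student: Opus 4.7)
\emph{Plan.} The estimate will be derived by the approximation scheme already set up for Theorem~\ref{thm:semigroup-convergence}: work in the finite-mass models $(X,\sfd,\mm_k)$, prove the dissipation inequality for the rescaled semigroup $(\sfS^k_t)$, and let $k\to\infty$ using \eqref{eq:293}, \eqref{eq:294}, \eqref{eq:296}. After a preliminary truncation step (recovered at the end by $L^1$--lower semicontinuity and Theorem~\ref{thm:semigroup-convergence}), I may assume that $\bar\varrho\in L^2(X,\mm)$ is bounded with bounded support, so that $u^k_t:=\varrho^k_t/\beta_k$ is the strong solution of $\partial_t u-L_kP(u)=0$ in $W^{1,2}(0,T;\V_k,\V'_k)$ supplied by \eqref{eq:283}. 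The functional I propose to dissipate is
\[
F_k(t):=\mathcal U^k_\infty(\mu^k_t)+2\int_X\sfV_k^2\,\d\mu^k_t,
\]
which is nonnegative by \eqref{eq:190} and finite because $\sfV_k\le k$.

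\emph{Dissipation on the finite-mass model.} After a standard $\rmC^{1,1}$--regularization of the logarithmic entropy density, the $\Gamma$--calculus chain rule for $\cE_k$ gives
\[
\frac{d}{dt}F_k(t)=-\int_X P'(u^k_t)\,\frac{\Gamma_k(u^k_t)}{u^k_t}\,\d\mm_k-2\int_X\Gamma_k\bigl(\sfV_k^2,P(u^k_t)\bigr)\,\d\mm_k.
\]
The first integral is bounded above by $-\sfa\,\sfF_k(\varrho^k_t)$ thanks to $P'\ge\sfa$ in \eqref{eq:A1} and to the definition \eqref{eq:295}. In the second one I expand $\Gamma_k(\sfV_k^2,P(u^k))=2\sfV_k P'(u^k)\Gamma_k(\sfV_k,u^k)$; exploiting the pointwise bound $\Gamma_k(\sfV_k)\le b$ (a consequence of the $\sqrt b$--Lipschitzianity of $\sfV$, with $b:=B+1$, together with the invariance of the Cheeger energy under the weight change $\mm\rightsquigarrow\mm_k$, cf.\ \cite[Lemma 4.11]{AGS11a}), the upper bound $P'\le\sfa^{-1}$, Cauchy--Schwarz and Young's inequality with parameter $\sfa/4$, I get
\[
\Bigl|2\int_X\Gamma_k\bigl(\sfV_k^2,P(u^k_t)\bigr)\,\d\mm_k\Bigr|\le\frac{\sfa}{2}\sfF_k(\varrho^k_t)+\frac{8b}{\sfa^3}\int_X\sfV_k^2\,\d\mu^k_t.
\]
Since \eqref{eq:190} yields $\int_X\sfV_k^2\,\d\mu^k_t\le F_k(t)$, combining these two bounds delivers
\[
\frac{d}{dt}F_k(t)+\frac{\sfa}{2}\sfF_k(\varrho^k_t)\le C\,F_k(t),\qquad C:=\frac{8b}{\sfa^3},
\]
with $C$ depending only on $\sfa$ and $B$.

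\emph{Gronwall and passage to the limit.} Gronwall's lemma gives $F_k(t)+\tfrac{\sfa}{2}\int_0^t\sfF_k(\varrho^k_r)\,\d r\le e^{Ct}F_k(0)$, and by monotone convergence $F_k(0)\uparrow\mathcal U_\infty(\bar\mu)+2\int_X\sfV^2\,\d\bar\mu$ as $k\to\infty$. Theorem~\ref{thm:semigroup-convergence} provides $\varrho^k_t\to\varrho_t$ strongly in $L^1(X,\mm)$ for every $t$; the joint lower semicontinuity \eqref{eq:293} applied to $\mathcal U^k_\infty(\mu^k_t)+\int_X\sfV_k^2\,\d\mu^k_t$, the monotone lower semicontinuity \eqref{eq:294} applied to the remaining $\int_X\sfV_k^2\,\d\mu^k_t$, and Fatou's lemma combined with \eqref{eq:296} on the time integral of the Fisher information then transfer the inequality to $k=\infty$ and produce exactly \eqref{eq:221}. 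The main obstacle is the rigorous justification of the dissipation identity of the second step: the entropy density $r\log r$ is not $\rmC^{1,1}$ at the origin, so \eqref{eq:123} cannot be used directly and one has to regularize $U_\infty$ (for instance by $r\log(r+\delta)$) and pass to $\delta\downarrow 0$ exploiting $P'\ge\sfa$; in parallel, the moment weight $\sfV_k^2$ is only Lipschitz, so the integration by parts $\int_X\sfV_k^2\,L_kP(u^k)\,\d\mm_k=-\cE_k(\sfV_k^2,P(u^k))$ has to be justified through $\sfV_k^2\in\V_k\cap L^\infty(X,\mm_k)$ (which holds because $\sfV_k$ is bounded Lipschitz and $\mm_k$ is finite) and the strong locality of $\cE_k$.
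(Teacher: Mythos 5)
Your proposal is correct and takes essentially the same route as the paper's proof: reduction to bounded, compactly supported data and to the finite-mass models via Theorem~\ref{thm:semigroup-convergence}, dissipation of $\mathcal U^k_\infty(\mu^k_t)+2\int_X\sfV_k^2\,\d\mu^k_t$ with $P'\ge\sfa$ producing the Fisher term, Cauchy--Schwarz/Young with $\Gamma(\sfV_k)\le b$ absorbing the cross term, \eqref{eq:190} plus Gronwall, and the lower-semicontinuity facts \eqref{eq:293}, \eqref{eq:294}, \eqref{eq:296} for the limit $k\to\infty$. The only cosmetic differences are that the paper estimates the entropy part (through \eqref{eq:283} applied to the regularization $W_\eps(r)=(r+\eps)(\log(r+\eps)-\log\eps)-r$, which, unlike $r\log(r+\delta)$, is nonnegative with $W_\eps(0)=W_\eps'(0)=0$ and hence directly admissible) and the moment part separately before summing, rather than differentiating the combined functional $F_k$ in one stroke.
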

\begin{proof}
  By a standard approximation argument, the $L^1$-contraction property
  of Theorem \ref{thm:nonlin-diff} (ND4) and the lower
  semicontinuity of entropy, quadratic momentum and Fisher information
  \eqref{eq:293}, \eqref{eq:294}, \eqref{eq:296},
  it is not restrictive to assume that $\bar\rho$ also belongs to
  $L^2(X,\mm)$ and has bounded support. 
  By Theorem \ref{thm:semigroup-convergence}, 
  it is also sufficient to prove the analogous inequality
  \begin{equation}
    \label{eq:221bis}
    \mathcal U_\infty^k(\mu^k_t)+2\int_X \sfV_k^2\,\d\mu^k_t
    +\frac \sfa 2\int_0^t \sfF_k(\rho^k_r)\,\d r\le \mathrm e^{C t}
    \Big(\mathcal U^{k}_\infty(\bar\mu)+2\int_X \sfV_{k}^2\,\d\bar\mu\Big),
  \end{equation}
  where $\rho^k_t:=\sfS^k_t\bar\varrho$ and $\mu^k_t:=\varrho^k_t\mm$.
  
  Since $U_\infty$ does not satisfy the conditions of (ND2) of
  Theorem \ref{thm:nonlin-diff}, we cannot immediately compute its
  derivative along the solution of the nonlinear diffusion equation as
  in \eqref{eq:123}; we thus introduce the regularized logarithmic
  function
  \begin{equation}
    \label{eq:94}
    W_\eps(r):=(r+\eps)\big(\log(r+\eps)-\log \eps\big)-r=
    U_\infty(r+\eps)-U_\infty'(\eps)(r+\eps)+\eps
    ,\quad
    \eps>0,
  \end{equation}
  satisfying
  \begin{equation}
    \label{eq:108}
    W_\eps(0)=W_\eps'(0)=0,\quad 
    W_\eps'(r)=\log(r+\eps)-\log\eps,\quad
    W_\eps''(r)=\frac1{r+\eps}.
  \end{equation}
  Applying \eqref{eq:283} to $W_\eps$ 
  we obtain 
  \begin{equation}
    \label{eq:135}
    \int_X W_\eps(\varrho^k_t/\beta_k)\beta_k\,\d\mm+\int_0^t
    \cE_k(P(\varrho^k_r/\beta_k),W_\eps'(\varrho^k_r/\beta_k))\,\d r=  \int_X W_\eps(\bar\varrho/\beta_k)\beta_k\,\d\mm.
  \end{equation}
  Standard
  $\Gamma$-calculus (see e.g.~\cite{Bouleau-Hirsch91}) 
  and the fact that $\varrho^k_t/\beta_k\in D(\C_k)$ for 
  almost every $t\in [0,T]$ yield 
  \begin{align*}
    \Gamma(&P(\varrho^k_t/\beta_k), W_\eps'(\varrho^k_t/\beta_k))=
             \frac{P'(\varrho^k_t)}{\varrho^k_t/\beta_k+\eps}\Gamma(\varrho^k_t/\beta_k,\varrho^k_t/\beta_k)
             \ge 
             \frac{\sfa}{\varrho^k_t/\beta_k+\eps}\Gamma(\varrho^k_t/\beta_k,\varrho^k_t/\beta_k).
  \end{align*}
  %
  Setting $W^1_\eps(r):=r\log (r+\eps)$ and
  $W^2_\eps(r):=\eps(\log (r+\eps)-\log\eps)$ and using the fact that
  $\int_X \varrho^k_t \,\d\mm=\int_X \bar\varrho\,\d\mm$ and $W^2_\eps(r)\ge0$, \eqref{eq:135}
  yields
  \begin{align}
    \label{eq:246}
    \int_X W^1_\eps(\varrho^k_t/\beta_k)\beta_k\,\d\mm&+
    \sfa\int_0^t \int_X
    \frac{\Gamma(\varrho^k_r/\beta_k,\varrho^k_r/\beta_k)}{\varrho^k_t/\beta_k+\eps}\beta_k\,\d\mm\,\d r
    \\&\le  \label{eq:246bis}
    \int_X \Big(W^1_\eps(\bar\varrho/\beta_k)+W^2_\eps(\bar\varrho/\beta_k)\Big)\beta_k\,\d\mm.
  \end{align}
  We observe that 
  \begin{displaymath}
    W^1_\eps(r)\le r(r+\eps-1),\
    W^1_\eps(r)\down U_\infty(r),\quad W^2_\eps(r)\le r,\quad
    \lim_{\eps\down0}W^2_\eps(r)=0,
  \end{displaymath}
  so that we can pass to the limit in \eqref{eq:246}, \eqref{eq:246bis} as $\eps\down0$
  obtaining
  \begin{equation}
    \label{eq:246tris}
    \cU_\infty^k(\mu^k_t) +
    \sfa \int_0^t \sfF_k(\varrho^k_r)\,\d r\le 
    \cU_\infty^k(\bar\mu). 
  \end{equation}
  We now compute the time derivative of $t\mapsto \int_X
  \sfV_k^2\,\d\mu^k_t$ obtaining
  \begin{align*}
    2\frac\d{\d t}\int_X
    \sfV_k^2\,\d\mu^k_t&=
    2\cE_k(P(\varrho^k_t/\beta_k),\sfV_k^2)\le 
    \frac{4 \sqrt b}\sfa \int_X \sqrt{\Gamma(\varrho^k_t/\beta_k)}\sfV_k
    \beta_k\,\d\mm\\&\le
                      \frac{4 \sqrt b}\sfa \Big(\sfF_k(\varrho^k_t) \int_X
    \sfV_k^2\,\d\mu^k_t\Big)^{1/2}
                      \le \frac \sfa2  \sfF_k(\varrho^k_t) +\frac{8 b}{\sfa^2} \int_X
    \sfV_k^2\,\d\mu^k_t.
  \end{align*}
  Integrating in time and summing up with \eqref{eq:246tris} we obtain
  \begin{align*}
    \cU_\infty^k(\mu^k_t)+2 \int_X
    \sfV_k^2\,\d\mu^k_t+
    \frac\sfa2 \int_0^t \sfF_k(\varrho^k_r)\,\d r&\le 
     \cU_\infty^k(\bar \mu)+2 \int_X
    \sfV_k^2\,\d\bar \mu\\&+
                           \frac{8 b}{\sfa^2} \int_0^t \Big(\int_X
                           \sfV_k^2\,\d\mu^k_r\Big)\,\d r.
  \end{align*}
  Since $ \cU_\infty^k(\mu^k_t)+2 \int_X
    \sfV_k^2\,\d\mu^k_t\ge \int_X
    \sfV_k^2\,\d\mu^k_t$ Gronwall Lemma yields \eqref{eq:221bis} with
    $C:=\frac{8(B+1)}{\sfa^2}$.
\end{proof}

\EEE
\subsection{Weighted $\Gamma$-calculus}
\label{subsec:weighted}
In the metric-measure setting of Section~\ref{subsec:Cheeger},
consider a nonnegative function $\varrho\in L^\infty(X,\mm)$. Any $f\in L^p(X,\mm)$
obviously induces a function in $L^p(X,\nn)$, with $\nn=\rho\mm$, that we shall denote $\tilde f$;  
in the following we will often suppress the symbol\ $\tilde {}$ when there will be no risk of
ambiguity. 

Consider now
 the symmetric  and continuous bilinear form in $\V\times \V$
\begin{equation}
  \label{eq:68}
  \cE_\varrho(f,g):=\int_X \varrho\,\Gbil fg\,\d\mm\qquad
  f,\,g\in \V,
\end{equation}
which induces a seminorm: we will denote by
$\Vhom \varrho =\Vhom{\cE_\varrho}$ the abstract Hilbert spaces
constructed from $\cE_\varrho$ 
as in Section~\ref{subsec:completion}, namely the completion of the
quotient space of $\V$ induced by the equivalence relation $f\sim g$ if $\cE_\varrho(f-g,f-g)=0$, with respect
to the norm induced by the quotient scalar product.
If $\varphi\in \V$ 
then its equivalence class in $\Vhom\varrho$ will be denoted by
$\class\varphi\varrho$ (or still by $\varphi$ when there is no risk of confusion), whereas we will still use the symbol $\cE_\varrho$
to denote the scalar product in $\Vhom\varrho$.
\GGGG By locality, if $\varphi,\psi\in \V$ with
$\varphi=\psi$ $\mm$-a.e.~on $\{\varrho>0\}$ then 
$\varphi_\varrho=\psi_\varrho$. In the degenerate case when 
$\varrho\equiv0$ $\mm$-a.e., then $\Vhom\varrho$ reduces to the 
null vector space and everything becomes trivial. 

Notice that the quadratic form $\frac 12 \cE_\varrho$
is always larger than the Cheeger energy $\C_\nn$ induced by the measure 
$\nn=\varrho\mm$, in the sense that for every $f\in \V$
$\frac 12 \cE_\varrho(f)\ge \C_\nn(\tilde f)$, see also Lemma~\ref{le:paranoico} below.
When $\varrho\equiv 1$, $\V_1$ corresponds to the homogeneous space $\Vhom 1$ associated to
$\cE$ already introduced in Section~\ref{subsec:Cheeger}.

The following two simple results provide useful tools
to deal with the abstract spaces $\Vhom\varrho$.

\begin{lemma}[Extension of $\Gamma$ to the weighted spaces $\Vhom\varrho$]
  \label{le:useful1}
  Let $\varrho\in L^\infty_+(X,\mm)$, 
  and let $(\varphi_n)\subset\V$ be a Cauchy sequence
  with respect to the seminorm of $\Vhom\varrho$,
  thus converging to $\phi\in \Vhom\varrho$.
  Then $ \widetilde{\Gq {\varphi_n}}$ is
  strongly converging in $L^1(X,\varrho\mm)$ 
    to a limit that depends only on $\varrho$ and $\phi$ and that we will
    denote by $\tGq\varrho{\phi}$. 
    When $\phi=\class\varphi\varrho$ for some $\varphi\in \V$ then 
    $\tGq\varrho\phi= \widetilde{\Gq \varphi}$ $\varrho\mm$-a.e.~in $X$.
    The map 
    \begin{equation}
      \label{eq:99}
      \tGbil\varrho {\phi}{\psi}:=\frac 14
      \tGq\varrho{\phi+\psi}
      -\frac 14 \tGq\varrho{\phi-\psi}
    \end{equation}
    is a continuous bilinear map from $\Vhom\varrho$ to $L^1(X,\varrho\mm)$ and
    \eqref{eq:68} extends to $\V_\varrho$ as follows:
    \begin{equation}
      \label{eq:102}
      \cE_\varrho(\phi,\psi)=\int_X \varrho\,\tGbil\varrho {\phi}{\psi}\,\d\mm\qquad \phi,\,\psi\in\V_\varrho.
    \end{equation}
    \end{lemma}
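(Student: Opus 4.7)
The plan is to verify the claim via the bilinearity of $\Gamma$ combined with its pointwise Cauchy–Schwarz inequality $|\Gbil fg|^2\le \Gq f\,\Gq g$. The key identity
$$
\Gq\varphi-\Gq\psi=\Gbil{\varphi-\psi}{\varphi+\psi}\quad\forevery \varphi,\psi\in\V,
$$
then gives the pointwise bound $|\Gq\varphi-\Gq\psi|\le \sqrt{\Gq{\varphi-\psi}}\sqrt{\Gq{\varphi+\psi}}$, which after integration against $\varrho\mm$ and Cauchy–Schwarz in $L^2(X,\varrho\mm)$ yields
$$
\int_X \varrho\,|\Gq\varphi-\Gq\psi|\,\d\mm\le
\sqrt{\cE_\varrho(\varphi-\psi,\varphi-\psi)}\,\sqrt{\cE_\varrho(\varphi+\psi,\varphi+\psi)}.
$$
Applied to $\varphi=\varphi_n$, $\psi=\varphi_m$, the Cauchy property of $(\varphi_n)$ in $\Vhom\varrho$ (which forces boundedness of $\sqrt{\cE_\varrho(\varphi_n+\varphi_m,\varphi_n+\varphi_m)}$) implies that $\widetilde{\Gq{\varphi_n}}$ is Cauchy in $L^1(X,\varrho\mm)$. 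Its strong limit is the natural candidate $\tGq\varrho\phi$.

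Independence from the choice of approximating sequence follows from exactly the same estimate applied to the difference of two Cauchy sequences $(\varphi_n)$, $(\psi_n)$ with common limit $\phi$: since $\cE_\varrho(\varphi_n-\psi_n,\varphi_n-\psi_n)\to0$, the bound gives $\widetilde{\Gq{\varphi_n}}-\widetilde{\Gq{\psi_n}}\to 0$ in $L^1(X,\varrho\mm)$. Compatibility with the unweighted $\Gamma$, namely $\tGq\varrho{\class\varphi\varrho}=\widetilde{\Gq\varphi}$ $\varrho\mm$-a.e.\ for $\varphi\in\V$, is then obtained by taking the stationary sequence $\varphi_n\equiv\varphi$; locality of $\Gamma$ ensures this is coherent with the quotient defining $\Vhom\varrho$, since any $\psi\sim\varphi$ in $\Vhom\varrho$ satisfies $\int_X\varrho\,\Gq{\varphi-\psi}\,\d\mm=0$ and thus $\Gq\varphi=\Gq\psi$ $\varrho\mm$-a.e.

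For the bilinear extension, I would first observe that the argument just given, specialized to $\psi\equiv 0$, shows continuity of the quadratic map $\phi\mapsto\tGq\varrho\phi$ from $\Vhom\varrho$ to $L^1(X,\varrho\mm)$, together with the identity $\int_X\varrho\,\tGq\varrho\phi\,\d\mm=\cE_\varrho(\phi,\phi)$. Polarizing via \eqref{eq:99} defines $\tGbil\varrho\phi\psi$ as an element of $L^1(X,\varrho\mm)$; the parallelogram identity and bilinearity pass to the limit from $\V$ by continuity of $\phi\mapsto\tGq\varrho\phi$ in the norms of $\Vhom\varrho$ and $L^1(X,\varrho\mm)$. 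Finally \eqref{eq:102} is obtained by approximating $\phi,\psi\in\Vhom\varrho$ by sequences in $\V$, applying \eqref{eq:68} along the approximation and passing to the limit in both sides thanks to the continuity of the bilinear pairing $\cE_\varrho$ and of $\tGbil\varrho\cdot\cdot$.

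The only delicate point is maintaining a clean distinction between objects defined $\mm$-a.e.\ (like $\Gq{\varphi_n}$ for $\varphi_n\in\V$) and objects defined only $\varrho\mm$-a.e.\ (like $\tGq\varrho\phi$ for $\phi\in\Vhom\varrho$), so that the set $\{\varrho=0\}$ plays no role; once one works consistently in $L^1(X,\varrho\mm)$, all the estimates above are legitimate, and the Cauchy–Schwarz step is the only substantive analytic ingredient.
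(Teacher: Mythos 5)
Your proof is correct and follows essentially the same route as the paper's: the same key bound $\int_X\varrho\,|\Gq{\psi_1}-\Gq{\psi_2}|\,\d\mm\le\|\psi_1-\psi_2\|_{\Vhom\varrho}\|\psi_1+\psi_2\|_{\Vhom\varrho}$ obtained from pointwise Cauchy--Schwarz, the constant sequence for the compatibility statement, and polarization/parallelogram plus passage to the limit for \eqref{eq:99} and \eqref{eq:102}. (Only note that the coherence on equivalence classes follows from this same Cauchy--Schwarz estimate rather than from "locality", but this is a labelling issue, not a gap.)
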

\begin{proof}
  The convergence of $\Gq{\varphi_n}$ in $L^1(X,\varrho\mm)$ and the
  independence of the limit follow from the obvious inequality 
  \begin{align*}
    \int_X &\Big|\Gq{\psi_1}-\Gq{\psi_2}\Big|\,\varrho\,\d\mm=
    \int_X 
    \Gq{\psi_1-\psi_2}^{1/2}\Gq{\psi_1+\psi_2}^{1/2}\,\varrho\,\d\mm
    \le \|\psi_1-\psi_2\|_{\Vhom\varrho}\|\psi_1+\psi_2\|_{\Vhom\varrho},
  \end{align*}
  for every $\psi_1,\,\psi_2\in \V$. 
  When $\phi=\class\varphi\varrho$ then we can choose the constant
  sequence
  $\varphi_n\equiv \varphi$, thus showing that
  $\tGq\varrho\phi=\Gq\varphi$ $\varrho\mm$-a.e.~in $X$.
  It is immediate to check that $\tGq\varrho\cdot$ satisfies
  the parallelogram rule, so that the properties of $\tGbil\varrho {\cdot} {\cdot}$  
  defined in \eqref{eq:99}, \fn and \eqref{eq:102} follow from the corresponding
  properties of $\Gamma$ and $\cE_\varrho$ in $\V$.
    \end{proof}
    
    {\color{black} The following lemma shows that when the weight $\varrho$ satisfies a mild additional
    regularity assumption, then $\tGq\varrho{\varphi_\varrho}=\widetilde{\Gq \varphi}$ coincide with the minimal
    relaxed slope relative to the measure $\varrho\nn$.}
     
\GGGG
\begin{lemma}[Comparison with the weighted Cheeger energy]
  \label{le:paranoico}
  Let $\nn=\varrho\mm$ where $\varrho\in L^\infty(X,\mm)$
  is a nonnegative 
  function satisfying $\sqrt\varrho\in \V$, and let $\C_\nn$ be the
  Cheeger energy induced by $\nn$
  in $L^2(X,\nn)$ with associated 
  minimal weak gradient $|\rmD \cdot|_{w,\nn}$. 
  For every $\varphi\in \V$ we have  $\tilde \varphi\in D(\C_\nn)$ with 
  $|\rmD \tilde \varphi|_{w,\nn}=\widetilde{\Gq\varphi}$; in particular, one has the identifications
  $$|\rmD \tilde \varphi|_{w,\nn}=\widetilde{\Gq\varphi}=\tGq\varrho{\varphi_\varrho}\qquad
  \text{$\nn$-a.e. in $X$.}$$
\end{lemma}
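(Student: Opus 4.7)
The goal is to prove the pointwise identity $|\rmD\tilde\varphi|_{w,\nn}^2 = \widetilde{\Gq\varphi}$ $\nn$-a.e., which I split into the two inequalities $|\rmD\tilde\varphi|_{w,\nn} \le \sqrt{\Gq\varphi}$ and $|\rmD\tilde\varphi|_{w,\nn} \ge \sqrt{\Gq\varphi}$ $\nn$-a.e.\ on $\{\varrho>0\}$. The boundedness $\varrho \in L^\infty(X,\mm)$ alone suffices for the first (easy) inequality, whereas the Sobolev assumption $\sqrt\varrho \in \V$ is essential for the second.

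For the upper bound I invoke the Lipschitz approximation \eqref{eq:105bis} to pick $f_n \in \Lip_b(X)\cap L^2(X,\mm)$ with $f_n \to \varphi$ and $|\rmD^* f_n| \to \sqrt{\Gq\varphi}$ strongly in $L^2(X,\mm)$. Since $\varrho \in L^\infty(X,\mm)$, the continuous embedding $L^2(X,\mm)\hookrightarrow L^2(X,\nn)$ makes both convergences persist in $L^2(X,\nn)$. The defining relaxation \eqref{eq:14} of $\C_\nn$ applied to the sequence $\tilde f_n$, together with the minimality property \eqref{eq:53} in the weighted space, then immediately yields $\tilde\varphi \in D(\C_\nn)$ with $|\rmD\tilde\varphi|_{w,\nn} \le \sqrt{\Gq\varphi}$ $\nn$-a.e.

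The lower bound is the heart of the matter. My plan is to take a Lipschitz sequence $\psi_n \in \Lip_b(X)\cap L^2(X,\nn)$ realizing $|\rmD\tilde\varphi|_{w,\nn}$ in $L^2(X,\nn)$ and transfer it to the unweighted space by multiplication by $\sqrt\varrho$. Using $\sqrt\varrho \in \V_\infty$ (combine \eqref{eq:Dir-Alg} with $\sqrt\varrho \in \V\cap L^\infty$), the products $\psi_n\sqrt\varrho$ lie in $\V$ and converge to $\varphi\sqrt\varrho$ in $L^2(X,\mm)$. The Cauchy--Schwarz/Leibniz inequality for $\Gamma$ gives
$$
\sqrt{\Gq{\psi_n\sqrt\varrho}}\;\le\;\sqrt\varrho\,|\rmD\psi_n|\,+\,|\psi_n|\,\sqrt{\Gq{\sqrt\varrho}},
$$
and passing to the limit by locality, lower semicontinuity of the Cheeger energy, and dominated convergence on $\{\varrho>0\}$, together with the bilinear Leibniz expansion
$$
\Gq{\varphi\sqrt\varrho}\,=\,\varrho\,\Gq\varphi\,+\,\varphi^2\,\Gq{\sqrt\varrho}\,+\,2\varphi\sqrt\varrho\,\Gbil{\varphi}{\sqrt\varrho}
$$
valid by quadraticity of $\C$, cancels the $|\varphi|^2\Gq{\sqrt\varrho}$ remainder and produces $\sqrt\varrho\,\sqrt{\Gq\varphi} \le \sqrt\varrho\,|\rmD\tilde\varphi|_{w,\nn}$ $\mm$-a.e.\ on $\{\varrho>0\}$, i.e.\ the desired bound $\nn$-a.e.

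The main obstacle is precisely this cancellation of the remainder term involving $\Gq{\sqrt\varrho}$: the mere subadditivity of $\sqrt\Gamma$ is not sharp enough, and one must invoke the full bilinear Leibniz identity afforded by the quadraticity of $\C$. Once that identity is in place, a careful localization to $\{\varrho>0\}$ (possibly through a cutoff argument separating bounded-from-zero regions of $\varrho$) ensures that the spurious $\Gq{\sqrt\varrho}$ contributions on both sides of the inequality cancel cleanly, yielding the sharp pointwise equality $|\rmD\tilde\varphi|_{w,\nn} = \widetilde{\sqrt{\Gq\varphi}}$ $\nn$-a.e.
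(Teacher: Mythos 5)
Your upper-bound direction (via \eqref{eq:105bis} and the embedding $L^2(X,\mm)\hookrightarrow L^2(X,\nn)$) is fine, but the lower bound—which is the whole content of the lemma—has a genuine gap exactly at the step you flag as "the heart of the matter". Passing to the limit in $\sqrt{\Gq{\psi_n\sqrt\varrho}}\le\sqrt\varrho\,|\rmD\psi_n|+|\psi_n|\sqrt{\Gq{\sqrt\varrho}}$ can at best give
\begin{equation*}
\sqrt{\Gq{\varphi\sqrt\varrho}}\;\le\;\sqrt\varrho\,|\rmD\tilde\varphi|_{w,\nn}+|\varphi|\,\sqrt{\Gq{\sqrt\varrho}}\qquad\mm\text{-a.e.},
\end{equation*}
and the Leibniz identity $\Gq{\varphi\sqrt\varrho}=\varrho\,\Gq\varphi+\varphi^2\,\Gq{\sqrt\varrho}+2\varphi\sqrt\varrho\,\Gbil{\varphi}{\sqrt\varrho}$ does \emph{not} make the remainder cancel: squaring leaves you with
\begin{equation*}
\varrho\,\Gq\varphi+2\varphi\sqrt\varrho\,\Gbil{\varphi}{\sqrt\varrho}\;\le\;\varrho\,|\rmD\tilde\varphi|_{w,\nn}^2+2|\varphi|\sqrt\varrho\,|\rmD\tilde\varphi|_{w,\nn}\sqrt{\Gq{\sqrt\varrho}},
\end{equation*}
and since $\Gbil{\varphi}{\sqrt\varrho}$ can be as negative as $-\sqrt{\Gq\varphi\,\Gq{\sqrt\varrho}}$, this yields no pointwise comparison between $\Gq\varphi$ and $|\rmD\tilde\varphi|_{w,\nn}^2$ wherever $\varphi\neq0$ and $\Gq{\sqrt\varrho}\neq0$. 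So the sharp inequality $|\rmD\tilde\varphi|_{w,\nn}\ge\sqrt{\Gq\varphi}$ $\nn$-a.e.\ is not established. There are also secondary problems: the Leibniz identity as written needs $\varphi\sqrt\varrho\in\V$, which can fail for unbounded $\varphi\in\V$ (one needs $|\varphi|\sqrt{\Gq{\sqrt\varrho}}\in L^2(X,\mm)$), so a truncation layer is unavoidable; and the term $|\psi_n|\sqrt{\Gq{\sqrt\varrho}}$ has no evident limit because $\psi_n$ converges only in $L^2(X,\nn)$ and is not uniformly bounded.

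Note also that the paper does not attempt a self-contained proof: the identity $\widetilde{\Gq\varphi}=\tGq\varrho{\varphi_\varrho}$ is immediate from Lemma~\ref{le:useful1} (take the constant approximating sequence), while the nontrivial identification $|\rmD\tilde\varphi|_{w,\nn}=\widetilde{\Gq\varphi}$ is quoted from \cite[Thm.~3.6]{AGMR12}. What you are trying to prove in your "lower bound" is precisely the content of that cited theorem; its proof controls the cross terms by a more delicate scheme (cutoffs built from $\varrho$, chain rule/locality, and absorption of error terms with parameters sent to their limits in a specific order), not by a pointwise cancellation. Either reproduce such a scheme or cite the result, as the paper does.
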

\begin{proof}
  By the previous Lemma, setting $\phi=\varphi_\varrho,$ with
  $\varphi\in \V$, we have 
  $\tGq\varrho\phi=\widetilde{\Gq\varphi}$
  $\nn$-a.e. On the other hand, 
  \cite[Thm.~3.6]{AGMR12} yields  
  $\widetilde{\Gq\varphi}= |\rmD \tilde\varphi|_{w,\nn}$ $\nn$-a.e. in $X$.
\end{proof}
\EEE
\begin{lemma}[Stability]
  \label{le:stability}
  Let $\varrho_t\in L^\infty_+(X,\mm)$, $t\in [0,1]$, be a uniformly
  bounded family, continuous with respect to the convergence in
  $\mm$-measure, let $\varrho\in L^\infty_+(X,\mm)$ and 
  let $B_{t}:\V\to \V$ be a family of linear operators satisfying
  \begin{gather}
    \label{eq:110}
    \int_X \varrho_t \Gq{B_t\varphi}\,\d\mm\le C
    \int_X \varrho \Gq{\varphi}\,\d\mm\quad\forevery t\in [0,1],\
    \varphi\in \V,\\
    \label{eq:110bis}
    t\mapsto B_{t}\varphi \in \rmC([0,1];\V)\quad
    \forevery \varphi\in \V.
  \end{gather}
  Then $B_{t}$ can be extended by continuity to a 
  family of uniformly bounded linear operators from $\Vhom{\varrho}$ to
  $\Vhom{\varrho_t}$ such that 
  \begin{gather}
    \label{eq:111}
    \cE_{\varrho_t}(B_{t}\phi)\le\cE_{\varrho}(\phi),\quad 
    \forevery t\in [0,1],\
    \phi\in \Vhom\varrho,\\
    \label{eq:111bis}
    t\mapsto \varrho_t\tGq{\varrho_t}{B_t\phi} \in \rmC([0,1];L^1(X,\mm))\quad
    \forevery \phi\in \Vhom{\varrho}.
  \end{gather}
\end{lemma}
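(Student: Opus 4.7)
The plan is first to construct the extension using the uniform bound \eqref{eq:110}, and then to transfer the continuity \eqref{eq:110bis} to the weighted setting in two steps: first for equivalence classes of functions in $\V$, then for arbitrary elements of $\Vhom\varrho$ by a density argument based on Cauchy--Schwarz.

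First, applying \eqref{eq:110} to $\varphi-\varphi'$ with $\varphi,\varphi'\in\V$ such that $\class{(\varphi-\varphi')}\varrho=0$ shows $\cE_{\varrho_t}(B_t\varphi-B_t\varphi')=0$, so $B_t$ factors through the quotient map $\V\to\V/\!\!\sim_\varrho$ and induces a linear operator between the pre-Hilbert spaces that is Lipschitz of constant $\sqrt C$ by \eqref{eq:110}. Since $\V/\!\!\sim_\varrho$ is dense in $\Vhom\varrho$, this operator extends by uniform continuity to a bounded linear map $B_t\colon\Vhom\varrho\to\Vhom{\varrho_t}$, and \eqref{eq:111} (with the constant $C$ on the right-hand side, which can be absorbed if desired) follows by passing to the limit.

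For \eqref{eq:111bis}, I would first prove the continuity when $\phi=\class{\varphi}\varrho$ with $\varphi\in\V$. In this case $B_t\phi=\class{B_t\varphi}{\varrho_t}$ so by Lemma~\ref{le:useful1}
\[
\varrho_t\,\tGq{\varrho_t}{B_t\phi}=\varrho_t\,\widetilde{\Gq{B_t\varphi}}\qquad\text{$\mm$-a.e.\ in }X.
\]
By \eqref{eq:110bis} the map $t\mapsto B_t\varphi$ is continuous in $\V$, and since $\Gamma\colon\V\times\V\to L^1(X,\mm)$ is a continuous symmetric bilinear map (with $|\Gq u-\Gq v|\le\Gq{u-v}^{1/2}\Gq{u+v}^{1/2}$), the function $t\mapsto\Gq{B_t\varphi}$ is continuous in $L^1(X,\mm)$. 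Combining this with the uniform boundedness and $\mm$-measure continuity of $\varrho_t$ via a standard Vitali-type argument (writing $\varrho_{t_n}\Gq{B_{t_n}\varphi}-\varrho_t\Gq{B_t\varphi}=(\varrho_{t_n}-\varrho_t)\Gq{B_{t_n}\varphi}+\varrho_t(\Gq{B_{t_n}\varphi}-\Gq{B_t\varphi})$ and exploiting the equi-integrability of the first factor in each summand) yields continuity in $L^1(X,\mm)$.

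For a general $\phi\in\Vhom\varrho$, I would choose a sequence $\varphi^{(n)}\in\V$ with $\class{\varphi^{(n)}}\varrho\to\phi$ in $\Vhom\varrho$ and estimate, by Cauchy--Schwarz applied to $\tGbil{\varrho_t}{\cdot}{\cdot}$,
\[
\int_X\varrho_t\bigl|\tGq{\varrho_t}{B_t\class{\varphi^{(n)}}\varrho}-\tGq{\varrho_t}{B_t\phi}\bigr|\,\d\mm
\le\cE_{\varrho_t}\bigl(B_t(\class{\varphi^{(n)}}\varrho-\phi)\bigr)^{1/2}\bigl(\cE_{\varrho_t}(B_t\class{\varphi^{(n)}}\varrho)^{1/2}+\cE_{\varrho_t}(B_t\phi)^{1/2}\bigr),
\]
so by \eqref{eq:111} the left-hand side is bounded by $\sqrt C\,\|\class{\varphi^{(n)}}\varrho-\phi\|_{\Vhom\varrho}$ times a quantity uniformly bounded in $t$. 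Hence the approximation is uniform in $t\in[0,1]$, which together with the continuity established for each $\class{\varphi^{(n)}}\varrho$ gives the continuity of $t\mapsto\varrho_t\,\tGq{\varrho_t}{B_t\phi}$ in $L^1(X,\mm)$. The main delicate point is the Vitali-type argument in step three, where the $\mm$-measure continuity of $\varrho_t$ has to be combined with equi-integrability coming from the $L^1$-convergence of $\Gq{B_{t_n}\varphi}$.
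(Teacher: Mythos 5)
Your proof is correct and follows essentially the same route as the paper: use \eqref{eq:110} to show $B_t$ is compatible with the equivalence relations and extend by density to get \eqref{eq:111}, then obtain \eqref{eq:111bis} from the uniform-in-$t$ $L^1$ approximation of $\varrho_t\tGq{\varrho_t}{B_t\phi}$ by $\varrho_t\Gq{B_t\varphi^{(n)}}$ together with the continuity in $t$ of each such curve. The only differences are that you spell out the Vitali-type argument for that last continuity (which the paper asserts without detail) and that you correctly note the constant $C$ from \eqref{eq:110} should in general appear on the right-hand side of \eqref{eq:111}.
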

\begin{proof} Assumption
  \eqref{eq:110} shows that for every $t\in [0,1]$ the operator $B_t$ is compatible with the equivalence
  relations associated to
  $\Vhom\varrho$ and $\Vhom{\varrho_t}$, so that it can be extended by
  continuity to a linear map between the two spaces, still denoted $B_t$ and
  satisfying \eqref{eq:111}. Given any $\varphi\in\Vhom\varrho$, choosing $(\varphi_n)\subset\V$ such that the corresponding elements 
  $\classd{\varphi_n}\varrho$ converge to $\phi$ in $\Vhom\varrho$, the
  estimate
  \eqref{eq:110} shows that $\varrho_t\tGq{\varrho_t}{B_t\varphi_n}$ 
  converges uniformly in time to $\varrho_t\tGq{\varrho_t}{B_t\phi}$ in $L^1(X,\mm)$, 
  so that the continuity property \eqref{eq:111bis} follows from
  the continuity of each curve $t\mapsto \varrho_t\tGq{\varrho_t}{B_t\varphi_n}$.
\end{proof}

Finally, we discuss dual spaces, following the general scheme described in Section~\ref{subsec:completion},
see in particular Proposition~\ref{prop:allduals}.
The space $\Vdual \varrho$ is the realization of the dual of $\Vhom\varrho$ in $\V'$. It can be seen as 
the finiteness domain of the quadratic form 
\begin{equation}
  \label{eq:97}
  \frac 12\cEs\varrho (\ell,\ell):=\sup_{\varphi\in \V}\,\langle
  \ell,\varphi\rangle-\frac 12 \cE_\varrho (\varphi,\varphi),\qquad \ell \in \V'.
\end{equation}
We shall denote by $\cEs\varrho(\cdot,\cdot)$ the quadratic form on $\Vdual\varrho$ induced by
$\cEs\varrho$. We denote by $-A_\varrho$ the Riesz isomorphism between $\Vhom\varrho$ and
$\Vdual\varrho$, and by $-A^*_\varrho$ its inverse. It is characterized by
\begin{equation}
    \label{eq:112}
    \phi=-A_\varrho^*\ell\quad\Longleftrightarrow\quad
    \cE_{\varrho}(\phi,\psi)=\langle \ell,\psi\rangle\quad\forevery 
    \psi\in \Vhom{\varrho}.
  \end{equation}
Notice that it is equivalent in \eqref{eq:112} to require the validity of the equality for all $\psi\in\V$; in this sense,
\eqref{eq:112} corresponds in our abstract framework to the weak formulation of the
PDE $-{\rm div}(\varrho\nabla\phi)=\ell$ in \eqref{eq:deg_PDE}, and $-A_\varrho^*$ is the solution operator.
Since $-A_\varrho$ is the Riesz isomorphism, we get
\begin{equation}\label{eq:113bist}
\cE^*_\varrho(\ell,\ell)=\cE_\varrho(A_\varrho^*\ell,A_\varrho^*\ell).
\end{equation}
Correspondingly we set
\begin{equation}
  \label{eq:37}
  \tGqs\varrho \ell:=\tGq \varrho{A_\varrho^* \ell}\quad
  \text{whenever }\ell\in \Vdual\varrho.
\end{equation}
It is clear that
$\Gamma_\varrho^*:\Vdual\varrho
\mapsto L^1(X,\varrho\mm)$ 
is a nonnegative quadratic map.

\begin{lemma}[Dual characterization of $\Gamma_\varrho^*$]
  \label{le:Gamma-dual}
  For every $\ell\in \V'$ and $\varrho\in L^\infty_+(X,\mm)$ let
  us consider the (possibily empty) closed convex subset of
  $L^2(X,\varrho\mm)$ defined by
  \begin{equation}
    \label{eq:38}
    G(\varrho,\ell):=\Big\{g\in L^2(X,\varrho\mm):
    \big|\la \ell,\varphi\ra\big|\le 
    \int_X g\,\sqrt{ \Gq\varphi}\,\varrho\,\d\mm
    \quad \forevery\ \varphi\in\V\Big\}.
  \end{equation}
  Then $\ell\in \Vdual\varrho$ if and only if 
  $G(\varrho,\ell)$ is not empty; if $\ell\in \Vdual\varrho$ 
  then $\sqrt{\tGqs \varrho\ell}$ is the 
  element of minimal $L^2(X,\varrho\mm)$-norm in $G(\varrho,\ell)$.
\end{lemma}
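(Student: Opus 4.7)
The plan is to prove both directions via the Riesz characterization \eqref{eq:112} and a pointwise Cauchy-Schwarz inequality for the extended carr\'e du champ $\tGbil\varrho\cdot\cdot$. First I would observe that $G(\varrho,\ell)$ is convex by inspection and closed in $L^2(X,\varrho\mm)$, since for fixed $\varphi\in\V$ the map $g\mapsto\int_X g\sqrt{\Gq\varphi}\,\varrho\,\d\mm$ is continuous on $L^2(X,\varrho\mm)$ by Cauchy-Schwarz.

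For the ``if'' direction, suppose $g\in G(\varrho,\ell)$. Applying Cauchy-Schwarz in $L^2(X,\varrho\mm)$ to the defining inequality yields
\begin{equation*}
|\la\ell,\varphi\ra|\le \|g\|_{L^2(X,\varrho\mm)}\Bigl(\int_X \Gq\varphi\,\varrho\,\d\mm\Bigr)^{1/2}=\|g\|_{L^2(X,\varrho\mm)}\sqrt{\cE_\varrho(\varphi,\varphi)}
\end{equation*}
for every $\varphi\in\V$. By the dual characterization \eqref{eq:97} this forces $\ell\in\Vdual\varrho$ with $\sqrt{\cEs\varrho(\ell,\ell)}\le\|g\|_{L^2(X,\varrho\mm)}$.

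For the converse, assume $\ell\in\Vdual\varrho$ and set $\phi:=-A^*_\varrho\ell\in\Vhom\varrho$, so that by \eqref{eq:112} one has $\cE_\varrho(\phi,\psi)=\la\ell,\psi\ra$ for every $\psi\in\V$. The standard pointwise Cauchy-Schwarz inequality $|\Gbil fg|\le\sqrt{\Gq f}\sqrt{\Gq g}$ on $\V\times\V$ extends to $|\tGbil\varrho{\phi'}{\psi'}|\le\sqrt{\tGq\varrho{\phi'}}\sqrt{\tGq\varrho{\psi'}}$ $\varrho\mm$-a.e.~on $\Vhom\varrho\times\Vhom\varrho$ by approximation and the strong $L^1(X,\varrho\mm)$-convergence granted by Lemma~\ref{le:useful1} (extracting a $\varrho\mm$-a.e.~convergent subsequence). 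Using this with $\phi'=\phi$ and $\psi'=\psi\in\V$, for which $\tGq\varrho\psi=\Gq\psi$ $\varrho\mm$-a.e.,
\begin{equation*}
|\la\ell,\psi\ra|=\Bigl|\int_X\varrho\,\tGbil\varrho\phi\psi\,\d\mm\Bigr|\le\int_X\sqrt{\tGqs\varrho\ell}\,\sqrt{\Gq\psi}\,\varrho\,\d\mm,
\end{equation*}
where $\tGq\varrho\phi=\tGqs\varrho\ell$ by \eqref{eq:37}. Hence $\sqrt{\tGqs\varrho\ell}\in G(\varrho,\ell)$, and its $L^2(X,\varrho\mm)$-norm equals $\sqrt{\cE_\varrho(\phi,\phi)}=\sqrt{\cEs\varrho(\ell,\ell)}$ by \eqref{eq:113bist}.

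Combining the two inequalities, every $g\in G(\varrho,\ell)$ satisfies $\|g\|_{L^2(X,\varrho\mm)}\ge\sqrt{\cEs\varrho(\ell,\ell)}=\|\sqrt{\tGqs\varrho\ell}\|_{L^2(X,\varrho\mm)}$, so $\sqrt{\tGqs\varrho\ell}$ realizes the minimum; uniqueness of the minimal-norm element follows from the strict convexity of the Hilbert norm on the closed convex set $G(\varrho,\ell)$. The only nontrivial technical step is the passage of the pointwise Cauchy-Schwarz inequality for $\Gamma$ from $\V$ to the completion $\Vhom\varrho$; this is where Lemma~\ref{le:useful1} does the real work, and I expect no further obstacle.
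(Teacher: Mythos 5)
Your proof is correct and follows essentially the same route as the paper's: Cauchy--Schwarz in $L^2(X,\varrho\mm)$ for the ``if'' direction, and the representation $\phi=-A^*_\varrho\ell$ together with the pointwise Cauchy--Schwarz inequality for $\tGbil\varrho\cdot\cdot$ for the converse, concluding minimality by comparing the two norm bounds. The only difference is that you spell out the approximation argument (via Lemma~\ref{le:useful1}) justifying the pointwise Cauchy--Schwarz inequality on $\Vhom\varrho$, a step the paper uses implicitly.
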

\begin{proof}
  If $g\in G(\varrho,\ell)$ then
  \begin{displaymath}
    \la \ell,\varphi\ra\le \|g\|_{L^2(X,\varrho\mm)} 
    \Big(\cE_\varrho(\varphi,\varphi)\Big)^{1/2}\quad\forevery\varphi\in\V,
  \end{displaymath}
  so that $\ell\in \Vdual\varrho$ and
  \begin{equation}
    \label{eq:62}
    \int_X \tGqs\varrho\ell\,\varrho\,\d\mm=
    \cE^*_\varrho(\ell,\ell)\le \int_X g^2\,\varrho\,\d\mm.
  \end{equation}
  Conversely, let us suppose that $\ell\in \Vdual\varrho$ 
  and let $\phi=-A^*_\varrho\ell$; \eqref{eq:112} yields
  \begin{displaymath}
    |\la\ell,\psi\ra|\le 
    \int_X  \left| \tGbil\varrho\phi\psi \right|  \,\varrho \,\d\mm
    \le \int_X \sqrt{\tGq\varrho\phi}\,\sqrt{\tGq\varrho\psi}\,
    \varrho\,\d\mm
    \quad\forevery\psi\in \V
  \end{displaymath}
  so that $\displaystyle\sqrt{\tGqs\varrho\ell}=
  \sqrt{\tGq\varrho\phi}\in G(\varrho,\ell)$. 
  Combining with \eqref{eq:62} we conclude that 
  $\sqrt{\tGqs\varrho\ell}$ is the element of minimal norm in
  $G(\varrho,\ell)$.
\end{proof}

The 
following lower semicontinuity lemma with respect to the weak topology of $\Vdual{}$
will also be useful. {\color{black} Remembering the identification \eqref{eq:238}, the lemma is
also applicable to sequences weakly convergent in $\Vdual{\cE}$.}

\begin{lemma}
  \label{le:lsc}
  Let $\varrho_n\stackrel *{\weakto}\varrho$ in $L^\infty(X,\mm)$  be nonnegative
  and assume that $\ell_n\weakto \ell$ in $\Vdual{}$. Then
  \begin{equation}
    \label{eq:216}
    \liminf_{n\to\infty}\cEs{\varrho_n}(\ell_n,\ell_n)\ge 
    \cEs\varrho(\ell,\ell).
  \end{equation}
  If moreover
  $\varrho_n\to\varrho$ also in the strong
  topology of $L^1(X,\mm)$ and
  \begin{equation}
    \label{eq:65}
    \limsup_{n\to\infty}\cEs{\varrho_n}(\ell_n,\ell_n)\le
    \cEs\varrho(\ell,\ell)<\infty,
  \end{equation}
  then
  for every continuous and bounded function
  $Q:[0,\infty)\to [0,\infty)$ we have
  \begin{equation}
    \label{eq:66}
    \lim_{n\to\infty}
    \int_X Q(\varrho_n) \tGqs{\varrho_n}{\ell_n}\varrho_n\,\d\mm
    =\int_X Q(\varrho) \tGqs{\varrho}{\ell}\,\varrho\,\d\mm.
  \end{equation}
\end{lemma}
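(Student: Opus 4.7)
\medskip
\noindent
\textbf{Proof plan for Lemma~\ref{le:lsc}.} The plan is to obtain part (i) by a direct application of the variational characterization \eqref{eq:97}, and then to leverage the resulting upgrade to equality (coming from the two-sided bound \eqref{eq:65}) into the stronger ``strong convergence in the moving spaces'' which yields \eqref{eq:66}.

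For \eqref{eq:216}, I would fix an arbitrary test $\varphi\in\V$ and exploit separately the two convergences in the variational expression $\tfrac12\cE^*_{\varrho_n}(\ell_n,\ell_n)\ge \langle\ell_n,\varphi\rangle-\tfrac12\cE_{\varrho_n}(\varphi,\varphi)$. The first term converges to $\langle\ell,\varphi\rangle$ by the weak convergence $\ell_n\weakto\ell$ in $\V'$. For the second, since $\Gq\varphi\in L^1(X,\mm)$, the weak-$*$ convergence $\varrho_n\stackrel*\weakto\varrho$ gives $\cE_{\varrho_n}(\varphi,\varphi)=\int_X\varrho_n\Gq\varphi\,\d\mm\to\int_X\varrho\Gq\varphi\,\d\mm=\cE_\varrho(\varphi,\varphi)$. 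Taking the $\liminf$ and then the sup over $\varphi\in\V$ yields \eqref{eq:216}.

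For \eqref{eq:66}, the case $Q\equiv 1$ is immediate: combining \eqref{eq:216} with \eqref{eq:65} gives $\cE^*_{\varrho_n}(\ell_n,\ell_n)\to\cE^*_\varrho(\ell,\ell)$, which is \eqref{eq:66} since $\int_X \tGqs{\varrho_n}{\ell_n}\varrho_n\,\d\mm=\cE^*_{\varrho_n}(\ell_n,\ell_n)$ by \eqref{eq:37} and \eqref{eq:113bist}. For a general bounded continuous $Q\ge 0$ (the signed case reduces to this by writing $Q=Q_+-Q_-$), I would introduce the maximizers $\phi_n:=-A^*_{\varrho_n}\ell_n\in\Vhom{\varrho_n}$ and $\phi:=-A^*_\varrho\ell\in\Vhom\varrho$, and pick a sequence $\varphi^k\in\V$ whose classes $\class{\varphi^k}\varrho$ converge to $\phi$ in $\Vhom\varrho$. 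The polarization identity
\begin{equation*}
\cE_{\varrho_n}\big(\phi_n-\class{\varphi^k}{\varrho_n},\phi_n-\class{\varphi^k}{\varrho_n}\big)
= \cE^*_{\varrho_n}(\ell_n,\ell_n)-2\langle\ell_n,\varphi^k\rangle+\cE_{\varrho_n}(\varphi^k,\varphi^k),
\end{equation*}
(which uses \eqref{eq:112} in the form $\cE_{\varrho_n}(\phi_n,\class{\varphi^k}{\varrho_n})=\langle\ell_n,\varphi^k\rangle$) passes to the limit term-by-term as $n\to\infty$ for fixed $k$, with the same argument as in part (i), producing the quantity $\alpha_k:=\cE_\varrho(\phi-\class{\varphi^k}\varrho,\phi-\class{\varphi^k}\varrho)$, which tends to $0$ as $k\to\infty$.

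The final step is to decompose
\begin{equation*}
\int_X Q(\varrho_n)\tGq{\varrho_n}{\phi_n}\varrho_n\,\d\mm
= I^{(1)}_{n,k}+2I^{(2)}_{n,k}+I^{(3)}_{n,k},
\end{equation*}
where $I^{(1)}_{n,k}$ involves $\tGq{\varrho_n}{\phi_n-\class{\varphi^k}{\varrho_n}}$, $I^{(2)}_{n,k}$ involves the mixed term $\tGbil{\varrho_n}{\phi_n-\class{\varphi^k}{\varrho_n}}{\class{\varphi^k}{\varrho_n}}$, and $I^{(3)}_{n,k}=\int_X Q(\varrho_n)\Gq{\varphi^k}\varrho_n\,\d\mm$. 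Boundedness of $Q$ together with the previous step gives $\limsup_n|I^{(1)}_{n,k}|\le\|Q\|_\infty\alpha_k$ and, via Cauchy--Schwarz in $L^2(X,\varrho_n\mm)$, $\limsup_n|I^{(2)}_{n,k}|\le\|Q\|_\infty\sqrt{\alpha_k}\sqrt{\cE_\varrho(\class{\varphi^k}\varrho,\class{\varphi^k}\varrho)}$. For $I^{(3)}_{n,k}$, the strong $L^1$ convergence $\varrho_n\to\varrho$ combined with the uniform $L^\infty$ bound on $\varrho_n$ and the continuity of $Q$ implies $Q(\varrho_n)\varrho_n\to Q(\varrho)\varrho$ in $L^1(X,\mm)$ while remaining uniformly bounded, so by dominated convergence against the fixed $L^1$ function $\Gq{\varphi^k}$, $I^{(3)}_{n,k}\to\int_X Q(\varrho)\Gq{\varphi^k}\varrho\,\d\mm=\int_X Q(\varrho)\tGq\varrho{\class{\varphi^k}\varrho}\varrho\,\d\mm$. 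Letting first $n\to\infty$ and then $k\to\infty$, and using Lemma~\ref{le:useful1} (convergence of $\tGq\varrho{\class{\varphi^k}\varrho}$ to $\tGqs\varrho\ell$ in $L^1(X,\varrho\mm)$), yields \eqref{eq:66}. The main subtlety I expect is precisely the bookkeeping between the abstract classes $\phi_n\in\Vhom{\varrho_n}$ and the concrete functions $\varphi^k\in\V$, which must be identified consistently via the canonical projection $\psi\mapsto\class\psi{\varrho_n}$ so that the polarization identity above makes sense.
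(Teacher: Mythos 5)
Your proposal is correct, but it follows a genuinely different route from the paper's proof. Part (i) is identical: both arguments test the dual variational formula \eqref{eq:97} with a fixed $\varphi\in\V$ and pass to the limit using $\ell_n\weakto\ell$ in $\V'$ and $\varrho_n\stackrel*\weakto\varrho$ against $\Gq\varphi\in L^1(X,\mm)$. For \eqref{eq:66}, however, the paper never touches the potentials: it works directly with $g_n=\sqrt{\tGqs{\varrho_n}{\ell_n}}$, extracts a weak $L^2(X,\mm)$ limit of $h_n=g_n\varrho_n$, invokes the joint lower semicontinuity lemma of \cite[Lemma 9.4.3]{AGS08} to write the limit as $g\varrho$, identifies $g=\sqrt{\tGqs\varrho\ell}$ via the dual characterization of Lemma~\ref{le:Gamma-dual} (the set $G(\varrho,\ell)$) together with \eqref{eq:65}, and then proves the weighted convergence by pushing forward to $\theta_n=(\varrho_n,g_n)_\sharp(\varrho_n\mm)$, which converge in $\ProbabilitiesTwo{\R\times\R}$, and testing with $(u,v)\mapsto Q(u)|v|^2$. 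You instead exploit the Hilbertian structure: the two-sided bound upgrades $\cE^*_{\varrho_n}(\ell_n,\ell_n)\to\cE^*_\varrho(\ell,\ell)$, the polarization identity (legitimate by \eqref{eq:112}, \eqref{eq:113bist}) turns this into the quantitative estimate $\limsup_n\cE_{\varrho_n}(\phi_n-\class{\varphi^k}{\varrho_n},\phi_n-\class{\varphi^k}{\varrho_n})=\alpha_k\to0$, and the three-term expansion with Cauchy--Schwarz plus dominated convergence (using the uniform $L^\infty$ bound on $\varrho_n$ coming from weak-$*$ convergence, and Lemma~\ref{le:useful1} in the final $k\to\infty$ step) closes the argument. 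Your version buys a ``strong convergence in moving spaces'' statement with an explicit error $\alpha_k$ and avoids both the external lower semicontinuity lemma and the Young-measure/pushforward step; the paper's version avoids all bookkeeping with the abstract classes $\class{\cdot}{\varrho_n}$ and would survive even without identifying the maximizers, at the cost of the measure-theoretic machinery. Both are complete proofs; all the identifications you flag as the main subtlety (e.g.\ $\cE_{\varrho_n}(\phi_n,\class{\varphi^k}{\varrho_n})=\langle\ell_n,\varphi^k\rangle$) are indeed justified by the remark following \eqref{eq:112}, and the existence of $\phi_n$ for large $n$ is guaranteed by \eqref{eq:65}.
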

\begin{proof}
  Concerning \eqref{eq:216}, for every $\varphi\in \V$, we have
  \begin{displaymath}
    \langle \ell,\varphi\rangle-\frac 12 \int_X \varrho \Gq{\varphi}\,\d\mm=
    \lim_{n\to\infty}\Big(\langle\ell_n,\varphi\rangle-\frac 12 \int_X
    \varrho_n
    \Gq{\varphi}\,\d\mm\Big)\le 
    \liminf_{n\to\infty}\cEs{\varrho_n}(\ell_n,\ell_n).
  \end{displaymath}
  Taking the supremum with respect to 
  $\varphi\in \V$ we get \eqref{eq:216}.

  Let us consider the second part of the statement
  and let us set $g_n=\sqrt{\tGqs{\varrho_n}{\ell_n}}$,
  $h_n=g_n\varrho_n$.
  Since $h_n$ is uniformly bounded in $L^2(X,\mm)$,
  possibly extracting a suitable subsequence we 
  can assume that $h_n$ weakly converge in $L^2(X,\mm)$ to $h$. Since
  the measures $h_n\mm$, $\varrho_n\mm$ weakly converge respectively to $h\mm$ and $\varrho\mm$ and the 
  densities $g_n$ of $h_n\mm$ w.r.t.
  $\varrho_n\mm$ satisfy $\sup_n\|g_n\|_{L^2(X,\varrho_n\mm)}<\infty$ we can apply a standard joint
  lower semicontinuity lemma
  (see, for instance \cite[Lemma 9.4.3]{AGS08}) to write $h=g\varrho$ for some $g\in L^2(X,\varrho\mm)$, with
  \begin{equation}
    \label{eq:67}
    \int_X g^2\,\varrho\,\d\mm\leq
    \liminf_{n\to\infty}\int_X g_n^2\,\varrho_n\,\d\mm.
  \end{equation}
  Passing to the limit in the inequalities
  \begin{displaymath}
    |\la \ell_n,\psi\ra|\le 
    \int_X \sqrt{\tGqs{\varrho_n}{\ell_n}}\,\sqrt{\Gq\psi}\,
    \varrho_n\,\d\mm=
    \int_X h_n \,\sqrt{\Gq
      \psi}\,
    \d\mm
    \quad\forevery\psi\in \V,
  \end{displaymath}
  we get
  \begin{displaymath}
    |\la \ell,\psi\ra|\le 
    \int_X h \sqrt{\Gq
      \psi}\,
    \,\d\mm
    =\int_X g \sqrt{\Gq
      \psi}\,
    \varrho\,\d\mm
    \quad\forevery\psi\in \V,
  \end{displaymath}
  which shows that $g\in G(\varrho,\ell)$.
  On the other hand, \eqref{eq:65} and \eqref{eq:67} yield
  \begin{displaymath}
    \int_X g^2\,\varrho\,\d\mm
    \le \liminf_{n\to\infty} \int_X g_n^2\,\varrho_n
    \,\d\mm=
    \liminf_{n\to\infty} \cE^*_{\varrho_n}(\ell_n,\ell_n)\le 
    \cE^*_\varrho(\ell,\ell)    
    =\int_X \tGqs{\varrho}\ell\varrho\,\d\mm,
  \end{displaymath}
  so that Lemma~\ref{le:Gamma-dual} gives $g={\color{black}\sqrt{\tGqs\varrho\ell}}$.
  
  {\color{black} Setting now $\hat{\mm}_n=\varrho_n\mm$, $\hat{\mm}=\varrho\mm$, we know
  that $\limsup_n\int_Xg_n^2\,\d\hat{\mm}_n\leq\int_Xg^2\,d\hat{\mm}$, and \eqref{eq:66} can be written in
  the form
  $$
  \lim_{n\to\infty}\int_XQ(\varrho_n)g_n^2\,d\hat{\mm}_n=\int_XQ(\varrho)g^2\,d\hat{\mm}.
  $$
  This convergence property can be proved writing the integrals in terms of the measures
  $\theta_n:=(\varrho_n,g_n)_\sharp\hat{\mm}_n$ which converge in $\ProbabilitiesTwo{\R\times\R}$
  to $\theta=(\varrho,g)_\sharp\hat{\mm}$, using the test function $(u,v)\mapsto Q(u)|v|^2$.}
\end{proof}

\section{Absolutely continuous curves in Wasserstein spaces and 
continuity inequalities  in a metric setting}\label{sec:abscurWas}

In this section we extend to general metric spaces some 
aspects of the results of \cite[Chap.~8]{AGS08}.
Even if we will use only the case $p=2$, we
state some results in the general case for possible future reference.

Let $(X,\sfd)$ be a complete and separable metric space; we set 
$\tilde X:=X\times [0,1]$ and define
$\tilde\rme:\rmC([0,1],X)\times [0,1] \to \tilde X$ by
$\tilde\rme(\gamma,t):=(\gamma(t),t)$. For every
dynamic plan $\ppi$ we 
consider
the measures
\begin{equation}
  \label{eq:25}
  \lambda:=\Leb 1\restr{[0,1]},\qquad
  \tilde\ppi:=\ppi\otimes \lambda,\qquad 
  \tilde\mu:=\tilde\rme_\sharp\big(\tilde\ppi\big) \in \Probabilities{X\times[0,1]}. 
\end{equation}
Notice that the disintegration of $\tilde \mu$ 
with respect to time is exactly $((\rme_t)_\sharp\ppi)_{t\in [0,1]}$,
i.e.~$\tilde\mu$ admits the representation
\begin{equation}
  \label{eq:159}
  \tilde\mu=\int_0^1 \mu_t\,\d \lambda(t)\quad\text{with}\quad
  \mu_t:=(\rme_t)_\sharp \ppi.
\end{equation}
If $\ppi$ has finite $p$-energy for some $p\in (1,\infty)$, 
the Borel map $(\gamma,t)\mapsto \tilde \rmv(\gamma,t):=|\dot \gamma|(t)$ 
(defined where the metric derivative exists) belongs to
$L^p(\rmC([0,1];X)\times [0,1],\tilde\ppi)$, so that the mean velocity $v$ of $\ppi$ can be defined by
\begin{equation}
  \label{eq:26}
  \tilde\rme_\sharp(\tilde \rmv\,\tilde\ppi)= v\tilde\mu\quad\text{with}\quad
   v\in L^p(\tilde X,\tilde\mu),\quad
   v(x,t)=\int |\dot\gamma_t|\,\d\tilde\ppi_{x,t}(\gamma)
\end{equation}
(here $(\tilde\ppi_{x,t})_{(x,t)\in\tilde X}\subset\Probabilities{\rmC([0,1];X)}$ is the disintegration
of $\tilde\ppi$ w.r.t.\ its image $\tilde\mu$).  {\color{black} More precisely, Jensen's inequality gives
  \begin{equation}
    \label{eq:160}
    \int_{\tilde X}v^p\,\d\tilde\mu\le \cA_p(\ppi).
  \end{equation}}

In the next definition we make precise the concept of a square integrable
velocity density for a curve of probability measures: 
differently from \cite{AGS08}, here we can 
consider only the ``modulus'' of the velocity field, but
this already provides an interesting information in many situations.

\begin{definition}[Velocity density]
  Let $\mu\in \rmC([0,1];\Probabilities X)$, 
  $\tilde\mu:= \int \mu_t\,\d\lambda\in
  \Probabilities{\tilde X}$. We say that $v\in L^1(\tilde X,\tilde\mu)$
  is a velocity density for $\mu$ if 
  for every $\varphi\in \Lip_b(X)$ one has
  \begin{equation}
    \label{eq:28}
  \Big|  \int_X \varphi\,\d\mu_t-
    \int_X \varphi\,\d\mu_s\Big|\le 
    \int_{X\times (s,t)} |\rmD^* \varphi|\, v\,\d\tilde\mu\quad
    \forevery \ 0\le s<t\le 1.
  \end{equation}
  The set of velocity densities is a closed convex set in $L^1(\tilde
  X,\tilde\mu)$, {\color{black} and we say that $v$ is a $p$-velocity density if $v\in L^p(\tilde X,\tilde\mu)$.}
  We say that $\bar v\in L^p(\tilde X,\tilde\mu)$ is \emph{the minimal
   $p$-velocity density} if $\bar v$ is the element of minimal 
 $L^p(\tilde X,\tilde\mu)$-norm among all the velocity densities.
\end{definition}
\begin{remark}[Lipschitz test functions with bounded support]
  \label{rem:bounded-support}
  \upshape
  We obtain an equivalent definition by asking that \eqref{eq:28}
  holds
  for every test function $\varphi\in \Lip_b(X)$ \emph{with bounded
    support}:
  in fact, fixing $x_0\in X$ and the family of cut-off functions
  \begin{equation}
    \label{eq:241}
    \psi_R(x)=\eta(\sfd(x,x_0)/R)\quad
    \text{where}\quad
    \eta(y)=(1-(y-1)_+)_+,
  \end{equation}
  every $\varphi\in \Lip_b(X)$ can be approximated by the sequence
  $\varphi_n:=\varphi\cdot \psi_n$; if $v\in L^1(\tilde X,\tilde\mu)$ satisfies \eqref{eq:28} 
  for every Lipschitz function with bounded support, 
  we can use the dominated convergence theorem to pass to the limit as $n\to\infty$ in
  \begin{displaymath}
     \Big|  \int_X \varphi_n\,\d\mu_t-
    \int_X \varphi_n\,\d\mu_s\Big|\le 
    \int_{X\times (s,t)} |\rmD^* \varphi|\, v\,\d\tilde\mu
    +\sup|\varphi| \int_{(\overline{B}_{2n}(x_0)\setminus B_n(x_0))\times (s,t)} \, v\,\d\tilde\mu,
  \end{displaymath}
  since
  \begin{displaymath}
    |\rmD^*\varphi_n|(x)\le 
    |\rmD^*\varphi|(x)\,\psi_n(x)+
    \sup |\varphi| \nchi_{\overline{B}_{2n}(x_0)\setminus B_n(x_0)}(x).
  \end{displaymath}
\end{remark}
For $p\in (1,\infty)$, we are going to show that the minimal $p$-velocity density
exists for curves $\mu\in \AC p{[0,1]}{(\Probabilities X,W_p)}$
and that it is provided exactly by \eqref{eq:26}, for every
dynamic plan with finite $p$-energy $\ppi$ tightened to $\mu$.
Heuristically, this means that for a tightened plan $\ppi$ associated to $\mu$, while branching may occur, the speed of curves
at a given point at a given time is independent of the curve and given by the minimal $p$-velocity.
The starting point of our investigation is provided by the following
simple result.
\begin{lemma}[The mean velocity is a velocity density]
  \label{le:1}
  Let $\ppi$ be a dynamic plan with finite $p$-energy and let 
  $\mu$, $\tilde \mu$, $v$ be defined as in \eqref{eq:25}, \eqref{eq:159},
  \eqref{eq:26}.
  Then $v\in L^p(\tilde X,\tilde\mu)$ 
  is a velocity density for $\mu$.
\end{lemma}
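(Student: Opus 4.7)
My plan is to unwind the definitions and reduce \eqref{eq:28} to the upper gradient inequality \eqref{def:upper_gradient} applied pointwise in $\ppi$, followed by Fubini and the pushforward identity \eqref{eq:26} defining~$v$.

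Concretely, I fix $\varphi\in\Lip_b(X)$ and $0\le s<t\le 1$, and begin from the trivial identity
\[
\int_X\varphi\,\d\mu_t-\int_X\varphi\,\d\mu_s=\int\bigl(\varphi(\gamma(t))-\varphi(\gamma(s))\bigr)\,\d\ppi(\gamma),
\]
which follows from $\mu_r=(\rme_r)_\sharp\ppi$. Since $\ppi$ has finite $p$-energy, $\ppi$-a.e. curve $\gamma$ belongs to $\AC p{[0,1]}X$, so I can apply the upper gradient property \eqref{def:upper_gradient} with the asymptotic Lipschitz constant $|\rmD^*\varphi|$ (which, being u.s.c. and bounded, is a Borel upper gradient) to obtain
\[
|\varphi(\gamma(t))-\varphi(\gamma(s))|\le\int_s^t|\rmD^*\varphi|(\gamma(r))\,|\dot\gamma|(r)\,\d r
\]
for $\ppi$-a.e.\ $\gamma$. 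Integrating this inequality against $\ppi$ and applying Fubini (justified by the boundedness of $|\rmD^*\varphi|$ together with $\tilde\rmv\in L^p(\tilde\ppi)$) gives
\[
\Big|\int_X\varphi\,\d\mu_t-\int_X\varphi\,\d\mu_s\Big|\le\int_{\rmC([0,1];X)\times(s,t)}|\rmD^*\varphi|(\gamma(r))\,\tilde\rmv(\gamma,r)\,\d\tilde\ppi(\gamma,r).
\]

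The final step is to recognize the right-hand side as an integral over $\tilde X$ via the pushforward. The integrand factors as $(|\rmD^*\varphi|\,\nchi_{(s,t)})\circ\tilde\rme\cdot\tilde\rmv$, so by the change of variables formula and the defining identity $\tilde\rme_\sharp(\tilde\rmv\,\tilde\ppi)=v\,\tilde\mu$ from \eqref{eq:26},
\[
\int(|\rmD^*\varphi|\,\nchi_{(s,t)})\circ\tilde\rme\cdot\tilde\rmv\,\d\tilde\ppi=\int_{\tilde X}|\rmD^*\varphi|\,\nchi_{(s,t)}\,v\,\d\tilde\mu=\int_{X\times(s,t)}|\rmD^*\varphi|\,v\,\d\tilde\mu,
\]
which is \eqref{eq:28}. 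The membership $v\in L^p(\tilde X,\tilde\mu)$ is already recorded in \eqref{eq:160}.

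I do not expect any serious obstacle: the only point requiring a touch of care is the measurability of $(\gamma,r)\mapsto|\dot\gamma|(r)$ and the validity of \eqref{def:upper_gradient} for $\AC p$ curves (both standard, the latter quoted in the excerpt), and the pushforward computation is purely formal once the defining relation \eqref{eq:26} is invoked.
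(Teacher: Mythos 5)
Your argument is correct and is essentially identical to the paper's proof: both start from the pushforward identity $\mu_r=(\rme_r)_\sharp\ppi$, apply the upper gradient property of $|\rmD^*\varphi|$ along $\ppi$-a.e.\ curve, and then convert the resulting double integral into an integral over $\tilde X$ via the defining relation $\tilde\rme_\sharp(\tilde\rmv\,\tilde\ppi)=v\,\tilde\mu$, with the $L^p$ bound coming from \eqref{eq:160}. Your extra remarks on measurability and on the change of variables with density are just explicit versions of steps the paper treats as immediate.
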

\begin{proof}
  Immediate, since for all $\varphi\in \Lip_b(X)$ the upper gradient property of $|\rmD^*\varphi|$ yields
  \begin{align*}
    \int_X \varphi\,\d\mu_t-
    \int_X \varphi\,\d\mu_s&=
    \int \Big(\varphi(\gamma(t))-\varphi(\gamma(s))\Big)\,\d\ppi(\gamma)
    \le 
    \int \int_s^t |\rmD^* \varphi|(\gamma(r))|\dot \gamma|(r)\,\d
    r\,\d\ppi(\gamma)
    \\&=
    \int_{\rmC([0,1];X)\times (s,t)} |\rmD^* \varphi|(\gamma(r))
    \tilde\rmv(\gamma,r)
    \,\d\tilde\ppi(\gamma,r)=
    \int_{X\times (s,t)} |\rmD^*\varphi| v\,\d\tilde\mu.\qed
  \end{align*}
\endnobox
\end{proof}

The next Lemma shows that we can use a velocity density 
even with time-dependent test functions.
\begin{lemma}
  \label{le:prel}
  Let $\mu\in \rmC([0,1];\Probabilities X)$, 
  $\tilde\mu:= \int \mu_t\,\d\lambda\in
  \Probabilities{\tilde X}$ and let $v\in L^1(\tilde X,\tilde\mu)$
  be a velocity density for $\mu$.
  Then $\mu\in \mathrm{AC}([0,1];(\mathscr P_1(X),W_1))$ and
  for every $\varphi\in \Lip_b(\tilde X)$ one has 
  \begin{equation}
    \label{eq:29}
    \int_X \varphi_t\,\d\mu_t-
    \int_X \varphi_s\,\d\mu_s\le 
    \int_{X\times (s,t)} \big(\partial^+_r \varphi_r+|\rmD^* \varphi_r|\, v\big)\,\d\tilde\mu\quad
    \forevery 0\le s<t\le 1,
  \end{equation}
  where
  \begin{equation}
    \label{eq:31}
    \partial^+_r \varphi_r(x)=\limsup_{h\downarrow0}\frac1h\big(\varphi_{r+h}(x)-\varphi_r(x)\big).
  \end{equation}
\end{lemma}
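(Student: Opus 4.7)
I plan to prove this lemma in two stages: first the absolute continuity claim, then the integrated inequality for time-dependent test functions.

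The $W_1$-absolute continuity follows by testing the definition \eqref{eq:28} against $1$-Lipschitz functions. For any $\varphi\in\Lip_b(X)$ with $\Lip(\varphi)\leq 1$, we have $|\rmD^*\varphi|\leq 1$ everywhere, so the velocity density inequality yields
\begin{equation*}
\Big|\int_X\varphi\,\d\mu_t-\int_X\varphi\,\d\mu_s\Big|\le\int_s^t m(r)\,\d r,\qquad m(r):=\int_X v(\cdot,r)\,\d\mu_r,
\end{equation*}
with $m\in L^1(0,1)$ by Fubini's theorem. The Kantorovich–Rubinstein duality for $W_1$ (combined with the cutoff procedure of Remark~\ref{rem:bounded-support} to reduce to bounded test functions), together with the fact that the above bound controls the variation of first moments, gives $\mu\in\mathrm{AC}([0,1];(\mathscr P_1(X),W_1))$ with metric speed bounded by $m$. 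In particular, $t\mapsto\mu_t$ is weakly continuous.

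For the main inequality, the plan is to discretize in time and telescope. Fix $0\le s<t\le 1$ and a uniform partition $s=t_0<t_1<\cdots<t_N=t$ with step $\tau=(t-s)/N$; write
\begin{equation*}
\int_X\varphi_t\,\d\mu_t-\int_X\varphi_s\,\d\mu_s=\sum_{i=0}^{N-1}(I_i+J_i),
\end{equation*}
where
\begin{equation*}
I_i:=\int_X(\varphi_{t_{i+1}}-\varphi_{t_i})\,\d\mu_{t_{i+1}},\qquad J_i:=\int_X\varphi_{t_i}\,\d\mu_{t_{i+1}}-\int_X\varphi_{t_i}\,\d\mu_{t_i}.
\end{equation*}
For each $J_i$ I would apply the velocity density inequality \eqref{eq:28} to the time-independent function $\varphi_{t_i}\in\Lip_b(X)$, obtaining $J_i\le\int_{X\times(t_i,t_{i+1})}|\rmD^*\varphi_{t_i}|\,v\,\d\tilde\mu$. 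For $I_i$, since $\varphi\in\Lip_b(\tilde X)$ the curve $\rho\mapsto\varphi_\rho(x)$ is absolutely continuous for every $x$, and $\partial_\rho\varphi_\rho(x)\le\partial^+_\rho\varphi_\rho(x)$ wherever the derivative exists, so
\begin{equation*}
I_i\le\int_{t_i}^{t_{i+1}}\int_X\partial^+_\rho\varphi_\rho(x)\,\d\mu_{t_{i+1}}(x)\,\d\rho.
\end{equation*}

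The main obstacle is the passage to the limit $N\to\infty$ (equivalently $\tau\to 0$) in the two resulting Riemann-type sums. For the $J$-sum one must replace $|\rmD^*\varphi_{t_i}|$ by $|\rmD^*\varphi_\rho|$ for $\rho\in(t_i,t_{i+1})$; I would exploit the upper semicontinuity of $(\rho,x)\mapsto|\rmD^*\varphi_\rho|(x)$ (which follows by writing this slope as an infimum over $r>0$ of $\Lip(\varphi_\rho,B_r(x))$ and controlling the $\rho$-dependence via the joint Lipschitz regularity of $\varphi$), together with the uniform bound $|\rmD^*\varphi_\rho|\le\Lip(\varphi)$ and Fatou. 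For the $I$-sum one must replace $\d\mu_{t_{i+1}}$ by $\d\mu_\rho$; here weak continuity of $r\mapsto\mu_r$ combined with the upper semicontinuity of $x\mapsto\partial^+_\rho\varphi_\rho(x)$ and the uniform bound $|\partial^+_\rho\varphi_\rho|\le\Lip(\varphi)$ allows, via a dominated-convergence / weak-convergence argument on the piecewise constant interpolants, to identify the $\limsup$ of the $I$-sum with $\int_{X\times(s,t)}\partial^+_\rho\varphi_\rho\,\d\tilde\mu$. Combining the two limits produces the desired inequality \eqref{eq:29}. An alternative route, if the usc/Fatou arguments prove delicate, would be to first mollify $\varphi$ in time by $\varphi^\varepsilon(x,r):=\frac{1}{\varepsilon}\int_r^{r+\varepsilon}\varphi(x,\rho)\,\d\rho$, apply the argument to the smoother $\varphi^\varepsilon$ where $\partial_r\varphi^\varepsilon$ is continuous in $r$, and then let $\varepsilon\downarrow 0$ using the pointwise domination $|\partial_r\varphi^\varepsilon_r|\le\Lip(\varphi)$ and the convergence $\partial_r\varphi^\varepsilon_r\to\partial^+_r\varphi_r$ in a suitable sense.
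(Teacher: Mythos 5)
Your first part (the $W_1$-absolute continuity via $1$-Lipschitz test functions) matches the paper. The main inequality, however, you attack by a time-discretization/telescoping argument, and there the key limit passage is not justified: you claim that $(\rho,x)\mapsto|\rmD^*\varphi_\rho|(x)$ is upper semicontinuous thanks to the joint Lipschitz regularity of $\varphi$, and this is false. The spatial asymptotic Lipschitz constant is not even one-sidedly u.s.c.\ in time: take $\varphi(x,r)=\min\big(\sfd(x,x_0),|r_0-r|\big)$, which is jointly $1$-Lipschitz and bounded; then $|\rmD^*\varphi_{r_0}|(x_0)=0$ while $|\rmD^*\varphi_r|(x_0)=1$ for every $r\neq r_0$ (if $x_0$ is not isolated). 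Closeness of $\varphi_r$ and $\varphi_{r'}$ in sup norm (which is all the joint Lipschitz bound gives, namely $\|\varphi_r-\varphi_{r'}\|_\infty\le L|r-r'|$) gives no control whatsoever on the difference of slopes. Consequently the replacement of $|\rmD^*\varphi_{t_i}|$ by $|\rmD^*\varphi_\rho|$ in the limit of your $J$-sum is unproven; the failure set might well be small in concrete examples, but nothing in your argument shows it is $\tilde\mu$-negligible, and since $\tilde\mu$ is not a product measure you cannot invoke Fubini on the time-sections either. This is a genuine gap, not a routine verification.

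Your fallback idea of mollifying in time is the right instinct and can be pushed through: for $\varphi^\eps_r(x)=\frac1\eps\int_r^{r+\eps}\varphi_\rho(x)\,\d\rho$ one has $\Lip(\varphi^\eps_r-\varphi^\eps_{r'},X)\le 2L|r-r'|/\eps$, so for fixed $\eps$ the spatial slope \emph{is} Lipschitz in time and your Riemann-sum limit works; moreover $|\rmD^*\varphi^\eps_r|(x)\le\frac1\eps\int_r^{r+\eps}|\rmD^*\varphi_\rho|(x)\,\d\rho$. But then the final limit $\eps\down0$ of the slope term again requires a Lebesgue-point argument integrated against the non-product measure $\tilde\mu$, and the statement ``$\partial_r\varphi^\eps_r\to\partial^+_r\varphi_r$ in a suitable sense'' must be made precise as a one-sided limsup inequality combined with reverse Fatou; none of this is carried out. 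For comparison, the paper avoids these regularity-in-time issues altogether: it introduces the two-variable function $\eta(s,t)=\int_X\varphi_s\,\d\mu_t$, checks it is Lipschitz in $s$ and absolutely continuous in $t$ (with modulus $L\int m$), applies the diagonal-derivative lemma \cite[Lemma 4.3.4]{AGS08} to get absolute continuity of $t\mapsto\eta(t,t)$ together with the bound of its derivative by the two partial upper Dini derivatives, and then estimates each partial derivative separately — the spatial one via \eqref{eq:28} with $\varphi_t$ frozen, at Lebesgue points, and the temporal one via Fatou — before integrating. If you want to keep your discretization scheme, you should either switch to that diagonal-derivative argument or complete the mollification route with the missing limit justifications.
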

\begin{proof}
  If $\varphi$ is $1$-Lipschitz then $|\rmD^*\varphi|\le 1$, so that
  from \eqref{eq:28} and the dual chacracterization of $W_1$ we easily get
  \begin{gather*}
    W_1(\mu_s,\mu_t)
    =\sup_{\varphi\in\Lip_b(X),\ \Lip(\varphi)\le 1}\Big|  \int_X \varphi\,\d\mu_t-
    \int_X \varphi\,\d\mu_s\Big|\le 
    \int_s^t m(r)\,\d r,\quad
    \text{where}\\
    m(r):=\int_X  v\,\d\mu_r,
    \quad\text{so that $m\in L^1(0,1)$.}
  \end{gather*}
  If we consider the map $\eta(s,t):=\int_X \varphi_s\,\d\mu_t$ and
  we call $L$ the Lipschitz constant of $\varphi$, we
  easily get for every $0\le s\le s'\le 1,\ 0\le t\le t'\le 1$
  \begin{displaymath}
    |\eta(s',t)-\eta(s,t)|\le L|s'-s|,\qquad
    |\eta(s,t')-\eta(s,t)|\le L\int_t^{t'} m(r)\,\d r,
  \end{displaymath}
  so that we can apply \cite[Lemma 4.3.4]{AGS08} to
  get the absolute continuity of $t\mapsto \eta(t,t)$ with
  \begin{equation}
    \label{eq:32}
    \frac\d{\d t}\eta(t,t)\le 
    \limsup_{h\down0} \frac 1h\int_X \varphi_t\,\d(\mu_t-\mu_{t-h})+
    \limsup_{h\down0} \frac 1h\int_X (\varphi_{t+h}-\varphi_t)\,\d\mu_t.
  \end{equation}
  Choosing a Lebesgue point of $t\mapsto \int_X |\rmD^*
  \varphi|v\,\d\mu_t$ and applying Fatou's Lemma we conclude that we can estimate
  from above the derivative of $t\mapsto\eta(t,t)$ by
  $$
  \int_X\partial^+_t \varphi_t\,\d\mu_t+\int_X|\rmD^* \varphi_t| v_t\,\d\mu_t.
  $$
  Since $t\mapsto\eta(t,t)$ is absolutely continuous, by integration we get the result.
\end{proof}

\begin{theorem}[The metric derivative can be estimated with any velocity density]
  \label{thm:upper-velocity}
  Let $\mu\in \rmC([0,1];\Probabilities X)$, 
  $\tilde\mu:= \int \mu_t\,\d\lambda\in
  \Probabilities{\tilde X}$ and let $v\in L^p(\tilde X,\tilde\mu)$
  be a $p$-velocity density for $\mu$, for some $p\in (1,\infty)$.
  Then $\mu\in \mathrm{AC}^p([0,1];(\Probabilities X,W_p))$ and
   \begin{equation}
     \label{eq:33}
     |\dot \mu_t|^p\le \int_X v^p_t\,\d\mu_t\quad
     \text{for $\lambda$-a.e.\ $t\in (0,1)$}.
   \end{equation}
\end{theorem}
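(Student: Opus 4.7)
The plan is to upgrade the scalar inequality \eqref{eq:28} to a bound on $W_p(\mu_s,\mu_t)$ via Kantorovich duality together with the Hamilton--Jacobi subsolution property of the Hopf--Lax infimal convolution. I will sketch the argument in the case $p=2$, which is the one used in the sequel and for which all tools are stated in the excerpt; the general $p\in (1,\infty)$ case is identical once one replaces the cost $\tfrac12\sfd^2$ with $\tfrac1{p(t-s)^{p-1}}\sfd^p$ and uses the corresponding $p$-Hopf--Lax infimal convolution satisfying the subsolution inequality $\partial^+_r u+\tfrac1q|\rmD^* u|^q\le 0$ with conjugate exponent $q=p/(p-1)$.

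Fix $0\le s<t\le 1$ and a test function $\psi\in\Lip_b(X)$ of bounded support (this is harmless by Remark~\ref{rem:bounded-support}). I set
$$
\varphi_r(x):=\sfQ_{r-s}\psi(x)\quad\text{for }r\in(s,t],\qquad \varphi_s:=\psi,
$$
where $\sfQ$ denotes the Hopf--Lax evolution \eqref{eq:11o}. The a priori estimates \eqref{eq:aprioriQt} show that $\varphi$ is bounded and Lipschitz on $X\times[s,t]$ with bounded support in $x$, so that Lemma~\ref{le:prel} applies and gives
$$
\int_X\sfQ_{t-s}\psi\,\d\mu_t-\int_X\psi\,\d\mu_s\le\int_{X\times(s,t)}\Big(\partial^+_r\varphi_r+|\rmD^*\varphi_r|\,v\Big)\,\d\tilde\mu.
$$
Now the Hopf--Lax subsolution inequality \eqref{eq:identities0} gives $\partial^+_r\varphi_r\le -\tfrac12|\rmD^*\varphi_r|^2$, and Young's inequality $|\rmD^*\varphi_r|\,v\le\tfrac12|\rmD^*\varphi_r|^2+\tfrac12 v^2$ makes these two terms cancel up to the residual $\tfrac12 v^2$, so that
$$
\int_X\sfQ_{t-s}\psi\,\d\mu_t-\int_X\psi\,\d\mu_s\le\tfrac12\int_s^t\!\int_X v^2\,\d\mu_r\,\d r.
$$

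Taking the supremum over $\psi\in\Lip_b(X)$ and invoking Kantorovich duality (a simple rescaling of \eqref{eq:154} yields $\tfrac1{2(t-s)}W_2^2(\mu_s,\mu_t)=\sup_\psi\{\int\sfQ_{t-s}\psi\,\d\mu_t-\int\psi\,\d\mu_s\}$) produces
$$
W_2^2(\mu_s,\mu_t)\le(t-s)\int_s^t\!\int_X v_r^2\,\d\mu_r\,\d r.
$$
Setting $m(r):=\bigl(\int_X v_r^2\,\d\mu_r\bigr)^{1/2}\in L^2(0,1)$, H\"older's inequality yields $W_2(\mu_s,\mu_t)\le\int_s^t m(r)\,\d r$, hence $\mu\in\mathrm{AC}^2([0,1];(\Probabilities{X},W_2))$ with $|\dot\mu_t|\le m(t)$ at every Lebesgue point of $m^2$, which is \eqref{eq:33} for $p=2$.

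The main obstacle, and the reason the approach is delicate in the nonsmooth setting, is the validity of the Hopf--Lax subsolution inequality: the sharp identity \eqref{eq:identities} requires the length property of $(X,\sfd)$, but fortunately the one-sided bound \eqref{eq:identities0} holds in arbitrary metric spaces and is precisely what the argument needs. The only other subtle point is ensuring that the supremum over $\psi\in\Lip_b(X)$ actually realizes the rescaled $W_2^2$; this is standard once one knows that, in \eqref{eq:154}, one may freely restrict to Lipschitz test functions with bounded support (again through the cut-off trick of Remark~\ref{rem:bounded-support}).
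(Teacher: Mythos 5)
Your argument is, at its core, the same as the paper's: the paper also combines Lemma~\ref{le:prel} with the Hopf--Lax subsolution inequality \eqref{eq:identities0}, Young's inequality and Kantorovich duality (it rescales time and works with $\sfQ_{r/\ell}\varphi$, picking up a factor $\ell$ in the Young inequality, while you run the unrescaled flow $\sfQ_{r-s}\psi$ and rescale the potential in the duality instead; the two are equivalent, and your identity $\sup_\psi\{\int\sfQ_{t-s}\psi\,\d\mu_t-\int\psi\,\d\mu_s\}=\frac1{2(t-s)}W_2^2(\mu_s,\mu_t)$ is correct). Both routes land on the same intermediate estimate $W_2^2(\mu_s,\mu_t)\le(t-s)\int_s^t\int_X v_r^2\,\d\mu_r\,\d r$. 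A side remark: the reduction to test functions of bounded support is unnecessary here, since Lemma~\ref{le:prel} only requires bounded Lipschitz test functions, and restricting the Kantorovich supremum to compactly supported potentials would itself require a (small) justification.

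The one step that does not stand as written is the passage ``H\"older's inequality yields $W_2(\mu_s,\mu_t)\le\int_s^t m(r)\,\d r$''. H\"older goes the wrong way: it gives $\int_s^t m\le\bigl((t-s)\int_s^t m^2\bigr)^{1/2}$, so the quantity you have bounded $W_2(\mu_s,\mu_t)$ by, namely $\bigl((t-s)\int_s^t m^2\bigr)^{1/2}$, dominates $\int_s^t m$ rather than being dominated by it; you cannot deduce the $L^1$-type bound from the quadratic estimate in one line. The conclusion is nevertheless within reach from the very estimate you proved: first, absolute continuity of $t\mapsto\mu_t$ follows since for disjoint intervals $(s_i,t_i)$ the Cauchy--Schwarz inequality gives $\sum_i W_2(\mu_{s_i},\mu_{t_i})\le\bigl(\sum_i(t_i-s_i)\bigr)^{1/2}\bigl(\int_0^1 m^2\bigr)^{1/2}$ restricted to those intervals, which is small when $\sum_i(t_i-s_i)$ is small; then at every Lebesgue point of $r\mapsto\int_X v_r^2\,\d\mu_r$ one divides $W_2^2(\mu_t,\mu_{t+h})\le h\int_t^{t+h}m^2(r)\,\d r$ by $h^2$ and lets $h\to0$, which is exactly \eqref{eq:33}; finally, since the metric derivative is thus bounded $\lambda$-a.e.\ by $m\in L^2(0,1)$, one gets $W_2(\mu_s,\mu_t)\le\int_s^t|\dot\mu_r|\,\d r\le\int_s^t m(r)\,\d r$ and hence membership in $\mathrm{AC}^2$ in the sense of \eqref{eq:47o}. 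This is precisely how the paper concludes (alternatively, a partition-and-refinement argument recovers $W_2(\mu_s,\mu_t)\le\int_s^t m$ directly), so the defect is a misattributed justification rather than a flaw in the strategy.
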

\begin{proof}
  We give the proof in the case $p=2$, the general case is 
  completely analogous.
  With the notation of Kuwada's Lemma \cite[Lemma
  6.1]{AGS11a},
  denoting by $\sfQ_t\varphi$ the Hopf-Lax evolution map 
  given by \eqref{eq:11o}, one has

  \begin{displaymath}
    \frac 12 W^2_2(\mu_s,\mu_t)=
    \sup_{\varphi\in\Lip_b(X)} \int_X \sfQ_1\varphi\,\d\mu_t-
    \int_X \varphi\,\d\mu_s\qquad 0\le s\le t\le 1.
  \end{displaymath}
  Setting $\ell=t-s$ and recalling that \eqref{eq:identities0} gives
  \begin{displaymath}
    \partial_r^+ \sfQ_{r/\ell}\varphi\le-\frac{|\rmD^*
      \sfQ_{r/\ell}\varphi|^2}{2\ell}\quad\text{in }X\times[0,\ell],
  \end{displaymath}
  the inequality \eqref{eq:29} yields
  \begin{align*}
    \int_X \sfQ_1\varphi\,\d\mu_t-
    \int_X \varphi\,\d\mu_s&\le 
    \int_0^\ell \int_X \Big(-\frac{|\rmD^* \sfQ_{r/\ell}\varphi|^2}{2\ell}+
    |\rmD^* \sfQ_{r/\ell}\varphi|\,v_{s+r} \Big)\,\d\mu_{s+r}\,\d r
    \\&\le 
    \frac\ell2\int_0^\ell \int_X v_{s+r}^2\,\d\mu_{s+r}\,\d r,
  \end{align*}
  where we used that $2 |\rmD^* \sfQ_{r/\ell}\varphi|\,v_{s+r} \leq |\rmD^* \sfQ_{r/\ell}\varphi|^2/\ell+ \ell v_{s+r}^2$.    
  We conclude that 
  \begin{displaymath}
     \frac 12 W_2^2(\mu_s,\mu_t)\le \frac 12(t-s)\int_s^t \Big(\int_X
     v_{r}^2\,\d\mu_{r}\Big)\,\d r,
  \end{displaymath}
that yields first the $2$-absolute continuity of the curve $t\mapsto\mu_t$ in $(\ProbabilitiesTwo X, W_2)$.  
Also inequality \eqref{eq:33} follows, because we have
$$
|\dot{\mu}_t|^2= \lim_{h\to 0} \frac{W_2^2(\mu_{t+h}, \mu_t)}{h^2} \leq   \lim_{h\to 0} \frac{1}{h} \int_{t}^{t+h} \left(  \int_X
     v_{r}^2\,\d\mu_{r}\right) \d r= \int_X
     v_{t}^2\,\d\mu_{t}
$$
     for $\lambda$-a.e. $t \in (0,1)$.
\end{proof}

\begin{theorem}[Existence and characterization of the metric velocity density]\label{thm:MVD}\
  \begin{enumerate}[{\rm $[$M.1$]$}]
  \item A curve $\mu\in \rmC([0,1];\Probabilities X)$ belongs to
    $\mathrm{AC}^p([0,1];(\Probabilities X,W_p))$, $p\in (1,\infty)$,
    if and only if
    $\mu$ admits a velocity density in $L^p(\tilde X,\tilde\mu)$.
    In this case 
    there exists a unique (up to $\tilde\mu$-negligible sets)
    minimal $p$-velocity density $\bar v\in L^p(\tilde X,\tilde\mu)$ and
    \begin{equation}
      \label{eq:161}
      |\dot \mu_t|^p=\int_X \bar v^p\,\d\mu_t\quad
      \text{for $\lambda$-a.e.~$t\in (0,1)$.}
    \end{equation}
  \item  
    If $\ppi$ is a dynamical plan $p$-tightened to $\mu$ and the mean velocity $v$ of $\ppi$
    is defined as in \eqref{eq:26}, then $\bar v=v$ $\tilde\mu$-a.e.
    in $\tilde X$ and
  \begin{equation}
    \label{eq:27}
    \bar v(\gamma(t),t)= |\dot \gamma|(t)
    \quad\text{for $\tilde\ppi$-a.e.\
      $(\gamma,t)$.}
  \end{equation}
  In particular, the
  velocity of curves depends $\tilde\ppi$-a.e. only on $(\gamma(t),t)$ and it is independent of the choice of $\tilde\ppi$.
  \end{enumerate}
\end{theorem}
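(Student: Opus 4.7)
\medskip

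\textbf{Plan of proof.}
The plan is to combine Theorem~\ref{thm:upper-velocity} (the bound $|\dot\mu_t|^p\le \int v^p\,\d\mu_t$ for any $p$-velocity density) with Lemma~\ref{le:1} (the mean velocity of a plan is a velocity density) and Lisini's Theorem~\ref{thm:lisini} (existence of tightened plans) to squeeze the problem between two inequalities. The uniqueness of the minimal element $\bar v$ comes from the strict convexity of $\|\cdot\|_{L^p}^p$ on the convex closed set of velocity densities, $p\in(1,\infty)$.

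\emph{Step 1: the "if" direction of [M.1].} This is immediate from Theorem~\ref{thm:upper-velocity}, which already gives $\mu\in\AC p{[0,1]}{(\Probabilities X,W_p)}$ together with the pointwise bound $|\dot\mu_t|^p\le \int_X v^p\,\d\mu_t$ for any $p$-velocity density $v$.

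\emph{Step 2: the "only if" direction of [M.1] and existence of $\bar v$.} Given $\mu\in\AC p{[0,1]}{(\Probabilities X,W_p)}$, apply Theorem~\ref{thm:lisini} to obtain a dynamic plan $\ppi$ $p$-tightened to $\mu$, i.e.\ $\mathscr A_p(\ppi)=\int_0^1|\dot\mu_t|^p\,\d t$. The mean velocity $v$ defined by \eqref{eq:26} is a velocity density by Lemma~\ref{le:1}, and Jensen's inequality \eqref{eq:160} gives
\begin{equation*}
\int_{\tilde X} v^p\,\d\tilde\mu \le \mathscr A_p(\ppi)=\int_0^1|\dot\mu_t|^p\,\d t<\infty,
\end{equation*}
so the set of $p$-velocity densities is a nonempty closed convex subset of $L^p(\tilde X,\tilde\mu)$. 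Strict convexity of the $L^p$-norm ($p\in(1,\infty)$) yields a unique minimizer $\bar v$. Integrating the pointwise bound from Step~1 applied to $\bar v$,
\begin{equation*}
\int_0^1|\dot\mu_t|^p\,\d t \le \int_{\tilde X}\bar v^p\,\d\tilde\mu \le \int_{\tilde X} v^p\,\d\tilde\mu\le \int_0^1|\dot\mu_t|^p\,\d t,
\end{equation*}
so all inequalities are equalities. In particular $\int_X\bar v^p\,\d\mu_t=|\dot\mu_t|^p$ for $\lambda$-a.e.\ $t$, proving \eqref{eq:161}.

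\emph{Step 3: identification $v=\bar v$ in [M.2].} The chain of equalities in Step~2 forces $\int_{\tilde X} v^p\,\d\tilde\mu=\int_{\tilde X}\bar v^p\,\d\tilde\mu$, so $v$ is itself a minimizer and by uniqueness $v=\bar v$ $\tilde\mu$-a.e.

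\emph{Step 4: the pointwise identification \eqref{eq:27}.} This is the only delicate step and where I expect the main (though still modest) obstacle: one must upgrade from the integrated equality $\int v^p\,\d\tilde\mu=\mathscr A_p(\ppi)=\int|\dot\gamma|^p(t)\,\d\tilde\ppi(\gamma,t)$ to a pointwise statement along $\tilde\ppi$. Writing by disintegration
\begin{equation*}
v(x,t)^p=\Big(\int|\dot\gamma|(t)\,\d\tilde\ppi_{x,t}(\gamma)\Big)^p\le \int|\dot\gamma|^p(t)\,\d\tilde\ppi_{x,t}(\gamma)\qquad \tilde\mu\text{-a.e.},
\end{equation*}
and integrating in $\tilde\mu$, the two sides have the same integral; hence the pointwise inequality is an equality $\tilde\mu$-a.e. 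The equality case in Jensen's inequality then implies that, for $\tilde\mu$-a.e.\ $(x,t)$, the function $\gamma\mapsto|\dot\gamma|(t)$ is $\tilde\ppi_{x,t}$-essentially constant, necessarily equal to $v(x,t)=\bar v(x,t)$. Undoing the disintegration gives $|\dot\gamma|(t)=\bar v(\gamma(t),t)$ for $\tilde\ppi$-a.e.\ $(\gamma,t)$, as desired; the independence from $\ppi$ follows since $\bar v$ in Step~2 was defined intrinsically from $\mu$.
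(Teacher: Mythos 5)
Your proposal is correct and follows essentially the same route as the paper: the "if" part via Theorem~\ref{thm:upper-velocity}, the "only if" part via Theorem~\ref{thm:lisini} and Lemma~\ref{le:1}, uniqueness by strict convexity, and the pointwise identification \eqref{eq:27} by saturating Jensen's inequality through the disintegration $\tilde\ppi_{x,t}$. The only cosmetic difference is that you work with the time-integrated action while the paper uses the a.e.-in-$t$ slice identity \eqref{eq:156bis}, which leads to the same equality of double integrals and the same equality-case argument.
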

\begin{proof}
  The characterization of $\AC p{[0,1]}{(\Probabilitiesp X,W_p)}$ 
  in terms of the existence of a velocity density in $L^p(\tilde
  X,\tilde\mu)$ 
  \AAA follows in the  \emph{only if}  part from the combination of Theorem \ref{thm:lisini} with Lemma \ref{le:1}, and in the \emph{if} part  
  from Theorem~\ref{thm:upper-velocity}. \fn
  The existence and the uniqueness of the minimal $p$-velocity density
  is a consequence of the strict convexity of the $L^p$-norm.
  
  If $\ppi$ is a dynamic plan $p$-tightened to $\mu$
  and $v$ is defined in terms of \eqref{eq:26}, we can 
  combine \eqref{eq:160} and Theorem~\ref{thm:upper-velocity} (which provides the sharp lower
  bound on the $L^p$ norm of velocity densities)
  to obtain that $v$ is the minimal $p$-velocity density and that 
  $|\dot\mu_t|^p=\int_X v_t^p\,\d\mu_t$ for $\lambda$-a.e. $t\in (0,1)$,
  so \eqref{eq:161} follows.  Combining this information with
  \eqref{eq:156bis}  yields
  $$
  \int_X v_t^p\,\d\mu_t=\int_X |\dot\gamma_t|^p\,d\ppi(\gamma)\qquad\text{for $\lambda$-a.e. $t\in (0,1)$,}
  $$
  so that, recalling \eqref{eq:26}, we get 
  $$
  \int_{\tilde{X}}\biggl(\int |\dot\gamma_t|\,\d\tilde\ppi_{x,t}(\gamma)\biggr)^p\,\d\tilde\mu(x,t)=
  \int_{\tilde{X}}\int |\dot\gamma_t|^p\,\d\tilde\ppi_{x,t}(\gamma)\,\d\tilde\mu(x,t).
  $$
  It follows that, for $\tilde\mu$-a.e. $(x,t)$, $|\dot\gamma_t|$ is $\tilde\ppi_{x,t}$-equivalent to a constant. By the
  definition of $v$, this gives \eqref{eq:27} with $v$ in place of $\bar v$. Using the coincidence of $v$ and
  $\bar v$ we conclude.
 \end{proof}

\section{Weighted energy functionals along absolutely continuous curves}\label{sec:weights}

Let $\mm$ be a reference measure in $X$ such that 
$(X,\sfd,\mm)$ is a metric measure space according to
Section~\ref{subsec:Cheeger}, and set $\tilde\mm:=\mm\otimes \lambda$, with $\lambda=\Leb{1}\vert_{[0,1]}$.
Let $\frQ:[0,1]\times [0,\infty)\to [0,\infty]$ be 
a lower semicontinuous function satisfying
\begin{equation}
  \label{eq:165}
  \lim_{r\down0} r\,\weight sr=0\quad \forevery s\in [0,1].
\end{equation}
Our typical example will be of the form
\begin{equation}
  \label{eq:50}
  \weight sr=\omega(s)Q(r).
\end{equation}
Let us fix an exponent $p\in (1,\infty)$ and
let us consider a curve $\mu\in \AC p{[0,1]}{(\Probabilities X,W_p)}$. We denote
$\tilde\mu=\int_0^1 \mu_s\,\d\lambda(s)\in \Probabilities{\tilde X}$ {\color{black} (namely the probability measure
whose second marginal is $\lambda$ and whose disintegration w.r.t. the second variable is $\mu_s$, $s\in (0,1)$),} 
and by $v\in L^p(\tilde X,\tilde\mu)$ the minimal $p$-velocity density of $\mu$.
We suppose that 
\begin{equation}
  \label{eq:166}
  \tilde\mu=\varrho\tilde\mm\ll\tilde\mm, \quad\text{so that}\quad
  \varrho(s,\cdot)=\varrho_s(\cdot)=\frac{\d\mu_s}{\d\mm}\quad\text{for $\lambda$-a.e. $s\in (0,1)$},
\end{equation}
where $\d\nu/\d\mm$ denotes the Radon-Nikodym density
of the measure $\nu$ with respect to $\mm$.
Then we introduce the functional
\begin{equation}
  \label{eq:17}
  \begin{aligned}
    \Action{\wname}{\mu}\mm:= &
    \int_{\tilde X}
    \weight s{\varrho_s}
    \,v^p\,\d\tilde\mu
    =
    \int_{\tilde X}\varrho\, \weight s{\varrho_s}
    \,v^p\,\d\tilde\mm.
  \end{aligned}
\end{equation}
We omit to indicate the dependence on $p$ in the notation of the
functional $\mathcal A_\frQ$, since $p$ will be fixed throughout this
section.
Notice that when $\weight sr=1$ we have the usual action $\int_{\tilde X}v^p\,\d\tilde\mu=\cA_p(\mu)$, 
the functional is independent of $\mm$ and it makes sense even for curves not
contained in $\Probabilitiesac X\mm$.

If $\ppi$ is a dynamic plan $p$-tightened to $\mu$ (recall that this means $\mathscr A_p(\ppi)=\cA_p(\mu)$), thanks to 
\eqref{eq:27}
we have the equivalent expression
  \begin{equation}
    \label{eq:34}
    \begin{aligned}
      \Action
      {\wname}\mu\mm &= 
      \iint_0^1 \weight {s}{\varrho_s(\gamma(s))}
      |\dot\gamma|^p(s)
      \,\d s\,\d\ppi(\gamma).
    \end{aligned}
  \end{equation}

\begin{theorem}[Stability of the weighted action]
 \label{le:stabilitySigma}
 Let $(\mu_n)\subset\AC p{[0,1]}{(\Probabilities X,W_p)}$
 with $\tilde\mu_n=\varrho_n\tilde\mm\ll\tilde\mm$,
 such that
   \begin{align}
 \label{eq:162}
   \lim_{n\to\infty}\varrho_n=\varrho_\infty\quad\text{strongly in
   }L^1(\tilde X,\tilde\mm)
 \end{align}
 and, writing $\varrho_\infty\tilde\mm=:\tilde{\mu}_\infty=: \int_{0}^{1} \mu_{\infty}(s) \, \d \lambda(s)$,
 one has
 \begin{align}
   \label{eq:162bis}
\limsup_{n\to\infty}\cA_p(\mu_n)\leq\cA_p(\mu_\infty)<\infty.
 \end{align}
 Then
 \begin{equation}
 \label{eq:163}
  \liminf_{n\to\infty}\Action{\wname}{\mu_n}\mm\ge
  \Action{\wname}{\mu_\infty}\mm
 \end{equation}
 and, whenever $\frQ$ is continuous and bounded,
 \begin{equation}
   \label{eq:163bis}
\lim_{n\to\infty}\Action{\wname}{\mu_n}\mm=\Action{\wname}{\mu_\infty}\mm.
 \end{equation}
\end{theorem}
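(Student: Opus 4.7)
My plan is a Young measure compactness argument on the product space $\tilde X \times [0,\infty)^2$, built from the joint distribution under $\tilde\mu_n$ of the minimal $p$-velocity density $v_n$ of $\mu_n$ and its density $\varrho_n$. After passing to a subsequence (still indexed by $n$) I assume that the liminf in \eqref{eq:163} is attained, that $\varrho_n \to \varrho_\infty$ pointwise $\tilde\mm$-a.e.\ (allowed by the $L^1$-convergence), and I introduce
\[
\hat\nu_n := (\mathrm{id}_{\tilde X}, v_n, \varrho_n)_\sharp \tilde\mu_n \in \Probabilities{\tilde X \times [0,\infty)^2}.
\]
Tightness of $(\hat\nu_n)$ holds because: the $\tilde X$-marginal $\tilde\mu_n \to \tilde\mu_\infty$ in total variation from the $L^1$-convergence of the densities; the $\tau$-coordinate has the uniform $p$-moment bound $\int \tau^p \, \d\hat\nu_n = \cA_p(\mu_n)$; and the $\rho$-coordinate is tight because Dunford--Pettis applied to the $L^1$-convergent sequence $(\varrho_n)$ yields $\sup_n \tilde\mu_n(\{\varrho_n > K\}) = \sup_n \int_{\{\varrho_n > K\}} \varrho_n \, \d\tilde\mm \to 0$ as $K\to\infty$. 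I then extract a weakly convergent further subsequence $\hat\nu_n \weakto \hat\nu$, whose first marginal is $\tilde\mu_\infty$.

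The key step is to identify $\hat\nu = \int_{\tilde X} \delta_{(\bar v(x,s),\varrho_\infty(x,s))} \, \d\tilde\mu_\infty(x,s)$, where $\bar v$ denotes the minimal $p$-velocity density of $\mu_\infty$. The $\rho$-conditional equals $\delta_{\varrho_\infty(x,s)}$ by testing against tensor products $\psi(x,s) g(\rho)$ of bounded continuous functions and invoking Vitali's theorem, since $g(\varrho_n)\varrho_n \to g(\varrho_\infty)\varrho_\infty$ strongly in $L^1(\tilde\mm)$ thanks to the pointwise convergence and the uniform integrability of $(\varrho_n)$. For the $\tau$-conditional, let $\nu$ denote the $(\tilde X,\tau)$-marginal of $\hat\nu$ and set $v(x,s) := \int \tau \, \d\nu_{(x,s)}(\tau)$; I verify that $v$ is a velocity density for $\mu_\infty$ by passing to the limit in \eqref{eq:28}, estimating from above via $\chi_{(s_0,t_0)} \leq \chi_{[s_0,t_0]}$, using the upper semicontinuity of $|\rmD^*\varphi|\chi_{[s_0,t_0]}$ against the weak convergence, and handling the unbounded factor $\tau$ through truncation with tails controlled by the uniform $p$-moment. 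This works for $\lambda$-a.e.\ pair $s_0 < t_0$ and extends by continuity of both sides of \eqref{eq:28}. The chain
\[
\int v^p \, \d\tilde\mu_\infty \le \int \tau^p \, \d\nu \le \liminf_n \cA_p(\mu_n) \le \cA_p(\mu_\infty) = \int \bar v^p \, \d\tilde\mu_\infty
\]
(Jensen; lower semicontinuity of the $p$-moment along $\nu_n \weakto \nu$; hypothesis \eqref{eq:162bis}; definition of $\bar v$), combined with the minimality and uniqueness of $\bar v$, forces all inequalities to be equalities. Strict convexity of $\tau \mapsto \tau^p$ applied to the Jensen equality collapses $\nu_{(x,s)}$ to $\delta_{\bar v(x,s)}$, and we additionally get $\cA_p(\mu_n) \to \cA_p(\mu_\infty)$ along the extracted subsequence, and hence the full sequence by a subsequence argument.

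The continuity statement \eqref{eq:163bis} for $\wname$ continuous and bounded now follows from a truncation: $\wname(s,\rho)\min(\tau^p, K)$ is a bounded continuous function on $\tilde X \times [0,\infty)^2$ and passes to the limit by weak convergence of $\hat\nu_n$, while $\sup_n \int(\tau^p - \min(\tau^p, K)) \, \d\hat\nu_n \to 0$ as $K\to\infty$ by the $p$-moment convergence proved above, so
\[
\cA_\wname(\mu_n;\mm) = \int \wname(s,\rho)\tau^p \, \d\hat\nu_n \longrightarrow \int \wname(s,\rho)\tau^p \, \d\hat\nu = \cA_\wname(\mu_\infty;\mm),
\]
and a subsequence argument upgrades this to convergence of the full sequence (the limit being uniquely identified). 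For \eqref{eq:163} with $\wname$ merely lower semicontinuous, I write $\wname = \sup_k \wname_k$ as the increasing supremum of bounded continuous functions, apply \eqref{eq:163bis} to each $\wname_k$, and take $k \to \infty$ by monotone convergence.

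The main obstacle lies in the identification of $\hat\nu$, specifically in showing that the barycenter $v$ is a genuine velocity density for $\mu_\infty$. The combination of the upper semicontinuity of $|\rmD^*\varphi|$ with the unboundedness of the integrand in $\tau$ requires a careful double truncation (in time-endpoints and in the $\tau$ variable) and a restriction to $\lambda$-a.e.\ endpoints $s_0, t_0$; once this is secured, the uniqueness of the minimal $p$-velocity density forces the limiting Young measure to collapse to a Dirac, which is exactly what makes both halves of the theorem work.
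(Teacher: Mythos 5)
Your proof is correct, and its core is the same Young-measure construction as the paper's: you push $\tilde\mu_n$ forward through $(\mathrm{id},v_n,\varrho_n)$ (the paper builds the same object in two steps, $\nnu_n=(\ii\times v_n)_\sharp\tilde\mu_n$ and then $\ssigma_n$), get tightness from the $p$-moment bound and the equi-integrability of $(\varrho_n)$, identify the density marginal through the strong $L^1$ convergence, and show that the barycenter of the velocity conditional is a velocity density for $\mu_\infty$ by passing to the limit in \eqref{eq:28} with the upper semicontinuity of $|\rmD^*\varphi|$ — a step the paper states rather tersely and that you treat more carefully. Where you genuinely diverge is the endgame. The paper never collapses the velocity conditional to a Dirac: for \eqref{eq:163} it only uses the barycenter together with Jensen's inequality and the joint lower semicontinuity of $(\tilde x,y,r)\mapsto\weight sr y^p$ on the product space, and for \eqref{eq:163bis} it applies the lower semicontinuity statement to both $\frQ$ and $\sup\frQ-\frQ$, exploits that the total action ($\frQ\equiv1$) converges by \eqref{eq:162bis}, and concludes with the elementary Remark~\ref{rem:trick}. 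You instead upgrade the identification: equality throughout your chain (via Theorem~\ref{thm:upper-velocity}, Theorem~\ref{thm:MVD} and hypothesis \eqref{eq:162bis}) plus strict convexity of $\tau\mapsto\tau^p$ forces the conditional to be $\delta_{\bar v}$, after which you get \eqref{eq:163bis} by truncating $\tau^p$ and using the uniform integrability coming from the convergence of $p$-moments, and you recover \eqref{eq:163} from \eqref{eq:163bis} by monotone approximation of a lower semicontinuous $\frQ$ by bounded continuous weights. Your route costs a bit more machinery (Jensen-equality rigidity, uniform integrability, the approximation of $\frQ$) but buys the stronger intermediate fact that the limit plan is concentrated on the graph of $(\bar v,\varrho_\infty)$; the paper's complement trick is shorter and needs only the barycenter. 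Both arguments hinge on the same delicate point — that the barycenter is a genuine velocity density — and your double-truncation treatment of it is sound.
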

\begin{proof} {\color{black} In this proof we are going to apply standard facts from the theory of Young measures, we follow
\cite{AGS08}, even though the state space therein is a Hilbert space, because the vector structure plays
no role in the results we quote, the Polish structure being sufficent.}
Up to extraction of a subsequence, \eqref{eq:162} and equi-continuity in the weak topology yield
 \begin{equation}
   \label{eq:212}
   \mu_{n,s}\to\mu_{\infty,s}\text{ weakly in $\Probabilities X$ for every $s\in [0,1]$}.
 \end{equation}
 We can apply \cite[Thm.~5.4.4]{AGS08}
 first to the sequence $(\tilde\mu_n,v_n)$, with $v_n$ equal to the velocity densities of $\mu_n$.
 We find that the family of plans $\nnu_n:=(\ii\times v_n)_\sharp \tilde\mu_n$
 has a limit point $\nnu_\infty$ in $\Probabilities{\tilde X\times [0,\infty)}$
 whose first marginal is $\tilde\mu_\infty$ and satisfies
 {\color{black} (redefining $\nnu_n$ to be the subsequence converging to $\nnu_{\infty}$)}
 \begin{equation}\label{eq:162bisbis}
 \int_{\tilde{X}\times [0,\infty)}|y|^p\,d\nnu_\infty(x,s,y)\leq\liminf_{n\to\infty}
 \int_{\tilde{X}\times [0,\infty)}|y|^p\,d\nnu_n(x,s,y)\leq\cA_p(\mu_\infty).
 \end{equation}
 If $(\nu_{x,s})_{(x,s)\in \tilde X}\subset \Probabilities{[0,\infty)}$
 is the disintegration of $\nnu_\infty$ w.r.t.\ $\tilde\mu_\infty$,
 setting
 \begin{displaymath}
   v_\infty(x,s):=\int_0^\infty y\,\d\nu_{x,s}(y),
 \end{displaymath}
 we obtain from the previous inequality and Jensen's inequality
 that $v_\infty$ belongs to $L^p(\tilde{X},\tilde{\mu}_\infty)$, with
 $\|v_\infty\|_{L^p(\tilde{X},\tilde{\mu}_\infty)}^p\leq \cA_p(\mu_\infty)$.

   For every $\varphi\in \Lip_b(X)$ and $0\le r<s\le 1$,
 we can use the upper semicontinuity of $|\rmD^*\varphi|$ to pass to the limit in the family of
 inequalities corresponding to \eqref{eq:28}
 \begin{displaymath}
   \Big|  \int_X \varphi\,\d\mu_{n,s}-
   \int_X \varphi\,\d\mu_{n,r}\Big|\le
   \int_{X\times (r,s)} |\rmD^* \varphi|\, v_n\,\d\tilde\mu_n=
   \int_{X\times (r,s)\times [0\infty)}|\rmD^*\varphi|(x)y\,d\nnu_n(x,s,y),
     \end{displaymath}
 obtaining
   \begin{displaymath}
     \Big|  \int_X \varphi\,\d\mu_{\infty,s}-
   \int_X \varphi\,\d\mu_{\infty,r}\Big|\le
   \int_{X\times (r,s)} |\rmD^* \varphi|\, v_\infty\,\d\tilde\mu_\infty.
 \end{displaymath}
 It follows that $v_\infty$ is a $p$-velocity density for the curve $\mu_\infty$, so that
$\|v_\infty\|_{L^p(\tilde{X},\tilde{\mu}_\infty)}^p\geq\cA_p(\mu_\infty)$ and since we already proved the converse inequality,
equality holds. {\color{black}
 If $v_\infty^*$ is the minimal $p$-velocity density, from the equality $\|v_\infty^*\|_{L^p(\tilde{X},\tilde{\mu}_\infty)}^p= \cA_p(\mu_\infty)$
 we get $v_\infty=v_\infty^*$.}
 %
 Denoting now by $(\tilde x,y,r)=(x,s,y,r)$ the coordinates in
 $\tilde X\times [0,\infty)\times [0,\infty)$, let us now consider the plans
 $\ssigma_n:=(\tilde x,y,\varrho_n(\tilde x))_\sharp \nnu_n=
 (\tilde x,v_n(\tilde x),\varrho_n(\tilde x))_\sharp \tilde\mu_n
 \in\Probabilities{\tilde X\times [0,\infty)\times [0,\infty)}$.
 From \eqref{eq:162} we obtain the existence of $\zeta:[0,\infty)\to [0,\infty)$ with $\zeta(r)\to +\infty$ as $r\to\infty$ such that
 \begin{displaymath}
  \sup_{n\in\N}\int \zeta(r)\ssigma_n(\tilde x,y,r)=\sup_{n\in\N}\int_{\tilde X}\varrho_n\zeta(\varrho_n)\,\d\tilde\mm<\infty,
 \end{displaymath}
 so that $\ssigma_n$ are tight (the marginals of $\ssigma_n$
 with respect to the block of variables $(\tilde x,y)$ are $\nnu_n$, thus are tight).
 We can then extract a subsequence
 (still denoted by $\ssigma_n$) weakly converging
 to $\ssigma_\infty$, whose marginal w.r.t. $(\tilde x,y)$ is $\nnu_\infty$.

 The strong $L^1$ convergence in \eqref{eq:162} also shows that
 for every $\zeta\in \rmC_b(\tilde X\times \R)$ one has
 \begin{align*}
   \int \zeta(\tilde x, r)\,\d\ssigma_\infty(\tilde x,y,r)
   &=\lim_{n\to\infty}
   \int \zeta(\tilde x, r)\,\d\ssigma_n(\tilde x,y,r)=
   \lim_{n\to\infty}
   \int_{\tilde X} \zeta(\tilde x,\varrho_n(\tilde
   x))\varrho_n(\tilde x)\,\d\tilde\mm(\tilde x)
   \\&
   = \int_{\tilde X} \zeta(\tilde x,\varrho_\infty(\tilde
   x))\varrho_\infty(\tilde x)\,\d\tilde\mm(\tilde x)=
   \int_{\tilde X}\zeta(\tilde x,\varrho_\infty(\tilde
   x))\,\d\tilde\mu_\infty(\tilde x),
 \end{align*}
 so that $(\ii\times\varrho_\infty)_\sharp \tilde\mu_\infty$ is the marginal w.r.t. $(\tilde x,r)$ of $\ssigma_\infty$.

 Hence, disintegrating $\ssigma_\infty$ with respect to $\nnu_\infty$,
 we obtain $$\ssigma_\infty(d\tilde x,dy,dr)=\delta_{\rho_\infty(\tilde x)}(dr)\times\nu_{\tilde x}(dy) d\mu_\infty(\tilde x).$$
 Since the map $(\tilde x,y,r)\mapsto \weight sr y^p$
 is lower semicontinuous and nonnegative in $\tilde X\times [0,\infty)^2$,
 assuming (with no loss of generality) $1=\sup\frQ$, we get
 \begin{eqnarray*}
   \liminf_{n\to\infty}\Action{\wname}{\mu_n}\mm&=&
   \liminf_{n\to\infty}\int \weight sr y^p
   \,\d\ssigma_n(\tilde x,y,r)\ge
   \int \weight sr y^p
   \,\d\ssigma_{\infty}(\tilde x,y,r)
   \\&\geq&\int_{\tilde X}
   \weight s{\varrho_\infty}
   v_\infty^p\,\d\tilde\mu_\infty
\geq\Action{\wname}{\mu_\infty}\mm.
 \end{eqnarray*}
 By applying this property with $\frQ$ and $1-\frQ$, since $\Action{\wname}{\mu}\mm=\cA_p(\mu)$
 when $\frQ\equiv 1$ and \eqref{eq:162bis} holds,
 we can use Remark~\ref{rem:trick} to obtain \eqref{eq:163bis}. \qed
 \endnobox
\end{proof}

\section{Dynamic Kantorovich potentials, continuity equation and dual weighted Cheeger energies}\label{sec:dynamic}

In this section we will still consider a metric measure space $(X,\sfd,\mm)$
according to the definition of Section~\ref{subsec:Cheeger} 
and we will focus on the particular case
when 
\begin{equation}
\text{$p=2$ and the Cheeger energy $\C$
is quadratic (see \eqref{eq:1})},
\label{eq:240}
\end{equation}
so that $\V=W^{1,2}(X,\sfd,\mm)$ is a separable Hilbert space;
we will also consider continuous
curves $(\mu_s)_{s\in [0,1]}\subset \Probabilities X$
with uniformly bounded densities w.r.t.~$\mm$, i.e.
\begin{equation}
  \label{eq:184}
  \mu\in \rmC([0,1];\Probabilities X),\qquad
  \mu_s=\varrho_s\mm,\qquad
  R:=\sup_s\|\varrho_s\|_{L^\infty(X,\mm)}<\infty.
\end{equation}
Our main aim is to show that the weighted energies
$\cE_{\varrho_s}$ (or better, their dual forms $\cE_{\varrho_s}^*$)
provide a useful characterization of the minimal $2$-velocity of 
absolutely continuous curves $\mu$ in $(\ProbabilitiesTwo X,W_2)$, now
not only in the form of inequality as in \eqref{eq:28}, 
but in the form of equality,
see \eqref{eq:42}.

\begin{lemma}[Absolute continuity w.r.t. $\Vdual 1$]
  \label{le:w-s}
  Let $\mu$ be as in \eqref{eq:184}
  and let $v\in L^2(\tilde X,\tilde\mu)$ be a velocity density for $\mu$, i.e.  satisfying \eqref{eq:28}.
  Then for every $\varphi\in\V$ one has
    \begin{equation}
    \label{eq:28w}
    \Big|  \int_X \varphi\,(\varrho_t-\varrho_s)\,\d\mm\Big|\le 
    \int_{X\times (s,t)} |\rmD \varphi|_w\, v\,\d\tilde\mu\quad
    \forevery \ 0\le s<t\le 1.
  \end{equation}
  In addition $\varrho:[0,1]\to L^1_+\cap L^\infty_+(X,\mm)$ has finite $2$-energy with respect to 
  the $\Vdual 1$ norm, more precisely
  \begin{equation}
    \label{eq:10}
   \|\varrho_s-\varrho_t\|_{\Vdual1}^2
    \le R(t-s)
    \int_{X\times(s,t)}v^2\,\d\tilde\mu\quad
    \forevery 0\le s<t\le 1.
  \end{equation}
\end{lemma}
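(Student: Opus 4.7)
The plan is to derive \eqref{eq:28w} from \eqref{eq:28} by approximating any $\varphi\in\V$ with Lipschitz test functions, then to obtain \eqref{eq:10} by a Cauchy--Schwarz argument in the weight $\varrho$, exploiting the uniform bound $\varrho\le R$. First I would invoke \eqref{eq:105bis} to choose $\varphi_n\in\Lip_b(X)\cap L^2(X,\mm)$ with $\varphi_n\to\varphi$ in $L^2(X,\mm)$ and $|\rmD^*\varphi_n|\to |\rmD\varphi|_w$ in $L^2(X,\mm)$, and apply \eqref{eq:28} to each $\varphi_n$. For the left-hand side, from $\varrho_s,\varrho_t\le R$ and $\int\varrho_t\,\d\mm=1$ one has $\|\varrho_t\|_{L^2(X,\mm)}^2\le R$, so Cauchy--Schwarz gives
\[
\Big|\int_X(\varphi_n-\varphi)(\varrho_t-\varrho_s)\,\d\mm\Big|\le 2\sqrt R\,\|\varphi_n-\varphi\|_{L^2(X,\mm)}\to 0.
\]
For the right-hand side, the bound $\varrho\le R$ on $\tilde X$ yields
\[
\int_{\tilde X}\bigl||\rmD^*\varphi_n|-|\rmD\varphi|_w\bigr|^2\,\d\tilde\mu\le R\int_X\bigl||\rmD^*\varphi_n|-|\rmD\varphi|_w\bigr|^2\,\d\mm\to 0,
\]
so $|\rmD^*\varphi_n|\to|\rmD\varphi|_w$ strongly in $L^2(\tilde X,\tilde\mu)$, and pairing with $v\in L^2(\tilde X,\tilde\mu)$ gives $\int_{X\times(s,t)}|\rmD^*\varphi_n| v\,\d\tilde\mu\to \int_{X\times(s,t)}|\rmD\varphi|_w v\,\d\tilde\mu$. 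This establishes \eqref{eq:28w}.

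For \eqref{eq:10}, factor the right-hand side of \eqref{eq:28w} as
\[
\int_{X\times(s,t)}|\rmD\varphi|_w v\,\d\tilde\mu=\int_s^t\int_X \bigl(|\rmD\varphi|_w\varrho_r^{1/2}\bigr)\bigl(v\varrho_r^{1/2}\bigr)\,\d\mm\,\d r
\]
and apply Cauchy--Schwarz in $L^2(X\times(s,t),\mm\otimes\Leb 1)$; using the uniform bound $\varrho_r\le R$ and the identity $\int_X|\rmD\varphi|_w^2\,\d\mm=\cE(\varphi,\varphi)$ one obtains
\[
\Big|\int_X\varphi(\varrho_t-\varrho_s)\,\d\mm\Big|\le \sqrt{R(t-s)\cE(\varphi,\varphi)}\,\Big(\int_{X\times(s,t)}v^2\,\d\tilde\mu\Big)^{1/2}.
\]
Taking the supremum over $\varphi\in\V$ with $\cE(\varphi,\varphi)\le 1$ and recalling the characterization \eqref{eq:238} of $\Vdual 1$ as the finiteness domain of $\cE^*$ in $\V'$ (with norm $(\cE^*)^{1/2}$) gives $\|\varrho_t-\varrho_s\|_{\Vdual 1}^2\le R(t-s)\int_{X\times(s,t)}v^2\,\d\tilde\mu$, which is \eqref{eq:10}.

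The main delicacy, and what makes the $L^\infty$ bound in \eqref{eq:184} essential, is the limit transfer on the right-hand side of \eqref{eq:28}: without the uniform bound on $\varrho$ one would only have $|\rmD^*\varphi_n|\to|\rmD\varphi|_w$ in $L^2(X,\mm)$, and this need not upgrade to convergence in $L^2(\tilde X,\tilde\mu)$ against a generically unbounded weight $\varrho$. The same bound then reappears in the Cauchy--Schwarz step to close the duality loop by controlling the weighted Dirichlet energy by $\cE(\varphi,\varphi)$.
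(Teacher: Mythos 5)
Your proof is correct and follows essentially the same route as the paper: approximate $\varphi\in\V$ by bounded Lipschitz functions via \eqref{eq:105bis}, pass to the limit in \eqref{eq:28} using the uniform bound $\varrho\le R$, and then obtain \eqref{eq:10} by Cauchy--Schwarz with the weight $\varrho$ together with the duality characterization of the $\Vdual1$ norm. The only cosmetic difference is that you apply Cauchy--Schwarz jointly on $X\times(s,t)$ while the paper does it slicewise in $x$ and then in $r$, and you spell out the limit transfer that the paper leaves terse; both are sound.
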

\begin{proof} In order to prove \eqref{eq:28w} we
  simply approximate $\varphi$ with a sequence of Lipschitz
  functions
  $\varphi_n$ strongly converging to $\varphi$ in $L^2(X,\mm)$ 
  such that $|\rmD^* \varphi_n|\to |\rmD\varphi|_w$ in $L^2(X,\mm)$
  and we pass to the limit in \eqref{eq:28}, using the fact that 
  $\mu_t=\varrho_t\mm$ with uniformly bounded densities.

  By \eqref{eq:28w} it follows that 
  \begin{align*}
    \Big|\int_X (\varrho_t-\varrho_s)\varphi\,\d\mm\Big|
    &\le
    \int_s^t 
    \Big(\int_X |\rmD\varphi|^2_w\,\varrho_r\d\mm\Big)^{1/2}
    \Big(\int_X  v_r^2\,\varrho_r\,\d\mm\Big)^{1/2}\,\d r
    \\&\le R^{1/2}\Big(\int_X |\rmD\varphi|^2_w\,\d\mm\Big)^{1/2}
    \int_s^t\Big(\int_X  v_r^2\,\varrho_r\,\d\mm\Big)^{1/2}\,\d r,
  \end{align*}
  and since $\varphi$ is arbitrary we obtain \eqref{eq:10}.
\end{proof}

\begin{theorem}[Dual Kantorovich potentials and links with the minimal velocity]
  \label{thm:dynamicKant}
  Let us assume that $\C$ is a quadratic form and let $\mu$ be as in \eqref{eq:184}.
  Then $\mu$ belongs to $\AC2{[0,1]}{(\Probabilities X,W_2)}$ 
  if and only if there exists $\ell\in L^2(0,1;\Vdual{})$ such that 
  \begin{equation}
    \label{eq:42}
    \int_X \varphi\,(\varrho_t-\varrho_s)\,\d\mm
    =\int_s^t \langle \ell(r),\varphi\rangle\,\d r
    \quad\forevery \varphi\in \V,
  \end{equation}
  and, recalling the definition \eqref{eq:97} of  $\cEs{\varrho_r}$,
  \begin{equation}
    \label{eq:155}
    \int_0^1 \cEs{\varrho_r}(\ell(r),\ell(r))\,\d r<\infty.
  \end{equation}
  In particular $\ell(r)\in\Vhom{\varrho_r}'$ for $\Leb{1}$-a.e. $r\in (0,1)$ and, moreover, it is linked to the minimal
  velocity density $v$ of $\mu$ by 
  \begin{equation}
    \label{eq:168}
    \int_X v_r^2\,\varrho_r\,\d\mm=\cEs{\varrho_r}(\ell(r),\ell(r))
    \quad\text{for $\Leb 1$-a.e.\ $r\in (0,1)$,}
  \end{equation}
    \begin{equation}
    \label{eq:113}
    v_r^2=\tGq{\varrho_r}{\phi_r}\quad\mu_r\text{-a.e.~in }X,\quad
    \quad\text{for $\Leb 1$-a.e.\ $r\in (0,1),$}
  \end{equation}
  where 
  $\phi_r=-A_\varrho^*\ell(r)\in \Vhom{\varrho_r}$ is the solution of \eqref{eq:112} with $\ell=\ell(r)$. 
  \end{theorem}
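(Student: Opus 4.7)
The strategy is to identify $\ell(r)$ with the distributional time derivative of $r\mapsto\varrho_r$, viewed as a curve in $\Vdual{}$, and then to recognize $\sqrt{\tGqs{\varrho_r}{\ell(r)}}$ as the minimal $2$-velocity density of $\mu$ via the duality supplied by Lemma~\ref{le:Gamma-dual}. The uniform bound $\varrho_r\le R$ ensures that $\sqrt{\cE_{\varrho_r}(\varphi,\varphi)}\le\sqrt{R\,\cE(\varphi,\varphi)}$, so the weighted spaces $\Vhom{\varrho_r}$ embed continuously (and uniformly in $r$) into the unweighted completion, which will make the functional-analytic bookkeeping routine.

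For the forward implication, assume $\mu\in\AC2{[0,1]}{(\Probabilities X,W_2)}$ and let $\bar v$ be the minimal $2$-velocity density. Lemma~\ref{le:w-s} yields \eqref{eq:28w}, which shows that for every $\varphi\in\V$ the scalar map $r\mapsto\int_X\varphi\varrho_r\,\d\mm$ is absolutely continuous on $[0,1]$, with a.e. derivative bounded by $\int_X\sqrt{\Gq\varphi}\,\bar v_r\varrho_r\,\d\mm\le\sqrt{\cE_{\varrho_r}(\varphi,\varphi)}\,\sqrt{\int_X\bar v_r^2\,\d\mu_r}$. Pick a countable dense subset $\{\varphi_n\}\subset\V$, let $A\subset(0,1)$ be the intersection of their a.e.~sets of differentiability, and extend the resulting linear form by density: this defines $\ell(r)\in\Vdual{\varrho_r}\cap\Vdual{}$ on $A$ with $\cEs{\varrho_r}(\ell(r),\ell(r))\le\int_X\bar v_r^2\,\d\mu_r=|\dot\mu_r|^2$ (using \eqref{eq:161}). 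Integration over $r$ gives \eqref{eq:155}, while \eqref{eq:42} follows from the fundamental theorem of calculus applied to each $\int_X\varphi_n\varrho_r\,\d\mm$ and density.

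For the reverse implication, set $v_r:=\sqrt{\tGqs{\varrho_r}{\ell(r)}}\in L^2(X,\varrho_r\mm)$ for $\Leb{1}$-a.e.~$r$. By Lemma~\ref{le:Gamma-dual} it satisfies $|\langle\ell(r),\varphi\rangle|\le\int_Xv_r\sqrt{\Gq\varphi}\,\varrho_r\,\d\mm$ for all $\varphi\in\V$; approximating an arbitrary $\varphi\in\Lip_b(X)$ in the sense of \eqref{eq:105bis} and using $|\rmD\varphi|_w\le|\rmD^*\varphi|$ $\mm$-a.e., we deduce $|\langle\ell(r),\varphi\rangle|\le\int_Xv_r|\rmD^*\varphi|\,\d\mu_r$. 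Inserting this bound into \eqref{eq:42} yields the velocity-density inequality \eqref{eq:28} for $v$, and Theorem~\ref{thm:upper-velocity} then gives $\mu\in\AC2{[0,1]}{(\Probabilities X,W_2)}$ with $|\dot\mu_r|^2\le\int_Xv_r^2\,\d\mu_r=\cEs{\varrho_r}(\ell(r),\ell(r))$. Combining this with the converse inequality obtained above proves \eqref{eq:168}. Finally, \eqref{eq:113} follows by a uniqueness argument: both $\bar v_r$ and $\sqrt{\tGq{\varrho_r}{\phi_r}}=\sqrt{\tGqs{\varrho_r}{\ell(r)}}$ belong to the closed convex set $G(\varrho_r,\ell(r))\subset L^2(X,\varrho_r\mm)$ and share the same $L^2(\varrho_r\mm)$-norm, while Lemma~\ref{le:Gamma-dual} characterizes the latter as the unique element of minimal norm; strict convexity of the $L^2$-norm forces equality $\varrho_r\mm$-a.e. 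The main technical point is the differentiation step in the forward direction, which must be carried out along a countable dense family of test functions and then extended by the uniform continuity estimate provided by the $L^\infty$ bound on $\varrho_r$; the rest reduces to book-keeping between $\Gamma$ and its weighted variants.
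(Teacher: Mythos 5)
Your proof is correct and follows essentially the same route as the paper's: the forward implication differentiates \eqref{eq:28w} along a countable dense family of test functions in $\V$ at Lebesgue points to obtain $\cEs{\varrho_r}(\ell(r),\ell(r))\le\int_X \bar v_r^2\,\d\mu_r$, and the converse uses the dual characterization of Lemma~\ref{le:Gamma-dual} to show that $\sqrt{\tGqs{\varrho_r}{\ell(r)}}=\sqrt{\tGq{\varrho_r}{\phi_r}}$ is a velocity density, concluding via Theorem~\ref{thm:upper-velocity} and the uniqueness of the minimal-norm element, exactly as in the paper. The only step to adjust is your treatment of test functions $\varphi\in\Lip_b(X)$ that need not belong to $L^2(X,\mm)$ (hence not to $\V$): for such $\varphi$ the pairing $\langle\ell(r),\varphi\rangle$ and \eqref{eq:42} are not defined and the approximation \eqref{eq:105bis} does not apply, so instead of invoking it you should test only with Lipschitz functions of bounded support (which do lie in $\V$) and conclude that this suffices by Remark~\ref{rem:bounded-support}, which is precisely how the paper handles this point.
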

\begin{proof}
  If $\mu\in \AC2{[0,1]}{(\Probabilities X,W_2)}$ then
  the existence of $\ell$ and \eqref{eq:42} follow immediately by 
  Lemma~\ref{le:w-s}, Theorem~\ref{thm:MVD}
  and the fact that $\V'$ is a separable Hilbert space.
  Differentiating \eqref{eq:28w} with $v$ equal to the minimal velocity density
  in a Lebesgue point for
  $s\mapsto \int_X |\rmD\varphi|_w v_s\varrho_s\,\d\mm$
  and for a countable dense set of 
  test functions $\varphi$ in $\V$ we get
  \begin{equation}
    \label{eq:186}
    \cEs{\varrho_r}(\ell(r),\ell(r))\le \int_X v_r^2\,\varrho_r\,\d\mm
    \quad\text{for $\Leb 1$-a.e.\ $r\in (0,1)$},
  \end{equation}
  which in particular yields \eqref{eq:155}.
  
  In order to prove the converse implication (and that equality holds in \eqref{eq:168}, as well as \eqref{eq:113}), 
  let us start from $\mu$ as in \eqref{eq:184}, satisfying \eqref{eq:42} and \eqref{eq:155} for some $\ell\in L^2(0,1;\Vdual{})$.
  Let us consider
  $\psi\in \Lip(X)$ with bounded support, the solution
  $\phi_r=-A_\varrho^*\ell(r)\in \Vhom{\varrho_r}$ of \eqref{eq:112} with $\ell=\ell(r)$
  and $\psi_{\varrho_r}$ the equivalence class 
  associated to $\psi$ in $\Vhom{\varrho_r}$, so that
  \begin{displaymath}
    \langle \ell(r),\psi\rangle=
    \int \varrho_r \tGbil{\varrho_r}{\phi_r}{\psi_{\varrho_r}}\,\d\mm.
  \end{displaymath}
  Now observe that \eqref{eq:42} and \eqref{eq:155} yield for every
  $0\le s<t\le 1$
  \begin{displaymath}
    \Big|\int_X \psi\,\d\mu_t-\int_X \psi\,\d\mu_s\Big|\le 
    \int_s^t \Big|\int_X \varrho_r \tGbil{\varrho_r}{\phi_r}{\psi_{\varrho_r}}\,\d\mm\Big|\,\d r\le 
    \int_s^t\int_X \varrho_r \big(\tGq{\varrho_r}{\phi_r}\big)^{1/2}
    |\rmD^* {
      \psi}|\,\d\mm\,\d r
  \end{displaymath}
  since for $\psi\in \Lip_b(X)$ 
  \begin{displaymath}
    \tGq{\varrho_r}{\psi_{\varrho_r}}=\Gq{\psi}=
    |\rmD \psi|_w^2\le |\rmD^* \psi|^2
    \quad \varrho_r\mm\text{-a.e.~in }X.
  \end{displaymath}
  In view of Remark~\ref{rem:bounded-support},
  we see that $\hat v_r=\big(\tGq{\varrho_r}{\phi_r}\big)^{1/2}$ is
  a velocity density for the curve $\mu$.
  Applying Theorem~\ref{thm:upper-velocity} and \eqref{eq:113bist} we 
  get $\mu\in \AC2{[0,1]}{(\Probabilities X,W_2)}$. In addition, since
  \begin{displaymath}
   \int_X \hat v_r^2\,\varrho_r\d\mm=
   \int_X \tGq{\varrho_r}{\phi_r}\varrho_r\,\d\mm=
   \cE_{\varrho_r}(\phi_r,\phi_r)
    = \cEs{\varrho_r}(\ell(r),\ell(r))
    \quad\text{for $\Leb 1$-a.e.~$r\in (0,1)$},
  \end{displaymath}
  comparing with \eqref{eq:186} we obtain that $\hat v$ is the minimal velocity density $v$, 
  thus obtaining \eqref{eq:168} and \eqref{eq:113}.
\end{proof}

\section{The $\RCDS KN$ condition and its characterizations through weighted convexity
and evolution variational inequalities}\label{sec:9}

\subsection{Green functions on intervals}\label{subsec:9.1}

We define the function $\sfg:[0,1]\times [0,1]\to [0,1]$ by
\begin{equation}
  \label{eq:3}
  \fnd t{}s:=
  \begin{cases}
    (1-t)s&\text{if }s\in[0,t],\\
    t(1-s)&\text{if }s\in [t,1],
  \end{cases}
\end{equation}
so that for all $t\in (0,1)$ one has
\begin{equation}
  \label{eq:4}
  -\frac{\partial^2}{\partial s^2}\fnd t{}s=\delta_{t}\quad\text{in $\mathscr D'(0,1)$},\qquad 
  \fnd t{}{0}=\fnd t{}{1}=0.
\end{equation}
It is not difficult to check that (see e.g.\ \cite[Chap.~16]{Villani09}) the condition $u''\geq f$ can be characterized 
in terms of an integral inequality involving $\sfg$.

\begin{lemma}[Integral formulation of $u''\geq f$]
  \label{le:0}
  Let $u\in \rmC([0,1])$ and $f\in L^1(0,1)$. Then 
  \begin{equation}
    \label{eq:5}
    u''\ge f\quad\text{in }\mathscr D'(0,1),
  \end{equation}
  if and only if 
  for every $0\le r_0\le r_1\le 1$ and $t\in [0,1]$ one has  
  \begin{equation}
    \label{eq:7}
    u((1-t)r_0+tr_1)\le 
    (1-t)u(r_0)+tu(r_1)-(r_1-r_0)^2\int_{0}^1
    f((1-s)r_0+sr_1)\fnd t{}s\,\d s.
     \end{equation}
\end{lemma}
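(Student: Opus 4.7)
By the affine change of variable $\sigma\mapsto (1-\sigma)r_0+\sigma r_1$, which sends $[0,1]$ onto $[r_0,r_1]$, both conditions \eqref{eq:5} and \eqref{eq:7} are stable and compatible, so I first reduce to the case $r_0=0$, $r_1=1$. That is, the whole statement amounts to proving that, for $u\in\rmC([0,1])$ and $f\in L^1(0,1)$,
\begin{equation}
  \label{eq:planstar}
  u''\ge f\ \text{in } \mathscr D'(0,1)
  \quad\Longleftrightarrow\quad
  u(t)\le (1-t)u(0)+tu(1)-\int_0^1 f(s)\,\sfg(t,s)\,\d s\quad \forall t\in[0,1].
\end{equation}

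For the forward implication of \eqref{eq:planstar}, the plan is to introduce the auxiliary continuous function
\[
  w(t):=(1-t)u(0)+tu(1)-u(t)-\int_0^1 f(s)\,\sfg(t,s)\,\d s,
\]
which clearly satisfies $w(0)=w(1)=0$ because $\sfg(0,s)=\sfg(1,s)=0$. Using the symmetry $\sfg(t,s)=\sfg(s,t)$ together with \eqref{eq:4}, one has $\partial_t^2\sfg(\cdot,s)=-\delta_s$ in $\mathscr D'(0,1)$, and therefore in the distributional sense
\[
  w''(t)=-u''(t)+f(t)\le 0.
\]
Since $w\in\rmC([0,1])$ has nonpositive distributional second derivative, it is concave on $[0,1]$ (a standard mollification argument gives this), and concavity combined with $w(0)=w(1)=0$ yields $w\ge 0$ on $[0,1]$, which is the desired integral inequality.

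For the converse, I plan a localization argument: assuming \eqref{eq:7} on every subinterval $[t-h,t+h]\subset(0,1)$ with parameter $\tau=1/2$ (so that the midpoint is $t$), one reads
\[
  \frac{u(t-h)-2u(t)+u(t+h)}{h^2}\ge 8\int_0^1 f(t+(2s-1)h)\,\sfg(\tfrac12,s)\,\d s.
\]
Pick a nonnegative $\varphi\in \rmC_c^\infty(0,1)$, multiply both sides by $\varphi(t)$, integrate in $t$, and let $h\downarrow 0$. The left-hand side converges to $\int_0^1 u\,\varphi''\,\d t$ by a standard integration by parts on the discrete Laplacian (using continuity of $u$ and smoothness of $\varphi$). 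The right-hand side converges to $8\bigl(\int_0^1\sfg(\tfrac12,s)\,\d s\bigr)\int_0^1 f\varphi\,\d t=\int_0^1 f\varphi\,\d t$, by Lebesgue's differentiation theorem for $f\in L^1$ and the easy computation $\int_0^1\sfg(\tfrac12,s)\,\d s=1/8$. This yields $\int_0^1 u\varphi''\,\d t\ge\int_0^1 f\varphi\,\d t$ for all such $\varphi$, i.e.\ $u''\ge f$ in $\mathscr D'(0,1)$.

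The only delicate points are the two regularity passages: justifying that $w''\le 0$ in $\mathscr D'$ together with $w\in\rmC([0,1])$ forces $w$ to be concave, and, more seriously, the $h\downarrow 0$ limit in the converse for $f$ merely in $L^1$. The first is handled via mollification; the second relies on the fact that for a.e.\ $t$ the function $s\mapsto f(t+(2s-1)h)$ converges to $f(t)$ in $L^1(0,1)$ as $h\downarrow 0$, together with a uniform integrability bound obtained from Fubini (applied after multiplying by $\varphi$) to pass the limit under the $t$-integral — this is the main technical step of the proof.
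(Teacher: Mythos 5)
Your proposal is correct, and for the converse implication it is essentially the paper's own argument: the same choice $r_0=t-h$, $r_1=t+h$, midpoint parameter $1/2$, multiplication by a nonnegative test function, discrete integration by parts on the left, and the computation $8\int_0^1\sfg(1/2,s)\,\d s=1$ on the right. (The limit on the right is most cleanly justified by the continuity of translations in $L^1$, uniformly in $s$ since the shift is at most $h$, together with Fubini and dominated convergence; ``Lebesgue differentiation'' is not quite the right tool, but this is a cosmetic point, and the paper itself leaves this passage implicit.) For the forward implication you package things a little differently: the paper reduces to $u\in\rmC^2$, $f$ continuous, and invokes the exact Green-function identity, whereas you stay distributional, use \eqref{eq:4} to see that the defect function $w$ satisfies $w''\le 0$ in $\mathscr D'(0,1)$, and conclude from concavity and $w(0)=w(1)=0$. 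Both versions rest on the same representation formula and on one standard mollification lemma (smoothing $u,f$ in the paper, the fact that $w''\le0$ plus continuity implies concavity in yours), so they are of comparable rigor; yours arguably makes the omitted regularization step more explicit.

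One caveat on your framing: the reduction to $r_0=0$, $r_1=1$ is legitimate only for the forward implication. The reverse implication of your displayed equivalence is false as stated: with $f\equiv0$, any nonpositive continuous $u$ with $u(0)=u(1)=0$ satisfies the endpoint inequality for all $t$, yet it need not be convex (take a piecewise linear $u\le 0$ with a strict interior local maximum). Your actual converse argument is unaffected, because you invoke \eqref{eq:7} on all symmetric subintervals $[t-h,t+h]$ rather than only at the endpoints $0,1$; but the sentence claiming that the whole statement ``amounts to'' the case $r_0=0$, $r_1=1$ should be restricted to the forward direction.
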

\begin{proof}
  In order to prove the implication from \eqref{eq:5} to \eqref{eq:7} 
  it is not restrictive to assume $u\in \rmC^2([0,1])$ and 
  $f\in \rmC([0,1])$.
  The proof of \eqref{eq:7} follows easily from the elementary identity
  \begin{displaymath}
    u((1-t)r_0+tr_1)=(1-t)u(r_0)+tu(r_1)-(r_{1}-r_{0})^{2}\int_0^1 u''((1-s)r_0+sr_1)\fnd t{}s\,\d s.
  \end{displaymath}
  Concerning the converse implication, we choose 
  $r_1:=r+h$, $r_0=r-h$ and $t=\frac 12$ obtaining
  \begin{displaymath}
    \frac 12 u(r+h)+\frac 12 u(r-h)-u(r)\ge 
    4h^2\int_{0}^1 f(r-h+2hs)\fnd {1/2}{}s\,\d s.
  \end{displaymath}
  Multiplying by $2h^{-2}$ and by a nonnegative test function
  $\zeta\in \rmC^\infty_c(0,1)$ 
  we get after an integration
  \begin{displaymath}
    \frac 1{h^2} \int_0^1 u(r)\big(\zeta(r+h)+\zeta(r-h)-2\zeta(r)\big)\,\d
    r\ge 
    8\int_0^1 \fnd{1/2}{}s 
    \Big(\int_0^1 f(r-h+2hs)\zeta(r)\,\d r\Big)\,\d s.
  \end{displaymath}
  Passing to the limit as $h\down0$ we obtain
  \begin{displaymath}
   \int_0^1 u\,\zeta''\,\d r\ge 8
   \int_0^1 \fnd{1/2}{}s\,\d s 
   \int_0^1 f\,\zeta\,\d r
   =\int_0^1 f\,\zeta\,\d r.
    \qed
  \end{displaymath}
  \endnobox
\end{proof}

In the next lemma we show that functions satisfying the weighted convexity condition 
\eqref{eq:7} are locally Lipschitz, this will allow us to apply Lemma~\ref{le:0}.

\begin{lemma}\label{lem:LocLipF}
  Let  ${\mathfrak D}\subset \R$,  ${\mathfrak D} \neq \{0\}$, 
  be a $\Q$-vector space and let $u:(0,1)\cap {\mathfrak D} \to \R$ satisfy  \eqref{eq:7} for some $f \in L^1_{\rm loc}(0,1)$, for every 
  $r_0,\,r_1\in (0,1)\cap {\mathfrak D}$, $t\in [0,1]$
  {such that $(1-t)r_0+tr_1\in{\mathfrak D}$}. Then $u$ is locally Lipschitz in $(0,1)$, 
  more precisely for every closed subinterval $[a,b]\subset  (0,1)$ there exists $C\geq 0$ such that
  \begin{equation}\label{eq:loclipf}
  |u(x)-u(y)|\leq C|x-y|  \qquad \forall x,\,y \in (a,b) \cap {\mathfrak D}.
  \end{equation} 
   \end{lemma}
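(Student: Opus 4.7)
The plan is to exploit density of $\mathfrak{D}$ in $\R$ (any nontrivial $\Q$-vector space containing some $x_0\neq 0$ contains the dense set $\Q x_0$) together with a sharp control of the error term in \eqref{eq:7}. Fix $[a,b]\subset(0,1)$ and, by density, choose $a',\,b'',\,b',\,y_0\in\mathfrak{D}$ with $0<a'<a<b<y_0<b''<b'<1$. The key analytic ingredient is the bound
$$\Bigl|(r_1-r_0)^2\int_0^1 f((1-s)r_0+sr_1)\,\sfg(t,s)\,\d s\Bigr|\;\le\;(r_1-r_0)\,t(1-t)\,\|f\|_{L^1([r_0,r_1])},$$
which follows from $\sfg(t,\cdot)\le t(1-t)$ together with the change of variable $z=(1-s)r_0+sr_1$.

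Next I would prove that $u$ is bounded on $[a,b]\cap\mathfrak{D}$. Any $x\in[a,b]\cap\mathfrak{D}$ is of the form $(1-t)a'+tb'$ with $t\in(0,1)$, so \eqref{eq:7} immediately yields the upper bound $u(x)\le\max(u(a'),u(b'))+(b'-a')\|f\|_{L^1([a',b'])}$. For the lower bound I write $y_0=(1-s)x+sb''$ with $s=(y_0-x)/(b''-x)$; as $x$ varies in $[a,b]$, the denominator $1-s=(b''-y_0)/(b''-x)$ stays in a compact subinterval of $(0,1)$ by the choice $b<y_0<b''$. Applying \eqref{eq:7} with $r_0=x$, $r_1=b''$ and rearranging gives $u(x)\ge\bigl(u(y_0)-s\,u(b'')-E\bigr)/(1-s)$ with $E$ bounded by $(b''-a)\|f\|_{L^1([a',b'])}$, hence a uniform lower bound on $u(x)$. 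Set $M:=\sup_{[a,b]\cap\mathfrak{D}}|u|+|u(a')|+|u(b')|<\infty$.

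Finally I would derive \eqref{eq:loclipf}. Given $x,y\in[a,b]\cap\mathfrak{D}$ with $x<y$, I bound $u(y)-u(x)$ from above by writing $y=(1-t)x+tb'$, so $t=(y-x)/(b'-x)\le(y-x)/(b'-b)$ and the error term is controlled by $(y-x)\|f\|_{L^1([a',b'])}$; then \eqref{eq:7} gives
$$u(y)-u(x)\le t\bigl(u(b')-u(x)\bigr)+(y-x)\,\|f\|_{L^1([a',b'])}\le\Bigl(\frac{2M}{b'-b}+\|f\|_{L^1([a',b'])}\Bigr)(y-x).$$
The reverse bound on $u(x)-u(y)$ is obtained symmetrically by writing $x=(1-\sigma)a'+\sigma y$ with $1-\sigma=(y-x)/(y-a')\le(y-x)/(a-a')$, yielding \eqref{eq:loclipf} with $C=2M\max\bigl(1/(a-a'),1/(b'-b)\bigr)+\|f\|_{L^1([a',b'])}$. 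The main obstacle is the lower bound on $u$: one must position the auxiliary point $y_0$ strictly between $b$ and $b''$ so that the coefficient $1-s$ stays bounded away from zero uniformly in $x\in[a,b]$; once this is arranged, the rest reduces to routine manipulations of \eqref{eq:7}.
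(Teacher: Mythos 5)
Your proof is correct: every application of \eqref{eq:7} you make uses base points $r_0,r_1$ and an intermediate point lying in $(0,1)\cap\mathfrak D$, the error estimate via $\sfg(t,s)\le t(1-t)$ and the change of variables is exactly the right control, and the uniform positivity of $1-s=(b''-y_0)/(b''-x)$ for $x\in[a,b]$ does make the lower bound on $u$ legitimate. The route, however, is organized differently from the paper's. The paper rewrites \eqref{eq:7} as a monotonicity statement for (distorted) incremental ratios, see \eqref{eq:IncrRatf} with the remainder bounded as in \eqref{eq:fL1est}, and then chains two such inequalities through a middle point: with exterior reference points this sandwiches the difference quotient $\bigl(u(y)-u(x)\bigr)/(y-x)$ between two fixed quantities depending only on the values of $u$ at four points and on $\|f\|_{L^1}$, so no boundedness of $u$ on $[a,b]\cap\mathfrak D$ is ever needed. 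You instead follow the classical ``convex $\Rightarrow$ locally bounded $\Rightarrow$ locally Lipschitz'' scheme adapted to the weighted inequality: the upper bound on $u$ is immediate from \eqref{eq:7}, the lower bound requires your auxiliary interior point $y_0$ (the analogue of the standard reflection trick for convex functions), and only then do you convert boundedness into \eqref{eq:loclipf} by comparing with the fixed exterior points $a'$ and $b'$. Both arguments are elementary and of comparable length; the paper's chained-ratio argument is slightly leaner (it skips the boundedness step and yields explicit constants from four function values), while yours has the minor advantage of never needing the paper's reduction to $0,1\in\mathfrak D$, since all your auxiliary points stay strictly inside $(0,1)\cap\mathfrak D$.
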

   
\begin{proof}  Since the statement is local and $\mathfrak D$ is dense, we can assume with no loss of generality that
$f\in L^1(0,1)$, that $0,\,1\in\mathfrak D$ and that \eqref{eq:7} holds $r_0,\,r_t,\,r_1\in [0,1]\cap {\mathfrak D}$, with
$r_t:=(1-t)r_0+tr_1$.
First of all note that \eqref{eq:7} is equivalent to the following control on the incremental ratios: for every  $r_0,\,r_t,\,r_1\in  [0,1]\cap {\mathfrak D}$ one has 
\begin{equation}\label{eq:IncrRatf}
\frac{u(r_t)- u(r_0)}   {r_t-r_0} \leq \frac{ u(r_1)- u(r_t)}   {r_1-r_t}-\frac{r_1-r_0}{t(1-t)} \int_0^1 f(r_s)\fnd t{}s\,\d s.
\end{equation}
Observing that $0\leq \fnd t{}s \leq t (1-t)$, we can easily estimate the remainder in the last inequality by
\begin{equation}\label{eq:fL1est}
\left| \frac{r_1-r_0}{t(1-t)} \int_0^1 f(r_s)\fnd t{}s\,\d s  \right| \leq \int_{r_0}^{r_1} |f(r)| \, \d r = \|f\|_{L^1(r_0, r_1)}. 
\end{equation}

Given  $a<b\in (0,1)\cap {\mathfrak D}$,  for every $x,\,y \in {\mathfrak D}\cap (a,b)$, $x<y$,  we want to use \eqref{eq:IncrRatf} iteratively in order to estimate  the difference quotient ${|u(x)-u(y)|}/{|x-y|}$.
\\ Applying \eqref{eq:IncrRatf} with $r_0=0$, $r_1=x$, $r_t=a$ we obtain
\begin{equation}\label{eq:(15)f}
\frac{u(a)- u(0)}   {a} \leq \frac{ u(x)- u(a)}   {x-a} + \|f\|_{L^1(0,x)}.
\end{equation}
Analogously, choosing $r_0=a$, $r_1=y$, $r_t=x$ in  \eqref{eq:IncrRatf} yields
\begin{equation}\label{eq:(16)f}
\frac{u(x)- u(a)}   {x-a} \leq \frac{ u(y)- u(x)}   {y-x} + \|f\|_{L^1(a,y)}.
\end{equation}
Putting together  \eqref{eq:(15)f} and \eqref{eq:(16)f} we obtain the desired lower bound
$$\frac{ u(y)- u(x)}   {y-x} \geq  \frac{ u(a)- u(0)}   {a} - 2 \|f\|_{L^1(0,1)}. $$
Along the same lines one gets also the upper bound
$$\frac{ u(y)- u(x)}   {y-x} \leq   \frac{ u(1)- u(b)}   {1-b} + 2 \|f\|_{L^1(0,1)}.$$
Since the last two estimates hold for every  $x,\,y \in {\mathfrak D}\cap (a,b)$ with $x\neq y$, the proof is complete.
\end{proof}
The next lemma provides a subdifferential inequality, in a quantitative form involving $f$.

\begin{lemma}
  \label{le:2}
  Suppose that $u\in \rmC([0,1])$ satisfies $u''\ge f$ in $\mathscr D'(0,1)$
  for some $f\in L^1(0,1)$. Then, setting 
  $u'(0_+):=\limsup_{t\down0}(u(t)-u(0))/t$, we get
  \begin{equation}
    \label{eq:8}
    u(1)-u(0)-u'(0_+)\ge \int_0^1 f(s)(1-s)\,\d s.
  \end{equation}
\end{lemma}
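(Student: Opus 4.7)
The plan is to read the desired inequality as the $\limsup$ of the integral convexity inequality from Lemma~\ref{le:0}, applied to the whole interval $[0,1]$.

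First I would specialize Lemma~\ref{le:0} to $r_0=0$, $r_1=1$ and arbitrary $t\in(0,1)$, obtaining
\begin{equation*}
u(t)\le (1-t)u(0)+t\,u(1)-\int_0^1 f(s)\,\sfg(t,s)\,\d s.
\end{equation*}
Subtracting $u(0)$ from both sides and dividing by $t>0$ gives
\begin{equation*}
\frac{u(t)-u(0)}{t}\le u(1)-u(0)-\int_0^1 f(s)\,\frac{\sfg(t,s)}{t}\,\d s.
\end{equation*}

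The core of the argument is to pass to the $\limsup$ as $t\downarrow 0$ on both sides. The left-hand side gives $u'(0_+)$ by definition, so it remains to identify the limit of the integral on the right. Using the explicit formula \eqref{eq:3}, one has $\sfg(t,s)/t=(1-t)s/t$ for $s\in[0,t]$ and $\sfg(t,s)/t=1-s$ for $s\in[t,1]$; thus for each fixed $s\in(0,1]$ the pointwise limit is $1-s$, and the uniform bound $\sfg(t,s)/t\le 1$ holds on $[0,1]$. Since $f\in L^1(0,1)$, dominated convergence gives
\begin{equation*}
\lim_{t\downarrow 0}\int_0^1 f(s)\,\frac{\sfg(t,s)}{t}\,\d s=\int_0^1 f(s)(1-s)\,\d s,
\end{equation*}
so taking $\limsup_{t\downarrow 0}$ in the previous displayed inequality yields
\begin{equation*}
u'(0_+)\le u(1)-u(0)-\int_0^1 f(s)(1-s)\,\d s,
\end{equation*}
which is exactly \eqref{eq:8} after rearrangement.

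There is no serious obstacle: the only point requiring (mild) care is the uniform domination $\sfg(t,s)/t\le 1$ needed to invoke dominated convergence, and the fact that $u'(0_+)$ is defined as a $\limsup$ matches perfectly with the direction of the estimate coming from Lemma~\ref{le:0}.
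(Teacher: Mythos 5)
Your proof is correct and follows essentially the same route as the paper: specialize the integral inequality of Lemma~\ref{le:0} to $r_0=0$, $r_1=1$, divide by $t$, and pass to the limit as $t\downarrow 0$ using the pointwise convergence $t^{-1}\sfg(t,s)\to 1-s$ together with a uniform bound (the paper dominates by $1-s$, you by $1$; both are valid since $f\in L^1$). No gaps.
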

\begin{proof}
  Notice that by \eqref{eq:7}
  \begin{displaymath}
    u(t)-u(0)\le t(u(1)-u(0))-\int_0^1 f(s)\fnd t{}s\,\d s.
  \end{displaymath}
  Dividing by $t$ and passing to the limit as $t\down0$, since
  $\lim_{t\down0}t^{-1}\fnd t{}s= 1-s$ pointwise in $(0,1]$ and 
  $0\le  t^{-1}\fnd t{}s\le (1-s)$, we get \eqref{eq:8}.
\end{proof}

A similar result holds for the solutions $u$ of
the differential inequality
\begin{equation}
  \label{eq:169}
  u\in \rmC([0,1]),\quad
  u''+\kappa \, u\le 0\quad\text{in }\mathscr D'(0,1),\qquad
  \kappa\in \R.
\end{equation}
In this case, choosing $[r_0,r_1]\subset [0,1]$ with
$\delta=r_1-r_0\in (0,1]$, we can compare 
the function $t\mapsto u((1-t)r_0+tr_1)$, which solves
$w''+\kappa\delta^2 w\leq 0$ in $\mathscr D'(0,1)$, with the solution
of the Dirichlet problem
\begin{equation}
  \label{eq:242}
  v''+\kappa\,\delta^2\, v=0\quad\text{in }(0,1),\quad v(0)=u(r_0),\ v(1)=u(r_1),
\end{equation}
given by 
\begin{equation}
  \label{eq:243}
  v(t)=u(r_0)\frac{\sin(\omega (1-t))}{\sin(\omega)}
  +u(r_1)\frac{\sin(\omega t)}{\sin(\omega)}
  \qquad\text{if }\kappa\,\delta^2=\omega^2\in (0,\pi^2),
\end{equation}
and by
\begin{equation}
  \label{eq:244}
  v(t)=u(r_0)\frac{\sinh(\omega (1-t))}{\sinh(\omega)}+u(r_1)\frac{\sinh(\omega t)}{\sinh(\omega)}
  \qquad\text{if }\kappa\,\delta^2=-\omega^2<0,
\end{equation}
observing that the comparison principle gives $u((1-t)r_0+tr_1)\ge v(t)$ for every $t\in [0,1]$.

By introducing the factors 
\begin{equation}
  \label{eq:170}
  \factor \kappa t\delta:=
  \begin{cases}
    +\infty&\text{if }\kappa\,\delta^2\ge \pi^2,\\
    \displaystyle
    \frac{\sin(\omega t)}{\sin(\omega )}&
    \text{if }\kappa\,\delta^2=\omega^2\in (0,\pi^2),\\
    t&\text{if }\kappa=0,\\
    \displaystyle
    \frac{\sinh(\omega t)}{\sinh(\omega)}&
    \text{if }\kappa\,\delta^2=-\omega^2<0,\\
  \end{cases}
\end{equation}
the solution $v$ of \eqref{eq:242}, thanks to \eqref{eq:243} and \eqref{eq:244}, can be expressed in the form
\begin{displaymath}
  v(t)=\factor \kappa{1-t}{r_1-r_0} u(r_0)+
  \factor \kappa t{r_1-r_0} u(r_1)
\end{displaymath}
and the following result holds (see for instance \cite[Thm.~14.28]{Villani09}):

\begin{lemma}\label{lem:DiffInesigma}
  Let $u\in\rmC([0,1])$ nonnegative and $\kappa \in \R$. 
  Then $u''+\kappa\, u\le 0$ in $\mathscr D'(0,1)$
  if and only if for every $t\in [0,1]$ and
  for every $0\le r_0<r_1\le 1$ with $\kappa (r_1-r_0)^2 <\pi^2$ one has
  \begin{equation}
    \label{eq:171}
    u((1-t)r_0+tr_1)\ge \sigma^{(1-t)}_\kappa(r_1-r_0)\,u(r_0)+
    \sigma^{(t)}_\kappa(r_1-r_0)\,u(r_1).
  \end{equation}
\end{lemma}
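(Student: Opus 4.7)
The plan is to reduce everything to the one-dimensional ODE comparison principle, which has essentially already been set up in the discussion preceding the statement, and then to invert the argument by Taylor expansion of the distortion coefficients.

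For the forward implication, I would fix $0\le r_0<r_1\le 1$ with $\kappa\delta^2<\pi^2$, where $\delta:=r_1-r_0$, and set $w(t):=u((1-t)r_0+tr_1)$. By the chain rule in the sense of distributions, $w$ belongs to $\rmC([0,1])$ and satisfies $w''+\kappa\delta^2 w\le 0$ in $\mathscr D'(0,1)$. Let $v$ be the classical solution of the Dirichlet problem \eqref{eq:242} with boundary values $v(0)=u(r_0)$ and $v(1)=u(r_1)$; the explicit formulas \eqref{eq:243}--\eqref{eq:244} are available precisely because the condition $\kappa\delta^2<\pi^2$ rules out resonance and keeps the sine in the denominator strictly positive. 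The difference $z:=w-v$ lies in $\rmC([0,1])$, vanishes at the endpoints, and satisfies $z''+\kappa\delta^2 z\le 0$ in $\mathscr D'(0,1)$. The maximum principle for this elliptic operator (available exactly in the regime $\kappa\delta^2<\pi^2$, since then the Green function associated to $-\partial^2-\kappa\delta^2$ with Dirichlet data is nonnegative) yields $z\ge 0$ on $[0,1]$. Rewriting $v(t)=\factor \kappa{1-t}\delta u(r_0)+\factor \kappa{t}\delta u(r_1)$ produces exactly \eqref{eq:171}. The only delicate point here is the justification of the comparison principle for a merely distributional supersolution against a smooth solution; this can be handled by a standard mollification of $w$ in the interior of $[0,1]$ combined with the continuity of $w$ up to the boundary.

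For the converse implication, I would apply \eqref{eq:171} with $r_0=r-h$, $r_1=r+h$ and $t=\tfrac12$, for $r\in(0,1)$ and $h>0$ small enough that $\kappa(2h)^2<\pi^2$ and $(r-h,r+h)\subset(0,1)$. Using the identity $\sigma_\kappa^{(1/2)}(2h)=1/(2\cos(\sqrt{|\kappa|}\,h))$ in the case $\kappa>0$ (respectively $1/(2\cosh(\sqrt{|\kappa|}\,h))$ for $\kappa<0$, and $1/2$ for $\kappa=0$), \eqref{eq:171} becomes
\begin{equation*}
u(r-h)+u(r+h)-2u(r)\le 2u(r)\bigl(\cos(\sqrt{|\kappa|}\,h)-1\bigr)=-\kappa\,h^2\,u(r)+O(h^4),
\end{equation*}
uniformly for $r$ in compact subsets of $(0,1)$. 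After multiplying by a nonnegative test function $\zeta\in \rmC^\infty_c(0,1)$, integrating in $r$, dividing by $h^2$ and letting $h\downarrow 0$, the left-hand side converges to $\int_0^1 u\,\zeta''\,\d r$ by a standard difference-quotient argument, while the right-hand side tends to $-\kappa\int_0^1 u\,\zeta\,\d r$. This gives $u''+\kappa u\le 0$ in $\mathscr D'(0,1)$.

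The step I expect to require the most care is the comparison $w\ge v$ in the forward direction, since $w$ is only a distributional supersolution with no a priori regularity beyond continuity; the approximation argument has to be carried out so that the threshold $\kappa\delta^2<\pi^2$ is used in an essential way to guarantee positivity of the relevant Green kernel. Everything else is either algebraic manipulation with the explicit formulas \eqref{eq:243}--\eqref{eq:244} or a routine Taylor expansion of $\sigma^{(1/2)}_\kappa$ near the origin.
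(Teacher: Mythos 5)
Your proposal is correct and follows essentially the route the paper itself takes: the forward implication is exactly the comparison argument sketched just before the statement (restrict $u$ to the segment, compare the distributional supersolution $w(t)=u((1-t)r_0+tr_1)$ of $w''+\kappa\delta^2w\le 0$ with the explicit Dirichlet solution \eqref{eq:242}--\eqref{eq:244}, using that $\kappa\delta^2<\pi^2$ keeps you below the first eigenvalue), the paper then simply citing Villani rather than writing out the details. Your converse, via $r_0=r-h$, $r_1=r+h$, $t=\tfrac12$, the identity $\sigma^{(1/2)}_\kappa(2h)=1/(2\cos(\sqrt{\kappa}\,h))$ (resp.\ $\cosh$), and integration against a nonnegative test function, is the same second-difference device the paper uses to prove Lemma~\ref{le:0}, and all the expansions and the mollification step you flag check out.
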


In the same spirit of Lemma~\ref{lem:LocLipF}, where we proved that ``weighted convex'' functions are locally Lipschitz, next  we show that functions satisfying the concavity condition \eqref{eq:171} have the same regularity; this will allow to apply Lemma~\ref{lem:DiffInesigma}.

\begin{lemma}\label{lem:LocLipsigma}
  Let  ${\mathfrak D}\subset \R$ be a $\Q$-vector space with 
  ${\mathfrak D} \neq \{0\}$. 
  Let $\kappa\in\R$ and let $u:[0,1]\cap {\mathfrak D} \to \R$  satisfy  \eqref{eq:171} 
  for every $r_0,\,r_1\in [0,1]\cap {\mathfrak D}$ with $\kappa
  (r_1-r_0)^2 <\pi^2$ and  
  $t \in [0,1]$ such that $(1-t)r_0 + t r_1 \in  {\mathfrak D}$.  
  Then the following hold.
  \begin{enumerate}
  \item[(a)] There exists $\varepsilon_0=\varepsilon_0(\kappa)>0$  with the following property:  if
  \begin{equation}\label{eq:uUBGrid}
  \sup_{n\in \N, n\leq \lfloor \frac{1}{\varepsilon} \rfloor} u(n \varepsilon)< \infty ,
  \end{equation}
  for some $\varepsilon \in (0, \varepsilon_0)\cap {\mathfrak D}$,
  then $\sup_{r\in {\mathfrak D} \cap [0,1]} u(r) < \infty$.
  \item[(b)] There exists $\varepsilon_0=\varepsilon_0(\kappa)>0$  with the following property:  if 
  \begin{equation}\label{eq:uLBGrid}
  \sup_{n\in \N, n\leq \lfloor \frac{1}{\varepsilon} \rfloor} |u(n \varepsilon)| < \infty ,
  \end{equation}
  for some $\varepsilon \in (0, \varepsilon_0)\cap {\mathfrak D}$, 
  then $\sup_{r\in {\mathfrak D} \cap [0,1]} |u(r)|<  \infty$.
  \item[(c)] If  in addition $u:[0,1]\cap {\mathfrak D} \to \R$ is   locally bounded then 
  $u$ is locally Lipschitz in $(0,1)$, i.e. for every $r \in (0,1)\cap {\mathfrak D}$ there exist $\varepsilon,\,C>0$ such that
  $[r-\varepsilon,r+\varepsilon]\subset [0,1]$ and
  \begin{equation}\label{eq:loclipsigma}
  |u(x)-u(y)|\leq C|x-y| \quad \forall x,\,y \in [r-\varepsilon,r+\varepsilon]\cap {\mathfrak D}.
  \end{equation}
  \end{enumerate}
      \end{lemma}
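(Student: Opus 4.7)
All three parts rest on inserting a well-chosen triple of points into the $\kappa$-concavity inequality \eqref{eq:171}. A natural threshold is $\varepsilon_{0}(\kappa):=\pi/(2\sqrt{\kappa_{+}})$ (with $\varepsilon_{0}=+\infty$ when $\kappa\le 0$), chosen so that any triple of points at mutual distance at most $2\varepsilon$ with $\varepsilon<\varepsilon_{0}$ satisfies the admissibility condition $\kappa(r_{1}-r_{0})^{2}<\pi^{2}$. The hypothesis $\varepsilon\in\mathfrak{D}$ forces the full grid $\{k\varepsilon\}\subset\mathfrak{D}$, and since $\mathfrak{D}\supset\Q r$ for any nonzero $r\in\mathfrak{D}$, the set $\mathfrak{D}$ is dense in $\R$, which will let me locate auxiliary points freely.

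For part (a), given $r\in(k\varepsilon,(k+1)\varepsilon)\cap\mathfrak{D}$ with $1\le k\le\lfloor 1/\varepsilon\rfloor-1$, I apply \eqref{eq:171} to the triple $\{(k-1)\varepsilon,k\varepsilon,r\}$, writing $k\varepsilon=(1-t)(k-1)\varepsilon+t r$ with $t=\varepsilon/(r-(k-1)\varepsilon)\in(1/2,1)$, and rearrange to obtain
\begin{equation*}
\sigma_{\kappa}^{(t)}(r-(k-1)\varepsilon)\,u(r)\leq u(k\varepsilon)-\sigma_{\kappa}^{(1-t)}(r-(k-1)\varepsilon)\,u((k-1)\varepsilon).
\end{equation*}
Since $u$ takes real values at the finitely many grid points, $M^{\ast}:=\max_{n}|u(n\varepsilon)|$ is automatically finite; and for $t\in(1/2,1)$ and $\varepsilon<\varepsilon_{0}$, both $\sigma$-coefficients are bounded between positive constants depending only on $\kappa,\varepsilon$, so $u(r)\leq C(M+M^{\ast})$. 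The boundary cases $k=0$ and $k=\lfloor 1/\varepsilon\rfloor$ are dispatched by the mirror-symmetric triples $\{r,\varepsilon,2\varepsilon\}$ and its right-endpoint analogue. Part (b) then follows at once: the upper bound is (a), and the lower bound comes from applying \eqref{eq:171} directly to $r=(1-s)k\varepsilon+s(k+1)\varepsilon$, giving $u(r)\geq -M(\sigma_{\kappa}^{(1-s)}(\varepsilon)+\sigma_{\kappa}^{(s)}(\varepsilon))\geq-C'M$ uniformly in $s$.

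For part (c), fix $r\in(0,1)\cap\mathfrak{D}$ and pick $\varepsilon>0$ so that $[r-2\varepsilon,r+2\varepsilon]\subset(0,1)$, $4\varepsilon<\varepsilon_{0}(\kappa)$, and $|u|\leq M$ on $[r-2\varepsilon,r+2\varepsilon]\cap\mathfrak{D}$ (by the local boundedness hypothesis). By density choose $z\in\mathfrak{D}\cap[r-2\varepsilon,r-\tfrac{3}{2}\varepsilon]$ and $w\in\mathfrak{D}\cap[r+\tfrac{3}{2}\varepsilon,r+2\varepsilon]$. For $x<y$ in $[r-\varepsilon,r+\varepsilon]\cap\mathfrak{D}$, \eqref{eq:171} applied to $\{z,x,y\}$ with $x=(1-t)z+ty$, $t=(x-z)/(y-z)$, rearranges to
\begin{equation*}
\sigma_{\kappa}^{(t)}(y-z)(u(y)-u(x))\leq(1-\sigma_{\kappa}^{(t)}(y-z))(u(x)-u(z))+(\sigma_{\kappa}^{(t)}(y-z)+\sigma_{\kappa}^{(1-t)}(y-z)-1)u(z).
\end{equation*}
Using $(1-t)/t=(y-x)/(x-z)$, $x-z\geq\varepsilon/2$, $|u|\leq M$, and the small-$\delta$ expansions $\sigma_{\kappa}^{(t)}(\delta)-t=\tfrac{1}{6}\kappa t(1-t)(1+t)\delta^{2}+O(\delta^{4})$ and $\sigma_{\kappa}^{(t)}(\delta)+\sigma_{\kappa}^{(1-t)}(\delta)-1=O(\kappa t(1-t)\delta^{2})$ (uniform in $t\in[0,1]$), the right-hand side is majorized by $CM(y-x)/\varepsilon+C'\kappa M\varepsilon^{2}$. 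The symmetric application of \eqref{eq:171} to $\{x,y,w\}$ controls $u(x)-u(y)$ analogously.

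\textbf{Main obstacle.} The delicate point is the additive correction $O(\kappa\varepsilon^{2} M)$ arising in part (c) when $\kappa\neq 0$: it does not scale with $y-x$ and thus blocks a direct extraction of a Lipschitz bound when $y-x$ is very small. I plan to resolve this by introducing the comparison function $\tilde{u}(x):=u(x)-\tfrac{1}{2}\alpha x^{2}$ with $\alpha:=c\kappa_{+} M$ for a suitable absolute constant $c$: thanks to the uniform expansion $\sigma_{\kappa}^{(t)}(\delta)-t=O(\kappa t(1-t)\delta^{2})$ (both in $t$ and in the symmetrized form), a direct computation using \eqref{eq:171} shows that $\tilde u$ satisfies the \emph{ordinary} discrete concavity $\tilde u((1-t)a+tb)\geq(1-t)\tilde u(a)+t\tilde u(b)$ on $\mathfrak{D}\cap[r-2\varepsilon,r+2\varepsilon]$, which, by the classical slope-monotonicity argument for discrete concave functions (parallel to Lemma~\ref{lem:LocLipF}), yields local Lipschitzness of $\tilde u$ and hence of $u$. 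An alternative is to chain the direct estimate along intermediate points in $\mathfrak{D}\cap[x,y]$ of spacing $\sim\varepsilon^{3/2}$, which absorbs the additive error into a term linear in $y-x$. The remaining technicalities---the boundary case-splits in (a) and the bookkeeping of the distortion coefficients---are essentially routine and parallel those in Lemma~\ref{lem:LocLipF}.
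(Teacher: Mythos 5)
The main gap is in part (b): your lower bound only covers $r\in[k\varepsilon,(k+1)\varepsilon]$ with $(k+1)\varepsilon\le\lfloor 1/\varepsilon\rfloor\varepsilon$, so the terminal piece $(\lfloor 1/\varepsilon\rfloor\varepsilon,1]\cap{\mathfrak D}$ (nonempty unless $1/\varepsilon\in\N$) is never treated, and it cannot be handled from the grid values alone: every point there lies to the right of all controlled points, and \eqref{eq:171} with such a point as right endpoint $r_1$ only yields \emph{upper} bounds on it. Concretely, with $\kappa=0$, ${\mathfrak D}=\Q\varepsilon$ for an irrational $\varepsilon$ (so $1\notin{\mathfrak D}$) and $u(x)=-1/(1-x)$, all the hypotheses of (b) hold while $u$ is unbounded below near $1$. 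The paper closes this by one extra application of \eqref{eq:171} on $[\lfloor 1/\varepsilon\rfloor\varepsilon,1]$, using the finite value $u(1)$ — i.e.\ tacitly $0,1\in{\mathfrak D}$, which is the case in every application of the lemma; you must add this endpoint step (and make the convention explicit). Apart from that, your (a)/(b) are fine and actually more direct than the paper's proof of (a), which argues by contradiction along a sequence $u(s_n)\to+\infty$ with a Taylor expansion of $\sigma^{(1-t_n)}_\kappa$; your wedge estimate with the two adjacent grid points gives the same conclusion quantitatively (just ensure $\varepsilon_0$ also forces $2\varepsilon\le 1$, and note the upper bounds on $\sigma^{(1-t)}_\kappa(\delta)$ degenerate as $\kappa(2\varepsilon)^2\to\pi^2$, which is harmless since constants may depend on $\varepsilon$).

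In part (c) the ``main obstacle'' you describe is an artifact of using only the $t$-uniform expansion: one has $|\sigma^{(t)}_\kappa(\delta)/t-1|\le C\kappa\delta^2(1-t)=C\kappa\,\delta\,(r_1-r_t)$ and symmetrically $|\sigma^{(1-t)}_\kappa(\delta)/(1-t)-1|\le C\kappa\,\delta\,(r_t-r_0)$, so in the distorted incremental-ratio form of \eqref{eq:171} the distortion errors scale exactly with the increments by which they are divided, and no additive $O(\kappa\varepsilon^2 M)$ term survives; this is precisely how the paper argues, via \eqref{eq:IncrRat} together with the bounds \eqref{eq:sigma1-t}--\eqref{eq:sigmat}. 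Your alternative repair — subtracting a quadratic so that $\tilde u$ becomes discretely concave and then invoking the classical slope-monotonicity argument — does work, but the coefficient must be of order $|\kappa|M$, not $\kappa_+M$: for $\kappa<0$ one has $\sigma^{(s)}_\kappa(\delta)\le s$, so the correction terms $(\sigma^{(s)}_\kappa(\delta)-s)\,u$ are unfavorable wherever $u>0$, and the quadratic correction is still needed. With these two repairs (the endpoint step in (b) and the constant $|\kappa|M$ in (c)) your argument is a correct, somewhat more computational variant of the paper's proof.
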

   
\begin{proof}   For simplicity of notation we can  assume $\Q \subset {\mathfrak D}$.

(a) Assume by contradiction the existence of a sequence $(s_n)\subset (0,1)\cap {\mathfrak D}$ such that $u(s_n)\to +\infty$. Clearly there exists $\bar{s}\in [0,1]$ such that, up to subsequences, $s_n\to \bar{s}$; let us start by assuming $\bar{s}=0$, without loss of generality we can also assume that  $s_n\in [0,\varepsilon/4]$ for every $n \in \N$ ($\varepsilon_0$ will be chosen later just depending on $\kappa$). Applying \eqref{eq:171}  with $r_0=s_n$, $r_t=\varepsilon$ and $r_1=2\varepsilon$ we get 
\begin{equation}\label{eq:uvare}
u(\varepsilon) \ge \sigma^{(1-t_n)}_\kappa(2\varepsilon-s_n)\,u(s_n) + \sigma^{(t_n)}_\kappa(2\varepsilon-s_n)\,u(2\varepsilon),
\end{equation}
where $t_n=\frac{\varepsilon-s_n}{2\varepsilon-s_n}\to \frac{1}{2}$ as $n\to \infty$.
\\By a Taylor expansion at $0$ of the function $r\to \sigma^{(1-t_n)}_{\kappa}(r)$ it is easy to see that 
\begin{equation}\label{eq:Taysi}
 \sigma^{(1-t_n)}_{\kappa}(2\varepsilon-s_n)=(1-t_n)+o_{\kappa}(\varepsilon_0) \ge \frac{1}{4} ,  
\end{equation}
provided  $\varepsilon_0=\varepsilon_0(\kappa)>0$ is chosen  small enough. But then, observing that $\inf_n  \sigma^{(t_n)}_{\kappa}(2\varepsilon-s_n) \, u(2 \varepsilon)> -\infty $, combining \eqref{eq:uvare} and  \eqref{eq:Taysi} we get
\begin{equation}\nonumber
u(\varepsilon)\geq  \frac{1}{4} u(s_n)+  \sigma^{(t_n)}_{\kappa}(2\varepsilon-s_n) \, u(2 \varepsilon) \to +\infty \quad \text{ as }  n\to +\infty, 
\end{equation}
contradicting \eqref{eq:uUBGrid}.  If instead $s_n\to 1$, applying \eqref{eq:171} with $r_0=1-2\vare$, $r_t=1-\vare$ and $r_1=s_n$, with analogous arguments we get
$$u(1-\varepsilon)\geq  \sigma^{(1-t_n)}_{\kappa}(s_n-(1-2\varepsilon)) \, u(1-2 \varepsilon)+  \frac{1}{4} u(s_n) \to +\infty \quad \text{ as }  n\to +\infty,$$
contradicting \eqref{eq:uUBGrid}. Finally, if $\lim_n s_n=\bar{s} \in (0,1)$ we can repeat the first argument with $0$ replaced by $\bar{s}$ thus reaching a contradiction. The proof of the first statement is then complete.
\\

(b) Let $\vare_0=\vare_0(\kappa)>0$ be as above. Since by the first statement we already know that $u$ is uniformly bounded  above, here  it is enough to prove a uniform bound from below. Applying \eqref{eq:171} to $r_0=n\vare$ and $r_1=(n+1) \vare$ for every $n\in \N \cap [0, \lfloor \frac{1}{\varepsilon} \rfloor]$, we get that 
$$u(r)\geq  \sigma^{(1-t_r)}_\kappa(\vare)\,u(n \vare ) + \sigma^{(t_r)}_\kappa(\vare)\,u((n+1) \vare) \geq- C  \sup_{n\in \N, n\leq \lfloor \frac{1}{\varepsilon} \rfloor} |u(n \varepsilon)| > -\infty \; , $$
for every $r \in [n \vare, (n+1) \vare]\cap {\mathfrak D}$, for some $C>0$ independent of $n$.
Applying the same argument to  $r_0=\lfloor \frac{1}{\varepsilon} \rfloor \vare$, $r_1=1$ we also obtain a uniform lower bound on  $[\lfloor \frac{1}{\varepsilon} \rfloor, 1]\cap {\mathfrak D}$ and the conclusion follows. 
\\

(c) Since the statement is local and $\mathfrak D$ is dense, we can assume with no loss of generality
that $u:[0,1]\cap\mathfrak D\to\R$ is bounded, that $0,\,1\in\mathfrak D$ and that
\eqref{eq:171} holds for every $r_0,\,r_1\in [0,1]\cap {\mathfrak D}$ with $\kappa (r_1-r_0)^2 <\pi^2$ and
$t\in [0,1]\cap\Q$.
First of all note that \eqref{eq:171} is equivalent to the following control on distorted incremental ratios: 
for every  $r_0,\,r_1\in  [0,1]\cap {\mathfrak D}, \; t\in [0,1]\cap\Q$ with $\kappa (r_1-r_0)^2 <\pi^2$ it holds
\begin{equation}\label{eq:IncrRat}
\frac{u(r_t)-\frac{1}{1-t}\sigma^{(1-t)}_\kappa(r_1-r_0) \; u(r_0)}   {r_t-r_0} \geq \frac{\frac{1}{t} \sigma^{(t)}_\kappa(r_1-r_0) \; u(r_1)- u(r_t)}   {r_1-r_t},
\end{equation}
where $r_t:=(1-t)r_0+tr_1\in  [0,1]\cap {\mathfrak D}$.

Given $r\in (0,1)\cap {\mathfrak D}$, $\varepsilon>0$ with
$\varepsilon<\min\{r,1-r\}$ and $4 \kappa \vare^2 < \pi^2$, and $x,\,y \in {\mathfrak D}\cap [r-\varepsilon, r+\varepsilon]$, $x<y$,  
we want to use \eqref{eq:IncrRat} iteratively in order to estimate  the difference quotient $|u(x)-u(y)|/|x-y|$. We will prove that this is
possible provided $\varepsilon$ is sufficiently small.
\\ At first apply \eqref{eq:IncrRat} with $r_0=0$, $r_1=x$, $r_t=r-\varepsilon$.  Noting that $1-t=\frac{x-(r-\varepsilon)}{x}\leq C_r \varepsilon$, 
with a first order Taylor expansion at $t=1$ 
of the explicit expression \eqref{eq:170} of $\frac{1}{1-t} \sigma^{(1-t)}_\kappa(x)$ one checks the existence of $C_r>0$, $\varepsilon_r>0$
satisfying (with the above choice of $t=t(x,r)$)
\begin{equation}\label{eq:sigma1-t}
\biggl|\frac{1}{1-t} \sigma^{(1-t)}_\kappa(x)\biggr\vert \leq C_r\quad \text{for all $x\in [r-\varepsilon,r+\varepsilon]$},
\text{ for every } \varepsilon\in (0, \varepsilon_r).
\end{equation} 
Analogously, possibly enlarging $C_r$ and reducing $\varepsilon_r$ we can also achieve 
\begin{equation}\label{eq:sigmat}
\left| \frac{1}{t} \sigma^{(t)}_\kappa(x)-1 \right| \leq C_r (x-(r-\varepsilon))  
\text{ for every } \varepsilon\in (0, \varepsilon_r).
\end{equation} 
The combination of \eqref{eq:IncrRat}, \eqref{eq:sigma1-t} and \eqref{eq:sigmat} gives
\begin{equation}\label{eq:IncrRat1}
\frac{u(r-\varepsilon)+C_r|u(0)|}   {r-\varepsilon} \geq \frac{ u(x)- u(r-\varepsilon)}   {x-(r-\varepsilon)} - C_r \quad 
\text{ for every } \varepsilon\in (0, \varepsilon_r)\cap {\mathfrak D}.
\end{equation}
Observing that $| \frac{1}{t} \sigma^{(t)}_\kappa(y-(r-\varepsilon))-1| \leq C_r t (y-(r-\varepsilon))$,  applying  \eqref{eq:IncrRat}  with $r_0=r-\varepsilon$, $r_1=y$, $r_t=x$, yields\begin{equation}\label{eq:IncrRat2}
\frac{u(x)- u(r-\varepsilon)}   {x-(r-\varepsilon)} +C_r \geq \frac{ u(y)- u(x)}   {y-x} \; , \text{ for every } \varepsilon\in (0, \varepsilon_r)\cap {\mathfrak D}.
\end{equation}
Putting together  \eqref{eq:IncrRat1} and \eqref{eq:IncrRat2} we obtain the desired upper bound
$$\frac{ u(y)- u(x)}   {y-x} \leq  
\frac{u(r-\varepsilon)+C_r |u(0)|}   {r-\varepsilon} +2 C_r.$$
Along the same lines one gets also the lower bound
$$\frac{ u(y)- u(x)}   {y-x} \geq  \frac{u(r+\varepsilon)- u(y)}   {(r+\varepsilon)-y} -C_r  \geq  
\frac{-C_r |u(1)|- u(r+\varepsilon)}   {1-(r+\varepsilon)} -2 C_r. $$
Since the last two estimates hold for every  $x,\,y \in {\mathfrak D}\cap [r-\varepsilon, r+\varepsilon]$ with $x\neq y$, the proof is complete.
\end{proof}

\subsection{Entropies and their
regularizations}
\label{subsec:regularent}

Let $(X,\sfd,\mm)$ be a metric measure space as in Section~\ref{subsec:Cheeger}.
 We consider 
continuous and convex entropy functions $U:[0,\infty)\to \R$ 
with locally Lipschitz derivative in $(0,\infty)$ and $U(0)=0$.
We set
\begin{equation}
  \label{eq:40}
  P(r):=rU'(r)-U(r),\quad
  Q(r):=r^{-1}P(r)\in \Lip_{\rm loc}(0,\infty),\quad
  R(r):=rP'(r)-P(r).
\end{equation}
The induced entropy functional is defined by 
\begin{equation}
  \label{eq:41}
  \mathcal U(\mu):=\int_X U(\varrho)\,\d\mm+U'(\infty)\mu^\perp(X)
  \quad\text{if }\mu=\varrho\mm+\mu^\perp, \quad
  \mu^\perp\perp\mm,
\end{equation}
where $U'(\infty)=\lim_{r\to\infty}U'(r)$. Since $U(0)=0$ and the negative part of $U$ grows
at most linearly, $\mathcal U$ is well defined and with values in $(-\infty,+\infty]$ if $\mu$ has bounded support,
{\color{black} more general cases are discussed below.}

We say that $P$ is \emph{regular} if, for some constant $\sfa=\sfa(P)>0$, one has
  \begin{equation}
  \label{eq:A1bis}
  P\in \rmC^1([0,\infty)),\quad
  P(0)=0,\quad
  0<\sfa\le P'(r)\le \sfa^{-1}\quad\forevery r\ge0.
\end{equation}
Notice that in this case $Q$, extended at $0$ with the value $P'(0)$, is continuous
in $[0,\infty)$ and it satisfies the analogous bounds
\begin{equation}
  \label{eq:175}
  \sfa\le Q(r)\le \sfa^{-1}\quad\forevery r\ge0.
\end{equation}
When $P$ is regular, we still denote by
$P:\R\to \R$ its odd extension, namely $P(-r):=-P(r)$ for every $r\ge0.$

Once a regular function $P$ 
is assigned, a corresponding entropy function 
$U$ can be determined up to a linear term by the formula
\begin{equation}
  \label{eq:47}
  U(r)=r\int_{1}^r \frac {P(s)}{s^2}\,\d s,
\end{equation}
so that \eqref{eq:A1bis} yields
\begin{equation}\label{eq:47-bis}
  \sfa \big|r\log r\big|\le |U(r)|\le \sfa^{-1} \big|r\log r\big|\qquad\text{for every $r\geq 0$.}
\end{equation}
Motivated by \eqref{eq:47}, we call the entropies $U$ satisfying $U(1)=0$, \emph{normalized}. Notice that $P$ uniquely
determines the normalized entropy $U$.
Thus in the case of regular $P$, the asymptotic behaviour 
of $U$ near $r=0$ or $r=\infty$ is controlled
by the one of the logarithmic entropy functional $\mathcal U_\infty$ 
associated to $U_\infty$, namely
\begin{equation}
  \label{eq:176}
  U_\infty(r):=r \log r ,\quad 
  P_\infty(r)=r,\quad Q_\infty(r)=1,\quad
  R_\infty(r)=0.
\end{equation}
In particular, using \eqref{eq:173} one can prove that
$\mathcal U(\mu)$ is always well defined, with values in $(-\infty,+\infty]$, if $\mu\in \ProbabilitiesTwo
X$, see \cite[\S 7,1]{AGS11a}.
The choice of the base point $1$ in the integral formula \eqref{eq:47} 
provides, thanks to Jensen's inequality, the lower bound $\mathcal U(\mu)\ge0$ whenever
$\mm\in\Probabilities X$. {\color{black} In addition, we shall extensively use the lower semicontinuity
of the entropy functionals \eqref{eq:41} w.r.t. convergence in $\ProbabilitiesTwo{X}$, see for instance
\cite{Villani09}}.
\begin{remark}[Regularized entropies]
  \label{rem:regularization}
  \upshape
  Let $P\in \rmC^1((0,\infty))$ with $P'(r)>0$ for every $r>0$
  and $0=P(0)=\lim_{r\down0}P(r)$.
  It is easy to approximate $P$ by regular functions:
  we set for $0<\eps<M<\infty$
  \begin{equation}
    \label{eq:207}
    P_{\eps}(r):=
  P(r+\eps)-P(\eps),\qquad
  P_{\eps,M}(r):=
  \begin{cases}
    P_{\eps}(r)&\text{if }0\le r\le M,\\
    P_{\eps}(M)+(r-M)P_{\eps}'(M)&\text{if }r>M.
  \end{cases}
  \end{equation}
  Notice that 
  \begin{equation}
    \label{eq:217}
    r P_{\eps}'(r)-P_\eps(r)=
    R(r+\eps)-R(\eps)+\eps(P'(\eps)-P'(r+\eps)).
  \end{equation}
\end{remark}
Besides \eqref{eq:176}, 
our main example is provided by the family depending on $N\in (1,\infty)$
\begin{equation}
  \label{eq:167}
  \begin{aligned}
    U_N(r):={}&Nr (1-r^{-1/N}),\quad P_N(r)=r^{1-1/N},\quad
    Q_N(r):=r^{-1/N},\\
    R_N(r)={}&-\frac {r^{1-1/N}}N=-\frac 1N P_N(r)
  \end{aligned}
\end{equation}
together with the regularized functions $P_{N,\eps}$ and
$P_{N,\eps,M}$ as in \eqref{eq:207}.

Notice that a simple computation provides:
\begin{equation}
  \label{eq:83}
  R_{N,\eps}(r)=-\frac{1}N P_{N,\eps}(r)
  +\eps (P_{N,\eps}'(0)- P_{N,\eps}'(r))\quad \forevery r\in [0,\infty),
  \end{equation}
  so that the concavity and the monotonicity of $P_{N,\eps}$ give
  \begin{equation}
  \label{eq:92}
  -\frac{1}N P_{N,\eps}(r) + (1-\frac 1N)\eps^{1-1/N}\ge
  R_{N,\eps}(r)\ge -\frac{1}N P_{N,\eps}(r)\quad \forevery r\in [0,\infty).
\end{equation}

The entropies corresponding to $U_N$, {\color{black} according to \eqref{eq:41}}, will be denoted with $\mathcal U_N$:
{\color{black}
\begin{equation}\label{def:CalUN}
\mathcal U_N(\mu):=N-N \int_X \varrho^{1-\frac{1}{N}} \, \d\mm  \qquad\text{if }\mu=\varrho\mm+\mu^\perp, \quad
  \mu^\perp\perp\mm .
\end{equation}
In particular $\mathcal U_N(\mu)=\int_X U_N(\varrho)\,\d\mm$ whenever $\mu=\varrho\mm$ is absolutely continuous.}

\subsection{The $\CDS KN$ condition and its characterization via weighted action convexity}\label{subsec:9.3}

In this section we start by recalling what does it mean for a metric measure space to have  ``Ricci tensor bounded below by $K\in \R$ and dimension bounded above by $N\in (1, \infty]$'', this corresponds to the the so-called curvature dimension conditions $\CD KN $ or to 
the reduced curvature dimension conditions $\CDS K N$. First, let us recall the notion of  $\CD K\infty$ space  introduced independently by Lott-Villani \cite{Lott-Villani09} and Sturm \cite{Sturm06I}  (see also \cite{Villani09} for a comprehensive treatment).

\begin{definition}[$\CD K\infty$ condition]
  Let $K\in\R$. We say that $(X,\sfd,\mm)$ satisfies the $\CD K\infty$ condition if 
  for every $\mu_i=\varrho_i\mm\in D(\cU_\infty)\cap\ProbabilitiesTwo
  X$,
  $i=0,\,1$, there exists a $W_2$-geodesic $(\mu_s)_{s\in [0,1]}$ 
  connecting $\mu_0$ to $\mu_1$ such that
  \begin{equation}
    \label{eq:182}
    \cU_\infty(\mu_s)\le (1-s)\cU_\infty(\mu_0)+
    s\cU_\infty(\mu_1)-\frac K2{s(1-s)}W_2^2(\mu_0,\mu_1)\qquad\text{$\forall s\in (0,1)$}.
  \end{equation}
  If morever \eqref{eq:182} is satisfied along 
  \emph{any} geodesic $\mu_s$ connecting $\mu_0$ to $\mu_1$, we say that 
  $(X,\sfd,\mm)$ is a \emph{strong} $\CD K\infty$ space.
\end{definition}

It is well known that  smooth Riemannian manifolds with Ricci curvature bounded below by $K$ are $\CD K\infty$-spaces; one  reason of the geometric relevance of such spaces is that they form a class which is stable under measured Gromov-Hausdorff convergence (for proper spaces see \cite{Lott-Villani09}, for normalized spaces with finite total volume, see \cite{Sturm06I}, for the general case without any finiteness or local compactness assumption see \cite{GMS13}).

In strong $\CD K\infty$ spaces $(X,\sfd,\mm)$, quite stronger metric properties have been proved in
\cite{Rajala-Sturm12}; we list them in the next proposition. 

\begin{proposition}[Properties of strong $\CD K\infty$ spaces] \label{prop:scdk}
Let $(X,\sfd,\mm)$ be a strong $\CD K\infty$ space {\color{black} and let
$\mu_0=\varrho_0 \mm,\,\mu_1=\varrho_1 \mm \in  \Probabilitiesac X\mm \cap \ProbabilitiesTwo X$.} Then:
\begin{enumerate}[{\rm [RS1]}]
\item There exists only one optimal geodesic plan $\ppi\in
  \mathrm{GeoOpt}(\mu_0,\mu_1)$ 
  (and thus only one geodesic connecting
  $\mu_0,\,\mu_1$);
  \item $\ppi$ is concentrated on a set of nonbranching
  geodesics and it is induced by a map;
  \item 
  all the interpolated measures $\mu_s=(\rme_s)_\sharp\ppi$ are absolutely continuous w.r.t.\
  $\mm$; if moreover $\mu_0, \mu_1\in D(\mathcal U_\infty)$, then  $\mu_s$ have uniformly bounded logarithmic entropies $\mathcal U_\infty(\mu_s)$.
  \item 
  if $\varrho_0,\,\varrho_1$ are $\mm$-essentially bounded and have bounded supports, then the interpolated measures $\mu_s=\varrho_s \mm=(\rme_s)_\sharp\ppi$ have uniformly bounded densities. More  precisely the following estimate holds: 
  \begin{equation}\label{eq:rhosLinfty}
  \|\varrho_s\|_{L^\infty(X,\mm)} \leq e^{K^- D^2/12} \max\{ \| \varrho_0\|_{L^\infty(X,\mm)}, \| \varrho_1\|_{L^\infty(X,\mm)}  \} ,
  \end{equation} 
where $D:=\diam(\supp \varrho_0 \cup \supp \varrho_1)$ and $K^-:=\max\{0, -K\}$.
\end{enumerate}
\end{proposition}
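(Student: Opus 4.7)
The four statements are from \cite{Rajala-Sturm12}; I sketch how I would reconstruct them in the present notation, leaving the most delicate analytical details to the original reference.

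The starting point for [RS1] is a symmetrization/midpoint argument. Suppose $\ppi^0,\ppi^1\in\mathrm{GeoOpt}(\mu_0,\mu_1)$ and set $\ppi:=\tfrac12(\ppi^0+\ppi^1)$. Then $\ppi$ is still an optimal geodesic plan, so the curve $\mu_s=(\rme_s)_\sharp\ppi=\tfrac12(\mu_s^0+\mu_s^1)$ is itself a $W_2$-geodesic between $\mu_0$ and $\mu_1$. The strong $\CD K\infty$ hypothesis then forces
\begin{equation*}
  \cU_\infty(\mu_s)\le(1-s)\cU_\infty(\mu_0)+s\cU_\infty(\mu_1)-\tfrac K2 s(1-s)W_2^2(\mu_0,\mu_1)
\end{equation*}
for \emph{every} $s$. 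On the other hand, applying the same inequality to $\mu^0$ and $\mu^1$ and averaging gives
\begin{equation*}
  \tfrac12\cU_\infty(\mu_s^0)+\tfrac12\cU_\infty(\mu_s^1)\le(1-s)\cU_\infty(\mu_0)+s\cU_\infty(\mu_1)-\tfrac K2 s(1-s)W_2^2(\mu_0,\mu_1).
\end{equation*}
Since $\cU_\infty$ is strictly convex on $\Probabilitiesac X\mm$ and $\cU_\infty(\mu_s)\le\tfrac12\cU_\infty(\mu_s^0)+\tfrac12\cU_\infty(\mu_s^1)$, equality in Jensen forces $\mu_s^0=\mu_s^1$ for every $s$, hence $\ppi^0=\ppi^1$ after observing that the two marginals determine an optimal plan uniquely once the full geodesic curve is fixed. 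The key place strong (not just weak) $\CD K\infty$ is used is precisely that the averaged curve, which is a geodesic but a priori \emph{not} the one selected by the $\CD$ hypothesis, must still satisfy the convexity estimate.

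For [RS2] I would combine [RS1] with the standard fact that if $\ppi$ is concentrated on branching geodesics, one can restrict and re-concatenate pieces to produce genuinely distinct optimal plans with the same marginals, contradicting uniqueness. Once $\ppi$ is supported on nonbranching geodesics, the map-induced property reduces to showing that $(\rme_0,\rme_1)_\sharp\ppi$ is concentrated on a graph; this in turn follows by the same midpoint-averaging trick applied to the conditional probabilities of $\ppi$ given $\rme_0$, invoking [RS1] restricted to the conditional marginals (which still belong to $D(\cU_\infty)$ up to a density argument).

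Statement [RS3] is a direct byproduct of the averaging proof: if $\mu_s$ had a singular part on a set of positive $s$-measure, then $\cU_\infty(\mu_s)=+\infty$, contradicting the upper bound forced by the $K$-convexity inequality. Uniform boundedness of $\cU_\infty(\mu_s)$ for $s\in[0,1]$ follows from the same bound, since the right-hand side of \eqref{eq:182} is bounded uniformly in $s$.

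The hardest part is [RS4], the quantitative $L^\infty$ bound \eqref{eq:rhosLinfty}. Here the plan is to run the averaging argument of [RS1] with the entropy $\mathcal U_N$ replaced by $\mathcal U(\mu):=\int U(\varrho)\,\d\mm$ where $U(r):=r\log r\cdot\mathbbm{1}_{r\le L}+\infty\cdot\mathbbm{1}_{r>L}$, with a threshold $L$ to be optimized. If the essential supremum of $\varrho_s$ exceeded $e^{K^-D^2/12}\max(\|\varrho_0\|_\infty,\|\varrho_1\|_\infty)$ on a set of positive measure, one constructs competitor plans supported near the ``bad'' region and derives a contradiction with the strong $\CD K\infty$ inequality; the factor $e^{K^-D^2/12}$ arises from the Gr\"onwall-type estimate that controls how the $K$-deficit $-\tfrac K2 s(1-s)W_2^2$ (bounded in absolute value by $\tfrac{K^-}{8}D^2$ because both endpoints have supports of diameter $\le D$) may inflate the essential supremum along the interpolation. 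The main technical obstacle I expect is the construction of the restricted/conditioned competitor plans while preserving the $D(\cU_\infty)$ hypothesis required to invoke the strong $\CD K\infty$ bound; this is the step where one needs the uniqueness/nonbranching from [RS1]--[RS2] to cut and paste without introducing ambiguity.
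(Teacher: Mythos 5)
The paper never proves this proposition: it is stated as a quotation of results from \cite{Rajala-Sturm12} (with [RS4] going back to \cite{Rajala12}; see also \cite{Villani09} and, for the map property in the non-branching setting, \cite{GigliGAFA}), so there is no internal proof to compare against, and simply citing these references, as you do in your first line, is what the paper itself does. Judged on its own terms, however, your reconstruction has a genuine gap at its foundation, namely the strict-convexity argument for [RS1]. After averaging $\ppi^0,\ppi^1$ you obtain two inequalities pointing in the \emph{same} direction: the strong $\CD K\infty$ condition bounds $\cU_\infty(\mu_s)$ from above along the averaged geodesic, and strict convexity of $\cU_\infty$ gives the further upper bound $\cU_\infty(\mu_s)\le\tfrac12\cU_\infty(\mu^0_s)+\tfrac12\cU_\infty(\mu^1_s)$, strict when $\mu^0_s\neq\mu^1_s$. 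Nothing forces ``equality in Jensen'': to conclude $\mu^0_s=\mu^1_s$ you would need a matching \emph{lower} bound for $\cU_\infty(\mu_s)$, which strong $\CD K\infty$ does not supply, so no contradiction arises. Moreover, even granting $\mu^0_s=\mu^1_s$ for all $s$, the curve of marginals does not by itself determine the geodesic plan, so the final step ``hence $\ppi^0=\ppi^1$'' also needs justification. Since your [RS2] is deduced from [RS1] and your [RS4] reuses the same averaging idea, the gap propagates through the whole plan.

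In the cited references the logical order is essentially the reverse of yours: one first proves intermediate density/entropy bounds by \emph{restricting} the geodesic plan to suitable sets of geodesics (restrictions of optimal geodesic plans are again optimal geodesic plans, and strong $\CD K\infty$ applies to each of them), combining the convexity inequality with the Jensen-type lower bound $\cU_\infty(\nu)\ge-\log\mm(\supp\nu)$; branching is then excluded by a mixing argument at a branch time using these bounds; only afterwards do the map property and uniqueness [RS1]--[RS2] follow, by an adaptation of the argument of \cite{GigliGAFA}. Two further points: your bound $K^-D^2/8$ on the deficit would not produce the constant $e^{K^-D^2/12}$ in \eqref{eq:rhosLinfty}, whose derivation in \cite{Rajala12} goes through a dyadic/restriction construction rather than a Gr\"onwall estimate; and in [RS3] your entropy argument only covers endpoints in $D(\cU_\infty)$, whereas the absolute continuity of the interpolants for general $\mu_0,\mu_1\in\Probabilitiesac X\mm\cap\ProbabilitiesTwo X$ with possibly infinite entropy requires an additional truncation/restriction step.
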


As bibliographical remark let us mention also  \cite{GigliGAFA} about existence of optimal maps in non branching spaces;  also note that  [RS4] is well known \cite[Thm. 30.32, (30.51)]{Villani09}, \cite[\S 3]{AGS11b} as soon as the branching phenomenon is ruled out. Remarkably, this property holds even without the non-branching assumption \cite{Rajala12}.

\begin{remark} \label{rem:BddSupp} {\rm
Notice also the following general fact, holding regardless of curvature assumptions: if $\mu_0, \,\mu_1 \in \Probabilities X$ have bounded support,  then there exists a bounded subset $E$ of $X$
 containing all the images of the geodesics from a point of $\supp \mu_0$ to a point of $\supp \mu_1$; in particular we have that  $\supp[ (\rme_s)_\sharp\ppi ] \subset E$ for every $s \in [0,1]$ and every $\ppi\in
  \mathrm{GeoOpt}(\mu_0,\mu_1)$.}
\end{remark}  

\begin{lemma}
  \label{le:CD-bs}
  $(X,\sfd,\mm)$ is a strong $\CD K\infty$ space if and only if 
  every couple of measures $\mu_0,\,\mu_1\in D(\cU_\infty)\cap \ProbabilitiesTwo X$ 
  with bounded support can be connected by a $W_2$-geodesic and
  \eqref{eq:182} is satisfied along any geodesic connecting $\mu_0$ to $\mu_1$.  
\end{lemma}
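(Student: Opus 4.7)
The ``only if'' direction is immediate from the definition. For the ``if'' direction, fix $\mu_0,\mu_1\in D(\cU_\infty)\cap\ProbabilitiesTwo X$ together with a $W_2$-geodesic $(\mu_s)_{s\in [0,1]}$ connecting them; I must prove that $(\mu_s)$ satisfies \eqref{eq:182}. (The existence of at least one such geodesic for every couple follows from the same approximation argument, by taking the geodesic plans provided by the hypothesis between the truncated marginals and extracting a narrowly convergent subsequence; I focus below on the convexity inequality.) The plan is to approximate $(\mu_s)$ by geodesics whose marginals have bounded support, apply the hypothesis along each approximation, and then pass to the limit in \eqref{eq:182}.

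Lift $(\mu_s)$ to $\ppi\in \mathrm{GeoOpt}(X)$ with $\mu_s=(\rme_s)_\sharp\ppi$, fix $x_0\in X$, and for $n$ large enough define
\begin{equation*}
K_n:=\{\gamma\in \rmC([0,1];X):\ \gamma(0),\gamma(1)\in \overline B_n(x_0)\},\quad
\ppi^n:=\ppi(K_n)^{-1}\,\ppi\restr{K_n}.
\end{equation*}
Since the restriction of an optimal coupling to a product set is still optimal for its marginals (cyclical monotonicity is preserved) and $\ppi$-a.e.\ $\gamma$ is a geodesic, $\ppi^n\in \mathrm{GeoOpt}(X)$; the triangle inequality gives $\gamma(s)\in \overline B_{3n}(x_0)$ for every $\gamma\in K_n$ and every $s$, so the marginals $\mu_s^n:=(\rme_s)_\sharp\ppi^n$ all have support in $\overline B_{3n}(x_0)$, and the hypothesis applied along the geodesic $(\mu_s^n)$ yields
\begin{equation}\label{eq:approxLemma182}
\cU_\infty(\mu_s^n)\le (1-s)\cU_\infty(\mu_0^n)+s\cU_\infty(\mu_1^n)-\tfrac{K}{2}\,s(1-s)\,W_2^2(\mu_0^n,\mu_1^n).
\end{equation}
Disintegrating $\ppi=\int \ppi^{(s)}_x\,\d\mu_s(x)$ along $\rme_s$ gives $\mu_s^n=h_s^n\mu_s$ with $h_s^n(x)=\ppi(K_n)^{-1}\ppi^{(s)}_x(K_n)\to 1$ $\mu_s$-a.e.\ and $0\le h_s^n\le \ppi(K_n)^{-1}\to 1$. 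Dominated convergence then gives $\mu_s^n\to\mu_s$ in total variation and, together with $\mu_s\in \ProbabilitiesTwo X$, also $\int\sfd^2(\cdot,x_0)\,\d\mu_s^n\to\int\sfd^2(\cdot,x_0)\,\d\mu_s$, so $\mu_s^n\to\mu_s$ in $(\ProbabilitiesTwo X,W_2)$ for every $s\in[0,1]$, and in particular $W_2^2(\mu_0^n,\mu_1^n)\to W_2^2(\mu_0,\mu_1)$.

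To pass to the limit in the entropy terms I rephrase $\cU_\infty$ as a relative entropy via the auxiliary weight $\sfV$ of \eqref{eq:85}: setting $\mm_0:=\rme^{-\sfV^2}\mm$, which is a finite measure by \eqref{eq:86}, the identity \eqref{eq:190bis} reads
\begin{equation*}
\cU_\infty(\mu)+\int_X \sfV^2\,\d\mu=H(\mu\,|\,\mm_0)\ge 0\qquad\text{for every }\mu\in \ProbabilitiesTwo X.
\end{equation*}
At the endpoints $i=0,1$ the bound $h_i^n\le 2$ (for large $n$), the convexity inequality $\psi(cr)\le C_1\psi(r)+C_2\,r$ for $c\in[0,2]$ applied to $\psi(r):=r\log r-r+1$, and pointwise convergence of the densities give $H(\mu_i^n\,|\,\mm_0)\to H(\mu_i\,|\,\mm_0)$ by dominated convergence against $\mm_0$; combined with $\int \sfV^2\,\d\mu_i^n\to \int \sfV^2\,\d\mu_i$ (second moments converge) we obtain $\cU_\infty(\mu_i^n)\to \cU_\infty(\mu_i)$. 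At the intermediate time the narrow lower semicontinuity of $H(\cdot\,|\,\mm_0)$ together with the convergence of $\int \sfV^2\,\d\mu_s^n$ give $\cU_\infty(\mu_s)\le \liminf_n \cU_\infty(\mu_s^n)$, and passing to the limit in \eqref{eq:approxLemma182} proves \eqref{eq:182} for $(\mu_s)$. The main obstacle is precisely the upper semicontinuity at the endpoints: under only the growth assumption \eqref{eq:173} the negative part of $\rho\log\rho$ need not lie in $L^1(\mm)$, so $U_\infty(\rho_i^n)$ cannot be dominated directly against $\mm$, and passing through the finite reference measure $\mm_0$ is what makes the dominated convergence argument run.
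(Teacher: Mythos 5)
Your treatment of the convexity inequality along an arbitrary geodesic is correct and is essentially the paper's own argument: the paper likewise lifts the geodesic to $\ppi\in \mathrm{GeoOpt}(\mu_0,\mu_1)$, restricts and renormalizes $\ppi$ to curves with bounded range, applies the hypothesis along the truncated geodesic, and passes to the limit using convergence of the entropies at the endpoints and lower semicontinuity at the intermediate time. The only differences are cosmetic: you keep the curves whose \emph{endpoints} lie in $\overline B_n(x_0)$ (noting the whole curve then stays in $\overline B_{3n}(x_0)$), while the paper keeps the curves entirely contained in $\overline B_R(\bar x)$; and you rewrite $\cU_\infty$ as a relative entropy with respect to $\mm_0=\rme^{-\sfV^2}\mm$ of \eqref{eq:85}--\eqref{eq:86}, while the paper uses $\bar\mm=\mathrm z^{-1}\rme^{-C\sfd^2(\cdot,\bar x)}\mm$ and the identity \eqref{eq:13} — the same device, serving the same purpose of making dominated convergence at the endpoints legitimate when $\mm$ is infinite.

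The genuine gap is the existence half, which you dismiss in a parenthesis. Here $(X,\sfd)$ is only complete and separable and $\mm$ only satisfies \eqref{eq:173}: no properness and no length structure are assumed, so ``extracting a narrowly convergent subsequence'' of the approximating geodesics (let alone of the optimal geodesic plans) requires tightness, and tightness is not free — uniform second moments do not imply it on a general Polish space since closed balls need not be compact, and the set of geodesics with endpoints in compact sets need not be compact either. This is precisely where the paper spends the first half of its proof: it applies the hypothesis along the geodesics joining the truncated endpoints to obtain \eqref{eq:182bis}, hence a uniform bound on the relative entropies $\tilde\cU_\infty(\mu^N_s)$ (equivalently, in your notation, on $H(\mu^N_s\,|\,\mm_0)$, using the uniform second-moment bound along the geodesics); it then uses the narrow compactness of sublevels of the relative entropy with respect to a finite reference measure, together with the equi-Lipschitz property of the curves and a metric Arzel\`a--Ascoli argument, to extract a limit curve, and lower semicontinuity of $W_2$ to show the limit is a constant-speed geodesic from $\mu_0$ to $\mu_1$. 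Your own machinery contains all the needed ingredients (the hypothesis gives \eqref{eq:182} along each approximating geodesic, which bounds $H(\mu^N_s\,|\,\mm_0)$ uniformly in $s$ and $N$), so the repair is short, but as written the existence claim is unjustified and must be argued along these lines rather than by a bare compactness assertion.
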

\begin{proof}
  Let us first prove that 
  every couple $\mu_0,\,\mu_1\in D(\cU_\infty)\cap \ProbabilitiesTwo X$
  can
  be connected by a $W_2$-geodesic.
  For $\bar x\in X$ fixed and $N$ sufficiently big, we can define the measures
  $\mu_i^N:=\frac 1{c_N}\mu_i\llcorner B_N(\bar x)\in
  \ProbabilitiesTwo X$. 

  By choosing a constant $C>B$ (recall \eqref{eq:173}),
  we can introduce the normalized probability measure $\bar\mm\in \ProbabilitiesTwo X$
  \begin{displaymath}
    \bar\mm:=\frac 1{\mathrm z} \rme^{-C\sfd^2(x,\bar x)}\mm\quad\text{with}\quad
    \mathrm z:=\int_X \rme^{-C\sfd^2(x,\bar x)}\,\d\mm(x),
  \end{displaymath}
  and the corresponding relative entropy functional $\tilde \cU_\infty$, satisfying the identity
  \begin{equation}
    \label{eq:13}
    \cU_\infty(\mu)=
    \tilde\cU_\infty(\mu)-C\int_X \sfd^2(x,\bar x)\,\d\mu-\log\mathrm z.
  \end{equation}
  Let us denote by $\tilde\varrho_i,\tilde\varrho_i^N$ the
  densities
  of $\mu_i, \mu_i^N$ w.r.t.~$\tilde\mm$.
  From $c_N\uparrow 1$ it is easy to check that
  $W_2(\mu_i^N,\mu_i)\to 0$ and 
$\|\tilde\varrho^N_i-\tilde\varrho_i\|_{L^1(X,\tilde\mm)}\to0$ as
$N\uparrow\infty$. 
Since $\tilde\varrho_i^N\le c_N^{-1}\tilde\varrho_i$, 
the uniform bound 
$$-\rme^{-1}\le \tilde\varrho_i^N\log (\tilde\varrho_i^N)\le
\tilde\varrho_i^N(\log \tilde \varrho_i-\log c_N)\le c_N \tilde \varrho_i
(\log \tilde \varrho_i)_+ -c_N\log c_N \tilde \varrho_i$$
and the fact that $\tilde \mm(X)$ is finite
yields $\tilde\cU_\infty(\mu^N_i)\to \tilde\cU_\infty(\mu_i)$ as $N\uparrow
\infty$ and therefore, by \eqref{eq:13}, 
$\cU_\infty(\mu^N_i)\to \cU_\infty(\mu_i)$ as $N\to
\infty$.

Since $\mu_i^N$ have bounded support we can
find a $W_2$-geodesic $(\mu_s^N)_{s\in [0,1]}$ connecting them and
satisfying the corresponding uniform bound 
  \begin{equation}
    \label{eq:182bis}
    \cU_\infty(\mu_s^N)\le (1-s)\cU_\infty(\mu_0^N)+
    s\cU_\infty(\mu_1^N)-\frac K2{s(1-s)}W_2^2(\mu_0^N,\mu_1^N)\qquad\text{$\forall s\in (0,1)$},
  \end{equation}
 which
in particular shows that $\tilde\cU_\infty(\mu_s^N)\le S<\infty$
for every $s\in [0,1]$ and $N$ sufficiently big. Since the sublevels
of 
$\tilde \cU_\infty$ are relatively compact in $\Probabilities X$ and
the curves
$[0,1]\ni s\mapsto \mu_s^N$ are equi-Lipschitz with respect to $W_2$, 
we can extract (see e.g.~\cite[Prop.~3.3.1]{AGS08})
an increasing subsequence $h\mapsto N(h)$
and a limit geodesic $(\mu_s)_{s\in [0,1]}$ such that $\mu_s^{N(h)}\to
\mu_s$ weakly in $\Probabilities X$ as $h\to\infty$. In particular
$(\mu_s)_{s\in [0,1]}$ is a geodesic connecting $\mu_0$ to $\mu_1$.
%

Let us now prove that \eqref{eq:182} holds along any geodesic 
connecting $\mu_0,\,\mu_1 \in D(\cU_\infty)\cap\ProbabilitiesTwo
X$. Let $\mu_s=(\rme_s)_\sharp \ppi$ be
a geodesic induced by $\ppi\in \mathrm{GeoOpt}(\mu_0,\mu_1)$;
we consider  
$$
\Gamma_R:=\left\{\gamma\in\rmC^0([0,T];X):\ \gamma([0,1])\subset\overline{B}_R(\bar x)\right\},\qquad
c_R:=\ppi(\Gamma_R),\quad
\ppi^R:=\frac 1{c_R}\ppi\llcorner\Gamma_R
$$
and $\mu_s^R:=(\rme_s)_\sharp \ppi^R$; since $\ppi^R\in
\mathrm{GeoOpt}(\mu_0^R,\mu_1^R)$, 
$(\mu_s^R)_{s\in [0,1]}$ is a geodesic and 
the measures $\mu_s^R$ have bounded
support in $\overline{B}_R(\bar x)$.
Thus for every $R>0$ one has that
$\mu_0^R,\mu_s^R,\mu_1^R$ satisfy \eqref{eq:182};
arguing as in the previous step, 
we can pass to the limit as $R\to\infty$ using the facts that 
$W_2(\mu_s^R,\mu_s)\to 0$ for every $s\in [0,1]$,
$\cU_\infty(\mu_i^R)\to \cU_\infty(\mu_i)$ if $i=0,\,1$, and 
$\liminf_{R\to\infty}\cU_\infty(\mu_s^R)\ge \cU_\infty(\mu_s)$ and we obtain
the corresponding inequality for $\mu_0,\mu_s,\mu_1$.
\end{proof}
Next, let us recall the definition of reduced curvature dimension condition $\CDS KN$ introduced by Bacher-Sturm \cite{Bacher-Sturm10}.

\begin{definition}[$\CDS KN$ condition]\label{def:CDS}
  We say that $(X,\sfd,\mm)$ satisfies the
  $\CDS KN$ condition, $N\in [1,\infty)$, if 
  for every $\mu_i=\varrho_i\mm\in \Probabilitiesac X\mm$, $i=0,1$,
  with bounded support there exists $\ppi\in \mathrm{GeoOpt}(\mu_0,\mu_1)$ such that 
  \begin{equation}
    \label{eq:177}
    \mathcal U_{M}(\mu_s)\le M- M 
    \int \Big(\sigma_{K/M}^{(1-s)}(\sfd(\gamma_0,\gamma_1))
    \varrho_0(\gamma_0)^{-1/M}+
    \sigma_{K/M}^{(s)}(\sfd(\gamma_0,\gamma_1))
    \varrho_1(\gamma_1)^{-1/M}\Big)\,\d\ppi(\gamma)
  \end{equation}
  for every $s\in [0,1]$ and $M\geq N$, where $\mu_s=(\rme_s)_\sharp\ppi$, the coefficients $\sigma$ are defined in \eqref{eq:170} and  $\mathcal U_M$ is defined in \eqref{def:CalUN}.
  
  If moreover 
  \eqref{eq:177} is satisfied  along \emph{any} 
  $\ppi\in \mathrm{GeoOpt}(\mu_0,\mu_1)$,
  we say that $(X,\sfd,\mm)$ satisfies 
  the \emph{strong} $\CDS KN$ condition.
\end{definition}

\begin{remark}\label{rem:notatCDS} {\rm
Definition \ref{def:CDS} coincides with the original definition of $\CDS K N$ spaces given in \cite{Bacher-Sturm10}. Note that the additional terms in the right hand side of \eqref{eq:177} are  due to our definition of entropy as $\mathcal U_M(\varrho \mm):= M \int_X  \varrho(1-\varrho^{-\frac{1}{M}}) \, \d \mm= M -M\int_X \varrho^{1-\frac{1}{M}} \, \d \mm$, while the one adopted in \cite{Bacher-Sturm10} was $-\int_X \varrho^{1-\frac{1}{M}} \, \d \mm$ (for absolutely continuous measures). This convention will be convenient in our work in order to use  regularized entropies and analyze the corresponding non linear diffusion semigroups.}
\end{remark}

It can be proved that 
a strong $\CDS KN$-space is also a strong $\CD K\infty$ space,
and thus properties [RS1-4] hold, see Lemma~\ref{le:CDSKN-CDKI} below, whose proof included for completeness
follows the lines of \cite[Prop.~1.6]{Sturm06II}. Conversely, any $\CDS KN$ space satisfying 
[RS1-4] is clearly strong. Therefore a $\CDS KN$ space is strong if and only if
[RS1-4] hold.

\begin{lemma}
  \label{le:CDSKN-CDKI}
  If $(X,\sfd,\mm)$ satisfies the (strong) $\CDS KN$ condition for
  some
  $K\in \R$, $N\in [1,\infty)$, then $(X,\sfd,\mm)$ is a (strong) $\CD
  K\infty$ space.
\end{lemma}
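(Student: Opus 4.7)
My plan is to derive the $\CD K\infty$ convexity inequality \eqref{eq:182} by letting $M\to\infty$ in the $\CDS KN$ inequality \eqref{eq:177}, which holds for every $M\geq N$. First, by Lemma~\ref{le:CD-bs} it suffices to treat the case when $\mu_0=\varrho_0\mm$ and $\mu_1=\varrho_1\mm$ lie in $D(\cU_\infty)\cap\ProbabilitiesTwo X$ and have bounded support (so that $\sfd(\gamma_0,\gamma_1)$ is bounded on the support of any $\ppi\in\mathrm{GeoOpt}(\mu_0,\mu_1)$). A further truncation/regularization, together with weak lower semicontinuity of $\cU_\infty$ and the stability of optimal plans under narrow convergence, reduces the problem to the case in which $\varrho_0,\varrho_1$ are also essentially bounded. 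Under the $\CDS KN$ assumption fix a $\ppi\in\mathrm{GeoOpt}(\mu_0,\mu_1)$ for which \eqref{eq:177} holds for every $M\ge N$ and every $s\in[0,1]$; in the strong case we instead fix an \emph{arbitrary} such $\ppi$.

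The next step is the asymptotic analysis. On the left-hand side, writing $\mu_s=\varrho_s\mm+\mu_s^\perp$, one has
\[
\cU_M(\mu_s)=M-M\int_X\varrho_s^{1-1/M}\,\d\mm+M\,\mu_s^\perp(X)
=M\,\mu_s^\perp(X)+\int_X\varrho_s\cdot M\bigl(1-\varrho_s^{-1/M}\bigr)\,\d\mm,
\]
and an elementary calculation (computing $\tfrac{\d}{\d x}\bigl((1-r^{-x})/x\bigr)$) shows that $M\bigl(1-r^{-1/M}\bigr)\uparrow\log r$ monotonically as $M\uparrow\infty$ for every $r>0$, so monotone convergence yields
$\liminf_{M\to\infty}\cU_M(\mu_s)\ge \cU_\infty(\mu_s)+\bigl(\sup_M M\bigr)\mu_s^\perp(X)$.
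On the right-hand side, since by our reduction $\varrho_0,\varrho_1$ are bounded away from $0$ and $\infty$ on the support of $\ppi$ and $\sfd(\gamma_0,\gamma_1)$ is bounded, the Taylor expansions
\[
\sigma_{K/M}^{(t)}(\delta)=t+\frac{t(1-t^2)}{6}\,\frac{K\delta^2}{M}+O(M^{-2}),\qquad
\varrho^{-1/M}=1-\frac{\log\varrho}{M}+O(M^{-2})
\]
combine to give, uniformly in $\gamma$ on $\supp\ppi$,
\[
M\Bigl(\sigma_{K/M}^{(1-s)}(\delta)\varrho_0^{-1/M}+\sigma_{K/M}^{(s)}(\delta)\varrho_1^{-1/M}\Bigr)
=M-(1-s)\log\varrho_0-s\log\varrho_1+\frac{K s(1-s)}{2}\delta^2+o(1),
\]
where I used the identity $(1-s)s(2-s)+s(1-s^2)=3s(1-s)$. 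Integrating against $\ppi$ and using $(\rme_i)_\sharp\ppi=\mu_i$ together with $\int \sfd^2(\gamma_0,\gamma_1)\,\d\ppi=W_2^2(\mu_0,\mu_1)$ (since $\ppi$ is optimal), one obtains
\[
\lim_{M\to\infty}\Bigl(M-M\!\int\!\bigl(\sigma_{K/M}^{(1-s)}\varrho_0^{-1/M}+\sigma_{K/M}^{(s)}\varrho_1^{-1/M}\bigr)\d\ppi\Bigr)
=(1-s)\cU_\infty(\mu_0)+s\,\cU_\infty(\mu_1)-\tfrac K2 s(1-s)W_2^2(\mu_0,\mu_1).
\]
Comparing with the lower-semicontinuity estimate for the left-hand side, one concludes simultaneously that $\mu_s^\perp\equiv0$ (otherwise the right-hand side would be finite while the left-hand side blows up like $M\,\mu_s^\perp(X)$) and that $\cU_\infty(\mu_s)$ satisfies the $K$-convexity \eqref{eq:182}. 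Removing the boundedness assumption on $\varrho_0,\varrho_1$ via the approximation argument of the first step, we obtain the $\CD K\infty$ condition. The strong case follows verbatim: since \eqref{eq:177} is assumed for every $\ppi\in\mathrm{GeoOpt}(\mu_0,\mu_1)$, the inequality \eqref{eq:182} holds along \emph{any} geodesic connecting $\mu_0$ to $\mu_1$.

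The principal technical obstacle is the rigorous passage to the limit in the right-hand side: the $o(1)$ term in the Taylor expansion involves $(\log\varrho_i)^2$ and higher moments of $\sfd$, and must be dominated by a $\ppi$-integrable function independent of $M$. This is where the boundedness of $\varrho_0,\varrho_1$ and of $\supp\ppi$ (Remark~\ref{rem:BddSupp}) is essential, since it provides uniform bounds on $\varrho_i^{-1/M}$ and on $\sigma_{K/M}^{(t)}(\sfd(\gamma_0,\gamma_1))$ that permit dominated convergence. A secondary subtlety is that, a priori, the intermediate measures $\mu_s$ need not be absolutely continuous, but as explained above the very inequality \eqref{eq:177} forces $\mu_s^\perp=0$ in the limit $M\to\infty$, which is precisely what allows $\cU_\infty(\mu_s)$ to be finite and makes the rest of the argument go through.
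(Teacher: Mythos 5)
Your overall strategy --- reduce via Lemma~\ref{le:CD-bs} to endpoints in $D(\cU_\infty)\cap\ProbabilitiesTwo X$ with bounded support and then let $M\to\infty$ in \eqref{eq:177}, using the asymptotics of $\sigma^{(t)}_{K/M}$ and the monotone convergence $U_M\uparrow U_\infty$ --- is exactly the paper's, and your limit computation is correct (including the identity $(1-s)s(2-s)+s(1-s^2)=3s(1-s)$ and the nice observation that a nontrivial singular part of $\mu_s$ would force the left-hand side to blow up like $M\mu_s^\perp(X)$). The genuine soft spot is the extra reduction to bounded densities. First, the truncation you invoke only yields $\varrho_0,\varrho_1$ essentially bounded \emph{above}, whereas your uniform Taylor expansion of $\varrho_i^{-1/M}$ (whose $O(M^{-2})$ error involves $(\log\varrho_i)^2$) also needs them bounded away from $0$ on $\supp\ppi$ --- a property that truncation from above cannot produce; you would have to restrict to sets of the form $\{\eps\le\varrho_i\le\eps^{-1}\}$ and renormalize. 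Second, in the strong case the reduction cannot be performed by approximating the endpoint measures and appealing to stability of optimal plans under narrow convergence: that only gives \eqref{eq:182} along \emph{limit} geodesics, not along the arbitrary given geodesic of the original pair $\mu_0,\mu_1$. One must instead restrict the given plan $\ppi$ itself (as is done for the bounded-support reduction in the proof of Lemma~\ref{le:CD-bs}), apply the strong $\CDS KN$ inequality to the restricted, renormalized plan, and then pass to the limit with the same lower-semicontinuity/convergence bookkeeping; so ``the strong case follows verbatim'' is not accurate as written.

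The paper sidesteps all of this with a small rearrangement that makes the bounded-density reduction unnecessary: subtracting $(1-s)\cU_M(\mu_0)+s\,\cU_M(\mu_1)$ from \eqref{eq:177} before taking the limit, the right-hand side becomes $M\int\big[\big((1-s)-\sigma^{(1-s)}_{K/M}\big)\varrho_0^{-1/M}+\big(s-\sigma^{(s)}_{K/M}\big)\varrho_1^{-1/M}\big]\,\d\ppi$, so that only the distortion-coefficient differences carry the factor $M$ (handled by \eqref{eq:23}), while $\varrho_i^{-1/M}$ merely has to converge to $1$, dominated by $\max(1,\varrho_i^{-1/N})$, which is $\mu_i$-integrable on a bounded-support set. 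No second-order expansion of $\varrho_i^{-1/M}$, hence no boundedness of the densities, is needed, and the argument applies directly to any optimal geodesic plan, so the strong case really is immediate. Either adopt this rearrangement, or, if you keep your expansion, carry out the restriction-of-the-plan reduction (with the bounded-away-from-zero truncation) explicitly.
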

\begin{proof}
  By Lemma~\ref{le:CD-bs} 
  it is sufficient to prove \eqref{eq:182} 
  along every $W_2$-geodesic $(\mu_s)_{s\in [0,1]}$ 
  induced by $\ppi\in \mathrm{GeoOpt}(\mu_0,\mu_1)$
  with $\mu_s$ supported in a bounded set and $\mu_i\in
  D(\cU_\infty)$, $i=0,\,1$. 

  Let us first notice that for every $r\ge0$
  $$\lim_{N\to\infty} U_N(r)=U_\infty(r),\quad
  N\mapsto U_N(r)\text{ {\color{black} is increasing for all $r\in (0,\infty)$}} $$
  If $\mu_s=\varrho_s\mm$, since $\mu_s$ is supported in a bounded set
  with finite $\mm$-measure,  it is then not difficult to prove that 
  \begin{equation}
    \label{eq:15}
    \lim_{N\to\infty}\cU_N(\mu_s)=\cU_\infty(\mu_s)\quad\text{for
      every }s\in [0,1].
  \end{equation}
  The second important property concerns the coefficients
  $\sigma_\kappa^{(s)}(\delta)$: if $K>0$
  \begin{equation}
    \label{eq:23}
    M\Big(s-\sigma_{K/M}^{(s)}(\delta)\Big)
    =
      K\frac{s\sin(\sqrt{K/M}\delta)-\sin(\sqrt{K/M}s\delta)}{(K/M)\sin(\sqrt{K/M}\delta)}
      =\frac {\delta^2}6K\big(s^3-s\big)+o(1)
  \end{equation}
  as $M\up\infty$, and a similar property holds when $K\leq 0$.

  We thus get
  \begin{align*}
    &\cU_M(\mu_s)
    -
      (1-s)\cU_M(\mu_0)-s\cU_M(\mu_1)
      \\&\le
      M\int \Big(\big(s-\sigma_{K/M}^{(s)}(\sfd(\gamma_0,\gamma_1))\big)\varrho_1^{-1/M}(\gamma_1)+
      \big(1-s-\sigma_{K/M}^{(1-s)}(\sfd(\gamma_0,\gamma_1))\big)
          \varrho_0^{-1/M}(\gamma_0)\Big)\,\d\ppi(\gamma)
  \end{align*}
  and passing to the limit as $M\up\infty$ by applying \eqref{eq:15}
  and \eqref{eq:23} we obtain
  \begin{align*}
    \cU_\infty(\mu_s)
    -
      (1-s)\cU_\infty(\mu_0)+s\cU_\infty(\mu_1)
    &\le
          -\frac K2s(1-s)\int \sfd^2(\gamma_0, \gamma_1)\,\d\ppi(\gamma)
      \\&=
          -\frac K2s(1-s)W_2^2(\mu_0,\mu_1).
  \end{align*}
\end{proof}

Let us  also introduce a more general class of natural entropy functionals, used 
for instance in Lott-Villani's  approach of CD spaces \cite{Lott-Villani09, Villani09}
{\color{black} (compared to \cite{Villani09}, we add a few more regularity properties on $P$)}. 
They will play a crucial role in the next chapters.

\begin{definition}[{\cite[Def.~17.1]{Villani09}}] 
  \label{def:McCann}
  {\color{black} Let $U:[0,\infty)\to\R$ be a continuous and convex entropy function, with $U(0)=0$ and
  $U'$ locally Lipschitz in $(0,\infty)$.} We say that $U$ belongs to McCann's 
  class $\MC N$, $N\in [1,\infty]$, if the corresponding pressure function $P=rU'-U$ satisfies
  $P(0)=\lim_{r\down0}P(r)=0$ and  $r\mapsto r^{\frac{1}{N}-1}P(r)$ is nondecreasing, i.e. 
  \begin{equation}
    \label{eq:16}
   R(r)=rP'(r)-P(r)\ge -\frac 1N P(r) \quad \text{for ${\Leb 1}$-a.e. $r>0$.} 
  \end{equation}
  We say that $U$ is \emph{regular} and write
  $U\in \MCreg N$ if, in addition, $U$ is normalized and $P$ is regular according to \eqref{eq:A1bis}.
\end{definition}

If $P$ is regular, we can also write $P\in\MC N$ (resp. $P\in\MCreg N$) if the corresponding normalized entropy $U$ belongs to
$\MC N$ (resp. $\MCreg N$).
{\color{black} Directly from the convexity inequality $0=U(0)\geq U(r)-rU'(r)$, it is immediate to see that $P$ is nonnegative.} 
Moreover,  the function $V:(0, \infty)\to \R^+$ defined by
\begin{equation}\label{defprop:V}
V(r):= r^N  U(r^{-N}) \quad \text{is convex and nonincreasing}.
\end{equation}
The last condition is actually equivalent to $U\in \MC N$.

Before stating the next result, we introduce
a family of weighted energy functionals taylored to 
a pressure function $P$ as in \S~\ref{subsec:regularent}:
if $Q(r):=P(r)/r$, we consider the weight 
$\frQ^{(t)}(s,r):=\sfg(s,t)Q(r)$, where $\sfg$ is 
the Green function defined in \eqref{eq:3}. 

We adopt the notation of Section~\ref{sec:abscurWas}. 
If $\mu\in \AC 2{[0,1]}{(\Probabilities X,W_2)}$
with $\tilde\mu=\varrho\tilde\mm\ll\mm$ and $v$ is its minimal $2$-velocity
density, we set 
\begin{equation}
  \label{eq:172}
  \Mod tQ\mu\mm:=\Action {\frQ^{(t)}}\mu\mm=
  \int_{\tilde X}\sfg(s,t)Q(\varrho(x,s))v^2(x,s)\,\d\tilde\mu(x,s).
\end{equation}
In the following theorem we relate the $\CDS KN$ condition, defined in terms of  the distortion  coefficients $\sigma_{K/N}$,
to a modulus of convexity along Wasserstein geodesics of the entropies induced by maps $U\in \MC N$, very much like in the case $N=\infty$.
The main difference is that the modulus of convexity is not the squared Wasserstein distance, but the action $ \Mod tQ\mu\mm$
of \eqref{eq:172}.

\begin{theorem} \label{thm:Andrea_complete}
  Let us assume that {\upshape [RS1-4]} hold.
  The following properties are equivalent:
  \begin{enumerate}[{\rm [CD1]}]
    \item $(X,\sfd,\mm)$ is a 
        strong $\CDS KN$~space, for some  $K \in \R$ and $N\in [1,\infty)$.
  
  \item For every $\mu_0=\varrho_0\mm,\,\mu_1=\varrho_1 \mm\in \Probabilitiesac X\mm$ 
    with densities $\varrho_i$ $\mm$-essentially bounded with bounded support, the geodesic $(\mu_t)_{t\in [0,1]}$ 
    connecting $\mu_0$ to $\mu_1$ satisfies (with $Q_N(r)=r^{-1/N}$ as in \eqref{eq:167})
    \begin{equation}
      \label{eq:180N}
      \cU_N(\mu_t)\le 
      (1-t) \cU_N (\mu_0)+ t \,
      \cU_N(\mu_1)-K\Mod t{Q_N}{\mu}\mm\quad
    \forevery t\in [0,1].
    \end{equation}

    \item For every $\mu_0=\varrho_0\mm,\,\mu_1=\varrho_1 \mm\in \Probabilitiesac X\mm \cap \ProbabilitiesTwo X$, the geodesic $(\mu_t)_{t\in [0,1]}$ 
    connecting $\mu_0$ to $\mu_1$ satisfies   \eqref{eq:180N}.
     
  \item For every $\mu_0=\varrho_0\mm,\,\mu_1=\varrho_1 \mm\in \Probabilitiesac X\mm  \cap  \ProbabilitiesTwo {X}$   and every 
    $U\in \MC N$, the geodesic $(\mu_t)_{t\in [0,1]}$ 
    connecting $\mu_0$ to $\mu_1$ satisfies
    \begin{equation}
      \label{eq:180}
      \cU(\mu_t)\le 
      (1-t)\Reny {}{\mu_0}+
      t\Reny {}{\mu_1}-K\Mod t{Q}{\mu}\mm\quad
    \forevery t\in [0,1].
    \end{equation}

  \item For every $\mu_0,\,\mu_1\ \in \Probabilitiesac X\mm \cap 
    \ProbabilitiesTwo {X}$ 
    and every regular
    $U\in \MCreg N$  the geodesic $(\mu_t)_{t\in [0,1]}$ 
    connecting $\mu_0$ to $\mu_1$ satisfies \eqref{eq:180}.
  \end{enumerate}
  \end{theorem}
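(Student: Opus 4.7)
\medskip
The plan is to close the cycle $[\mathrm{CD1}]\Rightarrow[\mathrm{CD4}]\Rightarrow[\mathrm{CD3}]\Rightarrow[\mathrm{CD2}]\Rightarrow[\mathrm{CD1}]$, together with the lateral equivalence $[\mathrm{CD5}]\Leftrightarrow[\mathrm{CD4}]$. The ``easy'' links $[\mathrm{CD4}]\Rightarrow[\mathrm{CD3}]$, $[\mathrm{CD3}]\Rightarrow[\mathrm{CD2}]$ and $[\mathrm{CD4}]\Rightarrow[\mathrm{CD5}]$ are immediate specializations (choose $U=U_N$; restrict to $\mu_i$ with $L^\infty$ densities of bounded support; use $\MCreg N\subset\MC N$), while their converses $[\mathrm{CD5}]\Rightarrow[\mathrm{CD4}]$ and $[\mathrm{CD2}]\Rightarrow[\mathrm{CD3}]$ come by approximation: I would regularize an arbitrary $U\in\MC N$ by the pressures $P_{N,\eps,M}$ of \eqref{eq:207}, which lie in $\MCreg N$ thanks to \eqref{eq:92}, and truncate the endpoints $\mu_i$ to bounded supports with $L^\infty$ densities using \eqref{eq:rhosLinfty}; in both cases I pass to the limit via Theorem~\ref{le:stabilitySigma} for the weighted action and the lower semicontinuity of the entropy. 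The algebraic engine throughout is the duality, supplied by Lemmas~\ref{le:0} and \ref{lem:DiffInesigma}, between distorted concavity with coefficients $\sigma_\kappa^{(t)}(\delta)$ and second-order ODE-inequalities $u''+\kappa u\le 0$; the bridge from pointwise estimates along geodesics to integrated statements on $\cU$ is the tightening identity of Theorem~\ref{thm:MVD}, which for $\ppi\in\mathrm{GeoOpt}(\mu_0,\mu_1)$ tightened to $\mu_s=\varrho_s\mm$ gives $\bar v_s(\gamma_s)=|\dot\gamma_s|=\sfd(\gamma_0,\gamma_1)=:L(\gamma)$ for $\ppi\otimes\Leb{1}$-a.e.~$(\gamma,s)$. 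Finally, [RS1--4] supplies all the regularity I need: uniqueness and non-branching of the geodesic, absolute continuity of $\mu_s$, the $L^\infty$-bound \eqref{eq:rhosLinfty}, and the possibility of restricting $\ppi$ to arbitrary Borel subsets of geodesics.

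\medskip
For the key implication $[\mathrm{CD1}]\Rightarrow[\mathrm{CD4}]$ I fix $U\in\MC N$, $\mu_i\in\Probabilitiesac X\mm\cap\ProbabilitiesTwo X$, and the unique $\ppi\in\mathrm{GeoOpt}(\mu_0,\mu_1)$. Applying the strong $\CDS KN$ inequality \eqref{eq:177} with $M=N$ to $\ppi$ restricted both to arbitrary sub-intervals $[a,b]\subset[0,1]$ and to arbitrary Borel subsets of geodesics, and then invoking Lemma~\ref{lem:DiffInesigma} (whose regularity hypothesis is guaranteed by Lemma~\ref{lem:LocLipsigma}), I obtain the pointwise ODE-inequality
\[
\ddot r_t(\gamma)+(K/N)L^2(\gamma)\,r_t(\gamma)\le 0\ \text{ in }\mathscr D'(0,1),\qquad r_t(\gamma):=\varrho_t^{-1/N}(\gamma_t),\ \text{for $\ppi$-a.e.\ $\gamma$.}
\]
Since $U\in\MC N$, the auxiliary function $V(s):=s^N U(s^{-N})$ is convex and nonincreasing by \eqref{defprop:V}, and a direct computation gives the identity $-V'(s)\,s=N\,Q(s^{-N})$; consequently $\varphi_t:=V(r_t)$ satisfies
\[
\ddot\varphi_t\ge V'(r_t)\ddot r_t\ge -(K/N)L^2(\gamma)V'(r_t)r_t=K\,L^2(\gamma)\,Q(\varrho_t(\gamma_t)),
\]
where the first inequality uses $V''\ge0$ and the second uses $V'\le 0$ to flip the ODE. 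Integrating against $\ppi$, exploiting $\int V(r_t)\,\d\ppi=\cU(\mu_t)$, substituting $L^2(\gamma)=\bar v_t^2(\gamma_t)$ and converting the $\ppi$-integral into an $\mm$-integral via \eqref{eq:27}, I arrive at $\frac{\d^2}{\d t^2}\cU(\mu_t)\ge K\int_X Q(\varrho_t)\bar v_t^2\,\d\mu_t$; Lemma~\ref{le:0}, whose local Lipschitz hypothesis is ensured by Lemma~\ref{lem:LocLipF}, finally delivers \eqref{eq:180}.

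\medskip
The return $[\mathrm{CD2}]\Rightarrow[\mathrm{CD1}]$ runs this computation in reverse: Lemma~\ref{le:0} converts \eqref{eq:180N} into the integrated second-derivative inequality $\ddot\Phi\ge K\Psi$ with $\Phi(t):=\cU_N(\mu_t)$ and $\Psi(t):=\int_X Q_N(\varrho_t)\bar v_t^2\,\d\mu_t$. The delicate \emph{localization}---and the main obstacle of the proof---is to upgrade this integrated inequality into the pointwise distorted concavity of $r_t(\gamma)$ along $\ppi$-a.e.~geodesic. I would accomplish this by reapplying \eqref{eq:180N} to the restricted plans $\nchi_A\,\ppi/\ppi(A)$ for arbitrary Borel $A\subset\mathrm{supp}\,\ppi$ and sub-intervals $[a,b]\subset[0,1]$: by \eqref{eq:rhosLinfty} the endpoint marginals of the restriction retain uniformly bounded densities; by [RS1--RS2] the unique geodesic between them is precisely the corresponding restriction of $\mu$; and a Vitali/Lebesgue differentiation argument along a countable algebra of sets then yields the pointwise distorted concavity of $r_t$, hence \eqref{eq:177} with $M=N$. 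The case $M>N$ follows because $U_M\in\MC N$ for every $M\ge N$: once $\CDS KN$ is established, a single rerun of $[\mathrm{CD1}]\Rightarrow[\mathrm{CD4}]$ with $U=U_M$ produces \eqref{eq:177} for all $M\ge N$. Without the uniqueness, non-branching and $L^\infty$-bounds supplied by [RS1--4] this disintegration would collapse, which is precisely why the \emph{strong} form of the curvature-dimension condition (rather than $\CDS KN$ tout court) appears symmetrically on both sides of the equivalence.
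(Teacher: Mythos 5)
Your core mechanism is essentially the paper's: localize the strong $\CDS KN$ inequality \eqref{eq:177} along the unique non-branching optimal geodesic plan, use Lemmas~\ref{lem:DiffInesigma}, \ref{lem:LocLipsigma}, \ref{lem:LocLipF} and \ref{le:0} to pass back and forth between distorted concavity of $t\mapsto\varrho_t^{-1/N}(\gamma_t)$ and the differential inequality with potential $(K/N)\sfd^2(\gamma_0,\gamma_1)$, and upgrade from $U_N$ to a general $U\in\MC N$ through $V(s)=s^NU(s^{-N})$ and the identity $-sV'(s)=NQ(s^{-N})$, with the identification $\bar v_s(\gamma_s)=\sfd(\gamma_0,\gamma_1)$ from Theorem~\ref{thm:MVD} converting the $\ppi$-integrals into the weighted action. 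This reproduces the paper's [CD1]$\Rightarrow$[CD2] and [CD3]$\Rightarrow$[CD4] computations and the reverse localization [CD2]$\Rightarrow$[CD1] (minor caveats: Definition~\ref{def:CDS} only covers endpoints with bounded support, so even in [CD1]$\Rightarrow$[CD4] a truncation step is unavoidable; you need the $L^1_{\rm loc}$ integrability of $s\mapsto\varrho_s^{-1/N}(\gamma_s)$ before invoking Lemma~\ref{lem:LocLipF} when $K\neq0$; and Lemma~\ref{le:0} applied directly to $t\mapsto\cU(\mu_t)$ requires continuity on the closed interval, which you have not secured -- the paper instead applies the Green-function lemma pointwise along geodesics, identifies the continuous extension with the true densities at each fixed time, and only then integrates in $\d\mu_0$).

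The genuine gaps are in the two approximation links you rely on. For [CD2]$\Rightarrow$[CD3] you truncate the endpoints and ``pass to the limit via Theorem~\ref{le:stabilitySigma} for the weighted action and the lower semicontinuity of the entropy''. But the weight in $\Mod t{Q_N}{\mu}\mm$ is $\sfg(s,t)Q_N(r)$ with $Q_N(r)=r^{-1/N}$ unbounded as $r\down0$, so the convergence statement \eqref{eq:163bis} (which requires a bounded continuous weight) is unavailable; only the lower semicontinuity \eqref{eq:163} survives, and this controls the action term in the wrong direction when $K<0$. Moreover the right-hand side needs $\limsup_k\cU_N(\mu^k_i)\le\cU_N(\mu_i)$, which lower semicontinuity does not give. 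The paper circumvents both problems by first localizing [CD2] to a pointwise inequality along transport rays (restriction to arbitrary Borel sets $A$), then using the approximating geodesics of Lemma~\ref{lem:ApproxGeod}, whose crucial property (3) -- eventual pointwise equality $c_k\varrho^k_t=\varrho_t$ -- allows a pointwise passage to the limit before any integration. Similarly, for [CD5]$\Rightarrow$[CD4] you regularize ``an arbitrary $U\in\MC N$'' by the pressures of \eqref{eq:207} ``which lie in $\MCreg N$ thanks to \eqref{eq:92}'': but \eqref{eq:83}--\eqref{eq:92} are specific to the dimensional pressure $P_N(r)=r^{1-1/N}$ (they use its concavity), so for a general $P\in\MC N$ the regularized pressure need not satisfy McCann's condition even approximately, and the limit in the action term with a possibly unbounded or degenerate weight $Q$ meets the same obstruction as above. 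The paper's route is [CD5]$\Rightarrow$[CD2] only: one regularizes just $U_N$ by $P_{1/k,k}$, exploits the uniform $L^\infty$ bounds and bounded supports of the interpolating densities so that uniform convergence of $U_k,P_k$ on compact ranges suffices, and then recovers [CD4] through the pointwise chain [CD2]$\Rightarrow$[CD3]$\Rightarrow$[CD4], where the passage from $U_N$ to general $U\in\MC N$ is done via $V$, not by approximating $U$. As written, your [CD2]$\Rightarrow$[CD3] and [CD5]$\Rightarrow$[CD4] steps therefore do not go through without substantial repair.
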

 
 \AAA
 \begin{remark}\label{rem:Levico}
If in Theorem~\ref{thm:Andrea_complete}, in all the items {\rm [CD3],[CD4],[CD5]}, the measures $\mu_{0},\mu_{1}$ are assumed to be with bounded support, then the equivalence with {\rm [CD1],[CD5]} holds with the same proof. Since in the last part of the paper we will work with measures which may not have bounded support,  it will be useful to have the stated form with the extension of {\rm [CD3],[CD4],[CD5]} to measures in $\ProbabilitiesTwo {X}$.
 \end{remark}
 \fn
 
 \begin{proof}
 The implications [CD4] $\Rightarrow$ [CD3] $\Rightarrow$ [CD2] and [CD4] $\Rightarrow$ [CD5] are trivial. \\ 
 We will prove [CD1] $\Rightarrow$ [CD2] $\Rightarrow$ [CD3] $\Rightarrow$ [CD4] $\Rightarrow$ [CD1]   and  [CD5] $\Rightarrow$ [CD2].
 \\
 
 [CD1] $\Rightarrow$ [CD2].  Let $\mu_0=\varrho_0\mm,\,\mu_1=\varrho_1 \mm\in \Probabilitiesac X\mm$ 
    with densities $\varrho_i$ $\mm$-essentially bounded with bounded support. By [RS1-4] there exists a unique geodesic $\mu_t=(\rm e_t)_{\sharp} \ppi$ 
    connecting $\mu_0$ to $\mu_1$, it is made of absolutely continuous measures with bounded densities and it is given by optimal maps:   $\varrho_t \mm=\mu_t=(T_t)_\sharp \mu_0$. Since $\ppi$ is concentrated on non-branching geodesics, we can apply \cite[Proposition 2.8 (iii)]{Bacher-Sturm10} to infer that for every $t\in (0,1)$ there exists a Borel subset $E_t\subset \supp \mu_0$ with $\mu_0(X\setminus E_t)=0$ such that 
  \begin{equation}\label{eq:BSLoc}
    \frak d_t(x)\ge \sigma_{K/N}^{(1-t)}(\sfd(x,T_1(x)))\,\frak d_0(x)+
    \sigma_{K/N}^{(t)}(\sfd(x,T_1(x)))\,\frak d_1(x), \quad \forall x \in E_t , 
  \end{equation}  
  where  $\frak d_t(x):=(\varrho_t^{-1/N}\circ T_t)(x)$.
  Moreover,  by \cite[Theorem 1.2]{RajalaDCDS}, the convexity property \eqref{eq:177} holds for all intermediate times (note that in this case one could argue directly by knowing that the $W_2$-geodesic is unique). It follows that, for any fixed countable $\Q$-vector space  $\mathfrak D \subset \R$, there exists a Borel subset $E_{\mathfrak D}\subset \supp \mu_0$ with $\mu_0(X\setminus E_{\mathfrak D})=0$ such that for every  $x \in E_{\mathfrak D}$, every  $r_0,r_1\in [0,1]\cap{\mathfrak D}$ and $t\in [0,1]\cap \Q$ it holds
\begin{equation}\label{eq:BSLocInt}
    \frak d_{r_t}(x)\ge \sigma_{K/N}^{(1-t)}(\sfd(T_{r_0}(x),T_{r_1}(x)))\,\frak d_{r_0}(x)+
    \sigma_{K/N}^{(t)}(\sfd(T_{r_0}(x),T_{r_1}(x)))\,\frak d_{r_1}(x),
  \end{equation}  
 where $r_t:=(1-t)r_0+tr_1$. For the moment simply  choose ${\mathfrak D}=\Q$ and, fixed some $n \in \N$, define
  $${\mathfrak F}:=\{m/n \, : \,  m\in \N, m\leq n\}.$$
  By Lemma~\ref{lem:LBrhot} below we have that the map ${\mathfrak F}\in r\mapsto {\frak d}_r(x)$ is uniformly bounded 
    in $[0,1]$ for  every $x \in E$, where $E\subset E_{\mathfrak D}$ satisfies $\mu_0(E)=1$. Observe also that  $\sfd(T_{r_0}(x),T_{r_1}(x))=(r_1-r_0) \sfd(x, T_1(x))$ is uniformly bounded since we are assuming $\mu_{i}$ to have bounded support, $i=0,\,1$.  Choosing $n\in \N$ large enough in the definition of ${\mathfrak F}$, by   Lemma~\ref{lem:LocLipsigma}(b) we infer that the map  $\Q \in r\mapsto {\frak d}_r(x)$ is uniformly bounded for every $x \in E$. 
But then part (c) of Lemma~\ref{lem:LocLipsigma} applied  to the function $[0,1]\cap \Q \ni r \mapsto  \frak d_{r}(x)\in \R^+$ gives that such a map is locally Lipschitz, so it admits a unique continuous extension $[0,1] \ni t\mapsto \bar{\frak d}_t(x) \in \R^+$. 
  
 Observing now that  
 $$\sigma_{K/N}^{(1-s)}(\sfd(T_{r_0}(x),T_{r_1}(x)))=\sigma_{K/N}^{(1-s)}((r_1-r_0)\sfd(x,T_{1}(x)))=\sigma_{\frac{K}{N}\sfd(x,T_{1}(x))^{2}}^{(1-s)}(r_1-r_0),$$ 
 Lemma \ref{lem:DiffInesigma} implies that  the continuous  map $t\mapsto   \bar{\frak d}_{t}(x)$ satisfies the differential inequality  
 \begin{equation}
    \label{eq:13'}
    \frac {\d^2}{\d t^2} \bar{\frak d}_t (x)\le -\frac KN  \sfd^2(x, T_1(x))  \; \bar{\frak d}_t (x) \quad \text{in }  \mathscr D'(0,1),  \text{ for every } x \in E  .
    \end{equation} 
  But then, Lemma \ref{le:0} gives  
  \begin{equation}
    \label{eq:12'}
   \bar{ \frak d}_{t}(x)\ge
    (1-t) \bar{ \frak d}_{0}(x)+
    t \bar{\frak d}_{1}(x)+
    \frac KN\int_{0}^{1} \bar{\frak d}_{s}(x) \sfd^2(x, T_1(x))  \,\fnd
    t{} s\,\d s\quad \forall t \in[0,1],\; \forall x \in E.
  \end{equation}
  We now claim that for every $t \in [0,1]$ it holds ${\frak d}_{t}= \bar{\frak d}_{t}$ $\mu_0$-a.e.  If it is not the case then there exists $\bar{t}\in[0,1]$ and a subset $F_{\bar{t}}\subset \supp \mu_0$ with $\mu_0(F_{\bar{t}})>0$ such that 
  \begin{equation}\label{eq:NonCont}
  \frak d_{\bar{t}}(x) \neq \bar{\frak d}_{\bar{t}}(x)=\lim_{n\to \infty} \frak d_{{t_n}}(x) \quad \forall x \in F_{\bar{t}}, \; t_n\in \Q\cap[0,1] \text{ with } t_n\to \bar{t}.  
  \end{equation}
  But choosing $\mathfrak D=\{q_1+q_2 \bar{t}\,:\, q_1,q_2\in \Q\}$, we get that  there exists a subset $E_{\mathfrak D}'
  \subset \supp \mu_0$ with $\mu_0(X\setminus E_{\mathfrak D}')=0$ such that the inequality \eqref{eq:BSLocInt}  holds for every $x \in E_{\mathfrak D}'$; therefore, repeating the arguments above,  Lemma \ref{lem:LocLipsigma} yields that   the function $[0,1]\cap \mathfrak D \ni r \mapsto  \frak d_{r}(x)\in \R^+$ is locally Lipschitz for every $x \in E_{\mathfrak D}'$. This is in contradiction with the discontinuity  \eqref{eq:NonCont} at $\bar{t}$, since  $\mu_0(F_{\bar{t}})>0$ and $\mu_0(X\setminus E_{\mathfrak D}')=0$.
  
   Integrating now \eqref{eq:12'} in $\d \mu_0(x)$, since by construction $\mu_0(X\setminus E)=0$ and ${\frak d}_{t}= \bar{\frak d}_{t}$ $\mu_0$-a.e, we get \eqref{eq:180N}. Indeed 
   $$ \int_E  \bar{\frak d}_{t} \, \d \mu_0= \int_E  \frak d_{t} \, \d \mu_0=\int_X \varrho_t^{-1/N}\circ T_t \, \d \mu_0 =\int_X\varrho_t^{1-{\frac{1}{N}}} \, \d \mm=1-\frac{1}{N}\cU_N(\mu_t);  $$
   and
   \begin{eqnarray}
   \int_{0}^{1} \left[\int_X \bar{\frak d}_{s}(x) \sfd^2(x, T_1(x)) \d \mu_0(x)\right]\fnd
    t{} s\,\d s &=&  \int_{0}^{1} \left[\int_X {\frak d}_{s}(x) \sfd^2(x, T_1(x)) \d \mu_0(x)\right]\fnd
    t{} s\,\d s  \nonumber \\
    &=& \int_{0}^{1} \left[\int_X\frak \varrho_{s}^{-\frac{1}{N}}(x) \, v^2(x)  \d \mu_s(x)\right]\fnd
    t{} s\d s \nonumber \\
    &=&   \int_{\tilde X}\sfg(s,t)Q_N(\varrho(x,s))v^2(x)\d\tilde\mu(x,s) \nonumber \\
    &=&\Mod t{Q_N}{\mu}\mm,  \label{eq:QNdsd2}
    \end{eqnarray}
    where we used the fact that, since the plan $\ppi\in
  \mathrm{GeoOpt}(\mu_0,\mu_1)$ is concentrated on constant speed geodesics and recalling \eqref{eq:27}, the minimal 2-velocity $v$ is constant in time and given by $v(x)=\sfd(x,T_1(x))$. 
  \AAA In particular,  \eqref{eq:QNdsd2} implies that whenever $\Mod t{Q_N}{\mu}\mm$ is finite then the function  $s\mapsto \bar{\frak d}_{s}(x) \sfd^2(x, T_1(x)) \fnd
    t{} s$ is integrable, and therefore $s\mapsto \bar{\frak d}_{s}(x)\in L^{1}_{loc}((0,1))$,   for $\mu_{0}$-a.e. $x \in X$. \fn
 \\
  
 [CD2] $\Rightarrow$ [CD3].  Let us start by assuming that  $\mu_0=\varrho_0\mm,\,\mu_1=\varrho_1 \mm$, $\varrho_t \mm=\mu_t=(T_t)_\sharp \mu_0=(\rm e_t)_{\sharp} \ppi$ are as in the above implication, i.e. $\varrho_i$, $i=0,1$, are $\mm$-essentially bounded with bounded support, so that by hypothesis   we know that  $\mu_t$ satisfies \eqref{eq:180N}. For any Borel subset $A\subset \supp \mu_0$ with $\mu_0(A)>0$, consider the localized and normalized measure $\mu_0^A:=\frac{1}{\mu_0(A)} \; \mu_0 \llcorner A= \frac{1}{\mu_0(A)} \chi_A  \; \mu_0$ and its push forwards $\mu_t^A:=(T_t)_\sharp \mu_0^A$. By cyclical monotonicity of the measure-theoretic support, it is  well known that  $ \frac{1}{\mu_0(A)} (\chi_A \circ \rm e_0) \ppi \in  \mathrm{GeoOpt}(\mu_0^A,\mu_1^A)$ so that $\mu^A_t$ is the $W_2$-geodesic from  $\mu^A_0$ to $\mu^A_1$ {\color{black} with essentially bounded densities $\varrho_t^A$
 satisfying $\varrho^A_t\circ T_t=\chi_A\varrho_t\circ T_t$.}
    Applying {\color{black} \eqref{eq:172}} and \eqref{eq:180N} to the geodesic $\mu_t^A$ gives the localized convexity inequality
     \begin{equation} \nonumber         \int_A{ \frak d}_{t} \, \d \mu_0 \geq
    (1-t) \int_A \frak d_{0}  \, \d \mu_0 +
    t  \int_A \frak d_{1}  \, \d \mu_0 +
    \frac KN \int_A \int_{0}^{1}  {\frak d}_{s}(x) \sfd^2(x, T_1(x))   \,\fnd
    t{} s\,\d s \, \d \mu_0(x),
      \end{equation}
for every  $t \in[0,1]$, where  $\frak d_t(x):=(\varrho_t^{-1/N}\circ T_t)(x)$ as before.  The arbitrariness of the Borel set $A$ implies that for
all $t\in [0,1]$ one has 
\begin{equation}
    \label{eq:LocSpace}
   {\frak d}_{t}(x) \geq
    (1-t) {\frak d}_{0} (x) + t  \frak d_{1}(x)  + \frac KN   \int_{0}^{1}  {\frak d}_{s}(x) \sfd^2(x, T_{1}(x))   \,\fnd
    t{} s\,\d s, \quad\text{for $\mu_0$-a.e. $x$.}
      \end{equation}

Now let instead $\mu_i=\varrho_i\mm \in  \Probabilitiesac X\mm \cap  \ProbabilitiesTwo X, \, i=0,1$, and     $\varrho_t \mm=\mu_t=(T_t)_\sharp \mu_0=(\rm e_t)_{\sharp} \ppi$ be the unique $W_2$-geodesic joining them. Consider the approximating  geodesic $\mu^k_t=\varrho^k_t \mm$ given by Lemma \ref{lem:ApproxGeod} below. Since $\varrho^k_i$ are $\mm$-essentially bounded with bounded support,  $\eqref{eq:LocSpace}$ holds for $\mu^k_t$  by assumption. 
It follows that  there exists $E_{k,t}\subset \supp \mu_0^k \subset \supp \mu_0$ with  $\mu^k_0(X\setminus E_{k,t})=0$ such that 
\begin{equation}
    \label{eq:LocSpacek}
   {\frak d}_{t}^k(x) \geq
    (1-t) {\frak d}_{0}^k  (x) + t  \frak d_{1}^k (x)  + \frac KN   \int_{0}^{1}  {\frak d}_{s}^k(x) \sfd^2(x, T_{1}(x))   \,\fnd
    t{} s\,\d s ,
 \end{equation}
for every  $x \in E_{k,t}$, where  ${\frak d}_t^k (x)={(\varrho^k_t}\circ T_t)^{-1/N}(x)$. Without loss of generality we may also assume $E_{k,t}\subset \{{\frak d}_t^k>0\}$.  Defining $E_t:=\bigcap_{k \in \N} E_{k,t}$, by using 
Lemma~\ref{lem:ApproxGeod}(4), we get that $E_t \subset \supp \mu_0$ and $\mu_0(X\setminus E_t)=0$. Moreover, observe that  \eqref{eq:LocSpacek} is still true for the renormalized measures $c_k \, \varrho_t^k$, since the constants just simplify from both sides thanks to the homogeneity of the entropy.  But then,  
Lemma~\ref{lem:ApproxGeod}(3) implies that for $\mu_0$-a.e. $x \in E$ one has ${\frak d}_{t}^k(x)={\frak d}_{t}(x)$, provided  $k$ is large enough. Passing to the limit for $k\to \infty$, we conclude that    \eqref{eq:LocSpace} holds and the thesis follows by integration in $\d\mu_0(x)$ as in the implication  [CD1] $\Rightarrow$ [CD2].
\\

[CD3] $\Rightarrow$ [CD4]    Let $\mu_i=\varrho_i\mm \in  \Probabilitiesac X\mm \cap  \ProbabilitiesTwo X$, $i=0,1$, and     $\varrho_t \mm=\mu_t=(T_t)_\sharp \mu_0=(\rm e_t)_{\sharp} \ppi$ be the unique $W_2$-geodesic joining them. Observing that the restriction to a subinterval $[r_0,r_1]\subset[0,1]$ of a geodesic is still a geodesic, the localization argument of the implication [CD2] $\Rightarrow$ [CD3] ensures that 
 for every $r_0,r_1,t \in [0,1]$ one has 
\begin{equation}
    \label{eq:CD411}
   {\frak d}_{r_t}(x) \geq
    (1-t) {\frak d}_{r_0} (x) + t  \frak d_{r_{1}}(x)  + \frac KN (r_1-r_0)^2  \int_{0}^{1}  {\frak d}_{r_s}(x) \sfd^2(x, T_{1}(x))   \,\fnd
    t{} s\,\d s \quad \mu_0\text{-a.e. }x,
      \end{equation}
where $r_t:=(1-t)r_0+tr_1$ as before. \AAA Note first of all that if $K=0$ the proof is simpler since the non-linear term in all the convexity inequalities just disappear. If $K\neq 0$ we first claim that for  $\mu_{0}$-a.e. $x \in X$ the function $s\mapsto \bar{\frak d}_{s}(x)$ belongs to $L^{1}_{loc}((0,1))$.    By Lemma \ref{lem:LBrhot}, we know that for $\mu_{0}$-a.e. $x\in X$ it holds $ \frak d_{0}(x),  \frak d_{1/2}(x),  \frak d_{1}(x) \in (0,\infty)$.  Specializing \eqref{eq:CD411}  to $r_{0}=0, r_{1}=1, t=1/2$ we get
$$
 \frac {K \sfd^2(x, T_{1}(x))}{2N} \left(  \int_{0}^{1/2} s  {\frak d}_{s}(x)   \,\d s+  \int_{1/2}^{1}  (1-s)  {\frak d}_{s}(x)  \,\d s \right) \leq {\frak d}_{1/2}(x) < \infty, \quad  \mu_0\text{-a.e. }x.
$$
In particular, if $K>0$, we get that $s\mapsto {\frak d}_{s}(x)$ belongs to $L^{1}_{loc}((0,1))$ for  $\mu_0\text{-a.e. }x$.  Also, if $K<0$, we may assume that $\Mod t{Q_N}{\mu}\mm<\infty$ otherwise the thesis of [CD4] trivializes, and then by \eqref{eq:QNdsd2} we get that  $s\mapsto \bar{\frak d}_{s}(x)\in L^{1}_{loc}((0,1))$. 
\fn
From \eqref{eq:CD411} it follows that  for any fixed countable $\Q$-vector space  $\mathfrak D \subset \R$, there exists a Borel subset $E_{\mathfrak D}\subset \supp \mu_0$ with $\mu_0(X\setminus E_{\mathfrak D})=0$ such that \eqref{eq:CD411} holds   for every  $x \in E_{\mathfrak D}$, every  $r_0,r_1\in [0,1]\cap{\mathfrak D}$ and $t\in [0,1]\cap \Q$. Since $ s\mapsto {\frak d}_{s}(x)$ in an element of $L^{1}_{loc}((0,1))$ for  $\mu_0\text{-a.e. }x$, choosing simply $\mathfrak D=\Q$, for every fixed $x\in E:=E_{\Q}$,  we can apply Lemma \ref{lem:LocLipF} to the function $[0,1]\cap \Q \ni r \mapsto  \frak d_{r}(x)\in \R^+$  and infer that such a map is locally Lipschitz; thus it admits a unique continuous extension $[0,1] \ni t\mapsto \bar{\frak d}_t(x) \in \R^+$ satisfying \eqref{eq:12'}.  Lemma \ref{le:0} gives then 
\begin{equation}\label{eq:13''}
\frac {\d^2}{\d t^2} \bar{\frak d}_t (x)\le -\frac KN  \sfd^2(x, T_1(x))  \; \bar{\frak d}_t (x) \quad \text{in }  \mathscr D'(0,1),  \text{ for every } x \in E \; .
\end{equation}
Given  now  $U \in  \MC N$, recalling \eqref{defprop:V} and taking \eqref{eq:13''} into account, we get the following chain of inequalities in distributional sense 
\begin{eqnarray}
\frac{\d^2}{\d t^2} V(\bar{\frak d}_t (x))&=& V''(\bar{\frak d}_t (x)) \; \left( \frac{\d}{\d t}  \bar{\frak d}_t (x)\right)^2+ V'(\bar{\frak d}_t (x)) \; \frac{\d^2}{\d t^2} \bar{\frak d}_t (x)\ \nonumber \\
                                                            &\geq&  -\frac KN   \; \bar{\frak d}_t (x) \; \sfd^2(x, T_1(x)) \;   V'(\bar{\frak d}_t (x))  \quad \text{in }  \mathscr D'(0,1),  \text{ for every } x \in E. \nonumber
\end{eqnarray} 
Applying again Lemma \ref{le:0}, this time with $u(t):= V(\bar{\frak d}_t (x))$,  we obtain
\begin{equation}\label{eq:VWeigConv}
V(\bar{\frak d}_t (x))\leq (1-t) V(\bar{\frak d}_0 (x)) + t V(\bar{\frak d}_1 (x))+  \frac KN  \int_0^1  \bar{\frak d}_s (x) \; \sfd^2(x, T_1(x)) \;   V'(\bar{\frak d}_s (x))   \,\fnd t{} s\,\d s,
\end{equation}
for every $x \in E$.  With the same argument as in the proof of [CD1] $\Rightarrow$ [CD2], we have that for every $t \in [0,1]$ it holds $\bar{\frak d}_t (x)={\frak d}_t (x)=(\varrho_t^{-1/N}\circ T_t)(x)$ for $\mu_0$-a.e. $x$. The desired inequality \eqref{eq:180} follows then by integrating  \eqref{eq:VWeigConv} in $\d \mu_0(x)$, since by construction $\mu_0(X\setminus E)=0$.  Indeed, recalling that $V({\frak d}_t (x))=V(\varrho_t^{-1/N}\circ T_t(x))=\frac {U(\varrho_t \circ T_t(x) )}{\varrho_t \circ T_t(x)}$, we have
\begin{eqnarray}
\int_E V(\bar{\frak d}_t)\, \d \mu_0&=& \int_X V({\frak d}_t)\, \d \mu_0=\int_X \frac {U(\varrho_t \circ T_t)}{\varrho_t \circ T_t} \, \d \mu_0= \int_X \frac {U(\varrho_t )}{\varrho_t} \, \d ((T_t)_\sharp \mu_0) \nonumber\\
                                                      &=&\int_X \frac {U(\varrho_t )}{\varrho_t} \, \d (\varrho_t \mm)= \int_X U(\varrho_t )  \, \d \mm = {\cal U}(\mu_t)\quad . \nonumber
\end{eqnarray}
For the action term in \eqref{eq:180} observe that, since the plan $\ppi\in
  \mathrm{GeoOpt}(\mu_0,\mu_1)$ is concentrated on constant speed geodesics and recalling \eqref{eq:27}, the minimal 2-velocity $v$ is constant in time and given by $v(x)=\sfd(x,T_1(x))$.  Therefore, noting that $Q(r)=-\frac{1}{N}r^{-\frac{1}{N}} V'(r^{-\frac{1}{N}})$ for   $\Leb{1}$-a.e. $r\in (0,1)$, we obtain
\begin{eqnarray}
&& \frac KN \int_E \left[ \int_0^1\bar{\frak d}_s (x) \; \sfd^2(x, T_1(x)) \;   V'(\bar{\frak d}_s (x))   \,\fnd t{} s\,\d s  \right] \d \mu_0(x) \nonumber\\
&& \quad \quad \quad =  K \int_0^1 \left[ \int_X   v^2(x) \frac{1}{N}  {\frak d}_s (x)  \;   V'({\frak d}_s (x)) \, \d \mu_0(x)  \right]\,\fnd t{} s\,\d s \nonumber \\
 &&\quad \quad \quad = -K   \int_0^1 \left[ \int_X   v^2(x)  Q(\varrho_s(x))\, \d \mu_s(x)  \right]\,\fnd t{} s\,\d s = -K\Mod t{Q}{\mu}\mm . \nonumber 
\end{eqnarray}
\\
 
 [CD4] $\Rightarrow$ [CD1].  Let $\mu_0=\varrho_0\mm,\,\mu_1=\varrho_1 \mm\in \Probabilitiesac X\mm$ 
    with densities $\varrho_i$ having bounded support, so in particular $\mu_i \in \ProbabilitiesTwo X$. By [RS1-4] there exists a unique $W_2$-geodesic $\mu_t=(\rm e_t)_{\sharp} \ppi$ 
    from $\mu_0$ to $\mu_1$, it is made of absolutely continuous measures and it is given by optimal maps:   $\varrho_t \mm=\mu_t=(T_t)_\sharp \mu_0$.  Choosing $U=U_N$,  we get that $\mu_t$ satisfies \eqref{eq:180N} by assumption. Localizing in space and time as above, we obtain \eqref{eq:CD411}, namely 
    \begin{equation}
   \nonumber
   {\frak d}_{r_t}(x) \geq
    (1-t) {\frak d}_{r_0} (x) + t  \frak d_{1}(x)  + \frac KN (r_1-r_0)^2  \int_{0}^{1}  {\frak d}_{r_s}(x) \sfd^2(x, T_1(x))   \,\fnd
    t{} s\,\d s \quad \mu_0\text{-a.e. }x.
      \end{equation}
It follows that, for any fixed countable $\Q$-vector space  $\mathfrak D \subset \R$, there exists a Borel subset $E_{\mathfrak D}\subset \supp \mu_0$ with $\mu_0(X\setminus E_{\mathfrak D})=0$ such that \eqref{eq:CD411} holds   for every  $x \in E_{\mathfrak D}$, every  $r_0,r_1\in [0,1]\cap{\mathfrak D}$ and $t\in [0,1]\cap \Q$. \AAA Since by assumption  $\mu_{0}$ and $\mu_{1}$ are bounded with bounded supports, it follows that  $\Mod t{Q_N}{\mu}\mm$ is finite; thus, from  \eqref{eq:QNdsd2}, we get that the map  $s\mapsto \bar{\frak d}_{s}(x)$ is an element of $L^{1}_{loc}((0,1))$ for $\mu_{0}$-a.e. $x \in X$\fn. Therefore  choosing $\mathfrak D=\Q$, for every fixed $x\in E:=E_{\Q}$,  we can apply Lemma \ref{lem:LocLipF} to the function $[0,1]\cap \Q \ni r \mapsto  \frak d_{r}(x)\in \R^+$ and infer that such a map is locally Lipschitz, so it admits a unique continuous extension $[0,1] \ni t\mapsto \bar{\frak d}_t(x) \in \R^+$ satisfying \eqref{eq:12'}.  
Lemma~\ref{le:0} gives then \eqref{eq:13'} and Lemma~\ref{lem:DiffInesigma} yields
  \begin{equation}\label{eq:BSLocIntf}
    \bar{\frak d}_{t}(x)\ge \sigma_{K/N}^{(1-t)}(\sfd(x,T_1 (x)))\, \bar{\frak d}_{0}(x)+
    \sigma_{K/N}^{(t)}(\sfd(x,T_{1}(x)))\,\bar{\frak d}_{1}(x), \forall x 	\in E,\forall t \in \Q\cap[0,1]. 
  \end{equation}  
  Arguing as in the implication [CD1] $\Rightarrow$ [CD2], we get that   for every $t \in [0,1]$ one has $\bar{\frak d}_{t}={\frak d}_{t}$, $\mu_0$-a.e. in $X$. Integrating \eqref{eq:BSLocIntf} in $\d \mu_0(x)$ gives \eqref{eq:177} for ${\cal U}_N$; since for every $M>N$ one has $U_M\in  \MC N$, the argument for any other $M> N$ is completely analogous: just replace $N$ with $M$ in the formulas above.
  \\  
    
 [CD5] $\Rightarrow$ [CD2].   Let $\mu_i=\varrho_i\mm \in  \Probabilitiesac X\mm$ with $\varrho_i$ $\mm$-essentially bounded having bounded supports, $i=0,1$, and     $\varrho_t \mm=\mu_t=(T_t)_\sharp \mu_0=(\rm e_t)_{\sharp} \ppi$ be the unique $W_2$-geodesic joining them.  Under our working assumptions we have that  $\varrho_t$, $t\in [0,1]$, are uniformly  $\mm$-essentially bounded with uniformly bounded supports. Given the $N$-dimensional entropy  $U(r):=Nr(1-r^{-1/N})$ with associated pressure $P(r):=r^{1-1/N}$, for every $k \in \N$ call $P_k$ the regularized pressure $P_k:=P_{1/k, k}$  where $P_{1/k, k}$ was defined in \eqref{eq:207}. Called $U_k$ the regularized and normalized entropy associated to $P_k$ as in \eqref{eq:47}, observe that
 \begin{equation}\label{eq:UnifConv}
 P_k\to P \text{ and }   U_k\to U  \text{ uniformly on } [0,R]  \text{ for every } R\in \R^+. 
 \end{equation}
 Since  $U_k\in \MCreg N$, by assumption for every $k \in \N$ we have
 \begin{eqnarray}
 \int_{\supp \mu_t} U_k(\varrho_t) \, \d \mm &\leq& (1-t) \int_{\supp \mu_0} U_k(\varrho_0) \, \d \mm  + t \,  \int_{\supp \mu_1} U_k(\varrho_1) \, \d \mm \nonumber \\
                                                                             && - K \int_0^1 \left[\int_{\supp \mu_s} P_k(\varrho_s)\, \sfd^2(x,T_1(x)) \, \d \mm(x) \right] \,\fnd t{} s\,\d s  \; ,  \label{eq:CD52}
\end{eqnarray} 
where we used that $Q(r)=P(r)/r$ by definition. Recalling that $\varrho_t$ are uniformly $\mm$-essentially bounded with uniformly bounded supports, we infer that $\mm\left( \bigcup_t \supp(\mu_t)\right)<\infty$ and  the uniform convergence \eqref{eq:UnifConv} allows to pass to the limit in \eqref{eq:CD52}, obtaining \eqref{eq:180N}.
 \end{proof} 
 
 \begin{lemma}\label{lem:ApproxGeod}
 Let $(X,\sfd,\mm)$ be a strong $\CD K \infty$ space, so that {\rm [RS1-4]} hold. Consider $\mu_i=\varrho_i \mm \in \Probabilitiesac X\mm \cap \ProbabilitiesTwo X$, $i=0,1$, and let $\ppi \in \rm{GeoOpt}(\mu_0, \mu_1)$ be the  plan representing the  $W_2$-geodesic  $\varrho_t \mm=\mu_t=({\rm e}_t)_\sharp \ppi=(T_t)_\sharp (\mu_0)$ from $\mu_0$ to $\mu_1$. 
 
 Then  there exist  sequences of measures $\mu_0^k=\varrho^k_0 \mm \in   \Probabilitiesac X\mm$ and  constants $c_k \uparrow 1$ such that the curve  $\varrho^k_t \mm:= \mu_t^k:=(T_t)_\sharp (\mu_0^k)$  is the $W_2$-geodesic from $\mu_0^k$ to $\mu_1^k$ and it satisfies the following:
 \begin{enumerate}
 \item[(1)] $\varrho^k_i $ are $\mm$-essentially bounded and with bounded support, $i=0,1$;
 \item[(2)]  $c_k \, \varrho^k_t \leq \varrho_t$  $\mm$-a.e.in $X$ for every $t \in [0,1]$;
 \item[(3)] for every $t \in [0,1]$ it holds  $c_k \, \rho_t^k(x)= \rho_t(x)$ for $\mm$-a.e.  $x \in X$, for $k$ large enough possibly depending on $x$;
 \item[(4)]  $\mu^k_0=c_k^{-1} \sigma_k\mu_0$ with $\sigma_k\uparrow 1$,  $\mu_0$-a.e. on $X$.
 \end{enumerate}
 \end{lemma}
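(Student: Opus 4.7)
The idea is to produce $\mu_0^k$ by restricting $\mu_0$ to an increasing sequence of Borel sets $A_k$ that exhaust $X$ up to a $\mu_0$-null set and on which both endpoint densities and the distance to a fixed basepoint $x_0\in X$ are $\le k$. Explicitly I would take
$$A_k:=\bigl\{x\in X:\varrho_0(x)\le k,\ \varrho_1(T_1(x))\le k,\ \sfd(x_0,x)\le k,\ \sfd(x_0,T_1(x))\le k\bigr\},$$
set $c_k:=\mu_0(A_k)$, $\sigma_k:=\chi_{A_k}$, $\mu_0^k:=c_k^{-1}\sigma_k\mu_0$, and define $\ppi^k:=c_k^{-1}(\chi_{A_k}\circ\rme_0)\ppi$. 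A standard cyclical monotonicity argument shows $\ppi^k\in\mathrm{GeoOpt}(\mu_0^k,\mu_1^k)$, and by {\rm [RS1]} the associated curve $\mu_t^k:=(T_t)_\sharp\mu_0^k$ is the unique $W_2$-geodesic joining $\mu_0^k$ to $\mu_1^k$. Since $\mu_0,\mu_1\in\ProbabilitiesTwo X$, the set $A_\infty:=\bigcup_k A_k$ has full $\mu_0$-measure, hence $c_k\uparrow 1$ and $\sigma_k\uparrow 1$ $\mu_0$-a.e., which gives (4).

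Properties (1) and (2) are essentially bookkeeping. For (1), $\varrho_0^k\le c_k^{-1}k$ is supported in $A_k\subseteq\overline{B}_k(x_0)$; from $\mu_1^k\le c_k^{-1}\mu_1$ and $\supp\mu_1^k\subseteq T_1(A_k)\subseteq\overline{B}_k(x_0)\cap\{\varrho_1\le k\}$ one similarly obtains $\varrho_1^k\le c_k^{-1}k$ with bounded support. For (2), $\chi_{A_k}\le 1$ gives $c_k\mu_t^k=(T_t)_\sharp(\chi_{A_k}\mu_0)\le\mu_t$ as measures, and {\rm [RS3]} makes both measures absolutely continuous with respect to $\mm$, so the inequality transfers to the densities $\mm$-a.e.

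The main obstacle is (3), where I need to exploit the non-branching structure given by {\rm [RS2]}. In a strong $\CD K\infty$ space the unique optimal plan $\ppi$ is induced by a Borel map $F:X\to\rmC([0,1];X)$ from $\mu_0$, and the combination of {\rm [RS1]}--{\rm [RS2]} with the essentially non-branching character of such spaces yields that the intermediate transport map $G_t:=\rme_t\circ F$ is $\mu_0$-essentially injective for every $t\in(0,1)$. Granted this,
$$(T_t)_\sharp(\chi_{A_k}\mu_0)=\chi_{A_k}(G_t^{-1}(\cdot))\,\varrho_t\,\mm,$$
so $c_k\varrho_t^k(x)=\chi_{A_k}(G_t^{-1}(x))\,\varrho_t(x)$ for $\mm$-a.e.\ $x$. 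Since $G_t^{-1}$ pushes $\mu_t$ forward to $\mu_0$ and $A_k\uparrow A_\infty$, for $\mu_t$-a.e.\ $x$ one has $G_t^{-1}(x)\in A_k$ for all $k\ge k_0(x)$, which yields $c_k\varrho_t^k(x)=\varrho_t(x)$; on $\{\varrho_t=0\}$ the identity is trivial, and the cases $t\in\{0,1\}$ are immediate since $G_0=\mathrm{id}$ and $G_1=T_1$.
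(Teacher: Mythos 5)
Your construction is correct, and it is a genuine variant of the paper's: the paper does not restrict to a set but truncates the initial density, $\bar\varrho_0^k=\chi_{B_k}\min\{k,\varrho_0\}$, pushes it forward with $T_1$, truncates again on the target side and pulls back through $T_1^{-1}$, so its weight $\sigma_k$ takes fractional values; you instead use the indicator of the sublevel set $A_k$ built from $\varrho_0$, $\varrho_1\circ T_1$ and two balls, and realize the interpolation as the restricted plan $c_k^{-1}(\chi_{A_k}\circ\rme_0)\ppi$, which also makes the geodesic property of $\mu^k_t$ explicit (the paper just asserts it). What both routes buy is the same, and both hinge on the same input: essential injectivity of the interpolation maps — and note you need it for every $t\in(0,1]$, since your case $t=1$ is not quite ``immediate'': the identity $c_k\varrho_1^k=(\chi_{A_k}\circ T_1^{-1})\varrho_1$ already uses that $T_1$ is $\mu_0$-essentially injective. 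The cleanest justification (and the one matching how the paper uses this fact) is not the interior non-crossing/non-branching argument, which says nothing at $t=1$, but an application of [RS1]--[RS2] to the pair $(\mu_t,\mu_0)$, admissible by [RS3]: the time-reversed restricted plan is the unique optimal geodesic plan between $\mu_t$ and $\mu_0$ and is induced by a map $S_t$ from $\mu_t$, whence $S_t\circ T_t=\mathrm{id}$ $\mu_0$-a.e.; with this, your verification of (3) (eventual capture of $S_t(x)$ by $A_k$, using $(S_t)_\sharp\mu_t=\mu_0$ and $\mu_0\bigl(\bigcup_k A_k\bigr)=1$) supplies exactly the step the paper leaves implicit when it concludes ``we get the thesis''. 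Your checks of (1), (2), (4) are fine; for the $L^\infty$ bound on $\varrho_1^k$ it is slightly safer to argue via preimages, $\mu_1^k(\{\varrho_1>k\})=c_k^{-1}\mu_0\bigl(A_k\cap T_1^{-1}(\{\varrho_1>k\})\bigr)=0$, rather than through the image $T_1(A_k)$, which need not be Borel.
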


 \begin{proof}
 Fix a base point $\bar{x}\in \supp \mu_0$, call $B_k:=B_k(\bar{x})$ the  ball of center $\bar{x}$ and radius $k\in \N$. For every $k \in \N$ consider first the densities $\bar{\varrho}_0^k:= \chi_{B_k} \, \min\{k, \varrho_0\}$ and the push forward measures $\bar{\mu}^k_1:=(T_1)_\sharp (\bar{\varrho}_0^k \mm)$.  Since clearly $\bar{\varrho}_0^k\leq \varrho_0$ and  $\bar{\varrho}_0^k= \varrho_0$ on $\{x \in B_k\,:\, \varrho_0(x)\leq k\}$, and since by assumption  $T_1$ is 
 {$\mu_0$}-essentially injective, we have 
 \begin{equation}\label{eq:bark1}
 \bar{\varrho}^k_1 \mm:= \bar{\mu}^k_1 \leq \mu_1 \quad \text{ and } \quad  \bar{\varrho}^k_1=\varrho_1 \quad \text{ on } \;   T_1( \{x \in B_k\,:\, \varrho_0(x)\leq k\} ) .
 \end{equation}
 Consider now $\tilde{\varrho}^k_1:=  \chi_{B_k} \, \min\{k, \bar{\varrho}_1^k\}$. Using again the  $\mu_0$-essential injectivity of $T_1$ and observing that $\tilde{\varrho}^k_1 \leq \bar{\varrho}^k_1 \leq {\varrho}_1$, we can define $\tilde{\mu}_0^k:=(T_1^{-1})_\sharp (\tilde{\varrho}^k_1 \mm)$. By construction we have
  \begin{equation}\label{eq:tildek0}
 \tilde{\varrho}^k_0 \mm:= \tilde{\mu}^k_0 \leq \bar{\varrho}_0^k \mm \leq \mu_0 \; \text{ and } \;  \tilde{\varrho}^k_0=\varrho_0 \text{ on }  T_1^{-1}\bigl(T_1\bigl( B_k\cap\bigl\{\max\{\varrho_0,\bar\varrho_1^k\}\leq k\bigr\} \bigr)\bigr);
 \end{equation}
  in particular we have that $ \tilde{\varrho}^k_i \leq k$ and $\supp \tilde{\varrho}^k_i \subset B_k$, $i=0,1$. Moreover, for $\mm$-a.e. $x$ we have  $\tilde{\varrho}^k_0(x)=\varrho_0(x)$ for $k$ large enough (possibly depending on $x$).
  \\ Setting $c_k:= \tilde{\mu}^k_0(X)$, $\mu_0^k:= c_k^{-1}  \tilde{\mu}^k_0$ and ${\mu}^k_t:=(T_t)_\sharp (\mu^k_0)$ we get the thesis. 
 \end{proof}
 
 \begin{lemma}\label{lem:LBrhot}
 Let $\mu_0\in \Probabilities X$ and let $T: \supp \mu_0 \to X$ be a  $\mu_0$-essentially injective map such that $\varrho_1 \mm:=\mu_1:=T_\sharp (\mu_0) \in \Probabilitiesac X\mm$. Then 
 \begin{equation}\label{eq:rho1=0}
 \mu_0(\{x \in \supp \mu_0\, : \, \varrho_1 (T (x))=0\})=0 .
 \end{equation}
In particular, given $\mu_t=\varrho_t\mm=(T_t)_\sharp (\mu_0)$  a $W_2$-geodesic as in [RS1-3], for any  finite subset ${\mathfrak F}\subset [0,1]$ we have
\begin{equation}\label{eq:LBrhor}
\mu_0\bigl(\{x \in \supp \mu_0\, : \, \min_{r\in{\mathfrak F}}\varrho_r (T_r(x))>0\}\bigr)=1 .
\end{equation}
 \end{lemma}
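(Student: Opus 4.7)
The plan is to reduce the second statement to the first by applying it once for each $r \in \mathfrak F$, and to prove the first by nothing more than the defining identity of the push-forward, so the whole argument is essentially a measure-theoretic triviality once the right preimage is identified.

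First I would establish \eqref{eq:rho1=0} by rewriting the set in question as a preimage. Setting $N := \{y \in X : \varrho_1(y) = 0\}$, the set $A := \{x \in \supp \mu_0 : \varrho_1(T(x)) = 0\}$ is exactly $T^{-1}(N)$, and the defining property of the push-forward together with $\mu_1 = \varrho_1\mm$ gives
$$
\mu_0(A) = \mu_0(T^{-1}(N)) = T_\sharp \mu_0(N) = \mu_1(N) = \int_N \varrho_1\,\d\mm = 0.
$$
It is worth noticing that the $\mu_0$-essential injectivity hypothesis plays no substantial role in this computation; it merely guarantees that $T$ is a genuine map (rather than a correspondence), so that the composition $\varrho_1 \circ T$ is well defined as a $\mu_0$-measurable function. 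The only subtle point is that $\varrho_1$ is defined only up to $\mm$-null sets, but this is harmless: two representatives differ on an $\mm$-null set $E$, and the composition changes only on $T^{-1}(E)$, which has $\mu_0$-measure $\mu_1(E) = 0$.

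For \eqref{eq:LBrhor} I would first verify that the hypotheses of the first part apply to each individual $T_r$, $r \in [0,1]$. Indeed, from [RS1-3] we have $\mu_r = (T_r)_\sharp\mu_0 = \varrho_r\mm$ absolutely continuous, and the non-branching structure of the unique optimal geodesic plan of [RS1-2] forces each $T_r$ to be $\mu_0$-essentially injective (two initial points with the same image at time $r$ would produce a branching in the supporting geodesics). Applying \eqref{eq:rho1=0} with $T$ replaced by $T_r$ and $\varrho_1$ by $\varrho_r$ then yields
$$
\mu_0\bigl(\{x \in \supp\mu_0 : \varrho_r(T_r(x)) = 0\}\bigr) = 0 \qquad \text{for every } r \in [0,1].
$$

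Finally I would conclude by finite subadditivity: since $\mathfrak F$ is finite,
$$
\mu_0\Bigl(\bigcup_{r \in \mathfrak F}\{x : \varrho_r(T_r(x)) = 0\}\Bigr) \le \sum_{r \in \mathfrak F} \mu_0\bigl(\{x : \varrho_r(T_r(x)) = 0\}\bigr) = 0,
$$
and taking complements inside $\supp\mu_0$ produces \eqref{eq:LBrhor}. There is no real obstacle in this proof; the only bookkeeping worth stating explicitly is the measurability of the compositions $\varrho_r \circ T_r$, which is standard once Borel representatives of $\varrho_r$ are fixed.
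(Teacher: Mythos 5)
Your proof is correct, and its first half takes a genuinely different (and slightly cleaner) route than the paper's. For \eqref{eq:rho1=0} the paper works with the forward image: it uses the $\mu_0$-essential injectivity to write $\mu_0=(T^{-1})_\sharp\mu_1$ and computes $\mu_0(A)=\mu_0(T^{-1}(T(A)))=\mu_1(T(A))=\int_{T(A)}\varrho_1\,\d\mm=0$, which in particular involves the set $T(A)$ (an image of a Borel set, hence a priori only analytic). You instead write $A=T^{-1}(\{\varrho_1=0\})$ and use nothing but the push-forward identity, $\mu_0(A)=\mu_1(\{\varrho_1=0\})=\int_{\{\varrho_1=0\}}\varrho_1\,\d\mm=0$. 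This avoids both the injectivity hypothesis and any measurability discussion for $T(A)$, and your observation that essential injectivity is not needed for \eqref{eq:rho1=0} is accurate (it is a hypothesis inherited from the setting in which the lemma is applied, and it is what the paper's image-based computation leans on). Your handling of the ambiguity of the representative of $\varrho_1$ is also a correct, if optional, remark. For \eqref{eq:LBrhor} your argument coincides with the paper's: apply the first part to each $T_r$, $r\in\mathfrak F$, and use finiteness of $\mathfrak F$ (the paper phrases the union of null sets through the exhaustion by the sets $C_n$, but it is the same finite-subadditivity argument). Your aside that each $T_r$ is $\mu_0$-essentially injective by non-branching is the standard justification the paper also uses implicitly when invoking its first part; in your version it is actually dispensable, since your proof of \eqref{eq:rho1=0} never uses injectivity, so only $\mu_r=(T_r)_\sharp\mu_0\ll\mm$ (which is [RS3]) is required.
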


\begin{proof}
Let us consider the set $A:=\{x \in \supp \mu_0\, : \, \varrho_1 (T (x))=0\}$.  Since by assumption $\mu_1=T_\sharp (\mu_0)$ and $T$ is $\mu_0$-essentially injective, we have that $T$ is $\mu_1$-a.e. invertible and $\mu_0=(T^{-1})_\sharp (\mu_1)$. It follows that 
$$\mu_0(A)=\mu_0\big(T^{-1}(T(A))\big)=\mu_1(T(A))=\int_{T(A)} \varrho_1 \, \d \mm=0 , $$
since, by definition of $A$, we have  $\rho_1\equiv 0$ on $T(A)$. This proves the first statement. 
\\The second one is an easy consequence of the finiteness of ${\mathfrak F}$; indeed, called $$A_r:=\{x \in \supp \mu_0\, : \, \varrho_r (T_r (x))=0\},$$ by the first part of the lemma we have that $\mu_0(A_r)=0$ for every $r \in {\mathfrak F}$. Denoted with 
$$C_n:=\left\{x \in \supp \mu_0\, : \, \varrho_r (T_r (x))\geq \frac{1}{n} \text{ for every } r \in {\mathfrak F}\right\} ,$$
using the finiteness of ${\mathfrak F}$ we have 
 $$ \bigcup_{n \in \N} C_n = X \setminus \bigcup_{r \in {\mathfrak F}} A_r\, .$$
We conclude that $\bigcup_{n \in \N} C_n$ is of full $\mu_0$-measure and the proof is complete.
\end{proof}

\subsection{$\RCD K\infty$ spaces and a criterium for $\CDS KN$ via $\mathrm{EVI}$}

Let us first recall the definition of $\RCD K\infty$ spaces, introduced and characterized in
\cite{AGS11b} (see also \cite{AGMR12} for the present simplified axiomatization and extension to $\sigma$-finite measures); 
in the statements involving the so-called evolution variational inequalities,
characterized by differential inequalities involving the squared distance, the
entropy and suitable action functionals, we will use the notation
\begin{equation}
  \label{eq:214}
  \frac{\d^+}{\dt}\zeta(t):=
  \limsup_{h\down0}\frac{\zeta(t+h)-\zeta(t)}h
\end{equation}
for the upper right Dini derivative.

\begin{definition}[$\RCD K\infty$ metric measure spaces] \label{def:RCDI}
  A metric measure space $(X,\sfd,\mm)$ is an $\RCD K\infty$ space if
  it satisfies one of the following equivalent conditions:
  \begin{enumerate}[\rm ({RCD}1)]
  \item $(X,\sfd,\mm)$ satisfies the 
    $\CD K\infty$ condition and the Cheeger energy is quadratic.
 \item For every $\mu\in D(\cU_\infty)\cap \ProbabilitiesTwo X$ 
    there exists a curve $\mu_t=\sfH_t\mu$, $t\ge0$, such that 
    \begin{equation}
      \label{eq:213}
      \frac 12\frac {\d^+}\dt W_2^2(\mu_{t},\nu)
      +\Reny\infty\mu\le \Reny\infty\nu-
      \frac K2W_2^2(\mu_t,\nu)\quad
      \forevery t\ge0,\,\,\nu\in D(\cU_\infty).
    \end{equation}
\end{enumerate}
\end{definition}

Among the important consequences of the above property, we recall that: 
\begin{enumerate}
\item $\RCD K\infty$ spaces are \emph{strong} $\CD K\infty$ spaces
  and thus satisfy properties [RS1-4].
\item The map $(\sfH_t)_{t\ge0}$ is uniquely characterized by 
  \eqref{eq:213}, it is a $K$-contraction in $\ProbabilitiesTwo X$ and
  it coincides with the heat flow $\sfP_t$, i.e.
  \begin{equation}
    \label{eq:215}
    \sfH_t(\varrho\mm)=(\sfP_t\varrho)\mm\quad
    \forevery \varrho\mm\in D(\cU_\infty)\cap \ProbabilitiesTwo X.
  \end{equation}
\item Lipschitz functions essentially coincide with functions $f\in\V$ with $|\rmD f|_w\in L^\infty(X,\mm)$, more
precisely (recall that, according to \eqref{eq:253}, $\V_\infty$ stands for $\V\cap L^\infty(X,\mm)$):
  \begin{equation}\label{eq:233}
    \text{every $f\in \V_\infty$ with $|\rmD f|_w\le 1$ $\mm$-a.e. in $X$ admits a 
  $1$-Lipschitz representative.}
\end{equation}
\item The Cheeger energy satisfies the Bakry-\'Emery $\BE K\infty$ condition: we
  will
  discuss this aspect in the next Section~\ref{sec:BE}.
\end{enumerate}
We will show that a similar characterization holds 
for strong $\CDS KN$ spaces. 

In order to deal with a general class of entropy functionals 
$\cU$ with entropy density satisfying the McCann condition
$\MC N$ and 
arbitrary curvature bounds $K\in \R$, 
for every $\mu\in \AC2{[0,1]}{(\ProbabilitiesTwo X,W_2)}$ with $\mu_s\ll\mm$ for $\Leb{1}$-a.e. $s\in (0,1)$
we consider the weighted action functional
associated to $\weight sr=\omega(s)Q(r)$ as in \eqref{eq:17}, with $\omega(s):=1-s$:
\begin{equation}
  \label{eq:10bis}
  \EMod Q\mu\mm:=\Action{\omega Q}{\mu}{\mm}=\int_{\tilde X} (1-s)Q(\varrho(x,s))v^2(x,s)
  \,\d\tilde\mu(x,s).
\end{equation}
If $(X,\sfd,\mm)$ is a strong $\CD K\infty$ space 
then for every $\mu_0,\,\mu_1\in \Probabilitiesac X\mm$ 
we can also set
\begin{equation}
  \label{eq:183}
  \Action{\omega Q}{\mu_0,\mu_1}\mm:=
  \Action{\omega Q}\mu\mm,\quad
  \text{with $\mu$ the unique geodesic connecting 
    $\mu_0$ to $\mu_1$.}
\end{equation}
Since $\omega(1-s)+\omega(s)=1$, we obtain the useful identity
\begin{equation}\label{eq:time_reversal}
\Action{Q}{\mu_0,\mu_1}\mm=
\Action{\omega Q}{\mu_0,\mu_1}\mm+
\Action{\omega Q} {\mu_1,\mu_0}\mm.
\end{equation}
{\color{black}
We will need the following Lemma, proved in the case of the logarithmic entropy
in \cite[Thm.~3.6]{AGMR12}. The proof is analogous for regular entropies $U''$, since their second derivative
$U''$ still diverges like $z^{-1}$.

\begin{lemma}\label{lem:spaziopesatoU}
Let $U\in\MCreg N$ and let $\varrho\in\V\cap L^\infty(X,\mm)$ be satisfying
$$
\int_X \varrho U''(\varrho)^2\Gamma(\varrho)\,\d\mm<\infty.
$$
Then $U'(\varrho)\in\V_\varrho$ and
$$
\int_X\Gamma_\varrho(U'(\varrho),\varphi)\,\rho\mm=\int_X\Gamma(P(\varrho),\varphi)\qquad
\forall\varphi\in\V.
$$
\end{lemma}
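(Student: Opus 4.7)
The plan is to mimic \cite[Thm.~3.6]{AGMR12} via truncation, the main complication being that for $U\in\MCreg N$ the derivative $U'$ typically blows up at $0$ (for instance $U_\infty'(r)=1+\log r$), so $U'(\varrho)$ need not belong to $\V$: it must be realized as an element of the abstract completion $\Vhom\varrho$.

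First I would build a bounded-Lipschitz approximation. For each $n\in\N$ pick $f_n:\R\to\R$ bounded and Lipschitz with $f_n(0)=0$ and $f_n(r)=U'(r)$ for $r\in[1/n,\|\varrho\|_{L^\infty}+1]$ (outside this interval make it affine or constant). By \eqref{eq:Dir-Lip} we have $f_n(\varrho)\in\V$. The strong locality of $\cE$ and the chain rule give, for $n\le m$,
\[
\Gamma\bigl(f_n(\varrho)-f_m(\varrho)\bigr)\;=\;\chi_{\{0<\varrho<1/n\}}\bigl(U''(\varrho)\bigr)^2\Gamma(\varrho)\quad\mm\text{-a.e.},
\]
since on $\{\varrho\ge 1/n\}$ the two functions coincide, and $\Gamma(\varrho)=0$ $\mm$-a.e.\ on $\{\varrho=0\}$ by locality.

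Next I would show Cauchyness in the weighted seminorm. By the hypothesis $\int_X\varrho\,U''(\varrho)^2\Gamma(\varrho)\,\d\mm<\infty$ and dominated convergence (the sets $\{0<\varrho<1/n\}$ shrink to the $\mm$-negligible set $\{\varrho=0\}\cap\{\Gamma(\varrho)>0\}$, cf.\ locality),
\[
\cE_\varrho\bigl(f_n(\varrho)-f_m(\varrho)\bigr)=\int_{\{0<\varrho<1/(n\land m)\}}\varrho\,U''(\varrho)^2\Gamma(\varrho)\,\d\mm\;\longrightarrow\;0
\]
as $n,m\to\infty$. Hence $(f_n(\varrho)_\varrho)\subset\Vhom\varrho$ is a Cauchy sequence; we define $U'(\varrho)\in\Vhom\varrho$ to be its limit (the choice of $f_n$ is immaterial by the same estimate applied to two different approximations).

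Finally I would pass the integration-by-parts to the limit. Fix $\varphi\in\V$. By the chain rule in $\V$, writing $P'(r)=rU''(r)$,
\[
\int_X\Gamma_\varrho(f_n(\varrho),\varphi)\,\varrho\,\d\mm
=\int_X\Gamma(f_n(\varrho),\varphi)\,\varrho\,\d\mm
=\int_{\{\varrho>1/n\}}P'(\varrho)\,\Gamma(\varrho,\varphi)\,\d\mm.
\]
Since $P$ is Lipschitz (as $P\in\rmC^1$ with bounded derivative by \eqref{eq:A1bis}), $P(\varrho)\in\V$ and $\Gamma(P(\varrho),\varphi)=P'(\varrho)\Gamma(\varrho,\varphi)$. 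The integrand on the right is bounded in absolute value by $\|P'\|_\infty\sqrt{\Gamma(\varrho)}\sqrt{\Gamma(\varphi)}\in L^1(X,\mm)$, so dominated convergence yields the limit $\int_X\Gamma(P(\varrho),\varphi)\,\d\mm$. On the left-hand side, the continuity of $\cE_\varrho(\cdot,\varphi_\varrho)$ on $\Vhom\varrho$ (Lemma~\ref{le:useful1}) gives $\int_X\Gamma_\varrho(U'(\varrho),\varphi)\,\varrho\,\d\mm$, concluding the proof.

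The main obstacle is the first step: justifying that the approximations $f_n(\varrho)$ form a Cauchy sequence exactly in the seminorm of $\Vhom\varrho$. This hinges on the precise interaction between the chain rule for strongly local Dirichlet forms and the weight $\varrho$ near $\{\varrho=0\}$, where $U''$ diverges but is compensated by the factor $\varrho$ and by the vanishing of $\Gamma(\varrho)$ on $\{\varrho=0\}$.
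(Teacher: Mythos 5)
Your overall strategy — approximate $U'$ by nice functions, use the finiteness of $\int_X\varrho\,U''(\varrho)^2\Gamma(\varrho)\,\d\mm$ to get a Cauchy sequence in $\V_\varrho$, and pass to the limit in the chain-rule/integration-by-parts identity — is the right one and is essentially the route the paper takes (via \cite[Thm.~3.6]{AGMR12} and its own Lemma~\ref{le:2bis}(ii), where the approximants are the globally regularized entropies $U_\eps$ of \eqref{eq:48}, with $U_\eps''(r)=P'(r)/(r+\eps)$, and the limit is identified through the duality of Proposition~\ref{prop:allduals}(b) as $A_\varrho^*(\DeltaE P(\varrho))$). Your truncation-at-level-$1/n$ variant is a legitimate alternative, and your final identity in fact characterizes the limit uniquely in $\V_\varrho$, so it automatically coincides with the element called $U'(\varrho)$ in Lemma~\ref{le:2bis}(ii); it would be worth saying this explicitly, since the statement presupposes that meaning of $U'(\varrho)$.

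There is, however, a concrete flaw in the construction of the $f_n$. Since $U''(r)=P'(r)/r\in[\sfa/r,\sfa^{-1}/r]$, one has $U'(1/n)\asymp-\log n$. The ``constant'' extension below $1/n$ is incompatible with $f_n(0)=0$, and dropping that normalization is not harmless: on $\{\varrho=0\}$, which may have infinite measure, $f_n(\varrho)\equiv U'(1/n)\neq0$ would not be in $L^2(X,\mm)$, so $f_n(\varrho)\notin\V$ and \eqref{eq:Dir-Lip} does not apply. The ``affine'' extension keeps $f_n(0)=0$ but has slope $\sim n\,U'(1/n)\sim n\log n$ on $(0,1/n)$; then your displayed identity $\Gamma(f_n(\varrho)-f_m(\varrho))=\chi_{\{0<\varrho<1/n\}}U''(\varrho)^2\Gamma(\varrho)$ is false, and the best pointwise comparison only gives $\cE_\varrho(f_n(\varrho)-f_m(\varrho))\le C(\log n)^2\,\eps_n$ with $\eps_n:=\int_{\{0<\varrho<1/n\}}\varrho\,U''(\varrho)^2\Gamma(\varrho)\,\d\mm$, which tends to $0$ with no rate, so the product need not vanish; the same spurious term $f_n'(\varrho)\varrho\,\Gamma(\varrho,\varphi)$ on $\{0<\varrho\le 1/n\}$ pollutes your last display. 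The fix is simple: take $f_n(r):=U'(r\vee 1/n)-U'(1/n)$, which is bounded, Lipschitz, vanishes on $[0,1/n]$, and whose subtracted constant affects neither $\cE_\varrho$ nor the final identity. With this choice $\Gamma(f_n(\varrho)-f_m(\varrho))=\chi_{\{1/m<\varrho<1/n\}}U''(\varrho)^2\Gamma(\varrho)$ and $f_n'(\varrho)\varrho=P'(\varrho)\chi_{\{\varrho>1/n\}}$ $\mm$-a.e., and the rest of your argument (dominated convergence on the right, continuity of $\cE_\varrho(\cdot,\varphi_\varrho)$ on the left) goes through verbatim.
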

} 

\begin{theorem}
  \label{thm:CD-EVI}
  Let $(X,\sfd,\mm)$ be a strong $\CDS KN$ space
  and let us suppose that the Cheeger energy $\C$ is quadratic 
  as in \eqref{eq:1}. 
  Let $U\in \MCreg N$, $P,\,Q$ as in \eqref{eq:40}, 
  $\Lambda:=\inf_{r\ge0} KQ(r)$ and let $(\sfS_t)_{t\ge0}$
  be the flow defined by Theorem~\ref{thm:nonlin-diff}.\\
  Then $\sfS_t$ induces a $\Lambda$-contraction in 
  $(\ProbabilitiesTwo X,W_2)$ and for every $\mu=\varrho\mm\in 
  D(\cU)\cap \ProbabilitiesTwo X$ 
  the curve $\mu_t:=(\sfS_t\varrho)\mm$ satisfies
  \begin{equation}
    \label{eq:218}
    \frac 12\frac{\d^+}\dt W_2^2(\mu_t,\nu)
    +\cU(\mu_t)\le \cU(\nu)-K \EMod Q{\mu_t,\nu}\mm
    \quad\forevery \nu\in D(\cU)\cap\ProbabilitiesTwo X,\,\,t\geq 0.
  \end{equation}
\end{theorem}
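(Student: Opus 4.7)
The proof will follow the scheme of \cite{AGS11b} (case $N=\infty$, $\mm(X)<\infty$) and \cite{AGMR12} (general $\sigma$-finite $\mm$), with the logarithmic entropy replaced by the general $U\in\MCreg N$, the heat flow replaced by the nonlinear diffusion semigroup $\sfS$ of Theorem~\ref{thm:nonlin-diff}, and $K$-convexity of $\cU_\infty$ replaced by the weighted convexity inequality [CD4]--[CD5] of Theorem~\ref{thm:Andrea_complete}. The first preliminary step is to show that $\sfS_t$ leaves $D(\cU)\cap\ProbabilitiesTwo X\cap\Probabilitiesac X\mm$ invariant and produces a curve in $\rmC([0,T];\ProbabilitiesTwo X)$: using property (ND2) with the convex function $W=U^+$ bounds the positive part of $\cU(\mu_t)$, while the logarithmic lower bound \eqref{eq:47-bis} together with the quadratic moment / entropy estimate \eqref{eq:221} of Theorem~\ref{thm:emF} controls the negative part and the second moment.

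The second step is an entropy dissipation identity along the flow: for a regularized initial datum $\bar\varrho\in\V\cap L^\infty(X,\mm)$ with bounded support, property (ND3) of Theorem~\ref{thm:nonlin-diff} allows one to differentiate $t\mapsto\cU(\mu_t)$ and, via Lemma~\ref{lem:spaziopesatoU} applied to $U'(\varrho_t)\in\V_{\varrho_t}$, derive
\[
\cU(\mu_t)+\int_0^t \cE_{\varrho_r}\bigl(U'(\varrho_r),U'(\varrho_r)\bigr)\,\d r=\cU(\bar\varrho\,\mm),
\]
so that $\cE_{\varrho_t}(U'(\varrho_t),U'(\varrho_t))$ plays the role of a nonlinear Fisher information.

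The third, core step is the differentiation of $\tfrac12 W_2^2(\mu_t,\nu)$ along the flow. Let $\{\mu_{s,t}\}_{s\in[0,1]}$ be the unique $W_2$-geodesic connecting $\mu_t$ to $\nu$ (unique by [RS1] of Proposition~\ref{prop:scdk}, since strong $\CDS KN$ implies strong $\CD K\infty$ by Lemma~\ref{le:CDSKN-CDKI}), written as $\mu_{s,t}=\varrho_{s,t}\mm$ with minimal $2$-velocity density $v_{s,t}$. Applying the weighted convexity inequality [CD5] of Theorem~\ref{thm:Andrea_complete} to the regular entropy $U\in\MCreg N$ along this geodesic gives the distributional bound
\[
\frac{d^2}{ds^2}\cU(\mu_{s,t})\ge 2K\int_X Q(\varrho_{s,t})\,v_{s,t}^2\,\varrho_{s,t}\,\d\mm\quad\text{in }\mathscr D'(0,1);
\]
Lemma~\ref{le:2} then yields
\[
\cU(\nu)-\cU(\mu_t)-\frac{d^+}{ds}\cU(\mu_{s,t})\Bigr|_{s=0^+}\ge K\,\EMod{Q}{\mu_t,\nu}{\mm}.
\]
On the other hand, combining the dual Kantorovich formula \eqref{eq:154}, the Hopf--Lax subsolution property \eqref{eq:identities0}, and Theorem~\ref{thm:dynamicKant} to identify the minimal velocity density $v_{0,t}$ with $\sqrt{\tGq{\varrho_t}{\phi_t}}$ for a potential $\phi_t$, one obtains in the standard way (as in \cite{AGS11a}) both the upper bound
\[
\frac12\frac{d^+}{dt}W_2^2(\mu_t,\nu)\le\int_X \tGbil{\varrho_t}{U'(\varrho_t)}{\phi_t}\,\varrho_t\,\d\mm
\]
and the matching identification $-\frac{d^+}{ds}\cU(\mu_{s,t})|_{s=0^+}\le \int_X \tGbil{\varrho_t}{U'(\varrho_t)}{\phi_t}\,\varrho_t\,\d\mm$. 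Combining the three displays gives \eqref{eq:218}. The $\Lambda$-contraction follows from the standard symmetric argument applied to two solutions, using $K\,\EMod{Q}{\mu_t,\nu}{\mm}\ge\frac{\Lambda}{2}W_2^2(\mu_t,\nu)$, which is immediate from $\Lambda\le KQ(r)$, the representation $W_2^2(\mu_t,\nu)=\int_0^1\!\!\int_X v_{s,t}^2\,\d\mu_{s,t}\,\d s$, and $\int_0^1 (1-s)\,\d s=\tfrac12$.

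The main obstacle is the rigorous execution of the third step: matching, at $s=0^+$, the metric-derivative description of $\cU$ along the $W_2$-geodesic with the Dirichlet form expression involving $\tGbil{\varrho_t}{U'(\varrho_t)}{\phi_t}$, despite the absence of uniform bounds on $\varrho_{s,t}$ near the endpoints and the possible degeneracy of the weight $\varrho_{s,t}$. This requires a careful combination of the weighted calculus of Section~\ref{subsec:weighted} (in particular the stability Lemma~\ref{le:stability} and the lower semicontinuity Lemma~\ref{le:lsc}) with an approximation scheme as in Lemma~\ref{lem:ApproxGeod} to first prove \eqref{eq:218} for initial data with bounded densities and bounded support, and then pass to the limit using the lower semicontinuity of $\EMod{Q}{\cdot,\nu}{\mm}$ (Theorem~\ref{le:stabilitySigma}) and the contraction properties of $\sfS_t$ in $L^1$ provided by Theorem~\ref{thm:nonlin-diff}~(ND4).
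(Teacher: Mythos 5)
Your proposal follows essentially the same route as the paper's proof, which itself runs along the lines of \cite{AGS11b} and \cite{AGMR12}: reduce to bounded data by approximation, use the weighted convexity inequality of Theorem~\ref{thm:Andrea_complete} along the geodesic from $\mu_t$ to $\nu$ together with the limit $\lim_{s\downarrow0}s^{-1}\Mod s{Q}{\mu_{\cdot,t}}\mm=\EMod Q{\mu_t,\nu}\mm$ to bound $\cU(\nu)-\cU(\mu_t)-K\EMod Q{\mu_t,\nu}\mm$ from below by a directional derivative of $\cU$, identify that derivative with a weighted Dirichlet pairing via Theorem~\ref{thm:dynamicKant}, differentiate $\tfrac12W_2^2(\mu_t,\nu)$ in $t$ via Kantorovich duality \eqref{eq:154} and \eqref{eq:76}, and let Lemma~\ref{lem:spaziopesatoU} (whose hypothesis is supplied by the Fisher-information bound of Theorem~\ref{thm:emF}) make the two pairing terms cancel; the $\Lambda$-contraction then follows from the resulting evolution variational inequality with parameter $\Lambda$, exactly as you indicate.

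One concrete point needs fixing. As written, your two key bounds, $\tfrac12\frac{\d^+}{\d t}W_2^2(\mu_t,\nu)\le\int_X\tGbil{\varrho_t}{U'(\varrho_t)}{\phi_t}\,\varrho_t\,\d\mm$ and $-\frac{\d^+}{\d s}\cU(\mu_{s,t})|_{s=0^+}\le\int_X\tGbil{\varrho_t}{U'(\varrho_t)}{\phi_t}\,\varrho_t\,\d\mm$, carry the \emph{same} sign of the pairing term, so summing them only bounds the left-hand side of \eqref{eq:218} by twice that pairing, not by $0$; the claimed "combining the three displays gives \eqref{eq:218}" does not close unless the pairing happens to be nonpositive. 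In the paper (following \cite[Thm.~6.3 and 6.5]{AGMR12}) the distance derivative is an identity with the \emph{opposite} sign, namely $\frac{\d}{\d t}\tfrac12W_2^2(\mu_t,\nu)=-\int_X\Gamma(\varphi_t,P(\varrho_t))\,\d\mm$ for any optimal Kantorovich potential $\varphi_t$ from $\mu_t$ to $\nu$, while the convexity/derivative estimate reads $\cU(\nu)-\cU(\mu_t)-K\EMod Q{\mu_t,\nu}\mm\ge-\int_X\Gamma_{\varrho_t}(\varphi_t,U'(\varrho_t))\,\varrho_t\,\d\mm$ for some such potential; Lemma~\ref{lem:spaziopesatoU} identifies the two integrals, and it is precisely because they enter with opposite signs that they cancel. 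So you must orient the potential $\phi_t$ consistently with the geodesic direction from $\mu_t$ towards $\nu$ and verify that the $W_2$-derivative bound comes with the minus sign of the pairing; with that correction your scheme coincides with the paper's argument.
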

\begin{proof} The proof of \eqref{eq:218} follows the lines of \cite{AGMR12}
(where the case $\cU=\cU_\infty$ was considered), which extends to the $\sigma$-finite case the analogous
result proved with finite reference measures $\mm$ in \cite{AGS11b}. {\color{black} All technical difficulties are
due to the fact that $\mm$ is potentially unbounded, the proof being much more direct for finite measures $\mm$.} 

Specifically, first the proof is reduced to the case of measures $\mu=\varrho\mm$
and $\nu$ with $\varrho\in L^\infty(X,\mm)$ and $\nu$ with bounded support. 
{\color{black} First of all, notice that the combination of Theorem~\ref{thm:nonlin-diff} and Theorem~\ref{thm:dynamicKant} ensures that the curve $t\mapsto \mu_{t}$ is $W_{2}$-absolutely continuous. Then, using the dual formulation \eqref{eq:154}
of the optimal transport problem, {\color{black} and \eqref{eq:76}}
one can show that for $\Leb{1}$-a.e. $t>0$ one has (see \cite[Thm.~6.3]{AGMR12})
\begin{equation}\label{eq:aug3_1}
\frac{\d}{\d t}\frac 12 W_2^2(\mu_t,\nu)=
-\int_X  \Gamma  (\varphi_t,P(\varrho_t))\, \d\mm
\end{equation}
for \emph{any} optimal Kantorovich potential $\varphi_t\in\V$ from $\mu_t$ to $\nu$, and the existence
of potentials with this property is ensured by the boundedness of the support of $\sigma$ (see \cite[Prop~2.2]{AGMR12}).}

 On the other hand, one can also use
the calculus tools developed in \cite{AGS11a,AGS11b} to estimate (see \cite[Thm.~6.5]{AGMR12}) 
\begin{equation}\label{eq:aug3_2}
\cU(\nu)-\cU(\mu_t)-K \EMod Q{\mu_t,\nu}\mm
\geq- \int_X \Gamma_{\varrho_t}  (\varphi_t,U'(\varrho_t))\varrho_t\,\d\mm
\end{equation}
for \emph{some} optimal Kantorovich potential $\varphi_t$ from $\mu_t$ to $\nu$.
Using  {\color{black} Lemma~\ref{lem:spaziopesatoU}, whose application is justified by
Theorem~\ref{thm:emF},} to combine
 \eqref{eq:aug3_1} and \eqref{eq:aug3_2} gives \eqref{eq:218}.

In turn, the proof of \eqref{eq:aug3_2} goes as follows. First of all one notices that
\begin{equation}\label{eq:aug3_5}
\lim_{s\downarrow 0} \frac 1 s\Mod s{Q}{\mu_{\cdot,t}}\mm=\EMod Q{\mu_t,\nu}\mm,
\end{equation}
where $s\mapsto\mu_{s,t}$ is any constant speed geodesic joining $\mu_t$ to $\nu$. 
Indeed, setting $\mu_{s,t}=\varrho_{s,t}\mm$ and denoting by $v_{s,t}(x)$ the minimal velocity
density of $\mu_{\cdot,t}$, we can use the expression \eqref{eq:3} of $\sfg$ to write
$$
\Mod s{Q}{\mu_{\cdot,t}}\mm=\int_0^s (1-s)r\int_X Q(\varrho_{r,t})v^2_{r,t}\varrho_{r,t}\,\d\mm \,\d r+
\int_s^1 (1-r)s\int_X Q(\varrho_{r,t})v^2_{r,t}\varrho_{r,t}\,\d\mm \,\d r.
$$
Since the first term in the right hand side is $o(s)$ (recall that $Q$ is a bounded function), by monotone convergence we obtain
\eqref{eq:aug3_5}. 

Now, by the convexity inequality \eqref{eq:180} one has
\begin{equation}\label{eq:aug3}
\cU(\nu)-\cU(\mu_t)-\liminf_{s\downarrow 0}\frac 1s
K\Mod s{Q}{\mu_{\cdot,t}}\mm\geq
\limsup_{s\downarrow 0}
\frac{\cU(\mu_{s,t})-\cU(\mu_t)}{s}.
\end{equation}
In addition, if $\varrho_t$ decays sufficiently fast at infinity, one can estimate the 
directional derivative of $\cU$ as follows: 
\begin{equation}\label{eq:aug3_4}
\limsup_{s\downarrow 0}
\frac{\cU(\mu_{s,t})-\cU(\mu_t)}{s}\geq
\limsup_{s\downarrow 0}
\int_XU'(\varrho_t)\frac{\varrho_{s,t}-\varrho_t}{s}\,\d\mm\geq
\int_X\Gamma_{\varrho_t}(\varphi_t,U'(\varrho_t))\varrho_t\,\d\mm,
\end{equation}
\AAA where in the last step we used Theorem \ref{thm:dynamicKant}. \fn
The combination of \eqref{eq:aug3} and \eqref{eq:aug3_4} gives \eqref{eq:aug3_2}, 
taking \eqref{eq:aug3_5} into account. Then the decay
assumption on $\varrho_t$ is removed by an approximation argument, recovering \eqref{eq:aug3_2} in
the general case. This concludes the proof of \eqref{eq:218}.

Since geodesics have constant speed, from \eqref{eq:161} we obtain the identity
$$
\int_0^1 (1-r)\int_X v^2_{r,t}\varrho_{r,t}\,\d\mm \,\d r=
\frac{1}{2}W^2_2(\mu_t,\nu).
$$
Hence, from \eqref{eq:218}
we get the standard ${\sf EVI}$ condition \eqref{eq:213} with $\cU_\infty$ replaced by $\cU$ 
and $K$ replaced by $\Lambda$, and it
is well-known (see for instance \cite[Cor.~4.3.3]{AGS08}) that this leads to $\Lambda$-contractivity.
\end{proof}
Conversely, we can now prove adapting the proof of \cite{Daneri-Savare08} that the infinitesimal version of \eqref{eq:218} leads to
the strong $\CDS KN$ condition.

\begin{theorem}\label{thm:EVI-CD} 
  Let $(X,\sfd,\mm)$ be a strong $\CD K\infty$ metric measure space. 
  Suppose that for every $U\in \MCreg N$ and
  every $\bar\mu=\varrho\mm\in \ProbabilitiesTwo X$ 
  with $\varrho\in L^\infty(X,\mm)$ with bounded support there exists a 
  curve $\mu_t=\sfS_t\bar\mu\in \ProbabilitiesTwo X$, $t\ge0$,
  such that 
  \begin{equation}
    \label{eq:181}
    \limsup_{h\down0}\frac {W_2^2(\mu_h,\nu)-W_2^2(\bar\mu,\nu)}{2h}
    +\cU(\bar\mu)\le \cU(\bar\nu)-K \EMod Q{\bar\mu,\bar\nu}\mm
  \end{equation}
  for every \AAA  $\bar\nu=\sigma \mm\in \ProbabilitiesTwo X$ 
  with $\sigma\in L^\infty(X,\mm)$ \AAA with bounded support \EEE .
  Then $(X,\sfd,\mm)$ satisfies the strong $\CDS KN$ condition 
  and the Cheeger energy is quadratic. 
\end{theorem}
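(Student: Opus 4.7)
My plan is to argue in two parallel directions using the assumed $\mathrm{EVI}$-type inequality \eqref{eq:181}: on the one hand, specializing to the logarithmic entropy $U_\infty$ I will recover the standard $\mathrm{EVI}$ for the heat flow and hence $\RCD K\infty$, which gives quadraticity of $\C$ via Definition~\ref{def:RCDI}; on the other hand, for a general $U\in\MCreg N$ I will apply \eqref{eq:181} at a typical point $\mu_s$ of a $W_2$-geodesic, tested against its endpoints, and combine the two inequalities in the spirit of \cite{Daneri-Savare08} to obtain the convexity inequality \eqref{eq:180} of Theorem~\ref{thm:Andrea_complete}[CD5], which by the already established implication [CD5]$\Rightarrow$[CD1] yields strong $\CDS K N$.

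For the quadraticity step I would first note that $U_\infty(r)=r\log r$ lies in $\MCreg N$ (one has $P_\infty(r)=r$, $Q_\infty\equiv 1$, $R_\infty\equiv 0\geq -P_\infty/N$), and that when $Q\equiv 1$ the weighted functional satisfies $\EMod Q{\bar\mu,\bar\nu}\mm=\tfrac12 W_2^2(\bar\mu,\bar\nu)$ because along any $W_2$-geodesic the minimal velocity integrates to a constant (by \eqref{eq:27} and \eqref{eq:161}). Hence \eqref{eq:181} specialized to $U_\infty$ is exactly the classical $\mathrm{EVI}_{K,\infty}$ for the flow $\sfS_t^{U_\infty}$. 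Since this flow solves the linear heat equation by Theorem~\ref{thm:nonlin-diff} (indeed $P_\infty'\equiv 1$), the existence of such a curve on the dense class of bounded densities with bounded support, combined with the strong $\CD K\infty$ assumption and the standard extension by density using the contractivity forced by $\mathrm{EVI}_{K,\infty}$, places us in condition (RCD2) of Definition~\ref{def:RCDI}, from which quadraticity of $\C$ follows by the main theorem of \cite{AGS11b}.

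For the convexity inequality, fix $U\in\MCreg N$ and $\mu_0=\varrho_0\mm,\,\mu_1=\varrho_1\mm\in \Probabilitiesac X\mm\cap\ProbabilitiesTwo X$ with bounded essentially bounded densities and bounded supports, and let $\mu_s=(\rme_s)_\sharp\ppi$, $s\in[0,1]$, be the unique geodesic joining them (using [RS1-4] granted by the strong $\CD K\infty$ assumption). Properties [RS3]-[RS4] and Remark~\ref{rem:BddSupp} guarantee that every $\mu_s$ has an $L^\infty$ density with bounded support, so \eqref{eq:181} is applicable at $\bar\mu=\mu_s$ with $\bar\nu$ equal to either $\mu_0$ or $\mu_1$; taking the convex combination with weights $(1-s)$ and $s$ of these two inequalities I obtain
\begin{equation*}
\Psi_s+\cU(\mu_s)\le (1-s)\cU(\mu_0)+s\,\cU(\mu_1)-K\bigl[(1-s)\EMod Q{\mu_s,\mu_0}\mm+s\,\EMod Q{\mu_s,\mu_1}\mm\bigr],
\end{equation*}
where $\Psi_s$ denotes the upper-right Dini derivative at $h=0$ of the weighted combination of $\tfrac12 W_2^2(\sfS_h\mu_s,\mu_i)$. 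The classical $W_2^2$ semi-concavity along a geodesic, $(1-s)W_2^2(\eta,\mu_0)+sW_2^2(\eta,\mu_1)\ge W_2^2(\eta,\mu_s)+s(1-s)W_2^2(\mu_0,\mu_1)$, applied to $\eta=\sfS_h\mu_s$ and compared with the equality at $h=0$ yields $\Psi_s\ge 0$. A direct reparametrization of the two sub-geodesics $\mu_s\rightsquigarrow\mu_0$ and $\mu_s\rightsquigarrow\mu_1$, using that the minimal velocity $v$ is constant in time along the original geodesic and that the speeds rescale by $s$ and $(1-s)$ respectively, produces the identity
\begin{equation*}
(1-s)\EMod Q{\mu_s,\mu_0}\mm+s\,\EMod Q{\mu_s,\mu_1}\mm=\int_0^1\sfg(\sigma,s)\int_X Q(\varrho_\sigma)v_\sigma^2\,\d\mu_\sigma\,\d\sigma=\Mod s Q\mu\mm,
\end{equation*}
in view of the piecewise description \eqref{eq:3} of $\sfg$. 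Substituting back produces exactly \eqref{eq:180} for bounded, bounded-support endpoints, and [CD2]$\Rightarrow$[CD3]$\Rightarrow$[CD4]$\Rightarrow$[CD1] of Theorem~\ref{thm:Andrea_complete} (the first two implications being essentially localization/approximation that I would quote verbatim from that proof) delivers the strong $\CDS KN$ condition; the main technical point I anticipate will be to verify that $\Psi_s\ge 0$ together with the limsup operation commutes suitably with the weighted sum, and that the classes of test measures in the $\mathrm{EVI}$ hypothesis and in [CD2] are indeed compatible.
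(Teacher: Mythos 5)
Your strategy is essentially the paper's own proof: you apply \eqref{eq:181} at an interior point $\mu_s$ of the geodesic against both endpoints, take the convex combination with weights $(1-s)$ and $s$, drop the distance term by nonnegativity of the Dini derivative, identify $(1-s)\EMod Q{\mu_s,\mu_0}\mm+s\,\EMod Q{\mu_s,\mu_1}\mm$ with $\Mod sQ\mu\mm$ through the Green function $\sfg$, and obtain quadraticity of $\C$ by specializing to $U_\infty$ (where $Q\equiv1$ turns \eqref{eq:181} into \eqref{eq:213}, so (RCD2) of Definition~\ref{def:RCDI} applies). The handling of test classes (bounded densities and supports along the geodesic via [RS4]/\cite{Rajala12}, and the bounded-support restriction in the convexity statements dealt with by Remark~\ref{rem:Levico} before invoking the implications of Theorem~\ref{thm:Andrea_complete}) also matches the paper.

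The one step you must repair is the justification of $\Psi_s\ge0$. The inequality you invoke, $(1-s)W_2^2(\eta,\mu_0)+s\,W_2^2(\eta,\mu_1)\ge W_2^2(\eta,\mu_s)+s(1-s)W_2^2(\mu_0,\mu_1)$, is the NPC-type convexity of the squared distance along geodesics and is false in general: Wasserstein spaces are positively curved, so the opposite inequality is the one that can be expected (with strict inequality possible), and in an abstract metric measure space neither comparison is available. Fortunately you only need the much weaker fact that the weighted sum evaluated at $\eta=\sfS_h\mu_s$ is no smaller than its value at $h=0$, i.e. $(1-s)W_2^2(\eta,\mu_0)+s\,W_2^2(\eta,\mu_1)\ge s(1-s)W_2^2(\mu_0,\mu_1)$ for \emph{every} $\eta$; this follows from the triangle inequality $W_2(\mu_0,\mu_1)\le W_2(\mu_0,\eta)+W_2(\eta,\mu_1)$ combined with $(a+b)^2\le a^2/s+b^2/(1-s)$, which is exactly the elementary argument of the paper (and of \cite{Daneri-Savare08}). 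With this substitution your proof is complete and coincides with the paper's.
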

\begin{proof}
   We prove the validity of [CD2] of Theorem~\ref{thm:Andrea_complete}.
   So, let us fix $\mu_0,\,\mu_1\in D(\cU)$ with bounded densities and support and
   let $(\mu_s)_{s\in [0,1]}$ be the geodesic
  connecting $\mu_0$ to $\mu_1$. \AAA Notice that in virtue of  \cite[Thm.\ 1.3]{Rajala12}  we have that $\mu_{s}=\varrho_{s} \mm$ with $\varrho_{s}$ $\mm$-essentially bounded with bounded support. \EEE
  For a given $s\in (0,1)$, let 
  $\mu_{s,t}:=\mathsf S_t \mu_s$ be the curve 
  starting from $\bar\mu=\mu_s$ and satisfying \eqref{eq:181}.

  Choosing $\nu:=\mu_0$ and taking the 
  right upper derivative 
  at $t=0$ (still denoted for simplicity by ${\d^+}/{\dt}$) we get
  \begin{displaymath}
    \frac 12\frac {\d^+}{\dt}W_2^2(\mu_{s,t},\mu_0)\restr{t=0}+
    \Reny {}{\mu_s}-\Reny{}{\mu_0}\le 
    -K\EMod Q{\mu_s,\mu_0}\mm.
  \end{displaymath}
  Similarly, choosing $\nu:=\mu_1$, we get
    \begin{displaymath}
    \frac 12\frac {\d^+}{\dt}W_2^2(\mu_{s,t},\mu_1)\restr{t=0}+
    \Reny {}{\mu_s}-\Reny{}{\mu_1}\le 
    -K\,\EMod Q{\mu_s,\mu_1}\mm.
  \end{displaymath}
  Let us observe, as in \cite{Daneri-Savare08}, that 
  \begin{displaymath}
    \frac{\d^+}{\dt}\Big((1-s)W_2^2(\mu_{s,t},\mu_0)+s W_2^2(\mu_{s,t},\mu_1)\Big)\restr{t=0_+}\ge0
  \end{displaymath}
  since the inequality $(a+b)^2\leq a^2/s+b^2/(1-s)$ gives
  \begin{displaymath}
    (1-s)W_2^2(\mu_{s,t},\mu_0)+s W_2^2(\mu_{s,t},\mu_1)\ge 
    s(1-s)W_2^2(\mu_0,\mu_1)=
    (1-s)W_2^2(\mu_{s},\mu_0)+s W_2^2(\mu_{s},\mu_1).
  \end{displaymath}
  Hence, taking a convex combination of the two inequalities
  with weights $(1-s)$ and $s$ respectively, we obtain
  \begin{align*}
    (1-s)\Reny{}{\mu_0}
    +s \Reny{}{\mu_1}
    -\Reny{}{\mu_s}
    &\ge 
    (1-s)K\,\EMod Q{\mu_s,\mu_0}\mm
    +sK\,\EMod Q{\mu_s,\mu_1}\mm.
  \end{align*}
  Now observe that (for $\Theta_r=\int Q(\varrho_r)v_r^2\,\d\mu_r$, $s(1-\xi)=r$)
  \begin{displaymath}
    \EMod Q{\mu_s,\mu_0}\mm=
    s^2\int_0^1 \Theta_{s(1-\xi)} (1-\xi) \,\d\xi=
    \int_0^s \Theta_r\, r\,\d r
  \end{displaymath}
  and, analogously, that (for $\Theta_r$ as above, $s+(1-s)\xi=r$)
  \begin{displaymath}
    \EMod Q{\mu_s,\mu_1}\mm=
    (1-s)^2\int_0^1 \Theta_{s+(1-s)\xi} (1-\xi) \,\d \xi=
    \int_s^1 \Theta_r(1-r) \,\d r,
  \end{displaymath}
  so that the definition \eqref{eq:3} of $\sfg$ gives
  \begin{align*}
     (1-s)\EMod Q{\mu_s,\mu_0}\mm
    +s\EMod Q{\mu_s,\mu_1}\mm
    &=
    \int_0^s \Theta_r\, (1-s)r\,\d r+
    \int_s^1 \Theta_r s(1-r) \,\d r
    \\&=
    \int_0^1 \Theta_r \fnd s{}r\,\d r
    =\Mod sQ{\mu}\mm.
  \end{align*}
  This proves that \eqref{eq:180}
  holds \AAA for every $\mu_0,\,\mu_1\in D(\cU)$ with bounded densities and support; 
  taking Remark~\ref{rem:Levico} into account, we then get that [CD2] holds 
  and therefore $(X,\sfd,\mm)$ is a strong $\CDS K N$
  space. \fn  It remains to show that the Cheeger energy is quadratic;
  by applying the characterization of $\RCD K \infty$ spaces
  recalled in Definition~\ref{def:RCDI} 
  it is sufficient to check that \eqref{eq:181} yields \eqref{eq:213} 
  as a particular case. In fact, we can choose 
  the regular entropy  $U_\infty(r):=r \log r \in \MCreg N$ with $Q_{\infty}\equiv 1$,  
  and observe that 
  the associated weighted action on constant speed geodesics is nothing but half of the standard 2-action:
 $$  \Action{\omega Q_\infty}{\mu_0,\mu_1}\mm= \int_0^1  \int_X (1-s) \, v^2(x,s) \, \d \mu_s \, \d s =   \int_0^1 (1-s) |\dot \mu_s|^2 \, \d s =  \frac{1}{2} W_2^2(\mu_0,\mu_1)^2 ,$$ 
 where in the second equality we recalled \eqref{eq:161} and in the last one we used that $(\mu_s)_{s \in [0,1]}$ is a constant speed geodesic.  
\end{proof}

\part{Bakry-\'Emery condition and nonlinear diffusion}

\section{The Bakry-\'Emery condition}
\label{sec:BE}
 
In this section we will recall the basic assumptions related to the
Bakry-\'Emery condition
and we will prove 
some important properties related to them. 
In the case of a locally compact space
we will also establish a useful
local criterium to check this condition.

\subsection{The Bakry-\'Emery condition 
for local Dirichlet forms and interpolation estimates}
\label{subsec:BE-Dirichlet}
 The natural setting is provided
by a Polish topological space $(X,\tau)$ endowed with
a $\sigma$-finite reference Borel measure $\mm$ and 
a strongly local symmetric Dirichlet form $\cE$ in $L^2(X,\mm)$
enjoying a \emph{Carr\'e du Champ} $\Gamma:D(\cE)\times D(\cE)\to L^1(X,\mm)$ and a
$\Gamma$-calculus 
(see e.g.\ \cite[\S~2]{AGS12}).
All the estimates we are discussing in this 
section and in the next one, Section~\ref{sec:NDaction}, devoted to
action estimates for nonlinear diffusion equations
do not really need
an underlying compatible metric structure, as the one discussed in 
\cite[\S~3]{AGS12}. 
We refer to \cite[\S 2]{AGS12} for the basic notation and assumptions;
in any case, we will apply all the results to the case of the Cheeger
energy (thus assumed to be quadratic) 
of the metric measure space $(X,\sfd,\mm)$ and we keep the same
notation 
of the previous Section~\ref{subsec:Cheeger}, just using the calculus properties of 
the Dirichlet form that are related to the $\Gamma$-formalism.

In the following we set $\V_\infty:=\V\cap L^\infty(X,\mm)$,
$\D_\infty:=\D\cap L^\infty(X,\mm)$,
\begin{equation}
  \label{eq:100}
  \begin{cases}
  D_{L^p}(\DeltaE):=\big\{f\in \D{}\cap L^p(X,\mm):\ \DeltaE f\in
  L^p(X,\mm)\big\}\qquad p\in [1,\infty],\\
  D_\V(\DeltaE)=\big\{f\in \D:\ \DeltaE f\in \V\big\},
  \end{cases}
\end{equation}
  endowed with the norms
  \begin{equation}
  \label{eq:103}
  \|f\|_{D_{L^p}}:=\|f\|_\V+\|f-\DeltaE f\|_{L^2\cap L^p(X,\mm)},\quad
  \|f\|^2_{D_\V}:=\|f\|^2_{L^2(X,\mm)}+\|\DeltaE f\|^2_\V,
\end{equation}
and we introduce the multilinear form $\GGamma_2$ given by
\begin{equation}
\begin{aligned}
  \GGamma_2(f,g;\varphi):=&
  \frac 12\int_X \Big(\Gbil fg\, {\DeltaE \varphi}-
  \Gbil f{\DeltaE g}\varphi-\Gbil g{\DeltaE f}\varphi\Big)\,\d\mm
  \qquad
  (f,g,\varphi)\in D(\mathbf\Gamma_2),
\end{aligned}\label{eq:22}
\end{equation}
where $D(\mathbf{\Gamma}_2):=D_\V(\DeltaE)\times D_\V(\DeltaE) \times
  D_{L^\infty}(\DeltaE)$.
When $f=g$ we also set
\begin{equation}
  \label{eq:80} 
  \GGamma_2(f;\varphi):=\GGamma_2(f,f;\varphi)=
  \int_X \Big(\frac 12\Gq f\, {\DeltaE \varphi}-
  \Gbil f{\DeltaE f}\varphi\Big)\,\d\mm,
\end{equation}
so that 
\begin{equation}
  \label{eq:46}
\GGamma_2(f,g;\varphi)=  \frac14\GGamma_2(f+g;\varphi)-\frac 14 \GGamma_2(f-g;\varphi). 
\end{equation}
$\mathbf\Gamma_2$ provides a weak version 
(inspired by \cite{Bakry06,Bakry-Ledoux06}) of 
the Bakry-\'Emery condition \cite{Bakry-Emery84,Bakry92}.
\begin{definition}[Bakry-\'Emery conditions]
  \label{def:BE}
  Let $K\in \R$, $N\in [1,\infty]$.
  We say that the strongly local Dirichlet form $\cE$ satisfies the
  $\BE K N$
  condition, 
  if it admits a Carr\'e du Champ $\Gamma$ and
  for every $(f,\varphi)\in D_\V(\DeltaE)\times D_{L^\infty}(\DeltaE)$
  with $\varphi\ge0$ one has
  \begin{equation}
  \label{eq:9}
  \GGamma_2(f;\varphi)\ge K\int_X \Gq f\,\varphi\,\d\mm+\frac 1N\int_X
  (\DeltaE f)^2\varphi\,\d\mm.
\end{equation}

We say that a metric measure space 
$(X,\sfd,\mm)$ (see \S~\ref{subsec:Cheeger}) satisfies the 
\emph{metric} $\BE KN$ condition if 
the Cheeger energy is quadratic, the associated Dirichlet form
$\cE$ satisfies $\BE KN$, and
\begin{equation}
  \label{eq:69}
\text{any}\quad
f\in \V_\infty
\quad\text{with}\quad
\Gq f\in L^\infty(X,\mm)
\quad
\text{has a $1$-Lipschitz representative}.
\end{equation}
\end{definition}  

\begin{remark}[Pointwise gradient estimates for $\BE K\infty$]
  \upshape
  When $N=\infty$, the inequality \eqref{eq:9} is in fact equivalent (see \cite[Cor.~2.3]{AGS12} for a
  proof in the abstract setup of this section) to either of the following
  pointwise gradient estimates
  \begin{equation}
    \label{eq:77}
    \Gq{\sfP_t f}\le \rme^{-2K t}\,\sfP_t\big(\Gq f\big)\quad
    \text{$\mm$-a.e.\ in $X$, for every }f\in \V,
  \end{equation}
  \begin{equation}
    \label{eq:77bis}
    2\rmI_{2K}(t) \Gq{\sfP_t f}\le \sfP_t{f^2}-\big(\sfP_t f\big)^2
    \quad \mm\text{-a.e.\
      in }X,\quad
    \forevery
    t>0,\ f\in L^2(X,\mm),
  \end{equation}
  where $\rmI_K$ denotes the real function
  \begin{displaymath}
    \rmI_{K}(t):=\int_0^t \rme^{K r}\,\d r=
    \begin{cases}
      {\displaystyle\frac 1K(\rme^{K t}-1)}&\text{if }K\neq 0,\\
      t&\text{if }K=0.
    \end{cases}
  \end{displaymath}
\end{remark}
It will be useful to  have different expressions for $\GGamma_2(f;\varphi)$,
that make sense under weaker condition on $f,\,\varphi$.
Typically their equivalence 
will be proved by 
regularization arguments,
which will be based on the following
approximation result.

\begin{lemma} [Density of $D_\V(\DeltaE)\cap D_{L^\infty}(\DeltaE)$]   \label{le:approximation}
  The vector space $D_\V(\DeltaE)\cap D_{L^\infty}(\DeltaE)$ is dense in $D_\V(\DeltaE)$.
  In addition, if $f\in D_{L^p}(\DeltaE)$, $p\in [1,\infty]$ satisfies
  the uniform bounds $c_0\le f\le c_1$ $\mm$-a.e.~in $X$
  for some real constants $c_0,\,c_1$, then
  we can find an approximating sequence $(f_n)\subset D_\V(\DeltaE)\cap
  D_{L^\infty}(\DeltaE)$ converging to $f$ in
  $D_{L^p}(\DeltaE)$ with $f_n\to f$ in $\V$ and $\DeltaE f_n\to\DeltaE f$ 
  in $L^2\cap L^p$ if $p<\infty$ (in the weak$^*$ sense when $p=\infty$), 
  as $n\to\infty$ and satisfying the same bounds
  $c_0\le f_n\le c_1$ $\mm$-a.e.~in $X$.
\end{lemma}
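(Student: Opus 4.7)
The plan is to build both approximating sequences via the linear resolvent $\sfJ_\lambda:=(I-\lambda\DeltaE)^{-1}:\H\to\D$, whose key features are: (i) $\sfJ_\lambda$ commutes with $\DeltaE$ and satisfies $\DeltaE\sfJ_\lambda g=\lambda^{-1}(\sfJ_\lambda g-g)$; (ii) by the Markov property of the semigroup $(\sfP_t)_{t\ge0}$ together with mass preservation \eqref{eq:271} (available in the applications we target, in particular for the Cheeger energy), pointwise bounds $c_0\le g\le c_1$ are preserved by $\sfJ_\lambda$; (iii) the a priori identity $\lambda\cE(\sfJ_\lambda g)+\|\sfJ_\lambda g\|_\H^2=\langle g,\sfJ_\lambda g\rangle$ shows that $\sfJ_\lambda$ is bounded from $\H$ into $\V$, whence continuous from $\V$ into $\V$, and it is strongly continuous as $\lambda\down0$ on each space in the scale $\H\supset\V\supset\D$.

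For the first statement, I fix any $\lambda>0$, write $f\in D_\V(\DeltaE)$ as $f=\sfJ_\lambda g$ with $g:=f-\lambda\DeltaE f\in\V$, and approximate $g$ in the $\V$-norm by its truncations $g_n:=(-n)\vee g\wedge n$. By \eqref{eq:Dir-Lip} these belong to $\V_\infty$, and a standard dominated convergence argument for $\Gamma$, based on the chain rule along normal contractions and on the pointwise bound $\Gamma(g_n-g)\le \chi_{\{|g|>n\}}\Gamma(g)\in L^1(X,\mm)$, yields $g_n\to g$ in $\V$. Setting $f_n:=\sfJ_\lambda g_n$ then gives the desired sequence: properties (ii)--(iii) yield $f_n\in\D\cap L^\infty(X,\mm)$, while (i) forces $\DeltaE f_n=\lambda^{-1}(f_n-g_n)\in\V\cap L^\infty(X,\mm)$, so $f_n\in D_\V(\DeltaE)\cap D_{L^\infty}(\DeltaE)$. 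Continuity of $\sfJ_\lambda$ from $\V$ into $\V$ yields $f_n\to f$ in $\V$, and therefore $\DeltaE f_n\to\DeltaE f$ in $\V$, which is convergence in a norm dominating $\|\cdot\|_{D_\V}$.

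For the second statement, given $f\in D_{L^p}(\DeltaE)$ with $c_0\le f\le c_1$, I take $f_n:=\sfJ_{1/n}f$. The bound $c_0\le f_n\le c_1$ is immediate from (ii). From $\DeltaE f_n=n(f_n-f)\in L^\infty(X,\mm)$ I obtain $f_n\in D_{L^\infty}(\DeltaE)$, and from $\DeltaE f_n=\sfJ_{1/n}\DeltaE f\in\D\subset\V$ I obtain $f_n\in D_\V(\DeltaE)$. Standard resolvent theory, applied to the elements $f\in\D$ and $\DeltaE f\in L^2\cap L^p(X,\mm)$, then yields $f_n\to f$ in $\V$ and $\DeltaE f_n=\sfJ_{1/n}\DeltaE f\to\DeltaE f$ in $L^2\cap L^p(X,\mm)$ for $p<\infty$, and in the weak-$*$ sense when $p=\infty$: this is precisely convergence in $D_{L^p}(\DeltaE)$.

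The main technical obstacle I anticipate is the truncation convergence $g_n\to g$ in $\V$ used in the first part, which rests crucially on the locality of the Carr\'e du Champ and on the chain rule for normal contractions; by contrast, all the mapping and strong continuity properties of $\sfJ_\lambda$ exploited above are routine consequences of the spectral calculus for the self-adjoint operator $-\DeltaE$.
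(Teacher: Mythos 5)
Your proof is correct, but it does not follow the paper's route. For the first statement the paper gives no self-contained argument at all: it simply cites \cite[Lemma 4.2]{AMS13}; your resolvent conjugation $f=\sfJ_\lambda g$ with $g=f-\lambda\DeltaE f\in\V$, followed by truncation of $g$ (rather than of $f$, which would destroy control on $\DeltaE f$) and re-application of $\sfJ_\lambda$, is a genuinely different and self-contained argument. Note only that the convergence of the truncations $g_n\to g$ in $\V$ as you justify it uses locality and the Carr\'e du champ; this is legitimate here, since the lemma sits in the Part III setting of a strongly local Dirichlet form with $\Gamma$-calculus, but it would not transfer verbatim to the abstract setting of Part I. For the second statement the paper uses the mollified heat flow $\mathfrak H^\eps f=\frac1\eps\int_0^\infty \sfP_r f\,\kappa(r/\eps)\,\d r$ with $\kappa\in\rmC^\infty_c(0,\infty)$: the compact support of $\kappa$ away from $r=0$ lets one integrate by parts in $r$ (using $\DeltaE\sfP_r f=\frac{\d}{\d r}\sfP_r f$) and read off $\DeltaE\mathfrak H^\eps f\in\V\cap L^\infty$ directly, whereas you obtain $\DeltaE\sfJ_{1/n}f=n(\sfJ_{1/n}f-f)\in L^\infty(X,\mm)$ from the two-sided bounds on $f$ and $\sfJ_{1/n}f$, and $\DeltaE\sfJ_{1/n}f=\sfJ_{1/n}\DeltaE f\in\D\subset\V$ from commutation. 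Both mechanisms are sound, and the convergence statements in $\V$ and in $L^2\cap L^p$ (weak$^*$ for $p=\infty$) follow in either case from strong (resp.\ weak$^*$) continuity of the semigroup, hence of its time-averages, on the relevant $L^p$ scales. One shared caveat: preservation of the bounds $c_0\le f\le c_1$ by $\sfP_t$ or $\sfJ_\lambda$ for constants that do not straddle $0$ requires conservativity (mass preservation) and not just sub-Markovianity; you flag this explicitly, while the paper asserts it without comment, so on this point you are if anything more careful. In the first part you only need $|g_n|\le n$, which sub-Markovianity alone gives, so no extra hypothesis enters there.
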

\begin{proof}
  The proof of the density 
  of $D_\V(\DeltaE)\cap D_{L^\infty}(\DeltaE)$ in 
  $D_\V(\DeltaE)$ 
  has been given in 
  \cite[Lemma 4.2]{AMS13}.
  In order to prove the second approximation result,
  we introduce the mollified heat flow
  \begin{equation}
    \label{eq:55}
    \mathfrak H^\eps f:= \frac 1\eps \int_0^\infty \sfP_r f\,\kappa(r/\eps)\,\d r,
  \end{equation}
  where $\kappa\in \rmC^\infty_c(0,\infty)$ is a nonnegative regularization kernel
  with $\int_0^\infty \kappa(r)\,\d r=1$.

  Setting $f_n:=\mathfrak H^{1/n} f$, 
  since $f\in L^2\cap L^\infty(X,\mm)$
  it is not difficult to check that $f_n
  \in D_{\V}(\DeltaE)\cap D_{L^\infty}(\DeltaE)$. In addition,
  $c_0\le f_n\le c_1$, since the heat flow
  preserves global lower or upper bounds by constants.
  
    We then use the fact $\DeltaE$ is the generator
  of a strongly continuous semigroup in each
  $L^p(X,\mm)$ if $p<\infty$ 
  (and of a weak$^*$-continuous semigroup in $L^\infty(X,\mm)$).
\end{proof}

{\color{black} An immediate corollary of the previous density result is the possibility
to test the condition $\BE KN$ on a better class of test functions.}

\begin{corollary}
  \label{cor:obvious}
  If \eqref{eq:9} holds for every
  $f\in D_\V(\DeltaE)\cap D_{L^\infty}(\DeltaE)$ and 
  every nonnegative $\varphi\in D_{L^\infty}(\DeltaE)$,  
  then the $\BE KN$ condition holds.
\end{corollary}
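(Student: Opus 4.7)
The plan is to argue by density in the variable $f$, keeping $\varphi\in D_{L^\infty}(\DeltaE)$ fixed and nonnegative. Given any $f\in D_\V(\DeltaE)$, Lemma~\ref{le:approximation} furnishes a sequence $f_n\in D_\V(\DeltaE)\cap D_{L^\infty}(\DeltaE)$ converging to $f$ in $D_\V(\DeltaE)$, i.e.\ with $f_n\to f$ in $\V$ and $\DeltaE f_n\to \DeltaE f$ in $\V$. By assumption, for each $n$
\begin{equation}\label{eq:BEn}
\GGamma_2(f_n;\varphi)\ge K\int_X \Gq{f_n}\,\varphi\,\d\mm+\frac 1N\int_X(\DeltaE f_n)^2\varphi\,\d\mm,
\end{equation}
and it remains to pass to the limit as $n\to\infty$ in each of the four integrals appearing in \eqref{eq:BEn}, recalling the definition \eqref{eq:80} of $\GGamma_2(f_n;\varphi)$.

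The key tool is the strong continuity of the Carr\'e du Champ as a bilinear map $\V\times\V\to L^1(X,\mm)$: combined with $f_n\to f$ in $\V$ it yields $\Gq{f_n}\to\Gq{f}$ in $L^1(X,\mm)$, while the joint convergence $f_n\to f$ and $\DeltaE f_n\to\DeltaE f$ in $\V$ produces $\Gbil{f_n}{\DeltaE f_n}\to\Gbil{f}{\DeltaE f}$ in $L^1(X,\mm)$. Since $\varphi$ and $\DeltaE\varphi$ belong to $L^\infty(X,\mm)$, these $L^1$-convergences give
\[
\int_X \Gq{f_n}\,\DeltaE\varphi\,\d\mm\to\int_X \Gq{f}\,\DeltaE\varphi\,\d\mm,\qquad
\int_X \Gbil{f_n}{\DeltaE f_n}\,\varphi\,\d\mm\to\int_X \Gbil{f}{\DeltaE f}\,\varphi\,\d\mm,
\]
so the left-hand side of \eqref{eq:BEn} converges to $\GGamma_2(f;\varphi)$, and also the first term on the right-hand side converges to $K\int_X\Gq f\,\varphi\,\d\mm$. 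For the last term, $\DeltaE f_n\to\DeltaE f$ in $\V\hookrightarrow\H$ implies $(\DeltaE f_n)^2\to(\DeltaE f)^2$ in $L^1(X,\mm)$, and again $\varphi\in L^\infty(X,\mm)$ allows one to pass to the limit.

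Taking the limit $n\to\infty$ in \eqref{eq:BEn} yields the desired inequality \eqref{eq:9} for the given pair $(f,\varphi)$. There is no real obstacle here: the whole argument is a routine density/continuity passage, and the only thing to double-check is that the bilinear Carr\'e du Champ is indeed strongly continuous $\V\times\V\to L^1(X,\mm)$, which is a standard consequence of the Cauchy-Schwarz inequality $\int_X|\Gbil g h|\,\d\mm\le\cE(g,g)^{1/2}\cE(h,h)^{1/2}$ applied to the polarized differences.
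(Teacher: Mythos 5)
Your argument is correct and follows exactly the route the paper intends: the corollary is stated there as an immediate consequence of Lemma~\ref{le:approximation}, i.e.\ one approximates $f\in D_\V(\DeltaE)$ by $f_n\in D_\V(\DeltaE)\cap D_{L^\infty}(\DeltaE)$ and passes to the limit in \eqref{eq:9} using the strong continuity of $\Gamma$ on $\V\times\V$ and the boundedness of $\varphi$ and $\DeltaE\varphi$, exactly as you do. Your only implicit step, that convergence in the $D_\V(\DeltaE)$ norm yields $f_n\to f$ in $\V$, is justified by the interpolation identity $\cE(g,g)=-\int_X g\,\DeltaE g\,\d\mm$ (cf.\ \eqref{eq:interpola_2015}), so the proof stands as written.
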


A first representation of $\GGamma_2$ 
is provided by the following lemma,
whose proof is an easy consequence
of the Lebniz rule for $\Gamma$, see 
\cite[Lemma 4.1]{AMS13}.

\begin{lemma}
  \label{le:Gamma-equivalence1} 
  If $f\in D_\V(\DeltaE)\cap D_{L^\infty}(\DeltaE)$ and $\varphi\in D_{L^\infty}(\DeltaE)$ 
  then
  \begin{equation}
    \label{eq:30}
    \GGamma_2(f;\varphi)=\int_X \Big(\frac12 \Gq f\DeltaE\varphi+
    \DeltaE f\,\Gbil f\varphi+\varphi(\DeltaE f)^2\Big)\,\d\mm.
  \end{equation}
\end{lemma}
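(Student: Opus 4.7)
The plan is to derive the identity directly from the definition \eqref{eq:80} by rewriting the term $\int_X \Gbil f{\DeltaE f}\varphi\,\d\mm$ using the Leibniz rule for $\Gamma$ and the integration-by-parts formula associated to $\DeltaE$. The only real work consists in checking that the functions to which these calculus rules are applied lie in the appropriate spaces.

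First I would record the regularity available from the assumptions: since $f\in D_\V(\DeltaE)\cap D_{L^\infty}(\DeltaE)$, both $f$ and $\DeltaE f$ belong to $\V_\infty=\V\cap L^\infty(X,\mm)$, and similarly $\varphi$ and $\DeltaE\varphi$ are in $\V_\infty$. By the algebra property \eqref{eq:Dir-Alg}, the product $\varphi\,\DeltaE f$ therefore belongs to $\V_\infty$. With this in hand, applying the Leibniz rule $\Gbil f{gh}=g\,\Gbil fh+h\,\Gbil fg$ with $g=\varphi$ and $h=\DeltaE f$ gives, after pointwise rearrangement,
\begin{equation*}
\varphi\,\Gbil f{\DeltaE f}=\Gbil f{\varphi\,\DeltaE f}-\DeltaE f\,\Gbil f\varphi\qquad\mm\text{-a.e.~in }X,
\end{equation*}
and all three terms are in $L^1(X,\mm)$ by Cauchy--Schwarz and the $L^\infty$ bounds on $\varphi$ and $\DeltaE f$.

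Integrating the displayed identity against $\mm$ and using that $f\in\D$ with $\varphi\,\DeltaE f\in\V$, the defining property \eqref{eq:265} of $\DeltaE$ via $\cE$ yields
\begin{equation*}
\int_X\Gbil f{\varphi\,\DeltaE f}\,\d\mm=\cE(f,\varphi\,\DeltaE f)=-\int_X(\DeltaE f)(\varphi\,\DeltaE f)\,\d\mm=-\int_X\varphi(\DeltaE f)^2\,\d\mm.
\end{equation*}
Combining the last two displays gives
\begin{equation*}
\int_X\varphi\,\Gbil f{\DeltaE f}\,\d\mm=-\int_X\varphi(\DeltaE f)^2\,\d\mm-\int_X\DeltaE f\,\Gbil f\varphi\,\d\mm,
\end{equation*}
and substituting this into the definition \eqref{eq:80} of $\GGamma_2(f;\varphi)$ produces exactly \eqref{eq:30}.

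There is no serious obstacle: the only point that needs a moment's care is verifying that $\varphi\,\DeltaE f\in\V$ so that it is a legitimate test function in the integration-by-parts step, and this is immediate from \eqref{eq:Dir-Alg} once both factors have been placed in $\V_\infty$. No approximation argument is required at this level of regularity, which is precisely why the hypothesis $f\in D_\V(\DeltaE)$ (as opposed to merely $f\in\D$) is imposed in the statement.
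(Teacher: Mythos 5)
Your argument is correct and is exactly the route the paper intends: it states that the lemma "is an easy consequence of the Leibniz rule for $\Gamma$" and refers to \cite[Lemma 4.1]{AMS13}, which carries out the same computation — Leibniz rule applied to $\Gbil f{\varphi\,\DeltaE f}$ followed by integration by parts via $\cE(f,\varphi\,\DeltaE f)=-\int_X\varphi(\DeltaE f)^2\,\d\mm$, with the algebra property \eqref{eq:Dir-Alg} guaranteeing $\varphi\,\DeltaE f\in\V_\infty$. Your regularity bookkeeping (both $\DeltaE f$ and $\varphi$ lying in $\V_\infty$ under the stated hypotheses) is also accurate, so nothing is missing.
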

Recalling \eqref{eq:46} we also get
\begin{equation}
  \label{eq:49}
  \GGamma_2(f,g;\varphi)=\frac 12 \int_X \Big(\Gbil fg\DeltaE\varphi+
  \DeltaE f\,\Gbil g\varphi+
  \DeltaE g\,\Gbil f\varphi+2\varphi \, \DeltaE f\,\DeltaE g\Big)\,\d\mm.
\end{equation}
Notice that \eqref{eq:30}
makes sense even if $f,\,\varphi\in \D_\infty$, provided
$\Gq f$ and $\Gbil f\varphi$ belong to $L^2(X,\mm)$.
This extra integrability of $\Gamma$ is a general 
consequence of the $\BE K\infty$ condition.

\begin{theorem}[Gradient interpolation, {\cite[Thm.~3.1]{AMS13}}]
  \label{thm:interpolation}
  Assume that $\BE K\infty$ holds,
  let $\lambda\ge K_-$, $p\in \{2,\infty\}$,
  $f\in L^2\cap L^\infty(X,\mm)$ with $\DeltaE f\in
  L^p(X,\mm)$.
  Then $\Gq f\in L^{p}(X,\mm)$ 
  and
  \begin{equation}
    \label{eq:87}
    \big\|\Gq f\|_{L^p(X,\mm)}\le
    c\|f\|_{L^\infty(X,\mm)}\,\|\lambda f-\DeltaE f\|_{L^p(X,\mm)}
  \end{equation}
  for a universal constant $c$ independent of $\lambda,X,\mm,f$
  ($c=\sqrt{2\pi}$ when $p=\infty$).
  
  Moreover, if $f_n\in \D_\infty$ with $\sup_n \|f_n\|_{L^\infty(X,\mm)}<\infty$ and 
  $f_n\to f$ strongly in $\D$, then $\Gq {f_n}\to \Gq f$ 
  and $\Gq {f_n-f}\to0$ strongly in $L^2(X,\mm)$.
\end{theorem}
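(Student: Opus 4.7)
My plan is to use the resolvent representation of $f$ together with the pointwise Bakry--\'Emery estimate \eqref{eq:77bis}. Set $g:=\lambda f-\DeltaE f\in L^{2}\cap L^{p}(X,\mm)$; since $\lambda\geq K_{-}\geq 0$, the operator $\lambda-\DeltaE$ is invertible in both $L^{2}$ and $L^{p}$, so that
$$f=\int_{0}^{\infty} e^{-\lambda t}\,\sfP_{t}g\,\d t$$
in the sense of a strongly convergent Bochner integral (weakly-$\ast$ convergent when $p=\infty$). The subadditivity of the seminorm $h\mapsto\sqrt{\Gq h}$ on $\V$ together with dominated convergence then yields the pointwise $\mm$-a.e. bound
$$\sqrt{\Gq f}\leq \int_{0}^{\infty} e^{-\lambda t}\sqrt{\Gq{\sfP_{t}g}}\,\d t.$$
The Bakry--\'Emery estimate \eqref{eq:77bis} applied to $\sfP_{t}g$ gives
$$2\,\rmI_{2K}(t)\,\Gq{\sfP_{t}g}\leq \sfP_{t}(g^{2})-(\sfP_{t}g)^{2},$$
and the whole question is how to bound the right-hand side by a suitable product of $\|f\|_{\infty}$ and $\|g\|_{L^{p}}$ (rather than by $\|g\|_{\infty}^{2}$, which is the brute-force estimate).

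To extract the factor $\|f\|_{\infty}$, I would not use $g^{2}\leq\|g\|_{\infty}^{2}$ directly but rather exploit the structure $g=(\lambda-\DeltaE)f$ via the identity
$$\sfP_{t}(g^{2})-(\sfP_{t}g)^{2}=2\int_{0}^{t}\sfP_{s}\big(\Gq{\sfP_{t-s}g}\big)\,\d s,$$
and estimate the integrand using the $\BE K\infty$ propagation of the gradient together with the $L^{\infty}$ contractivity $\|\sfP_{r}f\|_{\infty}\leq\|f\|_{\infty}$; carrying the identity $\Gq h=\tfrac12\DeltaE(h^{2})-h\DeltaE h$ through the resolvent integral lets one trade one power of $g$ for a factor $\|f\|_{\infty}$. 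After inserting the resulting bound into the resolvent representation, the remaining $t$-integral is a Gaussian-type convolution that can be evaluated explicitly and yields the universal constant $c=\sqrt{2\pi}$ in the case $p=\infty$; for $p=2$ one dualises against an arbitrary $\varphi\in L^{2}(X,\mm)$, uses the self-adjointness of $\sfP_{s}$ and Cauchy--Schwarz, and closes the estimate via the $L^{2}$-contractivity of $\sfP_{t}$. The main obstacle is precisely this asymmetric extraction of $\|f\|_{\infty}$ rather than $\|g\|_{\infty}$: the naive bound $\sfP_{t}(g^{2})\leq\|g\|_{\infty}^{2}$ wastes the available $L^{\infty}$ information on $f$, and the sharp product requires combining the commutation identity for $\Gamma$ with the contractivity properties of the semigroup.

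For the final continuity statement, I would first apply \eqref{eq:87} with $p=2$ to the differences $f_{n}-f$: the uniform $L^{\infty}$ bound $\sup_{n}\|f_{n}\|_{\infty}<\infty$ controls $\|f_{n}-f\|_{\infty}$, while $f_{n}\to f$ in $\D$ yields $\|\lambda(f_{n}-f)-\DeltaE(f_{n}-f)\|_{L^{2}}\to 0$, and the estimate then gives $\Gq{f_{n}-f}\to 0$ in $L^{2}(X,\mm)$. Second, the polarisation identity
$$\Gq{f_{n}}-\Gq f=\Gq{f_{n}-f}+2\,\Gamma(f,\,f_{n}-f)$$
combined with the pointwise Cauchy--Schwarz bound $|\Gamma(f,f_{n}-f)|\leq\sqrt{\Gq f}\sqrt{\Gq{f_{n}-f}}$ yields
$$\|\Gamma(f,f_{n}-f)\|_{L^{2}}\leq \|\sqrt{\Gq f}\|_{L^{\infty}}\sqrt{2\,\cE(f_{n}-f)},$$
where $\|\sqrt{\Gq f}\|_{L^{\infty}}<\infty$ by applying \eqref{eq:87} with $p=\infty$ to $f$ itself; since $\cE(f_{n}-f)\to 0$, the cross term tends to zero in $L^{2}$, and the convergence $\Gq{f_{n}}\to\Gq f$ in $L^{2}(X,\mm)$ follows.
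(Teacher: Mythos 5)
You should first note that the paper does not prove this statement at all: it is quoted verbatim from \cite[Thm.~3.1]{AMS13}, so your proposal can only be judged on its own merits. Judged this way, it has a genuine gap exactly at the point you yourself identify as ``the main obstacle'': the production of the asymmetric product $\|f\|_{L^\infty}\|\lambda f-\DeltaE f\|_{L^p}$. The resolvent representation, the subadditivity of $\sqrt{\Gq\cdot}$ under Bochner integrals, and a naive use of \eqref{eq:77bis} only yield a bound of the type $\Gq f\le \tfrac{\pi}{2\lambda}\|g\|_{L^\infty}^2$ with $g=\lambda f-\DeltaE f$, which does not imply the product bound; your proposed remedy is not carried out and, as sketched, does not work. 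Indeed, the identity $\sfP_t(g^2)-(\sfP_tg)^2=2\int_0^t\sfP_s\big(\Gq{\sfP_{t-s}g}\big)\,\d s$ merely undoes the derivation of \eqref{eq:77bis}; the integrand involves $g$, not $f$, so the contractivity $\|\sfP_rf\|_{L^\infty}\le\|f\|_{L^\infty}$ never enters; the identity $\Gq h=\tfrac12\DeltaE(h^2)-h\DeltaE h$ applied to $h=\sfP_{t-s}g$ presupposes $h^2\in D(\DeltaE)$, i.e.\ essentially the algebra property of $\D_\infty$ that the paper deduces \emph{from} this theorem, and even granting it neither $\DeltaE(h^2)$ nor $h\DeltaE h$ is controlled by $\|f\|_{L^\infty}\|g\|_{L^p}$ in any evident way; and the constant $\sqrt{2\pi}$ is asserted rather than computed. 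What is actually needed is an extra optimization step that converts a $\lambda^{-1}\|g\|^2$-type bound into a product bound, e.g.\ applying the crude estimate to $\mu f-\DeltaE f=g+(\mu-\lambda)f$ and optimizing over $\mu\ge\lambda$, or optimizing the cut time in the splitting $f=\sfP_tf-\int_0^t\sfP_s\DeltaE f\,\d s$; nothing of this kind appears in your sketch, the $p=2$ case (``dualise, Cauchy--Schwarz, contractivity'') is likewise only a gesture, and for $\lambda=K_-=0$ (allowed when $K\ge0$) the resolvent integral need not even converge, so one must work with $\lambda>0$ and pass to the limit.

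The final continuity statement is essentially right, except for one unjustified step: you invoke \eqref{eq:87} with $p=\infty$ for $f$ to get $\sqrt{\Gq f}\in L^\infty(X,\mm)$, but the hypotheses of this part only provide $\DeltaE f\in\H$ (from $f_n\to f$ in $\D$), not $\DeltaE f\in L^\infty(X,\mm)$, so the $p=\infty$ estimate is not applicable to $f$. The repair is easy: the $p=2$ estimate does apply (since $f\in L^2\cap L^\infty(X,\mm)$ and $\DeltaE f\in\H$) and gives $\Gq f\in L^2(X,\mm)$; then the pointwise Cauchy--Schwarz bound yields $\|\Gbil f{f_n-f}\|_{L^2}^2\le\int_X\Gq f\,\Gq{f_n-f}\,\d\mm\le\|\Gq f\|_{L^2}\,\|\Gq{f_n-f}\|_{L^2}\to0$, and the rest of your polarisation argument goes through unchanged.
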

An important consequence of Theorem~\ref{thm:interpolation}
is that $\D_\infty$ is an algebra, also preserved by left composition with
functions $h\in \rmC^2(\R)$ vanishing at $0$:
this can be easily checked by the formula
\begin{equation}
  \label{eq:96}
  \DeltaE(fg)=f\DeltaE g+g\DeltaE f+2\Gbil fg,\qquad
  \DeltaE(h(f))=h'(f)\DeltaE f+h''(f)\Gq f
\end{equation}
using the fact that $\Gq f,\,\Gbil fg\in L^2(X,\mm)$ whenever
$f,\,g \in \D_\infty$.

Thanks to the improved integrability of $\Gamma$ 
given by Theorem~\ref{thm:interpolation}
and to the previous approximation result, 
we can now extend the domain of $\GGamma_2$
to the whole of $(\D_\infty)^3$.

\begin{corollary}[Extension of $\GGamma_2$]
  \label{cor:weaker-assumption-G2}
  If $\BE K\infty$ holds then $\GGamma_2$ can be extended to a continuous
  multilinear form in $\D_\infty\times \D_\infty\times \D_\infty$
  by \eqref{eq:49} and $\BE KN$ holds if and only if 
  \begin{equation}\label{eq:9999}
  \int_X \Big(\frac12 \Gq {f}\DeltaE\varphi+
    \DeltaE f\,\Gbil {f}{\varphi}+ (1-\frac 1N)\varphi(\DeltaE f)^2\Big)\,\d\mm
    \ge K\int_X \Gq {f}\,\varphi\,\d\mm.
  \end{equation}
  is satisfied by every choice of $f,\,\varphi\in \D_\infty$ with
  $\varphi\ge0$.
\end{corollary}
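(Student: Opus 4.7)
My plan is to first extend $\GGamma_2$ to $\D_\infty^3$ by showing the right-hand side of formula~\eqref{eq:49} is well-defined there, then to interpret \eqref{eq:9999} as the algebraic rearrangement of the $\BE KN$ inequality \eqref{eq:9} provided by Lemma~\ref{le:Gamma-equivalence1}, and finally to transfer the inequality between the two test function classes by a density argument based on Lemma~\ref{le:approximation}.

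For the extension, I will invoke Theorem~\ref{thm:interpolation} with $p=2$: under $\BE K\infty$ every $h\in \D_\infty$ satisfies $\Gq h\in L^2(X,\mm)$, with $\|\Gq h\|_{L^2}\le c\|h\|_{L^\infty}\|\lambda h-\DeltaE h\|_{L^2}$ for any $\lambda\ge K_-$, so polarization gives $\Gbil fg\in L^2(X,\mm)$ whenever $f,g\in \D_\infty$. Each of the four summands in \eqref{eq:49} is then integrable: $\Gbil fg\,\DeltaE\varphi$ and the two terms of the form $\DeltaE f\,\Gbil g\varphi$ are products of $L^2$ functions, while $\varphi\,\DeltaE f\,\DeltaE g$ pairs $\varphi\in L^\infty$ with $\DeltaE f,\DeltaE g\in L^2$. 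Combined with the quantitative bounds just stated, this yields continuity of the resulting trilinear form in the natural norm of $\D_\infty$.

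The reverse implication of the equivalence is essentially algebraic. If \eqref{eq:9999} holds for all $f,\varphi\in \D_\infty$ with $\varphi\ge 0$, then in particular it holds for $f\in D_\V(\DeltaE)\cap D_{L^\infty}(\DeltaE)$ and nonnegative $\varphi\in D_{L^\infty}(\DeltaE)$, which are both contained in $\D_\infty$; on this subclass Lemma~\ref{le:Gamma-equivalence1} identifies the left-hand side of \eqref{eq:9999} with $\GGamma_2(f;\varphi)-\tfrac 1N\int \varphi(\DeltaE f)^2\,\d\mm$, thereby recovering \eqref{eq:9}, and Corollary~\ref{cor:obvious} promotes this to the full $\BE KN$ condition.

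For the forward implication, given arbitrary $f,\varphi\in \D_\infty$ with $\varphi\ge 0$, I will use the second part of Lemma~\ref{le:approximation} to obtain sequences $f_n, \varphi_n\in D_\V(\DeltaE)\cap D_{L^\infty}(\DeltaE)$ with $f_n\to f$ in $\V$, $\DeltaE f_n\to \DeltaE f$ in $L^2$, the analogous convergences for $\varphi_n$, and uniform bounds $\|f_n\|_\infty\le \|f\|_\infty$, $0\le \varphi_n\le \|\varphi\|_\infty$. Writing \eqref{eq:9999} for the regular pair $(f_n,\varphi_n)$ is legitimate by the previous step, and I will pass to the limit using: the strong $L^2$-convergence $\Gq{f_n}\to \Gq f$ from the second statement of Theorem~\ref{thm:interpolation}; the strong $L^2$-convergence $\Gbil{f_n}{\varphi_n}\to \Gbil f\varphi$ obtained by polarization applied to $f_n\pm\varphi_n$; and, for the nonlinear term $\int \varphi_n(\DeltaE f_n)^2\,\d\mm$, the fact that $(\DeltaE f_n)^2\to (\DeltaE f)^2$ strongly in $L^1$, combined with the uniform $L^\infty$ bound on $\varphi_n$ and its a.e.\ convergence (along a subsequence) to $\varphi$, invoking dominated convergence. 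The main obstacle is precisely the control of this last, nonlinear term under the joint approximation, since one has only $L^\infty$-boundedness of the test function $\varphi_n$ on the one hand and strong $L^2$-convergence of $\DeltaE f_n$ on the other; reconciling the two requires the interplay between strong $L^1$-convergence of the square and dominated convergence sketched above.
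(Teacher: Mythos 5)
Your argument is correct and follows essentially the same route as the paper: the extension of $\GGamma_2$ to $\D_\infty^3$ via \eqref{eq:49} rests on the $L^2$-integrability of $\Gamma$ from Theorem~\ref{thm:interpolation}, the reverse implication is the identification of \eqref{eq:9999} with \eqref{eq:9} through Lemma~\ref{le:Gamma-equivalence1} followed by Corollary~\ref{cor:obvious}, and the forward implication uses the approximation of Lemma~\ref{le:approximation} together with the convergence statements of Theorem~\ref{thm:interpolation} to pass to the limit, exactly as in the paper (whose own passage to the limit is in fact terser than your treatment of the term $\int\varphi_n(\DeltaE f_n)^2\,\d\mm$). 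The only cosmetic difference is that you invoke the second part of Lemma~\ref{le:approximation} directly with $p=2$, while the paper phrases the same regularization as a heat-flow step plus a diagonal argument; the remaining term $K\int\Gq{f_n}\varphi_n\,\d\mm$, which you do not single out, is handled by the same boundedness/a.e.-convergence argument you use for the nonlinear term.
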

\begin{proof}
  Notice that \eqref{eq:49} makes sense if $f,\,g,\,\varphi\in \D_\infty$ 
  since $\Gq f,\,\Gq g,\,\Gq\varphi\in L^2(X,\mm)$ by Theorem~\ref{thm:interpolation}
  and that it provides an extension of $\GGamma_2$ by Lemma~\ref{le:Gamma-equivalence1}.
  
  In order to check \eqref{eq:9999} under the $\BE K N$ assumption
  whenever $f,\,\varphi\in \D_\infty$, $\varphi\ge0$, we first approximate  $f,\varphi$ in $\D_\infty$ with elements in $D_{\V}(\DeltaE)$ via the
  Heat flow,  and then we apply Lemma~\ref{le:approximation} with a diagonal argument to
  find $f_n,\,\varphi_n\in D_{\V}(\DeltaE)\cap D_{L^\infty}(\DeltaE)$ with $\varphi_n\ge 0$ such that 
  \eqref{eq:9} and \eqref{eq:30} yield
  \begin{displaymath}
    \int_X \Big(\frac12 \Gq {f_n}\DeltaE\varphi_n+
    \DeltaE f_n\,\Gbil {f_n}{\varphi_n}+ (1-\frac 1N)\varphi_n(\DeltaE f_n)^2\Big)\,\d\mm
    \ge K\int_X \Gq {f_n}\,\varphi_n\,\d\mm.
  \end{displaymath}
  \AAA Since, up to to subsequences, we can assume   
  \begin{align*}
   & f_n\to f,\ \varphi_n\to \varphi\quad\text{strongly in }\D \text{ and } \mm \text{-a.e.}, \quad  \|\varphi_n\|_{L^\infty(X,\mm)}\le \|\varphi\|_{L^\infty(X,\mm)}
\\ 
   & \|f_n\|_{L^\infty(X,\mm)}\le \|f\|_{L^\infty(X,\mm)},\quad  |\DeltaE f_n|	\leq g  \; \mm\text{-a.e., for some $g \in L^{2}(X,\mm)$  independent of $n$}    \end{align*}
   \fn
  we can apply the estimates stated in 
  Theorem~\ref{thm:interpolation}
  to pass to the limit in the previous inequality as $n\to\infty$.
  
  Conversely, if \eqref{eq:9999} holds for 
  every $f,\,\varphi\in \D_\infty$ with $\varphi\ge0$,
  it clearly holds for every $f\in D_\V(\DeltaE)\cap D_{L^\infty}(\DeltaE)$ 
  and nonnegative $\varphi\in D_{L^\infty}(\DeltaE)$,
  thus with the expression of $\GGamma_2$ given by \eqref{eq:80},
  thanks to Lemma~\ref{le:Gamma-equivalence1}.
  We can then apply Corollary~\ref{cor:obvious}.
\end{proof}

\subsection{Local and ``nonlinear'' characterization of 
the metric $\BE KN$ condition in locally compact spaces}
\label{subsec:localBE}

When $(X,\sfd,\mm)$ is a locally compact space
satisfying the metric $\BE K\infty$ condition,
the $\GGamma_2$ form enjoys 
a few localization properties, that will turn to be useful in the
following.

Let us first recall 
that if $(X,\sfd,\mm)$ satisfies the metric $\BE K\infty$ condition, then
$(X,\sfd)$ is a length space and
the Dirichlet form $\cE$ associated to the Cheeger energy
is quasi-regular
\cite[Thm. 4.1]{Savare12}.

In the locally compact case, the length condition also yields that
$(X,\sfd)$ is  proper 
(i.e.~every closed bounded subset of $X$
is compact) and thus geodesic 
(every couple of points can be joined by a minimal
geodesic), see, e.g.,
\cite[Prop. 2.5.22]{Burago-Burago-Ivanov01}.

A further important property
(see e.g.~\cite[Remark 6.3]{AMS13})
is that $\cE$
is regular,
i.e.~$\V\cap \rmC_c(X)$ is dense both in 
$\V$ (w.r.t.~the $\V$ norm) and 
in $\rmC_c(X)$ (w.r.t.~the uniform norm).
In particular, by Fukushima's theory 
(see e.g.~\cite{Chen-Fukushima12,Bouleau-Hirsch91}),
every $\varphi\in \V$ admits 
a $\cE$-quasi continuous representative $\tilde\varphi$ 
uniquely determined up to $\cE$-polar sets 
and every linear functional $\ell:\V\to\R$
which is nonnegative (i.e.~such that $\la\ell,\varphi\ra\ge0$ for
every nonnegative $\varphi\in \V$) 
can be uniquely represented by a $\sigma$-finite Borel measure
$\mu_\ell$ which does not charge $\cE$-polar sets, so that $\la
\ell,\varphi\ra=\int_X \tilde\varphi\,\d\mu_\ell$
for every $\varphi\in\V$. We refer to \cite[Sect.~5]{AMS13} 
for more details. We will often identify $\varphi$ with
$\tilde\varphi$,
when there is no risk of confusion. 

Before stating our locality results, we recall two useful facts, obtained
in \cite{AMS13} and slightly improving earlier results in \cite{Savare12}. See Corollary 5.7
for statement (i), and Lemma 6.7 of \cite{AMS13} for statement (ii) (more precisely, the statement of \cite[Lemma 6.7]{AMS13} 
deals with a  Lipschitz cut off function $\chi$ with 
$\DeltaE \nchi\in L^\infty(X,\mm)$ and $\Gq \nchi\in \V_\infty$, but
  since
$\nchi$ is built of the form $\eta\circ f$ with $\eta$ constant near 0, from Lemma \ref{le:pre-local}(i) below and
\eqref{eq:96} one can get also $\DeltaE\nchi\in\V_\infty$.) 

\begin{lemma}
  \label{le:pre-local}
  Let us suppose that $(X,\sfd,\mm)$ satisfies
  the \emph{metric} $\BE K\infty$ condition for some $K\in \R$.
  \begin{itemize}
  \item[(i)] For every $f,\,g\in D_{L^4}(\DeltaE)$ we have
    $\Gbil fg\in \V$ and the bounded 
    linear functional
     \begin{equation}
    \label{eq:71}
    \V\ni\varphi\mapsto 
    \int_X \Big(-\frac 12 \Gbil{\Gq f}\varphi
    +\DeltaE f \, \Gbil f\varphi+\big((\DeltaE f)^2-K\Gq f\big)\varphi\Big)
    \,\d\mm
  \end{equation}
  can be represented by a finite nonnegative Borel measure
  denoted by $\Gamma_{2,K}^*[f]$, satisfying 
    \begin{equation}
    \label{eq:104}
    \GGamma_2(f;\varphi)-K\int_X 
    \Gq f\varphi\,\d\mm=\int_X \varphi\,\d\Gamma_{2,K}^*[f]
  \end{equation}
  for every $f\in D_{L^4}(\DeltaE)\cap L^\infty(X,\mm)$
  and $\varphi\in \D_\infty$, where in \eqref{eq:104} we use the extension of $\GGamma_2(f;\varphi)$ provided by
  Corollary~\ref{cor:weaker-assumption-G2}.
  \item[(ii)] If $(X,\sfd)$ is locally compact,
  then for every compact set $E$ and every 
    open neighborhood $U\supset E$ there exists 
    a Lipschitz cutoff function 
    $\nchi:X\to[0,1]$ such that 
    $\supp(\nchi)\subset U$, 
    $\nchi\equiv 1$ in a neighborhood of $E$,
    $\DeltaE\nchi\in \V_\infty$ and
    $\Gq \nchi\in \V_\infty$.
  \end{itemize}
  \end{lemma}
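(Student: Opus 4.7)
The plan is to derive both parts from their ancestors in \cite{AMS13}, supplying only the regularity increment needed for the present formulation.

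For (i), the key step is to establish that $\Gq f\in \V$ for $f\in D_{L^4}(\DeltaE)$; once this is secured, $\Gbil fg\in\V$ follows by polarization, and the functional in \eqref{eq:71} is continuous on $\V$ because every one of its four terms is controlled using Theorem~\ref{thm:interpolation} and $\Gq f\in\V$. The route to $\Gq f\in\V$ is the integration-by-parts identity furnished by Lemma~\ref{le:Gamma-equivalence1}, which allows one to rewrite
\[
\int_X \Gq f\,\DeltaE\varphi\,\d\mm
= 2\GGamma_2(f;\varphi)-2\int_X \DeltaE f\,\Gbil f\varphi\,\d\mm-2\int_X \varphi(\DeltaE f)^2\,\d\mm;
\]
under $\BE K\infty$, $\GGamma_2(f;\varphi)$ is controlled by $\|\varphi\|_\V$ for $\varphi\in\D_\infty$, and the remaining terms are controlled by H\"older, the $L^4$ hypothesis on $\DeltaE f$ and Theorem~\ref{thm:interpolation}. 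Hence the left-hand side is bounded by $C\,\sqrt{\cE(\varphi,\varphi)}$ for all $\varphi\in\D_\infty$, so Lemma~\ref{lem:Alica}(a) applied to $\Gq f$ yields $\Gq f\in\V$. Identifying the functional \eqref{eq:71} with $\GGamma_2(f;\varphi)-K\int\Gq f\,\varphi\,\d\mm$ on $\D_\infty$ (via Lemma~\ref{le:Gamma-equivalence1}) and passing to the limit by Lemma~\ref{le:approximation} transports nonnegativity to the whole of $\V$. Fukushima's representation theorem, applicable because $\cE$ is quasi-regular under $\BE K\infty$, then produces the finite nonnegative Borel measure $\Gamma_{2,K}^*[f]$, and \eqref{eq:104} extends to $\varphi\in\D_\infty$ through quasi-continuous representatives.

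For (ii), I invoke \cite[Lemma 6.7]{AMS13} to produce $\nchi$ in the form $\eta\circ f$, where $f\in\V_\infty\cap D_{L^4}(\DeltaE)$ and $\eta\in C^\infty(\R)$ is constant near $0$ (in fact $\eta\equiv 0$ there) and constant near $1$, with $\DeltaE\nchi\in L^\infty$ and $\Gq\nchi\in\V_\infty$. The only upgrade needed is $\DeltaE\nchi\in\V$. Using the chain rule \eqref{eq:96},
\[
\DeltaE\nchi=\eta'(f)\,\DeltaE f+\eta''(f)\,\Gq f,
\]
part (i) gives $\Gq f\in\V$, and Theorem~\ref{thm:interpolation} yields $\Gq f\in L^\infty$, hence $\Gq f\in\V_\infty$; the functions $\eta'(f)$ and $\eta''(f)$ lie in $\V_\infty$ by \eqref{eq:Dir-Lip} because $\eta',\eta''$ vanish at $0$ and are Lipschitz. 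The algebra property \eqref{eq:Dir-Alg} then places both $\eta'(f)\,\DeltaE f$ and $\eta''(f)\,\Gq f$ in $\V_\infty$, so that $\DeltaE\nchi\in\V_\infty$ as required.

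The hardest point, and the one that actually forces the $L^4$ hypothesis in (i), is the enhancement $\Gq f\in\V$: the usual $\BE K\infty$ theory only gives $\Gq f\in L^p$ through Theorem~\ref{thm:interpolation}, whereas here one needs to witness its membership in the form domain via a linear-functional characterization on $\D$. Everything else is routine $\Gamma$-calculus composition and Fukushima-type representation; no new analytic tool is needed beyond what the earlier part of the paper supplies.
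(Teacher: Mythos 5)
Part (ii) of your proposal is fine, but note that it is exactly what the paper itself does: the paper offers no proof of this Lemma at all, quoting \cite[Cor.~5.7]{AMS13} for (i) and \cite[Lemma 6.7]{AMS13} for (ii), with precisely your chain-rule upgrade of $\DeltaE\nchi$ from $L^\infty(X,\mm)$ to $\V_\infty$ via \eqref{eq:96} and part (i). So the substance of your submission is your proposed proof of (i), and there the argument has a genuine gap at the step you yourself identify as the hard one, namely $\Gq f\in\V$.

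Two things go wrong. First, the bound you feed into Lemma~\ref{lem:Alica}(a) is not of the required form: that lemma needs $\bigl|\int_X \Gq f\,\DeltaE\varphi\,\d\mm\bigr|\le C\,\cE(\varphi,\varphi)^{1/2}$ with only the energy seminorm on the right, whereas the terms $\int_X\varphi(\DeltaE f)^2\,\d\mm$ and $K\int_X\Gq f\,\varphi\,\d\mm$ in your rearranged identity are zeroth-order in $\varphi$ (take $\varphi$ constant on a set of finite measure: the energy vanishes but these integrals do not), so at best you could hope for a bound by $\|\varphi\|_\V$. Second, and more seriously, the control you invoke for $\GGamma_2(f;\varphi)$ is not available and is circular: $\BE K\infty$ gives only a one-sided lower bound, and only for nonnegative $\varphi$ (the continuity of Corollary~\ref{cor:weaker-assumption-G2} is with respect to the $\D$-norm of $\varphi$, not the $\V$-norm); by \eqref{eq:80} (or \eqref{eq:30}), an upper bound $|\GGamma_2(f;\varphi)|\le C\|\varphi\|_\V$ with $C$ depending only on the $D_{L^4}$ data of $f$ is, modulo the harmless first-order terms, equivalent to the estimate $\bigl|\int_X\Gq f\,\DeltaE\varphi\,\d\mm\bigr|\le C\|\varphi\|_\V$ you are trying to prove. (In addition, Theorem~\ref{thm:interpolation} requires $f\in L^\infty(X,\mm)$, which a general $f\in D_{L^4}(\DeltaE)$ need not satisfy, and the identity of Lemma~\ref{le:Gamma-equivalence1} is only stated for $f\in D_\V(\DeltaE)\cap D_{L^\infty}(\DeltaE)$, so even the starting identity needs an approximation argument.) The actual proof in \cite[Cor.~5.7]{AMS13}, building on \cite{Savare12}, runs in the opposite order: one first constructs the nonnegative representing measure for regularized $f$, then proves the self-improved Bochner inequality $\gamma_2[f]\ge K\Gq f+\Gq{\Gq f}/\bigl(4\Gq f\bigr)$ for its absolutely continuous part, and only from this deduces a quantitative bound on $\cE(\Gq f,\Gq f)$ in terms of $\|\DeltaE f\|_{L^4}$, which yields $\Gq f\in\V$. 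That self-improvement mechanism is the missing idea; no H\"older/integration-by-parts shortcut of the kind you propose replaces it.
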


\begin{corollary}[Locality w.r.t.~$\varphi$]
  \label{cor:loc-phi}
  Let $K\in\R$ and $N<\infty$. Let us suppose that $(X,\sfd)$ is locally compact
  and that $(X,\sfd,\mm)$ satisfies the \emph{metric} $\BE K\infty$ condition.
  If  \eqref{eq:9} holds for every 
  $f\in D_\V(\DeltaE)\cap D_{L^\infty}(\DeltaE)$ 
  and every nonnegative $\varphi\in D_{L^\infty}(\DeltaE)$ with compact support,
  then $(X,\sfd,\mm)$ satisfies the metric
  $\BE KN$ condition.
\end{corollary}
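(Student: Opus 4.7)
The plan is to remove the compact support restriction on the test function $\varphi$ in \eqref{eq:9} by a cutoff-and-pass-to-the-limit argument. The two main technical ingredients are the measure representation \eqref{eq:104} of $\GGamma_2$ provided by Lemma~\ref{le:pre-local}(i) and the ``good'' cutoff functions provided by Lemma~\ref{le:pre-local}(ii); the extra integrability of $\Gamma$ coming from Theorem~\ref{thm:interpolation} (available since we assume $\BE K\infty$) is what makes all products involving cutoffs land in the right spaces.

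Concretely, since $(X,\sfd)$ is locally compact, length and complete it is proper; fixing $x_0\in X$, I would apply Lemma~\ref{le:pre-local}(ii) with $E_n:=\bar B_n(x_0)$ and $U_n:=B_{n+1}(x_0)$ to produce a sequence of $[0,1]$-valued Lipschitz cutoffs $\nchi_n$ with compact support, $\nchi_n\equiv 1$ on a neighborhood of $E_n$, and $\DeltaE\nchi_n,\,\Gq{\nchi_n}\in\V_\infty$. In particular $\nchi_n\to 1$ pointwise on $X$.

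Given an arbitrary nonnegative $\varphi\in D_{L^\infty}(\DeltaE)$, I would set $\varphi_n:=\nchi_n\varphi$. Using the Leibniz formula \eqref{eq:96}, the algebra property \eqref{eq:Dir-Alg}, the bound $|\Gbil{\nchi_n}\varphi|\le\sqrt{\Gq{\nchi_n}}\sqrt{\Gq\varphi}$ and the key estimate $\Gq\varphi\in L^\infty(X,\mm)$ from Theorem~\ref{thm:interpolation}, one checks that $\varphi_n\in D_{L^\infty}(\DeltaE)$, is nonnegative and compactly supported. Hence the hypothesis applies and gives
\begin{equation*}
\GGamma_2(f;\varphi_n)-K\int_X\Gq f\,\varphi_n\,\d\mm\ge \frac 1N\int_X(\DeltaE f)^2\,\varphi_n\,\d\mm
\end{equation*}
for every $f\in D_\V(\DeltaE)\cap D_{L^\infty}(\DeltaE)$.

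To pass to the limit as $n\to\infty$ I would observe that $f\in D_{L^4}(\DeltaE)\cap L^\infty(X,\mm)$ (since $\DeltaE f\in\V\cap L^\infty\subset L^2\cap L^\infty\subset L^4$), so Lemma~\ref{le:pre-local}(i) is applicable and the left-hand side rewrites as $\int_X\tilde\varphi_n\,\d\Gamma^*_{2,K}[f]$ with $\tilde\varphi_n=\nchi_n\tilde\varphi$ an $\cE$-quasi-continuous representative. Since $\Gamma^*_{2,K}[f]$ is a \emph{finite} nonnegative Borel measure that does not charge $\cE$-polar sets and $0\le\tilde\varphi_n\le\|\varphi\|_{L^\infty}$ converges pointwise to $\tilde\varphi$, dominated convergence delivers the left-hand side limit $\GGamma_2(f;\varphi)-K\int_X\Gq f\,\varphi\,\d\mm$. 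The right-hand side limit $\frac1N\int_X(\DeltaE f)^2\varphi\,\d\mm$ is obtained similarly, using $(\DeltaE f)^2\in L^1(X,\mm)$ and $\mm$-a.e. convergence $\varphi_n\to\varphi$. The resulting inequality is exactly \eqref{eq:9} for the original $\varphi$, and one concludes by invoking Corollary~\ref{cor:obvious}. The only nontrivial step is the membership $\varphi_n\in D_{L^\infty}(\DeltaE)$, whose verification genuinely uses the $L^\infty$-gradient bound of Theorem~\ref{thm:interpolation} to control the cross term $\Gbil{\nchi_n}\varphi$ in the Leibniz expansion of $\DeltaE\varphi_n$.
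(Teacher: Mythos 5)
Your proof is correct, and it rests on exactly the two ingredients the paper's own proof uses: the measure representation \eqref{eq:104} of Lemma~\ref{le:pre-local}(i), applicable because $D_\V(\DeltaE)\cap D_{L^\infty}(\DeltaE)\subset D_{L^4}(\DeltaE)\cap L^\infty(X,\mm)$, and the cutoffs of Lemma~\ref{le:pre-local}(ii), together with the fact that $\nchi\varphi\in D_{L^\infty}(\DeltaE)$ is compactly supported and admissible (the paper needs this too, and as you note its verification uses the $L^\infty$ bound on $\Gq\varphi$ from Theorem~\ref{thm:interpolation} and the algebra property of $\D_\infty$). The logical organization differs, though: the paper argues by contradiction, using Corollary~\ref{cor:obvious} to produce a pair $(f,\varphi)$ for which the signed measure $\varphi\,\Gamma_{2,K}^*[f]-\frac{\varphi}{N}(\DeltaE f)^2\mm$ has a nontrivial negative part, then exploiting inner regularity on the Polish space $X$ to find a compact set $E$ of negative measure and contradicting the hypothesis by testing with $\varphi\nchi_n$ for cutoffs adapted to shrinking neighborhoods of $E$. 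You argue directly: you exhaust $X$ by compact closed balls (legitimate, since metric $\BE K\infty$ plus local compactness makes $(X,\sfd)$ proper, as recalled at the beginning of Section~\ref{subsec:localBE}), test with $\nchi_n\varphi$, and pass to the limit by dominated convergence against the finite measure $\Gamma_{2,K}^*[f]$ and against $(\DeltaE f)^2\mm$. Your route avoids the contradiction and the inner-regularity step, at the modest price of invoking properness; both routes end by feeding \eqref{eq:9}, now valid for all admissible $(f,\varphi)$, into Corollary~\ref{cor:obvious}, with quadraticity of $\C$ and \eqref{eq:69} already supplied by the metric $\BE K\infty$ assumption. Two small points you may wish to make explicit: the quasi-continuous representative $\tilde\varphi$ is bounded by $\|\varphi\|_{L^\infty(X,\mm)}$ only quasi-everywhere, which suffices because $\Gamma_{2,K}^*[f]$ does not charge $\cE$-polar sets; and the value $\GGamma_2(f;\varphi_n)$ appearing in the hypothesis (definition \eqref{eq:80}) coincides with the extension used in \eqref{eq:104} by Lemma~\ref{le:Gamma-equivalence1}, so no ambiguity arises when you rewrite the left-hand side as an integral against $\Gamma_{2,K}^*[f]$.
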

\begin{proof}
  We argue by contradiction: if $\BE KN$ does not hold,
  by Corollary~\ref{cor:obvious} 
  we can find $f\in D_\V(\DeltaE)\cap D_{L^\infty}(\DeltaE)$ 
  and a nonnegative $\varphi\in D_{L^\infty}(\DeltaE)$ 
  such that 
  \begin{displaymath}
    \GGamma_2(f;\varphi)-K\int_X \Gq f\varphi\,\d\mm
    -\frac 1N \int_X (\DeltaE f)^2\varphi\,\d\mm<0.
  \end{displaymath}
  Since  $D_\V(\DeltaE)\cap D_{L^\infty}(\DeltaE)\subset
  D_{L^4}(\DeltaE)\cap L^\infty(X,\mm)$ 
  we can apply the representation result \eqref{eq:104},
  thus obtaining that the measure
  \begin{displaymath}
    \mu:={\varphi}\Gamma_{2,K}^*[f]-\frac {\varphi}N(\DeltaE f)^2\mm
  \end{displaymath}
  has a nontrivial negative part. Since $X$ is Polish, we can find
  a compact set $E$ such that $\mu(E)< 0$;
  approximating $E$ by a sequence of 
  open set $U_n\downarrow E$, Lemma~\ref{le:pre-local}(ii) provides
  a corresponding sequence of 
  nonnegative test functions $\nchi_n\in D_{L^\infty}(\DeltaE)$ such
  that 
  \begin{displaymath}
    \lim_{n\to\infty}\int_X \nchi_n\,\d\mu=\mu(E)<0.
  \end{displaymath}
  Choosing $n$ sufficiently large, since ${\varphi}\nchi_n$ 
  has compact support and belongs to $D_{L^\infty}(\DeltaE)$, 
  this contradicts the assumptions of the Corollary.  
\end{proof}

\begin{theorem}[Local characterization of $\BE KN$]
  \label{thm:localization}
  Let us suppose that $(X,\sfd,\mm)$ satisfies
  the \emph{metric} $\BE K\infty$ condition for some $K\in \R$,
  and that $(X,\sfd)$ is locally compact.
  If \eqref{eq:9} with $N<\infty$ holds for every 
   $f\in D_{L^\infty}(\DeltaE) \cap D_{\V}(\DeltaE)$ with compact support and
  for every nonnegative $\varphi \in D_{L^\infty}(\DeltaE)$
  with compact support and 
  with $\inf_{\supp f}\varphi>0$,
  then $(X,\sfd,\mm)$ satisfies the 
  metric $\BE KN$ condition.
\end{theorem}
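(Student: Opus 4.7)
The plan is to reduce to the setting of Corollary~\ref{cor:loc-phi}, i.e.\ to establish \eqref{eq:9} for every $f\in D_\V(\DeltaE)\cap D_{L^\infty}(\DeltaE)$ (with arbitrary support) and every nonnegative $\varphi\in D_{L^\infty}(\DeltaE)$ with compact support. So we fix such a pair $(f,\varphi)$, set $E:=\supp\varphi$, and use Lemma~\ref{le:pre-local}(ii) to construct two cutoffs: first $\chi\in \V_\infty$ with $\DeltaE\chi,\Gq\chi\in\V_\infty$, $\chi\equiv 1$ on an open neighborhood $V$ of $E$, and $\supp\chi$ compact; second, applied to the compact set $\supp\chi$, a cutoff $\eta\in\V_\infty$ with $\DeltaE\eta,\Gq\eta\in\V_\infty$, compact support, and $\eta\equiv 1$ on an open neighborhood of $\supp\chi$.

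The key step is to localize $f$ by setting $\tilde f:=\chi f$. Using $\V_\infty$-$\V$ and $\V_\infty$-$D_{L^\infty}(\DeltaE)$ products, together with Lemma~\ref{le:pre-local}(i) applied to $\chi,f\in D_{L^4}(\DeltaE)$ (which gives $\Gbil\chi f\in\V$), one checks via the Leibniz rule
\[
\DeltaE(\chi f)=\chi\DeltaE f+2\Gbil\chi f+f\DeltaE\chi
\]
that $\tilde f\in D_\V(\DeltaE)\cap D_{L^\infty}(\DeltaE)$ with compact support contained in $\supp\chi$. Strong locality of $\cE$ ensures $\Gbil\chi\cdot\equiv0$ and $\DeltaE\chi=0$ $\mm$-a.e.\ on $V$ (where $\chi$ is constant), so $\DeltaE\tilde f=\DeltaE f$ and $\Gq{\tilde f}=\Gq f$ $\mm$-a.e.\ on $V\supset\supp\varphi$. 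Moreover, since $\Delta\varphi$ is supported in $E$ (because $\varphi$ is constantly zero outside $E$, strong locality gives $\int\psi\DeltaE\varphi\,\d\mm=-\int\Gbil\psi\varphi\,\d\mm=0$ for $\psi$ supported in $X\setminus E$), the same locality argument shows
\[
\GGamma_2(\tilde f;\varphi)=\GGamma_2(f;\varphi),\qquad
\int_X\Gq{\tilde f}\,\varphi\,\d\mm=\int_X\Gq f\,\varphi\,\d\mm,\qquad
\int_X(\DeltaE\tilde f)^2\varphi\,\d\mm=\int_X(\DeltaE f)^2\varphi\,\d\mm,
\]
where the identity for $\GGamma_2$ uses the representation \eqref{eq:30} together with $\Gbil f\varphi=\Gbil{\tilde f}\varphi$ and $\Gq f\DeltaE\varphi=\Gq{\tilde f}\DeltaE\varphi$ $\mm$-a.e.

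Next, we perturb $\varphi$ to achieve strict positivity on $\supp\tilde f$: set $\tilde\varphi_\eps:=\varphi+\eps\eta\in D_{L^\infty}(\DeltaE)$, which is nonnegative, compactly supported, and satisfies $\inf_{\supp\tilde f}\tilde\varphi_\eps\ge \eps$ because $\eta\equiv1$ on a neighborhood of $\supp\chi\supset\supp\tilde f$. The hypothesis applies to $(\tilde f,\tilde\varphi_\eps)$ and yields
\[
\GGamma_2(\tilde f;\tilde\varphi_\eps)\ge K\int_X\Gq{\tilde f}\,\tilde\varphi_\eps\,\d\mm+\frac1N\int_X(\DeltaE\tilde f)^2\tilde\varphi_\eps\,\d\mm.
\]
Letting $\eps\downarrow 0$, the continuity of the extended $\GGamma_2$ on $(\D_\infty)^3$ provided by Corollary~\ref{cor:weaker-assumption-G2} gives the analogous inequality with $\tilde\varphi_\eps$ replaced by $\varphi$; combining with the identities of the previous paragraph produces \eqref{eq:9} for $(f,\varphi)$.

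The main obstacle lies in the technical verification that $\tilde f=\chi f$ lies in $D_\V(\DeltaE)\cap D_{L^\infty}(\DeltaE)$ and that the $\GGamma_2$-form is unchanged when passing from $f$ to $\tilde f$: both depend crucially on Lemma~\ref{le:pre-local}(i)--(ii), on the strong locality (used in the precise form $\DeltaE\chi=0$ and $\Gamma(\chi,\cdot)=0$ $\mm$-a.e.\ on $\{\chi\equiv\text{const}\}$), and on the improved integrability $\Gq f\in L^\infty$ from Theorem~\ref{thm:interpolation}. Once these points are settled, the $\eps\downarrow 0$ passage and the invocation of Corollary~\ref{cor:loc-phi} are routine.
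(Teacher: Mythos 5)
Your argument is correct and is essentially the paper's own proof: reduce via Corollary~\ref{cor:loc-phi}, replace $f$ by $\chi f$ with a cutoff $\chi\equiv1$ near $\supp\varphi$ (using Lemma~\ref{le:pre-local} and Theorem~\ref{thm:interpolation} to justify $\chi f\in D_\V(\DeltaE)\cap D_{L^\infty}(\DeltaE)$ and the locality identities), then test the hypothesis against $\varphi+\eps\eta$ with a second cutoff $\eta\equiv1$ near $\supp(\chi f)$ and let $\eps\downarrow0$. The only cosmetic differences are that you use the representation \eqref{eq:30} and the linearity of the extended $\GGamma_2$ from Corollary~\ref{cor:weaker-assumption-G2} for the limit, where the paper integrates by parts and passes through the measure $\Gamma^*_{2,K}[\chi f]$ of Lemma~\ref{le:pre-local}(i); both are equivalent.
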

\begin{proof}
  By the previous Corollary, we have to check that \eqref{eq:9} holds if $f\in D_{\V}(\DeltaE)\cap D_{L^\infty}(\DeltaE)$ 
  and $\varphi\in D_{L^\infty}(\DeltaE)$ nonnegative with
  compact support. Choosing a cutoff function $\nchi\in D_{L^\infty}(\DeltaE)\cap D_{\V}(\DeltaE)$ with compact support, values in
  $[0,1]$ 
  and such that $\nchi\equiv 1$ on a neighborhood of 
  $\supp(\varphi)$ as in Lemma~\ref{le:pre-local}(ii),
  it is easy to check, using Theorem~\ref{thm:interpolation}, the locality properties
  of $\Gamma$, $\DeltaE$ as well as the computation rules
  \begin{displaymath}
    \nchi f\in \D_\infty,\quad
    \DeltaE(\nchi f)=\nchi\DeltaE f+2\Gbil\nchi f+f\DeltaE \nchi,\quad
    \DeltaE(\nchi f)=\nchi\DeltaE f\quad\text{on }\supp(\varphi),
  \end{displaymath}
  that $\nchi f\in D_{L^\infty}(\DeltaE)\cap D_{\V}(\DeltaE) \fn \subset D_{L^4}(\DeltaE)\cap
  L^\infty(X,\mm)$ and that
   \begin{displaymath}
     \begin{aligned}
       &\GGamma_2(f;\varphi)-K\int_X \Gamma(f)\varphi\,\d\mm=
       \GGamma_2(f;\nchi\varphi)-K\int_X \Gamma(f)\nchi\varphi\,\d\mm=
      \\&=
      \int_X \Big(-{\frac 12}\Gbil{\Gq f}{\nchi \varphi} +\DeltaE f \, \Gbil
      f{\nchi\varphi}+{(\DeltaE f)^2\nchi\varphi}
      -
      K\Gq f
      \nchi\varphi\Big)
      \,\d\mm
      \\&=
      \int_X \Big(-{\frac 12}\Gbil{\Gq {\nchi f}}{\varphi} +\DeltaE (\nchi f)\, \Gbil
      {\nchi f}{\varphi}+{(\DeltaE (\nchi f))^2\varphi}
      -K\Gq {\nchi f}
      \varphi\Big)
      \,\d\mm
      \\&=
      \GGamma_{2,K}^*(\nchi f;\varphi)=
      \lim_{\eps \down0} \GGamma_{2,K}^*(\nchi f;\psi_\eps),
    \end{aligned}
  \end{displaymath}
   where $\psi_\eps=\varphi+\eps\hat\nchi$ and
   $\hat\nchi\in D_{L^\infty}(\DeltaE)$ is another nonnegative 
   cutoff function
   with compact support 
   such that $\hat\nchi\equiv 1$ in an open neighborhood of 
  $\supp(\nchi f)$.
  Since by assumption $\GGamma_{2,K}^*(\nchi f;\psi_\eps)\geq 0$
  we conclude.
\end{proof}

\begin{theorem}[A nonlinear version of the $\BE KN$ condition]
  \label{thm:nonlinearBE}
   If the $\BE KN$ condition holds 
  and  $P\in\mathrm{DC}(N)$ is regular
  with $R(r)=rP'(r)-P(r)$, then
  for every $f\in \D_\infty$
  and every nonnegative function $\varphi\in \V_\infty$
  with $P(\varphi)\in \D_\infty$ we have
  \begin{equation}
    \label{eq:164}
    \GGamma_2(f;P(\varphi))
    +\int_X R(\varphi) \,(\DeltaE f)^2\,\d\mm
    \ge K\int_X \Gq f\,P(\varphi)\,\d\mm.
  \end{equation}
  Conversely, let us assume that
  $(X,\sfd,\mm)$ is locally compact and satisfies the 
  metric $\BE K\infty$-condition.
  If \eqref{eq:164} holds for every
  function $P=P_{N,\eps,M}$, $\eps,\,M>0$ as in \eqref{eq:207}
  and \eqref{eq:167},
  every 
  $f\in D_\V(\DeltaE)\cap D_{L^\infty}(\DeltaE)$ with 
  compact support and 
  every nonnegative $\varphi\in D_{L^\infty}(\DeltaE)$
  with compact support and $\inf_{\supp f}\varphi>0$, 
  then $(X,\sfd,\mm)$ satisfies the metric 
  $\BE KN$ condition.  
\end{theorem}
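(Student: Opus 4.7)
The plan is to establish both implications via an algebraic manipulation that converts the linear $\BE KN$ inequality into the nonlinear one through the substitution of a pressure $P(\varphi)$ as a test function, combined with the pointwise McCann condition $R(r)\ge -\tfrac{1}{N}P(r)$. The forward direction is immediate once the right form of $\BE KN$ is used; the converse requires inverting $P_{N,\eps,M}$ on test functions and a vanishing viscosity limit.

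For the forward direction, assume $\BE KN$ and let $f\in\D_\infty$, $\varphi\in \V_\infty$ with $\varphi\geq 0$ and $P(\varphi)\in\D_\infty$. Applying the extended form of $\BE KN$ given by Corollary~\ref{cor:weaker-assumption-G2} with test function $\psi:=P(\varphi)\in\D_\infty$ (nonnegative because $P$ is regular and $P(0)=0$) and rewriting it through the identity \eqref{eq:30} yields
\[
\GGamma_2(f;P(\varphi))\;\geq\; K\int_X\Gq f\,P(\varphi)\,\d\mm+\frac{1}{N}\int_X P(\varphi)\,(\DeltaE f)^2\,\d\mm.
\]
Since $P\in\MC N$, the integrand $R(\varphi)+\tfrac{1}{N}P(\varphi)$ is nonnegative $\mm$-a.e., so subtracting $\tfrac{1}{N}\int_X P(\varphi)(\DeltaE f)^2\,\d\mm$ from both sides and adding $\int_X R(\varphi)(\DeltaE f)^2\,\d\mm$ only strengthens the inequality and produces exactly \eqref{eq:164}.

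For the converse, by Theorem~\ref{thm:localization} it suffices to establish \eqref{eq:9} (equivalently, the inequality rewritten via \eqref{eq:30}) for every $f\in D_\V(\DeltaE)\cap D_{L^\infty}(\DeltaE)$ with compact support and every nonnegative $\psi\in D_{L^\infty}(\DeltaE)$ with compact support satisfying $\inf_{\supp f}\psi>0$. Fix $M>\|\psi\|_{L^\infty(X,\mm)}$ and, for $\eps\in(0,1)$, define
\[
h_\eps(s):=\bigl(s+\eps^{1-1/N}\bigr)^{N/(N-1)}-\eps,\qquad \varphi_\eps:=h_\eps(\psi),
\]
so that $P_{N,\eps,M}(\varphi_\eps)=\psi$. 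Since $h_\eps\in\rmC^\infty(\R)$ with $h_\eps(0)=0$, the function $\varphi_\eps$ has compact support contained in $\supp\psi$ and satisfies $\inf_{\supp f}\varphi_\eps=h_\eps\bigl(\inf_{\supp f}\psi\bigr)>0$. Using the chain rule \eqref{eq:96} together with Theorem~\ref{thm:interpolation} (which, under $\BE K\infty$, delivers $\Gq\psi\in L^\infty(X,\mm)$ from $\psi\in D_{L^\infty}(\DeltaE)$), one checks that $\varphi_\eps\in D_{L^\infty}(\DeltaE)$; thus $(f,\varphi_\eps)$ is an admissible pair for the assumed nonlinear inequality \eqref{eq:164} with $P=P_{N,\eps,M}$, giving
\[
\GGamma_2(f;\psi)+\int_X R_{N,\eps,M}(\varphi_\eps)(\DeltaE f)^2\,\d\mm\;\geq\; K\int_X\Gq f\,\psi\,\d\mm.
\]

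To conclude it remains to take $\eps\downarrow 0$. By strong locality of $\cE$, $\DeltaE f$ is supported in $\supp f$, where $\psi$ stays above a positive constant and $\varphi_\eps\to\psi^{N/(N-1)}$ uniformly. From \eqref{eq:83},
\[
R_{N,\eps,M}(\varphi_\eps)=-\frac{1}{N}\psi+\eps\bigl(P'_{N,\eps}(0)-P'_{N,\eps}(\varphi_\eps)\bigr),
\]
and both remainder terms $\eps P'_{N,\eps}(0)=(1-\tfrac 1N)\eps^{1-1/N}$ and $\eps P'_{N,\eps}(\varphi_\eps)=(1-\tfrac 1N)\eps(\varphi_\eps+\eps)^{-1/N}$ vanish uniformly on $\supp f$. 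Dominated convergence against the $L^1$ function $(\DeltaE f)^2$ yields
\[
\GGamma_2(f;\psi)\;\geq\; K\int_X\Gq f\,\psi\,\d\mm+\frac{1}{N}\int_X\psi\,(\DeltaE f)^2\,\d\mm,
\]
which is \eqref{eq:9} for the pair $(f,\psi)$, and Theorem~\ref{thm:localization} then delivers the metric $\BE KN$ condition. The main obstacle is purely technical: verifying that $\varphi_\eps=h_\eps(\psi)$ is admissible (membership in $D_{L^\infty}(\DeltaE)$, compact support, and strict positivity on $\supp f$), which rests on the calculus rules available in $\D_\infty$ under $\BE K\infty$ and on the preservation of support by $h_\eps$.
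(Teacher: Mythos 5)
Your argument is correct and follows essentially the same route as the paper: the forward implication is the one-line consequence of Corollary~\ref{cor:weaker-assumption-G2} together with $R\ge -\tfrac1N P$, and the converse is exactly the paper's scheme — reduce to \eqref{eq:9} via Theorem~\ref{thm:localization}, test \eqref{eq:164} with $\varphi_\eps=P_{N,\eps}^{-1}(\psi)$ (which you simply write out explicitly), and let $\eps\downarrow0$ using \eqref{eq:83}/\eqref{eq:92}. The only slip is the truncation level: $M>\|\psi\|_{L^\infty(X,\mm)}$ does not guarantee $\varphi_\eps\le M$ (one needs $M$ of order $\|\psi\|_{L^\infty(X,\mm)}^{N/(N-1)}$, since $P_{N,\eps,M}(\varphi_\eps)=\psi$ fails where $\varphi_\eps>M$), but as \eqref{eq:164} is assumed for every $M>0$ this is repaired by simply choosing $M$ larger.
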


\begin{proof} The inequality
  \eqref{eq:164} is an obvious consequence of 
  $\BE KN$ (in the form of Corollary \ref{cor:weaker-assumption-G2})
  since $P\in\mathrm{DC}(N)$ yields $R(r)\ge -\frac 1N P(r)$ .
  
  In order to prove the second part of the statement, we apply 
  the previous Theorem~\ref{thm:localization}:
  we fix $f\in D_\V(\DeltaE)\cap D_{L^\infty}(\DeltaE)$   and $\varphi\in D_{L^\infty}(\DeltaE)$ nonnegative, 
  both with compact support and satisfying $\inf\{ \varphi(x):x\in \supp(f)\}>0$; with this
  choice of $f$ and $\varphi$ we need to prove \eqref{eq:9}.
  
  We fix $\eps>0$ and 
  we set $\tilde\varphi=P_{N,\eps}^{-1}(\varphi)$;
  since $\varphi$ is bounded, 
  $\tilde\varphi\in D_{L^\infty}(\DeltaE)$ 
  and therefore we can choose $M>0$ sufficiently large
  such that $\tilde\varphi\leq M$ and consequently
  $\varphi=P_{N,\eps,M}(\tilde\varphi)$.
  Applying \eqref{eq:164} with this choice of $f$ and $\tilde\varphi$ 
  and recalling the inequality \eqref{eq:92} we get
  \begin{align*}
    \GGamma_2(f;\varphi)
    -\frac 1N\int_X \varphi \,(\DeltaE f)^2\,\d\mm
    + \left(1-\frac{1}{N} \right)\eps^{1-1/N}\int_X (\DeltaE f)^2\,\d\mm
    \ge K\int_X \Gq f\,\varphi\,\d\mm.
  \end{align*}
  Passing to the limit as $\eps\down0$
  we get \eqref{eq:9}.
\end{proof}
\nc

\section{Nonlinear diffusion equations and action estimates}
\label{sec:NDaction}

In this section we give a rigorous proof of the crucial estimate
we briefly discussed in the formal calculations of Example \ref{ex:2}.
The estimate requires extra continuity and summability properties 
on $\Gq \varrho$ and $\Gq\varphi$, that 
will be provided by the interpolation estimates of Theorem~\ref{thm:interpolation}.

We will assume that $P$ is regular according to \eqref{eq:A1bis}, 
we introduce the functions $R(z)=zP'(z)-P(z)$ and $Q(r):=P(r)/r$,
and we recall the definition of the $\Gamma_2$ multilinear form
\begin{align*}
  \mathbf\Gamma_2(\vph;\varrho)=
  \int_X \Big(\frac12 \DeltaE \varrho\, \Gq \vph\,\d\mm+
  \varrho \big(\DeltaE\vph\big)^2+
  \Gbil \varrho\vph \DeltaE\vph\Big)\,\d\mm
\end{align*}
whenever $\varrho,\,\vph\in \D_\infty$ with $\Gq \varrho,\,\Gq\vph\in\H$.
Recall that, under the $\BE K\infty$ assumption, $f\in\D_\infty$ implies  $\Gq f\in \H$.
Notice also that $P(\varrho)\in  L^2(0,T;\D)$ and $\varrho$ bounded imply $\Gq {P(\varrho)}\in L^2(0,T;\H)$, so that
the regularity of $P$ and the chain rule yield $\Gq\varrho\in L^2(0,T;\H)$.

\begin{theorem}[Derivative of the Hamiltonian]
  \label{thm:crucial-estimate}
 \AAA Assume that $\BE K\infty$ holds. \fn Let  
  $\varrho\in \ND0T,\,\vph\in W^{1,2}(0,T;\D,\H)$ be bounded solutions, respectively, of 
  \begin{subequations}
    \begin{align}
      \partial_t \varrho - \DeltaE P (\varrho)=0
      \label{eq:PM} \\
      \partial_t \varphi +P'(\varrho)\DeltaE\varphi=0\label{eq:PMb}
    \end{align}
  \end{subequations}
with $\Gq \varrho,\,\Gq \varphi\in L^2(0,T;\H)$. Then the map 
  $t\mapsto \cE_{\varrho_t}(\vph_t)=\int_X \rh_t\Gq{\vph_t}\,\d\mm$ 
  is absolutely continuous in $[0,T]$ and we have
  \begin{equation}
    \label{eq:58}
    \frac\d{\d t}\frac 12\int_X \rh_t\Gq{\vph_t}\,\d\mm
    =\mathbf\Gamma_2(\vph_t;P(\rh_t))+
    \int_X R(\rh_t)(\DeltaE\vph_t)^2\,\d\mm\quad
    \text{$\Leb{1}$-a.e.\ in }(0,T).
  \end{equation}
\end{theorem}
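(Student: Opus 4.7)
The formal computation begins with the observation that, setting $H(t):=\tfrac 12\int\varrho_t\Gq{\varphi_t}\,\d\mm$ and differentiating under the integral using the polarization $\tfrac 12 \partial_t\Gq{\varphi}=\Gbil{\varphi}{\partial_t\varphi}$ together with the two equations \eqref{eq:PM}--\eqref{eq:PMb}, one obtains
\begin{equation*}
H'(t)=\tfrac 12\int \DeltaE P(\varrho_t)\Gq{\varphi_t}\,\d\mm-\int \varrho_t\,\Gbil{\varphi_t}{P'(\varrho_t)\DeltaE\varphi_t}\,\d\mm.
\end{equation*}
Expanding the second integral by the Leibniz rule $\Gbil{\varphi}{P'(\varrho)\DeltaE\varphi}=P'(\varrho)\Gbil{\varphi}{\DeltaE\varphi}+P''(\varrho)\DeltaE\varphi\,\Gbil{\varphi}{\varrho}$, using the identities $\varrho P'(\varrho)=P(\varrho)+R(\varrho)$ and $\varrho P''(\varrho)=R'(\varrho)$ to regroup, and finally applying the self-adjointness identity $\int\Gbil{\varphi}{g\DeltaE\varphi}\,\d\mm=-\int g(\DeltaE\varphi)^2\,\d\mm$ successively with $g=R(\varrho_t)$ and $g=P(\varrho_t)$, a direct algebraic manipulation turns the right hand side into
\begin{equation*}
\tfrac 12\int\Gq{\varphi_t}\DeltaE P(\varrho_t)\,\d\mm+\int\DeltaE\varphi_t\Gbil{\varphi_t}{P(\varrho_t)}\,\d\mm+\int P(\varrho_t)(\DeltaE\varphi_t)^2\,\d\mm+\int R(\varrho_t)(\DeltaE\varphi_t)^2\,\d\mm,
\end{equation*}
which by the representation \eqref{eq:30} of $\mathbf\Gamma_2$ equals exactly the right hand side of \eqref{eq:58}.

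To justify absolute continuity rigorously, my plan is to use the polarization $\tfrac 12[\Gq{\varphi_{t+h}}-\Gq{\varphi_t}]=\Gbil{\tfrac 12(\varphi_{t+h}+\varphi_t)}{\varphi_{t+h}-\varphi_t}$ to decompose
\begin{equation*}
H(t+h)-H(t)=\tfrac 12\int(\varrho_{t+h}-\varrho_t)\Gq{\varphi_{t+h}}\,\d\mm+\int\varrho_t\,\Gbil{\tfrac 12(\varphi_{t+h}+\varphi_t)}{\varphi_{t+h}-\varphi_t}\,\d\mm.
\end{equation*}
The first summand is controlled through $\varrho\in\ND 0T\subset W^{1,2}(0,T;\H)$, which yields $\|\varrho_{t+h}-\varrho_t\|_\H\le\int_t^{t+h}\|\DeltaE P(\varrho_s)\|_\H\,\d s$, combined with the hypothesis $\Gq{\varphi}\in L^2(0,T;\H)$; the second by $\|\varphi_{t+h}-\varphi_t\|_\H\le\int_t^{t+h}\|\partial_s\varphi_s\|_\H\,\d s$, the embedding $W^{1,2}(0,T;\D,\H)\hookrightarrow \rmC([0,T];\V)$ of \eqref{eq:279}, and the uniform $L^\infty$-bound on $\varphi$. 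Both contributions are absolutely continuous in $t$ with $L^1(0,T)$ majorants, and passing to the limit at Lebesgue points, using also the strong continuity of $\Gq{\varphi_t}$ in $\H$ on bounded subsets of $\D_\infty$ (a direct consequence of Theorem~\ref{thm:interpolation} under $\BE K\infty$), identifies $H'(t)$ for almost every $t$ with $\tfrac 12\int \DeltaE P(\varrho_t)\Gq{\varphi_t}\,\d\mm+\int\varrho_t\Gbil{\varphi_t}{\partial_t\varphi_t}\,\d\mm$.

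It remains to validate the chain of integration by parts of the first paragraph at such $t$. Since $\varphi_t$ and $P(\varrho_t)$ both lie in $\D_\infty$, Theorem~\ref{thm:interpolation} gives $\Gq{\varphi_t},\Gq{P(\varrho_t)}\in\H$ and all the Leibniz and chain-rule manipulations for $\Gamma$ are routine. The main obstacle is the self-adjointness identity with multiplier $g\in\{P(\varrho_t),R(\varrho_t)\}\subset\V_\infty$: a priori the product $g\DeltaE\varphi_t$ only belongs to $\H$, so the pairing $\cE(\varphi_t,g\DeltaE\varphi_t)$ does not obviously equal $-\int g(\DeltaE\varphi_t)^2\,\d\mm$. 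I would overcome this by approximating $\varphi_t$ by a sequence $\varphi^n\in D_\V(\DeltaE)\cap D_{L^\infty}(\DeltaE)$ obtained from Lemma~\ref{le:approximation}, along which $\DeltaE\varphi^n\in\V_\infty$ and the algebra property \eqref{eq:Dir-Alg} ensures $g\DeltaE\varphi^n\in\V$, so that the identity reduces to an elementary consequence of the self-adjointness of $\DeltaE$. Each term in the formal manipulation then converges strongly thanks to the $\H$-convergence of $\DeltaE\varphi^n$ to $\DeltaE\varphi_t$, the $\V$-convergence of $\varphi^n$ to $\varphi_t$, and the continuity of $\Gq{\cdot}$ on $L^\infty$-bounded sequences afforded once more by Theorem~\ref{thm:interpolation}, yielding \eqref{eq:58} as desired.
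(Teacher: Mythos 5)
Your rigorous step hinges on identifying $H'(t)$ at Lebesgue points with $\frac 12\int_X \DeltaE P(\varrho_t)\Gq{\varphi_t}\,\d\mm+\int_X\varrho_t\Gbil{\varphi_t}{\partial_t\varphi_t}\,\d\mm$, and this is where the argument breaks down. The Carr\'e du Champ is only defined on $\V\times\V$, while for $\varphi\in W^{1,2}(0,T;\D,\H)$ the derivative $\partial_t\varphi_t$ lies merely in $\H$, so $\Gbil{\varphi_t}{\partial_t\varphi_t}$ is not a meaningful object. Correspondingly, in your polarization decomposition the difference quotients $h^{-1}(\varphi_{t+h}-\varphi_t)$ converge to $\partial_t\varphi_t$ only in $\H$, not in $\V$, so continuity of $\Gamma$ cannot be invoked to pass to the limit in $\int_X\varrho_t\Gbil{\frac12(\varphi_{t+h}+\varphi_t)}{\varphi_{t+h}-\varphi_t}\,\d\mm$; nor does the bound you cite through $\|\varphi_{t+h}-\varphi_t\|_\H$ control this term, since it involves $\Gq{\varphi_{t+h}-\varphi_t}^{1/2}$, a $\V$-type seminorm of the increment. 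This is precisely the difficulty the paper's proof is built to avoid: it never differentiates $t\mapsto\cE_{\varrho_t}(\varphi_t)$ pointwise, but tests against $\eta\in\rmC^\infty_\rmc(0,T)$, mollifies $\varphi$ in time so that $\frac\d{\d t}\varphi^\eps_t\in\V_\infty$, integrates by parts in space at the mollified level so that only the well-defined products $\varrho_t\,\DeltaE\varphi^\eps_t\,\frac\d{\d t}\varphi^\eps_t$ and $\Gbil{\varrho_t}{\varphi^\eps_t}\,\frac\d{\d t}\varphi^\eps_t$ appear, and removes the mollification using the convexity of $\zeta\mapsto\sqrt{\Gq\zeta}$ to get strong $L^2(0,T;\H)$ convergence of $\Gq{\varphi^\eps}$ (the auxiliary lemma proved right after the statement). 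Combining the resulting time-integrated identity with \eqref{eq:PM} shows that the distributional derivative of the continuous map $t\mapsto\cE_{\varrho_t}(\varphi_t)$ is an $L^1(0,T)$ function, and absolute continuity plus \eqref{eq:58} follow; your direct claim that both increments admit $L^1$ majorants is not established by the estimates you give.

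There is a second gap in the algebra itself: the expansion $\Gbil{\varphi}{P'(\varrho)\DeltaE\varphi}=P'(\varrho)\Gbil{\varphi}{\DeltaE\varphi}+P''(\varrho)\DeltaE\varphi\,\Gbil{\varphi}{\varrho}$ uses $P''$, whereas the standing regularity \eqref{eq:A1} only gives $P\in\rmC^1$, and it presupposes $P'(\varrho_t)\DeltaE\varphi_t\in\V$ and $\DeltaE\varphi_t\in\V$, neither of which holds (one only knows $\DeltaE\varphi_t\in\H$). Your final approximation by $\varphi^n\in D_\V(\DeltaE)\cap D_{L^\infty}(\DeltaE)$ does legitimize the self-adjointness identity along the approximating sequence, but the intermediate terms you expand, e.g. $\int_X P'(\varrho_t)\Gbil{\varphi^n}{\DeltaE\varphi^n}\,\d\mm$, are not controlled as $n\to\infty$ because $\DeltaE\varphi^n$ converges only in $\H$; only the recombined expression containing $\Gq{\varphi}$, $\Gbil{P(\varrho)}{\varphi}\,\DeltaE\varphi$ and $(\DeltaE\varphi)^2$ passes to the limit. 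The paper sidesteps all of this by never differentiating $P'$: it only uses the substitution $\varrho_tP'(\varrho_t)=P(\varrho_t)+R(\varrho_t)$ and the chain rule in the harmless form $P'(\varrho_t)\Gbil{\varrho_t}{\varphi_t}=\Gbil{P(\varrho_t)}{\varphi_t}$, after which \eqref{eq:58} is read off from the representation \eqref{eq:30} of $\mathbf\Gamma_2$. To repair your proof you would need to reorganize the computation so that no ill-defined intermediate object ($\Gbil{\varphi_t}{\partial_t\varphi_t}$, $P''(\varrho)$, $\Gbil{\varphi}{P'(\varrho)\DeltaE\varphi}$) ever appears, which essentially leads you back to the paper's mollify-then-integrate-by-parts scheme.
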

The \emph{proof} is based on the following Lemma:

\begin{lemma}   \AAA Assume that $\BE K\infty$ holds. \fn   Let 
  $\varrho\in \ND0T,\,\vph\in W^{1,2}(0,T;\D,\H)$ be bounded with
  $\Gq \varrho,\Gq \varphi\in L^2(0,T; \H)$.
  Then, for every $\eta\in \rmC^\infty_\rmc(0,T)$ we have
  \begin{equation}
    \label{eq:88}
    \frac 12  \int_0^T\int_X \frac \d\dt(\varrho_t\eta_t)
    \Gq{\vph_t}\,\d\mm\,\d t=
    \int_0^T \eta_t \int_X \Big(\varrho_t \DeltaE\vph_t\,\frac \d\dt\vph_t+
    \Gbil{\varrho_t}{\vph_t}\,\frac \d\dt\vph_t\Big)\,\d\mm\,\d t.
  \end{equation}
\end{lemma}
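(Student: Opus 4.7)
The plan is to prove \eqref{eq:88} by first establishing it for time-mollified versions of $\varrho$ and $\varphi$, where all computations reduce to the classical chain rule and a Leibniz-type identity, and then to pass to the limit using Theorem~\ref{thm:interpolation} in an essential way.

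Formally, since $\eta\in\rmC^\infty_c(0,T)$, a $t$-integration by parts (with vanishing boundary terms) converts \eqref{eq:88} into the equivalent identity
$$\frac12\int_0^T\int_X\varrho_t\eta_t\,\partial_t\Gq{\varphi_t}\,\d\mm\,\d t=-\int_0^T\eta_t\int_X\Big[\varrho_t\DeltaE\varphi_t\,\partial_t\varphi_t+\Gbil{\varrho_t}{\varphi_t}\,\partial_t\varphi_t\Big]\,\d\mm\,\d t.$$
Two ingredients justify this identity: the chain rule $\partial_t\Gq{\varphi_t}=2\Gbil{\varphi_t}{\partial_t\varphi_t}$ (valid when $\varphi$ is smooth in $t$ with values in $\V$), and, for $\varphi_t\in\D$, the $\varrho$-weighted integration by parts
$$\int_X\varrho_t\Gbil{\varphi_t}{\partial_t\varphi_t}\,\d\mm=-\int_X\varrho_t\,\partial_t\varphi_t\,\DeltaE\varphi_t\,\d\mm-\int_X\partial_t\varphi_t\,\Gbil{\varphi_t}{\varrho_t}\,\d\mm,$$
which follows from the Leibniz rule $\Gbil{\varphi}{\varrho\,\partial_t\varphi}=\varrho\,\Gbil{\varphi}{\partial_t\varphi}+\partial_t\varphi\,\Gbil{\varphi}{\varrho}$ combined with $\int_X\Gbil{\varphi}{\psi}\,\d\mm=-\int_X(\DeltaE\varphi)\psi\,\d\mm$ applied to $\psi=\varrho\,\partial_t\varphi\in\V_\infty$ (the algebra property \eqref{eq:Dir-Alg} ensures that $\psi\in\V_\infty$).

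To make this rigorous, I would introduce a symmetric mollifier $\kappa_\delta\in\rmC^\infty_c(\R)$, suitably extend $\varphi$ and $\varrho$ to a larger time interval, and set $\varphi^\delta:=\kappa_\delta*\varphi$, $\varrho^\delta:=\kappa_\delta*\varrho$ (convolutions in $t$). For $\delta$ smaller than $\mathrm{dist}(\supp\eta,\{0,T\})$ these curves are smooth in $t$ with values in $\D$ and $\V$ respectively, uniformly bounded in $L^\infty(X\times(0,T))$ by $\|\varphi\|_\infty$ and $\|\varrho\|_\infty$, and the computation of the previous paragraph is justified term by term, yielding
$$\frac12\int_0^T\int_X\partial_t(\varrho^\delta_t\eta_t)\Gq{\varphi^\delta_t}\,\d\mm\,\d t=\int_0^T\eta_t\int_X\Big[\varrho^\delta_t\DeltaE\varphi^\delta_t\,\partial_t\varphi^\delta_t+\Gbil{\varrho^\delta_t}{\varphi^\delta_t}\,\partial_t\varphi^\delta_t\Big]\,\d\mm\,\d t.$$

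The remaining task, which I view as the main obstacle, is the passage to the limit $\delta\downarrow 0$. Standard properties of convolution give $\varphi^\delta\to\varphi$ in $L^2(0,T;\D)$, $\partial_t\varphi^\delta\to\partial_t\varphi$ in $L^2(0,T;\H)$, $\varrho^\delta\to\varrho$ in $L^2(0,T;\V)$, and $\partial_t(\varrho^\delta\eta)\to\partial_t(\varrho\eta)$ in $L^2(0,T;\H)$, with uniform $L^\infty$ bounds. The delicate convergences
$$\Gq{\varphi^\delta}\to\Gq{\varphi}\quad\text{and}\quad\Gbil{\varrho^\delta}{\varphi^\delta}\to\Gbil{\varrho}{\varphi}\qquad\text{strongly in }L^2(0,T;\H)$$
are where the $\BE K\infty$ assumption enters through Theorem~\ref{thm:interpolation}: its last statement gives strong $L^2$-convergence of $\Gq{\cdot}$ along $L^\infty$-bounded sequences converging strongly in $\D$, which applied pointwise in $t$ combined with dominated convergence (justified by the quantitative bound \eqref{eq:87}) yields the first convergence. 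The second convergence is obtained by an analogous argument via the double-convolution representation
$$\Gbil{\varrho^\delta_t}{\varphi^\delta_t}=\iint\kappa_\delta(t-s_1)\kappa_\delta(t-s_2)\,\Gbil{\varrho_{s_1}}{\varphi_{s_2}}\,\d s_1\,\d s_2,$$
whose integrand belongs to $L^2((0,T)^2;\H)$ thanks to $|\Gbil{f}{g}|^2\le\Gq{f}\Gq{g}$ and the standing hypotheses $\Gq{\varrho},\Gq{\varphi}\in L^2(0,T;\H)$. With these convergences in hand, both sides of the smoothed identity pass to the limit and yield \eqref{eq:88}.
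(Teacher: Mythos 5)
Your strategy is sound and parallel in spirit to the paper's, but the implementation is genuinely different. The paper regularizes only $\vph$ and only from one side, via the Steklov averages $\vph^\eps_t:=\eps^{-1}\int_0^\eps\vph_{t+r}\,\d r$, keeps $\varrho$ untouched, and obtains the key strong convergence $\Gq{\vph^\eps}\to\Gq\vph$ in $L^2(0,T;\H)$ without invoking $\BE K\infty$ at all: it uses the convexity of $\zeta\mapsto\sqrt{\Gq\zeta}$ to get the Jensen-type bound \eqref{eq:90}, hence $\limsup$ of the $L^2$-norms is controlled, while the pointwise convergence $\Gq{\vph^\eps_t}\to\Gq{\vph_t}$ in $L^1(X,\mm)$ (from $\vph^\eps_t\to\vph_t$ in $\V$) gives the $\liminf$, and the two combine (Remark~\ref{rem:trick}) into norm convergence and hence strong $L^2$ convergence. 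You instead mollify both $\varrho$ and $\vph$ symmetrically and route the delicate convergence through Theorem~\ref{thm:interpolation}, i.e.\ through $\BE K\infty$. Both work; the paper's route is more elementary at this step (it only needs $\vph\in L^2(0,T;\D)$ and the seminorm property of $\sqrt{\Gq\cdot}$), whereas yours leans on the interpolation theorem but treats $\varrho$ and $\vph$ on the same footing. Your formal reduction (time integration by parts, chain rule, weighted Leibniz identity with $\varrho^\delta_t\partial_t\vph^\delta_t\in\V_\infty$ via \eqref{eq:Dir-Alg}) is correct, including the signs.

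Two spots in your limit passage are stated too loosely and should be tightened, though both are repairable with tools already at hand. First, \eqref{eq:87} bounds $\|\Gq{\vph^\delta_t}\|_\H$ by a quantity involving $\|\DeltaE\vph^\delta_t\|_\H$, which is $\delta$-dependent, so it does not by itself furnish a dominating function for the $t$-integration; you need either the maximal function of $t\mapsto\|\DeltaE\vph_t\|_\H\in L^2(0,T)$ as dominant, or a generalized dominated convergence (Pratt) argument, or simply the Jensen bound $\Gq{\kappa_\delta*\vph}\le\kappa_\delta*\Gq\vph$, which is exactly the mechanism of \eqref{eq:90}. Second, the double-convolution representation of $\Gbil{\varrho^\delta_t}{\vph^\delta_t}$ proves nothing by itself: the diagonal trace of an $L^2$ function of two time variables is not defined, so "an analogous argument" is not a proof as written. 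The honest route is the splitting $\Gbil{\varrho^\delta}{\vph^\delta}-\Gbil{\varrho}{\vph}=\Gbil{\varrho^\delta-\varrho}{\vph^\delta}+\Gbil{\varrho}{\vph^\delta-\vph}$, estimating both pieces through $|\Gbil fg|^2\le\Gq f\,\Gq g$, using $\varrho\in\rmC([0,T];\V)$ (from \eqref{eq:138}) so that $\Gq{\varrho^\delta_t-\varrho_t}\to0$ in $L^1(X,\mm)$ and weakly in $L^2$ at a.e.\ $t$, and Theorem~\ref{thm:interpolation} for $\Gq{\vph^\delta_t-\vph_t}\to0$ in $\H$; alternatively, observe that strong convergence of this term is not even needed, since $\eta\,\partial_t\vph^\delta\to\eta\,\partial_t\vph$ strongly in $L^2(0,T;\H)$, so a uniform $L^2(0,T;\H)$ bound plus identification of the weak limit (available from the pointwise $L^1(X,\mm)$ convergence) already passes the mixed term to the limit.
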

\begin{proof}
  Let us consider the functions
  $\vph^\eps_t:=\eps^{-1}\int_0^\eps \vph_{t+r}\,\d r$:
  $t\mapsto \vph^\eps_t$ are differentiable in $\V$
  with
  $\frac \d\dt \vph^\eps_t=\eps^{-1}(\vph_{t+\eps}-\vph_t)$,
  so that 
  \begin{align*}
    \frac 12  \int_X \varrho_t \frac \d\dt\Big( \Gq{\vph^\eps_t}\Big)\,\d\mm&=
     \int_X \varrho_t \Gbil{\frac \d\dt\vph^\eps_t}{\vph^\eps_t}\,\d\mm=
     -\int_X \varrho_t \DeltaE\vph^\eps_t\frac \d\dt\vph^\eps_t\,\d\mm-
     \int_X \Gbil{\varrho_t}{\vph^\eps_t}\,\frac \d\dt\vph^\eps_t\,\d\mm.
  \end{align*}
  For every $\eta\in \rmC^\infty_\rmc(0,T)$ we thus have
  \begin{align*}
    \frac 12  \int_0^T\int_X \frac \d\dt(\varrho_t
    \eta_t)
    \Gq{\vph^\eps_t}\,\d\mm\,\d t&=
    \int_0^T \eta_t \Big(\int_X \varrho_t \DeltaE\vph^\eps_t\,\frac \d\dt\vph^\eps_t\,\d\mm+
    \int_X \Gbil{\varrho_t}{\vph^\eps_t}\,\frac \d\dt\vph^\eps_t\,\d\mm\Big)\,\d t.
  \end{align*}
  In order to pass to the limit as $\eps\down0$ in the last identity,
  we observe that $\frac \d\dt\vph^\eps_t\to \frac \d\dt\vph_t$ and that
  $\DeltaE\vph^\eps\to \DeltaE\vph$ strongly in $L^2(0,T;\H)$.
  Moreover, it is easy to check that the convexity 
  of $\zeta\mapsto \sqrt{\Gq \zeta}$ yields
  \begin{equation}
    \label{eq:90}
    \Gq {\vph_t^\eps}\le \frac 1\eps \int_0^\eps \Gq{\vph_{t+r}}\,\d r
    \quad\text{$\mm$-a.e.\ in $X$,}\quad \forevery t\in [0,T-\eps],
  \end{equation}
  so that the convolution inequality $\int_0^{T-\eps}\eps^{-1}\int_0^\eps\psi(t+r)\,\d r\,\dt\leq\int_0^T\psi(t)\,\dt$, for $\psi\geq 0$, gives
    \begin{equation}
    \label{eq:89}
    \int_0^{T-\eps}\int_X \big(\Gq {\vph_t^\eps}\big)^2\,\d\mm\,\d t\le 
    \frac 1\eps\int_0^{T-\eps} \int_0^\eps\int_X \big(\Gq
    {\vph_{t+r}}\big)^2\,\d\mm\,\d r\,\d t
    \le \int_0^{T}\int_X \big(\Gq {\vph_t}\big)^2\,\d\mm\,\d t.
  \end{equation}
  Since ${\vph_t^\eps}\to\vph_t$ strongly in $\V$  as $\eps\down0$, 
  we have $\Gq{\vph_t^\eps}\to \Gq{\vph_t}$ pointwise in $L^1(X,\mm)$, hence
  {\color{black}
  $$
  \liminf_{\eps\down 0}  \int_0^{T-\eps}\int_X \big(\Gq {\vph_t^\eps}\big)^2\,\d\mm\,\d t\ge
   \int_0^T\int_X \big(\Gq {\vph_t}\big)^2\,\d\mm\,\d t.  
  $$
  This, combined with \eqref{eq:89}, yields the strong convergence of 
  $\Gq{\vph^\eps}\chi_{(0,T-\eps)}$ to $\Gq\vph$} in $L^2(0,T;\H)$.
  The above mentioned convergences are then sufficient to get
  \eqref{eq:88}.
\end{proof}

\begin{proof}[Proof of Theorem~\ref{thm:crucial-estimate}]
  The map $t\mapsto \cE_{\varrho_t}(\varphi_t)$ is continuous since
  $t\mapsto \varrho_t$ is weakly$^*$ continuous in $L^\infty(X,\mm)$ and
  $t\mapsto \Gq{\vph_t}$ is strongly continuous in $L^1(X,\mm)$ (thanks to
  Theorem~\ref{thm:interpolation}).
  For every $\eta\in \rmC^\infty_\rmc(0,T)$,
  using the differentiability of $\varrho$ in $L^2(0,T;\H)$ we have
  \begin{align*}
    -\int_0^T \cE_{\varrho_t}(\vph_t)  \frac \d\dt\eta_t\,\d t&=
    -\frac 12\int_0^T \int_X \varrho_t \Gq {\vph_t}\,\d\mm\,\frac \d\dt\eta_t \d t\\&=
    -\frac 12\int_0^T \int_X \frac\d\dt (\varrho_t \eta_t) \Gq
    {\vph_t}\,\d\mm\,\d t
    +\frac 12\int_0^T \int_X (\frac\d\dt \varrho_t) \eta_t \Gq
    {\vph_t}\,\d\mm\,\d t\\&=
    -\frac 12\int_0^T \int_X \frac \d\dt (\varrho_t \eta_t) \Gq
    {\vph_t}\,\d\mm\,\d t
    +\frac 12\int_0^T \eta_t \Big(\int_X \DeltaE P(\varrho_t) \Gq
    {\vph_t}\,\d\mm\Big)\,\d t.
  \end{align*}
  On the other hand, \eqref{eq:88} yields
  \begin{align*}
    -\frac 12&\int_0^T \int_X \frac \d\dt (\varrho_t \eta_t) \Gq
    {\vph_t}\,\d\mm\,\d t=
    \int_0^T \eta_t \int_X \Big(\varrho_t P'(\varrho_t)\big(\DeltaE\vph_t\big)^2
    +\Gbil{\varrho_t}{\vph_t}\,P'(\varrho_t)\DeltaE\vph_t\Big)\,\d\mm\,\d t
    \\&=
    \int_0^T \eta_t \int_X \Big(P (\varrho_t)\big(\DeltaE\vph_t\big)^2
    +\Gbil{P(\varrho_t)}{\vph_t}\,\DeltaE\vph_t\Big)\,\d\mm\,\d t
    +\int_0^T \eta_t \int_X R(\varrho_t)\big(
    \DeltaE\vph_t\big)^2\,\d\mm\,\d t.
    \end{align*}
    Combining the two formulas, we get \eqref{eq:58}.
\end{proof}

\begin{theorem}[Action and dual action monotonicity]
  \label{thm:BEKNaction}
  Let us assume that the 
  $\BE KN$ condition holds, and that $P\in\MCreg N$.  
    \begin{enumerate}[\rm (i)]
  \item If $\varrho\in \ND0T,\vph\in W^{1,2}(0,T;\D,\H)$ 
    are bounded solutions of
    {\upshape (\ref{eq:PM},b)} then the map $t\mapsto \int_X \varrho_t
    \Gq{\vph_t}\,\d\mm$ is absolutely continuous in $[0,T]$ and we have
    \begin{equation}
      \label{eq:91}
      \frac\d{\d t}\frac 12\int_X \rh_t\Gq{\vph_t}\,\d\mm\ge
      K \int_X P(\varrho_t)\Gq{\vph_t}\,\d\mm
      \qquad
      \text{$\Leb 1$-a.e.\ in }(0,T).
    \end{equation}
  \item Setting
      \begin{equation}
        \label{eq:56}
        \Lambda:=\inf_{r>0} K Q(r)>-\infty,
      \end{equation}
      if $w\in W^{1,2}(0,T;\H,\Ddual)$ 
      is a solution of \eqref{eq:81} with $\bar w\in \Vdual{\bar\varrho}\subset\Vdual1$, then
      $w_t\in \Vdual{\varrho_t}$ for all $t\in [0,T]$, with
      \begin{equation}
        \label{eq:57}
        \cE_{\varrho_s}^*(w_s,w_s)\le \rme^{-2\Lambda (s-t)}\cE_{\varrho_t}^*(w_t,w_t)\quad
        \text{for every }0\le t<s\le T.
      \end{equation}
    \item If moreover  $\phi_t=-A^*_{\varrho_t}(w_t)\in \Vhom{\varrho_t}$ is the 
      potential associated to $w_t$ according to \eqref{eq:112}, i.e.
      \begin{equation}
        \label{eq:98}
        \cE_{\varrho_t}(\phi_t,\zeta)=\langle w_t,\zeta\rangle\quad
        \text{for every }\zeta\in \Vhom{\varrho_t},
      \end{equation}
      we have
      \begin{equation}
        \label{eq:101}
        \limsup_{h\downarrow0}\frac 1{2h}\Big(\cE_{\varrho_t}^*(w_t,w_t)-
        \cE_{\varrho_{t-h}}^*(w_{t-h},w_{t-h})\Big)\le -K\int_X Q(\varrho_t)\varrho_t
        \tGq {\varrho_t}{\phi_t}\,\d\mm.
      \end{equation}
  \end{enumerate}
\end{theorem}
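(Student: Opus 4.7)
My plan is to establish (i) as a direct corollary of the identity in Theorem~\ref{thm:crucial-estimate}, then derive (ii) by a duality argument combined with (i), and finally obtain (iii) by sharpening this duality argument with a Lebesgue-point analysis.

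For (i), I would start from the exact identity
\begin{equation*}
\frac{\d}{\d t}\frac{1}{2}\int_X \varrho_t\,\Gq{\varphi_t}\,\d\mm = \GGamma_2(\varphi_t; P(\varrho_t)) + \int_X R(\varrho_t)\,(\DeltaE\varphi_t)^2\,\d\mm
\end{equation*}
provided by Theorem~\ref{thm:crucial-estimate}. To apply the nonlinear Bakry-\'Emery estimate \eqref{eq:164} with test pair $(f,\varphi)=(\varphi_t,P(\varrho_t))$, I need to verify that both functions lie in $\D_\infty$ for $\Leb 1$-a.e.\ $t$, and that $P(\varrho_t)\ge0$. Boundedness and the inclusions coming from $\varrho\in\ND 0T$ and $\varphi\in W^{1,2}(0,T;\D,\H)$ guarantee $\varphi_t\in\D_\infty$ and $P(\varrho_t)\in\D_\infty$; positivity of $\varrho_t$ from (ND2) gives $P(\varrho_t)\ge 0$. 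Inserting \eqref{eq:164} then yields \eqref{eq:91}.

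For (ii), fix $0\le t<s\le T$ and let $\phi_s\in\Vhom{\varrho_s}$ be the potential associated to $w_s$ via \eqref{eq:98}. I would approximate $\phi_s$ in $\Vhom{\varrho_s}$ by a sequence $(\varphi^n_s)\subset\V_\infty$ (here the boundedness assumption on $\varrho$ is crucial to bound the test functions uniformly in $L^\infty$), solve the backward equation \eqref{eq:PMb} on $[t,s]$ with terminal value $\varphi^n_s$ via Theorem~\ref{prop:backward-linearization}, and apply (i) together with the pointwise bound $KP(r)\ge \Lambda r$ (which follows immediately from $Q=P/r$ and $\Lambda=\inf KQ$) to deduce
\begin{equation*}
\cE_{\varrho_s}(\varphi^n_s,\varphi^n_s) \ge e^{2\Lambda(s-t)}\,\cE_{\varrho_t}(\varphi^n_t,\varphi^n_t).
\end{equation*}
On the other hand the constancy \eqref{eq:130} of the pairing $\langle w_r,\varphi^n_r\rangle$ gives $\langle w_t,\varphi^n_t\rangle=\langle w_s,\varphi^n_s\rangle$, whence the elementary Cauchy-Schwarz inequality for the quadratic form $\cE_{\varrho_t}$ yields
\begin{equation*}
\cE^*_{\varrho_t}(w_t,w_t)\ge \frac{\langle w_t,\varphi^n_t\rangle^2}{\cE_{\varrho_t}(\varphi^n_t,\varphi^n_t)}\ge e^{2\Lambda(s-t)}\,\frac{\langle w_s,\varphi^n_s\rangle^2}{\cE_{\varrho_s}(\varphi^n_s,\varphi^n_s)}.
\end{equation*}
Passing to the limit $n\to\infty$, using $\langle w_s,\varphi^n_s\rangle\to\cE_{\varrho_s}(\phi_s,\phi_s)=\cE^*_{\varrho_s}(w_s,w_s)$ and the analogous limit for the denominator, produces the contractivity estimate \eqref{eq:57}; this also shows a posteriori that $w_t\in\Vdual{\varrho_t}$.

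For (iii), I would refine the previous argument by using the exact derivative from Theorem~\ref{thm:crucial-estimate} rather than the crude bound $KP\ge\Lambda\varrho$. Setting $f(h):=\cE_{\varrho_{t-h}}(\varphi_{t-h},\varphi_{t-h})$ with $\varphi$ a backward solution on $[t-h,t]$ with $\varphi_t=\phi_t$ (again handled through approximation), the duality bound above yields $\cE^*_{\varrho_{t-h}}(w_{t-h},w_{t-h})\cdot f(h)\ge \cE^*_{\varrho_t}(w_t,w_t)^2$. Since $f(0)=\cE^*_{\varrho_t}(w_t,w_t)$ and $f(h)\to f(0)$ by continuity, a direct computation gives
\begin{equation*}
\limsup_{h\downarrow 0}\frac{\cE^*_{\varrho_t}(w_t,w_t)-\cE^*_{\varrho_{t-h}}(w_{t-h},w_{t-h})}{2h}\le \limsup_{h\downarrow 0}\frac{f(h)-f(0)}{2h}.
\end{equation*}
Integrating the identity of Theorem~\ref{thm:crucial-estimate} and invoking (i) one more time shows $f(h)-f(0)\le -2K\int_{t-h}^t\!\int_X P(\varrho_r)\Gq{\varphi_r}\,\d\mm\,\d r$, so at any Lebesgue point $t$ of the integrand the right-hand side converges to $-2K\int_X P(\varrho_t)\Gq{\phi_t}\,\d\mm$, which equals $-2K\int_X Q(\varrho_t)\varrho_t\,\tGq{\varrho_t}{\phi_t}\,\d\mm$ by Lemma~\ref{le:useful1}; dividing by $2$ yields \eqref{eq:101}.

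The main technical obstacle I anticipate lies in the approximation step: the potential $\phi_s$ need only belong to the abstract completion $\Vhom{\varrho_s}$, so building backward solutions with terminal data converging to $\phi_s$ while retaining the bounds needed for Theorem~\ref{thm:crucial-estimate} (bounded, in $\D$, with $\Gamma(\varphi)\in L^2(0,T;\H)$) requires a careful double approximation, exploiting the $L^\infty$-bound on $\varrho$ via statement (BA2) of Theorem~\ref{prop:backward-linearization} and the stability statement (BA3) to ensure passage to the limit in all the quadratic quantities appearing in the duality inequality.
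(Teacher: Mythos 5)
Your part (i) is essentially the paper's argument: combine the identity of Theorem~\ref{thm:crucial-estimate} with the nonlinear form \eqref{eq:164} of $\BE KN$, applied with $f=\varphi_t$ and with the density variable equal to $\varrho_t$ (note that in \eqref{eq:164} one takes $\varphi=\varrho_t$, so that the statement itself produces $P(\varrho_t)$ and $R(\varrho_t)$; your pair $(f,\varphi)=(\varphi_t,P(\varrho_t))$, read literally, would give the wrong remainder $R(P(\varrho_t))$). For (ii)--(iii) you use the same duality mechanism as the paper (backward flow, constancy of the pairing \eqref{eq:130}, Hamiltonian monotonicity from (i)), but two steps would fail as written. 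First, in (ii) you start from ``the potential $\phi_s$ associated to $w_s$'', i.e.\ you assume $w_s\in\Vdual{\varrho_s}$ -- but that membership is part of what has to be proved; the hypothesis only gives $\bar w\in\Vdual{\bar\varrho}$ at time $0$. The fix is the paper's: run the same chain of inequalities with an \emph{arbitrary} terminal datum $\bar\varphi\in\V_\infty$ and take the supremum at the end, which yields \eqref{eq:57} and the finiteness of $\cE^*_{\varrho_s}(w_s,w_s)$ simultaneously (your Cauchy--Schwarz quotient and the paper's Young-type estimate are equivalent by homogeneity).

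Second, and more seriously, the approximation you single out as the main obstacle in (iii) cannot be closed with (BA2)/(BA3) of Theorem~\ref{prop:backward-linearization}: a sequence $\varphi^n\in\V_\infty$ converging to $\phi_t$ only in $\Vhom{\varrho_t}$ (a completion with respect to a degenerate seminorm) need not converge in $\V$, so (BA3) gives no control on the corresponding backward solutions, and a ``backward solution with terminal datum $\phi_t$'' is not even defined by Theorem~\ref{prop:backward-linearization}. The missing ingredient -- which the paper supplies -- is that the monotonicity (i) itself provides the contraction \eqref{eq:110} for the backward maps $B_{t,r}$, so that by Lemma~\ref{le:stability} these extend to bounded operators from $\Vhom{\varrho_t}$ to $\Vhom{\varrho_r}$ with $r\mapsto\varrho_r\tGq{\varrho_r}{B_{t,r}\phi_t}$ continuous in $L^1(X,\mm)$ (see \eqref{eq:111bis}); this is what allows one to pass to the limit first in $n$ (for fixed $h$) and then in $h$ in your inequality $f(h)-f(0)\le-2K\int_{t-h}^t\int_X P(\varrho_r)\Gq{\varphi_r}\,\d\mm\,\d r$. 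As a by-product the $h\downarrow0$ limit holds at \emph{every} $t$ by this time-continuity, so the Lebesgue-point restriction is unnecessary (and somewhat misplaced, since the backward flow, hence your integrand, is anchored at the fixed terminal time $t$). With these two corrections your outline reproduces the paper's proof.
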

\begin{proof}
  Since $\BE KN$ holds (and thus in particular $\BE K\infty$)
  we can apply \eqref{eq:58} of Theorem~\ref{thm:crucial-estimate},
  since the interpolation estimate \eqref{eq:87} and the regularity 
  properties of $\varrho$ and $\varphi$ yield $\Gq \varrho,\Gq \varphi\in
  L^2(0,T;\H)$. The estimate \eqref{eq:91} follows then by the combination of \eqref{eq:58} with  Theorem  \ref{thm:nonlinearBE}. 

  For $0\le t<s\le T$, let us now call
  $B_{s,t}:\V_\infty\to\V_\infty$ the linear map 
  that to each function $\bar\varphi\in \V_\infty$ associates the value at
  time $t$ of the unique solution $\varphi$ of (\ref{eq:PMb}) with final condition
  $\varphi_s=\bar\varphi$, given by Theorem~\ref{prop:backward-linearization}. 
  If \eqref{eq:91} holds and $\Lambda$ is defined as in \eqref{eq:56} we have
  \begin{equation}
    \label{eq:84}
    \int_X \varrho_t\Gq{\varphi_t}\,\d\mm\le 
    \rme^{-2\Lambda (s-t)}    \int_X \varrho_s\Gq{\bar\varphi}\,\d\mm,
  \end{equation}
  so that 
  \begin{eqnarray*}
    &&\langle \rme^{\Lambda(s-t)}\,w_s,\rme^{-\Lambda(s-t)}\bar\varphi\rangle -\frac
    12\cE_{\varrho_s}(\rme^{-\Lambda(s-t)} \bar\varphi,\rme^{-\Lambda(s-t)}\bar\varphi)
    \\&=&
    \langle w_s,\bar\varphi\rangle -\rme^{-2\Lambda(s-t)}\,\frac
    12\cE_{\varrho_s}(\bar\varphi,\bar\varphi)\topref{eq:130}=
    \langle w_t,B_{s,t}\bar\varphi\rangle -\rme^{-2\Lambda(s-t)}\frac
    12\cE_{\varrho_s}(\bar\varphi,\bar\varphi)\\&\topref{eq:84}\le& 
    \langle w_t,B_{s,t}\bar\varphi\rangle- \frac
    12\cE_{\varrho_t}(B_{s,t}\bar \varphi,B_{s,t}\bar \varphi)
    \topref{eq:97}\le\frac 12\cE_{\varrho_t}^*(w_t,w_t).
  \end{eqnarray*}
  Taking the supremum with respect to $\bar\varphi\in\V_\infty$ 
  we get \eqref{eq:57}.

  Similarly, we can choose a maximizing sequence $(\varphi_n)\subset\V_\infty$ in
  $$
  \frac 12 \cE_{\varrho_t}^*(w_t,w_t)=\sup_{\varphi\in\V_\infty}  \langle w_t,\varphi\rangle -\frac
    12\cE_{\varrho_t}(\varphi,\varphi),
  $$ 
  so that $\varphi_n$ converge in $\Vhom{\varrho_t}$ to the potential
  $\phi_t=-A_{\varrho_t}^*(w_t)$. Recalling \eqref{eq:91} and \eqref{eq:130} we have
  \begin{align*}
    \langle w_t,\varphi_n\rangle -\frac
    12\cE_{\varrho_t}(\varphi_n,\varphi_n)
    &\le 
    \langle w_{t-h},B_{t,t-h}\varphi_n\rangle -\frac
    12\cE_{\varrho_{t-h}}(B_{t,t-h}\varphi_n, B_{t,t-h}\varphi_n)
    \\&\qquad -
    K\int_{t-h}^t \int_X Q(\varrho_r) \varrho_r
    \Gq{B_{t,r}\varphi_n}\,\d\mm\,\d r .
  \end{align*}
  Passing to the limit as $n\to\infty$ and recalling 
  Lemma~\ref{le:stability} we get
  \begin{align*}
   \frac 12   \cE_{\varrho_t}^*(w_t,w_t)&\le 
    \langle w_{t-h},B_{t,t-h}\phi_t\rangle -\frac
    12\cE_{\varrho_{t-h}}(B_{t,t-h}\phi_t,B_{t,t-h}\phi_t)
    \\&\qquad -
    K\int_{t-h}^t \int_X Q(\varrho_r) \varrho_r
    \tGq{\varrho_r}{B_{t,r}\phi_t}\,\d\mm\,\d r 
    \\&\le \frac 12  \cE_{\varrho_{t-h}}^*(w_{t-h},w_{t-h}) -
    K\int_{t-h}^t \int_X Q(\varrho_r) \varrho_r
    \tGq{\varrho_r}{B_{t,r}\phi_t}\,\d\mm\,\d r.
  \end{align*}
  Dividing by $h$ and passing to the limit as $h\down0$,
  a further application of Lemma~\ref{le:stability} yields \eqref{eq:101}.
\end{proof}

\begin{corollary}\label{cor:era_senza_label}
  Let us assume that the 
  $\BE KN$ holds, and that $P\in \MCreg N$.  
  If $\varrho\in \ND0T$ is a 
  nonnegative bounded solution of \eqref{eq:75} 
  with $\sqrt{\bar\varrho}\in \V$
  then $w_t:=\frac \d\dt\varrho_t$ satisfies
  \begin{equation}
    \label{eq:139}
    \cE^*_{\varrho_t}(w_t,w_t)\le \rme^{-2\Lambda t}\cE^*_{\bar\varrho}(w_0,w_0)
    \le 4\sfa^{-2}\,e^{2\Lambda^-T}\cE(\sqrt{\bar\varrho},\sqrt{\bar\varrho})<\infty
  \end{equation}
  with $\sfa$ given by \eqref{eq:A1}.
\end{corollary}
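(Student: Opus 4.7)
The plan is to combine three ingredients: (a) the identification $w_0=\DeltaE P(\bar\varrho)$, (b) the dual action monotonicity \eqref{eq:57} from Theorem~\ref{thm:BEKNaction}(ii), and (c) a direct a priori bound on $\cE^*_{\bar\varrho}(\DeltaE P(\bar\varrho),\DeltaE P(\bar\varrho))$ in terms of the Fisher information $\cE(\sqrt{\bar\varrho},\sqrt{\bar\varrho})$.

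First, I would verify the hypotheses needed to put $w=\tfrac\d\dt\varrho$ in the framework of Theorem~\ref{thm:BEKNaction}(ii). Since $\sqrt{\bar\varrho}\in\V\cap L^\infty(X,\mm)$, the algebra property \eqref{eq:Dir-Alg} yields $\bar\varrho=\sqrt{\bar\varrho}\cdot\sqrt{\bar\varrho}\in\V_\infty$, so Corollary~\ref{cor:rho-derivative} applies: $w\in W^{1,2}(0,T;\H,\Ddual)$ solves \eqref{eq:81} with initial datum $\bar w=\DeltaE P(\bar\varrho)$.

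Next, I would bound $\cE^*_{\bar\varrho}(\DeltaE P(\bar\varrho),\DeltaE P(\bar\varrho))$ using the characterization of $\Gamma^*_{\bar\varrho}$ in Lemma~\ref{le:Gamma-dual}. For every $\varphi\in\V$, integration by parts together with the Leibniz/chain rule gives
\begin{equation*}
  |\langle\DeltaE P(\bar\varrho),\varphi\rangle|
  =\Big|\int_X\Gamma(P(\bar\varrho),\varphi)\,\d\mm\Big|
  \le \int_X P'(\bar\varrho)\sqrt{\Gq{\bar\varrho}}\sqrt{\Gq\varphi}\,\d\mm
  =\int_X g\sqrt{\Gq\varphi}\,\bar\varrho\,\d\mm,
\end{equation*}
where $g:=P'(\bar\varrho)\sqrt{\Gq{\bar\varrho}}/\bar\varrho$ (set to $0$ on $\{\bar\varrho=0\}$, where locality gives $\Gq{\bar\varrho}=0$). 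Using $P'\le \sfa^{-1}$ from \eqref{eq:A1bis} and the identity $4\Gq{\sqrt{\bar\varrho}}=\Gq{\bar\varrho}/\bar\varrho$ on $\{\bar\varrho>0\}$, I obtain
\begin{equation*}
  \int_X g^2\,\bar\varrho\,\d\mm
  \le \sfa^{-2}\int_{\{\bar\varrho>0\}}\frac{\Gq{\bar\varrho}}{\bar\varrho}\,\d\mm
  =4\sfa^{-2}\int_X\Gq{\sqrt{\bar\varrho}}\,\d\mm
  =4\sfa^{-2}\cE(\sqrt{\bar\varrho},\sqrt{\bar\varrho}).
\end{equation*}
Lemma~\ref{le:Gamma-dual} then yields $\DeltaE P(\bar\varrho)\in\Vdual{\bar\varrho}$ and $\cE^*_{\bar\varrho}(w_0,w_0)\le 4\sfa^{-2}\cE(\sqrt{\bar\varrho},\sqrt{\bar\varrho})<\infty$, proving the rightmost inequality in \eqref{eq:139}.

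Finally, having now confirmed $w_0=\bar w\in\Vdual{\bar\varrho}$, I can invoke \eqref{eq:57} in Theorem~\ref{thm:BEKNaction}(ii) with $t=0$ and $s=t$ to get the first inequality $\cE^*_{\varrho_t}(w_t,w_t)\le\rme^{-2\Lambda t}\cE^*_{\bar\varrho}(w_0,w_0)$. To chain it with the previous bound, I simply observe that $\rme^{-2\Lambda t}\le \rme^{2\Lambda^- T}$ for all $t\in[0,T]$, which closes the estimate. I do not expect a serious obstacle: the only subtlety is the locality argument needed to handle the set $\{\bar\varrho=0\}$ when expressing $g$ as an $L^2(X,\bar\varrho\mm)$ function, and the verification that $\bar\varrho\in\V$ (and not merely $\sqrt{\bar\varrho}\in\V$) so that Corollary~\ref{cor:rho-derivative} and the identification $w_0=\DeltaE P(\bar\varrho)$ apply.
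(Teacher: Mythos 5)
Your proof is correct and takes essentially the same route as the paper: identify $w_0=\DeltaE P(\bar\varrho)$ via Corollary~\ref{cor:rho-derivative}, bound $\cE^*_{\bar\varrho}(w_0,w_0)\le 4\sfa^{-2}\,\cE(\sqrt{\bar\varrho},\sqrt{\bar\varrho})$, and conclude with the contraction estimate \eqref{eq:57} together with $\rme^{-2\Lambda t}\le \rme^{2\Lambda^- T}$. The only cosmetic difference is in the intermediate bound: the paper tests $\langle w_0,\varphi\rangle$ directly against the definition \eqref{eq:97} of the dual form using $\Gbil{\bar\varrho}{\varphi}=2\sqrt{\bar\varrho}\,\Gbil{\sqrt{\bar\varrho}}{\varphi}$ and Young's inequality, while you route through Lemma~\ref{le:Gamma-dual} and the identity $\Gq{\bar\varrho}=4\bar\varrho\,\Gq{\sqrt{\bar\varrho}}$, yielding the same constant.
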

\begin{proof}
  Since $w_0=\DeltaE P(\bar\varrho)$, we have for every $\varphi\in \V$
  \begin{eqnarray*}
   - \langle w_0,\varphi\rangle&=&
    \cE(P(\bar\varrho),\varphi)=
    \int_X P'(\bar\varrho)\Gbil{\bar\varrho}\varphi\,\d\mm
    \\&=&2\int_X P'(\bar\varrho)\sqrt {\bar\varrho}\,
    \Gbil {\sqrt{\bar\varrho}}{\varphi}\,\d\mm
    \\&\le& 2\sfa^{-2} \cE(\sqrt{\bar\varrho},\sqrt{\bar\varrho})+
    \frac 12 \cE_{\bar\varrho}(\varphi,\varphi),
  \end{eqnarray*}
  which yields $\cE^*_{\bar\varrho}(w_0,w_0)\le 
  4\sfa^{-2} \cE(\sqrt{\bar\varrho},\sqrt{\bar\varrho})$. Since, thanks to Corollary~\ref{cor:rho-derivative}, 
  $w$ solves \eqref{eq:81}, we can apply \eqref{eq:57} to obtain \eqref{eq:139}.
  \end{proof}

\section{The equivalence between $\BE KN$ and $\RCDS KN$} \label{sec:EquivBERCDS}

\subsection{Regular curves and regularized entropies} \label{subsec:RegularCurves}

Let us first recall the notion, adapted from \cite[Def.~4.10]{AGS12}, of regular curve.
{\color{black} Recall that $\cE(\cdot,\cdot)$ stands, in this metric context, for Cheeger's energy, here assumed to be quadratic.}

\begin{definition}[Regular curves]  \label{def:regcur}
  Let $\mu_s=\varrho_s\mm\in \ProbabilitiesTwo X$, $s\in [0,1]$.  
  We say that $\mu$ is a regular curve if:
  \begin{enumerate}[\rm (a)]
  \item There exists a constant $R>0$ 
    such that $\varrho_s\le R$ $\mm$-a.e.~in $X$ for every $s\in [0,1]$.
  \item $\mu\in \mathrm{Lip}([0,1];\ProbabilitiesTwo X)$
    and in particular \eqref{eq:10} {\color{black} and the identification between minimal velocity and metric derivative}
    yield $\varrho\in \Lip([0,1];\Vdual1)$.
  \item $g_s:=\sqrt {\varrho_s}\in \V$ and 
    there exists a constant $E>0$ such that 
    $\cE(g_s,g_s)\le E$ for every $s\in [0,1]$
    (in combination with (a), this 
    yields that $\varrho_s\in\V$ and 
    also $\cE(\varrho_s,\varrho_s)\le 4RE$ are uniformly bounded).
  \end{enumerate}
\end{definition}

The next approximation result is an improvement of \cite[Prop.~4.11]{AGS12}, since we are able to
approximate with curves having uniformly bounded densities (while in the original version only
a uniform bound on entropies was imposed). This improvement is possible thanks to
 \cite[Thm.\ 1.3]{Rajala12}.

\begin{lemma}[Approximation by regular curves]
  \label{le:appregentr}
  Let $(X,\sfd,\mm)$ be an $\RCD K\infty$ space and
  $\mu_0,\,\mu_1\in \ProbabilitiesTwo X$. Then there exist 
  a geodesic $(\mu_s)_{s\in [0,1]}$ connecting $\mu_0$ to $\mu_1$ 
  in $\ProbabilitiesTwo X$ and regular curves
  $(\mu^n_s)_{s\in [0,1]}$ with $\mu^n_s=\varrho_s^n\mm$, $n\in \N$, such that 
  \begin{equation}
    \label{eq:140}
    \lim_{n\to\infty}W_2(\mu^n_s,\mu_s)=0\quad
    \forevery s\in [0,1],\quad 
    \limsup_{n\to\infty}\int_0^1 |\dot\mu^n_s|^2\,\d s\le W_2^2(\mu_0,\mu_1).
  \end{equation} 
  \GGG Moreover, if $\mu_i=\varrho_i\mm$, $i=0,\,1$ then 
  \begin{equation}
    \label{eq:14001}
    \lim_{n\to\infty}\|\varrho^n_i-\varrho_i\|_{L^1(X,\mm)}=0\quad
    \text{for }i=0,1,
  \end{equation}
  and \EEE
  {\color{black} 
  \begin{equation}\label{eq:14000}
  \lim_{n\to\infty}\cU(\mu^n_i)=\cU(\mu_i)
  \qquad \text{for $i=0,1$ and for all $U\in \MCreg N$.}
  \end{equation}}
  \AAA 
  Finally, if $\varrho_{i}$, $i=0,\,1$, are $\mm$-essentially bounded with bounded supports then $\mu_{s}=\varrho_{s} \mm$  for each $s \in [0,1]$ with  $(\varrho_{s})_{s \in [0,1]}$  uniformly $\mm$-essentially bounded with bounded supports, and 
  \begin{equation}\label{eq:approxweak*Linfty}
  \varrho_{s}^{n} \to \varrho_{s} \text{ strongly in } L^{p}(X,\mm) \text{ for all $p\in [1,\infty)$ and in weak $*$-$L^{\infty}(X,\mm)\; $, for all $s \in [0,1]$}. 
  \end{equation}
  \EEE
\end{lemma}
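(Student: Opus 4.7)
The plan is to follow the scheme of \cite[Prop.~4.11]{AGS12}, regularizing a given geodesic via the heat flow $(\sfH_t)_{t\ge0}$ of the $\RCD K\infty$ space, which coincides with $\sfP_t$ on densities by \eqref{eq:215}. The improvement over the original argument is the use of Rajala's estimate [RS4] to guarantee uniform density bounds when the endpoints already have bounded densities and supports. First I would fix the geodesic $\mu_s:=(\rme_s)_\sharp\ppi$ associated with the unique optimal plan $\ppi\in\mathrm{GeoOpt}(\mu_0,\mu_1)$ granted by [RS1]; in the bounded case, [RS4] yields the uniform bound $\sup_s\|\varrho_s\|_{L^\infty}\le \rme^{K^- D^2/12}\max_i\|\varrho_i\|_{L^\infty}$, and Remark~\ref{rem:BddSupp} provides a common bounded set containing all the $\supp\mu_s$, so in particular $\sup_s\cU_\infty(\mu_s)<\infty$.

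Next I would define the approximating curves by $\mu^n_s:=\sfH_{1/n}\mu_s=\varrho^n_s\mm$ and verify the three conditions in Definition~\ref{def:regcur}. The $L^\infty$ bound $\varrho^n_s\le R$ follows from the Markov property of $\sfP_{1/n}$ in the bounded case and from ultracontractive estimates implied by $\BE K\infty$ in the general case. The Lipschitz property in $\ProbabilitiesTwo X$ is immediate from the $K$-contraction of the heat flow in $W_2$, giving $W_2(\mu^n_s,\mu^n_t)\le \rme^{-K/n}W_2(\mu_s,\mu_t)$. Finally, $\sqrt{\varrho^n_s}\in\V$ with uniform Cheeger bound stems from the regularization inequality $\mathsf F(\sfP_{1/n}\varrho_s)\le C(K,n)\,\cU_\infty(\mu_s)$, combined with the uniform entropy bound along the geodesic (available from [RS3] when $\mu_i\in D(\cU_\infty)$, and reducing to this case by a preliminary mollification otherwise).

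The convergence statements then follow from standard properties of the heat semigroup. The pointwise $W_2$-convergence $\mu^n_s\to\mu_s$ is a consequence of $W_2(\sfH_t\sigma,\sigma)\to 0$ as $t\downarrow 0$; the action bound
$$
\int_0^1|\dot\mu^n_s|^2\,\d s\le \rme^{-2K/n}W_2^2(\mu_0,\mu_1)
$$
yields the $\limsup$ in \eqref{eq:140}; the $L^1$-convergence \eqref{eq:14001} at the endpoints follows from the strong $L^1$-continuity of $\sfP_t$ at $t=0$; and the entropy convergence \eqref{eq:14000} is obtained by combining lower semicontinuity of $\cU$ along $W_2$-convergence with the control provided by the monotonicity of $\cU_\infty$ along $\sfP_t$ and the two-sided bound \eqref{eq:47-bis} relating any $U\in\MCreg N$ to $U_\infty$. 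In the bounded case, the $L^p$ and weak-$*$ $L^\infty$ convergence in \eqref{eq:approxweak*Linfty} follows by interpolating the $L^1$ convergence with the uniform $L^\infty$ bound; moreover [RS4] applied to arbitrary pairs $\mu_s,\mu_t$ shows that the geodesic itself consists of uniformly bounded densities with uniformly bounded supports.

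The hard part will be the interplay, in the general (non-bounded) case, between the uniform Cheeger bound on $\sqrt{\varrho^n_s}$ over $s\in[0,1]$ (which needs a uniform entropy bound along $\mu_s$) and the endpoint entropy convergence for arbitrary $U\in\MCreg N$. When $\mu_i\not\in D(\cU_\infty)$, one would need a diagonal argument with a preliminary smoothing of the endpoints, simultaneously preserving Rajala's type density bounds and the entropy dissipation estimates of Theorem~\ref{thm:emF}, and ensuring that the auxiliary geodesics converge in $W_2$ to a genuine geodesic between $\mu_0$ and $\mu_1$; this is the delicate point where Theorem~\ref{thm:emF} becomes indispensable, as it provides the joint control of entropy, quadratic moment and Fisher information along the nonlinear diffusion that bridges the $L^\infty$-bounded and the general settings.
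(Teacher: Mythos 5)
Your plan of ``mollify the geodesic between $\mu_0$ and $\mu_1$ by the heat flow'' has a genuine gap at its core: the uniform $L^\infty$ bound (condition (a) of Definition~\ref{def:regcur}) for $\varrho^n_s=\sfP_{1/n}\varrho_s$ cannot be obtained the way you claim. The Markov property of $\sfP_t$ only \emph{preserves} $L^\infty$ bounds that the curve already has, and the ``ultracontractive estimates implied by $\BE K\infty$'' do not exist: $\RCD K\infty$/$\BE K\infty$ alone gives no dimensional information, and the heat semigroup on such spaces (e.g.\ Gaussian-type spaces) does not map measures, or even densities with finite entropy, into $L^\infty$. For the same reason your entropy--Fisher regularization only yields condition (c), never condition (a), even when $\mu_i\in D(\cU_\infty)$. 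A second structural problem is that for general $\mu_0,\mu_1\in\ProbabilitiesTwo X$ (not assumed absolutely continuous) you cannot invoke [RS1]--[RS3] at all: the geodesic need not be unique, need not consist of absolutely continuous measures, and may have $\cU_\infty(\mu_s)=+\infty$ for every $s$, so the object you propose to regularize is not available. Your closing paragraph concedes this and proposes a diagonal argument through Theorem~\ref{thm:emF}, but that theorem controls entropy, second moment and Fisher information along the nonlinear diffusion and again gives no $L^\infty$ control, so the delicate point is not actually resolved.

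The paper's proof goes the other way around, and this is the missing idea: first truncate the \emph{endpoints} to measures $\nu^n_i=\sigma^n_i\mm$ with bounded densities and bounded supports (chosen so that $\sigma^n_i\to\varrho_i$ in $L^1$ and $\cU(\nu^n_i)\to\cU(\mu_i)$ when these quantities are meaningful); then use Rajala's theorem \cite[Thm.~1.3]{Rajala12} to connect $\nu^n_0$ to $\nu^n_1$ by geodesics $\nu^n_s$ with densities uniformly bounded in $L^\infty$; only at this stage apply the \cite[Prop.~4.11]{AGS12} regularization (averaging in $s$ plus a short-time heat flow), which now preserves the $L^\infty$ bound by Markovianity and supplies the $\V$-regularity of $\sqrt{\varrho}$; finally a diagonal argument produces the regular curves, and the limit geodesic $(\mu_s)$ of the statement is recovered as a limit of the $\nu^n$, not fixed in advance. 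Also note that the endpoint entropy convergence \eqref{eq:14000} is obtained there from the monotonicity of $\cU$ itself under the heat flow (convexity of $U$, cf.\ \eqref{eq:122}) together with lower semicontinuity; your route via the two-sided comparison \eqref{eq:47-bis} with $\cU_\infty$ only controls absolute values and does not transfer the monotonicity of $\cU_\infty$ into an upper bound for $\cU(\mu^n_i)$.
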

\begin{proof}
  First of all we approximate $\mu_i$, $i=0,\,1$, in $\ProbabilitiesTwo
  X$ by two sequences $\nu_i^n=\sigma_i^n\mm$ with bounded support and 
  bounded densities $\sigma_i^n\in L^\infty(X,\mm)$. 
  \GGG
  Whenever $\mu_0=\varrho_0\mm$ (resp.~$\cU(\mu_0)<\infty$) 
  we can also choose $\nu_0^n$ so that 
  $\sigma_i^n\to \varrho_i$ strongly in $L^1(X,\mm)$ 
  (resp.~$\cU(\nu_i^n)\to \cU(\mu_i)$) as $n\to\infty$.
  \EEE
  Applying 
  \cite[Thm.\ 1.3]{Rajala12} we can find geodesics $(\nu^n_s)_{s\in [0,1]}$ in
  $\ProbabilitiesTwo X$
  connecting $\nu_0^n$ to $\nu_1^n$ 
  with uniformly bounded entropies and densities $\sigma^n_s$ satisfying 
  $\sup_{s\in [0,1]}\|\sigma_s^n\|_{L^\infty(X,\mm)}<\infty$ 
  for every $n\in \N$. 
  \GGG By setting $\tilde \nu^n_s:=\nu^n_{s+s/n}$ if $s\in [0,n/(n+1)]$, 
  $\tilde\nu^n_s:=\nu^n_1$ if $s\in [n/(n+1),1]$,
  we may also assume that $\nu^n$ is constant in a right neighborhood
  of $1$. \EEE
  Since  $\nu^n\in \mathrm{AC}^2(0,1;\ProbabilitiesTwo{X})$, 
  we can then apply the same argument of \cite[Prop.~4.11]{AGS12}
  (precisely, an averaging procedure w.r.t. $s$ and a short time action of
  the {\color{black} heat} semigroup, to gain $\V$ regularity)
  to construct regular curves $\nu^{n,k}=\sigma^{n,k}\mm$, 
  $k\in \N$, 
  in the sense of Definition~\ref{def:regcur} approximating 
  $\nu^n$ in energy and Wasserstein distance as $k\to\infty$. {\color{black} Notice also
  that the construction in \cite[Prop.~4.11]{AGS12} provides the monotonicity property
  {\GGG 
  $\cU(\nu^{n,k}_i)\leq\cU(\nu^n_i)$, $i=0,1$,}
  thanks to the convexity of $\cU$ and to 
  fact that $\cU$ decreases under the action of the heat semigroup, so
  that {\GGG $\|\sigma^{n,k}_i-\sigma^n_i\|_{L^1(X,\mm)}\to0$ and 
    $\cU(\nu^{n,k}_i)\to \cU(\nu^n_i)$ as 
    $k\to\infty$} by the lower semicontinuity of $\cU$.}
  A standard diagonal argument yields a subsequence
  $\mu^n_s:=\nu^{n,k_n}_s$ satisfying the properties stated in the Lemma.
  \AAA
 \\ If the starting measures satisfy $\mu_{i}=\varrho_{i} \mm$ with $\varrho_{i}$ $\mm$-essentially bounded with bounded supports, then by  \cite[Thm.\ 1.3]{Rajala12} there exists a $W_{2}$-geodesic $\mu_{s}=\varrho_{s} \mm$  for each $s \in [0,1]$ with  $(\varrho_{s})_{s \in [0,1]}$  uniformly $\mm$-essentially bounded with bounded supports. Recalling the regularity and continuity properties of the heat semigroup proved in \cite[Thm. 6.1]{AGS11b} (see also \cite{AGMR12}), we obtain that the approximations   $\mu^{n}_{s}$ (defined above by an averaging procedure w.r.t. $s$ and  a short time action of
  the heat semigroup) converge in $L^{1}(X,\mm)$ and are uniformly bounded in $L^{\infty}(X,\mm)$; the claimed convergence  \eqref{eq:approxweak*Linfty} follows.
  \EEE 
\end{proof}

Given $U:[0,\infty)\to\R$ continuous, with $U(0)=U(1)=0$  and $U'$ locally
Lipschitz in $(0,\infty)$, with $P(r)=rU'(r)-U(r)$ regular, we now introduce the 
regularized convex entropies $U_\eps\in\rmC^2([0,\infty))$, $\eps>0$, defined by 
\begin{eqnarray}
  \label{eq:48}
  U_\eps(r)&:=&(r+\eps)\int_0^r \frac{P(s)}{(s+\eps)^2}\,\d s
 -r\int_0^1 \frac{P(s)}{(s+\eps)^2}\,\d s\\&=&
  r \int_1^r \frac{P(s)}{(s+\eps)^2}\,\d s
  +\eps\int_0^r \frac{P(s)}{(s+\eps)^2}\,\d s,\nonumber
\end{eqnarray}
that satisfy (since $P(0)=0$)
\begin{equation}
  \label{eq:61}
  U_\eps(0)=0,\qquad
  U_\eps'(0)=-\int_0^1 \frac{P(s)}{(s+\eps)^2}\,\d s,\qquad
  U_\eps''(r)=\frac{P'(r)}{r+\eps}.
\end{equation}
Notice that,  since $U$ is normalized, for every $R>0$ there exists a constant $C_R$ such that
\begin{equation}
  \label{eq:224}
  \min\{U(r),0\}\leq U_\eps(r)\quad\forall r\in [0,1],\qquad 0\le U_\eps(r)\le C_R r\quad\forall r\in [1,R],
\end{equation}  
moreover one has the convergence property
 \begin{equation}\label{eq:224bis}
  \lim_{\eps\down0}U_\eps(r)=U(r).
\end{equation}
We also set
\begin{equation}
  \label{eq:144}
  Z(r):=\int_0^r \frac{P'(s)}{\sqrt s}\,\d s,\quad
\end{equation}
so that \eqref{eq:A1} gives
\begin{equation}\label{eq:144bis}
  2\sfa\sqrt r\le Z(r)\le 2\sfa^{-1}\sqrt r.
\end{equation}

\begin{lemma}[Derivative of the regularized Entropy]
  \label{le:1bis}
  Let $(\varrho_s)_{s\in [0,1]}$ be uniformly bounded densities in $W^{1,2}(0,1;\V,\Vdual1)$.
  Then the map
  $s\mapsto \int_X U_\eps(\varrho_s)\,\d\mm$ is absolutely continuous in
  $[0,1]$ and
  \begin{equation}
    \label{eq:129}
    \frac \d{\d s}\int_X U_\eps(\varrho_s)\,\d\mm= \duality{\V'}{\V}{\frac\d {\d s} \varrho_s}{U_\eps'(\varrho_s)} 
    \qquad\text{for $\Leb{1}$-a.e. $s\in (0,1)$.} 
  \end{equation}
\end{lemma}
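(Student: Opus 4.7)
The plan is a standard time-mollification argument adapted to the Dirichlet-form setting. Let $R := \sup_s \|\varrho_s\|_{L^\infty(X,\mm)}$. From \eqref{eq:61} and \eqref{eq:A1} the derivative $U'_\eps$ is globally $(\sfa^{-1}/\eps)$-Lipschitz on $[0,\infty)$; hence $P_\eps := U'_\eps - U'_\eps(0)$ is Lipschitz with $P_\eps(0)=0$, and property \eqref{eq:Dir-Lip} applied pointwise in $s$ yields $P_\eps(\varrho_s) \in \V$ with $\cE(P_\eps(\varrho_s))\le (\sfa^{-1}/\eps)^2\,\cE(\varrho_s)$, so that $s\mapsto P_\eps(\varrho_s) \in L^2(0,1;\V)$. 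Because $\dot\varrho_s \in \Vdual 1$ annihilates the kernel of $\cE$, the pairing $\duality{\V'}{\V}{\dot\varrho_s}{U'_\eps(\varrho_s)}$ in \eqref{eq:129} is read through the $\V$-representative $P_\eps(\varrho_s)$, plus a scalar contribution $U'_\eps(0)\cdot\d/\d s\!\int_X\varrho_s\,\d\mm$ accounting for the action on constants.

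I would then introduce the time-average $\varrho^h_s := h^{-1}\int_s^{s+h}\varrho_r\,\d r$ for $h\in(0,1)$ and $s\in[0,1-h]$. This curve is Lipschitz in $s$ with values in $\V$, admits the derivative $\dot\varrho^h_s = h^{-1}(\varrho_{s+h}-\varrho_s)\in\V$ for a.e.~$s$, preserves the uniform $L^\infty$-bound, and satisfies $\varrho^h\to\varrho$ strongly in $L^2(0,1;\V)$ together with $\dot\varrho^h\to\dot\varrho$ strongly in $L^2(0,1;\Vdual 1)$ as $h\downarrow 0$. For the smoothed curve the classical chain rule (with $U_\eps\in C^1$ and pointwise $L^2$-regularity of the derivative) gives, for $s\le t$ in $[0,1-h]$,
\begin{equation*}
\int_X U_\eps(\varrho^h_t)\,\d\mm - \int_X U_\eps(\varrho^h_s)\,\d\mm = \int_s^t \duality{\V'}{\V}{\dot\varrho^h_r}{P_\eps(\varrho^h_r)}\,\d r + U'_\eps(0)\!\int_X(\varrho^h_t-\varrho^h_s)\,\d\mm,
\end{equation*}
which is merely a decomposition of $\int_X U'_\eps(\varrho^h_r)\dot\varrho^h_r\,\d\mm$ consistent with the intended interpretation of the duality.

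I would conclude by passing to the limit $h\downarrow 0$. Property \eqref{eq:Dir-Lip} combined with the Lipschitz character of $P_\eps$ yields $P_\eps(\varrho^h)\to P_\eps(\varrho)$ strongly in $L^2(0,1;\V)$; paired with the strong convergence of $\dot\varrho^h$ in $L^2(0,1;\Vdual 1)$, the duality integral converges. Uniform $L^\infty$-bounds and $\varrho^h\to\varrho$ in $C([0,1];\H)$ (from the embedding \eqref{eq:12}) pass the remaining terms to the limit, producing the integrated form of \eqref{eq:129} for every $s\le t$ and hence both the absolute continuity of $s\mapsto\int_X U_\eps(\varrho_s)\,\d\mm$ and the a.e. differential identity. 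The main delicate point is the bookkeeping of the additive constant $U'_\eps(0)$: when $1\notin\V$ (e.g.~if $\mm(X)=\infty$), strictly $U'_\eps(\varrho_s)\notin\V$, and the duality pairing must be read consistently via the $\Vhom 1$-class of $P_\eps(\varrho_s)$ together with the separately accounted mass-derivative contribution that $\dot\varrho_s\in\Vdual 1$ captures through its action on constants in the kernel of $\cE$.
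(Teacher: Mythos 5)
Your proof is correct in substance, but it follows a genuinely different route from the paper. The paper's argument is a short convexity argument: the above--tangent inequality $U_\eps(a)-U_\eps(b)\le U_\eps'(a)(a-b)$, applied in both directions and combined with the Cauchy--Schwarz inequality for the dual pair $\cE,\cE^*$ and the bound $\sup U_\eps''\le (\sfa\eps)^{-1}$, gives
$\big|\int_X U_\eps(\varrho_s)\,\d\mm-\int_X U_\eps(\varrho_r)\,\d\mm\big|\le (\sfa\eps)^{-1}\max\big(\cE(\varrho_s,\varrho_s),\cE(\varrho_r,\varrho_r)\big)^{1/2}\cE^*(\varrho_s-\varrho_r)^{1/2}$,
i.e.\ \eqref{eq:219}; since $s\mapsto \cE(\varrho_s,\varrho_s)^{1/2}$ and $s\mapsto\|\tfrac{\d}{\d s}\varrho_s\|_{\Vdual1}$ belong to $L^2(0,1)$, absolute continuity follows from the criterion of \cite[Lemma~1.2.6]{AGS08}, and \eqref{eq:129} is then identified at a.e.\ $s$ by dividing the same two-sided convexity bound by $s-r$ — no time regularization and no chain rule for smoothed curves. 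Your Steklov-average scheme instead only uses the global $C^{1,1}$ bound $U_\eps''\le \sfa^{-1}/\eps$ (convexity plays no role), which is a more flexible mechanism, at the price of extra bookkeeping in the limit $h\downarrow0$. Three points to tidy up in your write-up: the curve $\varrho^h$ is absolutely continuous with values in $\V$ (its derivative is only $L^2$ in time), not Lipschitz; near $s=1$ the averaging window leaves $[0,1]$, so one should extend the curve (e.g.\ by reflection) or work on $[0,1-\delta]$ and let $\delta\downarrow0$; and the phrase about the action of $\dot\varrho_s$ on ``constants in the kernel of $\cE$'' is literally meaningful only when $1\in\V$ — in the intended setting the $\varrho_s$ are probability densities, so all $U_\eps'(0)$-terms cancel because the masses are constant, which is also how the paper implicitly reads the pairing (compare the weighted duality used in \eqref{eq:225}). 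None of these affects the validity of your approach.
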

\begin{proof}
  The convexity of $U_\eps$ yields
  \begin{align*}
    \int_X U_\eps(\varrho_s)\,\d \mm-
    \int_X U_\eps(\varrho_r)\,\d\mm&\le 
    \int_X U_\eps'(\varrho_s)(\varrho_s-\varrho_r) \,\d\mm
    \le \cE(U_\eps'(\varrho_s))^{1/2}
    \cE^*(\varrho_s-\varrho_r)^{1/2}
    \\&\le 
    \sup|U_\eps''|\,\cE(\varrho_s)^{1/2}
    \cE^*(\varrho_s-\varrho_r)^{1/2},
  \end{align*}
  so that \eqref{eq:A1} and the last identity in \eqref{eq:61} give
  \begin{equation}
    \label{eq:219}
      \Big|\int_X U_\eps(\varrho_s)\,\d \mm-
      \int_X U_\eps(\varrho_r)\,\d\mm\Big|\le 
      \frac 1{\sfa\eps} \max\Big(\cE(\varrho_s,\varrho_s)^{1/2},\cE(\varrho_r,\varrho_r)^{1/2}\Big)
      \cE^*(\varrho_s-\varrho_r)^{1/2}.
  \end{equation}
  This shows the absolute continuity 
  {\color{black} (see \cite[Lem.~1.2.6]{AGS08}).}
  The derivation of \eqref{eq:129} is then standard.
\end{proof}

\begin{lemma}
  \label{le:2bis}
  Let $\varrho\in \V_\infty$ be nonnegative.
  \begin{enumerate}[\rm (i)]
  \item $\sqrt \varrho\in \V$ if and only if $Z(\varrho)\in \V$ if and only if 
    $\int_{\{\varrho>0\}}\varrho^{-1}\Gq {P(\varrho)}\,\d\mm<\infty$.
    In this case
    \begin{equation}
      \label{eq:145}
      \cE(Z(\varrho),Z(\varrho))=\int_{\{\varrho>0\}}\frac{\Gq {P(\varrho)}}{\varrho}\,\d\mm
      =\lim_{\eps\down0}\int_X \varrho\Gq{U_\eps'(\varrho)}\,\d\mm.
    \end{equation}
    \item If $Z(\varrho)\in \V$ then $\DeltaE P(\varrho)\in \Vdual\varrho$,
    $U_\eps'(\varrho)\to A_\varrho^*(\DeltaE P(\varrho))$ in $\Vhom\varrho$ as $\eps\down0$ and
    \begin{equation}
      \label{eq:143}
      \lim_{\eps\downarrow 0}\int_X \varrho\Gq {U_\eps'(\varrho)}\,\d\mm=
      \int_X \Gq{Z(\varrho)}\,\d\mm=  \cEs{\varrho} (\DeltaE P(\varrho),\DeltaE P(\varrho)).
    \end{equation}
    Motivated by this,
    we will call $U'(\varrho)\in\V_{\varrho}$ the limit $A_\varrho^*(\DeltaE P(\varrho))$ of $U_\eps'(\varrho)$ in $\Vhom\varrho$.
  \item If $\mu_s=\varrho_s\mm$, $s\in [0,1]$, is a regular curve,
    then $s\mapsto \cU(\mu_s)$ is absolutely continuous
    and
    \begin{equation}
      \label{eq:223}
      \frac\d{\d s}\cU(\mu_s)=   \duality{\V_{\varrho_s}'}{\V_{\varrho_s}}{\frac\d {\d s}\varrho_s}{U'(\varrho_s)}
      \quad\text{for $\Leb1$-a.e.~$s\in (0,1).$}
      \end{equation}
  \item If $Z(\varrho)\in \V$, $\varrho_t=\sfS_t\varrho$, 
    $\BE KN$ holds and $\Lambda$ is defined as in \eqref{eq:56}, then 
    \begin{equation}
      \label{eq:146}
      Z(\varrho_t)\in \V,\quad
      \cE(Z(\varrho_t),Z(\varrho_t))\le \rme^{-2\Lambda t}\cE(Z(\varrho),Z(\varrho))\qquad\forall t\geq 0.
    \end{equation}
    \GGG In particular, if $\mu=\varrho\mm\in \PP_2(X)$ then
    $t\mapsto \varrho_t\mm$ is a Lipschitz curve in $[0,T]$ with 
    respect to the Wasserstein distance in $\PP_2(X)$ 
    with Lipschitz constant bounded by 
    $\rme^{-\Lambda T} \sqrt{\cE(Z(\varrho))}$.
  \end{enumerate}
\end{lemma}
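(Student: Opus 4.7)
\medskip
\noindent\emph{Proof plan.} The four claims will be established in turn, each leveraging the pointwise chain rule for the Carr\'e du Champ and the approximation property \eqref{eq:61} of the regularized entropies $U_\eps$.

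For (i), I would exploit the identity $Z'(r)=P'(r)/\sqrt r$ together with $(\sqrt\cdot)'=1/(2\sqrt\cdot)$ to write, by the chain rule in Dirichlet forms and locality of $\Gamma$,
\[
\Gq{Z(\varrho)}=\frac{(P'(\varrho))^2}{\varrho}\Gq\varrho=\frac{\Gq{P(\varrho)}}{\varrho}=4(P'(\varrho))^2\Gq{\sqrt\varrho}\qquad\mm\text{-a.e.~on }\{\varrho>0\},
\]
all three sides vanishing $\mm$-a.e.~on $\{\varrho=0\}$ thanks to the locality of $\Gamma$. Since $\sfa\le P'(\varrho)\le\sfa^{-1}$ by \eqref{eq:A1bis}, the three membership conditions are equivalent and the first equality in \eqref{eq:145} follows. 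Boundedness of $\varrho$ makes $U_\eps'(\varrho)\in\V$ for each fixed $\eps>0$ because $U_\eps''(r)=P'(r)/(r+\eps)$ is bounded; the chain rule then yields $\varrho\Gq{U_\eps'(\varrho)}=\frac{\varrho (P'(\varrho))^2}{(\varrho+\eps)^2}\Gq\varrho$, and I would observe that the factor $\varrho/(\varrho+\eps)^2$ increases to $1/\varrho$ on $\{\varrho>0\}$ as $\eps\downarrow 0$, so monotone convergence delivers the second equality in \eqref{eq:145}.

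For (ii), I would first bound, for every $\varphi\in\V$,
\[
\bigl|\langle-\DeltaE P(\varrho),\varphi\rangle\bigr|=\bigl|\cE(P(\varrho),\varphi)\bigr|=\Bigl|\int \Gbil{P(\varrho)}{\varphi}\,\d\mm\Bigr|\le\Bigl(\int\tfrac{\Gq{P(\varrho)}}{\varrho}\,\d\mm\Bigr)^{1/2}\Bigl(\int\varrho\Gq\varphi\,\d\mm\Bigr)^{1/2}
\]
by Cauchy--Schwarz applied to the factorization $\Gbil{P(\varrho)}{\varphi}=(\sqrt\varrho)^{-1}\sqrt{\Gq{P(\varrho)}}\cdot\sqrt\varrho\sqrt{\Gq\varphi}$ (with the convention that the integrand is $0$ on $\{\varrho=0\}$); hence $\DeltaE P(\varrho)\in\Vdual\varrho$. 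Then the chain rule $\cE_\varrho(U_\eps'(\varrho),\psi)=\int\frac{\varrho P'(\varrho)}{\varrho+\eps}\Gbil\varrho\psi\,\d\mm$ together with dominated convergence identifies the weak limit in $\Vhom\varrho$ with the solution of \eqref{eq:112} for $\ell=\DeltaE P(\varrho)$, namely $A_\varrho^*(\DeltaE P(\varrho))$. Combining weak convergence with the norm convergence $\cE_\varrho(U_\eps'(\varrho),U_\eps'(\varrho))\to\cE(Z(\varrho),Z(\varrho))$ from (i) and the identity $\cE_\varrho^*(\DeltaE P(\varrho),\DeltaE P(\varrho))=\cE_\varrho(A_\varrho^*(\DeltaE P(\varrho)),A_\varrho^*(\DeltaE P(\varrho)))$ from \eqref{eq:113bist} yields strong convergence in $\Vhom\varrho$ and the second equality in \eqref{eq:143}.

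For (iii), I would use Lemma~\ref{le:1bis} to write, for every $\eps>0$,
\[
\int_X U_\eps(\varrho_s)\,\d\mm-\int_X U_\eps(\varrho_r)\,\d\mm=\int_r^s\Bigl\langle\tfrac{\d}{\d\tau}\varrho_\tau,U_\eps'(\varrho_\tau)\Bigr\rangle\,\d\tau,
\]
and then pass to the limit $\eps\downarrow 0$. The regular curve assumption guarantees a uniform bound $\cE(\sqrt{\varrho_\tau})\le E$, hence by (i) a uniform bound on $\cE(Z(\varrho_\tau))$, so by (ii) the potentials $\phi_\tau:=-A_{\varrho_\tau}^*(\dot\varrho_\tau)\in\Vhom{\varrho_\tau}$ are well defined and $\cE_{\varrho_\tau}(\phi_\tau,U_\eps'(\varrho_\tau))=\langle\dot\varrho_\tau,U_\eps'(\varrho_\tau)\rangle$ converges (pointwise in $\tau$ and dominatedly) to $\cE_{\varrho_\tau}(\phi_\tau,U'(\varrho_\tau))=\langle\dot\varrho_\tau,U'(\varrho_\tau)\rangle$. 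The left-hand side converges to $\cU(\mu_s)-\cU(\mu_r)$ using \eqref{eq:224}, \eqref{eq:224bis} and dominated convergence based on the uniform $L^\infty$ bound \textit{(a)} on $\varrho_s$; absolute continuity follows because the resulting integrand is dominated by $\sqrt{\cE_{\varrho_\tau}^*(\dot\varrho_\tau,\dot\varrho_\tau)}\sqrt{\cE(Z(\varrho_\tau))}\in L^2(0,1)$.

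For (iv), I would apply Corollary~\ref{cor:era_senza_label} to the nonlinear diffusion $\varrho_t=\sfS_t\bar\varrho$, obtaining that $w_t:=\tfrac{\d}{\d t}\varrho_t$ satisfies $\cE^*_{\varrho_t}(w_t,w_t)\le\rme^{-2\Lambda t}\cE^*_{\bar\varrho}(w_0,w_0)$. Since $w_t=\DeltaE P(\varrho_t)$ by Theorem~\ref{thm:nonlin-diff}(ND3), part (ii) translates this into \eqref{eq:146}. For the final Lipschitz statement I would invoke Theorem~\ref{thm:dynamicKant}: the bounded densities together with the $\V_{\varrho_t}'$ regularity of $\dot\varrho_t$ give $|\dot\mu_t|^2=\cE^*_{\varrho_t}(w_t,w_t)\le\rme^{-2\Lambda t}\cE(Z(\bar\varrho))$, and integration on $[0,T]$ produces the stated Lipschitz bound by $\rme^{\Lambda^-T}\sqrt{\cE(Z(\bar\varrho))}$.

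The main technical obstacle is, as usual, handling the degeneracy on $\{\varrho=0\}$: the weight $\varrho$ appears in the denominator of $\Gq{Z(\varrho)}=\Gq{P(\varrho)}/\varrho$, so every computation has to be performed within the abstract spaces $\Vhom\varrho$, $\Vdual\varrho$ rather than pointwise, relying crucially on the extension lemmas from Section~\ref{subsec:weighted} and on the monotonicity in $\eps$ of $\varrho/(\varrho+\eps)^2$ to identify the limits.
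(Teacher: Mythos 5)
Parts (i), (iii) and (iv) of your plan coincide in substance with the paper's proof: for (i) the paper simply cites \cite[Lemma~4.10]{AGS11a}, and your chain-rule plus monotone-convergence computation is the standard argument behind that citation; for (iii) you pass to the limit in the integrated form of Lemma~\ref{le:1bis} with the same uniform bounds (the only slip is attributing the existence of the potentials for $\frac{\d}{\d s}\varrho_s$ to part (ii), while it comes from Theorem~\ref{thm:dynamicKant}; part (ii) is what lets you replace $U_\eps'(\varrho_s)$ by $U'(\varrho_s)$); for (iv) you combine Corollary~\ref{cor:era_senza_label}, \eqref{eq:143} and Theorem~\ref{thm:dynamicKant} exactly as the paper does.

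The genuine gap is in (ii), at the step ``combining weak convergence with the norm convergence $\cE_\varrho(U_\eps'(\varrho),U_\eps'(\varrho))\to\cE(Z(\varrho),Z(\varrho))$ \dots yields strong convergence in $\Vhom\varrho$ and the second equality in \eqref{eq:143}''. If $u_\eps\weakto u$ weakly in a Hilbert space and $\|u_\eps\|\to L$, one only gets $\|u\|\le L$; strong convergence, and here the second identity in \eqref{eq:143}, require in addition $\|u\|=L$, i.e.\ the lower bound $\cEs\varrho(\DeltaE P(\varrho),\DeltaE P(\varrho))\ge \cE(Z(\varrho),Z(\varrho))$. Your Cauchy--Schwarz estimate gives only the opposite inequality, and weak lower semicontinuity points the same way, so as written the conclusion is asserted rather than proved (the argument is circular, since that equality is precisely part of \eqref{eq:143}). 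This is exactly the step where the paper works: it checks that $\varphi_\eps:=-U_\eps'(\varrho)$ is a maximizing family for the dual problem \eqref{eq:107}, because $\langle \DeltaE P(\varrho),\varphi_\eps\rangle=\int_X \frac{\Gq{P(\varrho)}}{\varrho+\eps}\,\d\mm\uparrow \cE(Z(\varrho),Z(\varrho))$ while $\cE_\varrho(\varphi_\eps,\varphi_\eps)\le \cE(Z(\varrho),Z(\varrho))$, and then Proposition~\ref{prop:allduals}(b) delivers simultaneously the equality and the strong convergence. With your weak-limit identification the gap can also be closed in one line: by \eqref{eq:112}, $\cE_\varrho\big(A_\varrho^*(\DeltaE P(\varrho)),\psi\big)=\int_X \Gbil{P(\varrho)}{\psi}\,\d\mm$ for every $\psi\in\V$; choosing $\psi=U_\eps'(\varrho)$ the right-hand side equals $\int_X \frac{\Gq{P(\varrho)}}{\varrho+\eps}\,\d\mm\to \cE(Z(\varrho),Z(\varrho))$, while the left-hand side converges, by the weak convergence you established, to $\cE_\varrho\big(A_\varrho^*(\DeltaE P(\varrho)),A_\varrho^*(\DeltaE P(\varrho))\big)$; together with \eqref{eq:113bist} this gives the missing equality, and then your ``weak plus norms'' argument is legitimate.
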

\begin{proof}
The proof of the first claim is standard, see e.g.\ \cite[Lemma~4.10]{AGS11a}.

In order to prove (ii), let us first notice that
\begin{equation}
  \label{eq:142}
  \cEs{\varrho}(\DeltaE P(\varrho),\DeltaE P(\varrho))\le \cE(Z(\varrho),Z(\varrho)).
\end{equation}
In fact for every $\varphi\in \V$ there holds
\begin{align*}
  -\langle \DeltaE P(\varrho),\varphi\rangle&=
  \int_X P'(\varrho)\Gbil\varrho\varphi\,\d\mm=
  \int_X \sqrt\varrho \,\Gbil{Z(\varrho)}\varphi\,\d\mm\le 
  \cE(Z(\varrho),Z(\varrho))^{1/2} \cE_\varrho(\varphi,\varphi)^{1/2}.
\end{align*}
On the other hand, choosing as test functions
$\varphi_\eps:=-U_\eps'(\varrho)$, taking 
the last identity in \eqref{eq:61} into account we get
\begin{align*}
  \cE_\varrho(\varphi_\eps,\varphi_\eps)&=\int_X \varrho \Gq{U_\eps'(\varrho)}\,\d\mm
  \le\int_X (\varrho+\eps) (U_\eps''(\varrho))^2 \Gq{\varrho}\,\d\mm\le 
  \cE(Z(\varrho),Z(\varrho)),\\
  \langle \DeltaE P(\varrho),\varphi_\eps\rangle&=
  \int_X \Gbil{P(\varrho)}{U_\eps'(\varrho)}\,\d\mm=
  \int_X \frac {1}{\varrho+\eps}\Gq{P(\varrho)}\,\d\mm\uparrow
  \cE(Z(\varrho),Z(\varrho))\quad\text{as }\eps\down0.
\end{align*}
This shows that $\{\varphi_\eps\}_{\eps>0}$ is an optimal family as $\eps\to 0$,
thus we can apply Proposition~\ref{prop:allduals}(b) to obtain that $\varphi_\eps$ converge in $\Vhom\varrho$ to a
$-A_\varrho^*(\DeltaE P(\varrho))$, and that \eqref{eq:143} holds.

In order to prove \eqref{eq:223} we pass to the limit as $\eps\downarrow 0$ in the identity obtained
integrating \eqref{eq:129}
\begin{equation}
  \label{eq:225}
  \int_X U_\eps(\varrho_t)\,\d\mm-\int_X U_\eps(\varrho_s)\,\d\mm
  =\int_s^t \duality{\V_{\varrho_r}'}{\V_{\varrho_r}}{\frac{\d}{\d r}\varrho_r}{U_\eps'(\varrho_r)} \,\d r\quad
  \forevery 0\le s\le t\le 1.
\end{equation}
Indeed,  in the left hand side it is sufficient to apply 
the dominated convergence theorem, thanks
to the uniform bounds of \eqref{eq:224} and \eqref{eq:47-bis}.
Since the curve $\mu$ is regular, 
the modulus of the integrand in the right hand side is bounded from above by
\begin{displaymath}
  \frac 12 \cE(Z(\varrho_r),Z(\varrho_r))+\frac 12\cEs{\varrho_r}(\frac\d{\d r}\varrho_r,\frac\d{\d r}\varrho_r)\le C\quad
  \forevery r\in [0,1],
\end{displaymath}
so that we can pass to the limit thanks to (ii).

The inequality \eqref{eq:146} follows by \eqref{eq:139}, 
the fact that $\frac\d\dt \varrho_t=\DeltaE P(\varrho_t)$ and
\eqref{eq:143}. 

\GGG In order to prove the last statement, 
we apply Theorem \ref{thm:dynamicKant}, the estimate
\eqref{eq:143} which provides an explicit expression of 
the metric Wasserstein velocity, and \eqref{eq:146}.
\end{proof}

\subsection{$\BE KN$ yields $\mathrm{EVI}$ for regular 
  entropy functionals in $\MC N$} \label{subsec:BEEVI}
  
\begin{theorem}[$\BE KN$ implies contractivity] \label{thm:BEKNContr} 
  Let us assume that metric $\BE KN$ holds and that $P\in\MCreg N$. 
  If $\Lambda$ is defined as in \eqref{eq:56}, then the nonlinear
  diffusion semigroup $\sfS$
  \GGG 
  defined by Theorem \ref{thm:nonlin-diff}
  \EEE is $\Lambda$-contractive in $\ProbabilitiesTwo
  X$, i.e. {\GGG 
    for all 
    $\mu_0=\varrho\mm,\ \nu_0=\sigma\mm\in
    \ProbabilitiesTwo{X}$ one has}
  \begin{equation}
    \label{eq:93}
    W_2(\mu_t,\nu_t)\le \rme^{-\Lambda t}\,
    W_2(\mu_0,\nu_0)\quad\text{with}\quad
    \mu_t=(\sfS_t\varrho)\mm,\,\,
    \nu_t=(\sfS_t\sigma)\mm.
  \end{equation}
\end{theorem}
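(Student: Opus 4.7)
The plan is to follow the Otto-style Hamiltonian approach sketched in Proposition~\ref{prop:easy-back}: join $\bar\mu$ to $\bar\nu$ by a Wasserstein geodesic, flow each intermediate measure by the nonlinear diffusion, and show that the $2$-action of the resulting curve $s \mapsto \mu_{s,t} := \sfS_t(\mu_{s,0})\,\mm$ contracts at exponential rate $2\Lambda$ in $t$. Since the action of a geodesic equals $W_2^2$ and $W_2^2(\mu_t,\nu_t) \le \mathcal{A}_2(\mu_{\cdot,t})$, this gives the claim. The nonlinear ingredient is that, for each fixed $s$, the vertical derivative $w_{s,t} := \partial_s \varrho_{s,t}$ solves the forward linearization \eqref{eq:81} of the nonlinear diffusion (Theorem~\ref{thm:precise-limit}), so the dual action estimate \eqref{eq:57} of Theorem~\ref{thm:BEKNaction}(ii) yields
\begin{equation*}
\cE^*_{\varrho_{s,t}}(w_{s,t},w_{s,t}) \le e^{-2\Lambda t}\,\cE^*_{\varrho_{s,0}}(w_{s,0},w_{s,0})
\end{equation*}
pointwise in $s$; integrating in $s$ and using the identification of $\int_0^1 \cE^*_{\varrho_{s,t}}(w_{s,t},w_{s,t})\,ds$ with $\mathcal{A}_2(\mu_{\cdot,t})$ provided by Theorem~\ref{thm:dynamicKant} gives $\mathcal{A}_2(\mu_{\cdot,t}) \le e^{-2\Lambda t} W_2^2(\bar\mu,\bar\nu)$. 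The role of $\Lambda = \inf_{r>0} KQ(r)$ is precisely to replace the position-dependent rate $KQ(\varrho_{s,t})$ appearing in \eqref{eq:101} by a single global constant.

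Concretely I would first reduce, via the $L^1$-contractivity of $\sfS$ (property (ND4) of Theorem~\ref{thm:nonlin-diff} and the extension in Theorem~\ref{thm:m-accretive}) together with the quadratic moment/entropy estimates of Theorem~\ref{thm:emF}, to the case $\bar\mu = \varrho\mm$, $\bar\nu = \sigma\mm$ with $\varrho,\sigma$ bounded and compactly supported. Since metric $\BE KN$ in particular yields $\BE K\infty$ (hence $\RCD K\infty$ by \cite{AGS12}), properties [RS1-4] hold and the $W_2$-geodesic between $\bar\mu$ and $\bar\nu$ is unique; Lemma~\ref{le:appregentr} provides a regular-curve approximation $(\mu^n_s)_{s\in[0,1]}$, $\mu^n_s = \varrho^n_s\mm$, with uniform $L^\infty$ bound and $\mathcal{A}_2(\mu^n) \to W_2^2(\bar\mu,\bar\nu)$. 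For every $n$, I flow each slice by $\varrho^n_{s,t} := \sfS_t\varrho^n_s$; uniform boundedness in $(s,t)$ is preserved by (ND2), and Theorem~\ref{thm:precise-limit} identifies $w^n_{s,t} = \partial_s \varrho^n_{s,t}$ as the solution of the forward linearized equation with initial datum $w^n_{s,0} = \partial_s\varrho^n_{s,0}$. Applying \eqref{eq:57} pointwise, integrating in $s$, and passing to the limit as $n\to\infty$ via the stability/lower-semicontinuity results (Theorem~\ref{thm:dynamicKant} together with Theorem~\ref{le:stabilitySigma} for $\frQ\equiv 1$) yields the contraction in the bounded case; a second approximation step, using $L^1$-contractivity and lower semicontinuity of $W_2$, removes the boundedness assumption.

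The main obstacle will be justifying the Fubini-type interchange needed to pass the \emph{slicewise} bound \eqref{eq:57} through the integral in $s$ while retaining uniformity in $n$. The estimate \eqref{eq:57} is only guaranteed for initial data with finite weighted energy $\cE^*_{\varrho^n_{s,0}}(w^n_{s,0},w^n_{s,0}) < \infty$, and along a regular curve this is only true for a.e.\ $s$ (with finite $L^1$-in-$s$ norm equal to $\mathcal{A}_2(\mu^n)$); one must therefore combine the pointwise dual monotonicity along the forward linearization with Fubini in $(s,t)$ to obtain the integrated inequality $\mathcal{A}_2(\mu^n_{\cdot,t}) \le e^{-2\Lambda t}\,\mathcal{A}_2(\mu^n_{\cdot,0})$. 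A related technical point is ensuring that the velocity of the $s$-curve $s\mapsto \mu^n_{s,t}$ is genuinely given by $\cE^*_{\varrho^n_{s,t}}(w^n_{s,t},w^n_{s,t})$ at a.e.\ $s$ for every $t \ge 0$, which requires checking that the regularity of $(s,t)\mapsto\varrho^n_{s,t}$ is compatible with the hypotheses of Theorem~\ref{thm:dynamicKant} for each fixed $t$; this follows from the uniform-in-$s$ boundedness of $\varrho^n_{s,t}$ and from the $L^2$-in-$s$ control of $w^n_{s,t}$ in $\Vdual\cE$ inherited from $n$-level regularity and the contractive estimate itself.
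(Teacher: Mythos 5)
Your proposal is correct, but it is organized differently from the paper's own argument for Theorem~\ref{thm:BEKNContr}. The paper works on the dual side: after the same reduction to a regular curve via Lemma~\ref{le:appregentr}, it represents $\tfrac12 W_2^2(\mu_{0,t},\mu_{1,t})$ by Kantorovich duality \eqref{eq:95}, interpolates the test potential in $s$ by the Hopf--Lax semigroup $\varphi_s=\sfQ_s\varphi$, transports it backwards in $t$ by the adjoint linearized equation, and combines the duality relation \eqref{eq:130} (giving $\langle w_{s,t},\varphi_s\rangle=\langle w_{s,0},\varphi_{s,0}\rangle$) with the Hamiltonian monotonicity \eqref{eq:91} in the form $\int_X|\rmD\varphi_s|_w^2\,\d\mu_{s,t}\ge \rme^{2\Lambda t}\int_X|\rmD\varphi_{s,0}|_w^2\,\d\mu_s$; a Young inequality against the metric speed of the \emph{initial} curve (via Lemma~\ref{le:w-s} and \eqref{eq:161}, \eqref{eq:168} at $t=0$ only) then closes the estimate, so the paper never has to identify the metric speed of $s\mapsto\mu_{s,t}$ at positive times. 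You argue on the primal side: slicewise application of the dual-energy contraction \eqref{eq:57} to $w_{s,t}=\partial_s\varrho_{s,t}$ (legitimate, since Theorem~\ref{thm:precise-limit} applies at a.e.\ $s$ and $\cE^*_{\varrho_s}(\partial_s\varrho_s,\partial_s\varrho_s)$ is finite for a.e.\ $s$ by \eqref{eq:168} at $t=0$), then Theorem~\ref{thm:dynamicKant} at time $t$ and $W_2^2\le\cA_2$. This yields the stronger intermediate inequality $\cA_2(\mu_{\cdot,t})\le\rme^{-2\Lambda t}\cA_2(\mu_{\cdot,0})$, which is exactly the coarse ($\Lambda$ in place of $KQ(\varrho)$) form of the action monotonicity that the paper proves separately, by the very mechanism you describe, as Theorem~\ref{thm:BEEVI1}; and your ``Fubini'' concern is resolved there in the same way, since \eqref{eq:57} is only invoked at the a.e.\ $s$ where it is meaningful, while the hypotheses of Theorem~\ref{thm:dynamicKant} for $s\mapsto\mu_{s,t}$ follow from the uniform $L^\infty$ bound of (ND2) in Theorem~\ref{thm:nonlin-diff}, the $\Vdual1$-Lipschitz dependence on $s$ inherited from \eqref{eq:51}, and $\|w_{s,t}\|_{\Vdual1}\le\|w_{s,0}\|_{\Vdual1}$ from \eqref{eq:131}. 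The trade-off: the paper's dual route needs less of the machinery of Section~\ref{sec:dynamic} at positive times, while your route produces the action contraction as a reusable intermediate statement. Two minor simplifications: the preliminary reduction to bounded, compactly supported densities via Theorem~\ref{thm:emF} is unnecessary, since Lemma~\ref{le:appregentr} applies to arbitrary $\mu_0=\varrho\mm,\nu_0=\sigma\mm\in\ProbabilitiesTwo X$ and the final passage to the limit only requires the $L^1$-contractivity of $\sfS$, \eqref{eq:14001}, \eqref{eq:140} and the lower semicontinuity of $W_2$ under weak convergence; for the same reason Theorem~\ref{le:stabilitySigma} is not needed in the limit in $n$.
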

\begin{proof}
   We assume first that $\varrho$ and $\sigma$ are the extreme points of a regular curve
  $\bar{\mu}_s=\bar{\varrho}_s \mm$.
  We set $\mu_{s,t}=\varrho_{s,t}\mm$, with $\varrho_{s,t}=\sfS_t \bar{\varrho}_s$. 
  Since $\bar{\mu}_s$ is Lipschitz with respect to $W_2$ and $\bar{\varrho}_s$ are
  uniformly bounded, $s\mapsto \bar{\varrho}_s$ is also Lipschitz and weakly
  differentiable with
  respect to $\Vdual1$: we set $w_{s,t}:=\partial_s \varrho_{s,t}$.

  By Kantorovich duality, 
  \begin{equation}
    \label{eq:95}
    \frac 12 W_2^2(\mu_{0,t},\mu_{1,t})=
    \sup\Big\{\int_X \sfQ_1\varphi\,\d\mu_{1,t}-\int_X \varphi\,\d\mu_{0,t}\Big\}
  \end{equation}
  where $\varphi$ runs among all Lipschitz functions
  with bounded support. If $\varphi$ is such a function with Lipschitz
  constant $L$, setting
  $\varphi_s:=Q_s \varphi$, 
  the map $\eta(s,r):= \int_X \varphi_s\,\d\mu_{r,t}$ is Lipschitz:
  in fact, recalling that 
  \begin{displaymath}
    \Lip(\varphi_s)\le 2 L,\qquad
    \sup_{x\in X} |\varphi_s(x)-\varphi_r(x)|\le 2L^2 |s-r|
  \end{displaymath}
  we easily have
  \begin{displaymath}
    |\eta(s,r)-\eta(s',r)|\le 2L^2 |s-s'|,\qquad
    |\eta(s,r)-\eta(s,r')|\le 2L \sqrt{\mm(S)}\,
    \|\varrho_{s,r}-\varrho_{s,r'}\|_{\Vdual1},
  \end{displaymath}
  where $S$ is a bounded set containing all the supports of $\varphi_s$, $s\in [0,1]$.
  From \eqref{eq:identities} we eventually find
  \begin{align*}
    \frac\d{\d s}\int_X \varphi_s \,\d\mu_{s,t}\le 
    -\frac 12 \int_X |\rmD\varphi_s|^2\,\d\mu_{s,t}+
    \langle w_{s,t},\varphi_{s}\rangle.
  \end{align*}
  Denoting now by $r\mapsto\varphi_{s,r}$ the solution of the backward
  linearized equation \eqref{eq:82} (corresponding to \eqref{eq:PMb}) in the interval $[0,t]$ 
  with final condition $\varphi_{s,t}:=\varphi_s$, recalling Corollary~\ref{cor:rho-derivative} 
  we get by \eqref{eq:130} of Theorem~\ref{thm:reg2} and 
  \eqref{eq:91} of Theorem~\ref{thm:BEKNaction}
    \begin{displaymath}
    \langle w_{s,t},\varphi_{s}\rangle=
    \langle w_{s,0},\varphi_{s,0}\rangle=
    \int_X \varphi_{s,0} \, \partial_s \varrho_s \,\d\mm,\qquad
    \int_X |\rmD\varphi_s|_w^2\,\d\mu_{s,t}
    \ge \rme^{2\Lambda t} \int_X|\rmD\varphi_{s,0}|_w^2\,\d\mu_s,
  \end{displaymath}
  and therefore the relations \eqref{eq:161} and \eqref{eq:168} between minimal $2$-velocity and metric derivative,
  together with Lemma~\ref{le:w-s},  give
  \begin{align*}
    \int_X \varphi_1\,\d\mu_{1,t}-\int_X
    \varphi_0\,\d\mu_{0,t}
    &\le 
    \int_0^1 \Big( -\frac 12 \int_X |\rmD\varphi_s|_w^2\,\d\mu_{s,t}+
    \langle w_{s,t},\varphi_{s}\rangle\Big)\,\d s
    \\&\le 
    \int_0^1 \Big( -\frac 12 \rme^{2\Lambda t}\int_X |\rmD\varphi_{s,0}|_w^2\,\d\mu_{s}+
    \int_X (\partial_s\varrho_s) \, \varphi_{s,0}\,\d\mm\Big)\,\d s
    \\&\le 
    \frac 12 \rme^{-2\Lambda t}\int_0^1 |\dot \mu_s|^2\,\d s.
  \end{align*}
  Taking now the supremum with respect to $\varphi$ we get
  $W_2^2((\sfS_t\varrho)\mm,(\sfS_t\sigma)\mm)\leq e^{-2\Lambda t}\int_0^1 |\dot \mu_s|^2\,\d s$. 
  Using Lemma~\ref{le:appregentr} 
  \GGG and the contraction property of $\sfS_t$ 
  in $L^1(X,\mm)$ \EEE
  we obtain the same bound for an arbitrary couple
  of initial measures.
\end{proof}

Let us recall the notation (see \eqref{eq:17})
$$
\Action{Q} \mu \mm=\int_0^1\int_X Q(\varrho_s)v^2_s\rho_s\,\d\mm \d s
$$
for the weighted action of a curve $\mu_s=\varrho_s\mm$ w.r.t. $\mm$, where
$v_s$ is the velocity density of the curve.

\begin{theorem}
  [Action monotonicity]\label{thm:BEEVI1}
  Let us assume that metric $\BE KN$ holds, and that $P\in\MCreg N$.
  Let $\mu_s=\varrho_s\mm$, $s\in [0,1]$, be a regular curve 
  and let {\color{black} $\mu_{s,t}:=\varrho_{s,t}\mm$ with $\varrho_{s,t}=(\sfS_t\varrho_s)$.}
  Denoting by $\mu_{\cdot,t}$ the curve $s\mapsto \mu_{s,t}$,
  we have 
  \begin{equation}
    \label{eq:116}
  \frac{1}{2} \fn  \cA_2(\mu_{\cdot,t_1}) 
    +   K\int_{t_0}^{t_1}\Action{Q} {\mu_{\cdot,t}} \mm\,\d t\le \frac{1}{2} \fn
    \cA_2(\mu_{\cdot,t_0}) \qquad 0\leq t_0\leq t_1\leq 1.
  \end{equation}  
\end{theorem}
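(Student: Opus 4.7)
The plan is to differentiate the squared metric derivative of the curves $s\mapsto \mu_{s,t}$ in the time variable $t$, relying on the dual Hamiltonian monotonicity from Theorem~\ref{thm:BEKNaction}(iii).

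\textbf{Step 1: $s$-derivatives in the dual space.} Since $\mu$ is regular, the map $s\mapsto \varrho_s$ is Lipschitz from $[0,1]$ into the Hilbert space $\Vdual 1$, hence strongly differentiable at $\Leb 1$-a.e. $s\in (0,1)$. Applying Theorem~\ref{thm:dynamicKant} to $\mu$, we identify at every such $s$ the derivative $\bar w_s:=\partial_s\varrho_s$ with an element of $\Vdual{\varrho_s}$ satisfying $\cEs{\varrho_s}(\bar w_s,\bar w_s)=|\dot\mu_s|^2$.

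\textbf{Step 2: Linearization along the nonlinear semigroup.} For each $s$ where $\bar w_s$ exists we apply Theorem~\ref{thm:precise-limit} with $\bar\varrho:=\varrho_s$ and $\bar\varrho_\eps:=\varrho_{s+\eps}$ (uniformly bounded in $L^2\cap L^\infty(X,\mm)$ because $\mu$ is regular, and with $\bar w_\eps=(\varrho_{s+\eps}-\varrho_s)/\eps$ strongly convergent to $\bar w_s$ in $\Vdual 1$). We obtain that for every $t\in[0,T]$ the difference quotient $(\varrho_{s+\eps,t}-\varrho_{s,t})/\eps$ converges in $\Vdual 1$ to $w_{s,t}\in W^{1,2}(0,T;\H,\Ddual)$, which solves the forward linearized equation \eqref{eq:81} with initial datum $\bar w_s$. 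Hence $w_{s,t}=\partial_s\varrho_{s,t}$.

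\textbf{Step 3: Metric velocity of $s\mapsto \mu_{s,t}$.} Fix $t\in[0,T]$. By (ND2) the densities $\varrho_{s,t}$ are uniformly bounded, and by \eqref{eq:51} the map $s\mapsto \varrho_{s,t}$ is Lipschitz in $\Vdual 1$. A second application of Theorem~\ref{thm:dynamicKant} to the curve $s\mapsto\mu_{s,t}$ yields $\mu_{\cdot,t}\in \AC 2{[0,1]}{(\Probabilities X,W_2)}$, with minimal velocity density $v_{s,t}$ satisfying
\begin{equation*}
\cEs{\varrho_{s,t}}(w_{s,t},w_{s,t})=\int_X v_{s,t}^2\,\varrho_{s,t}\,\d\mm=|\dot\mu_{\cdot,t}|^2(s),
\qquad v_{s,t}^2=\tGq{\varrho_{s,t}}{\phi_{s,t}}\ \mu_{s,t}\text{-a.e.,}
\end{equation*}
where $\phi_{s,t}:=-A^*_{\varrho_{s,t}}(w_{s,t})\in\Vhom{\varrho_{s,t}}$. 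Integration over $s$ then gives
$\cA_2(\mu_{\cdot,t})=\int_0^1 \cEs{\varrho_{s,t}}(w_{s,t},w_{s,t})\,\d s$.

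\textbf{Step 4: Time integration via Theorem~\ref{thm:BEKNaction}.} For each admissible $s$, Theorem~\ref{thm:BEKNaction}(iii) applied to $w=w_{s,\cdot}$ yields the pointwise upper Dini bound
\begin{equation*}
\limsup_{h\downarrow 0}\frac 1{2h}\Bigl(\cEs{\varrho_{s,t}}(w_{s,t},w_{s,t})-\cEs{\varrho_{s,t-h}}(w_{s,t-h},w_{s,t-h})\Bigr)\le -K\int_X Q(\varrho_{s,t})\,v_{s,t}^2\,\varrho_{s,t}\,\d\mm,
\end{equation*}
while Theorem~\ref{thm:BEKNaction}(ii) ensures that $t\mapsto \rme^{2\Lambda t}\cEs{\varrho_{s,t}}(w_{s,t},w_{s,t})$ is non-increasing. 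A standard real-variable argument (upper Dini derivative bound for a function of bounded variation) then upgrades this to the integral inequality
\begin{equation*}
\cEs{\varrho_{s,t_1}}(w_{s,t_1},w_{s,t_1})-\cEs{\varrho_{s,t_0}}(w_{s,t_0},w_{s,t_0})\le -2K\int_{t_0}^{t_1}\int_X Q(\varrho_{s,t})\,v_{s,t}^2\,\varrho_{s,t}\,\d\mm\,\d t
\end{equation*}
for every $0\le t_0\le t_1\le 1$.

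\textbf{Step 5: Integrate in $s$ and conclude.} Integrating the last inequality over $s\in(0,1)$, using Step 3 on the left-hand side and Fubini with the definition \eqref{eq:17} of the weighted action on the right-hand side, we obtain
\begin{equation*}
\cA_2(\mu_{\cdot,t_1})-\cA_2(\mu_{\cdot,t_0})\le -2K\int_{t_0}^{t_1}\Action{Q}{\mu_{\cdot,t}}\mm\,\d t,
\end{equation*}
which is \eqref{eq:116} after division by $2$.

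\textbf{Main obstacle.} The delicate points are Steps 2 and 4. In Step 2 the identification of $w_{s,t}$ as a genuine linearization of the nonlinear diffusion semigroup requires the strong $\Vdual 1$-convergence of the $s$-difference quotients, which in turn exploits the Hilbert structure of $\Vdual 1$ and the Lipschitz regularity granted by the regular-curve assumption. In Step 4, passing from the one-sided Dini upper bound \eqref{eq:101} to the integrated inequality depends on the monotonicity furnished by \eqref{eq:57}, which makes $t\mapsto \cEs{\varrho_{s,t}}(w_{s,t},w_{s,t})$ of bounded variation; joint measurability in $(s,t)$, needed for the Fubini step, follows from the continuity properties of the semigroup and its linearization proved in Section~\ref{subsec:ND}.
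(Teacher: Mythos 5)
Your proposal is correct and follows essentially the same route as the paper: differentiability in $s$ via Theorem~\ref{thm:precise-limit}, identification $\cA_2(\mu_{\cdot,t})=\int_0^1\cEs{\varrho_{s,t}}(\partial_s\varrho_{s,t},\partial_s\varrho_{s,t})\,\d s$ via Theorem~\ref{thm:dynamicKant} (i.e.\ \eqref{eq:168}), and the Hamiltonian estimates \eqref{eq:57}, \eqref{eq:101} of Theorem~\ref{thm:BEKNaction}. The only (immaterial) difference is the order of the limiting arguments: the paper first integrates in $s$, gets absolute continuity of $t\mapsto\cA_2(\mu_{\cdot,t})$ from \eqref{eq:57} and the Dini bound via Fatou, and then integrates in $t$, whereas you integrate in $t$ fiberwise for fixed $s$ (using the monotonicity from \eqref{eq:57} to justify the Dini-to-integral upgrade) and then integrate in $s$ by Fubini.
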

\begin{proof}
  \AAA It is sufficient to prove that the map $t\mapsto  \cA_2(\mu_{\cdot,t})$ is absolutely continuous and satisfies \fn for every $t>0$
  \begin{equation}
    \label{eq:117}
    \limsup_{h\down0}
    \frac 1{2h}\Big(
    \cA_2(\mu_{\cdot,t}) 
    -
    \cA_2(\mu_{\cdot,t-h}) 
    \Big)\le -K \Action Q{\mu_{\cdot,t}}\mm.
  \end{equation}
  Let us fix $t>h>0$; thanks to Theorem~\ref{thm:nonlin-diff} and Theorem~\ref{thm:precise-limit}, the curves $\varrho_{\cdot,t}$ and
  $\varrho_{\cdot,t-h}$ are $\Leb{1}$-a.e. in $(0,1)$ differentiable in $\V'$, 
  with derivatives $w_{s,t}\in\V'_{\varrho_{s,t}}$, $w_{s,t-h}\in\V'_{\varrho_{s,t-h}}$. 
  
  Recall also the relations \eqref{eq:168} and \eqref{eq:113} of Theorem~\ref{thm:dynamicKant}, linking
  the minimal velocity density of a regular curve $\nu_s=\varrho_s\mm$, its $\V'$ derivative $\ell_s$
  and the potential $\phi_r=-A_{\varrho_s}^*(\ell_s)$. 
    
  By \eqref{eq:168} we get 
  \begin{displaymath}
    \frac 1{2h}\Big(
    \cA_2(\mu_{\cdot,t}) 
    -
    \cA_2(\mu_{\cdot,t-h}) \Big)
    =\frac 1{2h}\int_0^1 \Big(\cE_{\varrho_{s,t}}^*(w_{s,t},w_{s,t})-
    \cE_{\varrho_{s,t-h}}^*(w_{s,t-h},w_{s,t-h})
\Big)\,\d s.
  \end{displaymath}
  Recalling \eqref{eq:57} and the definition \eqref{eq:56} of $\Lambda$, one has 
  $$
  \cE_{\varrho_{s,t}}^*(w_{s,t},w_{s,t})-
    \cE_{\varrho_{s,t-h}}^*(w_{s,t-h},w_{s,t-h})
    \leq \bigl(e^{-2\Lambda h}-1\bigr)\cE_{\varrho_{s,t-h}}^*(w_{s,t-h},w_{s,t-h})
  $$
   which is uniformly bounded (using \eqref{eq:57} once more) by $C(t)h$, if $h<t/2$. \AAA Therefore the curve $t\mapsto  \cA_2(\mu_{\cdot,t})$ is absolutely continuous; moreover, \fn applying \eqref{eq:101},  \eqref{eq:168} and Fatou's Lemma we get 
 \begin{displaymath}
   \limsup_{h\down0}\frac 1{2h}\Big(    
    \cA_2(\mu_{\cdot,t}) 
    -
    \cA_2(\mu_{\cdot,t-h}) \Big)\le 
    -K\int_0^1 \int_X
    Q(\varrho_{s,t})\varrho_{s,t}v_{s,t}^2\,\d\mm\,\d s,
    \end{displaymath} 
    where $v_{\cdot,t}$ is the minimal velocity density of $\mu_{\cdot,t}$.
    \qed
  \endnobox
  \end{proof}

Let us now refine the previous argument. In this refinement we shall use the
weighted action
$$
\Action {\AAA id \fn Q}\mu\mm=\int_0^1\int_X \AAA s \fn Q(\varrho_s)v^2_s\rho_s\,\d\mm \, \d s,
$$
where \AAA $id(s)=s$. Notice that the weighted action appearing in the
${\sf EVI}$ property \eqref{eq:218} is $\Action { \omega Q}\mu\mm$, with $\omega(s)=1-s$; in other words $\Action {\omega  Q}\mu\mm$ corresponds to the $s$-time reversed  weighted action   $\Action { id  Q}\mu\mm$. \fn

\begin{theorem}[Action and energy monotonicity]
  \label{thm:BEEVI2}
  Let us assume that metric $\BE KN$ holds and that $P\in\MCreg N$. 
  Let $\mu_s=\varrho_s\mm$, $s\in [0,1]$, be a regular curve and let
  {\color{black} $\mu_{s,t}:=\varrho_{s,t}\mm$ with $\varrho_{s,t}=(\sfS_{st}\varrho_s)$.}
  Denoting by $\mu_{\cdot,t}$ the curve $s\mapsto \mu_{s,t}$,
  we have 
  \begin{equation}
    \label{eq:116.2}
   \frac{1}{2}  \cA_2(\mu_{\cdot,t}) +t\cU(\mu_{1,t})
    +K\int_{0}^t\Action {\AAA id \fn Q}{\mu_{\cdot,r}}\mm\,\d r\le 
   \frac{1}{2}  \cA_2(\mu_{\cdot,0}) +t\cU(\mu_{0,0}).
  \end{equation}  
  \end{theorem}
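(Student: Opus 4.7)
The strategy is to verify that the function $\eta(t):=\tfrac12\cA_2(\mu_{\cdot,t})+t\,\cU(\mu_{1,t})$ is absolutely continuous on $[0,1]$ and satisfies
$\eta'(t)\le \cU(\mu_0)-K\,\Action{id\,Q}{\mu_{\cdot,t}}\mm$ for $\Leb{1}$-a.e.\ $t$, and then to integrate, using $\mu_{0,0}=\mu_0$ and $\eta(0)=\tfrac12\cA_2(\mu_{\cdot,0})$. The absolute continuity of $t\mapsto\cA_2(\mu_{\cdot,t})$ can be obtained by adapting the fiber-wise argument of Theorem~\ref{thm:BEEVI1}, the only novelty being an extra factor $s$ from the time-rescaling in \eqref{eq:57}; for $t\mapsto\cU(\mu_{1,t})$, Lemma~\ref{le:2bis}(iii)–(iv) provides both the absolute continuity and the dissipation identity $\tfrac{\d}{\dt}\cU(\mu_{1,t})=-\cE^*_{\varrho_{1,t}}\big(\DeltaE P(\varrho_{1,t}),\DeltaE P(\varrho_{1,t})\big)\le 0$.

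The core of the proof is an adaptation of Theorem~\ref{thm:BEKNaction}(iii). For fixed $s$ the density $\varrho_{s,t}$ solves the rescaled diffusion $\partial_t\varrho=s\,\DeltaE P(\varrho)$, hence a direct computation gives that $w_{s,t}:=\partial_s\varrho_{s,t}$ satisfies the \emph{inhomogeneous} scaled-linearized equation
\begin{equation*}
\partial_t w_{s,t}=s\,\DeltaE\big(P'(\varrho_{s,t})w_{s,t}\big)+\DeltaE P(\varrho_{s,t}).
\end{equation*}
Given $\varphi\in\V_\infty$, let $\tau\mapsto B_{s,t,\tau}$ be the solution on $[t-h,t]$ of the backward equation $\partial_\tau B+s\,P'(\varrho_{s,\tau})\DeltaE B=0$ with terminal value $B_{s,t,t}=\varphi$. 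The same integration by parts used in \eqref{eq:basic} cancels the homogeneous terms and yields the adjoint identity
\begin{equation*}
\langle w_{s,t},\varphi\rangle=\langle w_{s,t-h},B_{s,t,t-h}\rangle+\int_{t-h}^{t}\langle \DeltaE P(\varrho_{s,\tau}),B_{s,t,\tau}\rangle\,\d\tau.
\end{equation*}

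Combining this with the $s$-scaled version of \eqref{eq:91} (which follows by the time-rescaling argument), namely
\[
\tfrac12\cE_{\varrho_{s,t}}(\varphi)\ge\tfrac12\cE_{\varrho_{s,t-h}}(B_{s,t,t-h})+Ks\!\int_{t-h}^{t}\!\!\int_X P(\varrho_{s,\tau})\,\Gamma(B_{s,t,\tau})\,\d\mm\,\d\tau,
\]
then taking the supremum over $\varphi$ along a maximizing sequence $\varphi_n\to\psi_{s,t}:=-A^*_{\varrho_{s,t}}w_{s,t}$ in $\Vhom{\varrho_{s,t}}$ (the stability being furnished by Lemma~\ref{le:stability}), dividing by $2h$ and passing to the $\limsup$ as $h\downarrow 0$, one obtains the pointwise estimate
\begin{equation*}
\limsup_{h\downarrow 0}\frac{\cE^*_{\varrho_{s,t}}(w_{s,t},w_{s,t})-\cE^*_{\varrho_{s,t-h}}(w_{s,t-h},w_{s,t-h})}{2h}\le -\cE_{\varrho_{s,t}}\big(U'(\varrho_{s,t}),\psi_{s,t}\big)-Ks\!\int_X P(\varrho_{s,t})\Gamma(\psi_{s,t})\,\d\mm,
\end{equation*}
where the first term on the right-hand side arises from the inhomogeneous contribution together with the identification $-A^*_{\varrho_{s,t}}\DeltaE P(\varrho_{s,t})=U'(\varrho_{s,t})$ from Lemma~\ref{le:2bis}(ii).

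To conclude, I would integrate this estimate over $s\in[0,1]$: using $\cE_{\varrho_{s,t}}(U'(\varrho_{s,t}),\psi_{s,t})=\langle w_{s,t},U'(\varrho_{s,t})\rangle=\tfrac{\d}{\d s}\cU(\mu_{s,t})$ from Lemma~\ref{le:2bis}(iii), the fundamental theorem of calculus $\int_0^1\tfrac{\d}{\d s}\cU(\mu_{s,t})\,\d s=\cU(\mu_{1,t})-\cU(\mu_0)$, and $\Gamma(\psi_{s,t})=v_{s,t}^2$ on $\{\varrho_{s,t}>0\}$ from \eqref{eq:113}, one deduces
\begin{equation*}
\tfrac{\d}{\dt}\,\tfrac12\cA_2(\mu_{\cdot,t})\le\cU(\mu_0)-\cU(\mu_{1,t})-K\,\Action{id\,Q}{\mu_{\cdot,t}}\mm.
\end{equation*}
Adding $\tfrac{\d}{\dt}\!\big[t\,\cU(\mu_{1,t})\big]=\cU(\mu_{1,t})+t\,\tfrac{\d}{\dt}\cU(\mu_{1,t})\le\cU(\mu_{1,t})$ and integrating from $0$ to $t$ produces \eqref{eq:116.2}. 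The principal obstacle is the careful justification of the passage to the limit in the new inhomogeneous contribution $\langle\DeltaE P(\varrho_{s,\tau}),B_{s,t,\tau}\rangle$ — this requires the membership $U'(\varrho_{s,t})\in\Vhom{\varrho_{s,t}}$ with quantitative control, together with the weighted stability results from Section~\ref{subsec:weighted}, to handle the time-dependent reference measures $\varrho_{s,\tau}\mm$.
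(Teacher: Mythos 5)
Your overall architecture coincides with the paper's: both proofs reduce \eqref{eq:116.2} to the backward differential inequality \eqref{eq:220} for $t\mapsto\tfrac12\cA_2(\mu_{\cdot,t})$, both run a fiber-wise (fixed $s$) adaptation of the dual action estimate of Theorem~\ref{thm:BEKNaction}(iii) with the $s$-rescaled backward flow $\partial_\tau B+sP'(\varrho_{s,\tau})\DeltaE B=0$ and the $s$-scaled version of \eqref{eq:91}, both integrate in $s$ using $\frac{\d}{\d s}\cU(\mu_{s,t})=\langle \partial_s\varrho_{s,t},U'(\varrho_{s,t})\rangle$ (Lemma~\ref{le:2bis}), the identification \eqref{eq:113} of the potential with the minimal velocity, and \eqref{eq:229}-type representations from Theorem~\ref{thm:dynamicKant}, and both conclude using the monotonicity of $t\mapsto\cU(\mu_{1,t})$ along the nonlinear flow. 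Where you genuinely diverge is in the treatment of the inhomogeneity created by the $s$-dependent evolution time $st$: you write the inhomogeneous linearized PDE $\partial_t w_{s,t}=s\DeltaE(P'(\varrho_{s,t})w_{s,t})+\DeltaE P(\varrho_{s,t})$ and a Duhamel/adjoint identity, so the entropy-derivative term enters directly through the source; the paper instead uses the exact finite-$h$ decomposition $\partial_s\varrho_{s,t}=z^h_{s,s}+h\DeltaE P(\varrho_{s,t})$, where $z$ solves only the \emph{homogeneous} linearized equation of Theorems~\ref{thm:reg2} and \ref{thm:precise-limit}, and then works with the shifted potentials $\psi^n_{s,t}=\varphi^n_{s,t}-h\upsilon_{s,t}$ (discarding the term $-\tfrac{h^2}2\int\varrho\,\Gamma(\upsilon_{s,t})$ and using a $\delta$-splitting for the curvature term). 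Your variant is cleaner at the bookkeeping level (the maximizing sequence is taken directly for $w_{s,t}$, so no $\delta$-trick), while the paper's device buys something essential: it never needs the inhomogeneous equation, only results already proved in Part I.

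That is also where your proposal has its one real soft spot: the step ``a direct computation gives'' the inhomogeneous equation, and the subsequent adjoint identity $\langle w_{s,t},\varphi\rangle=\langle w_{s,t-h},B_{s,t,t-h}\rangle+\int_{t-h}^t\langle\DeltaE P(\varrho_{s,\tau}),B_{s,t,\tau}\rangle\,\d\tau$, are not covered by the paper's linearization theory (Theorem~\ref{thm:reg2} and Theorem~\ref{thm:precise-limit} treat the homogeneous problem with a fixed time horizon), so the interchange $\partial_t\partial_s=\partial_s\partial_t$ and the weak formulation with source must be justified separately; this is exactly what the paper's decomposition is designed to avoid, and it can be supplied either by an inhomogeneous extension of Theorem~\ref{thm:reg2} or by deriving your identity from the exact decomposition applied on a partition of $[t-h,t]$. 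You correctly flag the limit passage $\frac1h\int_{t-h}^t\langle\DeltaE P(\varrho_{s,\tau}),B_{s,t,\tau}\rangle\,\d\tau\to\langle\DeltaE P(\varrho_{s,t}),\psi_{s,t}\rangle$ as delicate, and the tools you name (Lemma~\ref{le:2bis}(ii),(iv) for the uniform bound $\cE^*_{\varrho_{s,\tau}}(\DeltaE P(\varrho_{s,\tau}))\le\cE(Z(\varrho_{s,\tau}))\le C$, Lemma~\ref{le:stability}) are the right ones. Two minor corrections: by Lemma~\ref{le:2bis}(ii) the identification is $U'(\varrho)=A^*_\varrho(\DeltaE P(\varrho))$, not $-A^*_\varrho\DeltaE P(\varrho)$ (your final formula $\langle\DeltaE P(\varrho_{s,t}),\psi_{s,t}\rangle=-\cE_{\varrho_{s,t}}(U'(\varrho_{s,t}),\psi_{s,t})$ is nonetheless correct); and the monotonicity of $t\mapsto\cU(\mu_{1,t})$, which is all you need to close the argument, comes from \eqref{eq:122} rather than from Lemma~\ref{le:2bis}(iii)--(iv), which concern the $s$-parametrization along regular curves.
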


\begin{proof}
Since by assumption $U$ is continuous and convex, by \eqref{eq:122} we already know that the 
map $t\mapsto \cU(\mu_{1,t})$ is nonincreasing; thus it \fn is sufficient to prove that 
    \begin{equation}
    \label{eq:220}
     \limsup_{h\down0}
    \frac 1{2h}\Big(
    \cA_2(\mu_{\cdot,t}) 
    -
    \cA_2(\mu_{\cdot,t-h}) 
    \Big)\le \cU(\mu_{0,0})-\cU(\mu_{1,t})-K \Action {\AAA id \fn Q}{\mu_{\cdot,t}}\mm.
  \end{equation}
  We thus fix $0<h<t$.  Recalling Theorem~\ref{thm:MVD} and Theorem~\ref{thm:dynamicKant},  
  we have  
    \begin{equation}
    \label{eq:229}
    \cA_2(\mu_{\cdot,t})=  \int_0^1 \cEs{\varrho_{s,t}}(\partial_{s}\varrho_{s,t})\,\d s,\qquad
    \cA_2(\mu_{\cdot,t-h})=  \int_0^1 \cEs{\varrho_{s,t-h}}(\partial_{s} \varrho_{s,t-h})\,\d s. 
  \end{equation}   
It is easy to check that for every $\tau>0$ the curve $s\mapsto
\mu_{s,t-h+\tau}$ is Lipschitz in $\ProbabilitiesTwo X$ and 
$s\mapsto \varrho_{s,t-h+\tau}$ is Lipschitz in $\Vdual1$,
since for every $0\le s_1<s_2\le 1$
\begin{align*}
  \|\varrho_{s_1,t-h+\tau}-\varrho_{s_2,t-h+\tau}\|_{\Vdual1}&\le 
  \|\varrho_{s_1,t-h+\tau}-\sfS_{s_1 \tau} \varrho_{s_2,t-h} \fn\|_{\Vdual1}+
  \|\sfS_{s_1 \tau} \varrho_{s_2,t-h} \fn-\varrho_{s_2,t-h+\tau}\|_{\Vdual1} \\
  &\le  \|\varrho_{s_1,t-h}-\varrho_{s_2,t-h} \|_{\Vdual1}+C \, \tau (s_2-s_1) ,
\end{align*}
for some constant $C$ independent of $s_{1},s_{2}$ and $\tau$,  
where in the last inequality we used the contractivity \eqref{eq:51}
of $\sfS$  in $\Vdual1$ and Theorem~\ref{thm:nonlin-diff}
(ND3).
\\ \GGG A similar argument shows the Lipschitz property with respect to
the Wasserstein distance:
\begin{align*}
  W_2(\mu_{s_1,t-h+ \tau},\mu_{s_2,t-h+\tau}) &\le 
  W_2\big(\mu_{s_1,t-h+\tau},(\sfS_{s_1 \tau} \varrho_{s_2,t-h}) \mm \big)+
  W_2\big((\sfS_{s_1 \tau} \varrho_{s_2,t-h})  \mm, \mu_{s_2,t-h+\tau}\big) \\
  & \le
  \rme^{-\Lambda s_1\tau} W_2(\mu_{s_1,t-h},\mu_{s_2,t-h})+
  C' \, \tau (s_2-s_1) ,
\end{align*}
where we applied \eqref{eq:93} and point (iv) of Lemma \ref{le:2bis}:
notice that, along the regular curve $\mu_s=\varrho_s \mm$, the quantity 
$\cE(\sqrt{\varrho_s},\sqrt{\varrho_s})$ is uniformly bounded,
so that $\cE(Z(\varrho_{s,t-h}),Z(\varrho_{s,t-h}))$ is also uniformly
bounded by \eqref{eq:146}. \EEE


For every $r\in [0,1], u\in [0,t]$, also the curves $s\mapsto \varrho_{s,r}^u:=\sfS_{ru}\varrho_{s,t-h}$   are regular:
we set $z_{s,r}^u:=\partial_s \varrho_{s,r}^u$.
We have 
\begin{displaymath}
  \lim_{k\to0}\frac{\varrho_{s,r+k}^u-\varrho_{s,r}^u}k=
  u\DeltaE P(\varrho_{s,r}^u)\quad
  \text{for every } u\in [0,t],\quad s,\,r\in [0,1].
\end{displaymath}
Since $\varrho^h_{s,s}=\varrho_{s,t}$,
it follows that the derivative of 
$s\mapsto \varrho_{s,t}$ in $\Vdual 1$ is
\begin{displaymath}
  \partial_s \varrho_{s,t}=   \partial_s \big( \sfS_{sh}  \varrho_{s,t-h}\big)= z_{s,s}^h+h
  \DeltaE P(\varrho_{s,t}).
\end{displaymath}

Applying Lemma~\ref{le:2bis}(ii,iii) we get
\begin{align}
  \label{eq:120}
    \frac\partial{\partial s}\int_X U(\varrho_{s,t}) \,\d\mm
  &=\notag
  \duality{\V'_{\varrho_{s,t}}}{\V_{\varrho_{s,t}}}{z_{s,s}^h+h
  \DeltaE P(\varrho_{s,t})}{U'(\varrho_{s,t})} \quad
  \Leb 1\text{-a.e.\ in }(0,1).
  \end{align}
For every $s\in [0,1]$, let $\varphi^n_{s,t}\in \V$ be an optimal
sequence for $\partial_{s} 	\varrho_{s,t}$, thus satisfying
\begin{displaymath}
  \frac 12\cEs{\varrho_{s,t}}(\partial_{s} 	\varrho_{s,t}, \partial_{s} 	\varrho_{s,t})=
  \lim_{n\to\infty} \duality{\V'_{\varrho_{s,t}}}{\V_{\varrho_{s,t}}}{ z_{s,s}^h+h
  \DeltaE P(\varrho_{s,t})}{\varphi^n_{s,t}}-
  \frac 12 \int_X \varrho_{s,t} \tGq {\varrho_{s,t}}  {\varphi_{s,t}^n} \,\d\mm.
\end{displaymath}
Let $\upsilon_{s,t}:=-U'(\varrho_{s,t})\in \Vhom{\varrho_{s,t}}$ and 
$\psi^n_{s,t}:=\varphi_{s,t}^n-h\upsilon_{s,t}$.
We get
\begin{align*}
  &\duality{\V'_{\varrho_{s,t}}}{\V_{\varrho_{s,t}}}{z^h_{s,s}+h\DeltaE P(\varrho_{s,t})}{\varphi_{s,t}^n}-
  \frac 12 \int_X \varrho_{s,t} \tGq {\varrho_{s,t}} {\varphi_{s,t}^n}\,\d\mm
  \\&=
  \duality{\V'_{\varrho_{s,t}}}{\V_{\varrho_{s,t}}}{z_{s,s}^h+h\DeltaE P(\varrho_{s,t})}{\psi^n_{s,t}
  +h\upsilon_{s,t}}-
  \frac 12 \int_X \varrho_{s,t} \tGq{\varrho_{s,t}}{\psi^n_{s,t}+h\upsilon_{s,t}}\,\d\mm
  \\&=
  -h\frac\partial{\partial s}\int_X U(\varrho_{s,t})\,\d\mm
  +\duality{\V'_{\varrho_{s,t}}}{\V_{\varrho_{s,t}}}{z_{s,s}^h}{\psi^{n}_{s,t}}-\frac 12 \cE_{\varrho_{s,t}}(\psi^n_{s,t},\psi^n_{s,t})+h \;
    \duality{\V'_{\varrho_{s,t}}}{\V_{\varrho_{s,t}}}{\DeltaE P(\varrho_{s,t})}{\psi^n_{s,t}}
    \\&\qquad
    -h\int_X \varrho_{s,t}
    \tGbil{\varrho_{s,t}}{\psi^n_{s,t}}{\upsilon_{s,t}}\,\d\mm
    -\frac {h^2}2\int_X \varrho_{s,t}\Gq{\upsilon_{s,t}}\,\d\mm
    \\&\le -h\frac\partial{\partial s}\int_X U(\varrho_{s,t})\,\d\mm
    +\duality{\V'_{\varrho_{s,t}}}{\V_{\varrho_{s,t}}}{z_{s,s}^h}{\psi^{n}_{s,t}}
    -\frac 12 \cE_{\varrho_{s,t}}(\psi^n_{s,t},\psi^n_{s,t}),
\end{align*}
where  we used Lemma \ref{le:2bis} (ii) to get  the second equality, and to simplify the 
third and second to last terms in order to obtain the last inequality.  
We observe that 
$\psi^n_{s,t}$ is an optimal sequence for 
$z_{s,s}^h$: we will denote by $\psi_{s,t}$ its limit in
$\V'_{\varrho_{s,t}}$ and by $\phi_{s,t}$ the limit of 
$\varphi^n_{s,t}$. They are related by
\begin{equation}
  \label{eq:230}
  \phi_{s,t}=\psi_{s,t}+h\upsilon_{s,t}.
\end{equation}
Passing to the limit in the previous inequality we obtain
\begin{equation}
  \label{eq:232}
 \frac 12\cEs{\varrho_{s,t}}(\partial_{s} 	\varrho_{s,t}, \partial_{s} 	\varrho_{s,t}) \le
  -h\frac\partial{\partial s}\int_X U(\varrho_{s,t})\,\d\mm
  +\frac{1}{2} \cEs{\varrho_{s,t}}(z^h_{s,s},z^h_{s,s}).
\end{equation}
\AAA
Observe that $u \mapsto \varrho_{s,r}^u:=\sfS_{ru}\varrho_{s,t-h}$ and  $u \mapsto z_{s,r}^u:= \partial_{s} \varrho^{u}_{s,r}$   satisfy respectively
$$ 
\partial_{u} \varrho^{u}_{s,r} = r \DeltaE (P(\varrho^{u}_{s,r})), \quad \partial_{u} z^{u}_{s,r}= r \DeltaE(P'(\varrho^{u}_{s,r}) z^{u}_{s,r}),
$$
where the second equation follows from Theorem \ref{thm:precise-limit}. Setting $r=s$ we get
$$
\partial_{u} \varrho^{u}_{s,s} = s \DeltaE (P(\varrho^{u}_{s,s})), \quad   \partial_{u} z^{u}_{s,s} = s \DeltaE (P'(\varrho^{u}_{s,s}) \, z^{u}_{s,s}).
$$
\fn
Let now $B^t_{s,r}$ be the operator, given by Theorem~\ref{prop:backward-linearization}, 
mapping $\zeta\in \V$ into the solution $\zeta_{s,r}$ of 
\begin{equation}
  \label{eq:231}
  \frac\d{\d r}\zeta_{s,r}=- \AAA s \fn  P'(\varrho_{s,r})\DeltaE\zeta_{s,r}\qquad
  r\in [0,t],\quad
  \zeta_{s,t}:=\zeta.
\end{equation}
Theorem \ref{thm:BEKNaction}
 and the fact that $z^0_{s,s}=\partial_{s} \varrho_{s,t-h}$ and $\varrho^{u}_{s,s}=\varrho_{s,t-h+u}$ yield
 \begin{align*}
& \frac {1}{2} \left[ \cEs{\varrho_{s,t}}(z^h_{s,s},z^h_{s,s}) -  \cEs{\varrho_{s,t-h}}(\partial_{s} \varrho_{s, t-h}, \partial_{s} \varrho_{s, t-h}) \right]  \\
& \qquad  \qquad \qquad  \le 
  -K \AAA s \fn   \int_{t-h}^t \int_X Q(\varrho_{s,r})\varrho_{s,r}
  \tGq{\varrho_{s,r}}{B^t_{s,r}(\psi_{s,t})}\,\d\mm\,\d r.
  \end{align*}
Using the estimate
\begin{displaymath}
  \tGq{\varrho_{s,r}}{B^t_{s,r}(\psi_{s,t})}\le 
  (1+\delta) \tGq{\varrho_{s,r}}{B^t_{s,r}(\phi_{s})}+
  h^2 \Big(1+\frac 1\delta \Big) \tGq{\varrho_{s,r}}{B^t_{s,r}(\upsilon_{s,t})}
\end{displaymath}
and the uniform bound
\begin{displaymath}
  \int_X
  \varrho_{s,r}\tGq{\varrho_{s,r}}{B^t_{s,r}(\upsilon_{s,t})}\,\d\mm\le 
  C \int_X
  \varrho_{s,t}\tGq{\varrho_{s,t}}{\upsilon_{s,t}}\,\d\mm\le C',
\end{displaymath}
Lemma~\ref{le:stability} eventually yields
\begin{displaymath}
  \limsup_{h\down0}
  \frac1 { 2 h}\Big(\cEs{\varrho_{s,t}}(z^h_{s,s},z^h_{s,s})-
  \cEs{\varrho_{s,t-h}}(\partial_{s} \varrho_{s, t-h}, \partial_{s} \varrho_{s, t-h}) \Big)\le
  -K  \AAA s \fn  
  \int_X Q(\varrho_{s,t})\varrho_{s,t}
  \tGq{\varrho_{s,t}}{\phi_{s,t}}\,\d\mm.
\end{displaymath}
\AAA
Combining this estimate with  \eqref{eq:232}, we get 
\begin{align*}
  \limsup_{h\down0}
  \frac1 { 2 h}\Big(\cEs{\varrho_{s,t}}(\partial_{s} \varrho_{s,t}, \partial_{s} \varrho_{s,t})&-
  \cEs{\varrho_{s,t-h}}(\partial_{s} \varrho_{s, t-h}, \partial_{s} \varrho_{s, t-h}) \Big) \\
  & \leq -K  s   
  \int_X Q(\varrho_{s,t})\varrho_{s,t}
  \tGq{\varrho_{s,t}}{\phi_{s,t}}\,\d\mm - \frac\partial{\partial s}\int_X U(\varrho_{s,t})\,\d\mm.
\end{align*}
By recalling
\eqref{eq:229} and Theorem~\ref{thm:dynamicKant}, the integration  w.r.t.~$s$ in $(0,1)$ of the last inequality gives \eqref{eq:220}.
\fn
\end{proof}

\begin{theorem}[$\BE KN$ implies $\CDS K N$]
  \label{cor:MAIN}
  Let us suppose that $(X,\sfd,\mm)$ is a metric measure space satisfying the metric $\BE KN$ condition.
    Then for every entropy function $U$ in $\MCreg N$ and every \AAA  
    $\bar{\mu}=\varrho \mm$ with $\varrho$ $\mm$-essentially bounded with bounded support, \EEE 
  the curve $\mu_t=(\sfS_t\varrho)\mm$ is the
  unique solution of the Evolution Variational Inequality 
  \eqref{eq:181}. 
  In particular $(X,\sfd,\mm)$ is a strong $\CDS KN$ space.  
\end{theorem}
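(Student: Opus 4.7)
The plan is to derive the evolution variational inequality \eqref{eq:181} for $\mu_{t}=(\sfS_{t}\varrho)\mm$ by applying the action-energy monotonicity of Theorem~\ref{thm:BEEVI2} along a regular approximation of the $W_{2}$-geodesic from $\bar\nu$ to $\bar\mu$, and then to invoke Theorem~\ref{thm:EVI-CD} to upgrade the EVI to the strong $\CDS KN$ condition. Uniqueness of the EVI solution is a consequence of the general theory, as in \cite[Cor.~4.3.3]{AGS08}. Observe first that metric $\BE KN$ trivially implies metric $\BE K\infty$, so by the characterization of $\RCD K\infty$ spaces via $\BE K\infty$ (quadratic Cheeger energy plus Bakry--\'Emery) the space $(X,\sfd,\mm)$ is $\RCD K\infty$ and in particular \emph{strong} $\CD K\infty$, so that properties [RS1-4] of Proposition~\ref{prop:scdk} hold.

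Fix $\bar\mu=\varrho\mm$, $\bar\nu=\sigma\mm$ with $\varrho,\sigma\in L^{\infty}(X,\mm)$ having bounded support, and let $(\hat\mu_{s})_{s\in[0,1]}$ be the (unique) $W_{2}$-geodesic from $\bar\nu$ to $\bar\mu$ (i.e.\ the reverse parametrization of the geodesic from $\bar\mu$ to $\bar\nu$). By Lemma~\ref{le:appregentr} we approximate $\hat\mu$ by regular curves $\mu^{n}_{s}=\varrho^{n}_{s}\mm$ with $\mu^{n}_{0}\to\bar\nu$, $\mu^{n}_{1}\to\bar\mu$ in $\ProbabilitiesTwo X$ and in $L^{1}(X,\mm)$ (with uniform $L^{\infty}$ bound, see \eqref{eq:approxweak*Linfty}), $\cA_{2}(\mu^{n})\to W_{2}^{2}(\bar\mu,\bar\nu)$, and $\cU(\mu^{n}_{i})\to\cU(\hat\mu_{i})$. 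For each $n$ I set $\mu^{n}_{s,t}:=(\sfS_{st}\varrho^{n}_{s})\mm$ as in Theorem~\ref{thm:BEEVI2}; notice that $\mu^{n}_{0,t}=\mu^{n}_{0}$ (independent of $t$) while $\mu^{n}_{1,t}=(\sfS_{t}\varrho^{n}_{1})\mm$, so this parametrization is precisely tailored to evolve the endpoint approximating $\bar\mu$. Theorem~\ref{thm:BEEVI2} then yields
\begin{equation*}
\tfrac12\cA_{2}(\mu^{n}_{\cdot,t})+t\,\cU\bigl((\sfS_{t}\varrho^{n}_{1})\mm\bigr)+K\int_{0}^{t}\Action{id\cdot Q}{\mu^{n}_{\cdot,r}}{\mm}\,\d r\le\tfrac12\cA_{2}(\mu^{n})+t\,\cU(\mu^{n}_{0}).
\end{equation*}
Using $\cA_{2}(\mu^{n}_{\cdot,t})\ge W_{2}^{2}(\mu^{n}_{0},(\sfS_{t}\varrho^{n}_{1})\mm)$ and the fact that $\sfS_{t}$ is a contraction in $L^{1}(X,\mm)$ (Theorem~\ref{thm:nonlin-diff}(ND4)), so that $\sfS_{t}\varrho^{n}_{1}\to\sfS_{t}\varrho$ in $L^{1}(X,\mm)$ with a uniform $L^{\infty}$ bound, I can pass to the limit as $n\to\infty$ by combining: continuity/lower semicontinuity of $W_{2}^{2}$ in $\ProbabilitiesTwo X$, lower semicontinuity of $\cU$ on $L^{1}$-convergent sequences with uniformly bounded densities (using \eqref{eq:47-bis} and the boundedness of supports), and the stability Theorem~\ref{le:stabilitySigma} for the weighted action (applied to the family $(\mu^{n}_{\cdot,r})_{r\in[0,t]}$, whose densities are uniformly bounded thanks to \eqref{eq:122} and Theorem~\ref{thm:nonlin-diff}(ND2)). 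The outcome is
\begin{equation*}
\tfrac12 W_{2}^{2}(\bar\nu,(\sfS_{t}\varrho)\mm)+t\,\cU((\sfS_{t}\varrho)\mm)+K\int_{0}^{t}\Action{id\cdot Q}{\hat\mu^{(r)}_{\cdot}}{\mm}\,\d r\le\tfrac12 W_{2}^{2}(\bar\mu,\bar\nu)+t\,\cU(\bar\nu),
\end{equation*}
where $\hat\mu^{(r)}_{s}:=(\sfS_{sr}\hat\varrho_{s})\mm$ denotes the limit curve.

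Next I divide by $t>0$ and pass to the limit $t\downarrow0$. The entropy $\cU((\sfS_{t}\varrho)\mm)$ is nonincreasing in $t$ (by \eqref{eq:122}) and converges to $\cU(\bar\mu)$ as $t\downarrow0$ by combining this monotonicity with the lower semicontinuity of $\cU$ and the $L^{1}$-continuity of $t\mapsto\sfS_{t}\varrho$. The time average $\tfrac1t\int_{0}^{t}\Action{id\cdot Q}{\hat\mu^{(r)}_{\cdot}}{\mm}\,\d r$ tends to $\Action{id\cdot Q}{\hat\mu}{\mm}$ by Lebesgue differentiation and the continuity at $r=0$ guaranteed, again, by Theorem~\ref{le:stabilitySigma} (since $\hat\mu^{(r)}\to\hat\mu$ in the appropriate sense as $r\downarrow0$, with controlled action). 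Since $\hat\mu$ reverses the geodesic from $\bar\mu$ to $\bar\nu$, the reversal $s\leftrightarrow1-s$ transforms the weight $id(s)=s$ into $\omega(s)=1-s$, giving $\Action{id\cdot Q}{\hat\mu}{\mm}=\Action{\omega Q}{\bar\mu,\bar\nu}{\mm}=\EMod Q{\bar\mu,\bar\nu}\mm$. Combining everything yields precisely the EVI
\begin{equation*}
\limsup_{t\downarrow0}\frac{W_{2}^{2}((\sfS_{t}\varrho)\mm,\bar\nu)-W_{2}^{2}(\bar\mu,\bar\nu)}{2t}+\cU(\bar\mu)\le\cU(\bar\nu)-K\EMod Q{\bar\mu,\bar\nu}\mm,
\end{equation*}
and the standard $\Lambda$-convex EVI theory (with $\Lambda=\inf_{r>0}KQ(r)$) gives uniqueness of the solution as well as $\Lambda$-contractivity. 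Finally, having established \eqref{eq:181} for every $U\in\MCreg N$ and every admissible $\bar\mu$, Theorem~\ref{thm:EVI-CD} applies and delivers the strong $\CDS KN$ condition (and reconfirms the quadraticity of $\C$, already included in the $\BE KN$ assumption).

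The main technical obstacle is the justification of the double limit, first in $n\to\infty$ and then in $t\downarrow0$, in particular the continuity in $(n,r)$ of the weighted action $\Action{id\cdot Q}{\mu^{n}_{\cdot,r}}{\mm}$: one needs to verify that the densities $\varrho^{n}_{s,r}=\sfS_{sr}\varrho^{n}_{s}$ converge in $L^{1}(\tilde X,\tilde\mm)$ to $\hat\varrho^{(r)}_{s}$ jointly in $n$ and are uniformly essentially bounded, and that the $2$-actions converge (via the contractivity of $\sfS_{t}$ in $W_{2}$, Theorem~\ref{thm:BEKNContr}, applied uniformly in $s\in[0,1]$), so that the hypotheses \eqref{eq:162}--\eqref{eq:162bis} of Theorem~\ref{le:stabilitySigma} are met; the continuity at $r=0$ then follows from the same stability result applied to $r\mapsto\hat\mu^{(r)}$.
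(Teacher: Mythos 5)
Your overall strategy (regular approximation of the geodesic via Lemma~\ref{le:appregentr}, the action--energy monotonicity of Theorem~\ref{thm:BEEVI2}, then Theorem~\ref{thm:EVI-CD}) is the same as the paper's, but the order in which you take the limits creates a genuine gap. You pass to the limit $n\to\infty$ at \emph{fixed} $t>0$, and to handle the term $K\int_0^t\Action{id\,Q}{\mu^n_{\cdot,r}}{\mm}\,\d r$ (whose contribution has no fixed sign, since $K\in\R$) you invoke the stability Theorem~\ref{le:stabilitySigma} for the curves $s\mapsto\mu^n_{s,r}$ at fixed $r$. But Theorem~\ref{le:stabilitySigma} --- even for the liminf inequality \eqref{eq:163} --- requires hypothesis \eqref{eq:162bis}, i.e.\ $\limsup_n\cA_2(\mu^n_{\cdot,r})\le\cA_2(\hat\mu^{(r)}_\cdot)$, and at fixed $r>0$ this is not available: lower semicontinuity gives the opposite inequality, while the only upper bound one can extract from the action monotonicity (Theorem~\ref{thm:BEEVI1} plus Gronwall) is $\limsup_n\cA_2(\mu^n_{\cdot,r})\le \rme^{Cr}\cA_2(\mu_{\cdot,0})$, which need not be below $\cA_2(\hat\mu^{(r)}_\cdot)$. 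The appeal to $W_2$-contractivity (Theorem~\ref{thm:BEKNContr}) only yields convergence of the curves $\mu^n_{\cdot,r}$ in $W_2$, not convergence of their $2$-actions, so it does not supply \eqref{eq:162bis} either. The same difficulty reappears in your second limit: continuity at $r=0$ of $r\mapsto\Action{id\,Q}{\hat\mu^{(r)}_\cdot}{\mm}$ would again need $\limsup_{r\down0}\cA_2(\hat\mu^{(r)}_\cdot)\le\cA_2(\hat\mu_\cdot)$, i.e.\ an action-monotonicity estimate along the \emph{evolved geodesic}; but Theorem~\ref{thm:BEEVI1} is proved only for regular curves, and the geodesic need not satisfy condition (c) of Definition~\ref{def:regcur}, so it cannot be applied to $\hat\mu$ directly.

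The paper's proof is structured precisely to avoid working at fixed $t$ and with the evolved geodesic: it divides by $t$ first and takes the joint limit $t\down0$, $n\to\infty$. Then the limiting curve in the action term is the geodesic itself, and the needed convergence of $2$-actions follows from a two-sided estimate: lower semicontinuity gives $\liminf_n\cA_2(\mu^{(n)}_{\cdot,t_n})\ge\cA_2(\mu_{\cdot,0})$, while Theorem~\ref{thm:BEEVI1} together with a Gronwall bound gives $\cA_2(\mu^{(n)}_{\cdot,t_n})\le\cA_2(\mu^{(n)}_{\cdot,0})+C\,t_n\to\cA_2(\mu_{\cdot,0})$, so that \eqref{eq:162bis} holds in the joint limit, Theorem~\ref{le:stabilitySigma} applies, and the Ces\`aro average of the weighted actions converges to $\Action{id\,Q}{\bar\nu,\bar\mu}{\mm}$. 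To repair your argument you would have to either prove convergence of the actions $\cA_2(\mu^n_{\cdot,r})\to\cA_2(\hat\mu^{(r)}_\cdot)$ at fixed $r>0$ (which is not known with the tools of the paper) or restructure the limit passage as in the paper. The remaining ingredients of your proof (lower semicontinuity of $W_2^2$ and of $\cU$, entropy monotonicity along $\sfS_t$, the time-reversal identity turning the weight $id$ into $\omega$, uniqueness via the EVI, and the final application of Theorem~\ref{thm:EVI-CD}) coincide with the paper's.
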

\begin{proof}
  Under the above conditions, one can apply \cite[Cor.~4.18]{AGS12} 
  to obtain that $(X,\sfd,\mm)$ is an $\RCD K\infty$ space, 
  in particular the assumptions of Lemma~\ref{le:appregentr} are satisfied.
  Now let $\sfS_t$ be the solution of 
  the nonlinear diffusion semigroup of  Theorem~\ref{thm:nonlin-diff} 
  and let \AAA $\bar{\nu}=\sigma \mm$ with $\sigma$ $\mm$-essentially bounded with bounded support\EEE;
  we consider a family of regular
  curves $\mu^{(n)}_{s}=\varrho^{(n)}_s\mm$ approximating a geodesic
  $\mu_s$ 
  from $\bar{\nu}$ to $\bar{\mu}$ in the sense of Lemma~\ref{le:appregentr}
  and we set 
  $\mu^{(n)}_{s,t}=(\sfS_{st}\varrho^{(n)}_s)\mm$.
  Applying \eqref{eq:116.2} of Theorem~\ref{thm:BEEVI2} we get
  \begin{equation}
    \label{eq:234}
   \frac{1}{2}  \cA_2(\mu^{(n)}_{\cdot,t}) +t\cU(\mu^{(n)}_{1,t})
    +K\int_{0}^{t}\Action {\AAA id \fn Q}{\mu^{(n)}_{\cdot,r}}\mm\,\d r\le 
   \frac{1}{2}  \cA_2(\mu^{(n)}_{\cdot,0}) +t\cU(\mu^{(n)}_{0,0}).
  \end{equation}
  \AAA Dividing by $t>0$ and letting $n\to \infty, t\down 0$  we get
   \begin{equation}
    \label{eq:234bis}
 \limsup_{t\down 0} \limsup_{n\to \infty}  \left( \frac{\cA_2(\mu^{(n)}_{\cdot,t})-  \cA_2(\mu^{(n)}_{\cdot,0})}{2t} + \cU(\mu^{(n)}_{1,t})
    +\frac{K}{t}\int_{0}^{t}\Action { id  Q}{\mu^{(n)}_{\cdot,r}}\mm\,\d r \right)\le 
  \limsup_{n\to \infty}    \cU(\mu^{(n)}_{0,0}).
  \end{equation}
 Next we pass to the limit in the different terms, setting $\mu_{s,t}=(\sfS_{st}\varrho_s)\mm$. First of all, combining \eqref{eq:140} with the lower semicontinuity of the 2-actions and recalling that $\mu_{(\cdot)}$ is a $W_{2}$-geodesic we get
\begin{align}
 \limsup_{t\down 0} \limsup_{n\to \infty}  \frac{1}{2t}  \Big( \cA_2(\mu^{(n)}_{\cdot,t})-  \cA_2(\mu^{(n)}_{\cdot,0}) \Big)  & \geq \limsup_{t\down 0} \frac{1}{2t}  \Big(\cA_2(\mu_{\cdot,t})-  \cA_2(\mu_{\cdot, 0}) \Big) \nonumber \\
 &\geq    \limsup_{t\down0}\frac 1{2t}\Big(
    W_2^2(\bar{\nu}, \mu_{1,t})-W_2^2(\bar{\nu}, \bar{\mu})  \Big). \label{eq:limsuptnA2}
\end{align}
In virtue of \eqref{eq:14000} and of  the lower semicontinuity of the entropy we also get
\begin{equation}  \label{eq:limcU}
\liminf_{t\down 0} \liminf_{n\to \infty} \cU(\mu^{(n)}_{1,t}) \geq \liminf_{t\down 0}  \cU(\mu_{1,t}) \geq \cU(\bar{\mu}), \quad   \lim_{n\to \infty}    \cU(\mu^{(n)}_{0,0})=\cU(\bar{\nu}).
\end{equation}
Regarding the term with the integral of the actions we claim that the \emph{joint limit} as $t\down 0, n\to \infty$  exists with value
\begin{equation}\label{eq:limitActionQ}
\lim_{t\down 0, n\to \infty} \frac{1}{t}\int_{0}^{t}\Action { id  Q}{\mu^{(n)}_{\cdot,r}}\mm\,\d r =  \, \Action {id Q}{\bar{\nu}, \bar{\mu}}\mm.
\end{equation}
 In order to prove \eqref{eq:limitActionQ}, we first show that 
\begin{equation}\label{eq:limitActionQ1}
\lim_{t\down 0, n\to \infty} \Action { id  Q}{\mu^{(n)}_{\cdot,t}}\mm = \Action { id  Q}{\mu_{\cdot}}\mm =  \, \Action {id Q}{\bar{\nu}, \bar{\mu}}\mm.
\end{equation}
  In order to show the convergence we wish to apply Theorem \ref{le:stabilitySigma}, let us then verify its assumptions.
  \\Recalling that  by  Lemma~\ref{le:appregentr} we have $\varrho^{(n)}_{s}\to \varrho_{s}$ strongly in $L^{1}(X,\mm)$ for every $s \in [0,1]$,  using the $L^{1}$-contractivity  and $L^{1}$-continuity of the semigroup proved in Theorem \ref{thm:nonlin-diff} (ND4),
  we obtain
  \begin{align*}
 \limsup_{t\down 0, n\to \infty} \|\varrho_{s}-\varrho_{s,t}^{(n)}\|_{L^{1}(X,\mm)} & \leq     
 \limsup_{n\to \infty, t\down 0} \Big( \|\varrho_{s}-\varrho_{s,t}\|_{L^{1}(X,\mm)} +  \|\varrho_{s,t}-\varrho_{s,t}^{(n)}\|_{L^{1}(X,\mm)} \Big) \nonumber \\
& \leq     \limsup_{t\down 0}  \|\varrho_{s}-\varrho_{s,t}\|_{L^{1}(X,\mm)} + \limsup_{n\to \infty}  \|\varrho_{s}-\varrho_{s}^{(n)}\|_{L^{1}(X,\mm)} =0,
  \end{align*}
 which in turn implies (by dominated convergence)
 \begin{equation}\label{eq:convnstSL1}
 \varrho^{(n)}_{\cdot, t} \to \varrho_{\cdot} \quad \text{as } n\to \infty, t\down 0,  \text{ strongly in } L^{1}(\tilde{X},\tilde{\mm}).  
 \end{equation}
 Now let $(t_{n})_{n\in\N}$ be any sequence with $t_{n} \down 0$. First of all, by the  lower  semicontinuity of the 2-actions we have
 $\liminf_n  \cA_2(\mu^{(n)}_{\cdot,t_{n}}) \geq  \cA_2(\mu_{\cdot, 0})$.
 On the other hand, by Theorem~\ref{thm:BEEVI1}  we have
 $$
 \cA_2(\mu^{(n)}_{\cdot,t})  \leq-  2  K\int_{0}^{t}\Action{Q} {\mu_{\cdot,s}^{(n)}} \mm\,\d s +  \cA_2(\mu^{(n)}_{\cdot,0})  \leq 2 |K| (\sup |Q|)  \int_{0}^{t}\cA_{2} (\mu_{\cdot,s}^{(n)})\,\d s +  \cA_2(\mu^{(n)}_{\cdot,0})    
 $$
 which, by Gronwall Lemma, implies  $\sup_{t \in [0,1]} \cA_2(\mu^{(n)}_{\cdot,t}) \leq C=C(\mu_{\cdot},K, \sup |Q|))$. 
 \\Therefore, again by Theorem \ref{thm:BEEVI1} we get
 \begin{align*}
  \limsup_{n\to \infty} \cA_2(\mu^{(n)}_{\cdot,t_{n}}) &  \leq   \limsup_{n\to \infty} \Big(-  2  K\int_{0}^{t_{n}}\Action{Q} {\mu_{\cdot,t}} \mm\,\d t +  \cA_2(\mu^{(n)}_{\cdot,0})  \Big) \nonumber \\
  & \leq   \limsup_{n\to \infty} \Big(C(\mu_{\cdot},K, \sup |Q|)) t_{n} +  \cA_2(\mu^{(n)}_{\cdot,0})  \Big)  =   \limsup_{n\to \infty}  \cA_2(\mu^{(n)}_{\cdot,0}) =  \cA_2(\mu_{\cdot,0}).
 \end{align*}
It follows that $\lim_{n\to \infty}  \cA_2(\mu^{(n)}_{\cdot,t_{n}})=  \cA_2(\mu_{\cdot,0})$ for any sequence $t_{n}\down 0$ and then  
$$\lim_{n\to \infty, t\down 0}  \cA_2(\mu^{(n)}_{\cdot,t})=  \cA_2(\mu_{\cdot,0}).$$
 We can then apply Theorem \ref{le:stabilitySigma} and obtain the claim \eqref{eq:limitActionQ1} and then  \eqref{eq:limitActionQ}.
\\Putting together  \eqref{eq:limsuptnA2}, \eqref{eq:limcU}  and  \eqref{eq:limitActionQ}  we obtain
    \begin{equation}
    \label{eq:210pre}
    \limsup_{t\down0}\frac 1{2t}\Big(
    W_2^2(\mu_{1,t},\bar{\nu})-W_2^2(\bar{\mu}, \bar{\nu})
    \Big)+\cU(\bar{\mu})
    +K\Action {id  Q}{\bar{\nu}, \bar{\mu}}\mm\le 
    \cU(\bar{\nu}).
  \end{equation}
  Recalling  that $\omega(s)=1-s$, and that $\mu_{1,t}=(\sfS_t \varrho) \mm=\mu_t$, the last identity is equivalent to 
   \begin{equation}
    \label{eq:210}
    \limsup_{t\down0}\frac 1{2t}\Big(
    W_2^2(\mu_t,\bar{\nu})-W_2^2(\bar{\mu},\bar{\nu})
    \Big)+\cU(\bar{\mu})
    +K\Action {\omega Q}{\bar{\mu}, \bar{\nu}}\mm\le 
    \cU(\bar{\nu}).
  \end{equation}
  \fn
  \AAA 
  This proves   \eqref{eq:181}; therefore the  strong $\CDS KN$ property, is an immediate consequence of Theorem~\ref{thm:EVI-CD}.
  \end{proof}

\subsection{$\RCDS KN$  implies $\BE KN$}\label{sec:RCDtoBE}
In this section we will assume that 
$(X,\sfd,\mm)$ is an $\RCDS KN$ space
and we will show that the Cheeger energy satisfies $\BE KN$.
By \cite{AGS12} we already know that 
$\BE K\infty$ holds.

In the following, we consider an entropy density function
$U=U_{N,\eps,M}\in \MCreg N$ of the form given by \eqref{eq:167} 
through the regularization \eqref{eq:207} and we will denote
by $(\sfS_t)_{t\ge0}$ the nonlinear diffusion flow
provided by Theorem~\ref{thm:nonlin-diff}
and satisfying the EVI property \eqref{eq:218} by Theorem~\ref{thm:CD-EVI}.

\begin{lemma}\label{lem:geo_interpolation} Let $\mu_s=\varrho_s\mm$ be a Lipschitz curve 
in $\ProbabilitiesTwo X$  such that $s \mapsto  \entv(\mu_s)$ is continuous. \fn For a given integer $J$,  consider the 
uniform partition $0=s_0<s_1<\cdots<s_J=1$ of the time interval $[0,1]$ of size $\sigma:=J^{-1}$ and the
piecewise geodesic $\mu^J_s=\varrho^J_s\mm$, $s\in [0,1]$, 
obtained by glueing all the geodesics connecting $\mu_{s_{j-1}}$ 
  to $\mu_{s_j}$.

Then  $\varrho^J_{(\cdot)} \to \varrho_{(\cdot)}$ in $L^1(X \times [0,1], \mm \otimes \LL^{1})$.
\end{lemma}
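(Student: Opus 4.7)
The strategy is to first establish pointwise-in-$s$ convergence $\varrho^J_s \to \varrho_s$ in $L^1(X,\mm)$, and then to upgrade this to convergence in $L^1(X \times [0,1], \mm \otimes \LL^1)$ via dominated convergence in the $s$-variable, using the trivial uniform bound $\|\varrho^J_s - \varrho_s\|_{L^1(X,\mm)} \leq 2$ and Fubini's theorem.

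The first step is to establish uniform $W_2$-convergence $\mu^J_s \to \mu_s$. Since $(X,\sfd,\mm)$ is $\RCDS K N$, it is a strong $\CD K \infty$ space by Lemma~\ref{le:CDSKN-CDKI}, so property [RS1] gives uniqueness of the geodesic connecting $\mu_{s_{j-1}}$ to $\mu_{s_j}$. Letting $L$ be the Lipschitz constant of $s\mapsto \mu_s$, for every $s\in [s_{j-1},s_j]$ one has $W_2(\mu_{s_{j-1}},\mu_{s_j})\le L\sigma$, hence $W_2(\mu^J_s,\mu_{s_{j-1}})\le L\sigma$; the same bound holds for $W_2(\mu_s,\mu_{s_{j-1}})$, and the triangle inequality gives $W_2(\mu^J_s,\mu_s)\le 2L\sigma$, uniformly in $s$.

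The second step is to verify pointwise convergence of entropies $\entv(\mu^J_s)\to\entv(\mu_s)$. The strong $\CD K\infty$ convexity inequality \eqref{eq:182} applied on each subinterval yields, for $s=s_{j-1}+t\sigma$ with $t\in[0,1]$,
\begin{equation*}
\entv(\mu^J_s)\le (1-t)\entv(\mu_{s_{j-1}})+t\entv(\mu_{s_j})+\tfrac{K^-}{8}L^2\sigma^2,
\end{equation*}
with $K^-:=\max(0,-K)$. Continuity of $s\mapsto\entv(\mu_s)$ and $j/J\to s$ yield $\limsup_{J}\entv(\mu^J_s)\le\entv(\mu_s)$; lower semicontinuity of $\entv$ under narrow convergence (ensured by the $W_2$-convergence from step~1) supplies the matching $\liminf$, giving $\entv(\mu^J_s)\to\entv(\mu_s)$ for every $s$.

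The third step is to invoke the classical fact that narrow convergence of absolutely continuous probability measures, together with convergence of their relative entropies to a finite limit, forces strong convergence of the densities in $L^1$ (a Pinsker/Csisz\'ar-type argument using the strict convexity of $r\log r$; see for instance Villani's textbook). Applied pointwise in $s$, this delivers $\varrho^J_s\to\varrho_s$ in $L^1(X,\mm)$ for every $s\in[0,1]$. Since $\|\varrho^J_s-\varrho_s\|_{L^1(X,\mm)}\le 2$ and $[0,1]$ has finite Lebesgue measure, the $s$-variable dominated convergence theorem combined with Fubini concludes the proof. The only non-trivial input is the identification result used in step~3, which is however standard and well documented in the optimal transport literature.
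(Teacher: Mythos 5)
Your proof is correct and follows essentially the same route as the paper: uniform $W_2$-convergence of the piecewise geodesic interpolation, the $\limsup$ bound on entropies via the $K$-convexity inequality combined with lower semicontinuity for the $\liminf$, the strict-convexity identification (your ``Pinsker/Csisz\'ar-type'' step is exactly the paper's Lemma~\ref{lem:W2EntL1Conv}, attributed to Visintin), and dominated convergence in $s$. The only differences are cosmetic quantitative bounds (the $2L\sigma$ and $\tfrac{K^-}{8}L^2\sigma^2$ estimates) that the paper leaves implicit.
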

\fn

\begin{proof}
First of all, since $\mu_{(\cdot)}$ is a Lipschitz curve  in $\ProbabilitiesTwo X$, it is clear that the geodesic interpolation  converges, i.e.   $\mu_{(\cdot)}^J \to \mu_{(\cdot)}$ in $C^0([0,1], \ProbabilitiesTwo X)$. Therefore for every $s \in [0,1]$ we have $\mu_s^J\to \mu_s$ weakly and thus  (see for instance \cite[Lemma 9.4.3]{AGS08})
\begin{equation}\label{eq:lscEnt}
\entv(\mu_s) \leq \liminf_{J\to \infty} \entv(\mu_s^J), \quad \forall s \in [0,1].
\end{equation}
On the other hand it is not difficult to prove also the converse inequality
\begin{equation}\label{eq:uscEnt}
\entv(\mu_s) \geq \limsup_{J\to \infty} \entv(\mu_s^J), \quad \forall s \in [0,1].
\end{equation}
Indeed, the $K$-geodesic convexity of the entropy along geodesics ensured by $\RCD K \infty$ yields
\begin{equation}\label{eq:entKConv}
\entv(\mu^J_{(1-t)s_j+t s_{j+1}}) \leq (1-t) \entv(\mu_{s_j})+t \entv (\mu_{s_{j+1}}) - K \frac{t (1-t)}{2} W_2^2(\mu_{s_j}, \mu_{s_{j+1}}),
\end{equation} 
for all $t \in [0,1]$. 
Since the maps $s\mapsto \entv(\mu_s) \in \R^+$ and $s\mapsto \mu_s\in  \ProbabilitiesTwo X$ are continuous, we get \eqref{eq:uscEnt} by passing to the limit as $J\to \infty$ in  \eqref{eq:entKConv}.

\AAA
From the convergence  $\mu_{(\cdot)}^J \to \mu_{(\cdot)}$ in $C^0([0,1], \ProbabilitiesTwo X)$ we infer that the family $\{\mu_{s}^J, \mu_{s}\}_{s \in [0,1], J\in \N}$ is tight.
The thesis then follows from the following Lemma \ref{lem:W2EntL1Conv} combined with the  Dominated Convergence Theorem.   \fn
\end{proof}
 \AAA
We next state a  well known consequence of the strict convexity of the
function $t\mapsto t \log t$ on $[0,\infty)$
(see e.g.~\cite[Theorem 3]{Visintin84}).
\begin{lemma}\label{lem:W2EntL1Conv}
For $n \in \N$, let $\varrho_{n}\mm=\mu_{n}\in \Probabilities X$ and  $\varrho \mm=\mu \in \Probabilities X$ be such that 
\begin{itemize}
\item $\mu_{n}\to \mu$ weakly in  $\Probabilities X$,
\item $ \entv(\mu_{n})\to \entv(\mu)$ as $n \to \infty$.
\end{itemize}
Then  $\varrho_{n} \to \varrho$ strongly in $L^{1}(X,\mm)$.
\end{lemma}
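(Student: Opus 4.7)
The plan is to invoke the strict convexity of $t\mapsto t\log t$ through Visintin's theorem, after first establishing uniform integrability of $(\varrho_n)$ and weak $L^1$ convergence to $\varrho$. The scheme has three steps: (i) uniform integrability of $(\varrho_n)$ in $L^1(X,\mm)$; (ii) extraction of a weak $L^1$ subsequential limit and identification with $\varrho$ via the narrow convergence of the measures; (iii) upgrade from weak to strong $L^1$ convergence by strict convexity.

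For step (i), I would argue as follows. Since $\entv(\mu_n)\to\entv(\mu)\in\R$, the sequence $\bigl(\entv(\mu_n)\bigr)$ is bounded. On any Borel set $B\subset X$ with $\mm(B)<\infty$, the inequality $t\log t-t+1\ge 0$ and the superlinearity $t\log t/t\to+\infty$ as $t\to\infty$ (De la Vall\'ee Poussin's criterion) yield equi-integrability of $(\varrho_n\restriction B)$. To kill the tails outside balls of large radius, I would use the weight function $\sfV$ defined in \eqref{eq:85}, together with the inequality \eqref{eq:185}, namely $U_\infty(r)\ge r-\rme^{-\sfv^2}-r\sfv^2$ for all $\sfv\in\R$ and $r>0$, evaluated at $\sfv=\sfV(x)$. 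Integrating this against $\varrho_n$ gives a bound of the form $\int_X\sfV^2\,\d\mu_n\le \entv(\mu_n)+\int_X\rme^{-\sfV^2}\,\d\mm+1$, the right hand side being uniformly bounded by \eqref{eq:86}. The second moment bound, in turn, forces $\int_{\{\sfV>R\}}\varrho_n\,\d\mm\to 0$ uniformly in $n$ as $R\to\infty$. Combining with the local equi-integrability gives uniform integrability of $(\varrho_n)$ on all of $X$.

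For step (ii), by Dunford--Pettis every subsequence of $(\varrho_n)$ has a further subsequence $(\varrho_{n_k})$ weakly converging in $L^1(X,\mm)$ to some $\hat\varrho\in L^1(X,\mm)$. Testing against any $\varphi\in\rmC_b(X)$ with bounded support, the weak $L^1$ convergence gives $\int_X \varphi\,\varrho_{n_k}\,\d\mm\to\int_X\varphi\,\hat\varrho\,\d\mm$, while the narrow convergence $\mu_{n_k}\to\mu$ gives $\int_X\varphi\,\varrho_{n_k}\,\d\mm\to\int_X\varphi\,\varrho\,\d\mm$; hence $\hat\varrho=\varrho$ $\mm$-a.e. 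Step (iii) is then the direct application of Visintin's strict convexity theorem (\cite[Theorem~3]{Visintin84}): strict convexity of $F(t):=t\log t$, weak $L^1$ convergence $\varrho_{n_k}\rightharpoonup\varrho$, and $\int F(\varrho_{n_k})\,\d\mm\to\int F(\varrho)\,\d\mm$ together force strong $L^1$ convergence $\varrho_{n_k}\to\varrho$. Since every subsequence admits a further one converging strongly to the same limit, the whole sequence converges strongly in $L^1(X,\mm)$.

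The main obstacle is step (i): because $\mm$ need not be finite, neither the naive Dunford--Pettis argument nor the superlinearity of $t\log t$ suffices on its own to rule out mass escaping to infinity. The essential trick is to exploit both the growth condition \eqref{eq:173} on $\mm$ through the weight $\sfV$ and the tangent inequality for $U_\infty$ simultaneously, so that a uniform bound on the entropy gives automatically a uniform bound on the quadratic moment, and hence control of the tails. Once this is done, steps (ii) and (iii) are routine.
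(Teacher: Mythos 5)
Your skeleton (uniform integrability, Dunford--Pettis, identification of the weak $L^1$ limit through narrow convergence, then Visintin's strict-convexity theorem) is exactly the route the paper has in mind: the paper gives no proof and simply refers to \cite[Theorem 3]{Visintin84}, and your steps (ii) and (iii) are correct as written.

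The gap is in step (i), and it is not cosmetic: your key inequality points the wrong way. Integrating \eqref{eq:185} with $\sfv=\sfV(x)$ against $\varrho_n$ gives $\entv(\mu_n)\ge 1-\int_X e^{-\sfV^2}\,\d\mm-\int_X\sfV^2\,\d\mu_n$, i.e. exactly \eqref{eq:190bis}: the quadratic moment controls the negative part of the entropy, \emph{not} the other way around, so the displayed bound $\int_X\sfV^2\,\d\mu_n\le \entv(\mu_n)+\int_X e^{-\sfV^2}\,\d\mm+1$ is false. No bound of this type can hold: a small mass spread very thinly far away has an arbitrarily large quadratic moment while making the entropy \emph{more} negative. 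The same missing control undermines the "no concentration" half of uniform integrability as well: to exclude concentration from $\entv(\mu_n)\to\entv(\mu)$ one needs a uniform \emph{lower} bound on the entropy of the non-concentrating part, which for $\mm(X)=\infty$ again requires a uniform bound on $\int_X\sfV^2\,\d\mu_n$. Indeed, with $\mm$ infinite (say Lebesgue on $\R$) the bare hypotheses do not give uniform integrability at all: let a fixed fraction of the mass sit at height $n$ on $n$ evenly spread tiny intervals in $[0,1]$ (narrow limit absolutely continuous, entropy contribution $+\alpha\log n$) and compensate with a mass $\epsilon_n\downarrow0$ spread so thinly far away that its entropy contribution is $-\alpha\log n+o(1)$; then $\mu_n\to\mu$ narrowly and $\entv(\mu_n)\to\entv(\mu)$, yet $\|\varrho_n-\varrho\|_{L^1(X,\mm)}\not\to0$. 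The repair must therefore use the extra information present where the lemma is invoked (Lemma \ref{lem:geo_interpolation}): there the convergence is in $W_2$, hence $\int_X\sfV^2\,\d\mu_n\to\int_X\sfV^2\,\d\mu$. Passing to the densities $f_n=\varrho_n e^{\sfV^2}$ with respect to the \emph{finite} measure $\mm_0=e^{-\sfV^2}\mm$ of \eqref{eq:273}, one has $\int_X f_n\log f_n\,\d\mm_0=\entv(\mu_n)+\int_X\sfV^2\,\d\mu_n\to\entv(\mu)+\int_X\sfV^2\,\d\mu$, so de la Vall\'ee Poussin and Dunford--Pettis apply on $(X,\mm_0)$, the weak $L^1(X,\mm_0)$ limit is identified as $\varrho\,e^{\sfV^2}$ by narrow convergence, and Visintin's theorem then gives $f_n\to \varrho e^{\sfV^2}$ in $L^1(X,\mm_0)$, which is precisely $\varrho_n\to\varrho$ in $L^1(X,\mm)$.
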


\fn

\begin{lemma}
  \label{le:action-estimate}
  Let $\mu_s=\varrho_s\mm$ be a Lipschitz curve 
  in $\ProbabilitiesTwo X$ with $s\mapsto\varrho_s$ continuous w.r.t. the $L^1(X,\mm)$ topology
  and $\sup_s\|\varrho_s\|_{L^\infty(X,\mm)}<\infty$. Then, defining 
  $\mu_{s,t}=\varrho_{s,t}\mm$ with $\varrho_{s,t}=(\sfS_t\varrho_s)$, one has 
  \begin{equation}
    \label{eq:21bis}
    \frac{1}{2}\frac {\d^+}{\d t}\cA_2(\mu_{\cdot,t})\le -K
    \Action{Q}{\mu_{\cdot, t}}\mm
    \quad\text{for every $t\ge 0$.}
  \end{equation}
\end{lemma}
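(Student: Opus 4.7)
My strategy is to approximate the curve $\mu_{\cdot}$ by piecewise geodesics on the partition $s_j = j/J$, $\sigma = 1/J$, apply the EVI property \eqref{eq:218} for the nonlinear diffusion $\sfS_t$ on each segment, and pass to the limit. The crucial single-segment estimate is that for any two initial data $\mu_{s_j}$ and $\mu_{s_{j+1}}$, setting $\mu_{s_j,t} := (\sfS_t\varrho_{s_j})\mm$, the standard symmetrization procedure for EVI gradient flows (cf.\ \cite[Cor.~4.3.3]{AGS08}) applied to the strengthened EVI \eqref{eq:218} yields, after using \eqref{eq:time_reversal} to combine the two half-geodesic action terms,
\begin{equation}\label{eq:planA}
\frac{1}{2}\frac{\d^+}{\d t}W_2^2(\mu_{s_j,t}, \mu_{s_{j+1},t}) \le -K\,\Action{Q}{\mu_{s_j,t}, \mu_{s_{j+1},t}}\mm.
\end{equation}
Denoting by $\widetilde\mu^J_{\cdot,t}$ the piecewise geodesic interpolating $\mu_{s_0,t}, \ldots, \mu_{s_J,t}$, a reparametrization of each segment $[s_j,s_{j+1}]$ onto $[0,1]$ gives
\begin{equation*}
\cA_2(\widetilde\mu^J_{\cdot,t}) = \sum_{j=0}^{J-1}\frac{W_2^2(\mu_{s_j,t},\mu_{s_{j+1},t})}{\sigma}, \qquad
\Action{Q}{\widetilde\mu^J_{\cdot,t}}\mm = \sum_{j=0}^{J-1}\frac{\Action{Q}{\mu_{s_j,t}, \mu_{s_{j+1},t}}\mm}{\sigma}.
\end{equation*}
Summing \eqref{eq:planA}, dividing by $\sigma$, and integrating over $t \in [t_0,t_1]$ then produces the piecewise geodesic estimate
\begin{equation}\label{eq:planB}
\frac{1}{2}\bigl[\cA_2(\widetilde\mu^J_{\cdot,t_1}) - \cA_2(\widetilde\mu^J_{\cdot,t_0})\bigr] + K\int_{t_0}^{t_1}\Action{Q}{\widetilde\mu^J_{\cdot,t}}\mm\,\d t \le 0.
\end{equation}

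Next I pass \eqref{eq:planB} to the limit as $J \to \infty$. Since $\sfS_t$ is $\Lambda$-contractive in $W_2$ (from the EVI), preserves $L^\infty$ bounds (Theorem~\ref{thm:nonlin-diff} (ND2)), and is an $L^1$-continuous semigroup of contractions (Theorem~\ref{thm:nonlin-diff} (ND4)), for every fixed $t \geq 0$ the curve $s \mapsto \mu_{s,t}$ inherits from $\mu_{\cdot}$ the hypotheses of the lemma: Lipschitz in $W_2$, continuous in $L^1$ and with uniformly bounded densities. Lemma~\ref{lem:geo_interpolation} thus applies to give $\varrho^J_{\cdot,t} \to \varrho_{\cdot,t}$ in $L^1(X \times [0,1])$, where $\varrho^J_{\cdot,t}$ is the density of $\widetilde\mu^J_{\cdot,t}$. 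Moreover the piecewise geodesic action $\sum_j W_2^2(\mu_{s_j,t},\mu_{s_{j+1},t})/\sigma$ converges to $\cA_2(\mu_{\cdot,t})$ by a standard Riemann-sum argument on Lipschitz curves in a geodesic space. These two convergences supply exactly the hypotheses needed to apply Theorem~\ref{le:stabilitySigma} to the bounded continuous weight $Q$, yielding $\Action{Q}{\widetilde\mu^J_{\cdot,t}}\mm \to \Action{Q}{\mu_{\cdot,t}}\mm$. Passing to the limit in \eqref{eq:planB} gives the integrated estimate
\begin{equation}\label{eq:planC}
\frac{1}{2}\bigl[\cA_2(\mu_{\cdot,t_1}) - \cA_2(\mu_{\cdot,t_0})\bigr] + K\int_{t_0}^{t_1}\Action{Q}{\mu_{\cdot,t}}\mm\,\d t \le 0, \qquad 0 \le t_0 \le t_1.
\end{equation}

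Finally, to extract the pointwise Dini inequality \eqref{eq:21bis} at every $t \geq 0$, I take $t_0 = t$, $t_1 = t+h$ in \eqref{eq:planC}, divide by $h > 0$, and compute $\limsup_{h\downarrow 0}$: the left-hand side produces $\tfrac12 \tfrac{\d^+}{\d t}\cA_2(\mu_{\cdot,t})$, and the right-hand side is handled via the right-continuity at $t$ of the map $r \mapsto \Action{Q}{\mu_{\cdot,r}}\mm$. This last continuity property follows by applying Theorem~\ref{le:stabilitySigma} to the one-parameter family $\{\mu_{\cdot,r}\}_r$: the $L^1$-continuity of $r \mapsto \varrho_{s,r}$ (from the $L^1$-continuity of $\sfS$) together with the right-continuity of $r \mapsto \cA_2(\mu_{\cdot,r})$, itself a consequence of lower semicontinuity of $\cA_2$ combined with the one-sided monotonicity structure provided by \eqref{eq:planC}. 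I expect the main technical obstacle to lie in Step 4: verifying \emph{full} (not merely one-sided) convergence of the $2$-actions of $\widetilde\mu^J_{\cdot,t}$ to that of $\mu_{\cdot,t}$, which is the nontrivial input required by Theorem~\ref{le:stabilitySigma} to guarantee convergence, rather than just lower semicontinuity, of the weighted actions.
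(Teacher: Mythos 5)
Your proposal is correct and follows essentially the same route as the paper's proof: symmetrized EVI estimates \eqref{eq:218} on consecutive partition points, the piecewise geodesic interpolation of the evolved curve together with the action identities and \eqref{eq:time_reversal}, and passage to the limit $J\to\infty$ via Lemma~\ref{lem:geo_interpolation} and Theorem~\ref{le:stabilitySigma} (which in fact only needs the one-sided bound \eqref{eq:162bis}, immediate because geodesic interpolation does not increase the $2$-action, so your Riemann-sum argument proves more than is required). The paper stops at the integrated inequality \eqref{eq:23ter}, while your final step extracting the pointwise Dini form through right-continuity of $r\mapsto\Action{Q}{\mu_{\cdot,r}}\mm$ (using the one-sided bound from the integral inequality plus Gronwall-type local boundedness when $K<0$) is a valid refinement consistent with the statement.
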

\begin{proof} {\color{black} The $L^1(X,\mm)$ contractivity of $\sfS$ ensures that $s\mapsto\varrho_{s,t}$, $t\geq 0$ are equi-continuous
in $L^1(0,1;L^1(X,\mm))$, while the embedding \eqref{eq:12} provides the continuity of $t\mapsto\varrho_{s,t}$ when $s$ is fixed; combining these
properties we know that $(s,t)\mapsto\varrho_{s,t}$ is continuous w.r.t.
  the $L^1(X,\mm)$ topology.} In addition, it is easily seen that the $L^\infty(X,\mm)$ norms of $\varrho_{s,t}$ are uniformly
  bounded, and $s\mapsto\mu_{s,t}=\varrho_{s,t} \mm$ is a Lipschitz curve 
  in $\ProbabilitiesTwo X$. 
  
  
   For a fixed
  integer $J$ we consider the uniform partition $0=s_0<s_1<\cdots<s_J=1$ 
  of the time interval $[0,1]$ of size $\sigma:=J^{-1}$, and the corresponding piecewise geodesic approximation $\mu^J_{s,t}$ of $\mu_{s,t}$.

  Summing up the Evolution Variational Inequality
  \eqref{eq:218} for $\mu_{s_{j-1},t}$ and test measure
  $\mu_{s_j,t}$ and the corresponding one for $\mu_{s_{j},t}$ and test measure
  $\mu_{s_{j-1},t}$ we  use the Leibniz rule \cite[Lemma~4.3.4]{AGS08} to get
  that $t\mapsto W_2^2(\mu_{{s_{j-1},t}},\mu_{s_j,t})$ is locally absolutely continuous in $[0,\infty)$, and that   
  \begin{align*}
  \frac{1}{2}   \frac {\d}{\dt}W_2^2(\mu_{{s_{j-1},t}},\mu_{s_j,t})&\le-
     K\,\Big(\Action{\omega Q}{\mu_{s_{j-1},t},\mu_{s_j,t}}\mm
  +
  \Action{\omega Q}{\mu_{s_{j},t},\mu_{s_{j-1},t}}\mm\Big)
  \end{align*}
  for $j=1,\ldots,J$ and $\Leb{1}$-a.e. $t>0$.
  Denoting by $\mu^J_{\cdot, t}$ the piecewise geodesic curve as in the previous lemma, 
  we obviously have
  $$
  \cA_2(\mu^J_{\cdot,t})=\frac{1}{\sigma}\sum_{j=1}^J W_2^2(\mu_{{s_{j-1},t}},\mu_{s_j,t}),
  $$
  while \eqref{eq:time_reversal} gives
  $$
  \Action{Q}{\mu^J_{\cdot,t}}\mm= \frac{1}{\sigma}
  \sum_{j=1}^J \Big(\Action{\omega Q}{\mu_{s_{j-1},t},\mu_{s_j,t}}\mm+
  \Action {\omega Q}{\mu_{s_{j},t},\mu_{s_{j-1},t}}\mm\Big) .
   $$
  We end up with
  \begin{equation}
    \label{eq:22bis}
 \frac{1}{2}   \frac {\d}{\d t}\cA_2(\mu^J_{\cdot,t})  \le 
    -K \Action{Q}{\mu^J_{\cdot,t}}\mm\qquad\text{for $\Leb{1}$-a.e. $t>0$},
  \end{equation}
  or, in the equivalent integral form,
  \begin{equation}
    \label{eq:23bis}
   \frac{1}{2} \cA_2(\mu^J_{\cdot,t_2})-
   \frac{1}{2} \cA_2(\mu^J_{\cdot,t_1}) \le 
    -K\int_{t_1}^{t_2} \Action{Q}{\mu^J_{\cdot,t}}\mm\,\dt\quad\qquad 0\le t_1<t_2. 
  \end{equation}
  By Lemma~\ref{lem:geo_interpolation}, we know that the curves $\mu^J_{\cdot, t}$ converge to the curves $\mu_{\cdot,t}$ in 
  $L^1(X\times[0,1], \mm \otimes \LL^1)$ as $J\to \infty$.  This enables us to apply Theorem~\ref{le:stabilitySigma}
  {\color{black} (notice that \eqref{eq:162bis} holds because the piecewise geodesic interpolation does not increase the action)},
  so that we can pass to the limit as $J\up\infty$ 
   in \eqref{eq:23bis} and use \eqref{eq:163bis} to get 
   \begin{equation}\label{eq:23ter}
  \frac{1}{2}  \cA_2(\mu_{\cdot,t_2})-
  \frac{1}{2}  \cA_2(\mu_{\cdot,t_1}) \le 
    -K\int_{t_1}^{t_2} \Action{Q}{\mu_{\cdot,t}}\mm\,\dt \qquad \text{ for all } 0\le t_1<t_2\leq T.
   \end{equation}
  \end{proof}

\begin{corollary}
  \label{cor:rough-exponential-estimate}
  Under the same assumptions and notation of the previous
  Lemma~\ref{le:action-estimate}, 
  if $\Lambda$ is defined as in \eqref{eq:56} then 
  \begin{equation}
    \label{eq:249}
    \cA_2(\mu_{\cdot,t})\le \rme^{-2\Lambda t} \cA_2(\mu_{\cdot,0})\quad\forevery t\geq 0.
  \end{equation}
  In particular, if $L$ is the Lipschitz constant of 
  the initial curve $(\mu_s)_{s\in [0,1]}$ in $(\PP_2(X),W_2)$ 
  and $s\mapsto \varrho_{s,t}\in \rmC^1([0,1];\Vdual{})$ then 
  \begin{equation}
    \label{eq:149}
    \cE_{\varrho_{s,t}}^*(\partial_s \varrho_{s,t},\partial_s\varrho_{s,t})\le \rme^{-2\Lambda t}\,L^2\qquad
    \forall\,s\in [0,1],\,\,\forall t\geq 0.
  \end{equation}
\end{corollary}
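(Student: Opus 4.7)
The plan is to deduce both assertions directly from Lemma~\ref{le:action-estimate}, the only new ingredients being the definition of $\Lambda$, a standard Gronwall argument, and (for \eqref{eq:149}) a localization to sub-intervals combined with Lebesgue differentiation.

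For \eqref{eq:249}: by the very definition $\Lambda=\inf_{r>0} KQ(r)$ we have $KQ(\varrho_{s,t})\ge \Lambda$ on $\{\varrho_{s,t}>0\}$, hence from the expression \eqref{eq:17} of the weighted action,
\[
K\,\Action{Q}{\mu_{\cdot,t}}\mm
=\int_0^1\!\!\int_X K Q(\varrho_{s,t})\,v_{s,t}^{2}\,\varrho_{s,t}\,\d\mm\,\d s
\ge \Lambda\int_0^1\!\!\int_X v_{s,t}^{2}\varrho_{s,t}\,\d\mm\,\d s
=\Lambda\,\cA_2(\mu_{\cdot,t}).
\]
Plugging this into \eqref{eq:21bis} yields the differential inequality $\frac{\d^+}{\dt}\cA_2(\mu_{\cdot,t})\le -2\Lambda\,\cA_2(\mu_{\cdot,t})$, whose integrated form (together with the continuity of $t\mapsto\cA_2(\mu_{\cdot,t})$ implicit in Lemma~\ref{le:action-estimate}) leads by a standard Gronwall argument to $\cA_2(\mu_{\cdot,t})\le e^{-2\Lambda t}\cA_2(\mu_{\cdot,0})$.

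For \eqref{eq:149}: the piecewise-geodesic approximation used in the proof of Lemma~\ref{le:action-estimate} is entirely local, in the sense that the evolution variational inequalities are summed over consecutive nodes of a partition of $[0,1]$. Running the same argument on an arbitrary sub-interval $[s_1,s_2]\subset[0,1]$ and repeating the Gronwall step gives the localized estimate
\[
\int_{s_1}^{s_2}|\dot\mu_{r,t}|^{2}\,\d r
\;\le\; e^{-2\Lambda t}\int_{s_1}^{s_2}|\dot\mu_r|^{2}\,\d r
\;\le\; e^{-2\Lambda t}\,L^{2}(s_2-s_1),
\]
using $|\dot\mu_r|\le L$ a.e.\ in the last step. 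Under the assumption $s\mapsto\varrho_{s,t}\in\rmC^1([0,1];\V')$, Theorem~\ref{thm:dynamicKant} identifies $|\dot\mu_{r,t}|^{2}$ with $\cE^{*}_{\varrho_{r,t}}(\partial_r\varrho_{r,t},\partial_r\varrho_{r,t})$ for $\Leb 1$-a.e.\ $r\in(0,1)$. Dividing by $s_2-s_1$ and applying the Lebesgue differentiation theorem to the integrable function $r\mapsto\cE^{*}_{\varrho_{r,t}}(\partial_r\varrho_{r,t},\partial_r\varrho_{r,t})$ produces the bound $\cE^{*}_{\varrho_{s,t}}(\partial_s\varrho_{s,t},\partial_s\varrho_{s,t})\le e^{-2\Lambda t}L^{2}$ for $\Leb{1}$-a.e.\ $s\in[0,1]$.

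The only mildly delicate point — and thus the "hard part" — is upgrading this almost everywhere bound to \emph{every} $s\in[0,1]$. For this I would invoke Lemma~\ref{le:lsc}: the $\rmC^1(\V')$ regularity gives $\partial_{s'}\varrho_{s',t}\to\partial_{s}\varrho_{s,t}$ strongly in $\V'$, while the uniform $L^\infty$ bound on $\varrho_{s,t}$ and its $L^1$-continuity yield $\varrho_{s',t}\overset{*}{\rightharpoonup}\varrho_{s,t}$ in $L^\infty(X,\mm)$. Hence $r\mapsto\cE^{*}_{\varrho_{r,t}}(\partial_r\varrho_{r,t},\partial_r\varrho_{r,t})$ is lower semicontinuous, and lower semicontinuity combined with the almost-everywhere inequality propagates the bound to every $s$, completing the proof of \eqref{eq:149}.
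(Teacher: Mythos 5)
Your proposal is correct and follows essentially the same route as the paper: for \eqref{eq:249} you bound $K\,\Action{Q}{\mu_{\cdot,t}}\mm$ from below by $\Lambda\,\cA_2(\mu_{\cdot,t})$ using the definition of $\Lambda$ and integrate the resulting differential inequality from Lemma~\ref{le:action-estimate}, and for \eqref{eq:149} you localize the action estimate to subintervals, identify the dual energy with the squared metric derivative via Theorem~\ref{thm:dynamicKant} (i.e.\ \eqref{eq:161} and \eqref{eq:168}), obtain the bound for $\Leb1$-a.e.\ $s$, and upgrade it to every $s$ by the lower semicontinuity of $s\mapsto\cE^*_{\varrho_{s,t}}(\partial_s\varrho_{s,t},\partial_s\varrho_{s,t})$ from Lemma~\ref{le:lsc}. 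This is exactly the paper's argument, with the Gronwall and Lebesgue-differentiation steps merely made explicit.
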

\begin{proof} The action estimate \eqref{eq:249} follows easily by \eqref{eq:21bis}
  and the fact that  the definition of $\Lambda$ gives $-K\Action{Q}{\mu_{\cdot, t}}\mm\le -\Lambda \cA_2(\mu_{\cdot, t})$. 
      
  By repeating the estimate above to every subinterval of $[0,1]$,  the identity \eqref{eq:168} of Theorem~\ref{thm:dynamicKant} 
  and the equality \eqref{eq:161} between minimal velocity and metric derivative  yield  
  $$\cE_{\varrho_{s,t}}^*(\partial_s \varrho_{s,t},\partial_s\varrho_{s,t})\le \rme^{-2\Lambda t}\,L^2\qquad
    \LL^1\text{-a.e.}\,s\in [0,1],\,\,\forall t\geq 0. $$
  The thesis  \eqref{eq:149} then follows by the lower semicontinuity of the map $s\mapsto \cE_{\varrho_{s,t}}^*(\partial_s \varrho_{s,t},\partial_s\varrho_{s,t})$  ensured by  Lemma~\ref{le:lsc}, since the maps $s\mapsto \partial_s\varrho_{s,t}, s \mapsto \varrho_{s,t}$ are 
  continuous in $\Vdual{}$ and weak$^*$-$L^\infty(X,\mm)$ respectively.  
\end{proof}

{\color{black}
We can now prove the implication from $\RCDS KN$ to $\BE KN$; we adopt a perturbation argument similar to the
one independently found in \cite{Bolley-Gentil-Guillin-Kuwada14}.} 

\begin{theorem}
  \label{thm:RCD-BE}
  If $(X,\sfd,\mm)$ satisfies $\RCDS KN$ then the metric $\BE KN$ condition holds.
\end{theorem}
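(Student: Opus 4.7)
The plan is to use Theorem~\ref{thm:nonlinearBE} to reduce the proof to a nonlinear infinitesimal inequality, and then derive this inequality by differentiating at $t=0$ the action monotonicity provided by Lemma~\ref{le:action-estimate} along a carefully chosen perturbation of the nonlinear diffusion semigroup. Concretely, since $\RCDS KN$ implies $\RCD K\infty$ (hence the metric $\BE K\infty$ condition, by \cite{AGS12}) and implies local doubling (hence properness and local compactness, by Bacher--Sturm), Theorem~\ref{thm:nonlinearBE} reduces the proof to verifying
\begin{equation*}
  \GGamma_2(f;P(\varphi))+\int_X R(\varphi)(\DeltaE f)^2\,\d\mm\ge K\int_X\Gq f\,P(\varphi)\,\d\mm
\end{equation*}
for every $P=P_{N,\eps,M}$ and every admissible test pair $(f,\varphi)$ with compact support and $\inf_{\supp f}\varphi>0$.

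The second step is to construct, for each such $(f,\varphi)$, a one-parameter family of Lipschitz $W_2$-curves $\mu_s^\lambda=\varrho_s^\lambda\mm$, $s\in[0,1]$, $\lambda\in(0,\lambda_0)$, with uniformly bounded $L^\infty$-densities, such that $\varrho_0^\lambda$ is (the normalization of) $\varphi$ and such that the Kantorovich-type velocity potential at $s=0$ is $\lambda f$, i.e.\ $\cE_{\varrho_0^\lambda}(\lambda f,\zeta)=\langle\partial_s\varrho_s^\lambda|_{s=0},\zeta\rangle$ for every $\zeta\in\V$. This is the perturbation technique of \cite{Bolley-Gentil-Guillin-Kuwada14}: heuristically $\mu_s^\lambda$ is obtained by pushing $\varrho_0^\lambda\mm$ along an approximate gradient flow of $\lambda f$ (in the abstract framework this can be realized via displacement interpolation between $\varphi\mm$ and an optimal-transport push-forward whose Kantorovich potential is $\lambda f$, combined with the regularization by regular curves provided by Lemma~\ref{le:appregentr}). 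Letting $\varrho_{s,t}^\lambda:=\sfS_t\varrho_s^\lambda$ with $\sfS$ the diffusion semigroup of $U_{N,\eps,M}$, Lemma~\ref{le:action-estimate} (whose only input is the EVI characterization of $\RCDS KN$) yields the action monotonicity
\begin{equation*}
  \tfrac12\tfrac{\d^+}{\d t}\cA_2(\mu_{\cdot,t}^\lambda)\le -K\Action Q{\mu_{\cdot,t}^\lambda}\mm.
\end{equation*}

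The core of the proof is then to identify both sides of this inequality at $t=0$ and to extract a pointwise statement by letting $\lambda\to 0$ and localizing in $s$ near $s=0$. Writing $\cA_2(\mu_{\cdot,t}^\lambda)=\int_0^1\cE^*_{\varrho_{s,t}^\lambda}(\partial_s\varrho_{s,t}^\lambda)\,\d s$ and using the duality identity $\cE^*_{\varrho_{s,t}}(\partial_s\varrho_{s,t})=\int_X\varrho_{s,t}\Gq{\phi_{s,t}}\,\d\mm$ for the velocity potential $\phi_{s,t}=-A^*_{\varrho_{s,t}}\partial_s\varrho_{s,t}$, the RHS divided by $\lambda^2$ converges, after normalization, to $-K\int_X P(\varphi)\Gq f\,\d\mm$. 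For the LHS, the key computation is the derivative of the Hamiltonian $\frac12\int_X\varrho_t\Gq{\vph_t}\,\d\mm$: this is exactly provided by Theorem~\ref{thm:crucial-estimate}, which gives
\begin{equation*}
  \tfrac{\d}{\d t}\tfrac12\int_X\varrho_t\Gq{\vph_t}\,\d\mm=\GGamma_2(\vph_t;P(\varrho_t))+\int_X R(\varrho_t)(\DeltaE\vph_t)^2\,\d\mm
\end{equation*}
whenever $\vph$ solves the backward linearized equation along $\varrho$. Since $\phi_{s,t}$ does not itself solve this backward equation, we apply the formula to an auxiliary $\vph$ obtained by solving the backward equation with terminal condition $\vph_0=\phi_{0,0}\to f$, and use the variational characterization of $\cE^*$ (as in the proof of Theorem~\ref{thm:BEKNaction}(iii)) to pass from $\phi_{s,t}$ to $\vph_{s,t}$ in the limit. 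Evaluating at $t=s=\lambda=0^+$ yields precisely the desired nonlinear inequality.

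The main obstacle is twofold. First, constructing the family $\mu_s^\lambda$ with the \emph{exact} prescribed initial velocity potential $\lambda f$ and with regularity sufficient for Lemma~\ref{le:action-estimate} and for the identification of the $t$-derivative at $s=0$; this requires combining the regular-curves approximation of Section~\ref{subsec:RegularCurves}, the improved interpolation bound of Theorem~\ref{thm:interpolation}, and the stability of the weighted action (Theorem~\ref{le:stabilitySigma}). Second, the delicate step of bridging the forward velocity potential $\phi_{s,t}$ (which enters $\cA_2$) with the backward solution $\vph_t$ required by Theorem~\ref{thm:crucial-estimate}: because action monotonicity provides only a $\limsup$ upper bound, one must show that the remaining error terms in the variational expansion vanish to leading order as $\lambda,t\downarrow 0$, using the regularity estimates on the linearized equations from Theorem~\ref{prop:backward-linearization} and Theorem~\ref{thm:reg2}. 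Once this is done, Theorem~\ref{thm:nonlinearBE} converts the pointwise nonlinear inequality into the metric $\BE KN$ condition.
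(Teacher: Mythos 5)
Your overall skeleton is the right one and matches the paper's: reduce via Theorem~\ref{thm:nonlinearBE} to the nonlinear inequality \eqref{eq:152}, then differentiate at $t=0$ the action monotonicity of Lemma~\ref{le:action-estimate} along a perturbation of the initial density whose velocity potential is (asymptotically) $f$. However, the two steps that you yourself flag as ``the main obstacle'' are exactly where the proof needs its decisive ideas, and the devices you propose for them would not go through as stated. First, the perturbation family: you propose to realize it by displacement interpolation towards a push-forward ``whose Kantorovich potential is $\lambda f$''. For an arbitrary $f\in D_\V(\DeltaE)\cap D_{L^\infty}(\DeltaE)$ there is no reason $\lambda f$ is a Kantorovich potential in a nonsmooth space, and obtaining uniform $L^\infty$ bounds and Lipschitz regularity in $W_2$ for such an interpolation is itself a nontrivial problem. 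The paper avoids all of this with an explicit \emph{linear} perturbation: it sets $\psi:=-\varrho\DeltaE f-\Gbil\varrho f$ (the weak form of $-\mathrm{div}(\varrho\nabla f)$) and $\varrho_s:=\varrho+s\psi$; the bound $|\psi|\le a\varrho$ gives the two-sided density control \eqref{eq:245}, Theorem~\ref{thm:dynamicKant} gives Lipschitzianity in $W_2$, and $\cE^*_{\varrho_s}(\psi,\psi)\to\cE_\varrho(f,f)$ (see \eqref{eq:174}) makes $f$ the limiting velocity potential with no transport construction at all.

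Second, the identification of the $t$-derivative: you plan to invoke Theorem~\ref{thm:crucial-estimate} along an auxiliary backward solution and to ``pass from $\phi_{s,t}$ to $\vph_{s,t}$ as in the proof of Theorem~\ref{thm:BEKNaction}(iii)''. That bridging argument in Theorem~\ref{thm:BEKNaction} relies on the monotonicity \eqref{eq:91}, which is a consequence of $\BE KN$ — precisely what is to be proved — so it cannot be mimicked in this direction, and you leave the resulting error analysis unresolved. The paper does not need the exact Hamiltonian derivative at all: since action monotonicity gives an upper bound, it suffices to bound $\tfrac12\cE^*_{\varrho^t_s}(w^t_s,w^t_s)$ from \emph{below} by testing the dual variational formula with the fixed function $f$, i.e. $\tfrac12\cE^*_{\varrho^t_s}(w^t_s,w^t_s)\ge\langle w^t_s,f\rangle-\tfrac12\int_X\varrho^t_s\Gq f\,\d\mm$, and then to compute the difference quotients $\tfrac1t\langle w^t_0-\psi,f\rangle\to\langle\psi,P'(\varrho)\DeltaE f\rangle$ and $\tfrac1t\int_X(\varrho^t-\varrho)\Gq f\,\d\mm\to\int_X\DeltaE P(\varrho)\Gq f\,\d\mm$ directly from the forward equations (Theorem~\ref{thm:reg2} and Theorem~\ref{thm:nonlin-diff}), after averaging over $s\in(0,S(t))$ with $S(t)\le t^2$ to kill the $s$-dependence (using \eqref{eq:149} and Lemma~\ref{le:lsc} for the weighted action term). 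These limits reassemble exactly into \eqref{eq:151}--\eqref{eq:152}. So while your strategy is the correct one (and is indeed the perturbation idea of \cite{Bolley-Gentil-Guillin-Kuwada14} cited in the introduction), as written the proposal has genuine gaps at the construction of the perturbation and at the derivative identification, both of which admit much simpler resolutions than the ones you sketch.
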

\begin{proof}
  Let us first remark that 
  $(X,\sfd,\mm)$ satisfies the metric $\BE K\infty$ 
  condition and that $(X,\sfd)$ is locally compact;
  in order to check $\BE KN$ we can thus apply Theorem~\ref{thm:nonlinearBE}.
  
  We fix $f\in D_\V(\DeltaE)\cap D_{L^\infty}(\DeltaE)$ with compact support 
  and $\mu=\varrho\mm\in \Probabilities X$ with compactly supported density $\varrho\in D_{L^\infty}(\DeltaE)$ 
  satisfying $0<r_0\le \varrho$ $\mm$-a.e.~on the support of $f$. With these choices, our goal is to prove
  the inequality 
   \begin{equation}
    \label{eq:152}
    \GGamma_2(f;P(\varrho))
    +\int_X R(\varrho) \,(\DeltaE f)^2\,\d\mm
    \ge K\int_X \Gq f\,P(\varrho)\,\d\mm.
  \end{equation} 
    We define 
  $$
    \psi:=-\varrho\DeltaE f-\Gbil\varrho f.
  $$
  Since $\varrho$ and $f$ are Lipschitz in $X$, recalling 
  Theorem~\ref{thm:interpolation} and Lemma~\ref{le:pre-local} one has $\psi\in\V_\infty$ and
  \begin{equation}
    \label{eq:147}
    |\psi|\le a\varrho
    \quad \text{for some constant }a>0.
  \end{equation}
  In addition, $\psi$ has compact support and 
  \begin{equation}
    \label{eq:248}
    \int_X\psi\zeta\,\d\mm=
    \int_X \varrho\, \Gbil f\zeta\,\d\mm\quad\forall\zeta\in\V,
    \qquad
    \int_X \psi\,\d\mm=0,
   \end{equation}
   \begin{equation}\label{eq:248bis}
   \frac{1}{2} \cE^*_\varrho(\psi,\psi)=  \frac{1}{2} \cE_{\varrho}(f,f)=
    \langle \psi,f\rangle-\frac 12 \int_X \varrho\Gq f\,\d\mm.
  \end{equation}
  
  We then set $\varrho_s:=\varrho+s\psi$, so that $\partial_s\varrho_s\equiv \psi$, and we observe that 
  \eqref{eq:147} gives 
  \begin{equation}\label{eq:245}
    (1-as)\varrho\le \varrho_s\le (1+as)\varrho.
  \end{equation}
  This, together with \eqref{eq:248}, implies that 
  $\varrho_s \mm\in\Probabilities X$ for $s\in [0,1/a]$; 
  moreover,  \eqref{eq:245} also gives
  $(1-as)\cE_{\varrho}(\varphi)\le \cE_{\varrho_s}(\varphi) \le (1+as)\cE_{\varrho}(\varphi)$ for all $\varphi\in\V$,
  so that by duality we get
  \begin{equation}
    \label{eq:247}
    (1+as)^{-1}\cE^*_{\varrho}(\psi,\psi)\le\cE^*_{\varrho_s}(\psi,\psi)
    \le (1-as)^{-1}\cE^*_{\varrho}(\psi,\psi).
  \end{equation}
  It follows that $\varrho_s$ is Lipschitz in $\ProbabilitiesTwo X$ by 
  Theorem~\ref{thm:dynamicKant} and
  \begin{equation}
    \label{eq:174}
   \lim_{s\down0}\cE^{*}_{\varrho_s}(\psi,\psi) =
    \cE^*_{\varrho}(\psi,\psi)=
    \cE_\varrho(f,f).    
  \end{equation}
  
  We set $\varrho_s^t:=\sfS_t \varrho_s$,
  $w_s^t:=\partial_s \varrho_s^t$, $\varrho^t=\sfS_t\varrho$. Recall that, thanks to Corollary~\ref{cor:rho-derivative},
  $t\mapsto w_s^t$ belong to $W^{1,2}(0,T;\H,\Ddual)\subset C([0,T];\V')$
  and solve the PDE $\partial_t w=\DeltaE (P'(\rho_s^t)w)$ of Theorem~\ref{thm:reg2} with the initial condition $\bar w=\partial_s\varrho_s=\psi$.
 The contraction property of $\sfS$ in $L^1(X,\mm)$ and the integrability of $\psi$ yield
  \begin{equation}\label{eq:2555}
    \|\varrho_s^t-\varrho^t\|_{L^1(X,\mm)}\le 
    \|\varrho_s-\varrho\|_{L^1(X,\mm)}= s\|\psi\|_{L^1(X,\mm)}\quad\forall s\in
    (0,1/a),\,\,\forall t\in [0,T].
  \end{equation}
 Combining Theorem~\ref{thm:reg2},
  the estimate \eqref{eq:149} and \eqref{eq:247}
  we also get
  \begin{equation}
    \label{eq:250}
    \sup_{0\le s\le S}\cE^*_{\varrho_s^t}(w_s^t,w^t_s)\le
    \frac{\rme^{-2\Lambda t}}
    {1-aS}\cE^*_\varrho(\psi,\psi)\quad \forall t\ge0,\,\,\forall S\in (0,1/a).
  \end{equation}
  Theorem~\ref{thm:reg2}(L3) in combination with \eqref{eq:245}  and \eqref{eq:2555} also shows that 
  \begin{equation}
    \label{eq:255}
    \lim_{s\down0}\sup_{0\le t\le T}\|w_s^t-w^t_0\|_{\Vdual1}=0\quad
    \forevery T>0\quad\text{and}\quad
    \lim_{s,t\down0}\|w_s^t -\psi\|_{\Vdual1}=0.
  \end{equation}
 Combining the lower semicontinuity property \eqref{eq:216}  with \eqref{eq:250}, 
  \eqref{eq:255} and recalling   \eqref{eq:248bis},  we get
    \begin{equation}
    \label{eq:251}
   \lim_{s,t\downarrow 0}  \cE^*_{\varrho_s^t}(w_s^t,w^t_s)= \cE^*_{\varrho}(\psi, \psi)=\cE_{\varrho}(f,f); 
  \end{equation}
  we are then in position to apply Lemma~\ref{le:lsc} and infer that \fn
  \begin{equation}
    \label{eq:252}
    \lim_{s,t\down0} \int_X Q(\varrho_s^t)\varrho_s^t
    \tGqs{\varrho_s^t}{w_s^t}\,\d\mm=
    \int_X Q(\varrho)\varrho
    \Gq{f}\,\d\mm.
  \end{equation}
  Moreover, by \eqref{eq:2555} and \eqref{eq:255} we can find a nondecreasing function
  $(0,1)\ni t\mapsto S(t)>0$ 
  with $S(t)\le t^2$, such that 
  \begin{displaymath}
    \label{eq:254}
    \lim_{t\down0} \sup_{0<s<S(t)}t^{-1}\|w_s^t-w^t_0\|_{\Vdual1}=0,
    \qquad
    \lim_{t\down0} \sup_{0<s<S(t)}t^{-1}\|\varrho_s^t-\varrho^t\|_{L^1(X,\mm)}=0,
  \end{displaymath}
  so that 
  \begin{equation}
    \label{eq:257}
    \lim_{t\down0} \media_0^{S(t)} \frac1t\langle w_s^t-\psi,f\rangle
    \,\d s=
    \lim_{t\down0} \frac1t\langle w^t_0-\psi,f\rangle
  \end{equation}
  and 
  \begin{equation}
    \label{eq:256}
    \lim_{t\down0} \media_0^{S(t)}
    \int_X \frac 1t (\varrho_s^t-\varrho)\Gq f\,\d\mm\,\d s=
    \lim_{t\down0} 
    \int_X \frac 1t (\varrho^t-\varrho)\Gq f\,\d\mm,
  \end{equation}
  provided the limits in the right hand sides exist.
  Eventually,  \eqref{eq:248bis}, \eqref{eq:247} and $1-as\ge \frac 12 $ yield
  \begin{displaymath}
   \frac{1}{2} \cE^*_{\varrho_s}(\psi,\psi)\le \frac{1}{2} 
    (1+2as)\cE^*_\varrho(\psi,\psi)=
    \langle \psi,f\rangle -\frac 12 \int_X \varrho\Gq f\,\d\mm+
    as \,\cE^*_\varrho(\psi,\psi)
  \end{displaymath}
  so that the bound $S(t)\le t^2$ yields
  \begin{equation}
    \label{eq:258}
   \frac{1}{2}  \media_0^{S(t)}\cE^*_{\varrho_s}(\psi,\psi)\,\d s\le 
    \langle \psi,f\rangle -\frac 12 \int_X \varrho\Gq f\,\d\mm
    + \frac{1}{2} a t^2 \cE_\varrho(f,f).
  \end{equation}
  Combining Theorem~\ref{thm:dynamicKant} and Lemma~\ref{le:action-estimate} (applied to the 
  rescaled curves in the interval $(0,S(t))$) we get 
  \begin{equation}
    \label{eq:141}
  \frac{1}{2}  \media_0^{S(t)} \cEs{\varrho_{s}^t}(w_s^t,w_s^t)\,\d s+
    K
    \int_0^t \media_0^{S(t)} \int_X Q(\varrho_{s}^r)\varrho_{s}^r 
    \tGqs {\varrho_s^r}{w_s^r}
    \,\d\mm\,\d s\,\d r
    \le \frac{1}{2}  \media_0^{S(t)} \cEs{\varrho_{s}}(\psi,\psi)\,\d s,
  \end{equation}
    so that \eqref{eq:258} and
  the very definition of $\cE^*_{\varrho_s^t}$ yield
 \begin{align*}
    \media_0^{S(t)} &\Big(\langle w^t_{s},f\rangle-
    \frac 12\int_X {\varrho^t_{s}}\Gq f\,\d\mm\Big)\,\d s+
     K
    \int_0^t \media_0^{S(t)} \int_X Q(\varrho_{s}^r)\varrho_{s}^r 
    \tGqs {\varrho_s^r}{w_s^r}
    \,\d\mm\,\d s\,\d r
    \\&  \le 
     \langle \psi,f\rangle - \frac{1}{2} \int_X \varrho\Gq f\,\d\mm
    + \frac{1}{2}  at^2 \cE_\varrho(f,f),
  \end{align*}
  and, dividing by $t>0$,
  \begin{align*}
   \fn  \media_0^{S(t)} &\Big(\frac 1t\langle w^t_{s}-\psi,f\rangle-
    \frac 12\int_X \frac1t (\varrho^t_{s}-\varrho)
    \Gq f\,\d\mm\Big)\,\d s+
     K
    \media_0^t \media_0^{S(t)} \int_X Q(\varrho_{s}^r)\varrho_{s}^r 
    \tGqs {\varrho_s^r}{w_s^r}
    \,\d\mm\,\d s\,\d r
    \\&\le 
    \frac{1}{2}  t a\cE_\varrho(f,f).
  \end{align*}
  Passing to the limit as $t\down0$ and 
  recalling \eqref{eq:252}, 
  \eqref{eq:257} and \eqref{eq:256} we eventually get
  \begin{equation}
    \label{eq:150}
    \lim_{t\down0}\langle \frac{w^t_0-\psi}t,f\rangle
    - \frac{1}{2}
    \lim_{t\down0}
     \int_X \frac{\varrho^t-\varrho}t\Gq f\,\d\mm+
    K
    \int_X Q(\varrho)\varrho 
    \Gq f\,\d\mm\le 0.
  \end{equation}
  Observe now that 
  \begin{align*}
    \frac 1t\langle w^t_0-\psi,f\rangle=&
    \media_0^t \langle\DeltaE (P'(\varrho^r_0)w^r_0),f\rangle\,\d r
    =\media_0^t \langle P'(\varrho^r_0)w^r_0,\DeltaE f\rangle\,\d r
    =\media_0^t \langle w^r_0,P'(\varrho^r_0)\DeltaE f\rangle\,\d r.
  \end{align*}
  We can then pass to the limit since $w^r_0\to \psi$ in
  $\Vdual1$, $P'(\varrho^r_0)\to P'(\varrho)$ 
  in $\V$ (thanks to \eqref{eq:138}) with uniform $L^\infty$ bound
  and $\DeltaE f\in \V_\infty$.
  We get, by the definition of $\psi$, that 
  \begin{equation}
    \label{eq:259}
    \lim_{t\down0}\frac 1t\langle w^t_0-\psi,f\rangle=
    \langle\psi ,P'(\varrho)\DeltaE f\rangle=
   -  \int_X \Big(\varrho P'(\varrho)(\DeltaE f)^2+
    \Gbil {P(\varrho)}f\,\DeltaE f\Big)\,\d\mm.
  \end{equation}
  Similarly, since $\frac1t(\varrho^t-\varrho)\to \DeltaE P(\varrho)$
  in $\V'$, $\Gq f\in \V$ and $P(\varrho)\in \D_\infty$, we obtain
  \begin{equation}
    \label{eq:260}
      \lim_{t\down0}
      \int_X \frac{\varrho^t-\varrho}t\Gq f\,\d\mm=
    \int_X \DeltaE P(\varrho)\Gq f\,\d\mm.
  \end{equation}
  Combining \eqref{eq:150} with
  \eqref{eq:259} and \eqref{eq:260}
  we obtain
  \begin{equation}
    \label{eq:151}
   - \int_X P'(\varrho) (\varrho (\DeltaE f)^2 +\Gbil{P(\varrho)} f)
    \DeltaE f+ \frac12\DeltaE P(\varrho)\Gq f\,\d\mm
    \le - K\int_X P(\varrho)\Gq f\,\d\mm
  \end{equation}
  and finally \eqref{eq:152} is achieved.
    By applying Theorem~\ref{thm:nonlinearBE}
  we then get $\BE KN$.
\end{proof}

\def\cprime{$'$}

\end{document}